\pgfplotsset{compat=1.15}
\tikzset{
  symbol/.style={
    draw=none,
    every to/.append style={
      edge node={node [sloped, allow upside down, auto=false]{$#1$}}}
  }
}
\setlist{topsep=1em, itemsep=1em}
\def\mydefc#1{\expandafter\def\csname c#1\endcsname{\mathcal{#1}}}
\def\mydefallc#1{\ifx#1\mydefallc\else\mydefc#1\expandafter\mydefallc\fi}
\def\mydefb#1{\expandafter\def\csname b#1\endcsname{\mathbb{#1}}}
\def\mydefallb#1{\ifx#1\mydefallb\else\mydefb#1\expandafter\mydefallb\fi}
\theoremstyle{plain}
\newtheorem{thm}{Theorem}[section]
\newtheorem{cor}[thm]{Corollary}
\newtheorem{lem}[thm]{Lemma}
\newtheorem{conj}{Conjecture}
\newtheorem{prop}[thm]{Proposition}
\theoremstyle{definition}
\newtheorem{rem}[thm]{Remark}
\newtheorem{defn}[thm]{Definition}
\newtheorem{notn}[thm]{Notation}
\newtheorem{const}[thm]{Construction}
\newtheorem{ex}[thm]{Example}
\newtheorem{warning}[thm]{Warning}
\newtheorem{introthm}{Theorem}
\newtheorem{introconj}{Conjecture}
\def\rm{\mathrm}
\DeclareMathOperator{\rk}{rank}
\DeclareMathOperator{\Aut}{Aut}
\DeclareMathOperator{\DCoh}{D^b_{coh}}
\DeclareMathOperator{\Dqc}{D_{qc}}
\DeclareMathOperator{\Coh}{Coh}
\DeclareMathOperator{\colim}{colim}
\DeclareMathOperator{\Cone}{Cone}
\DeclareMathOperator{\End}{End}
\DeclareMathOperator{\Ext}{Ext}
\DeclareMathOperator{\Fun}{Fun}
\DeclareMathOperator{\GL}{GL}
\DeclareMathOperator{\Hom}{Hom}
\newcommand{\id}{\mathrm{id}}
\DeclareMathOperator{\Map}{Map}
\newcommand{\pt}{\mathrm{pt}}
\DeclareMathOperator{\rank}{rank}
\DeclareMathOperator{\RHom}{RHom}
\DeclareMathOperator{\Span}{Span}
\DeclareMathOperator{\Spec}{Spec}
\DeclareMathOperator{\Stab}{Stab}
\DeclareMathOperator{\Pic}{Pic}
\DeclareMathOperator{\Perf}{Perf}
\DeclareMathOperator{\Ob}{Ob}
\newcommand{\heart}{\heartsuit}
\DeclareMathOperator{\Ind}{Ind}
\DeclareMathOperator{\Pidom}{\Pi}
\DeclareMathOperator{\cofib}{cofib}
\DeclareMathOperator{\fib}{fib}
\DeclareMathOperator{\im}{im}
\DeclareMathOperator{\Filt}{Filt}
\DeclareMathOperator{\Flag}{Flag}
\DeclareMathOperator{\coker}{coker}
\DeclareMathOperator{\Bl}{Bl}
\DeclareMathOperator{\sign}{sign}
\DeclareMathOperator{\gl}{gl}
\DeclareMathOperator{\Grad}{Grad}
\def\ch{v}
\DeclareMathOperator{\logZ}{logZ}
\DeclareMathOperator{\gr}{gr}
\DeclareMathOperator{\dom}{dom}
\DeclareMathOperator{\mscbar}{\mathcal{A}}
\DeclareMathOperator{\Mmscbar}{\mathcal{A}}
\def\d{\mathrm{d}}
\def\db{\mathrm{D}^{\mathrm{b}}}
\def\rmscbar{\mathcal{A}^{\mathbb{R}}}
\def\bf{\mathbf}
\def\cal{\mathcal}
\def\bb{\mathbb}
\def\Astab{\mathcal{A}\Stab}
\begin{document}

\title[Augmented stability conditions]{The space of augmented stability conditions}

\author[D. Halpern-Leistner]{Daniel Halpern-Leistner}
\address{Department of Mathematics, Cornell University, Ithaca, NY}
\email{daniel.hl@cornell.edu}

\author[A. Robotis]{Antonios-Alexandros Robotis}
\address{Department of Mathematics, Columbia University, New York, NY}
\email{a.robotis@columbia.edu}

\begin{abstract}
    Given a triangulated category $\cC$, we construct a partial compactification, de\-noted $\Astab(\cC)$, of the quotient of its stability manifold by $\bC$. The purpose of $\Astab(\cC)$ is to shed light on the structure of semiorthogonal decompositions of $\cC$.
    
    A point of $\Astab(\cC)$, called an augmented stability condition on $\cC$, consists of a newly introduced homological structure called a multi-scale decomposition, along with stability conditions on sub\-quotient categories of $\cC$ associated to this multi-scale decomp\-osition. A generic multi-scale decomp\-osition corresponds to a semiorthogonal decomposition along with a configuration of points in $\bC$.
    
    We give a conjectural description of open neighborhoods of certain boundary points, called the \emph{manifold-with-corners conjecture}, and prove it in a special case. We show that this conjecture implies the existence of proper good moduli spaces of Bridgeland semistable objects in $\cC$ when $\cC$ is smooth and proper, and present first examples where the manifold-with-corners conjecture holds.
\end{abstract}

\maketitle

\tableofcontents

\addtocontents{toc}{\protect\setcounter{tocdepth}{1}}

\linespread{1.15}\selectfont

\section{Introduction}

In the seminal paper \cite{Br07}, Bridgeland introduced the notion of a stability condition on a triangulated category $\cC$. We first equip $\cC$ with a surjective homomorphism $v :  \rm{K}_0(\cC) \to \Lambda$ to a finite rank free abelian group $\Lambda$. A stability condition consists of a bounded $t$-structure on $\cC$, which is determined by its heart $\cA \subset \cC$, along with a homomorphism $Z : \Lambda \to \bC$, called the \emph{central charge}, satisfying certain axioms. (We recall the detailed definition and standard terminology in \Cref{S:background}.) The main theorem \cite{Br07}*{Thm. 1.2}, reformulated using the support property of \cite{KS08}, states that the set of stability conditions $\Stab_\Lambda(\cC)$ admits a natural metric topology such that the map $\Stab_\Lambda(\cC)\to \Hom(\Lambda,\bC)$ given by $(Z,\cA)\mapsto Z$ is a local homeomorphism. In particular, $\Stab_\Lambda(\cC)$ has a canonical structure of a complex manifold. It is striking that adding the data of the central charge $Z$ allows one to construct a moduli space parameterizing homological structures on $\cC$.

Because triangulated categories are such a general tool in mathematics, stability manifolds have been studied in algebraic geometry \cites{BMSabelian,BMunreasonable,BridgelandK3,ABLStabilitysurf,Toda_2014}, representation theory \cites{BSSA2,CY2Licata,DKwild,IkedaAnCYN}, and symplectic topology \cites{HaidenCY3,HKK,BridgelandSmith}. In the few examples where the global structure of the stability manifold is known, they have turned out to be geometrically interesting spaces themselves, such as moduli spaces of quadratic differentials \cites{BridgelandSmith,HKK}.

In this paper, we extend the construction of $\Stab_{\Lambda}(\cC)$ to include a different homological structure on $\cC$, a \emph{semiorthogonal decomposition}. A semiorthogonal decomposition, as intro\-duced in \cite{B-KSerre} and written $\cC = \langle \cC_1,\ldots,\cC_n \rangle$, is a collection of full triangulated subcategories $\cC_1,\ldots, \cC_n$ that generate $\cC$ and satisfy the semiorthogonality condition: $i<j$ implies that $\Hom_{\cC}(\cC_j,\cC_i) = 0$. In many senses, the structure of $\cC$ is determined by the simpler categories $\cC_i$. For instance, one has a decomposition of $K$-groups $ \rm{K}_0(\cC) =  \rm{K}_0(\cC_1)\oplus \cdots \oplus  \rm{K}_0(\cC_n)$, and likewise for any additive invariant. Semiorthogonal decompositions are one of the main tools for studying the structure of derived categories of coherent sheaves, and the semiorthogonal factors that appear in $\DCoh(X)$ are expected to carry subtle information about the birational geometry of $X$ \cites{BondalOrlovSOD, KawamataDequivalence, Kuznetsov2010}.
 
The main contributions of this paper are the following:
\begin{enumerate}
    \item We introduce a Hausdorff partial compactification $\Astab(\cC)$ of $\Stab(\cC)/\bC$, called the space of \emph{augmented stability conditions} on $\cC$. It is based on a novel interpretation of a stability condition as a rational curve with infinitely many marked points, labeled by nonzero objects of $\cC$.

    \item We establish a general result on the existence of proper moduli spaces of semistable objects in a smooth and proper dg-category, and more generally a smooth and proper family of dg-categories over a base. The existence result depends on a certain \emph{mass-Hom} inequality, and we show that a conjecture on the structure of the boundary of $\Astab$, the \emph{manifold with corners} conjecture, implies that this inequality always holds.

    \item In the process of defining $\Astab(\cC)$, we introduce a new moduli space of marked genus $0$ curves, the space of \emph{multiscale lines} $\cA_n$, and a new homological structure on $\cC$ called a \emph{multiscale decomposition}, which lets one study mutations of semiorthogonal decompositions as a continuous (as opposed to discrete) process.
\end{enumerate}

\subsection*{Main results about \texorpdfstring{$\Astab(\cC)$}{AStab(C)}}
In order to summarize the main results about the space of augmented stability conditions, we need some terminology. We introduce the notion of a  \emph{generic} augmented stability condition in \Cref{D:terminology}, and an augmented stability condition is \emph{admissible} (\Cref{D:admissible_point}) if, roughly speaking, it can be perturbed to a generic point in any direction. Recall that $\Stab(\cC)$ admits a canonical action of the additive group $\bC$, determined by the property of preserving semistability and its action on the central charge $Z \mapsto e^\alpha Z$ for $\alpha \in \bC$.
\begin{introthm}
    For any stable dg-category $\cC$, $\Astab(\cC)$ has a natural Hausdorff topology such that $\Stab(\cC)/\bC$ embeds as an open subspace (\Cref{T:topology}). Furthermore:
    \begin{enumerate}
        \item To any generic point in $\Astab(\cC)$, one can assign an underlying semiorthogonal decomp\-osition $\cC = \langle \cC_1,\ldots,\cC_n \rangle$, a decomposition $\Lambda = \bigoplus \Lambda_i$ compatible with $ \rm{K}_0(\cC) = \bigoplus  \rm{K}_0(\cC_i)$ under $v :  \rm{K}_0(\cC) \to \Lambda$, and elements $\sigma_i \in \Stab(\cC_i)/\bC$ for $i=1,\ldots,n$; 
        \item Any admissible point in $\Astab(\cC)$ lies in the closure of $\Stab(\cC)/\bC$ (\Cref{P:admissible_points_in_closure}); and 
        \item At a generic admissible point, $\Astab(\cC)$ admits a canonical structure of a real manifold with corners (\Cref{T:genericmanifoldwithcorners}).
    \end{enumerate} 
\end{introthm}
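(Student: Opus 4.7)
The plan is to tackle the three assertions sequentially, since (1) provides the ambient topological space, (2) identifies the boundary where new behavior appears, and (3) establishes local smooth structure there. The overarching technique combines Bridgeland's deformation theorem applied to each semiorthogonal factor $\cC_i$ with an explicit gluing construction parametrized by the \emph{scale separation} between the factors.

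For claim (1), I would define $\Astab(\cC)$ as the set of pairs consisting of a multiscale decomposition and a compatible tuple of stability conditions on its subquotients. A generic point records the data $(\cC = \langle \cC_1,\ldots,\cC_n\rangle,\, \Lambda = \bigoplus \Lambda_i,\, \sigma_i \in \Stab(\cC_i)/\bC,\, [w_1,\ldots,w_n] \in \bC^n/\bC)$. The topology would be specified by a basis of neighborhoods around generic points: nearby augmented stability conditions have the same underlying semiorthogonal decomposition, nearby stability conditions $\sigma_i'$ in the Bridgeland topology on each $\Stab(\cC_i)/\bC$, and a nearby weight configuration. The image of $\Stab(\cC)/\bC$ in $\Astab(\cC)$ corresponds to trivial (single-factor) multiscale decompositions, and is open by the persistence of Bridgeland stability conditions under small deformation. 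Hausdorffness for points with different semiorthogonal decompositions follows because the decomposition can be detected from the asymptotic Harder--Narasimhan behavior, while within a fixed decomposition it follows from Bridgeland's metric on each factor.

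For claim (2), given an admissible augmented stability condition with data as above, I would construct a one-parameter family $\sigma_t \in \Stab(\cC)$ for small $t > 0$ converging to the given augmented point as $t \to 0^+$. Glue the hearts $\cA_i$ via the standard tilt associated to the semiorthogonal decomposition to produce $\cA \subset \cC$, and take a central charge $Z_t$ that restricts to a rescaling of $Z_i$ on each factor, with phases for distinct factors separated by amounts proportional to $1/t$ in the pattern dictated by the weight configuration $[w_1,\ldots,w_n]$. For $t$ small, the large phase separation among factors enforces the support property inherited from the pieces and makes $\sigma_t$ a genuine stability condition; as $t \to 0^+$, the class $[\sigma_t] \in \Stab(\cC)/\bC$ converges to the prescribed augmented stability condition in the topology from (1).

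For claim (3), at a generic admissible point I would construct a chart of the form $U \cong V \times [0,\infty)^{n-1}$. The factor $V$ encodes independent deformations of each $\sigma_i$ inside $\Stab(\cC_i)/\bC$, together with $2(n-1)$ real parameters for the relative positions of the weights modulo $\bC$; the $n-1$ corner coordinates would be reciprocals of the pairwise separations of the weights, so that approaching a corner corresponds to bringing a pair of factors to finite separation. The main obstacle is to verify that this chart is a homeomorphism onto an open neighborhood of the boundary point, which requires a \emph{uniform} support property along the entire gluing family from (2): one must show that every nearby augmented point arises uniquely from such a construction, and that Harder--Narasimhan filtrations of nearby objects vary continuously as one approaches the boundary. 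I expect this uniform control to constitute the bulk of the technical work.
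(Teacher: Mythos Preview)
Your outline captures the right high-level picture for (2) and (3), but there are genuine gaps in (1), and the missing ingredients propagate into the other parts.

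For (1), defining a basis of neighborhoods only at \emph{generic} points does not yield a topology on all of $\Astab(\cC)$, and the paper's definition is substantially more intricate. The key technical device you are missing is the \emph{log-central-charge function} $\ell_\sigma(E) = \log m_\sigma(E) + i\pi\phi_\sigma(E)$, which extends $\log Z_\sigma(E)$ continuously from the semistable locus to all nonzero objects. The paper then constructs an auxiliary moduli space $\rmscbar_N$ of $N$-marked multiscale lines (a real oriented blowup of a smooth compactification of $\bC^N/\bC$) and declares the maps $\sigma \mapsto (\Sigma, \ell_\sigma(E_1),\ldots,\ell_\sigma(E_N))$ continuous for suitable tuples of objects; this gives a ``weak topology,'' which is then strengthened by imposing an explicit directed-distance condition $\vec{d}(\sigma,\tau)\to 0$ and a stability-lifting condition on convergent nets. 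Your proposed neighborhood basis is roughly what this produces near a generic boundary point, but the global definition and the verification that it really is a topology (via Kelley's convergence-class axioms) require these tools.

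More seriously, your Hausdorffness argument is not a proof. Saying the semiorthogonal decomposition ``can be detected from the asymptotic Harder--Narasimhan behavior'' is the statement to be proved, not its justification. The paper's mechanism is an explicit reconstruction theorem: from a convergent net one recovers the limiting multiscale line as the limit of $(\bP^1,\ell_\alpha(Q_1),\ldots,\ell_\alpha(Q_n))$ in $\rmscbar_n$ for a maximal set of \emph{almost semistable} objects $Q_i$ (those with $\lim_\alpha c^t_\alpha(Q_i)=1$), and recovers each subcategory $\cC_{\leq v}$ as generated by the almost semistable objects with the correct dominant vertex. This rests on nontrivial mass estimates (a filtration inequality refining the triangle inequality for $m^t$) showing that the combined HN filtration is asymptotically additive.

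For (3), your chart $V\times[0,\infty)^{n-1}$ is morally right, but the paper realizes it concretely as the map $\ell_{(-)}(P_1,\ldots,P_n):U\to\rmscbar_n$ for a basis $P_1,\ldots,P_n$ of stable objects, and the manifold-with-corners structure is inherited from $\rmscbar_n$. Your ``uniform support property'' concern is handled by showing (via Collins--Polishchuk strong gluing) that every nearby point is $\epsilon$-glued from $\sigma$, and that the central charges then agree, forcing equality by the standard $d(\cP,\cP')<1$ argument.
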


The main open problem about $\Astab(\cC)$ is to show that (3) holds at any admissible point. This is a much more subtle issue, and we will formulate it precisely as the manifold-with-corners conjecture, \Cref{conj:manifold_with_corners_simplified}, after describing augmented stability conditions in more detail. It provides conjectural coordinates in a ``neighborhood of $\infty$'' on the quotient $\Stab(\cC) / \bC$. In examples, it sheds light on the global structure of the stability manifold.

\subsection*{An unexpected connection to moduli spaces}

In algebraic geometry, one can sometimes use a stability condition on the derived category of coherent sheaves $\cC = \DCoh(X)$ on a projective variety $X$ to construct moduli spaces of semistable complexes on $X$. These moduli spaces exhibit interesting wall-crossing behavior as one varies the stability condition, and have been very useful for studying the birational geometry of hyperk\"{a}hler manifolds \cites{BMMMPK3,ProjbiratBM}. Constructions of these moduli spaces currently only work on a case-by-case basis, but our investigation of the boundary of $\Stab(\cC)/\bC$ has led to different approach that works more generally.

We discovered that \Cref{conj:manifold_with_corners_simplified} implies a boundedness result, formulated as \Cref{conj:boundedness} in the body of the paper. The latter states that for any stability condition on a smooth and proper stable dg-category $\cC$ and any object $G \in \cC$, there is a constant $c_G>0$ such that $\dim \Hom(G,E) \leq c_G\cdot m(E)$ for all $E \in \cC$. Surprisingly, this bound is enough to establish the existence of moduli spaces:

\begin{introthm}[=\Cref{T:moduli_spaces}, simplified]
    If \Cref{conj:boundedness} holds for a smooth and proper stable dg-category $\cC$, then for any $\sigma \in \Stab(\cC)$, the stack parameterizing families of semi\-stable $E \in \cC$ with a fixed $\ch(E) \in \Lambda$ is algebraic and admits a proper good moduli space.
\end{introthm}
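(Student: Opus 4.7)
The plan is to apply the general existence criterion for good moduli spaces of Alper--Halpern-Leistner--Heinloth, which asserts that an algebraic stack of finite type over a field admits a proper good moduli space provided it is $\Theta$-reductive and $S$-complete. Accordingly, I would split the proof into three tasks: (i) establishing algebraicity and finite-typeness of the stack $\mathfrak{M}^{\sigma\text{-ss}}_{v}$ of $\sigma$-semistable objects of class $v \in \Lambda$; (ii) verifying $\Theta$-reductivity; and (iii) verifying $S$-completeness. Properness of the resulting good moduli space would then be checked via the valuative criterion, using the boundedness already established in (i).

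For (i), I would start from the moduli stack $\mathfrak{M}(\cC)$ of pseudo-perfect dg-modules over $\cC$, which is locally geometric and locally of finite presentation by To\"en--Vaqui\'e. Openness of the semistable locus would follow from upper semicontinuity of the masses of Harder--Narasimhan factors in flat families. Finite-typeness is where \Cref{conj:boundedness} enters: fixing a classical generator $G$ of $\cC$, the bound $\dim \Hom(G, E[k]) \le c_G \cdot m_\sigma(v)$ applies to every shift of any semistable $E$ of class $v$, while the support property of $\sigma$ restricts the range of $k$ for which $\Hom(G, E[k])$ can be nonzero. Together these give a uniform bound on $\dim \RHom(G, E)$, which translates into boundedness of the parameter spaces carving out semistable objects of class $v$ inside $\mathfrak{M}(\cC)$.

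For (ii) and (iii), the categorical inputs I would rely on are: (a) the existence of finite Jordan--H\"older filtrations of $\sigma$-semistable objects of fixed phase into $\sigma$-stable factors; and (b) noetherianity and artinianity of the heart of $\sigma$ on objects of bounded mass. Combined with the fact that $\cC$ is smooth and proper (hence $\Hom$-finite with Serre duality), these should let me show that a $1$-parameter degeneration of semistable families has a semistable limit, $S$-equivalent to the associated graded of its Jordan--H\"older filtration. $\Theta$-reductivity would then reduce to extending a weighted filtration of a generic fiber across the closed point, and $S$-completeness to gluing two such limits along their common generic fiber; both should be handled by direct dg-categorical constructions.

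The hard part will be (iii), $S$-completeness, in the generality of an arbitrary smooth and proper dg-category. One must produce the interpolating module between two limits of a $1$-parameter family purely from the triangulated/dg structure, without appeal to sheaf-theoretic constructions. I expect to address this by taking a common refinement of Jordan--H\"older filtrations of the two limits and extracting the required interpolation from the associated graded, verifying the universal property via a cofiber-sequence argument in the dg-enhancement. Once (ii) and (iii) are in place, properness of the good moduli space follows by combining the valuative criterion for good moduli spaces with the boundedness from (i).
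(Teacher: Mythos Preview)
Your overall strategy---apply the Alper--Halpern-Leistner--Heinloth criterion after establishing algebraicity and boundedness---matches the paper's, and your boundedness argument (map to $\Perf$ via $\RHom(G,-)$ and bound the image using the mass-Hom inequality together with the phase bounds coming from the support property) is essentially what the paper does for the implication $(1)\Rightarrow(2)$ of \Cref{T:moduli_spaces}.

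However, you misidentify the hard part. The paper singles out as its main technical contribution not $S$-completeness but the \emph{openness of the semistable locus} (equivalently, the ``generic flatness'' property for the heart). Your claim that openness ``would follow from upper semicontinuity of the masses of Harder--Narasimhan factors in flat families'' is too quick: semicontinuity of $\phi^{\pm}$ (which the paper establishes directly from base change of its sluicing machinery, \Cref{L:phase_semicontinuity}) only shows that the semistable locus is closed under generization, not that it is constructible. To get openness the paper runs a flag-stack argument (\Cref{P:generic_flatness}): over a finite-type base $T$ one forms the stack $\Flag(\cE)$ of filtered points, uses the boundedness hypothesis to show that the locus of HN filtrations inside it is bounded, and then applies Chevalley's theorem to the forgetful map $\Flag(\cE)\to T$ to conclude that the set where $\phi^+$ attains its maximum is constructible, hence closed. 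Thus the mass-Hom bound enters twice: once for finite-typeness, as you say, and once---less obviously---for openness.

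By contrast, $\Theta$-reductivity and $S$-completeness (\Cref{L:Theta_reductivity_S_completeness}) are handled by the standard saturation constructions: one extends a generic filtration by $E\cap E_i$ in the heart of an ind-noetherian $t$-structure, and one interpolates two limits over a DVR by the graded module $\bigoplus_n (E_1\cap \pi^{-n}E_2)\,t^n$. These work because the paper's sluicing machinery (\Cref{P:base_change_sluicing}) supplies noetherian $t$-structures on $\cC_R$ for any essentially-finite-type DVR $R$. Your proposed route via a ``common refinement of Jordan--H\"older filtrations of the two limits'' is not the right construction: the two limits live over the residue field and do not admit a direct common refinement; one really needs the ambient noetherian heart over $R$ to form the intersections. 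Finally, the valuative criterion for universal closedness in the paper is not deduced from boundedness alone but goes through the existence of a $\Theta$-stratification by HN type on $\cM^{(\varphi,\varphi+1]}$ and an appeal to the semistable reduction theorem.
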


In fact, the existence of moduli spaces is equivalent to a similar but slightly weaker bound on $\dim \Hom(G,E)$. In order to give the precise formulation in \Cref{T:moduli_spaces}, we introduce in \Cref{D:locally_constant_stability_condition} the notion of a locally constant family of stability conditions on a smooth and proper stable dg-category $\cC$ over a Nagata ring $k$. 

\begin{rem}
    Our definition of locally constant families of stability conditions is inspired by that of \cite{families}, but we feel it is more streamlined for the purpose of establishing existence of moduli spaces. We omit the existence of HN filtrations from the definition of a slicing, resulting in a structure, which we call a sluicing (\Cref{D:sluicing}), that has convenient base change properties. HN filtrations still exist, but are a consequence of the support property.
\end{rem}

\subsection*{Multi-scale decompositions}

To refine the notion of a semiorthogonal decomposition of $\cC$, we introduce a new homological structure called a \emph{multi-scale decomposition}. This is our main conceptual innovation. We denote a multi-scale decomposition $\cC = \langle \cC_\bullet \rangle_{\Sigma}$, where: 
\begin{itemize}
    \item $\Sigma$ is an object that we call a \emph{multi-scale line}, which is a compact connected arithmetic genus $0$ nodal curve equipped with the following data: 1) a smooth marked point $p_\infty$, which gives its dual graph $\Gamma(\Sigma)$ the structure of a rooted tree; 2) a total preorder $\preceq$ on the set of irreducible components of $\Sigma$; and 3) for each irreducible $\Sigma_v \subset \Sigma$, a meromorphic differential $\omega_v$ on $\Sigma_v$ having a pole of order $2$ at the node closest to $p_\infty$, and no other zeros or poles. We call the vertices of $\Gamma(\Sigma)$ that are farthest from $p_\infty$ \emph{terminal}. 

    \item $\cC_\bullet$ denotes a collection of full thick triangulated subcategories $\cC_{\leq v} \subset \cC$, indexed by the terminal vertices of $\Gamma(\Sigma)$. The categories $\cC_{\le v}$ satisfy axioms, which include generating $\cC$ and semiorthogonality conditions.
\end{itemize}

We refer the reader to \Cref{D:multi-scaleline} and \Cref{D:multi-scale_decomposition} for the full set of axioms that these data must satisfy. Instead, we will explain here the simplest non-trivial case:

\begin{ex}\label{E:basic_decomp}
Suppose all components of $\Sigma$ are terminal except for the root component $\Sigma_r \subset \Sigma$, i.e. the one containing $p_\infty$ --- see \Cref{F:2level} for a visualization. One can choose an isomorphism of marked curves with meromorphic differential $(\Sigma_r,p_\infty,\omega_r) \cong (\bP^1,\infty,dz)$, unique up to the action of $\bC$ by trans\-lation on the latter. Under this isomorphism, the set of nodes in $\Sigma_r \setminus \{p_\infty\}$ corresponds to a configuration of distinct complex numbers $n_1,\ldots,n_k \in \bC$. This configuration, taken up to simultaneous translation and scaling by a positive real number, uniquely determines the multi-scale line $\Sigma$ up to a notion of equivalence called \emph{real oriented isomorphism}.

The categorical data consist of thick triangulated subcategories $\cC_{\leq v_i} \subset \cC$, where $v_i$ is the vertex corresponding to the component that connects to $\Sigma_r$ at $n_i$. In this case the axioms state that the $\cC_{\le v_i}$ together generate $\cC$ as a triangulated category, that $\cC_{\le v_i}\subset \cC_{\le v_j}$ whenever $n_j-n_i \in \bR_{>0} \subset \bC$, and that $\Hom_{\cC}(\cC_{\le v_j},\cC_{\le v_i}) = 0$ whenever the imaginary part $\Im(n_j-n_i)>0$. This structure is quite similar to the filtered semiorthogonal decompositions obtained in \cite{quasiconvergence}*{Thm. 2.29}, except now with the additional data of the configuration $(n_1,\ldots,n_k)$.

In the extreme case where $n_i-n_j \in \bR, \forall i,j$, after reindexing so that $n_i$ is monotone increasing, we obtain a filtration $\cC_{\le v_1}\subsetneq \cdots \subsetneq \cC_{\le v_k} = \cC$, and we say $\gr_{v_i}(\cC_\bullet) = \cC_{\leq v_i} / \cC_{\leq v_{i-1}}$. In the \emph{generic} case, where $n_i - n_j \notin \bR$ for any $i \neq j$, after reindexing so that $\Im(n_i)$ is monotone increasing, we obtain a semiorthogonal decomposition $\cC = \langle \cC_{\le v_1},\ldots, \cC_{\le v_k}\rangle$. In this case we say $\gr_{v_i}(\cC_\bullet) = \cC_{\leq v_i}$.
\end{ex}

Returning to a generic multi-scale decomposition $\cC = \langle \cC_\bullet \rangle_\Sigma$, for any terminal vertex $v \in \Gamma(\Sigma)$, we let $\gr_v(\cC_\bullet)$ denote the Drinfeld--Verdier quotient $\cC_{\leq v} / \cC_{<v}$, where $\cC_{<v}$ is the thick triangulated subcategory generated by all $\cC_{\leq u}$ that are contained in $\cC_{\leq v}$. The definition of a multi-scale decomposition in \Cref{D:multi-scale_decomposition} is the minimal structure that generalizes \Cref{E:basic_decomp} and has a recursive nature: given a multi-scale decomposition $\cC = \langle \cC_\bullet \rangle_\Sigma$ and additional multi-scale decomp\-ositions on $\gr_v(\cC_\bullet)$ for each terminal vertex $v \in \Gamma(\Sigma)$, one can extend this to a new multi-scale decomposition $\cC = \langle \cC'_\bullet \rangle_{\Sigma'}$ such that $\langle \cC_\bullet \rangle_\Sigma$ is a \emph{coarsening} of $\langle \cC'_\bullet \rangle_{\Sigma'}$ in the sense of \Cref{D:multi-scaleSODcoarsening}.

\subsection*{Augmented stability conditions}

Given $\sigma \in \Stab(\cC)$ and a $\sigma$-semistable object $E$ of phase $\phi\in \bR$, there is a canonically defined logarithm of $Z(E)$, given by 
\[ 
    \logZ(E) := \log \lvert Z(E)\rvert + i\pi \phi,
\] 
which is holomorphic on its domain of definition. A key idea in this paper is that there is a natural continuous extension of $\logZ(E)$ to all of $\Stab(\cC)$, which we denote by $\ell(E)$. For $\sigma \in \Stab(\cC)$,
\[
    \ell_\sigma(E) := \log m_\sigma(E) + i\pi \phi_\sigma(E),
\]
where $m_\sigma(E)$ is the mass of $E$ with respect to $\sigma$ and $\phi_\sigma(E)$ is the \emph{average phase} of the semistable factors of $E$ (see \Cref{S:background}). Furthermore, given $z\in \bC$, $\ell_{z\cdot \sigma}(E) = \ell_\sigma(E) + z$ so that for any nonzero $E,F\in \cC$, $\ell(E) - \ell(F)$ descends to a continuous function on $\Stab(\cC)/\bC$.

By \Cref{P:logZ_functions}, the stability condition $\sigma$ is uniquely determined by the function $\ell_{\sigma} : \cC \setminus 0 \to \bC = \bP^1 \setminus \infty$, and two stability conditions lie in the same $\bC$-orbit if and only if the corresponding functions differ by an automorphism of $(\bP^1,\infty)$ that preserves $dz$. Thus, we arrive at a different description of the space $\Stab(\cC) / \bC$ as parameterizing smooth genus $0$ curves with meromorphic differential $(\Sigma, p_\infty, \omega)$ along with infinitely many (possibly overlapping) marked points labeled by isomorphism classes of nonzero objects of $\cC$.

\begin{defn} (\Cref{D:generalized_stability_condition})
An \emph{augmented stability condition}, written $\langle \cC_\bullet|\sigma_\bullet\rangle_\Sigma$, is a multi-scale decomposition $\langle \cC_\bullet\rangle_\Sigma$ equipped with marking functions $\ell_v : \gr_v(\cC_\bullet) \setminus 0 \to \Sigma_v \setminus \{\infty\}$ that correspond to an element $\sigma_v \in \Stab(\gr_v(\cC_\bullet)) / \bC$ for each terminal $v \in \Gamma(\Sigma)$.    
\end{defn}

This generalizes the perspective above. When $(\Sigma,p_\infty,\omega) \cong (\bP^1, \infty, dz)$, an augmented stability condition is nothing more than an element of $\Stab(\cC)/\bC$, giving an inclusion $\Stab(\cC)/\bC \hookrightarrow \Astab(\cC)$. More generally, an augmented stability condition $\langle \cC_\bullet | \ell_\bullet \rangle_{\Sigma}$ is completely determined by a multi-scale line $\Sigma$ and the marking function
\[
\ell : \bigsqcup_v (\cC_{\leq v} \setminus \cC_{<v}) \to \Sigma,
\]
which assigns $E \in \cC_{\leq v} \setminus \cC_{<v}$ to $\ell_v(E)$, regarding $E$ as an element in the quotient $\gr_v(\cC_\bullet)$. The disjoint union is over terminal $v \in \Gamma(\Sigma)$. More economically, the augmented stability condition is determined by the marking function only on the subset of stable objects, which are objects in some $\cC_{\leq v}$ whose image in $\gr_v(\cC_\bullet)$ is nonzero and stable (\Cref{D:terminology}).

\begin{figure}
\label{fig:logZconvergence}
    \begin{center}
\scalebox{0.75}{
\begin{tikzpicture}[>={Stealth}, scale=1]

\fill[gray!10] (0,0) rectangle (10,6);

\draw[thick] (0,0) rectangle (10,6);

\draw[blue, thick] (2,4.5) circle (1);
\fill (1.8,4.6) circle (2pt);
\fill (2.2,4.4) circle (2pt);
\node[blue, right=4pt] at (3,4.5) {\(\sim \alpha_1 t\)};  

\draw[->, blue, thick]
  (2,4.5) ++(135:1) -- ++(135:0.8);

\draw[red, thick] (7.5,4.5) circle (1);
\fill (7.3,4.7) circle (2pt);
\fill (7.6,4.3) circle (2pt);
\fill (7.9,4.6) circle (2pt);
\node[red, right=4pt] at (8.5,4.5) {\(\sim \alpha_2 t\)};

\draw[->, red, thick]
  (7.5,4.5) ++(45:1) -- ++(45:0.8);

\draw[green!60!black, thick] (5,1.5) circle (1);
\fill (4.7,1.4) circle (2pt);
\fill (5.1,1.6) circle (2pt);
\fill (4.9,1.2) circle (2pt);
\fill (5.3,1.3) circle (2pt);
\node[green!60!black, right=4pt] at (6,1.5) {\(\sim \alpha_3 t\)};

\draw[->, green!60!black, thick]
  (5,1.5) ++(270:1) -- ++(270:0.4);

\node[anchor=south east] at (10,0) {\(\mathbb{C}\)};

\draw[->, thick, decorate, decoration={snake, amplitude=1mm, segment length=7mm}]
  (10.5,3) -- (13,3);
\node[above=6pt] at (11.75,3) {\(\displaystyle t \to \infty\)};

\begin{scope}[shift={(16.5,3.72)}, scale=1.2]

\shade[ball color=gray!10] (0,0) circle (1.44);

\fill (0,1.44) circle (2pt);
\node[above] at (0,1.44) {\(\displaystyle p_\infty\)};

\shade[ball color=green!20] (0,-2.16) circle (0.72);
\fill (-0.3,-2.3) circle (2pt);
\fill (0.1,-2.1) circle (2pt);
\fill (-.1,-2.5) circle (2pt);
\fill (.3,-2.4) circle (2pt);

\fill[green!40!black] (0,-1.44) circle (2pt);
\node[green!40!black, above = 2pt] at (0.05,-1.44) {\(\alpha_3\)};

\shade[ball color=blue!20] (-1.66,-1.27) circle (0.72);
\fill (-1.9,-1.1) circle (2pt);
\fill (-1.5,-1.3) circle (2pt);

\fill[blue] (-1.15,-0.76) circle (2pt);
\node[blue, above=2pt] at (-1.15,-0.76) {\(\alpha_1\)};

\shade[ball color=red!20] (1.66,-1.27) circle (0.72);
\fill (1.5,-1.1) circle (2pt);
\fill (1.8,-1.5) circle (2pt);
\fill (2.1,-1.2) circle (2pt);

\fill[red] (1.15,-0.76) circle (2pt);
\node[red, above=2pt] at (1.15,-0.76) {\(\alpha_2\)};

\end{scope}

\end{tikzpicture}
}
\end{center}
    \caption{Consider a path $\sigma_t$ in $\Stab(\cC)/\bC$ and $n$ objects $E_1,\ldots, E_n$ such that $v(E_1),\ldots, v(E_n)$ is a basis for $\Lambda_{\bQ}$. The marked points in each cluster on the left represent logarithms of central charges of objects $E_j$, grouped according to their approximate growth rates $\alpha_i t$ for $\alpha_i \in \bC$. As $t\to\infty$, the limit of these configurations is the point in $\cA_n^{\bR}$ depicted on the right side of the diagram. The information of the relative displacement between points within a cluster is preserved in the limit.}
\end{figure}

\subsection*{The topology}

The idea for defining a topology on $\Astab(\cC)$ is to describe how $(\bP^1,\infty,dz)$ degenerates in a space of ``infinitely marked'' multi-scale lines. To make this rigorous, we construct a moduli space $\mscbar_n$ of stable $n$-marked multi-scale lines up to a notion of \emph{complex projective isomorphism}, inspired by \cite{BCGGM}. We prove as \Cref{T:spaceconstruction} that $\mscbar_n$ is a smooth and proper complex variety which is a simple normal crossings compactification of $\bC^n/\bC$. 

The real oriented blowup $\rmscbar_n$ of $\mscbar_n$ along the simple normal crossings boundary divisor obtained as the complement of $\bC^n/\bC \hookrightarrow \cA_n$ has an interpretation as a moduli space of stable $n$-marked multi-scale lines up to a stricter notion of equivalence, called \emph{real oriented isomorphism}. As we will see, the manifold with corners $\rmscbar_n$ plays a role in our theory analogous to the vector space $\Hom(\Lambda,\bC)$.

For any multi-scale decomposition $\cC = \langle \cC_\bullet \rangle_\Sigma$, we let $U(\langle \cC_\bullet \rangle_\Sigma) \subset \Astab(\cC)$ denote the augmented stability conditions whose underlying multi-scale decomposition is a coarsening of $\langle \cC_\bullet \rangle_\Sigma$  --- see \Cref{D:multi-scaleSODcoarsening}. Now consider a collection $E_1,\ldots,E_N \in \bigsqcup_v (\cC_{\leq v} \setminus \cC_{<v})$ such that for every terminal $v \in \Gamma(\Sigma)$, at least one $E_i \in \cC_{\leq v} \setminus \cC_{<v}$. For any $\sigma \in U(\langle \cC_\bullet \rangle_\Sigma)$, the marking function $\ell_\sigma$ for $\sigma$ allows one to define a stable $N$-marked multi-scale line $\ell_\sigma (E_1,\ldots,E_N):=(\Sigma, \ell_\sigma(E_1),\ldots,\ell_\sigma(E_N))$. The assignment $\sigma \mapsto \ell_\sigma(E_1,\ldots,E_N)$ defines a function
\begin{equation} \label{E:ell_function}
    \ell_{(-)}(E_1,\ldots,E_N) : U(\langle \cC_\bullet \rangle_\Sigma) \to \rmscbar_N.
\end{equation}
These locally defined maps are analogous to the forgetful map $\Stab(\cC) \to \Hom(\Lambda,\bC)$ in the original theory of stability conditions.

In \Cref{D:weak_topology}, we introduce the \emph{weak topology} on $\Astab(\cC)$, which is roughly the weakest topology such that the subsets $U(\langle \cC_\bullet \rangle_\Sigma)$ are open and the functions \eqref{E:ell_function} are conti\-nuous. Heuristically, convergence of a net (e.g. a path) in $\Stab(\cC)/\bC$ to a limit-point in $\Astab(\cC)$ is modeled on the convergence of a configuration of points in $\bC^n/\bC$ in $\cA_n^{\bR}$ as depicted in Figure \ref{fig:logZconvergence}. The precise definition requires concepts that we postpone to the body of the paper --- \eqref{E:ell_function} is continuous for any of the marking functions $\ell^t$, rather than just $\ell = \ell^0$, and for $t$-well-placed objects $E_i$, as opposed to $E_i \in \cC_{\leq v}$. In \Cref{T:unique_limit}, we give simplified criteria for when a net in $\Stab(\cC)/\bC$ converges in $\Astab(\cC)$ with its weak topology.

Although in certain examples satisfying strong finiteness properties, such as when $\cC$ is the bounded derived category of finite dimensional representations of an ADE quiver, the weak topology is expected to recover the quotient topology on $\Stab(\cC)/\bC$, this is not true in general.

To remedy this, in \Cref{T:topology} we define a stronger topology on $\Astab(\cC)$ by specifying exactly which nets are convergent. \Cref{D:directeddistance} introduces a directed distance function $\Vec{d}(\sigma,\tau)$, which is defined whenever $\tau$ is a coarsening of $\sigma$, and which generalizes Bridgeland's metric \cite{Br07}*{Prop. 8.1} on $\Stab(\cC)$ ---  see \Cref{R:localmetric}. The topology in \Cref{T:topology} is described as follows:

\begin{defn}[Convergence]\label{D:convergence_intro}
A net $\{\sigma_\alpha\}_{\alpha \in I}$ in $\Astab(\cC)$ converges to $\sigma \in \Astab(\cC)$ precisely if $\lim_{\alpha \to \infty} \Vec{d}(\sigma,\sigma_\alpha) = 0$, it converges in the weak topology, and for all $\sigma$-stable $E \in \cC_{\leq v}$ and $\alpha$ sufficiently large, $E$ is equivalent modulo $\cC_{<v}$ to a $\sigma_\alpha$-stable object.    
\end{defn}

This topology is Hausdorff and recovers the usual topology of $\Stab(\cC)/\bC$ on the interior. The Hausdorffness is a subtle issue. For instance, for a net in $\Stab(\cC)/\bC$ converging to a boundary point $\sigma$, it requires a prescription for constructing all of the data of $\sigma$, including the underlying multi-scale decomposition, from the net alone.


\begin{rem}
    We were led to \Cref{D:convergence_intro} 
    by the following design principle: the topology should be as weak as possible, should be Hausdorff, should recover the usual topology on the interior, and should allow us to prove the manifold-with-corners conjecture at generic boundary points, as we discuss below. It might be that a stronger topology is required to prove the full manifold-with-corners conjecture.
\end{rem}

\subsection*{Manifold-with-corners conjecture}

\begin{introconj}[Manifold-with-corners, simple version]\label{conj:manifold_with_corners_simplified}
Let $\sigma = \langle \cC_\bullet | \sigma_\bullet \rangle_\Sigma$ be an augmented stability condition on a smooth and proper stable dg-category $\cC$ over a field, and suppose that for any pair of terminal vertices $u,v \in \Gamma(\Sigma)$ with $\cC_{\leq u} \subseteq \cC_{\leq v}$, this inclusion functor admits either a right or left adjoint. Then for any collection $E_1,\ldots,E_n \in \cC$ of $\sigma$-stable objects such that $\ch(E_i)$ form a basis for $\Lambda_\bQ$, the log-central-charge map \eqref{E:ell_function} induces a homeomorphism between an open neighborhood of $\sigma$ and an open subset of $\rmscbar_n$.
\end{introconj}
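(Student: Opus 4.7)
The plan is to construct a local inverse to the map $\ell_{(-)}(E_1,\ldots,E_n)$ near $\sigma$, combining Bridgeland's local homeomorphism theorem on each graded piece $\gr_v(\cC_\bullet)$ with a gluing procedure that uses the adjoint hypothesis to reassemble stability conditions on subcategories when nodes of $\Sigma$ are smoothed.

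First I would argue injectivity on a small neighborhood of $\sigma$. Given two nearby augmented stability conditions $\tau, \tau'$ with $\ell_\tau(E_i) = \ell_{\tau'}(E_i)$ for all $i$, closeness to $\sigma$ forces their underlying multiscale lines to be close to $\Sigma$, and since the $\ell_\sigma(E_i)$ together with (a finite auxiliary collection of) other $\sigma$-stable objects separate points of $\rmscbar_N$ in a neighborhood of $\Sigma$ (as real-oriented curves with marked points), the two multiscale lines must coincide. On each graded piece, the log-central charges of those $E_i$ lying in it pull back to a $\bQ$-basis of the corresponding summand of $\Lambda$, so Bridgeland's local homeomorphism theorem applied to $\gr_v(\cC_\bullet)$ forces the stability conditions to agree piece by piece.

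The surjectivity onto an open set is the main content. Given a point $(\Sigma', p_1', \ldots, p_n')$ in $\rmscbar_n$ near $(\Sigma, \ell_\sigma(E_1), \ldots, \ell_\sigma(E_n))$, I would build a candidate $\tau$ as follows. Each terminal vertex $w$ of $\Gamma(\Sigma')$ either deforms a single terminal vertex $v$ of $\Gamma(\Sigma)$, or arises as the merger of a chain of terminal vertices $u_1, \ldots, u_k$ of $\Gamma(\Sigma)$ whose connecting nodes have been smoothed. In the first case one perturbs $\sigma_v$ via Bridgeland's local homeomorphism, matching the displacements of marked points on $\Sigma'_w$. In the second case the adjoint hypothesis provides, for each inclusion $\cC_{\leq u_i} \subseteq \cC_{\leq u_{i+1}}$ in the chain, a semiorthogonal complement; assembling these gives an iterated semiorthogonal decomposition of the merged category. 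One then glues the $\sigma_{u_i}$ along the configuration recording the relative positions of the smoothed nodes on the new component $\Sigma'_w$, exactly in the manner of \Cref{E:basic_decomp}, producing a stability condition on $\gr_w(\cC'_\bullet)$ whose log-central charges realize the prescribed markings.

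The main obstacles I anticipate are (i) verifying the support property for the glued stability conditions, and (ii) verifying continuity in the topology of \Cref{D:convergence_intro}. For (i), on a sufficiently small neighborhood the support property for each $\sigma_{u_i}$ should propagate to the glued stability condition through a quantitative mass estimate, using smoothness and properness of $\cC$ to bound $\Hom$-dimensions across the semiorthogonal factors; this is precisely where the connection to \Cref{conj:boundedness} enters, and I expect it to be the hardest step. For (ii), weak-topology continuity is built into the construction, so the remaining content is that the directed distance from $\sigma$ to the constructed $\tau$ vanishes as the marked points converge to $\ell_\sigma(E_i)$, and that $\sigma$-stable objects deform to $\tau$-stable objects. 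The latter should follow from an openness-of-stability argument inside each graded piece applied to the explicit gluing, and together these ingredients would show that the candidate map is inverse to $\ell_{(-)}(E_1,\ldots,E_n)$ on a small neighborhood, establishing the local homeomorphism.
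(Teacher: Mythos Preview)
The statement you are attempting to prove is labeled as a \emph{conjecture} in the paper, not a theorem; the paper does not prove it in full. What the paper does establish is the special case where $\sigma$ is \emph{generic}, meaning $\Im(\mathfrak{p}(u,v)) \neq 0$ for all distinct terminal $u,v$ (\Cref{T:genericmanifoldwithcorners}). Your proposed approach---perturb on each graded piece via Bridgeland's local homeomorphism, choose semiorthogonal complements via the adjoint hypothesis, and glue \`a la Collins--Polishchuk---is essentially the argument the paper carries out in that generic case (\Cref{Const:gluing}, \Cref{L:gluedexist}, \Cref{L:allglued}).

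The gap in your proposal, and the reason the full conjecture remains open, lies in the gluing step at non-generic points. Suppose $\mathfrak{p}(u_1,u_2)=1$ for some terminal $u_1,u_2$, so $\cC_{\leq u_1} \subsetneq \cC_{\leq u_2}$ is a proper filtration rather than a semiorthogonal pair. Nearby points in $\rmscbar_n$ include configurations where the node between these components is smoothed but the two clusters of marked points have nearly equal imaginary parts. The gluing theorem of \cite{CP10} (and its strong version used in the paper) requires large phase separation---concretely, $\Im(c_j - c_i)\gg 0$ between centroids---to force the $\Hom$-vanishing needed to assemble a slicing; see the role of the parameter $L$ in the proof of \Cref{L:gluedexist}. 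Near the $\mathfrak{p}=1$ locus this separation is arbitrarily small, so the gluing hypothesis fails. The paper is explicit about this: ``\Cref{conj:manifoldwithcorners} suggests that there is a missing gluing construction that works in a neighborhood of non-generic but admissible boundary points.'' Your sentence ``glues the $\sigma_{u_i}$ along the configuration recording the relative positions of the smoothed nodes'' presupposes exactly such a construction; producing it is the open problem. Separately, the support-property step you flag as ``the hardest step,'' invoking smoothness and properness to bound $\Hom$-dimensions, is essentially \Cref{conj:boundedness}, which the paper also leaves open---and in fact shows (\Cref{P:manifold_with_corners_implies_boundedness}) is \emph{implied} by \Cref{conj:manifold_with_corners_simplified}, so it cannot be assumed as input without circularity.
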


In the body of the paper we split \Cref{conj:manifold_with_corners_simplified} into two conjectures. The first is the boundedness conjecture, \Cref{conj:boundedness}, discussed above.

The second conjecture is a generalization of \Cref{conj:manifold_with_corners_simplified}. We introduce a technical condition in \Cref{D:admissible_point}, admissibility of a point $\sigma \in \Astab(\cC)$. \Cref{conj:manifoldwithcorners} states that this is precisely the condition under which $\ell_{(-)}(E_1,\ldots,E_n)$ is a local homeo\-morphism at $\sigma$, even when $\cC$ is not smooth and proper. \Cref{conj:boundedness} implies that if $\cC$ is smooth and proper, then a point $\langle \cC_\bullet | \sigma_\bullet \rangle_\Sigma \in \Astab(\cC)$ is admissible if and only if for any terminal $u$ and $v$ with $\cC_{\leq u} \subseteq \cC_{\leq v}$, the inclusion admits either a left or right adjoint (\Cref{P:mass_Hom_bound_implies_admissibility}). Therefore, \Cref{conj:boundedness} and \Cref{conj:manifoldwithcorners} together imply \Cref{conj:manifold_with_corners_simplified}.

We establish a partial result towards \Cref{conj:manifoldwithcorners}. A point $\langle \cC_\bullet | \sigma_\bullet \rangle_\Sigma \in \Astab(\cC)$ is \emph{generic} if there is an ordering of the terminal vertices of $\Gamma(\Sigma)$ such that one has a semiorthogonal decomp\-osition $\cC = \langle \cC_{\leq v_1},\ldots,\cC_{\leq v_n} \rangle$ --- see \Cref{D:terminology}.

\begin{introthm}[=\Cref{T:genericmanifoldwithcorners}] The manifold-with-corners conjecture, \Cref{conj:manifoldwithcorners}, holds at every admissible point $\sigma \in \Astab(\cC)$ that is also generic.
\end{introthm}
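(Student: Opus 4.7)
The plan is to construct an explicit local inverse to $\ell_{(-)}(E_1,\ldots,E_n)$ at a generic admissible point $\sigma = \langle \cC_\bullet | \sigma_\bullet \rangle_\Sigma$, exploiting the product structure that genericity imposes on both source and target. By genericity, we have a semiorthogonal decomposition $\cC = \langle \cC_{\le v_1}, \ldots, \cC_{\le v_n}\rangle$ and an induced decomposition $\Lambda = \bigoplus_v \Lambda_v$. Partition $\{E_i\} = \bigsqcup_v \{E_{v,j}\}_{j=1}^{d_v}$ with $E_{v,j} \in \cC_{\le v}\setminus \cC_{<v}$; the images in each $\gr_v(\cC_\bullet)$ are $\sigma_v$-stable with classes spanning $\Lambda_v \otimes \bQ$. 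On the target, \Cref{T:spaceconstruction} identifies $\rmscbar_n$ as the real oriented blowup of $\mscbar_n$ along its boundary divisor, so near $\ell_\sigma(E_1,\ldots,E_n)$ the space $\rmscbar_n$ is locally a product of a neighborhood of $\Sigma$ within its topological stratum (which records the relative-scale coordinates of the non-terminal components of $\Sigma$ and the real ``inward'' normal directions at each node of $\Gamma(\Sigma)$) with, for each terminal $v$, a neighborhood of the $d_v$-marked configuration on $\Sigma_v$, itself parametrized by an open subset of $\rmscbar_{d_v}$.

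On the source side, the candidate inverse is assembled component-by-component. For each terminal vertex $v$, Bridgeland's theorem applied to $\Stab(\gr_v(\cC_\bullet))$, followed by the $\bC$-quotient, gives a local homeomorphism $\Stab(\gr_v(\cC_\bullet))/\bC \to \rmscbar_{d_v}$ via $\ell_{(-)}(E_{v,1},\ldots,E_{v,d_v})$, identifying perturbations of $\sigma_v$ with perturbations of the marking on $\Sigma_v$. For perturbations of the multiscale line that either stay within the stratum of $\Sigma$ or resolve a coarsening (by bringing a previously infinite node back to finite distance), the assembled data yield a well-defined augmented stability condition lying in $U(\langle \cC_\bullet \rangle_\Sigma)$; the admissibility hypothesis provides the adjoints needed to project arbitrary test objects along the SOD and define the glued slicing. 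For multiscale lines of depth greater than one, the argument proceeds by induction on the depth of $\Gamma(\Sigma)$, applying the statement recursively to each subtree.

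The substantive work is verifying that this bijection is a homeomorphism in the topology of \Cref{D:convergence_intro}. Continuity of $\ell_{(-)}(E_1,\ldots,E_n)$ holds essentially by definition of the weak topology, since the chosen $E_i$ are $\sigma$-stable and hence well-placed. For the inverse direction, given a convergent net of perturbations, one must verify the three conditions of \Cref{D:convergence_intro}: (i) weak-topology convergence for every test $F \in \cC$, reduced via the SOD to the graded pieces and handled by Bridgeland's theorem together with the manifold structure of each $\rmscbar_{d_v}$; (ii) $\Vec{d} \to 0$, which follows from \Cref{R:localmetric} via Bridgeland-metric control on each $\gr_v$ plus convergence of the configuration coordinates of $\Sigma$; and (iii) persistence of $\sigma$-stable objects modulo $\cC_{<v}$ under small perturbation, which is the usual openness of stability loci in Bridgeland's theorem applied to each $\gr_v$. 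I expect (i) to be the main obstacle, since one must control $\ell_{\sigma'}(F)$ for arbitrary $F$ in terms of a continuously varying decomposition of $F$ along the SOD; the admissibility hypothesis, by furnishing the relevant adjoints, is essential here to ensure that these projections behave continuously under perturbation.
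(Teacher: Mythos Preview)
Your overall strategy---construct an inverse by gluing componentwise stability conditions and verify the three convergence conditions---matches the paper's approach in spirit. However, there is one genuine gap and a few points of confusion.

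\textbf{The gap: injectivity of $\ell$.} You construct a map from a neighborhood in $\rmscbar_n$ back to $\Astab(\cC)$ and observe that $\ell$ applied to the glued object recovers the original point. But you never argue the other direction: why is every augmented stability condition near $\sigma$ necessarily of glued form? Without this, you have only a one-sided inverse. The paper addresses this in \Cref{L:allglued}: given an arbitrary $\eta$ in a small neighborhood $W_\epsilon$, one first uses \Cref{L:gluedexist} to produce a glued $\tau$ with $\ell(\tau) = \ell(\eta)$, then shows $\eta = \tau$. The key step is that both have the same central charge (since the $P_i$ are stable for both, so $\ell = \logZ$ on them), and one can estimate $d(\cP_\eta,\cP_\tau) < 1$ using the directed distance bound together with the fact that $\tau$ is strongly glued, so $\cP_\tau(\phi) = \bigoplus_i \cP_{\tau_i}(\phi)$. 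Then \cite{Br07}*{Lem.~6.4} forces $\eta = \tau$. This argument is short but essential, and your proposal does not contain it.

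\textbf{On admissibility.} You say admissibility ``provides the adjoints needed to project test objects along the SOD.'' At a generic point the SOD already exists and furnishes all projection functors; there is nothing to supply. The actual content of admissibility here (\Cref{D:admissible_point}(1)) is \emph{strong gluability}: the existence of lifts $\sigma_i$ of the $\sigma_{v_i}$ whose slicings satisfy the Hom-vanishing needed for \cite{CP10}. Condition (2) of \Cref{D:admissible_point} and admissibility of the underlying multiscale decomposition are both vacuous at generic points.

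\textbf{On the induction.} Your proposed induction on the depth of $\Gamma(\Sigma)$ is unnecessary and somewhat misaligned with what is needed. The inverse must produce augmented stability conditions on all \emph{coarsenings} $\Sigma \rightsquigarrow \Sigma'$, and these have depth at most that of $\Sigma$. The paper handles this uniformly via \Cref{Const:gluing}: for each coarsening, glue the $\tau_{v_i}$ with $f(v_i) = w$ into a stability condition on $\cB_{\le w}$. No induction is required.

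\textbf{On continuity of $\ell^{-1}$.} Your direct verification of the three conditions of \Cref{T:topology} is a reasonable alternative to the paper's route through strong quasi-convergence (\Cref{D:stronglyqconv}, \Cref{P:quasiconvconv}). The paper's packaging is cleaner but your approach would work; your expectation that condition (i) is the delicate one is correct, and strong gluing (giving $\cP_{\sigma_t}(\phi) = \bigoplus_i \cP_{\tau_{i,t}}(\phi)$) is exactly the tool that makes the combined HN filtration explicit enough to control $\ell^t$ on arbitrary objects.
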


The key tool in the proof of this theorem is the gluing construction of \cite{CP10}, which we use to construct local charts around generic admissible points. \Cref{conj:manifoldwithcorners} suggests that there is a missing gluing construction that works in a neighborhood of non-generic but admissible boundary points. This would allow us to describe many more stability conditions than arise from the construction of \cite{CP10}, and would give us more insight into the global structure of stability manifolds.

\subsection*{Noncommutative minimal model program}

Beyond understanding the global structure of stability manifolds, our main motivation for defining $\Astab(\cC)$ is to introduce a tool to system\-atically study semiorthogonal decompositions of $\cC = \DCoh(X)$ for a smooth projective variety $X$ over $\bC$. Kontsevich has proposed that there should be canonical semiorthogonal decompositions of $\DCoh(X)$. This has been elaborated as the noncommutative minimal model program \cite{NMMP}, which implies a version of Dubrovin's conjecture and the $D$-equivalence conjecture.

The idea is, roughly, that solutions of the quantum differential equation on $H^*_{\rm{alg}}(X;\bC)$ should give rise to semiorthogonal decompositions of $\DCoh(X)$, and $\Astab(\cC)$ provides a mechanism to explain this. One can use the quantum differential equation to define canonical paths in the space of central charges $Z_\psi(t) \in \Hom(H^\ast_{\rm{alg}}(X),\bC)$, depending on a parameter $\psi$ \cite{Iritani}. The original version of the NMMP conjecture stated that the paths $Z_\psi(t)$ lift to paths in $\Stab(\cC)/\bC$ that are quasi-convergent in the sense of \cite{quasiconvergence}. The present work provides a refined formulation: the paths that lift $Z_\psi(t)$ are expected to converge in $\Astab(\cC)$. As the parameter $\psi$ varies, the limit-point in $\Astab(\cC)$ is expected to stay on the same boundary component. In particular, rather than a canonical semiorthogonal decomposition, one expects in general to get a canonical \emph{multi-scale decomposition} on $\DCoh(X)$, determined up to mutation, along with stability conditions on the associated graded cate\-gories.

\subsection*{Related work and acknowledgments}
Recently, several other works have appeared which consider the problem of constructing (partial) compactifications of spaces of Bridgeland stability conditions. We mention the ones we are aware of here and provide a brief comp\-arison of their approaches to ours.

\begin{enumerate} 
    \item In \cite{ThurstonBDL}, the authors introduce the \emph{Thurston compactification} of $\Stab(\cC)$. One first chooses a class of objects $\cS\subset \Ob(\cC)$ and positive real numbers $q_1,\ldots, q_n\in \bR_{>0}$. The  compactification is the closure of the image of $\prod_{i=1}^n m_{q_i}:\Stab(\cC)/\bC\to (\bP^{\cS})^n$ where $m_{q_i}(\sigma) = [m_{q_i,\sigma}(E):E\in \cS]$, when it is injective; here $\bP^{\cS}$ is the \emph{real} projective space $(\bR^{\cS}\setminus \{0\})/\bR_{>0}$, and $m_{q,\sigma}(E)$ is the $q$-deformed mass of $E$ with respect to $\sigma$, defined by $m^{\log q}_\sigma(E)$ in the notation of \Cref{S:background}.\endnote{This compactification is defined by analogy to Thurston's compactification of Teichm\"uller space $\cT_g$, which can be defined as the moduli space of hyperbolic structures on a fixed closed topological surface $\Sigma$ of genus $g$. There, one can choose $\cS$ to be a suitable set of closed curves on $\Sigma$ and, regarding points of $\cT_g$ as Riemannian metrics $\mu$, one sends $\mu$ to the set of lengths of the curves in $\cS$. Taking the closure of the image in $\bP^{\cS}$ gives the compactification.} 

    In \cite{ThurstonBDL}, the authors prove that for $n \ge 2$, the map $\prod_{i=1}^n m_{q_i}$ is injective with pre-compact image under mild hypotheses on $\cS$. Consequently, in many cases one obtains a continuous injection of $\Stab(\cC)/\bC$ into a compact space. The only difficulty is that it seems to be difficult to prove that $\prod_{i=1}^n m_{q_i}$ is a homeomorphism onto its image in general. 

    The case of the 2-Calabi--Yau categories associated to the $A_2$ and $\hat{A}_1$ quivers was studied in the original paper \cite{ThurstonBDL}, where the authors prove that in both cases $m_1$ is a homeo\-morphism onto its image. On the other hand, the example of $\DCoh(C)$ for $C$ a smooth and projective curve was considered in \cite{ThurstonKKO}. There, it is proven that $m_1$ is a homeomorphism for $g(C)\ge 1$, while in the case of $\bP^1$ it is proven that for $q = 1$, the map $\Stab(\bP^1)/\bC \to \bP^{\cS}$ is \emph{not} injective for any choice of $\cS$ \cite{ThurstonKKO}*{Prop. 6.3}. 
    
    In \cite{DeopurkarThurstonk3}, the Thurston compactification of the space of stability conditions on analytic K3 surfaces $S$ with $\Pic(S) = 0$ is studied, where $m_1$ is proven to be a homeo\-morphism. Furthermore, the Thurston compactification of $\Stab(S)/\bb{C}$ is shown to be homeomorphic to a closed 2-dimensional ball.

    \item In \cite{Bolognesecompactification}, a different enlargement $\widehat{\Stab}(\cC)$ of $\Stab(\cC)$ was considered; it is constructed as the metric completion of the metric on $\Stab(\cC)$ obtained by pulling back the metric on the finite dimensional vector space $\Hom(\Lambda,\bC)$ induced by a real inner product.\endnote{Implicitly, we are assuming that the central charges factor through a finite rank lattice $\Lambda$ as discussed at the beginning of the introduction. This imposes finite dimensionality on the space of central charges, which is used crucially here to get a metric completion which is independent of choices. This is equivalent to the fact that all norms on a finite dimensional vector space are equivalent!} In the presence of some extra hypotheses on the structure of the forgetful map $\Stab(\cC) \to \Hom(\Lambda,\bC)$, it is proven \cite{Bolognesecompactification}*{Thm. 1.1} that there is an embedding
        \[
            \widehat{\Stab}(\cC) \hookrightarrow \rm{GStab}(\cC)
        \]
    where 
        \[
            \rm{GStab}(\cC) = \{(\cK,\sigma): \cK\subset \cC \text{ a thick subcategory, and } \sigma \in \Stab(\cC/\cK)\}
        \]
    is called the set of \emph{generalized stability conditions}. This gives a modular interpretation of the points added in the completion process under suitable hypotheses.\endnote{The hypotheses of \cite{Bolognesecompactification}*{Thm. 1.1} are that the natural projection $\pi:\Stab(\cC) \to \Hom( \rm{K}_0(\cC),\bC)$ given by $(Z,\cP)\mapsto Z$ is a covering map onto the open dense subset of $\Hom( \rm{K}_0(\cC),\bC)$ given by the complement of a locally finite collection of smooth submanifolds.}

    \item In \cite{BPPWmassless}, two enlargements of the space of Bridgeland stability conditions are constructed. The points of the first space are called \emph{lax stability conditions} and consist of the data of a central charge and a slicing as in \cite{Br07}; the caveat is that the mass of semistable objects may be zero. Consequently, associated to a lax stability condition $\sigma$ is a thick subcategory $\cN$ of massless objects. A lax stability condition induces a stability condition on $\cC/\cN$ by \cite{BPPWmassless}*{Prop. 4.19}.\endnote{This is reminiscent of the generalized stability conditions introduced in \cite{Bolognesecompactification} and a partial comparison is made in \cite{BPPWmassless}*{\S 11.2}.} The space of \emph{quotient stability conditions} on $\cC$, $\Stab^Q(\cC)$, is defined to be the quotient of the space of lax stability conditions, $\Stab^L(\cC)$, by the equivalence relation that identifies lax stability conditions with the same subcategory $\cN$ and that induce the same stability condition on $\cC/\cN$.

    \item In \cite{BMSmulti-scale}, the authors introduce a notion of \emph{multi-scale stability conditions} on a triangulated category. Their construction aims to give a smooth (orbifold) compact\-ification of the double quotient $\bC\backslash\Stab(\cC)/{\Aut(\cC)}$. Roughly, a multi-scale stability condition consists of \vspace{-2mm}
    \begin{enumerate} 
        \item a multi-scale heart $\cA_\bullet = (\cA_i)$, which is a nested collection $\cA_L\subset \cdots \subset \cA_1\subset \cA_0$ of abelian categories satisfying some conditions; and \vspace{-2mm}
        \item a multi-scale central charge, which is a collection $Z_\bullet = (Z_i)_{i=0}^L$ of non-zero homo\-morphism $Z_i: \rm{K}_0(\cA_i) \to \bC$, where $Z_i$ factors through the kernel of $Z_{i-1}$. \vspace{-2mm}
    \end{enumerate}
    
    They are able to topologize the collection of multi-scale stability conditions in the case that $\cC$ is 3-Calabi--Yau category associated to the $A_n$ quiver. In this case, they obtain a complete description of the resulting compactification \cite{BMSmulti-scale}*{Thm. 1.1}, proving that it is a blowup of $\overline{\cM}_{0,n+2}$. One of the difficulties of this approach is that the results depend on finiteness conditions on the hearts of the t-structures encountered in $\Stab(\cC)$ --- see \cite{BMSmulti-scale}*{p. 4}. For example, the derived category of finite dimensional representations of the Kronecker quiver (equivalently $\DCoh(\bP^1)$) presents some technical challenges as discussed in \emph{loc. cit}. 
\end{enumerate}

Our partial compactification incorporates some features of the other const\-ructions detailed above with certain key differences:

\begin{itemize}
    \item The boundary points of $\Astab(\cC)$ that parameterize filtrations $\cC_1\subset \cdots \subset \cC_n = \cC$ along with stability conditions on the associated graded categories $\cC_i/\cC_{i-1}$ are reminiscent of the boundary points in (2), (3), and (4). However, none of the other compactifications has boundary points encoding semiorthogonal decompositions of $\cC$, which is the main goal of $\Astab(\cC)$.\endnote{In fact, the failure of the map $m_1:\Stab(\bP^1)/\bC\to \bP^{\cS}$ to be injective \cite{ThurstonKKO}*{Prop. 6.3} is directly related to the existence of the Beilinson decompositions $\DCoh(\bP^1) = \langle \cO(n),\cO(n+1)\rangle$. Indeed, their result proves non-injectivity for any triangulated category with a strong full exceptional collection.
    
    However, in the case of a two-term semiorthogonal decomposition $\cC = \langle \cC_1,\cC_2\rangle$, the space $\Stab^L(\cC)$ of \cite{BPPWmassless} adds points where $\cC_1$ is regarded as massless. In \cite{BPPWmassless}*{\S 12.9}, the authors study the case of $\DCoh(\bP^1)$ and describe the points of (a canonical subspace of) $\Stab^Q(\cC)/\bC$. The result is similar to our description of $\Astab(\bP^1)$ in \Cref{S:firstexamples} with two major differences: (1) where we add a boundary stratum canonically identified with a half-open interval in $\bR$ for each semiorthogonal decomposition $\DCoh(\bP^1) = \langle \cO(n),\cO(n+1)\rangle$, their space adds a point. Consequently, $\Astab(\bP^1)$ contains a copy of $\bR$ corresponding to the ``admissible'' boundary, while the boundary of the other space contains a copy of $\bZ$. (2) There is a unique non-admissible boundary point in $\Astab(\bP^1)$ which roughly corresponds to the torsion sheaves becoming massless relative to the locally free sheaves. This point is not present in the space considered in \cite{BPPWmassless}*{\S 10.8} as mentioned on p. 57 \emph{ibid}.}

    \item Our construction and main results hold in full generality for a triangulated category $\cC$ equipped with a choice of surjection $ \rm{K}_0(\cC) \twoheadrightarrow\Lambda$, whereas the constructions above, with the exception of \cite{BPPWmassless}, require special properties of $\cC$.
\end{itemize}

\subsubsection*{Acknowledgements}

We are grateful to Severin Barmeier, Arend Bayer, Tom Bridgeland, Hannah Dell, And\-res Fernandez Herrero, Jeffrey Jiang, Kimoi Kemboi, Maxim Kontsevich, Antony Licata, Emanuele Macr\`{i}, Alex Perry, Ian Selvaggi, Yan Soibelman, Yukinobu Toda, and Xiaolei Zhao for many useful conversations on the topics in this paper. We also thank David Ploog for useful comments on the first version of this paper.

The authors appreciate the support of the NSF CAREER grant DMS-1945478, and NSF FRG grant DMS-2052936, and the support and hospitality of the Simons Laufer Mathematical Sciences Institute during the preparation of this paper. The first author was also supported by an Alfred P. Sloan Foundation Research Fellowship (FG-2022-18834), and a Simons Foundation Fellowship.

The second author would additionally like to thank Maria Teresa for her love, patience, and encouragement during the preparation of this manuscript. Additionally, he thanks the first author for his unwavering support and guidance during this project and the duration of his doctoral studies.


\addtocontents{toc}{\protect\setcounter{tocdepth}{2}}

\section{Preliminaries}

\subsection{Stability conditions and associated functions} \label{S:background}

\begin{defn}
\label{D:stabilityconditions}
For the reader's convenience, we recall the definitions of our main objects of study:
\begin{itemize}
    \item A \emph{slicing} $\cP$ on a stable dg-category $\cC$ is a collection of full additive subcategories $\cP(\phi) \subset \cC$ for $\phi \in \bR$, called \emph{semistable objects} of phase $\phi$, satisfying two conditions: 1) If $E$ and $F$ are semistable of phases $\phi_E>\phi_F$, then $\Hom(E,F) = 0$; and 2) Every object $E$ admits a Harder--Narasimhan (HN) filtration $0 = E_0 \to E_{1} \to \cdots \to E_n = E$, meaning that $\gr_i(E_\bullet) := \cofib(E_{i-1} \to E_{i})$ is semistable of some phase $\phi_i$, and $\phi_1>\cdots>\phi_n$.

    \item A \emph{pre-stability condition} on $\cC$ is a pair $\sigma = (Z,\cP)$, where $\cP$ is a slicing, and $Z :  \rm{K}_0(\cC) \to \bC$ is a group homomorphism, called the \emph{central charge}, such that $Z(E) \in \bR_{>0} \cdot e^{i \pi \phi}$ for any $E \in \cP(\phi)$.

    \item Given a finitely generated free abelian group $\Lambda$, equipped with a norm on $\Lambda \otimes \bR$ and a homomorphism $\ch :  \rm{K}_0(\cC) \to \Lambda$, we say that $\sigma$ satisfies the \emph{support property} if $Z :  \rm{K}_0(\cC) \to \bC$ factors through a homomorphism $\Lambda \to \bC$, which we also denote $Z$, and there is a constant $c>0$ such that $Z(E) \geq c \lVert v(E)\rVert$ for all semistable $E$. A \emph{stability condition} (with respect to $\ch :  \rm{K}_0(\cC) \to \Lambda$) is a pre-stability condition satisfying the support property.\endnote{Note that the choice of norm on $\Lambda\otimes \bR$ is irrelevant, since all norms on a finite dimensional vector space are equivalent.}
\end{itemize}
Given a non-zero semistable object $E$, its \emph{mass} with respect to a pre-stability condition $\sigma = (Z,\cP)$ is $m(E) := \lvert Z(E)\rvert$. Given an arbitrary non-zero object $E$ of $\cC$, its mass is 
\[
    m(E) = \sum_{i} m(\gr_i(E_\bullet))
\]
where $\{\gr_i(E_\bullet)\}$ are the Harder--Narasimhan factors defined above. For more on stability conditions, see \cites{Bayer_short,Br07,KS08}.
\end{defn}


If $\cC$ is a stable dg-category with a pre-stability condition $\sigma$, then any $E \in \cC$ defines a finitely supported measure on $\bR$, which we call the \emph{mass measure},
\begin{equation}
\label{E:massmeasure}
\d m_{\sigma,E} := \sum_\theta |Z_\sigma(\gr^\sigma_\theta(E))| \delta_\theta
\end{equation}
where $\delta_\theta$ is the indicator measure of $\{\theta\}$. The \emph{polynomial mass} function on $\cC$, introduced in \cite{DynamicsDHKK}, is defined by
\[
    m_\sigma^t(E) := \int e^{t \theta} \d m_{\sigma,E}(\theta) = \sum_{\theta} e^{t\theta} |Z_\sigma(\gr^\sigma_{\theta}(E))|.
\]
In \cite{massmeasures} we also refer to this as the \emph{deformed mass} of $E$. When the context is clear, we will drop $\sigma$ from the notation, and we will drop $t$ when $t=0$, i.e., $m(E) = m^0(E)$. We will use this same notational convention with the functions $\phi_\sigma^t$ and $\ell_\sigma^t$ introduced below. We refer the reader to \cite{massmeasures} for a more systematic study of the functions $m_\sigma^t$ and $\phi_\sigma^t$, as well as the mass measures $\d m_{\sigma,E}$. A key property of $m_\sigma^t$ is the following:

\begin{thm}
[\cite{massmeasures}*{Thm. D}]
    For any exact triangle $E \to F \to G$ in $\cC$ and $t,a\in \bf{R}$ one has
    \begin{equation} \label{E:triangle_inequality}
        0 \leq m^t_\sigma(E)+m^t_\sigma(G) - m^{t}_\sigma(F) \leq (1+e^{|t|}) \left(m^t_\sigma(G^{>a}) + m^t_\sigma(E^{\leq a}) \right).
    \end{equation}
\end{thm}

The first inequality, the ``triangle inequality,'' appears as \cite{ikeda_2021}*{Prop. 3.3}, but the proof there is incomplete. The reader is referred to \cite{massmeasures} for a complete proof. The second inequality in \eqref{E:triangle_inequality}, which is \cite{massmeasures}*{Cor. 5.4}, says that when the mass of $E$ is concentrated in phase $>a$ and the mass of $G$ is concentrated in phase $\leq a$, then $m^t_\sigma$ is approximately additive on the exact triangle $E \to F \to G$.

In fact, we will need the following stronger version of \eqref{E:triangle_inequality}, which applies to filtrations of arbitrary length, as opposed to just length two.

\begin{thm} 
[\cite{massmeasures}*{Thm. 6.1}]
\label{T:filtration_inequality}
Let $a_1<\cdots<a_{n-1}$, and let $\epsilon \in (0,1)$. Then there is a constant $C_{n,\epsilon,t}>0$, depending only on $n$, $\epsilon$, and $t$, such that for any $E \in \cC$ with a finite descending filtration $E = E_n \to E_{n-1} \to \cdots \to E_1 \to E_0 = 0$ with associated graded objects $F_i := \fib(E_i \to E_{i-1})$,
\[
    m^t(E) \leq \sum_{i=1}^n m^t(F_i) \leq m^t(E) + C_{n,\epsilon, t} \left( \sum_{j=1}^{n-1} m^t(F_{j+1}^{\leq a_j}) + m^t(F_j^{>a_j-\epsilon}) \right).
\]
\end{thm}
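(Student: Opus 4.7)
The plan is to induct on $n$, with base case $n=2$ given directly by the two-term reverse triangle inequality \Cref{C:reverse_triangle_inequality}: applied to the triangle $F_2 \to E_2 \to F_1$ with cut $a_1$ (note $F_1 = E_1$), it yields exactly the claimed bound after observing that $m^t(F_1^{>a_1}) \leq m^t(F_1^{>a_1 - \epsilon})$. The lower bound $m^t(E) \leq \sum m^t(F_i)$ holds uniformly in $n$ by iterating the forward part of \eqref{E:triangle_inequality} on the successive triangles $F_i \to E_i \to E_{i-1}$.

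For the inductive step of the upper bound, I would apply the two-term reverse triangle inequality to the outermost triangle $F_n \to E_n \to E_{n-1}$ with cut $a_{n-1}$, obtaining
\[
m^t(F_n) + m^t(E_{n-1}) - m^t(E) \;\leq\; (1+e^{|t|})\bigl( m^t(F_n^{\leq a_{n-1}}) + m^t(E_{n-1}^{>a_{n-1}}) \bigr).
\]
The term $m^t(F_n^{\leq a_{n-1}})$ is already admissible (it is the $j=n-1$ term of the first type). Applying the inductive hypothesis to $E_{n-1}$ with its length-$(n-1)$ filtration and cut points $a_1<\cdots<a_{n-2}$ controls $\sum_{i=1}^{n-1} m^t(F_i) - m^t(E_{n-1})$ by admissible terms indexed by $j=1,\ldots,n-2$. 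It remains to bound the stray term $m^t(E_{n-1}^{>a_{n-1}})$ by admissible terms of the form $m^t(F_j^{>a_j-\epsilon})$.

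The key auxiliary fact is: for any object $E$ with a length-$k$ filtration $E = E_k \to \cdots \to E_0 = 0$ with gradeds $F_i = \fib(E_i \to E_{i-1})$, and any $a \in \bR$,
\[
m^t(E^{>a}) \;\leq\; \sum_{i=1}^k m^t(F_i^{>a}).
\]
This follows by induction on $k$ from two ingredients: (i) the slicing axioms imply that $(\cP_{>a}, \cP_{\leq a})$ is a t-structure on $\cC$, so the truncation $(-)^{>a}$ is an exact functor; and (ii) applying it to $F_k \to E_k \to E_{k-1}$ yields a triangle $F_k^{>a} \to E_k^{>a} \to E_{k-1}^{>a}$, whose forward triangle inequality gives $m^t(E_k^{>a}) \leq m^t(F_k^{>a}) + m^t(E_{k-1}^{>a})$. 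Applying this with $a = a_{n-1}$ and using $a_j < a_{n-1}$ for each $j < n$ (so $m^t(F_j^{>a_{n-1}}) \leq m^t(F_j^{>a_j - \epsilon})$) produces $m^t(E_{n-1}^{>a_{n-1}}) \leq \sum_{j=1}^{n-1} m^t(F_j^{>a_j - \epsilon})$, as required.

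Combining these three estimates produces the upper bound, and the constants satisfy the recursion $C_{n,\epsilon,t} = C_{n-1,\epsilon,t} + (1+e^{|t|})$ with $C_{2,\epsilon,t} = 1+e^{|t|}$. The main obstacle is technical rather than conceptual: verifying carefully that slicings yield genuine t-structures (so that $(-)^{>a}$ is exact as used in the sublemma) and handling the open/closed endpoint conventions consistently. Note that in this strategy the parameter $\epsilon$ enters only to weaken the base case, so the resulting constant is actually independent of $\epsilon$; a sharper argument giving genuine $\epsilon$-dependence may require a different approach, for instance splitting the filtration near the middle rather than at the outermost level.
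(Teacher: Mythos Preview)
Your induction scheme has a genuine gap at the ``key auxiliary fact.'' The claim that the truncation $(-)^{>a}$ takes the triangle $F_k \to E_k \to E_{k-1}$ to a triangle $F_k^{>a} \to E_k^{>a} \to E_{k-1}^{>a}$ is false: truncation functors for a $t$-structure are \emph{not} exact (if they were, all cohomology functors would be exact). Concretely, take slope stability on $\DCoh(\bP^1)$ with $Z=-\deg+i\cdot\rk$, and the triangle $\cO \to \cO(1) \to \cO_p$. The phases are $\phi(\cO)=\tfrac12$, $\phi(\cO(1))=\tfrac34$, $\phi(\cO_p)=1$. For $a=0.6$ one has $\cO(1)^{>a}=\cO(1)$, $\cO_p^{>a}=\cO_p$, $\cO^{>a}=0$, and at $t=0$ your inequality would read $m(\cO(1))=\sqrt2 \leq m(\cO_p)=1$, which is false. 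So the bound $m^t(E_{n-1}^{>a_{n-1}}) \leq \sum_{j<n} m^t(F_j^{>a_{n-1}})$ does not hold, and the inductive step does not close.

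This failure is exactly what the paper's machinery is built to handle. The replacement for your sublemma is \Cref{P:truncated_triangle_inequality}, which says that for a triangle $E\to F\to G$ one has
\[
m^t(F^{>a}) \leq m^t(E^{>a-\epsilon}) + m^t(G^{>a}) + r_\epsilon(-t)\, m^t(G^{(a,a+1]}),
\]
with an unavoidable $\epsilon$-shift and a correction term $r_\epsilon(-t)$ that blows up as $\epsilon\to 0$. Iterating this (with the cuts $a_i$ varying with $i$) yields the recursive bound $m^t(E_{i+1}^{>a_{i+1}}) \leq m^t(F_{i+1}^{>a_{i+1}-\epsilon}) + (1+r_\epsilon(-t))\, m^t(E_i^{>a_i})$, and the paper's proof combines this with a second application of \Cref{P:truncated_triangle_inequality} and some octahedral manipulations to run the induction. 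In particular, the $\epsilon$ in the statement is not an artificial weakening: it is forced by the fact that truncation only interacts with triangles up to an $\epsilon$-fuzz, and the resulting constant genuinely depends on $\epsilon$.
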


Informally, this says that if the mass of each $F_i$ is concentrated in the phase interval $(a_{i-1},a_i-\epsilon]$, then $m^t(E) \approx m^t(F_1)+\cdots+m^t(F_n)$.

\subsubsection{Smoothed phase and log-central charge}

Using the deformed mass function, we intro\-duce the \emph{smoothed phase} functions for $t \in (-\infty,\infty)$ as
\begin{equation}
\label{E:smoothedphase}
    \phi_\sigma^t(E):=\left\{ \begin{array}{ll} \frac{1}{m(E)} \sum_\theta \theta |Z(\gr_\theta(E))|, & \text{if } t=0 \\ \frac{1}{t} \log\left( \frac{m^t(E)}{m(E)} \right), & \text{if } t \neq 0. \end{array} \right. 
\end{equation}
Note that $\phi(E)$ is just the expectation value of the random variable $\theta$ with respect to the finite probability measure $\d m_E(\theta) / m(E)$. Several useful properties of $\phi_\sigma^t$ are established in \cite{massmeasures}*{Prop. 2.5}. For example, if $E$ and $\sigma$ are fixed, then $\phi^t(E)$ is an analytic function of $t$ such that 
\[
    \lim_{t\to \pm\infty} \phi^t(E) = \phi^\pm(E),
\]
where $\phi^+(E) := \max \{\theta : \gr^\sigma_\theta(E) \neq 0\}$ and $\phi^-(E) := \min \{\theta : \gr^\sigma_\theta(E) \neq 0\}$ are the max and min phase functions introduced in \cite{Br07}. In this sense, $\phi^t(E)$ is a smooth interpolation between $\phi^+(E)$ and $\phi^-(E)$. It is also shown that $\phi^t(E)$ is strictly monotone increasing in $t$ if $E$ is not semistable, and it is constant in $t$ if $E$ is semistable. 

For any $t\in \bR$, the \emph{log central charge} functions on $\cC$ are
\begin{equation}
\label{E:logcentralchargefunction}
    \ell_\sigma^t(E):= \log(m_\sigma(E)) + i \pi \phi^t_\sigma(E) \in \bC.
\end{equation}
Note that $\ell^t_\sigma(E) = \logZ_\sigma(E)$ for any $t$ when $E$ is semistable, so $\ell^t_\sigma$ is an extension to all of $\Stab(\cC)$ of the natural log central charge function defined on the subset of $\Stab(\cC)$ where $E$ is semistable. 


\begin{prop} \label{P:logZ_functions}
A pre-stability condition $\sigma$ such that the categories $\cP(\phi)$ are finite length for all $\phi \in \bR$ is uniquely determined by the function $\ell_\sigma : \cC \setminus 0 \to \bC$.
\end{prop}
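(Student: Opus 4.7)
The plan is to recover the slicing $\cP$ and the central charge $Z$ separately from $\ell_\sigma$. The formula $\ell_\sigma(E) = \log m_\sigma(E) + i\pi\phi_\sigma^0(E)$ immediately yields the mass $m_\sigma(E) = e^{\operatorname{Re}\ell_\sigma(E)}$ and the average phase $\phi_\sigma^0(E) = \operatorname{Im}\ell_\sigma(E)/\pi$ for every nonzero $E \in \cC$; these are the only inputs I will use in what follows.

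The central step is the following characterization of $\sigma$-semistability purely in terms of these two functions: $E$ is $\sigma$-semistable if and only if there is no exact triangle $A \to E \to B$ with $A, B$ both nonzero, $m_\sigma(A) + m_\sigma(B) = m_\sigma(E)$, and $\phi_\sigma^0(A) > \phi_\sigma^0(B)$. The contrapositive of the ``if'' direction is easy: if $E$ is not semistable, its HN filtration has strictly decreasing phases $\phi_1 > \cdots > \phi_n$ with $n \ge 2$, and splitting at any intermediate step gives $A \to E \to B$ with $m(A) + m(B) = m(E)$ (the HN of $E$ is the concatenation of those of $A$ and $B$) and $\phi^0(A) \ge \phi_k > \phi_{k+1} \ge \phi^0(B)$.

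For the ``only if'' direction, assume $E$ is semistable of phase $\phi$ and $A \to E \to B$ is a triangle with $A, B \ne 0$ and $m(A) + m(B) = m(E)$; I will show that $A, B$ are both semistable of phase $\phi$, so the strict inequality $\phi^0(A) > \phi^0(B)$ fails. From $Z(A) + Z(B) = Z(E)$ together with the saturated chain
\[
    m(A) + m(B) = m(E) = |Z(E)| = |Z(A) + Z(B)| \le |Z(A)| + |Z(B)| \le m(A) + m(B),
\]
every $Z(A_i)$ and $Z(B_j)$ over HN factors must be a positive real multiple of $e^{i\pi\phi}$, so every HN phase $a_i, b_j$ lies in $\phi + 2\bZ$. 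To upgrade congruence to equality, let $A_1 \hookrightarrow A$ be the top HN factor; if its phase were $a_1 = \phi + 2k$ with $k \ge 1$, then $A_1 \to A \to E$ vanishes by the slicing axiom, so factors through $\fib(A \to E) = B[-1]$, and $\Hom(A_1, B[-1]) \ne 0$ forces the top HN factor $B_1$ of $B$ to have phase $b_1 \ge a_1 + 2$. But then in the rotated triangle $E \to B \to A[1]$ we get $\Hom(B_1, A[1]) = 0$ (since $b_1 > a_1 + 1 = \phi^+(A[1])$), so $B_1 \hookrightarrow B$ factors through $\fib(B \to A[1]) = E$, contradicting $\Hom(B_1, E) = 0$. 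Thus $a_1 \le \phi$; the same argument applied to $\cC^{\mathrm{op}}$ (which swaps top and bottom HN factors and negates phases) yields $a_p \ge \phi$ for the bottom phase of $A$. Combining $\phi \le a_p \le a_1 \le \phi$ forces all $a_i = \phi$, and the analogous argument with the triangle $E \to B \to A[1]$ shows the same for $B$.

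Given the characterization, the slicing is recovered as $\cP(\phi) = \{E \ne 0 : E \text{ is } \sigma\text{-semistable and } \phi_\sigma^0(E) = \phi\}$, and for $E \in \cP(\phi)$ the central charge is $Z(E) = m_\sigma(E) e^{i\pi\phi} = e^{\ell_\sigma(E)}$. Since every class in $K_0(\cC)$ is a signed sum of classes of its HN factors, $Z$ extends uniquely to all of $K_0(\cC)$. I expect the Hom-vanishing contradiction in the ``only if'' direction to be the main obstacle requiring care; the rest of the argument is bookkeeping.
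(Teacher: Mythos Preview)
Your approach is different from the paper's and the core idea is sound. The paper instead characterizes \emph{stable} objects as those admitting no nontrivial mass-additive triangle at all, and proves the hard direction by a geometric argument with HN curves in $\bC$; it then rebuilds semistables as iterated extensions of stables of a fixed phase. Your route---characterizing \emph{semistable} objects directly via ``no mass-additive triangle with strict average-phase drop'' and arguing by Hom-vanishing in the slicing---is a clean alternative that avoids the HN-polygon geometry.

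There is, however, a real gap in the wrap-up of your ``only if'' direction. Passing to $\cC^{\mathrm{op}}$ turns the triangle $A \to E \to B$ into $B \to E \to A$, so the argument applied there bounds the bottom HN phase of the \emph{fiber} $B$ (yielding $b_q \ge \phi$), not of $A$ as you claim. Likewise, ``the analogous argument with $E \to B \to A[1]$'' does not apply directly: its middle term is $B$, which you have not yet shown to be semistable.

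The repair is short. From $a_1 \le \phi$ (your direct argument) and $b_q \ge \phi$ (the actual output of the $\cC^{\mathrm{op}}$ step), the rotation $E \to B \to A[1]$ gives
\[
\phi^+(B) \le \max\bigl(\phi^+(E),\,\phi^+(A[1])\bigr) = \max(\phi,\,a_1+1) \le \phi+1,
\]
so $b_1 \le \phi+1$, and since $b_1 \in \phi + 2\bZ$ this forces $b_1 \le \phi$; combined with $b_q \ge \phi$, all $b_j = \phi$ and $B$ is semistable of phase $\phi$. Then the rotation $B[-1] \to A \to E$ gives $\phi^-(A) \ge \min(\phi-1,\phi) = \phi-1$, so $a_p \ge \phi-1$, hence $a_p \ge \phi$ by parity, and together with $a_1 \le \phi$ this makes $A$ semistable of phase $\phi$. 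With that correction your proof goes through.
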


\begin{proof}
    See the proof of \cite{massmeasures}*{Thm 3.6}. 
\end{proof}

When $\sigma$ satisfies the support property as in \Cref{D:stabilityconditions}, the categories $\cP(\phi)$ are auto\-matically finite length. Let $\mathfrak{c}$ denote the set of isomorphism classes of non-zero objects of $\cC$.\footnote{Here, we tacitly assume that the category $\cC$ is essentially small, so that $\mathfrak{c}$ is indeed a set.} \Cref{P:logZ_functions} says that $\ell$ embeds $\Stab(\cC)$ into the set of functions $\Fun(\mathfrak{c},\bC) = \bC^{\mathfrak{c}}$. The map $\ell$ is $\bC$-equivariant, where $\bC$ acts on $\bC^{\mathfrak{c}}$ by componentwise translation. Furthermore, if one equips $\bC^{\mathfrak{c}}$ with the product topology, then one can show that $\ell$ is continuous --- see \cite{massmeasures}*{Thm. 3.6}. In particular, the average phase function 
\[
    \phi_\sigma(E) = \frac{1}{m_\sigma(E)} \int_{\bR} \theta \:\d m_{\sigma,E}(\theta)
\]
depends continuously on $\sigma$. So, we have a continuous injection $\Stab(\cC)/\bC\hookrightarrow \bC^{\mathfrak{c}}/\bC$. In general, Bridgeland's topology on $\Stab(\cC)/\bC$ is finer than the subspace topology induced by the embedding, so we refer to the latter topology as the \emph{weak topology}. 

The reader is invited to consult \cite{massmeasures} for a discussion of embeddings of stability conditions into spaces of measures. 

\subsubsection{\texorpdfstring{$\bC$}{C}-invariant functions}
\label{S:C_invariant_functions}
The functions introduced above are not invariant for the action of $\bC$ on $\Stab(\cC)$. For any $z \in \bC$, we have the identities
\begin{gather*}
m^t_{z \cdot \sigma}(E) = |e^{(1-\frac{it}{\pi}) z}| m^t_\sigma(E), \\
\phi^t_{z \cdot \sigma}(E) = \phi^t_\sigma(E) + \tfrac{1}{\pi}\Im(z), \\
\ell^t_{z \cdot \sigma}(E) = \ell^t_\sigma(E) + z.
\end{gather*}
This implies that for any nonzero $E,F \in \cC$, the following functions are well-defined on $\Stab(\cC)/\bC$:
\begin{gather*}
    m^t_\sigma(E/F) :=  m^t_\sigma(E) / |e^{(1-\frac{it}{\pi}) \ell_\sigma(F)}|, \\
    \phi^t_\sigma(E/F) := \phi^t_\sigma(E) - \phi_\sigma(F), \\
    \ell^t_\sigma(E/F) := \ell^t_\sigma(E) - \ell_\sigma(F).
\end{gather*}
Finally, for any $t \in \bR$ we consider the function on $\Stab(\cC)/\bC$
\begin{equation} \label{E:cosh_expression}
\begin{array}{rl} 
    c^t_\sigma(E) & = \frac{1}{m(E)} \int \cosh t(\theta-\phi(E)) \d m_E \\
        &= \frac{1}{2}\left(m^t_\sigma(E/E) + m^{-t}_\sigma (E/E) \right).
\end{array}
\end{equation}
The quantity $c^t_\sigma(E)$ measures how concentrated the probability measure $\d m_E/m(E)$ is around its mean. Since $\cosh(\theta) \geq 1$ for all $\theta \in \bf{R}$, $1 \leq c^t_\sigma(E)$ for any $E \in \cC$, with equality if and only if $E$ is $\sigma$-semistable.

\subsection{Boundedness of functors}

Let $\cC$ and $\cD$ be stable dg-categories, equipped with pre\-stability conditions $\sigma$ and $\tau$, respectively.
\begin{defn}[Boundedness conditions] \label{D:boundedness}
We say that an exact functor $\Phi : \cC \to \cD$ is \emph{$t$-bounded} if there is a constant $n>0$ such that $\Phi(\cC^{(0,1]}) \subset \cD^{(-n,n]}$. We say that $\Phi$ is \emph{mass-bounded} if there is a constant $C>0$ such that $m_{\tau}(\Phi(E)) \leq C m_{\sigma}(E)$ for all $E \in \cC$. If both of these conditions hold, we simply say $\Phi$ is \emph{bounded}.
\end{defn}

We observe that if $\sigma'$ and $\tau'$ are two pre-stability conditions such that $d(\sigma,\sigma')<\infty$ and $d(\tau,\tau')<\infty$, then $\Phi$ is ($t$-/mass-)bounded with respect to $\sigma$ and $\tau$ if and only if it is so for $\sigma'$ and $\tau'$. In particular, if $\sigma$ and $\tau$ satisfy the support property, these notions of boundedness only depend on the connected component of $\sigma$ and $\tau$ in $\Stab(\cC)$ and $\Stab(\cD)$, respectively. Another useful observation is that the ($t$-/mass-)bounded functors form a thick stable subcategory of $\Fun^{\rm{ex}}(\cC,\cD)$, the stable category of exact functors $\cC \to \cD$.\endnote{If $\Phi_1 \to \Phi_2 \to \Phi_3$ is an exact triangle of functors, then for any $E \in \cC$, the triangle inequality for mass implies that for any $E \in \cC$, $m_\tau(\Phi_2(E)) \leq m_\tau(\Phi_1(E)) + m_\tau(\Phi_3(E))$. Therefore, if $\Phi_1$ and $\Phi_2$ are mass-bounded, then so is $\Phi_2$, because $m_\tau(\Phi_2(E)) \leq (C_1 + C_2) m_\sigma(E)$. The same claim for $t$-boundedness follows from the long-exact homology sequence for $\Phi_1(E) \to \Phi_2(E) \to \Phi_3(E)$ for any $E \in \cC^{(0,1]}$. This shows that ($t$-/mass-)bounded functors are closed under extensions. They are closed under homological shifts because $m_\tau$ is invariant under homological shifts and $\Phi$ is exact.

Next if $\Phi_1$ is a direct summand of $\Phi_2$, then for any $E \in \cC$, $\Phi_1(E)$ is a direct summand of $\Phi_2(E)$. If there is a constant $C>0$ such that $m_\tau(\Phi_2(E)) \leq C m_\sigma(E)$, then the same bound holds for $m_\tau(\Phi_1(E))$, because $m_\tau(\Phi_1(E)) \leq m_\tau(\Phi_2(E))$. Likewise, any homological bounds on $\Phi_2(E)$ also apply to $\Phi_1(E)$.}

\begin{lem}\label{L:simplify_boundedness}
    To verify $t$- or mass-boundedness, it suffices to verify respectively that 
    \[
        |\phi^\pm_\tau(\Phi(E))-\phi_\sigma(E)| \leq n \quad \text{or} \quad m_\tau(\Phi(E)) \leq C m_\sigma(E)
    \]
    for all \emph{stable} $E \in \cC^{(0,1]}$.
\end{lem}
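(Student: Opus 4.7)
The plan is to verify each boundedness condition by a three-step reduction from an arbitrary $E \in \cC$ down to stable $E \in \cC^{(0,1]}$. First, the HN filtration built into the slicing decomposes $E$ into semistable factors $F_i$ of phases $\phi_i$. Second, homological shifts---which commute with $F$ and preserve both the masses and the differences $\phi^\pm_\tau(F(-)) - \phi^\pm_\sigma(-)$---allow us to move each $F_i$ into $\cP_\sigma(\phi_i)$ with $\phi_i \in (0,1]$. Third, the Jordan--H\"older filtration in the abelian category $\cP_\sigma(\phi_i)$ expresses each $F_i$ as an iterated extension of stable objects $G_{i,j} \in \cP_\sigma(\phi_i) \subset \cC^{(0,1]}$.

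For mass-boundedness, the key tool is the triangle inequality for mass recorded in \eqref{E:triangle_inequality}. Iterating along the image under $F$ of the composite HN-then-JH filtration yields $m_\tau(F(E)) \leq \sum_{i,j} m_\tau(F(G_{i,j}))$. Meanwhile, additivity of the mass function on HN filtrations and on same-phase semistable extensions gives $m_\sigma(E) = \sum_{i,j} m_\sigma(G_{i,j})$, the masses being invariant under the shifts of step two. Applying the hypothesis $m_\tau(F(G_{i,j})) \leq C\, m_\sigma(G_{i,j})$ to each stable piece then produces the desired global bound with the same constant $C$.

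For $t$-boundedness, the analogous tool is that, for any exact triangle $X \to Y \to Z$ in $\cD$, one has $\phi^+_\tau(Y) \leq \max(\phi^+_\tau(X), \phi^+_\tau(Z))$ and $\phi^-_\tau(Y) \geq \min(\phi^-_\tau(X), \phi^-_\tau(Z))$. Starting from $E \in \cC^{(0,1]}$, the HN-then-JH decomposition produces stable factors $G_{i,j} \in \cP_\sigma(\phi_i)$ with $\phi_i \in (0,1]$. Propagating these monotonicity properties along the induced filtration of $F(E)$ and applying the hypothesis at each stable $G_{i,j}$ bounds $\phi^+_\tau(F(E)) \leq n+1$ and $\phi^-_\tau(F(E)) > -n$, placing $F(E)$ in $\cD^{(-n,\, n+1]}$ as required.

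The main technical obstacle is the existence of Jordan--H\"older filtrations in $\cP_\sigma(\phi)$, which requires that abelian category to be of finite length. This is standard for locally finite slicings and automatic under the support property, which is the setting implicit throughout the paper. In the fully general prestability setting one would need either to strengthen the hypothesis to cover all semistable objects in the heart, or to use a direct approximation argument through finite-length subcategories. Past this point, the proof is a clean application of the triangle inequality for mass and of the corresponding monotonicity of the max/min phase functions along exact triangles.
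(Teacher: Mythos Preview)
Your proposal is correct and follows essentially the same approach as the paper's proof: decompose via the HN filtration refined by Jordan--H\"older filtrations into stable pieces, use the triangle inequality for mass (respectively, monotonicity of $\phi^\pm$ along exact triangles, which the paper phrases as the long exact homology sequence) on the induced filtration of $F(E)$, and use additivity of $m_\sigma$ on the original filtration. Your concern about the existence of Jordan--H\"older filtrations is a fair caveat that the paper leaves implicit, but as you note it is unproblematic under the support property, which is the paper's standing assumption.
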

\begin{proof}
    The claim about $t$-boundedness follows from the long exact homology sequence for extensions and the fact that every $E \in \cC^{(0,1]}$ can be constructed by a finite sequence of extensions from stable objects. For the mass-boundedness claim, let $G_i$ be the associated graded pieces of the Jordan-H\"{o}lder filtration of some fixed $E \in \cC$.\endnote{By this, we mean the refinement of the Harder--Narasimhan filtration obtained by choosing a filtration of every Harder--Narasimhan subquotient with stable subquotients.} Then $\Phi(E)$ admits a filtration with graded pieces $\Phi(G_i)$, so the triangle inequality gives 
    \[ 
        m_\tau(\Phi(E)) \leq \sum_i m_\tau(\Phi(G_i)) \leq C \sum_i m_\sigma(G_i).
    \]
    On the other hand, $m_\sigma(E) = \sum_i m_\sigma(G_i)$.
\end{proof}

We can sometimes deduce mass-boundedness from the following apparently simpler con\-dition.

\begin{defn}[mass-Hom bound] \label{D:mass_bound}
Let $\cC$ be a stable dg-category over a field $k$, equipped with a stability condition $\sigma$. We say that $\sigma$ has a \emph{mass-Hom bound} if for every $E \in \cC$, there is a constant $C_E$ such that
\begin{equation} \label{E:mass_bound}
\dim_k \Hom(E,F) \leq C_E\cdot m_\sigma(F), \qquad \forall \:F \in \cC.    
\end{equation}
\end{defn}

Inequalities of this type have already been considered in the literature, see for example \cites{pseudoanosov,ikeda_2021,DynamicsDHKK}.

\begin{lem}\label{L:simplify_mass_hom_bound}
The following variants of \Cref{D:mass_bound} are equivalent to a stability condition having a mass-Hom bound:\vspace{-2mm}
\begin{enumerate}
    \item It suffices to verify \eqref{E:mass_bound} for $E$ in a classical generating set.\endnote{By this we mean a set of objects that generates $\cC$ under the formation of shifts, extensions, and direct summands.}\vspace{-2mm}
    \item It suffices to verify \eqref{E:mass_bound} for stable $F$.\vspace{-2mm}
\end{enumerate}
\end{lem}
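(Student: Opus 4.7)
The plan is to establish each of (1), (2), (3) by exploiting that the class of pairs $(E,F)$ satisfying the mass-Hom inequality \eqref{E:mass_bound} is preserved by the basic operations available in a triangulated category with a stability condition, and that Bridgeland's metric controls the mass function up to a bounded multiplicative factor. In each case the forward implication (from \Cref{D:mass_bound} to (1)--(3)) is immediate, so the content lies in the converse direction.

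For (1), let $\cE \subset \cC$ denote the class of objects $E$ for which some $C_E$ makes \eqref{E:mass_bound} hold. I would verify that $\cE$ is closed under shifts, direct summands, and extensions. For shifts, $\dim \Hom(E[k],F) = \dim \Hom(E,F[-k])$ and $m_\sigma(F[-k]) = m_\sigma(F)$, so $E[k] \in \cE$ with the same constant as $E$. For direct summands, if $E \oplus E' \in \cE$ with constant $C$, then $\dim \Hom(E,F) \leq \dim \Hom(E \oplus E',F) \leq C m_\sigma(F)$. For extensions $E_1 \to E \to E_2$, applying $\Hom(-,F)$ yields a long exact sequence from which $\dim \Hom(E,F) \leq \dim \Hom(E_1,F) + \dim \Hom(E_2,F)$, so $E \in \cE$ with constant $C_{E_1}+C_{E_2}$. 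Thus \eqref{E:mass_bound} holding on a classical generating set forces it to hold throughout $\cC$.

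For (2), fix $E$ and an arbitrary $F \in \cC$. Using the HN filtration of $F$ and then Jordan--H\"{o}lder filtrations within each slice $\cP(\phi_i)$ (of finite length thanks to the support property), I would exhibit $F$ as an iterated extension of stable subquotients $G_1,\ldots,G_N$ with $m_\sigma(F) = \sum_j m_\sigma(G_j)$, since mass is additive along both HN and JH filtrations. Iterating the long exact sequence argument from (1) along this filtration gives $\dim \Hom(E,F) \leq \sum_j \dim \Hom(E,G_j) \leq C_E \sum_j m_\sigma(G_j) = C_E m_\sigma(F)$. For (3), suppose $\sigma$ has a mass-Hom bound and $d(\sigma,\tau) < \infty$. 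Bridgeland's metric majorizes $\sup_{E \neq 0} \lvert \log(m_\sigma(E)/m_\tau(E)) \rvert$, so $m_\sigma(F) \leq e^{d(\sigma,\tau)} m_\tau(F)$ uniformly in $F$. Hence $\dim \Hom(E,F) \leq C_E m_\sigma(F) \leq C_E e^{d(\sigma,\tau)} m_\tau(F)$, and replacing $C_E$ by $C_E e^{d(\sigma,\tau)}$ yields the mass-Hom bound for $\tau$; the reverse direction is symmetric.

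The only real subtlety is in (2), where one needs every semistable object to admit a Jordan--H\"{o}lder filtration with stable subquotients. This is precisely why the lemma concerns a genuine stability condition: the support property implies that each slice $\cP(\phi)$ is of finite length, so such JH filtrations exist. The remainder of the argument is simply the principle that \eqref{E:mass_bound} propagates under shifts, direct summands, extensions, and changes of stability condition at finite Bridgeland distance.
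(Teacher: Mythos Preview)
Your proof is correct and follows essentially the same approach as the paper's: for (1) the paper simply notes that the objects $E$ admitting such a bound form a thick stable subcategory, for (2) it refers back to \Cref{L:simplify_boundedness} which uses the Jordan--H\"{o}lder filtration and additivity of mass exactly as you do, and for (3) it invokes the inequality $m_\sigma(E) \leq r\, m_\tau(E)$ when $d(\sigma,\tau)<\infty$. Your remark that the support property is what guarantees finite-length slices and hence Jordan--H\"{o}lder filtrations is a useful clarification that the paper leaves implicit.
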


\begin{proof}
(1) follows from the observation that the objects $E \in \cC$ that admit a bound of the form \eqref{E:mass_bound} span a thick stable subcategory of $\cC$. The proof of (2) is analogous to that of \Cref{L:simplify_boundedness}.
\end{proof}

We will see in \Cref{P:manifold_with_corners_implies_boundedness} that \Cref{conj:manifold_with_corners_simplified} implies the following:

\begin{conj}[Boundedness conjecture]\label{conj:boundedness}
If $\cC$ is smooth and proper over a field $k$ then any stability condition admits a mass-Hom bound.
\end{conj}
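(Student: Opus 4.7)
The plan attacks the conjecture along two complementary lines: directly via the Serre functor on a smooth and proper $\cC$, and indirectly via the manifold-with-corners conjecture.

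I would first apply \Cref{L:simplify_mass_hom_bound} to reduce to the case where $F$ is $\sigma$-stable and $E$ ranges over a fixed classical generator of $\cC$, and where we may replace $\sigma$ by any prestability condition in its connected component. Under smoothness and properness, $\cC$ admits a Serre functor $S$, and the Euler pairing $\chi(E,-) : K_0(\cC) \to \bZ$ factors through a linear functional $\lambda_E$ on $\Lambda$. The support property gives $\lVert \ch(F) \rVert \leq c^{-1} m_\sigma(F)$ for semistable $F$, hence $\lvert \chi(E,F)\rvert \leq \lVert \lambda_E \rVert \, c^{-1} m_\sigma(F)$. The remaining task is to upgrade this chi-bound to a bound on $\dim_k \Hom(E,F)$ itself.

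To handle higher Ext groups, one would exploit Serre duality $\Hom(E, F[n]) \cong \Hom(F, SE[-n])^\ast$, which vanishes whenever $\phi_\sigma(F) > \phi^+_\sigma(SE[-n])$ or $\phi_\sigma(F) < \phi^-_\sigma(SE[-n])$ by semistability of $F$. If one can show that $S$ is $t$-bounded in the sense of \Cref{D:boundedness} with respect to $\sigma$ (which is expected for smooth and proper $\cC$), then for each $E$ only $O(1)$ values of $n$ contribute, with the count depending on $E$ but not on $F$. One may then attempt to dominate $\dim \Hom(E,F) \leq \sum_n \dim \Hom(E,F[n])$ and iterate on the refined HN factors of $E$ so that the alternating sum in $\chi$ does not cause cancellation.

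The principal obstacle is precisely this cancellation: the alternating sign in the Euler characteristic can conceal individual terms that grow faster than linearly in $m_\sigma(F)$, so the passage from a $\chi$-bound to a $\dim \Hom$-bound is not automatic. To sidestep this, I would pursue the indirect route. As announced in the paper, \Cref{P:manifold_with_corners_implies_boundedness} derives \Cref{conj:boundedness} from the manifold-with-corners conjecture \Cref{conj:manifold_with_corners_simplified}. The recursive boundary structure of $\Astab(\cC)$ via multiscale decompositions offers an inductive handle: using the adjoint-existence hypothesis at admissible boundary points, one could bootstrap the bound from the subquotient categories $\gr_v(\cC_\bullet)$ up to $\cC$ itself, propagating estimates across boundary strata with the help of the gluing construction of \cite{CP10}. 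This strategy seems more likely to succeed than a purely direct attack, while the Serre-duality approach is probably only tractable for Calabi-Yau categories, where $S$ is a shift.
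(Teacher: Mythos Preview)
The statement you are attempting to prove is a \emph{conjecture} in the paper, not a theorem: the paper does not provide a proof of \Cref{conj:boundedness}, and explicitly leaves it open. What the paper does establish is the implication in \Cref{P:manifold_with_corners_implies_boundedness}, namely that \Cref{conj:manifold_with_corners_simplified} implies \Cref{conj:boundedness}; but \Cref{conj:manifold_with_corners_simplified} is itself open (only the generic case, \Cref{T:genericmanifoldwithcorners}, is proven). So there is no ``paper's own proof'' to compare against.

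Regarding your two strategies: you have correctly identified the obstruction to the direct Serre-functor approach---the alternating signs in $\chi(E,F)$ permit cancellation, so a bound on $|\chi(E,F)|$ does not yield a bound on $\dim \Hom(E,F)$, and your suggestion to ``iterate on the refined HN factors of $E$'' does not resolve this, since the cancellation occurs in the $F$ variable. (Also, your claim that $t$-boundedness of the Serre functor is ``expected'' is in fact proven in \cite{quasiconvergence}*{Prop.~3.1}, cited in \Cref{P:mass_hom_implies_bounded_functors}, but this does not help with the cancellation.) Your indirect route simply reduces one open conjecture to another; the inductive bootstrapping you sketch across boundary strata is precisely the content of the unproven non-generic case of \Cref{conj:manifoldwithcorners}, and the paper explains that a ``missing gluing construction'' beyond \cite{CP10} would be needed there. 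In short, neither line constitutes a proof, and the paper does not claim one.
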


\begin{ex}[Algebraic stability conditions]
    A stability condition $\sigma = (Z,\cP)$ on $\cC$ is called \emph{algebraic} \cite{ikeda_2021}*{p. 138} if the underlying heart $\cA = \cP(0,1]$ is of finite length with finitely many simple objects, $S_1,\ldots, S_k$. By \cite{ikeda_2021}*{p. 151}, there is a stability condition $\sigma_0$ obtained from $\sigma$ by deformation of $Z$ such that for any object $F$ one has $m_{\sigma_0}(F) = \sum_{i=1}^k \lvert n_i\rvert$ where $[F] = \sum_{i=1}^k n_i \cdot [S_i]$ in $ \rm{K}_0(\cC)$ with $n_i \in \bZ$. By \Cref{C:metric_mass_hom_bound} we may assume $\sigma_0 = \sigma$. Then, one verifies that $\dim_k \Hom(G,F) \le U\cdot m_\sigma(F)$ and we are done.\endnote{More precisely, we prove the claim inductively by taking an object $F$ in $\cA$ (the general case of $F\in \cA[n]$ being identical) and considering its Jordan-H\"older filtration. Suppose the result has been proven for objects $F$ with length at most $\ell-1$ Jordan-H\"older filtration and consider an object $F$ with length $\ell$ Jordan-H\"older filtration. We have a short exact sequence $0\to E\to F \to Q\to 0$ where $E$ is a subobject with length $\ell-1$ Jordan-H\"older filtration and $Q = S_j^{\oplus d}$ for some $d\ge 1$. Next, we have a pair of maps, exact in the middle:
    \[
        \Hom(G,E)\xrightarrow{\alpha} \Hom(G,F)\xrightarrow{\beta} \Hom(G,S_j^{\oplus d})
    \]
    and it follows that $\dim \Hom(G,F) = \dim \im(\alpha) + \dim \im(\beta) \le \dim \Hom(G,F) + \dim\Hom(G,S_j^{\oplus d})$ and the result now follows by induction.}
    

    Examples of algebraic stability conditions include stability conditions with heart the abelian category of finite modules over a finite dimensional algebra of finite global dimension. As a consequence, any smooth and proper variety $X$ admitting a tilting bundle, such as a rational surface \cite{Tiltingrationalsurfaces}, has a connected component of $\Stab(X)$ consisting of stability conditions satisfying \eqref{E:mass_bound}.
\end{ex}

\begin{ex}[Curves]
    Let $X$ denote a smooth genus $g$ projective curve over a field with ample line bundle $\cL$ and let $\sigma\in \Stab(X)$ denote slope stability as in \cite{Br07}*{Ex. 5.4}. By \cite{Orlovgeneration}, $G = \cO_X \oplus \cL$ is a strong, hence classical, generator of $\DCoh(X)$. One can use Le Potier's bounds \cite{LePotier}*{Lem. 7.1.2} to prove that for any $\cF$ semistable vector bundle one has\endnote{First, given a semistable locally free sheaf $F$ on $X$ of slope $\mu$, we have $\dim_k \Hom(\cO_X,F) = h^0(X,F) +h^1(X,F) = h^0(X,F) +h^0(X,F^\vee \otimes \omega_X)$ using Serre Duality. Applying Le Potier's bounds, one has 
    \[
    \dim_k \Hom(\cO_X,F) \le \deg F + \deg(F^\vee\otimes \omega_X) + 2r = 2gr
    \]
    where $g$ is the genus of $X$ and $r$ the rank of $F$. Since the central charge of the Bridgeland stability condition $\sigma$ corresponding to slope stability on $X$ is given by $Z(F) = -\deg(F) + i\rk(F)$, we have $r\le m_\sigma(F)$ and $\dim \Hom(\cO_X,F)\le 2gm_\sigma(F)$ for all locally free $F$. Now, 
    \begin{align*}
    \dim \Hom(\cL^{k-1} \oplus \cL^k,\cF) &= \dim \Hom(\cO_X,F') + \dim \Hom(\cO_X,F'')\\
    &\le 2g(m_\sigma(F') + m_\sigma(F''))
    \end{align*}
    where $F' = F \otimes \cL^{-k+1}$ and $F'' = F\otimes \cL^{-k}$.
    Write $d = \deg F$. One computes $\deg(F') = d+r(1-k)\deg(\cL)$ and $\deg(F'') = d - kr\deg(\cL)$.
    Using this, we compute 
    \begin{align*}
        m_\sigma(F') + m_\sigma(F'') & = \sqrt{(d+r(1-k)\deg(\cL))^2 + n^2} + \sqrt{(d-nk\deg(\cL))^2+ n^2}\\
        & \le 2m_\sigma(F) + 2\sqrt{\lvert 2drk\deg(\cL)\rvert} + 2rk\deg \cL\\
        & \le 2m_\sigma(F) + 4\sqrt{k\deg \cL}\cdot m_\sigma(F) + 2k\deg \cL\cdot  m_\sigma(F)\\
        & \le C_Gm_\sigma(F)
    \end{align*}
    where $C_G$ is a constant depending only on the choice of classical generator $G$.}
    \[
        \dim_k \Hom(G,\cF) \le 4g \left(1+\sqrt{\deg(\cL)} \right)^2 m_\sigma(F).
    \]
    Furthermore, for any torsion sheaf $\cT$ one has $\dim_k(G,\cT) = 2\cdot m_\sigma(\cT)$. This implies the result for slope stability and since it is known that $\Stab(X)$ is connected in this case, the result follows for all stability conditions on curves. 
\end{ex}

\begin{ex}[Gluing]
    Consider a semiorthogonal decomposition $\cC = \langle \cC_1,\ldots, \cC_n\rangle$ of a smooth and proper stable dg-category $\cC$ equipped with stability conditions $\sigma_j \in \Stab(\cC_j)$ satisfying \eqref{E:mass_bound} for all $1\le j \le n$. For $t\in \bR$, define $(Z_{j,t},\cP_{j,t}) = \sigma_{j,t} := \exp(\sqrt{-1} jt) \cdot \sigma_j$. By \cite{quasiconvergence}*{Lem. 3.5}, there exists $t_0\in \bR$ such that for all $t\ge t_0$ one can glue $(\sigma_{1,t},\ldots, \sigma_{n,t})$ to obtain stability conditions $\sigma_t = (Z_t,\cP_t)$ on $\cC$ with the property that $\cP_t(\phi) = \bigoplus_{j=1}^n \cP_{j,t}(\phi)$ for all $\phi \in \bR$. 

    Let $t\ge t_0$ be given. We will show that $\sigma_t$ satisfies \eqref{E:mass_bound}. By induction, we may assume $n=2$. Denote by $i:\cC_1\to \cC$ the inclusion functor and by $j\dashv i$ its left adjoint. Let $X_1\in \cP_{1,t}(\phi)$ be given. Given a classical generator $G$ for $\cC$,\endnote{Toen and Vaqui\'{e} show that any smooth and proper category over a commutative ring is homotopically finitely presented, and in particular has a classical generator.} $j(G)$ is a classical generator of $\cC_1$ and by hypothesis, $\dim_k \Hom(G,X_1) = \dim_k \Hom(j(G),X_1) \le C_1(t)\cdot m_{\sigma_{1,t}}(X_1)$. Mutating, we obtain a similar inequality for any $X_2\in \cP_{2,t}(\phi)$ for some $C_2(t) > 0$ depending only on $G$ and $t$. So, for any $X = X_1\oplus X_2\in \cP(\phi)$ we have $\dim_k \Hom(G,X) \le \max\{C_1(t),C_2(t)\} m_{\sigma,t}(X)$. 

    As a very special case, any smooth and proper variety $X$ whose derived category admits a full exceptional collection has a connected component of $\Stab(X)$ whose elements satisfy the mass-Hom bound.
\end{ex}

\begin{ex}[Induction]
    Consider an exact functor $\Phi:\cC\to \cD$ of $k$-linear trian\-gulated categories. In some circumstances it is possible to induce a stability condition $\sigma$ on $\cC$ from a given $\sigma' \in \Stab(\cD)$; see \cite{MMSinducing}*{Thm. 2.14}. Suppose $\sigma'$ satisfies \eqref{E:mass_bound}. If $\Phi$ is faithful then for any $X,Y\in \cC$ with $Y$ semistable with respect to $\sigma$, one has 
    \[
        \dim_k \Hom(X,Y) \le \dim_k \Hom(\Phi(X),\Phi(Y)) \le C_{\Phi(X)}m_{\sigma'}(\Phi(Y)).
    \]
    However, $m_{\sigma'}(\Phi(Y)) = m_\sigma(Y)$ and thus setting $C_{\Phi(X)} =: C_X$, it follows that $\sigma$ satisfies \eqref{E:mass_bound}.
\end{ex}


\begin{prop}\label{P:mass_hom_implies_bounded_functors}
Let $\cC$ be a smooth, proper, and idempotent complete stable dg-category over a field $k$. A pre-stability condition on $\cC$ has a mass-Hom bound if and only if for any $E \in \cC$, the functor $\RHom(E,-) : \cC \to \DCoh(k)$ is mass-bounded for the standard stability condition on $\DCoh(k)$. Furthermore in this case:
\begin{enumerate}
    \item Any exact $k$-linear functor $\Phi : \cC \to \cD$ to an idempotent complete stable $k$-linear dg-category $\cD$ with a pre-stability condition is bounded; and
    \item There is a constant $C>0$ such that $\dim \left(\bigoplus_i \Ext^i(E,F)\right) \leq C m_\sigma(E) m_\sigma(F)$ for all $E,F \in \cC$.
\end{enumerate}
\end{prop}
\begin{proof}
    The functor $\RHom(E,-)$ is mass-bounded if and only if there is some constant $c_E$ such that $\dim H^\ast(\RHom(E,F)) \leq c_E m_\sigma(F)$, because the mass of any $C^\bullet \in \DCoh(k)$ is proportional $\sum_i \dim H^i(C^\bullet)$. By \Cref{L:simplify_boundedness}, it suffices to check this for all $F \in \cC^{(0,1]}$. The functor $\RHom(E,-)$ has bounded $t$-amplitude by \cite{quasiconvergence}*{Prop. 3.1}, so that there exists some $N>0$ such that $\dim H^\ast(\RHom(E,F)) = \sum_{i=-N}^N \dim \Hom(E,F[i])$ for all $F \in \cC^{(0,1]}$. This finite sum is bounded above by a constant times $m_\sigma(F)$ if and only if the same is true for each term, which is the statement of the mass-Hom bound \eqref{E:mass_bound}.
    
    Now let $\Phi : \cC \to \cD$ be a functor as in (1), and let $\tau$ be a pre-stability condition on $\cD$. The fact that $\Phi$ has bounded $t$-amplitude is proved in \cite{quasiconvergence}*{Prop. 3.1}, and a similar proof gives mass-boundedness as well: $\cC$ admits a classical generator $G$, and because $\cC$ is smooth and proper, the identity functor $\cC \to \cC$ lies in the smallest thick stable subcategory of $\Fun^{\rm{ex}}(\cC,\cC)$ containing the functor $\RHom(G,-) \otimes_k G$, and therefore $\Phi \cong \Phi \circ \id$ lies in the smallest thick stable subcategory of $\Fun(\cC,\cD)$ generated by $\RHom(G,-) \otimes_k \Phi(G)$. Choose a $C>0$ such that $\dim_k H^\ast(\RHom(G,E)) \leq C m_\sigma(E)$ for any $E \in \cC$. Because $\RHom(G,E)$ is isomorphic to its homology as a $k$-module, it follows that $m_\tau(\RHom(G,E) \otimes_k \Phi(G)) = m_\tau(\Phi(G)^{\oplus n}) = n m_\tau(\Phi(G))$ for some $n \leq C m_\sigma(E)$. It follows that the functor $\RHom(G,-) \otimes_k \Phi(G)$ is mass-bounded, with constant $C m_\tau(\Phi(G))$. So the claim holds because bounded functors form a thick stable subcategory of $\Fun^{\mathrm{ex}}(\cC,\cD)$.

    The claim (2) follows from the same argument. If $\cB \subset \Fun^{\mathrm{ex}}(\cC,\cC)$ is the full subcategory of functors $\Phi$ for which $\exists C>0$ such that $\dim (\Ext^\ast(\Phi(E),F)) \leq C m_\sigma(E) m_\sigma(F)$ for all $E,F \in \cC$, then $\cB$ is closed under shifts, cones, and retracts. $\cB$ contains $\RHom(G,-)\otimes G$, because $\dim \Ext^\ast(\RHom(G,E) \otimes G,F) = (\dim \Ext^\ast(G,E)) \cdot (\dim \Ext^\ast(G,F)) \leq c_G^2 m_\sigma(E) m_\sigma(F)$. It follows that $\id_{\cC} \in \cB$, which proves (2).
\end{proof}

\begin{cor}\label{C:metric_mass_hom_bound}
    Suppose $\cC$ is a stable dg-category over a field, $\sigma \in \Stab(\cC)$, and $\tau$ is a pre-stability condition on $\cC$.
    \begin{enumerate}
        \item If $d(\sigma,\tau)<\infty$, then $\tau$ has a mass-Hom bound if and only if $\sigma$ does.
        \item If $\cC$ is smooth, proper, and idempotent complete, and $\sigma$ and $\tau$ have a mass-Hom bound, then $d(\sigma,\tau)<\infty$.
    \end{enumerate}
\end{cor}
\begin{proof}
    The first claim follows from the fact that if $d(\sigma,\tau)<\infty$, then for some $r>0$, $m_\sigma(E) \leq r m_\tau(E)$ and $m_\tau(E) \leq r m_\sigma(E)$ for all $E \in \cC$. The second claim follows from \Cref{P:mass_hom_implies_bounded_functors} applied to the identity functor $(\cC,\sigma) \to (\cC,\tau)$ and $(\cC,\tau) \to (\cC,\sigma)$, which implies the existence of a constant $K>0$ such that $\phi_\sigma^+(E) \leq \phi_\tau^+(E)+K$, $\phi_\sigma^-(E) \geq \phi_\tau^-(E)-K$, and $e^{-K} m_\tau(E) \leq m_\sigma(E) \leq e^K m_\tau(E)$ for all $E \in \cC$. 
\end{proof}

\subsection{Sluicings and base change}

The question of base change of $t$-structures and slicings on derived categories of coherent sheaves has been studied in \cites{Polishchuk,AbramovichPolishchuk,families,halpernleistner2022structure}. We give a self-contained discussion here, extending these ideas to the more general context of an essentially small, idempotent complete stable dg-category $\cC$ over a commutative ring $k$. We believe that we have simplified some of the earlier treatments, especially with the following:

\begin{defn}[Sluicing]\label{D:sluicing}
    A \emph{sluicing} of $\cC$ of width $w$, with $0<w\leq 1$, consists of a collection of $t$-structures $(\cC_{>\varphi},\cC_{\leq \varphi})$ for $\varphi \in \bR$ such that:\vspace{-2mm}
    \begin{enumerate}
        \item $\forall a \in \bR$, $\cC_{>a} = \bigcup_{\varphi > a} \cC_{>\varphi}$. \vspace{-2mm}
        \item $\forall 0<b-a<w$, $\exists$ a bounded noetherian $t$-structure $(\cC_{\geq 0},\cC_{<0})$ with $\cC_{>a} \subseteq \cC_{\geq 0} \subseteq \cC_{>b-1}$; and \vspace{-2mm}
        \item Any $E \in \cC$ lies in $\cC_{>a} \cap \cC_{\leq b}$ for some $a<b$.
    \end{enumerate}
\end{defn}

\begin{rem} A sluicing is more restrictive than a slicing in that for intervals $I$ of width $< w$, $\cP(I) \subseteq \cC$ is always contained in the heart of a bounded \emph{noetherian} $t$-structure, but it is more flexible in that objects are not required to have HN filtrations.\end{rem}

\begin{figure}[h!]
    \centering
    \includegraphics[width=0.5\linewidth]{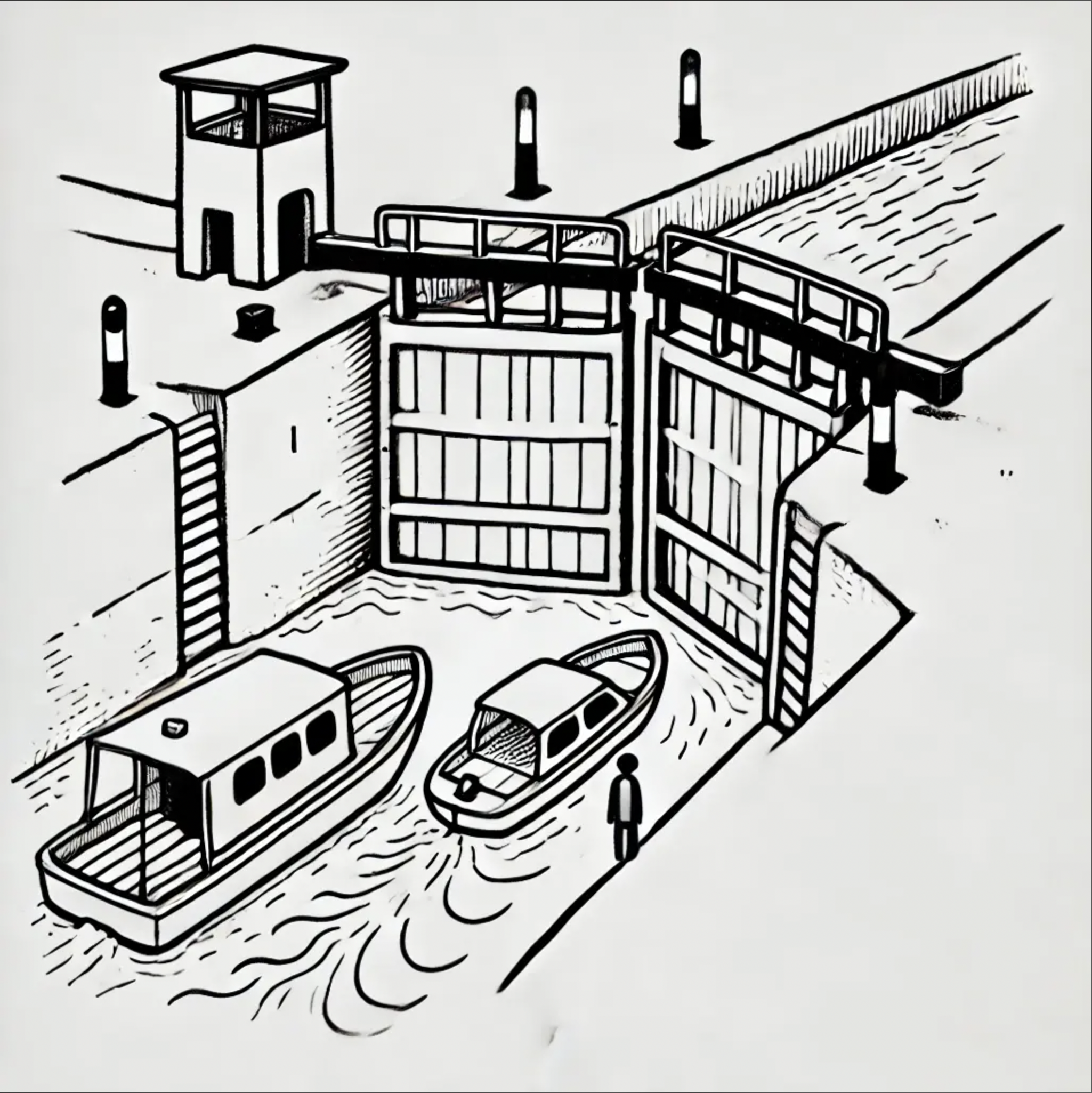}
    \caption{A sluice, rendered with ChatGPT.}
    \label{Fig:sluice}
\end{figure}

Given a sluicing on $\cC$, for any (possibly infinite) half-open interval $I = (a,b]$ we will write $\cP(I) = \cC_{>a} \cap \cC_{\leq b}$, and we will let $\tau_I = \tau_{>a} \circ \tau_{\leq b} \cong \tau_{\leq b} \circ \tau_{>a} : \cC \to \cC$ denote the projection functor with respect to the $t$-structures. The categories $\cP(I)$ determine the sluicing. We also define $\phi^-(E) = \sup\{a | E \in \cC_{>a}\}$ and $\phi^+(E) = \inf\{b | E \in \cC_{\leq b}\}$ --- note that condition (1) guarantees that $E \in \cC_{\leq \phi^+(E)}$, but the analogous claim for $\phi^-(E)$ does not hold. 

\begin{prop}\label{P:sluicing_vs_stability_condition}
    Fix a homomorphism $\ch :  \rm{K}_0(\cC) \to \Lambda$. Giving a stability condition on $\cC$ is equivalent to giving a sluicing $\cP$ and a homomorphism $Z : \Lambda \to \bC$ such that $\exists w \in (0,1)$ and $C>0$ such that:
    \begin{itemize}
    \item For any interval $I$ of width $w$ and any $0 \neq E \in \cP(I)$, $Z(E)$ lies in the cone $\bR_{>0} e^{i \pi I}$ and $|Z(E)| > C \lVert \ch(E) \rVert$.
    \end{itemize}
    An object $E$ is semistable of phase $\phi$ if $\phi = \phi^+(E) = \phi^-(E)$, i.e., $E \in \cC_{\leq \phi}$ and $\tau_{\leq \psi}(E) \cong 0$ for all $\psi<\phi$. 
\end{prop}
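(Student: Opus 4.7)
The plan is to show that giving a stability condition in the original sense is equivalent to giving the pair $(\{\cC_{>\phi}, \cC_{\leq \phi}\}_\phi, Z)$ via the mutually inverse recipes
\[
    \cC_{>\phi} = \langle \cP(\psi) \mid \psi > \phi \rangle_{\mathrm{ext}}, \qquad \cP(\phi) = \Bigl(\bigcap_{\psi < \phi} \cC_{>\psi}\Bigr) \cap \cC_{\leq \phi},
\]
with the central charge $Z$ unchanged on both sides. The semistability characterization at the end of the proposition is then an immediate unwinding, since $E \in \cC_{\leq \phi}$ forces $\phi^+(E) \leq \phi$ and $E \in \cC_{>\psi}$ for all $\psi < \phi$ forces $\phi^-(E) \geq \phi$.

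For the forward direction, HN existence for the slicing implies that each $(\cC_{>\phi}, \cC_{\leq \phi})$ is a bounded t-structure, and the sluicing axioms then hold: (1) is right-openness of the slicing, (3) is HN existence, and for (2) one may take, for any $\phi_0 \in [b, a+1]$ (nonempty since $b-a < 1 \leq w$), the bounded t-structure with heart $\cP(\phi_0 - 1, \phi_0]$; the sandwich $\cC_{>a} \subseteq \cC_{>\phi_0 - 1} \subseteq \cC_{>b-1}$ is immediate, and noetherianness of the heart follows from the support property. The cone-plus-support condition on $\cP(I)$ for any half-open $I$ of width $w$ follows from the support bound on semistables together with the fact that the HN central charges of $E \in \cP(I)$ all lie in the convex cone $\bR_{>0} e^{i\pi I}$, so their masses add without destructive cancellation up to a constant depending only on $w$.

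For the backward direction, Hom-vanishing between $\cP(\phi_E)$ and $\cP(\phi_F)$ with $\phi_E > \phi_F$ is immediate from the t-structure cut at $\phi_F$, while the cone condition $Z(E) \in \bR_{>0} e^{i\pi\phi}$ for $E \in \cP(\phi)$ follows by applying the hypothesis to $E \in \cP(\phi - \epsilon, \phi]$ and letting $\epsilon \to 0$. The main content is HN existence. Given $E \in \cC$, condition (3) places $E$ in some $\cP(a, b]$; I would partition $(a, b]$ into sub-intervals $(a_{i-1}, a_i]$ of width $< w$ and apply the t-structure truncations to obtain a finite filtration of $E$ whose graded pieces $G_i \in \cP(a_{i-1}, a_i]$ sit inside the noetherian heart provided by condition (2). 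On each such heart the central charge takes values in the strictly convex sector $\bR_{>0} e^{i\pi(a_{i-1}, a_i]}$ and satisfies the support bound, so the classical abelian-category HN machinery (compare \cite{Br07}*{Prop.~2.4}) produces an HN filtration of each $G_i$ into $Z$-semistables inside the heart. Concatenation yields the HN filtration of $E$, with strictly decreasing phases guaranteed by the nesting of the truncations.

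The main obstacle is the HN step above. One must verify that the classical noetherian-heart HN argument really applies inside each $G_i$: ascending chains of destabilizing subobjects terminate (noetherianness of the ambient heart from condition (2)), $Z$ gives a well-defined phase function compatible with short exact sequences (convex sector condition of width $< w$), and the set of classes appearing as HN factors of a fixed object is finite (the support bound $|Z(E)| > C \lVert \ch(E) \rVert$ applied to the countably many $\Lambda$-classes of mass bounded by $m(E)$). One must also check that the filtrations produced across the different sub-intervals $(a_{i-1}, a_i]$ glue into a genuinely phase-decreasing filtration of $E$, which is forced by the definition of $\cP(\phi)$ as an intersection of $\cC_{>\psi}$'s, but requires bookkeeping. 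Once these technicalities are in place, the two constructions are easily seen to be mutually inverse, establishing the equivalence.
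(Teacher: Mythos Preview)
Your forward direction has a genuine gap: the claim that ``noetherianness of the heart $\cP(\phi_0-1,\phi_0]$ follows from the support property'' is unjustified. The support property bounds $|Z|$ below on \emph{semistable} classes, but an ascending chain $E_1 \subset E_2 \subset \cdots \subset E$ in the heart can have successive quotients whose HN factors have phases approaching $0$ or $1$, so that $\Im Z(E_{i+1}/E_i)$ is positive yet arbitrarily small; nothing forces the chain to stabilize. The paper handles this by a different maneuver: invoke Bridgeland's deformation theorem to perturb $\sigma$ to a nearby stability condition $\sigma'$ whose central charge factors through $\bQ[i]$. For $\sigma'$, a dense set of the $t$-structures $(\cP'_{>\varphi},\cP'_{\leq \varphi})$ are noetherian (Polishchuk's observation, since the image of $Z'$ is then discrete), and because the slicing of $\sigma'$ is close to that of $\sigma$ in the slicing metric, these noetherian $t$-structures already witness axiom (2) for the \emph{original} sluicing. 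You cannot shortcut this step.

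Your backward direction is correct in outline but takes a different route from the paper and needs more care. You embed each $G_i \in \cP(a_{i-1},a_i]$ into the noetherian heart from sluicing axiom (2) and run classical abelian HN there; but subobjects of $G_i$ in that ambient heart need not lie in $\cP(a_{i-1},a_i]$, so the phase bookkeeping is delicate. The paper instead bypasses axiom (2) entirely in this direction: it shows directly that the quasi-abelian category $\cP(I)$, for $I=(a-w/2,a+w/2]$, is of finite length, using that $\Re(e^{-i\pi a}Z(-))$ is additive on strict short exact sequences and bounded below by $C\cos(\pi w/2)\cdot \inf_{0\neq v\in\Lambda}\lVert v\rVert$. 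The HN filtration is then built by taking $\phi_n \nearrow \phi^+(E)$ and observing that the descending chain $\tau_{>\phi_n}(E)\hookrightarrow E$ stabilizes by the artinian property, yielding a maximal semistable subobject; iteration terminates by the noetherian property. This is both cleaner and avoids the ambient-heart bookkeeping your approach would require.
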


\begin{proof}
To show that a stability condition gives a sluicing, use Bridgeland's theorem to deform the stability condition to one for which $Z : \Lambda \to \bC$ factors through $\bQ[i]$. Then a dense set of the $t$-structures $(\cP(>\psi),\cP(\leq \psi))$ are noetherian, and the fact that this slicing is close to the original with respect to the metric on slicings shows that these noetherian $t$-structures satisfy \Cref{D:sluicing}(2) --- see \cite{Polishchuk}*{\S 1.2, Ex.}.

Let $I=(a-b,a+b]$ for $a \in \bR$ and $0<b<1/2$. For any $F \in \cP(I)$, one has $|Z(\ch(F))| \leq \Re(e^{-i\pi a} Z(\ch(F))) / \cos(\pi b)$. Because the right-hand side is additive, applying this to the associated graded pieces of the HN filtration implies that
    \[
    |Z(\ch(F))| \leq m(F) \leq \frac{\Re(e^{-i\pi a} Z(\ch(F)))}{\cos(\pi b)} \leq \frac{|Z(\ch(F))|}{\cos(\pi b)}.
    \]
The support property for the stability condition implies that $\exists C>0$ such that $|Z(\ch(F))| > C \lVert \ch(F) \rVert$ for all $F \in \cP(I)$.

Conversely, assuming the condition in the lemma, we must show that HN filtrations exist.\endnote{The condition that $Z(E) \in \bR_{>0} e^{i\pi \phi}$ for a semistable object of phase $\phi$ follows from the condition in the lemma, because $E \in \cP((\phi-w,\phi])$ and $E \in \cP((\psi, \psi +w])$ for any $\psi < \phi$. The support property for the resulting pre-stability conditions follows from the inequality $m(E) \geq Z(E) > C \lVert \ch(E) \rVert$ in the statement of the lemma.} For $E \in \cP(I)$, one can subdivide $I$ and concatenate HN filtrations of the projection of $E$ onto each of these subintervals to obtain an HN filtration of $E$. It therefore suffices to show the existence of an HN filtration when $I = (a-w/2,a+w/2]$ for some $a \in \bR$. The function $\Re(e^{-i\pi a}Z(\ch(-)))$ is additive on short exact sequences in the quasi-abelian category $\cP(I)$ and bounded below by $C \cos(\pi w/2) \inf\{\lVert v \rVert: 0 \neq v \in \Lambda\}$. It follows that $\cP(I)$ is finite length in that any ascending or descending sequence of strict monomorphisms must stabilize.

Now consider a sequence $\phi_1<\phi_2<\cdots$ converging to $\phi^+(E)$. This gives a descending chain of strict monomorphisms $\cdots \hookrightarrow \tau_{>\phi_2}(E) \hookrightarrow \tau_{>\phi_1}(E) \hookrightarrow E$, which must stabilize at some index $n$ because $\cP(I)$ is artinian. The resulting object $E_1 := \tau_{>\phi_n}(E) \hookrightarrow E$ is semistable of phase $\phi^+(E)$ by construction, and it is nonzero if $E$ is nonzero. Repeating this process for the quotient $\cofib(E_1 \to E) \in \cP((a-w/2,\phi_n])$ and iterating gives an ascending chain of strict monomorphisms $E_1 \hookrightarrow E_2 \hookrightarrow E$ whose quotients are semistable with deceasing phases. This chain must stabilize because $\cP(I)$ is noetherian, resulting in an HN filtration of $E$.
\end{proof}

For any $k$-algebra $R$ we let $\cC_R$ denote the compact objects in the tensor product of $k$-linear presentable stable dg-categories $\Dqc(R) \otimes_{\Dqc(k)} \Ind(\cC)$. The latter is compactly generated, so there is a canonical isomorphism with the ind-completion $\Ind(\cC_R) \cong \Dqc(R) \otimes_{\Dqc(k)} \Ind(\cC)$.

Given a sluicing on $\cC$ and any $\varphi \in \bR$, we let $\Ind(\cC_R)_{>\varphi} \subset \Ind(\cC_R)$ denote the smallest full subcategory that is closed under small colimits and extensions and contains $R \otimes_k E$ for all $E \in \cP(\varphi,\infty)$. By \cite{HigherAlgebra}*{Prop.~1.4.4.11}, $\Ind(\cC_R)_{>\varphi}$ is presentable, and is the connective part of an accessible $t$-structure $(\Ind(\cC_R)_{>\varphi},\Ind(\cC_R)_{\leq \varphi})$ on $\cC_R$, where accessible means it satisfies the equivalent conditions of \cite{HigherAlgebra}*{Prop.~1.4.4.13}. The fact that $\Ind(\cC_R)_{>\varphi}$ is generated by objects that are compact in $\Ind(\cC_R)$ guarantees that the truncation functors commute with filtered colimits.\endnote{Consider a filtered system $\{E_\alpha \in \Ind(\cC_R)\}_{\alpha \in I}$. The fact that the inclusion functor $\Ind(\cC_R)_{>\varphi} \hookrightarrow \Ind(\cC_R)$ is a left adjoint implies that this subcategory is closed under all colimits. Now consider the diagram
\[
\colim_{\alpha} \tau_{>\varphi}(E_\alpha) \to E = \colim_\alpha E_\alpha \to \colim_\alpha \tau_{\leq \varphi}(E_\alpha).
\]
The first object lies in $\Ind(\cC_R)_{>\varphi}$, so if the third object lies in $\Ind(\cC_R)_{\leq \varphi}$, then this triangle must agree with the truncation sequence $\tau_{>\varphi}(E) \to E \to \tau_{\leq \varphi}(E)$, i.e., the truncation functors commute with filtered colimits. This reduces the claim to showing that the subcategory $\Ind(\cC_R)_{\leq \varphi} \subset \Ind(\cC_R)$ is closed under filtered colimits. $F \in \Ind(\cC_R)$ lies in $\Ind(\cC_R)_{\leq \varphi}$ if and only if $\Hom(R \otimes_k E, F) = 0$ for all $E \in \cP(\varphi,\infty)$. This condition on $F$ is closed under filtered colimits because each of the objects $R\otimes_k E$ is compact.}

The pullback functor $R \otimes_k(-) : \Ind(\cC) \to \Ind(\cC_R)$ is always right exact and is left exact if $R$ is flat over $k$, and its right adjoint $\Ind(\cC_R) \to \Ind(\cC)$ is always exact.\endnote{The definition of the $t$-structures imply that pullback maps $\Ind(\cC)_{>\varphi}$ to $\Ind(\cC_R)_{>\varphi}$, which is the meaning of right exactness. This implies by adjunction that the pushforward functor is left exact. To show that pushforward is right-exact, note that $\Ind(\cC_R)_{> \varphi}$ is generated under filtered colimits and finite colimits by objects of the form $R \otimes_k E$ for $E \in \cP((\varphi,\infty))$, so it suffices to show that $R \otimes_k E \in \Ind(\cC)_{>\varphi}$ for any $E \in \cP(\varphi,\infty)$. This holds because as a $k$-module, $R$ admits a resolution by free $k$-modules in cohomological degree $\leq 0$. Now, because pushforward is exact and conservative, we know that an object $E \in \Ind(\cC_R)$ lies in the $>\varphi$ or $\leq \varphi$ subcategories if and only if its pushforward to $\Ind(\cC)_R$ does. If $R$ is flat over $k$, it is a filtered colimit of projective $k$-modules, and $M \otimes_k (-)$ is left $t$-exact for any projective $k$-module $M$, pullback maps $\Ind(\cC)_{\leq \varphi}$ to $\Ind(\cC_R)_{\leq \varphi}$ by the previous observation.} For any (possibly infinite) interval $I = (a,b]$, we define $\overline{\cP}_R(I) := \Ind(\cC_R)_{>a} \cap \Ind(\cC_R)_{\leq b}$, and define $\cP_R(I) = \overline{\cP}_R(I) \cap \cC_R$. If $b-a<1$, then $\overline{\cP}_R(I)$ is a quasi-abelian category (see \cite{Br07}*{Lem. 4.3}), where strict short exact sequences are exact triangles in $\Ind(\cC_R)$ in which all three objects lie in $\overline{\cP}_R(I)$.

\begin{ex}\label{EX:base_change_perf}
    If $k$ is a field and $\cC = \Perf(X)$ for some smooth $k$-scheme $X$, then $\Ind(\cC_R) \cong \Dqc(X_R)$ and  $\cC_R \cong \Perf(X_R)$. If one starts with the standard $t$-structure on $\Perf(X) \cong \DCoh(X)$, then the construction above gives the standard $t$-structure on $\Dqc(X_R)$ for any $k$-algebra $R$.
\end{ex}

Below it will be convenient to introduce the following notion:

\begin{defn}[Ind-noetherian $t$-structure]
    We call an accessible $t$-structure on a com\-pactly generated stable dg-category $\cA$ \emph{ind-noetherian} if its truncation functors preserve filtered colimits and preserve the subcategory of compact objects $\cA^\omega \subset \cA$, and the induced $t$-structure on $\cA^\omega$ is bounded and noetherian.
\end{defn}

For any ind-noetherian $t$-structure one has $\cA^\heart \cong \Ind((\cA^\omega)^\heart)$,\endnote{The fully faithful inclusion $(\cA^\omega)^\heart \subset \cA^\heart$ induces a morphism $\Ind((\cA^\omega)^\heart) \to \cA^\heart$, which is fully faithful because the objects in $(\cA^\omega)^\heart$ are compact in $\cA^\heart$. It is essentially surjective because every $E \in \cA$ is a filtered colimit of objects in $\cA^\omega$, and the truncation functors preserve filtered colimits because the $t$-structure is accessible.} so $\cA^\heart$ is a locally noetherian abelian category, meaning it has a small generating family of noetherian objects. Also, the category of compact objects in $\cA^\heart$ (also called objects of finite presentation) and the category of noetherian objects in $\cA^\heart$ both coincide with $\cA^\omega \cap \cA^\heart$.\endnote{In a locally noetherian abelian category, compact objects and noetherian objects coincide. This agrees with $\cA^\omega \cap \cA^\heart$ because this abelian subcategory consists of compact objects, generates $\cA^\heart$, and is closed under retracts, so it must agree with the category of compact objects in $\cA^\heart$.}
\begin{lem}\label{L:ind-noetherian_condition}
    An accessible $t$-structure on a compactly generated stable dg-category $\cA$ is ind-noetherian if and only if $\cA$ is generated by a subset $\cG \subset \cA^\heart \cap \cA^\omega$ such that for any $G \in \cG$ and any injective morphism $E \to G$ in $\cA^\heart$, $E \in \cA^\omega$ as well. In this case the condition holds with $\cG = \cA^\heart \cap \cA^\omega$.
\end{lem}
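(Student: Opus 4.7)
My plan is to prove the two directions separately; the reverse direction is the substantive one. For the forward direction, with $\cG := \cA^\heart \cap \cA^\omega$, ind-noetherianness gives that $\cA^\heart \cong \Ind(\cA^\heart \cap \cA^\omega)$ is a locally noetherian Grothendieck category whose noetherian (equivalently compact) objects are exactly those of $\cG$. Any subobject in $\cA^\heart$ of an object of $\cG$ is then noetherian, hence compact, hence in $\cA^\omega$, as required. That $\cG$ generates $\cA$ follows from boundedness of the induced $t$-structure on $\cA^\omega$: any compact object is a finite iterated extension of shifts of its cohomology objects, each of which lies in $\cG$.

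For the reverse direction, given $\cG \subset \cA^\heart \cap \cA^\omega$ with the stated properties, I introduce the auxiliary class
\[
    \cN := \{F \in \cA^\heart \mid F \in \cA^\omega \text{ and every subobject of } F \text{ in } \cA^\heart \text{ lies in } \cA^\omega\}.
\]
The first step is to check that $\cN$ is closed under subobjects, quotients, and extensions in $\cA^\heart$. Closure under subobjects is immediate. For a quotient $F \twoheadrightarrow Q$ with $F \in \cN$, the kernel $K$ is a subobject of $F$ hence compact, so $Q \cong \cofib(K \to F)$ is compact; any subobject $S \subset Q$ equals $\cofib(K \to S')$ where $S' \subset F$ is the preimage of $S$, and so $S$ is compact because both $K$ and $S'$ are. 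For an extension $0 \to A \to B \to C \to 0$ in $\cA^\heart$ with $A, C \in \cN$, the object $B$ is a cone of compacts hence compact, and any subobject $S \subset B$ is an extension of the image of $S$ in $C$ by $S \cap A$, each a subobject of an element of $\cN$ and therefore compact. The hypothesis gives $\cG \subset \cN$ tautologically.

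Next I define $\cC \subset \cA$ to be the full subcategory of objects with only finitely many nonzero cohomology objects, each lying in $\cN$, and show that $\cC$ is a thick subcategory of $\cA^\omega$. Shifts and retracts are routine, using that a retract of $F \in \cN$ is a direct summand and hence a subobject of $F$. The main case is closure under cofibers: for a triangle $A \to B \to C$ with $A, C \in \cC$, the long exact cohomology sequence presents each $H^n(B)$ as an extension of $\ker(H^n(C) \to H^{n+1}(A))$ by $\coker(H^{n-1}(C) \to H^n(A))$, which lie in $\cN$ as a subobject and a quotient of objects in $\cN$. Since $\cG$ classically generates $\cA^\omega$ and $\cG \subset \cN \subset \cC$, we conclude $\cC = \cA^\omega$.

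From $\cC = \cA^\omega$ all the requirements for ind-noetherianness follow. Compact objects have bounded cohomology range, giving boundedness. For any $E \in \cA^\omega$, $\tau_{\leq n}(E)$ has cohomology lying in $\cN \subset \cA^\omega$, so it is itself a finite extension of shifts of compact objects, and similarly for $\tau_{\geq n}$; thus $\cA^\omega$ is preserved by truncations. Finally, $\cA^\omega \cap \cA^\heart = \cN$, and noetherianness reduces to a compactness argument: for an ascending chain $F_1 \subset F_2 \subset \cdots \subset F$ in $\cA^\heart$ with $F \in \cN$, the union $F_\infty := \colim F_i$ is a subobject of $F$ hence compact, so $\id_{F_\infty} \in \Hom(F_\infty, \colim F_i) = \colim \Hom(F_\infty, F_i)$ factors through some $F_n$, producing a section of the monomorphism $F_n \hookrightarrow F_\infty$ and forcing $F_n = F_\infty$. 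I expect the main obstacle to be the verification that $\cC$ is thick, especially its closure under cofibers, which is exactly why $\cN$ had to be engineered to be closed under all three of subobjects, quotients, and extensions rather than only the one closure property built into the hypothesis on $\cG$.
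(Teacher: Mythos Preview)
Your proof is correct and follows the same strategy as the paper: introduce an auxiliary class in $\cA^\heart$ (your $\cN$; the paper uses the closure $\bar{\cG}$ of $\cG$ under subobjects and quotients), show that objects whose cohomology lies in this class form a thick subcategory of compact objects, conclude it equals $\cA^\omega$, and then read off boundedness and the noetherian condition from the fact that subobjects of objects in the class are compact. Your treatment is a bit more careful in explicitly verifying that the auxiliary class is closed under extensions, which is exactly what is needed for the ``cohomology in $\cN$'' subcategory to be closed under cones; the paper's $\bar{\cG}$ is only stated to be closed under subobjects and quotients, so your $\cN$ makes this step cleaner.
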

\begin{proof}
    If the $t$-structure is ind-noetherian, then it is generated by $\cA^\heart \cap \cA^\omega$ because every object in $\cA^\omega$ is an iterated extension of its homology objects. Now if $E \subset F$ in $\cA^\heart$ and $F \in \cA^\omega$, then $E$ is a compact object in the abelian category $\cA^\heart$, because $\cA^\heart$ is locally noetherian. Writing $E = \colim_\alpha E_\alpha$ as a filtered colimit with $E_\alpha \in \cA^\heart \cap \cA^\omega$, it follows that $E$ is a retract of some $E_\alpha$, hence $E \in \cA^\omega$.

    Conversely, assume the conditions of the lemma hold, and let $\bar{\cG} \subset \cA^\heart$ be the closure of $\cG$ under taking subobjects and quotients. Note that the hypotheses on $\cG$ imply $\bar{\cG} \subset \cA^\omega$. The full subcategory $\{E \in \cA | \bigoplus_i H_i(E) \in \bar{\cG}\}$ is a thick triangulated subcategory of compact objects that generates $\cA$. It follows that this is precisely the category $\cA^\omega$. The induced $t$-structure on $\cA^\omega$ must be noetherian because for any ascending chain $E_1 \subseteq E_2 \subseteq \cdots E$ in $(\cA^\omega)^\heart$, the hypotheses of the lemma imply that $\colim E_i \subseteq E$ is also compact, hence $E_i = \colim E_i$ for some $i$, i.e., the chain stabilizes.
\end{proof}

The $t$-structures on $\Ind(\cC_R)$ induced by a sluicing on $\cC$ do not necessarily preserve the category of compact objects $\cC_R \subset \Ind(\cC_R)$, as can be seen from \Cref{EX:base_change_perf} in the case where $X = \Spec(k)$ and $R$ is not regular. Nevertheless we have the following:

\begin{prop}\label{P:base_change_sluicing}
    Suppose that $k \to R$ is a composition of ring homomorphisms of the following types: 1) a polynomial algebra, 2) a localization, or 3) $R$ is perfect as a $k$-module. Then the $t$-structures $(\Ind(\cC_R)_{>\varphi},\Ind(\cC_R)_{\leq \varphi})$ preserve the subcategory $\cC_R$ of compact objects, and define a sluicing of $\cC_R$ of the same width as that on $\cC$.
\end{prop}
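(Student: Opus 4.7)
My plan is to prove both claims of the proposition in tandem by reducing to a single key technical fact: that base change along each of the three types of ring map $k \to R$ promotes a bounded noetherian $t$-structure on $\cC$ to an ind-noetherian $t$-structure on $\Ind(\cC_R)$, in the sense of Lemma~\ref{L:ind-noetherian_condition}. By functoriality of base change, it suffices to treat each of the three basic types separately.

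The key step is to fix a bounded noetherian $t$-structure $(\cC_{\geq 0}, \cC_{<0})$ supplied by axiom (2) of the sluicing, with noetherian heart $\cA := \cC^\heart \cap \cC^\omega$, and to apply Lemma~\ref{L:ind-noetherian_condition} to the base change $(\Ind(\cC_R)_{\geq 0}^{BC}, \Ind(\cC_R)_{<0}^{BC})$. I would exhibit a generating set of compact objects in the base-changed heart whose heart subobjects remain compact. This reduces to a classical noetherianness-preservation principle in each of the three cases: for polynomial extensions, a Hilbert basis theorem for Grothendieck abelian categories (compare \cite{Polishchuk,AbramovichPolishchuk}); for localizations, the preservation of noetherianness under exact localization; and for $R$ perfect as a $k$-module, the fact that base change along a perfect module has bounded homological amplitude and preserves noetherianness of generators in $\cA$.

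Once ind-noetherianness is established, I deduce preservation of $\cC_R$ by the sluicing $t$-structures using HRS tilting. For fixed $\varphi \in \bR$, I apply axiom (2) with $a = \varphi$ and $b = \varphi + \delta$ for some $\delta \in (0, w)$ to obtain a bounded noetherian $t$-structure $N$ on $\cC$ with $\cC_{>\varphi} \subseteq \cC_{\geq 0}^N \subseteq \cC_{>\varphi+\delta-1}$. Shifting gives $\cC_{\geq 1}^N \subseteq \cC_{>\varphi+\delta} \subseteq \cC_{>\varphi} \subseteq \cC_{\geq 0}^N$, so after base change the sluicing $t$-structure $(\Ind(\cC_R)_{>\varphi}, \Ind(\cC_R)_{\leq \varphi})$ is the HRS tilt of the base-changed noetherian $t$-structure along the torsion pair $(\cT_\varphi, \cF_\varphi) := (\cA^{N,BC} \cap \Ind(\cC_R)_{>\varphi},\ \cA^{N,BC} \cap \Ind(\cC_R)_{\leq \varphi})$. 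Because $\cA^{N,BC} \cap \cC_R$ is a noetherian abelian category by the previous step and torsion subobjects of noetherian objects are noetherian, this torsion pair is noetherian on compacts. Standard HRS tilting then shows that the base-changed sluicing $t$-structure is itself ind-noetherian on $\cC_R$, and in particular its truncations preserve $\cC_R$. The remaining sluicing axioms on $\cC_R$ follow immediately: axiom (1) descends from $\Ind(\cC_R)$, axiom (2) is witnessed by restricting the ind-noetherian base-changed $t$-structures to $\cC_R$, and axiom (3) follows from the observation that $\cC_R$ is generated by objects of the form $R \otimes_k F$ for $F \in \cC$ compact, each of which lies in a bounded phase interval by axiom (3) on $\cC$ together with the bounded Tor-amplitude of the base change. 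The width $w$ is preserved because base change does not alter the width of the sandwiching interval.

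The main obstacle is the first step. In each of the three cases one must identify the correct compact generators of the base-changed heart and verify the subobject-compactness condition of Lemma~\ref{L:ind-noetherian_condition}. The perfect-over-$k$ case is the subtlest, since $R$ need not be flat, and one must carefully track the interaction between noetherian truncations and the Tor-amplitude of $R$ over $k$; the polynomial case requires a Hilbert-basis argument at the level of Grothendieck abelian categories, generalizing the classical statement for modules over a ring; the localization case follows directly from the exactness of localization.
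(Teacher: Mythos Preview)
Your overall strategy is correct, and for the polynomial and localization cases it aligns with the paper in the first step (establishing ind-noetherianness of the base-changed noetherian $t$-structures via a Hilbert basis theorem and exact localization, respectively). However, your second step---deducing that the \emph{sluicing} truncations $\tau_{>\varphi}$ preserve $\cC_R$---takes a genuinely different route. You observe that axiom~(2) of the sluicing yields $\cC_{\geq 1}^N \subseteq \cC_{>\varphi} \subseteq \cC_{\geq 0}^N$, so after base change the sluicing $t$-structure is the HRS tilt of the ind-noetherian one; since the torsion part of any compact heart object is a subobject of a noetherian object, it is compact by Lemma~\ref{L:ind-noetherian_condition}, and hence the tilted truncations preserve compacts. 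The paper instead works directly in the quasi-abelian category $\overline{\cP}_R(I)$: it shows that $\tau_{\leq a+\epsilon}(E)$ is compact as an object of $\overline{\cP}_R((a,a+\epsilon])$, then uses the counit $R\otimes_k F \twoheadrightarrow \tau_{\leq a+\epsilon}(E)$ and a filtered-colimit/retract argument to upgrade this to compactness in $\Ind(\cC_R)$, and iterates. Your HRS-tilting argument is cleaner and more conceptual; the paper's argument is more hands-on but avoids invoking the general tilting formalism.

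One point to tighten: your treatment of Case~(3), $R$ perfect over $k$, is underspecified. You gesture at ``bounded Tor-amplitude preserves noetherianness of generators,'' but since $R$ need not be flat, objects $R\otimes_k F$ for $F$ in the heart need not land in the base-changed heart, so identifying the right generating set for Lemma~\ref{L:ind-noetherian_condition} requires care. The paper sidesteps this entirely: because $R$ lies in the thick closure of $R\otimes_k R$ in $\Dqc(R)$, one has that $F \in \Ind(\cC_R)$ is compact if and only if its pushforward to $\Ind(\cC)$ is, and since pushforward is $t$-exact for every $t$-structure in the sluicing, preservation of $\cC_R$ is immediate. This direct argument is both simpler and avoids ind-noetherianness altogether; you could either adopt it or use the same compactness-via-pushforward criterion to verify the hypotheses of Lemma~\ref{L:ind-noetherian_condition} cleanly.
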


\begin{ex}\label{ex:sluicing_finite_type}
    If $k$ is a regular noetherian ring, then any essentially finite type $k$-algebra $k \to R$ satisfies the conditions of \Cref{P:base_change_sluicing}. Indeed such a morphism can be factored $k \to k[x_1,\ldots,x_n] \twoheadrightarrow S \to R$, where $S \to R$ is a localization. $S$ is a perfect $k[x_1,\ldots,x_n]$-module because $k[x_1,\ldots,x_n]$ is regular.
\end{ex}

\begin{lem} \label{L:sluicing_base_change_boundedness}
    Given a sluicing of $\cC$, if $k \to R$ is of finite Tor-amplitude, then any $E \in \cC_R$ lies in $\overline{\cP}_R(I)$ for some bounded interval $I \subset \bR$, i.e., $\cC_R = \bigcup_{I\text{ bounded}} \cP_R(I)$.
\end{lem}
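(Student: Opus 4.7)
The plan is to show that the full subcategory $\cS \subseteq \cC_R$ consisting of objects lying in some $\overline{\cP}_R(I)$ with $I \subset \bR$ finite is a thick stable subcategory equal to all of $\cC_R$. Closure under shifts is immediate, while closure under extensions and retracts is inherited from the corresponding closure properties of each $\overline{\cP}_R(I) = \Ind(\cC_R)_{>a}\cap \Ind(\cC_R)_{\leq b}$. Since $\Ind(\cC_R) = \Dqc(R)\otimes_{\Dqc(k)} \Ind(\cC)$ is compactly generated by the pullbacks $R \otimes_k F$ with $F \in \cC$, the subcategory $\cC_R$ of compact objects is generated as a thick stable subcategory by these objects, so it suffices to show $R \otimes_k F \in \cS$ for each $F \in \cC$.

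Given $F \in \cC$, sluicing axiom (3) yields $F \in \cP((a,b])$ for some finite $a < b$. The lower bound $R \otimes_k F \in \Ind(\cC_R)_{>a}$ is immediate from right exactness of the pullback. For the upper bound, I would use the finite Tor-amplitude hypothesis on $k \to R$ to represent $R$ by a bounded complex $P^\bullet$ of flat $k$-modules concentrated in cohomological degrees $[-d, 0]$ for some $d \geq 0$. Then $R \otimes_k F$, computed in $\Ind(\cC)$ via the $\Dqc(k)$-action, is equivalent to the totalization of $P^\bullet \otimes_k F$, which carries a finite stupid filtration with graded pieces $P^i \otimes_k F[-i]$ for $i \in [-d, 0]$.

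Each $P^i$ is flat, hence by Lazard's theorem a filtered colimit of finitely generated free $k$-modules, so $P^i \otimes_k F$ is a filtered colimit of finite direct sums of $F$ in $\Ind(\cC)$. Since $\overline{\cP}((a, b])$ is closed under direct sums and filtered colimits by accessibility of the sluicing $t$-structures, $P^i \otimes_k F \in \overline{\cP}((a, b])$. Under the convention $\cP(\phi+1) = \cP(\phi)[1]$, the shift $[-i]$ with $-i \in [0, d]$ sends $\overline{\cP}((a, b])$ into $\overline{\cP}((a-i, b-i]) \subseteq \Ind(\cC)_{>a} \cap \Ind(\cC)_{\leq b+d}$, so the totalization, as a finite extension of such pieces, also lies in $\Ind(\cC)_{>a} \cap \Ind(\cC)_{\leq b+d}$. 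Since pushforward $\Ind(\cC_R) \to \Ind(\cC)$ is $t$-exact and conservative, this is equivalent to $R \otimes_k F \in \overline{\cP}_R((a, b+d])$, a finite interval.

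The main obstacle is executing the upper bound rigorously: one needs to verify that finite Tor-amplitude of $k \to R$ gives uniform control on how much the pullback shifts an object in the sluicing on $\Ind(\cC)$. This rests on the compatibility of the $\Dqc(k)$-action on $\Ind(\cC)$ with the sluicing, specifically that tensoring with a flat $k$-module acts $t$-exactly and that cohomological shifts shift the sluicing phase by the same amount. Once these compatibilities are established, a finite flat resolution of $R$ and closure of $\overline{\cP}$ under extensions force the bound $b \mapsto b + d$.
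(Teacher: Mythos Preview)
Your proposal is correct and follows essentially the same argument as the paper: reduce to pullbacks $R\otimes_k F$ via classical generation of $\cC_R$, use a finite flat resolution of $R$ coming from finite Tor-amplitude, invoke Lazard's theorem to show flat modules preserve $\overline{\cP}(I)$, and transfer the bound to $\Ind(\cC_R)$ via $t$-exactness and conservativity of pushforward. The paper's proof is terser and does not separate the lower and upper bounds or track the explicit shift by $d$, but the content is the same.
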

\begin{proof}
    $E \in \Ind(\cC_R)$ lies in $\cP_R(I)$ if and only if its image in $\Ind(\cC)$ lies in $\overline{\cP}_k(I)$. Therefore for any $F \in \cP(I) \subset \cC$, the pullback $R \otimes_k F \in \cC_R$ lies in $\cP_R(J)$ for some finite $J$ if and only if the object $R \otimes_k F \in \overline{\cP}_k(J)$ for some finite $J$. Because $k \to R$ is of finite Tor-amplitude, $R$ has a finite resolution by flat $k$-modules. It therefore suffices to show that $M \otimes_k F \in \overline{\cP}_k(I)$ for any flat $k$-module $M$. This holds because any flat $k$-module $M$ is a filtered colimit of free modules.

    This combined with \Cref{D:sluicing}(3) shows that any object of the form $R \otimes_k F \in \cC_R$ for $F \in \cC$ lies in $\overline{\cP}_R(I)$ for a finite interval $I$. The latter condition is closed under shifts, extensions, and retracts. It therefore holds for any $E \in \cC_R$, because the objects $R \otimes_k F$ with $F \in \cC$ classically generate $\cC_R$.
\end{proof}

Next let $f^\ast : \cA \to \cB$ be a localization of $k$-linear compactly generated stable dg-categories, i.e., $f^\ast$ admits a right adjoint $f_\ast$ that commutes with filtered colimits and such that the counit $f^\ast f_\ast \to \id_\cB$ is an isomorphism of functors. 

\begin{lem}\label{L:localization}
    Suppose that $\cA$ admits an ind-noetherian $t$-structure such that $f_\ast \circ f^\ast$ is left $t$-exact. Then the $t$-structure induced on $\cB$ by defining $\cB_{>0}$ to be generated by $f^\ast(\cA_{>0})$ under colimits is also ind-noetherian, and $f^\ast : \cA^\omega \to \cB^\omega$ is essentially surjective.
\end{lem}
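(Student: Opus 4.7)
The plan is to first reinterpret the hypothesis as the assertion that $f^*$ is t-exact. By construction, $f^*(\cA_{>0}) \subseteq \cB_{>0}$, so $f^*$ is automatically left t-exact. Since $f_*$ preserves filtered colimits by assumption and finite coproducts by additivity, it preserves all small coproducts, and being exact, all colimits and extensions. Using that $\cB_{>0}$ is generated from $f^*(\cA_{>0})$ under colimits and extensions, and that $\cA_{>0}$ is closed under both, the hypothesis $f_* f^*(\cA_{>0}) \subseteq \cA_{>0}$ propagates to $f_*(\cB_{>0}) \subseteq \cA_{>0}$. By the $f^* \dashv f_*$ adjunction applied to the halves of the t-structure, this is equivalent to $f^*(\cA_{\leq 0}) \subseteq \cB_{\leq 0}$, so $f^*$ is t-exact; in particular $f^*(\cA^\heart) \subseteq \cB^\heart$.

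To verify ind-noetherianness of the induced t-structure on $\cB$, I would apply \Cref{L:ind-noetherian_condition} with the generating set $\cG := f^*(\cA^\heart \cap \cA^\omega)$. The inclusion $\cG \subseteq \cB^\heart \cap \cB^\omega$ is immediate from t-exactness and compactness-preservation of $f^*$, and $\cG$ generates $\cB$ because every $E \in \cB$ satisfies $E \cong f^* f_* E$ while $\cA^\heart \cap \cA^\omega$ generates $\cA$ by the ind-noetherian hypothesis. The nontrivial clause to check is that any injection $E \hookrightarrow f^* X$ in $\cB^\heart$, with $X \in \cA^\heart \cap \cA^\omega$, yields $E \in \cB^\omega$. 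Here I would invoke Gabriel's description of the Serre quotient $\cB^\heart \cong \cA^\heart / (\cA^0 \cap \cA^\heart)$, where $\cA^0 := \ker(f^*)$: letting $q := f^*|_{\cA^\heart}$ and $s$ its fully faithful right adjoint, given $E \subseteq q(X)$ the pullback $X' := X \times_{sq(X)} s(E) \subseteq X$ in $\cA^\heart$ satisfies $q(X') = E$ (using exactness of $q$ and the counit iso $qs = \id$). Since $X$ is noetherian in $\cA^\heart$, so is the subobject $X'$, and therefore $X' \in \cA^\heart \cap \cA^\omega$ and $E \cong f^* X' \in \cB^\omega$.

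For essential surjectivity of $f^* : \cA^\omega \to \cB^\omega$, I would induct on the t-amplitude of $E \in \cB^\omega$, which is finite by the ind-noetherianness just established. The base case $E \in \cB^\heart \cap \cB^\omega$ is immediate from the Gabriel argument above. For the inductive step, take the truncation triangle $\tau_{\leq b-1} E \to E \to H^b(E)[-b]$ with $b$ the top cohomological degree, and apply induction to write $\tau_{\leq b-1} E \cong f^* X'$ and $H^b(E) \cong f^* Y$ with $X', Y \in \cA^\omega$. The connecting morphism $\delta : f^* Y[-b] \to f^* X'[1]$ corresponds under adjunction to a map $\alpha : Y[-b] \to f_* f^* X'[1]$, and the obstruction to factoring $\alpha$ through the unit $X'[1] \to f_* f^* X'[1]$ is a class in $\Hom_\cA(Y[-b], C[1])$ where $C := \cofib(X' \to f_* f^* X') \in \cA^0$. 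I expect the main obstacle to be showing that this obstruction can always be killed by modifying the compact lift $X'$. This should follow from the ind-noetherian hypothesis, which forces $\cA^0 \subset \cA$ to be compactly generated (its heart $\cA^0 \cap \cA^\heart$ is a locally noetherian Grothendieck localizing Serre subcategory of $\cA^\heart$): since $Y$ is compact, the obstruction factors through some compact $\tilde C \in \cA^0 \cap \cA^\omega$, and one can absorb $\tilde C$ into $X'$ by a cone construction to produce $X''$ with $f^* X'' \cong f^* X'$ but vanishing obstruction. Once the lift $\tilde \delta : Y[-b] \to X''[1]$ exists in $\cA$, the triangle $X'' \to X \to Y[-b]$ in $\cA$ with connecting map $\tilde\delta$ produces a compact $X \in \cA^\omega$ with $f^* X \cong E$.
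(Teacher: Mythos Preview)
Your argument for $t$-exactness of $f^*$ misreads the hypothesis: ``left $t$-exact'' means $f_* f^*(\cA_{\leq 0}) \subseteq \cA_{\leq 0}$, not $f_* f^*(\cA_{>0}) \subseteq \cA_{>0}$. Moreover, the adjunction $f^* \dashv f_*$ does not give the equivalence $f_*(\cB_{>0}) \subseteq \cA_{>0} \Leftrightarrow f^*(\cA_{\leq 0}) \subseteq \cB_{\leq 0}$ that you invoke; that would require a \emph{left} adjoint to $f^*$. The paper's argument is direct: for $E \in \cA_{>0}$ and $F \in \cA_{\leq 0}$ one has $\Hom_\cB(f^* E, f^* F) = \Hom_\cA(E, f_* f^* F) = 0$ by the hypothesis, and since such $f^* E$ generate $\cB_{>0}$ under colimits, this forces $f^* F \in \cB_{\leq 0}$.

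Your Gabriel-theoretic verification of ind-noetherianness via \Cref{L:ind-noetherian_condition} is correct and close in spirit to the paper's argument: both pull subobjects back from $\cB^\heart$ to $\cA^\heart$ via the fiber product $X \times_{sq(X)} s(E)$. The paper forgoes \Cref{L:ind-noetherian_condition} and verifies the definition directly, using this same pullback to lift ascending chains from $(\cB^\omega)^\heart$ to $(\cA^\omega)^\heart$, where they stabilize.

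For essential surjectivity the paper takes a different route that avoids your obstruction analysis. Rather than lifting the connecting map $\delta$ through the unit $X' \to f_* f^* X'$ and confronting its cofiber $C$, the paper lifts morphisms $\phi: f^*(F) \to E$ with $F \in \cA^\omega$ connective and $E \in (\cB^\omega)^\heart$: the adjoint $F \to f_* E$ factors uniquely through $H_0(f_* E)$ because $F$ is connective and $f_* E$ is coconnective, and then through a compact subobject $E_\alpha \subset H_0(f_* E)$ because $F$ is compact in $\cA$. This yields a map $F \to E_\alpha$ with $f^*(F \to E_\alpha)$ recovering $\phi$, so no cofiber $C$ ever enters. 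Your resolution, by contrast, has a genuine gap: the assertion that $\cA^0 = \ker f^*$ is compactly generated does not follow from the hypotheses---a localizing subcategory of a compactly generated category need not be compactly generated---and even granting a compact $\tilde C$ through which the obstruction factors, the ``absorption'' of $\tilde C$ into $X'$ is not specified and does not obviously preserve the identification $f^* X' \cong \tau_{\leq b-1} E$ already in hand.
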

\begin{proof}
    Using the left $t$-exactness of $f_\ast \circ f^\ast$, one can show that $f^\ast$ is $t$-exact.\endnote{$f^\ast(\cA_{>0}) \subset \cB_{>0}$ by definition. $f^\ast(\cA_{\leq 0}) \subset \cB_{\leq 0}$, because $\cB_{>0}$ is generated under colimits by $f^\ast(E)$ for $E \in \cA_{>0}$, and for any $E \in \cA_{>0}$ and $F \in \cA_{\leq 0}$, $\Hom_\cB(f^\ast(E),f^\ast(F)) \cong \Hom(E,f_\ast(f^\ast(F))) \cong 0$ because $f_\ast(f^\ast(F)) \in \cA_{\leq 0}$.} Given $E \in \cB^\omega$, write $f_\ast(E) \cong \colim_\alpha F_\alpha$ as a filtered colimit of objects $F_\alpha \in \cA^\omega$. The isomorphism $f^\ast(f_\ast(E)) \cong E$ implies that $E$ is a retract of $f^\ast(F_\alpha)$ for some $\alpha$. Therefore $E \in \cB_{(m,n]}$ for some $m<n$, and exactness of $f^\ast$ implies that $H_i(E) \in \cB^{\omega}$ for all $i$.

    Next consider $E \in (\cB^{\omega})^\heart$, $F \in \cA^\omega_{>0}$, and a morphism $f^\ast(F) \to E$. Write $H_0(f_\ast(E)) = \bigcup_\alpha E_\alpha$ as a filtered union with $E_\alpha \in (\cA^\omega)^\heart$.\endnote{This is possible because $\cA^\heart$ is a locally noetherian category, where $\cA^\heart \cap \cA^\omega$ is both the category of compact objects and the category of noetherian objects.} Because $E$ is compact, the filtered union $E \cong \bigcup_\alpha f^\ast(E_\alpha)$ must stabilize, and $f^\ast(F) \to E$ factors through $f^\ast(E_\alpha)$ for some $\alpha$ because $F$ is compact. It follows that there is a morphism $F \to E_\alpha$ with $E_\alpha \in (\cA^\omega)^\heart$ that restricts to the given morphism $f^\ast(F) \to E$. Because any $E \in \cB^\omega$ can be constructed as an iterated extension of objects in $(\cB^\omega)^\heart$, it follows that $f^\ast : \cA^\omega \to \cB^\omega$ is essentially surjective.

    Finally, for any ascending chain $E_1 \subseteq E_2 \subset \cdots \subset E$ in $(\cB^\omega)^\heart$, choose a compact subobject $\tilde{E} \subseteq H_0(f_\ast(E))$ that becomes isomorphic to $E$ upon restriction, as in the previous paragraph. Then define $\tilde{E}_i := H_0(f_\ast(E_i)) \times_{H_0(f_\ast(E))} \tilde{E}$. These are subobjects of $\tilde{E}$ by construction, and they restrict to $E_i$ by the exactness of $f^\ast$. The ascending chain $\tilde{E}_1 \subseteq \tilde{E}_2 \subseteq \cdots$ must stabilize, so $(\cB^\omega)^\heart$ is noetherian.
\end{proof}

\begin{lem}[Hilbert basis theorem]\label{L:hilbert_basis}
    Suppose $R=k[t]$ and we are given a bounded noetherian $t$-structure $\cC = (\cC_{\geq 0},\cC_{<0})$. Then the induced $t$-structure on $\Ind(\cC_R)$ is ind-noetherian.
\end{lem}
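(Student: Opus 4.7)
The plan is to verify the criterion of \Cref{L:ind-noetherian_condition} using the generating set $\cG := \{R \otimes_k F : F \in \cC^\heart\}$. Since $R = k[t]$ is a free $k$-module, the pullback $R \otimes_k (-) : \Ind(\cC) \to \Ind(\cC_R)$ is $t$-exact, so $\cG \subseteq \cC_R \cap \Ind(\cC_R)^\heart$. The hypothesis that the $t$-structure on $\cC$ is bounded implies that $\cC^\heart$ classically generates $\cC$, hence $\cG$ classically generates $\cC_R$ and so generates $\Ind(\cC_R)$ under small colimits. What remains is to show that any subobject $E \hookrightarrow R \otimes_k F$ of a generator in $\Ind(\cC_R)^\heart$ lies in $\cC_R$.

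I will execute the Hilbert basis argument in the heart of $\Ind(\cC)$, which under the bounded noetherian hypothesis on $\cC$ is locally noetherian with noetherian objects $\cC^\heart$. Filter $R \otimes_k F$ by the subobjects $G_n := F \otimes_k k[t]_{\leq n}$ in $\Ind(\cC)^\heart$, and set $E_n := E \cap G_n$. The identifications $G_n/G_{n-1} \cong F$ embed $L_n := E_n/E_{n-1}$ as a subobject of $F$, and because the $t$-action sends $E_n$ into $E_{n+1}$ compatibly with the shift $G_n \to G_{n+1}$, the induced map $L_n \to L_{n+1}$ factors the inclusion $L_n \hookrightarrow F$ through $L_{n+1} \hookrightarrow F$. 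We thus obtain an ascending chain $L_0 \subseteq L_1 \subseteq \cdots \subseteq F$ of subobjects, which stabilizes at some $n_0$ by noetherianity of $F$. A standard induction on $n \geq n_0$ then yields $E_n = \sum_{k=0}^{n-n_0} t^k \cdot E_{n_0}$, so $E = R \cdot E_{n_0}$. Noetherianity also forces $E_{n_0} \in \cC^\heart$, and the inclusion $E_{n_0} \hookrightarrow R \otimes_k F$ extends $R$-linearly to a morphism $\phi : R \otimes_k E_{n_0} \to R \otimes_k F$ between compact objects of $\Ind(\cC_R)$ whose image in the heart is $E$.

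To conclude $E \in \cC_R$, it suffices to show that the cokernel $Q := (R \otimes_k F)/E$ in the heart lies in $\cC_R$: the short exact sequence $0 \to E \to R \otimes_k F \to Q \to 0$ is then a fiber sequence in $\Ind(\cC_R)$ exhibiting $E$ as a fiber of compact objects. Since $Q = H_0(\cofib(\phi)) = \tau_{\leq 0}(\cofib(\phi))$ and $\cofib(\phi) \in \cC_R$, this reduces to the auxiliary claim that the $t$-structure on $\Ind(\cC_R)$ preserves the subcategory $\cC_R$. I plan to prove this by induction on the construction of $X \in \cC_R$ from the generators $R \otimes_k F'$ (with $F' \in \cC$) via finite colimits, shifts, and retracts: the base case follows from $t$-exactness of $R \otimes_k(-)$ together with boundedness of the $t$-structure on $\cC$, and the inductive step uses that $\tau_{\geq n}$ is a right adjoint and thus preserves fibers, shifts, and retracts (with $\tau_{\leq n}$ handled via the fiber sequence $\tau_{\geq n+1}(X) \to X \to \tau_{\leq n}(X)$). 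The main obstacle is this truncation-preservation claim, which is what lets us avoid needing a finite projective resolution of $E$ as an $R$-module; such a resolution need not exist in our generality, since we have assumed nothing about the global dimension of $\cC^\heart$.
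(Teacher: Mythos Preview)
Your setup through the construction of $\phi$ and the identification $Q = \tau_{\leq 0}(\cofib(\phi))$ is correct and matches the paper. The gap is in your proposed proof that truncation preserves $\cC_R$. The functor $\tau_{\geq n}$ is right adjoint to the inclusion $\Ind(\cC_R)_{\geq n} \hookrightarrow \Ind(\cC_R)$ and therefore preserves limits \emph{as a functor into} $\Ind(\cC_R)_{\geq n}$; but that subcategory is not stable, and fibers computed there do not agree with fibers in $\Ind(\cC_R)$. Concretely, for a cofiber sequence $X \to Y \to Z$ one does not have $\tau_{\geq n}(Z) \cong \cofib(\tau_{\geq n}(X) \to \tau_{\geq n}(Y))$, so your inductive step breaks at cones. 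Unwinding at the level of homology, you would need kernels and cokernels of maps in $\cC_R \cap \Ind(\cC_R)^\heart$ to remain in $\cC_R$, which is precisely the local noetherianity being proved. Indeed, ``truncation preserves $\cC_R$'' is part of what it means for the $t$-structure to be ind-noetherian, and the paper deduces it \emph{from} the present lemma in \Cref{P:base_change_sluicing}, Case~(1); your reduction runs the implication backwards.

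The paper's route is to show directly that a suitable cokernel has compact pushforward. One first reduces to the case $F = L_{n_0}$ by checking that the composite $E \hookrightarrow F_R \to (F/L_{n_0})_R$ vanishes (an induction on degree, using $L_n \subseteq L_{n_0}$), so that $E \subseteq (L_{n_0})_R$. The degree filtration on $Q' := (L_{n_0})_R/E$ then has associated graded $\bigoplus_n L_{n_0}/L_n$, which vanishes for $n \geq n_0$, so the pushforward $(Q')^\flat$ lies in $\cC$. One concludes using the fact that any $M \in \Ind(\cC_R)^\heart$ with $M^\flat \in \cC$ already lies in $\cC_R$: the resolution of the diagonal for $R = k[t]$ presents $M \cong \cofib\bigl((M^\flat)_R \xrightarrow{\,t\otimes 1 - 1\otimes t\,} (M^\flat)_R\bigr)$. (The paper's write-up reaches this endpoint via an inclusion $t^n\bar{F}_R \subseteq F$ and a filtration by $t$-torsion pieces; that inclusion can fail---consider $F = (1+t)k[t] \subseteq k[t]$---but the general cofiber formula just stated is the correct replacement and finishes the argument.)
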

\begin{proof}
    For an object $G \in \Ind(\cC)$, we denote $G_R := R \otimes_k G$. The objects $E_R$ for $E \in \cC^\heart$ generate $\Ind(\cC_R)$, so it suffices by \Cref{L:ind-noetherian_condition} to show that for any $E \in \cC^\heart$ and any subobject $F \subset E_R$ in $\Ind(\cC_R)^\heart$, we have $F \in \cC_R$. The following is a modification of the argument in \cite{ArtinZhang}*{B5.2}:

    Pushing forward to $\Ind(\cC)$, we have $E_R \cong E \otimes_k k[t] \cong \bigoplus_{i \geq 0} E \cdot t^i$. Let
    \[
    (E_R)_{\leq n} = \bigoplus_{i=0}^n E \cdot t^n \subset E_R \quad \text{and} \quad F_{\leq n} = E_{\leq n} \cap F
    \]
    in $\Ind(\cC)^\heart$. Note that $F = \colim F_{\leq n}$ because the same is true for $E_R$. Letting $F_n := F_{\leq n}/ F_{\leq n-1}$, we have injective morphisms $F_n \hookrightarrow (E_R)_{\leq n}/(E_R)_{\leq n-1} \cong E \cdot t^n$. Furthermore, because multiplication by $t$ maps $F_{\leq n} \to F_{\leq n+1}$, we get an ascending chain $F_0 \hookrightarrow F_1 \hookrightarrow \cdots \hookrightarrow E$. Because $E$ is noetherian, this chain stabilizes to some subobject $\bar{F} \subset E$.

    Let $Q := \coker(\bar{F} \to E)$, and consider the composition of morphisms $F \to E_R \to Q_R$. For any $n$, the composition $F_{\leq n} \to (E_R)_{\leq n} \to (Q_R)_{\leq n} \to Q \cdot t^n$ vanishes, so the image of $F_{\leq n}$ in $Q_R$ lies in $(Q_R)_{\leq n-1}$. But the preimage of $(Q_R)_{\leq n-1} \subset (Q_R)_{\leq n}$ in $F_{\leq n}$ is $F_{\leq n-1}$ by definition, so the image of $F_{\leq n} \to (Q_R)_{\leq n}$ is the same as the image of $F_{\leq n-1} \to (Q_R)_{\leq n-1} \hookrightarrow (Q_R)_{\leq n}$. By induction this implies that  $F_{\leq n} \to Q_R$ is the zero morphism for all $n$, and hence $F \to Q_R$ is the zero morphism. Therefore, $F \hookrightarrow E_R$ factors through $\bar{F}_R \hookrightarrow E_R$. Now $\bar{F} \in \cC$ and $F \subset \bar{F}_R$, so it suffices to replace $E$ with $\bar{F}$. Therefore we may assume for the remainder of the proof that the injection $F_n \hookrightarrow E$ is an isomorphism for $n \gg 0$. In other words, we assume one has $t^n \cdot E_R \hookrightarrow F \hookrightarrow E_R$ for a sufficiently large $n$.

    Under this new assumption, the surjection $E_R \twoheadrightarrow A := E_R /F$ factors through a surjection $E_R / (t^n \cdot E_R) \twoheadrightarrow A$. In particular the image of $A$ in $\Ind(\cC)$ actually lies in $\cC$. Now let $K_i := \ker(t^i : A \to A)$ for $i=0,\ldots,n$ and let $G_i = K_i/K_{i-1}$ for $i\geq 1$. $K_n = Q$, so this gives a finite filtration of $A$ in $\Ind(\cC_R)^\heart$. The image of each $G_i$ under the forgetful functor $\Ind(\cC_R) \to \Ind(\cC)$ lies $\cC$ because it is a subquotient of $A$ in $\Ind(\cC)^\heart$. On the other hand, because $t$ acts trivially on $G_i$, we have $G_i = \cofib( t : (G_i)_R \to (G_i)_R) \in \cC_R$. It follows that $A \in \cC_R$, and hence $F = \fib(E_R \to A) \in \cC_R$.
\end{proof}

\begin{proof}[Proof of \Cref{P:base_change_sluicing}]
    \Cref{D:sluicing}(3) follows from \Cref{L:sluicing_base_change_boundedness}. Also, the contain\-ment $(\cC_R)_{>\phi_1} \subseteq (\cC_R)_{>\phi_2}$ for $\phi_1>\phi_2$ is immediate from the definition. Once we have shown that the truncation functors $\tau_I$ on $\Ind(\cC_R)$ preserve $\cC_R$, condition (1) is equivalent to the claim that $\tau_{>a}(E) \cong \colim_{\varphi>a} \tau_{>\varphi}(E)$ for every $E \in \Ind(\cC_R)$, because this filtered colimit must stabilize if $E$ is compact. The identity $\tau_{>a}(E) \cong \colim_{\varphi>a} \tau_{>\varphi}(E)$ holds automatically, though, because it holds in $\Ind(\cC)$ and the forgetful functor $\Ind(\cC_R) \to \Ind(\cC)$ is $t$-exact, conservative, and commutes with filtered colimits. It therefore suffices to show that the truncation functors $\tau_I$ preserve $\cC_R$, and verify the condition \Cref{D:sluicing}(2).

    \smallskip
    \noindent{\textit{Case (1): $R = k[t]$}}
    \smallskip

    For any interval $I$ of width $w<1$, $\cP(I) \subset \cC^\heart$ for some bounded noetherian $t$-structure on $\cC$. Then for any $E \in \cP(I)$, any strict subobject $F \hookrightarrow R \otimes_k E$ in $\overline{\cP}_R(I)$ is also a subobject with respect to the $t$-structure on $\Ind(\cC_R)$ induced by $\cC^\heart$. \Cref{L:hilbert_basis} and \Cref{L:ind-noetherian_condition} then imply that $F \in \cC_R$.
    
    Now consider an arbitrary $E \in \cC_R$ --- we may suppose $E \in \overline{\cP}_R((a,b])$ for some $a<b$ --- and choose an $\epsilon < w$. The object $\tau_{\leq a+\epsilon}(E) \in \overline{\cP}_R((a,a+\epsilon])$ is compact, because $\tau_{\leq a+\epsilon}$ is left-adjoint to the inclusion functor, which preserves filtered colimits. Let $F$ denote the image of $\tau_{\leq a+\epsilon}(E)$ via the forgetful functor $\Ind(\cC_R) \to \Ind(\cC)$. Then the counit of adjunction gives a strict epimorphism $R \otimes_k F \to \tau_{\leq a+\epsilon}(E)$ in the quasi-abelian category $\overline{\cP}_R(I)$.\endnote{After pushing forward to $\Ind(\cC)$ it is split, which shows that $\fib(R \otimes_k F \to \tau_{\leq a+\epsilon}(E)) \in \overline{\cP}_R(I)$.} Because $\tau_I$ preserves $\cC_R$, we may write $F = \colim_\alpha F_\alpha$ as a filtered colimit with $F_\alpha \in \cP(I)$. 
    Let $K_\alpha = \ker(R \otimes_k F_\alpha \to \tau_{\leq a+\epsilon}(E)) \in \overline{\cP}_R(I)$, and let $Q_\alpha = \coker(K_\alpha \to R \otimes_k F)$. Then because the formation of cokernels and kernels commutes with filtered colimits in $\overline{\cP}_R(I)$ and $R \otimes_k R \to \tau_{\leq a+\epsilon}(E)$ is a strict epimorphism\endnote{I am using that if $K = \ker(R \otimes_k F \to \tau_{\leq a+\epsilon}(E))$, then because the original morphism was a strict epimorphism $\tau_{\leq a+\epsilon}(E) \cong \coker(K \to R \otimes_k F)$.}, we have $\tau_{\leq a+\epsilon}(E) \cong \colim_\alpha Q_\alpha$. Because $\tau_{\leq a+\epsilon}(E)$ is compact, there exists some $\alpha$ such that $\tau_{\leq a+\epsilon}(E)$, is a retract of $Q_\alpha$. $K_\alpha$ is a strict subobject of $R \otimes_k F_\alpha$ and hence also lies in $\cC_R$ as observed above, and therefore $Q_\alpha = \cofib(K_\alpha \to R \otimes_k F_\alpha) \in \cC_R$ as well. It follows that $\tau_{\leq a+\epsilon}(E) \in \cC_R$, and hence $\tau_{>a+\epsilon}(E) \in \cC_R$.

    Iterating this argument shows that $\tau_{\leq a + n\epsilon}(E)$ and $\tau_{>a+n\epsilon}(E)$ lie in $\cC_R$ for all $n \in \bZ$. Because $\epsilon$ can be chosen arbitrarily close to $0$, this shows that $\tau_{\leq \varphi}(E)$ and $\tau_{>\varphi}(E)$ lie in $\cC_R$ for all $\varphi \in \bR$. This verifies that the $t$-structures on $\Ind(\cC_R)$ preserve $\cC_R$, and \Cref{L:hilbert_basis} provides the necessary noetherian $t$-structures $(\cC_R)_{>a} \subset (\cC_R)_{>0} \subset (\cC_R)_{>b}$ to verify \Cref{D:sluicing}(2) using the corresponding $t$-structures on $\cC$.

    \smallskip
    \noindent{\textit{Case (2): $k \to R$ is a localization}}
    \smallskip

    Because $k \to R$ is flat, $f_\ast(f^\ast(-)) \cong R \otimes_k (-)$ is exact, so \Cref{L:localization} implies that any noetherian $t$-structure on $\cC$ induces an ind-noetherian $t$-structure on $\Ind(\cC_R)$. The proof of Case (1) now applies verbatim, using the fact that $k \to R$ is flat.

    \smallskip
    \noindent{\textit{Case (3): $R$ a finite perfect $k$-algebra}}
    \smallskip

    The object $R \otimes_k R$ generates $\Dqc(R)$, because any perfect complex with full support generates $\Dqc(R)$ by the Hopkins--Neeman theorem \cite{NeemanHopkins}.\endnote{This is a result of the Hopkins-Neeman theorem, because localizing subcategories of $\Dqc(R)$ are in bijection with specialization-closed subsets of $\Spec(R)$, so the only localizing subcategory that contains an object supported everywhere is $\Dqc(R)$ itself.} This implies that $R$ itself lies in the thick triang\-ulated closure of $R \otimes_k R$. Therefore, for any $F \in \Ind(\cC_R)$, $F$ lies in the thick triangulated closure of $R \otimes_k F \cong (R \otimes_k R) \otimes_R F$. This implies that $F \in \Ind(\cC_R)$ is compact if and only if its pushforward to $\Ind(\cC)$ is compact. This implies that the $t$-structures on $\Ind(\cC_R)$ induced by a sluicing preserve $\cC_R$, because the pushforward $\Ind(\cC_R) \to \Ind(\cC)$ is exact and the $t$-structures on $\Ind(\cC)$ preserve $\cC$.

    Similarly, the noetherian $t$-structures on $\cC$ in \Cref{D:sluicing}(2) induce $t$-structures on $\cC_R$ that must also be noetherian, because the pushforward $\cC_R \to \cC$ preserves ascending chains. These noetherian $t$-structures verify \Cref{D:sluicing}(2) for the induced sluicing on $\cC_R$.
\end{proof}

\begin{cor} \label{R:base_change_sluicing_fields}
    If $k$ is a regular noetherian ring, then given a sluicing $\cP$ on $\cC$ any field $K$ over $k$, the induced $t$-structures $(\Ind(\cC_K)_{>\varphi},\Ind(\cC_K)_{\leq \varphi})$ preserve $\cC_R$ and restrict to a bounded $t$-structure on $\cC_R$. In particular, $\phi^\pm(E)$ is defined and finite for any $E \in \cC_K$, and $\phi^\pm(E) = \phi^\pm(E_L)$ for any field extension $K \subset L$.
\end{cor}
\begin{proof}
    Write $K = \bigcup_\alpha R_\alpha$ as a filtered union with each $R_\alpha$ a finitely generated $k$-algebra. By \Cref{ex:sluicing_finite_type} and \Cref{P:base_change_sluicing}, a sluicing on $\cC$ induces a sluicing on each $\cC_{R_\alpha}$. $\cM$ is locally of finite presentation, so for any $E \in \cC_K$ there is some $\alpha$ and $E_\alpha \in \cC_{R_\alpha}$ such that $E \cong E_\alpha \otimes_{R_\alpha} K$. $R_\alpha \to K$ is flat because it is injective, so we have $\tau_I(E) \cong K \otimes_{R_\alpha} \tau_I(E_\alpha) \in \cC_K$ for any interval $I$. The fact that $\phi^\pm(E)$ are preserved under a field extension $K \subset L$ follows from the fact that $L$ is faithfully flat over $K$.
\end{proof}

\subsection{Locally constant stability conditions}

In this subsection we consider a smooth, proper, and idempotent complete dg-category $\cC$ over a ring $k$. Let $\cM$ denote the functor from simplicial $k$-algebras to $\infty$-groupoids taking
\[
\cM : R \mapsto \cC_R^{\cong} := (\cC \otimes_k \Perf(R))^{\cong},
\]
where $(-)^{\cong}$ denotes the largest $\infty$-subcategory in which all morphisms are isomorphisms, and $\otimes_k$ denotes the symmetric monoidal structure on $k$-linear stable idempotent complete dg-categories.\endnote{Actually, the moduli functor is defined by $R \mapsto \Map(\cC^{\rm{op}},\Perf(R))$, where $\Map$ is maps of $k$-linear dg-categories. This is equivalent to $R \mapsto \cC \otimes_k \Perf(R)$ because $\cC$ is smooth and proper.} It is shown in \cite{ToenVaquie}*{Thm. 3.6} that $\cM$ is an algebraic higher derived stack locally of finite presentation over $k$. More precisely, it is a filtered union of algebraic derived $n$-stacks of finite presentation over $k$, where $n$ grows arbitrarily large.

The set of points $|\cM|$ can be defined as the disjoint union of $\pi_0(\cM(K))$, i.e., the set of isomorphism classes in $\cC_K$, over all fields $K$ essentially of finite type over $k$, modulo the equivalence relation generated by $[E] \sim [E \otimes_K L]$ for any extension of fields $K \subset L$ and $E \in \cC_K$. A point in $|\cM|$ is \emph{finite type} if it has a representative defined over a field $K$ that is finitely generated as a $k$-algebra. A subset of $|\cM|$ is \emph{bounded} if it is contained in the image of $|\Spec(R)| \to |\cM|$ for some morphism $\Spec(R) \to \cM$ from an affine $k$-scheme of finite type. A subset $U \subset |\cM|$ is \emph{open} if for any such morphism, the preimage of $U$ in $|\Spec(R)|$ is open. Any open $U \subset |\cM|$ is the set of points of a unique open substack $\cU \subset \cM$.

Let us fix a finitely generated free abelian group $\Lambda$ with a norm $\lVert-\rVert$. We say that a map $\ch : |\cM| \to \Lambda$ is \emph{locally constant} if for any morphism $\Spec(R) \to \cM$ from a connected affine scheme, the composition $|\Spec(R)| \to |\cM| \to \Lambda$ is constant. We say that $\ch$ is \emph{additive} if the following diagram of sets commutes
\[
\xymatrix{ | \cM \times_{\Spec(k)} \cM | \ar[r]^-{\oplus} \ar[d]^{\ch \times \ch} & |\cM| \ar[d]^{\ch} \\
\Lambda \times \Lambda \ar[r]^-{+} & \Lambda },
\]
where the morphism $\oplus$ maps $(E_1,E_2) \in \cC_R^2 \mapsto E_1 \oplus E_2 \in \cC_R$. Given a locally constant $\ch$, we let $\cM_u \subset \cM$ be the open and closed substack on which $\ch(E)=u \in \Lambda$.

\begin{lem}
    It suffices to check additivity of a locally constant $\ch$ at finite type residue fields $k \to \kappa$, meaning $\ch(E \oplus F) = \ch(E)+\ch(F)$, $\forall E,F \in \cC_\kappa$. If $v$ is additive, then $E \mapsto \ch(E)$ defines a group homomorphism $ \rm{K}_0(\cC_K) \to \Lambda$ for every field $K$ over $k$.
\end{lem}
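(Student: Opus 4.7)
The overall strategy is a combination of spreading-out (for the first claim) and deformation to the split extension (for the second claim).

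For the first claim, suppose additivity is known at every finite type residue field, and let $K$ be a field over $k$ with $E_1, E_2 \in \cC_K$. Since $\cM$ is locally of finite presentation over $k$, I would find a finitely generated $k$-subalgebra $R \subseteq K$ and objects $\tilde E_1, \tilde E_2 \in \cC_R$ with $\tilde E_i \otimes_R K \cong E_i$. Then $\tilde E_1 \oplus \tilde E_2 \in \cC_R$ restricts to $E_1 \oplus E_2$. Let $C$ be the connected component of $\Spec(R)$ containing the image of $\Spec(K) \to \Spec(R)$. Since $\Spec(R)$ is quasi-compact and sober, the closure of this image (which is contained in $C$) contains a closed point $x$, whose residue field $\kappa$ is a finite type $k$-algebra that is a field. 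By local constancy of $\ch$, the values of $\ch(\tilde E_1)$, $\ch(\tilde E_2)$, and $\ch(\tilde E_1 \oplus \tilde E_2)$ agree at $\Spec(K)$ and at $\{x\}$; invoking the assumed additivity at $\kappa$ then yields $\ch(E_1 \oplus E_2) = \ch(E_1) + \ch(E_2)$.

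For the second claim, assume $\ch$ is additive. A homomorphism out of $K_0(\cC_K)$ is determined by a function on isomorphism classes of objects that respects exact triangles, so it suffices to show $\ch(F) = \ch(E) + \ch(G)$ for any exact triangle $E \to F \to G \xrightarrow{\eta} E[1]$ in $\cC_K$. I would deform this extension across $\bA^1_K = \Spec(K[t])$: let $\tilde E, \tilde G \in \cC_{K[t]}$ be the base changes of $E, G$ and let $\tilde\eta$ be the base change of $\eta$, and define
\[
\tilde F := \fib\bigl(\tilde G \xrightarrow{t \cdot \tilde\eta} \tilde E[1]\bigr) \in \cC_{K[t]}.
\]
Base changing along $t=1$ recovers $F$, while base changing along $t=0$ recovers $E \oplus G$ since the zero morphism has fiber $G \oplus E$. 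Because $\Spec(K[t])$ is irreducible and hence connected, local constancy gives $\ch(F) = \ch(E \oplus G)$, and by the first claim (applied to $K$ itself) the right-hand side equals $\ch(E) + \ch(G)$. Additivity applied to $0 \oplus 0 = 0$ also forces $\ch(0) = 0$, so $\ch$ descends to a well-defined group homomorphism $K_0(\cC_K) \to \Lambda$.

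The main obstacle is the spreading-out step in the first claim, which relies on the algebraicity and local finite presentation of $\cM$ to produce $R$ and the lifts $\tilde E_i$; a minor technical point is choosing a closed point in the correct connected component when $k$ need not be noetherian, which is handled by quasi-compactness of $\Spec(R)$. The deformation argument in the second claim is then a clean consequence of connectedness of $\bA^1_K$.
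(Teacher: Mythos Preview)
Your deformation argument for the second claim is the same as the paper's (which calls it the Rees construction over $K[t]$).

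For the first claim you take a genuinely different and more elementary route. The paper, after reducing to a field $K$ of finite type over $k$ (the same spreading-out step you perform), does \emph{not} specialize further. Instead it observes that such a $K$ is a \emph{finite extension} of a residue field $\kappa$ of $\Spec(k)$ (namely $\kappa=\kappa(\ker(k\to K))$, which is itself finite type over $k$ by Artin--Tate), proves a pushforward formula $\ch(f_*(E)) = [K{:}\kappa]\,\ch(E)$ by base-changing to $\bar\kappa$ and filtering $\cO_{\Spec(K\otimes_\kappa\bar\kappa)}$ by skyscrapers at $\bar\kappa$-points, and then uses exactness of $f_*$ together with torsion-freeness of $\Lambda$ to deduce additivity at $K$ from additivity at $\kappa$.

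Which argument suffices depends on how one reads ``finite type residue fields $k\to\kappa$.'' If it means arbitrary fields $\kappa$ of finite type over $k$, your specialization to a closed point $x$ of $\Spec(R)$ lands in that class (since $\kappa(x)=R/\mathfrak m_x$ is a quotient of a finite-type $k$-algebra) and you are done. The paper evidently reads it as \emph{residue fields of $\Spec(k)$}, consistent with its usage elsewhere; under that stricter reading your $\kappa(x)$ need not qualify --- for instance $k=\bZ$, $R=\bZ[x]$, $\kappa(x)=\bF_4$ --- so your argument would still need the paper's pushforward step to descend from $\kappa(x)$ to an honest residue field of $k$. Your approach buys simplicity when the hypothesis is available at all finite-type fields; the paper's approach establishes the sharper reduction at the cost of the base-change-and-filter computation.
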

\begin{proof}
    Because $\ch$ is locally constant, it suffices check additivity for any field $K$ of finite type over $k$, which must be a finite extension of a finite type residue field $\kappa$.
    
    Let $f_\ast : \Ind(\cC_K) \to \Ind(\cC_\kappa)$ be the pushforward, which commutes with filtered colimits and maps $\cC_K$ to $\cC_{\kappa}$ because $K$ is finite over $\kappa$. We claim that for any $E \in \cC_K$,
    \begin{equation}\label{E:finite_pushforward}
        v(f_\ast(E)) = \dim_\kappa(K) \cdot v(E).
    \end{equation}
    Because $v$ is a function on $|\cM|$, it suffices to prove this after base change to an algebraic closure $\bar{\kappa}$ of $\kappa$. If $f' : X := \Spec(K \otimes_\kappa \bar{\kappa}) \to \Spec(\bar{\kappa})$, then by the base change formula\endnote{I'm appealing to a more general notion of the base change formula that is proved in the same way as in derived algebraic geometry. Namely, there is a canonical base change homomorphism, and the fact that it is an isomorphism is equivalent to the projection formula after pushing forward to the original base. Then the projection formula is proved using the fact that $R$ generates $\Dqc(R)$ under filtered colimits and $f_\ast$ commutes with filtered colimits.} implies that $f_\ast(E) \otimes_\kappa \bar{\kappa}$ is the pushforward $f'_\ast(E|_X)$. $X$ can have non-reduced structure if $\kappa$ is not perfect, but it is always the case that $\cO_X$ has a filtration of length $\dim_\kappa(K)$ whose associated graded is a direct sum of copies of skyscraper sheaves $\cO_{p}$ at various $p \in X(\bar{\kappa})$. It follows that $f'_\ast(E|_X)$ has a filtration of length $\dim_\kappa(K)$ whose associated graded is a direct sum of copies of $p^\ast(E|_X)$ for various sections $p : \bar{\kappa} \to X$. Again because $\ch$ is a function on $|\cM|$, one has $\ch(p^\ast(E|_X)) = \ch(E)$ for any such section, which proves the formula \eqref{E:finite_pushforward}.
    
    Using \eqref{E:finite_pushforward} along with the facts that $f_\ast$ is exact and $\Lambda$ is torsion free, the equation $0=\ch(E)+\ch(F)-\ch(E \oplus F)$ for $E,F \in \cC_K$ follows from the equation $0 = \ch(f_\ast(E))+\ch(f_\ast(F)) - \ch(f_\ast(F)\oplus f_\ast(F))$, which is the hypothesis of the lemma.

    Now assume $\ch$ is additive. Then for any field $K$ over $k$ and any exact triangle $E \to F \to G$, the Rees construction\endnote{Given an extension like this in $\cC_K$, the corresponding $K[t]$-module object in $\cC_K$ is $E \oplus \bigoplus_{n \geq 1} F$, which has a natural grading with $E$ in degree $0$. The $t$ action is homogeneous of degree $1$. The map $t \cdot : E \to F$ is the map from the triangle, and the map $t \cdot F \to F$ is the identity in all higher degrees. The fiber at $t=1$ is the colimit of the diagram $E \to F \to F \to \cdots$, which is $F$, and the fiber at $t=0$ is $E \oplus G$.} gives a $K[t]$-point whose fiber at $t=1$ is $F$ and whose fiber at $t=0$ is $E\oplus G$, and thus $\ch(F) = \ch(E\oplus G) = \ch(E)+\ch(G)$. This implies that $\ch$ descends to a map $ \rm{K}_0(\cC) \to \Lambda$, which is a group homomorphism because $\ch$ is additive.
\end{proof}

\begin{defn}\label{D:locally_constant_stability_condition}
A \emph{locally constant stability condition} on a smooth, proper, and idempotent complete\endnote{The smooth and proper condition is not really necessary to formulate this definition, but it is not clear that it is the correct one for categories that are not smooth and proper.} stable dg-category $\cC$ over a Nagata\endnote{The class of Nagata rings is rather general, and includes many of the rings encountered in algebraic geometry. For example, any (quasi-)excellent ring is Nagata \cite{stacks-project}*{\href{https://stacks.math.columbia.edu/tag/07QV}{Tag 07QV}}, and the class of excellent rings includes finite type $k$-algebras, among many other examples \cite{stacks-project}*{\href{https://stacks.math.columbia.edu/tag/07QW}{Tag 07QW}}.} ring $k$ consists of
\begin{enumerate}
    \item a sluicing $\cP_\kappa$ on $\cC_\kappa$ for every residue field $\kappa$ of $k$,
    \item an additive locally constant map $\ch : |\cM| \to \Lambda$, and
    \item a homomorphism $Z : \Lambda \to \bC$.
\end{enumerate}

By \Cref{P:base_change_sluicing} and \Cref{ex:sluicing_finite_type}, the data of (1) induce a sluicing on $\cC_K$ for any field $K$ essentially of finite type over $k$.\endnote{Because $k \to K$ factors through a residue field $k \to \kappa$, and any residue field is essentially finite type over $k$ because $k$ is noetherian.} We require these data to satisfy the following conditions:
\begin{enumerate}[label=\alph*)]

    \item If $R$ is a discrete valuation ring over $k$ whose fraction field $\kappa$ is a residue field of $k$, and if $\lambda = R/\mathfrak{m}_R$ is the residue field of $R$, then there is a sluicing of width $1$ on $\cC_R$ that induces the same sluicings on $\cC_\kappa$ and $\cC_{\lambda}$ as those induced by the sluicings in (1).\endnote{This is the same as saying $R$ is a discrete valuation ring that lies between $k$ and one of its residue fields, $k \to R \to \kappa$. $\lambda = R/\mathfrak{m}_R$ is not necessarily a residue field of $k$ --- it is a residue field of a blowup of $k$ at some prime ideal.}
    
    \item There is a $w \in (0,1)$ and $C>0$ such that $\forall u \in \Lambda$, if there is a finite type point $x \in |\cM_u|$ with $\phi^+(x)-\phi^-(x) \leq w$, then $Z(u) \in \bR_{>0} e^{i\pi (\phi^-(x),\phi^+(x)]}$ and $|Z(u)| > C \lVert u \rVert$.
\end{enumerate}
\end{defn}

The intent of \Cref{D:locally_constant_stability_condition} is to identify the minimal set of axioms that allow for the construction of proper moduli spaces of semistable objects in \Cref{T:moduli_spaces} below. The idea of considering a stability condition over every residue field of $k$, and imposing a compatibility condition on the slicings for discrete valuation ring over $k$ comes from \cite{families}. Our compatibility condition (a) is slightly different than the notion of HN structures over curves used in \cite{families}. As discussed above, condition (a) is automatic if all of the sluicings on $\cC_\kappa$ for residue fields $\kappa$ are induced from a single sluicing on $\cC$ itself, but it appears to be too strong to require a sluicing on $\cC$ when $\dim(k)>1$ or $k$ is not regular.

\begin{ex}
    If $k$ is a field, then a stability condition on $\cC$ defines a locally constant stability condition as long as it satisfies the following condition: the homomorphism $v :  \rm{K}_0(\cC) \to \Lambda$ factors through the quotient of $ \rm{K}_0(\cC)$ by the subgroup generated by
    \[
    [\kappa(s):k] [E_t] - [\kappa(t):k] [E_s],
    \]
    for all triples $(T,t,s)$, where $T$ is a connected affine $k$-scheme of finite type, $s,t \in T$ are points whose residue fields $\kappa(t),\kappa(s)$ are finite extensions of $k$, and $E \in \cC_T$. Here $[E_t] \in  \rm{K}_0(\cC)$ denotes the image of the fiber $E_t \in \cC_{\kappa(t)}$ under the pushforward functor $\cC_{\kappa(t)} \to \cC$, and likewise for $[E_s]$. This condition holds if $v$ is numerical, in the sense that it factors through the quotient of $ \rm{K}_0(\cC)$ by the kernel of the Euler pairing $(E,F) = \chi(\RHom(E,F))$.
\end{ex}

For any interval $I=(a,b]$, we let $\cM^{I} \subset \cM$ denote the substack whose $T$-points are those $E_T$ whose restriction to any point of $T$ lie in $\cP_K((a,b])\subset \cC_K$. This defines a substack by the following:

\begin{lem} \label{L:phase_mass_functions}
    A locally constant stability condition on $\cC$ induces a stability condition on $\cC_K$ for any field $K$ over $k$. The HN filtration of an object $E \in \cC_K$ is preserved under base change to an extension field of $K$, hence $\phi^+(E)$, $\phi^-(E)$, and $m(E)$ are well-defined functions on $\lvert \cM\rvert$.
\end{lem}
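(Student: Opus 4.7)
My plan is to (i) induce the sluicing on $\cC_K$ by base change, (ii) verify the support property using a finite type representative to get a stability condition, and (iii) deduce the descent statements from base-change invariance.

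For (i), any field $K$ over $k$ factors through the residue field $\kappa$ of the image of $\Spec K \to \Spec k$, giving $k \to \kappa \to K$. Applying \Cref{R:base_change_sluicing_fields} with $\kappa$ as the regular noetherian base and $\cC_\kappa$ as the category produces a sluicing $\cP_K$ on $\cC_K$ whose truncation functors are the base changes of those on $\cC_\kappa$. The same remark records $\phi^\pm(E) = \phi^\pm(E_L)$ for any further extension $K \subset L$, which feeds into (iii).

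For (ii), to apply \Cref{P:sluicing_vs_stability_condition} I must verify that with $w, C$ taken from condition (b) of \Cref{D:locally_constant_stability_condition}, every $0 \neq E \in \cP_K(I)$ with $|I| \leq w$ satisfies $Z(\ch(E)) \in \bR_{>0}e^{i\pi I}$ and $|Z(\ch(E))| > C\lVert\ch(E)\rVert$. The strategy is to find a finite type point $[E_\xi] \in |\cM_{\ch(E)}|$ with $\phi^\pm(E_\xi) \in I$; condition (b) then applies and gives the two inequalities, using the inclusion $(\phi^-(E_\xi),\phi^+(E_\xi)] \subseteq I$. Since $\cM$ is locally of finite presentation and $K$ is the filtered colimit of its finitely generated $\kappa$-subalgebras $R_\alpha$, the object $E$ descends to some $E_R \in \cC_R$ with $R$ finite type over $\kappa$. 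By \Cref{P:base_change_sluicing} and \Cref{ex:sluicing_finite_type}, $\cC_R$ inherits a compatible sluicing. Writing $I = (a,b]$, the compact objects $\tau_{>b}^R(E_R), \tau_{\leq a}^R(E_R) \in \cC_R$ vanish after the flat base change to $K$, and since $\cC_K \simeq \colim_\beta \cC_{R_\beta}$ along the filtered diagram of finite type subalgebras, these vanish already in some finite type intermediate $\cC_{R'}$, giving $E_{R'} \in \cP_{R'}(I)$. I then pick a closed point $\xi \in \Spec R'$; by the Nullstellensatz $\kappa(\xi)/\kappa$ is finite, so $[E_\xi] \in |\cM|$ is a finite type point. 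The base change $-\otimes_{R'}\kappa(\xi)$ is a left adjoint, hence preserves colimits and cofibers, and since $\Ind(\cC_{R'})_{>a}$ and $\Ind(\cC_{R'})_{\leq b}$ are generated under colimits and extensions by the images of $\Ind(\cC_\kappa)_{>a}$ and $\Ind(\cC_\kappa)_{\leq b}$ under $R' \otimes_\kappa -$, compositional compatibility of base change yields $E_\xi := E_{R'}\otimes_{R'}\kappa(\xi) \in \cP_{\kappa(\xi)}(I)$. Condition (b) then provides the desired bounds.

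For (iii), preservation of HN filtrations under $K \subset L$ is immediate: $\cP_L$ is the base change of $\cP_K$, so truncation functors commute with base change and HN filtrations (iterated truncations) are preserved. Consequently $\phi^\pm(E)$ and $m(E)$, being invariants of the HN filtration together with $Z \circ \ch$, are invariant under field extensions. Since the equivalence relation on $|\cM|$ is generated by $[E]\sim[E_L]$ for field extensions, the functions $\phi^\pm$, $m$, and $\ch$ all descend to well-defined functions on $|\cM|$. The main obstacle lies in Stage (ii), specifically the twin steps of enlarging $R$ to achieve $E_{R'} \in \cP_{R'}(I)$ and then preserving the interval $I$ under restriction to a closed point; both rely on the compositional structure of the base-change sluicing together with the colimit-preservation of the relevant left-adjoint base-change functors.
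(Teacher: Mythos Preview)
Your argument has two genuine gaps. In step~(i), \Cref{R:base_change_sluicing_fields} only produces bounded $t$-structures on $\cC_K$, not a sluicing: the noetherian condition \Cref{D:sluicing}(2) is not verified for an arbitrary field extension, and the paper's proof explicitly flags this (``it is not clear \emph{a priori} that they satisfy the condition \Cref{D:sluicing}(2)''). Without a sluicing you cannot invoke \Cref{P:sluicing_vs_stability_condition} as stated. In step~(ii), your assertion that $\Ind(\cC_{R'})_{\leq b}$ is ``generated under colimits and extensions by the images of $\Ind(\cC_\kappa)_{\leq b}$'' is false: only the aisle $\Ind(\cC_{R'})_{>\varphi}$ is defined by generation; the coaisle $\Ind(\cC_{R'})_{\leq\varphi}$ is its right orthogonal. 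The base change $\kappa(\xi)\otimes_{R'}(-)$ is only right $t$-exact in general, so it need not carry $\cP_{R'}((a,b])$ into $\cP_{\kappa(\xi)}((a,b])$; the upper phase bound can jump by the Tor-amplitude of $R'\to\kappa(\xi)$, and you do not obtain a finite type point of phase width $\leq w$.

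The paper takes a structurally different route. It first treats the case where $K/\kappa$ is essentially of finite type: there \Cref{P:base_change_sluicing} (via \Cref{ex:sluicing_finite_type}, using that the field $\kappa$ is regular) yields an honest sluicing on $\cC_K$, so \Cref{P:sluicing_vs_stability_condition} applies and gives a genuine stability condition. For an arbitrary $K$, rather than attempting to verify sluicing axioms, the paper descends each object $E\in\cC_K$ to some essentially finite type subfield $L\subset K$, takes the HN filtration in $\cC_L$ (now known to exist), and pulls back along the faithfully flat extension $L\hookrightarrow K$; exactness and conservativity of this pullback preserve semistability, so the result is an HN filtration of $E_K$. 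The support property transfers with the same uniform constant, giving a stability condition on $\cC_K$ directly, without ever needing a sluicing on $\cC_K$ for general $K$.
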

\begin{proof}
    For any field extension $K/\kappa$ of a residue field $\kappa$ of $k$, the sluicing on $\cC_\kappa$ induces $t$-structures $((\cC_K)_{>\varphi},(\cC_K)_{\leq \varphi})$ on $\cC_K$ by \Cref{R:base_change_sluicing_fields}, but it is not clear \emph{a priori} that they satisfy the condition \Cref{D:sluicing}(2) on a sluicing. However, if $K/\kappa$ is essentially finite type, \Cref{P:base_change_sluicing} and \Cref{P:sluicing_vs_stability_condition} imply that these $t$-structures and $Z$ determine a stability condition $\cC_K$.\endnote{The condition that $Z(E) \subset \bR_{>0} \cdot e^{i\pi \phi(E)}$ and the support property then follow from \Cref{D:locally_constant_stability_condition}(b), which implies the condition on $Z$ in \Cref{P:sluicing_vs_stability_condition}.} In general, every object of $\cC_K$ is the base change of some $E \in \cC_L$ for a subfield $L \subset K$ essentially of finite type over $\kappa$.\endnote{This follows from the fact the moduli functor $\cM$ is locally finitely presented, because any $K$ is a filtered union of subfields essentially finite type over $k$.} Because $L \subset K$ is faithfully flat, the pullback functor $\cC_L \to \cC_K$ is exact and preserves semistable objects,\endnote{We have defined an object to be semistable of phase $\phi$ if $E \in \cC_{\leq \phi}$ and $\tau_{>\varphi}(E)=0$ for all $\varphi<\phi$. This condition is preserved for any conservative exact functor $\cC \to \cB$.} so the base change of the HN filtration of $E$ is an HN filtration of $E_K$. The function $Z$ is preserved by base change by definition, and it satisfies the support property with a uniform constant over all essentially finite type fields over $k$, so we have verified directly that we have a stability condition on $\cC_K$.
\end{proof}

\begin{lem} \label{L:phase_semicontinuity}
Given a locally constant stability condition on $\cC$, suppose $x_0$ is a specialization of a point $x \in |\cM|$. Then $\phi^+(x) \leq \phi^+(x_0)$ and $\phi^-(x) \geq \phi^-(x_0)$.
\end{lem}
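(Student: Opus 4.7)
The plan is to reduce the statement to a specialization realized by a discrete valuation ring of the form appearing in axiom (a) of \Cref{D:locally_constant_stability_condition}, and then run a Nakayama-type argument on the truncation functors.

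First I would set up the DVR. Since $x_0$ specializes $x$ in $|\cM|$ and $\cM$ is locally of finite presentation, I can find a connected affine $k$-scheme $\Spec T$ of finite type over $k$, points $t_0 \in \overline{\{t\}} \subset \Spec T$, and an object $E_T \in \cC_T$ whose fibers $E_t$ and $E_{t_0}$ represent $x$ and $x_0$ respectively. Standard commutative algebra then produces a discrete valuation ring $R$ over $k$, with fraction field $K$ and residue field $\lambda$, and a morphism $\Spec R \to \Spec T$ sending the generic point to $t$ and the closed point to $t_0$; pulling back yields $E_R \in \cC_R$ with $E_K$ representing $x$ and $E_\lambda$ representing $x_0$. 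To apply axiom (a) of \Cref{D:locally_constant_stability_condition} I need $K$ to actually be a residue field of $k$, which requires a further approximation step using the Nagata hypothesis (finite normalization) combined with the flexibility of $|\cM|$ under extensions of fields and \Cref{P:base_change_sluicing} applied along the finite extension.

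Once the DVR is set up, axiom (a) gives a sluicing of width $1$ on $\cC_R$ whose induced sluicings on $\cC_K$ and $\cC_\lambda$ agree with those specified by the locally constant stability condition; in particular the truncation functors $\tau_{>\varphi}^R$ and $\tau_{\leq \varphi}^R$ on $\cC_R$ preserve $\cC_R$ and commute with derived base change to $K$ and $\lambda$. Set $\varphi := \phi^+(x_0) = \phi^+(E_\lambda)$. Since $E_\lambda \in \cC_{\lambda,\leq \varphi}$, the object $F := \tau_{>\varphi}^R(E_R) \in \cC_R$ satisfies $F \otimes^{\bb L}_R \lambda \cong \tau_{>\varphi}(E_\lambda) = 0$. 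Because $\cC$ is smooth and proper, for any classical generator $G$ of $\cC$ the complex $\RHom_{\cC_R}(G_R, F)$ is perfect over $R$, and its derived fiber at $\lambda$ is $\RHom_{\cC_\lambda}(G_\lambda, F \otimes^{\bb L}_R \lambda) = 0$. The standard Nakayama argument for perfect complexes over a local ring then forces $\RHom(G_R, F) = 0$, and since $G_R$ classically generates $\cC_R$, we conclude $F = 0$. Hence $E_R \in \cC_{R,\leq \varphi}$, so $E_K \in \cC_{K,\leq \varphi}$, giving $\phi^+(x) \leq \varphi = \phi^+(x_0)$. The inequality $\phi^-(x) \geq \phi^-(x_0)$ follows by the symmetric argument applied to $\tau_{\leq a}^R(E_R)$ for every $a < \phi^-(x_0)$.

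The main obstacle is the first paragraph: producing a DVR whose fraction field is a residue field of $k$, so that axiom (a) is directly applicable. One either has to enlarge $R$ carefully using the Nagata hypothesis, or show that axiom (a) formally implies the analogous compatibility for \emph{any} DVR over $k$ via a base-change and approximation argument built on \Cref{P:base_change_sluicing}. The Nakayama step, by contrast, is essentially forced by the smooth-and-proper hypothesis and should be routine; the fact that the truncation functors commute with base change along $R \to \lambda$ is precisely the content of the word ``induces'' in axiom (a).
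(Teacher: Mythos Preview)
Your Nakayama strategy via perfect complexes is clean and would work \emph{if} the input were correct, but the claim that $\tau^{R}_{>\varphi}(E_R)\otimes^{\bb L}_R \lambda \cong \tau_{>\varphi}(E_\lambda)$ is false in general, and axiom (a) does not say this. The word ``induces'' in \Cref{D:locally_constant_stability_condition}(a) means that the $t$-structures on $\cC_\lambda$ are generated by the pullback of the aisles $\cP_R((\varphi,\infty))$; this makes the pullback $(-)\otimes^{\bb L}_R\lambda$ \emph{right} $t$-exact, but since $R\to\lambda$ is not flat it is not left $t$-exact. Concretely, for $F:=\tau^R_{>\varphi}(E_R)$ and $G:=\tau^R_{\leq\varphi}(E_R)$ the base-changed triangle $F_\lambda\to E_\lambda\to G_\lambda$ has $F_\lambda\in(\cC_\lambda)_{>\varphi}$ and $E_\lambda\in(\cC_\lambda)_{\leq\varphi}$, so the first map is zero and $G_\lambda\cong E_\lambda\oplus F_\lambda[1]$; nothing forces $F_\lambda=0$. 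Thus the vanishing you feed into Nakayama is not established, and the argument collapses at exactly the step you labeled routine.

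The paper avoids this by never asserting exactness of pullback to the closed fiber. Instead it uses that the \emph{pushforward} $\iota:\cC_\lambda\to\cC_R$ is $t$-exact and conservative, together with the identification $\iota(E_\lambda)\cong\cofib(E_R\xrightarrow{\pi}E_R)$. Choosing a noetherian heart on $\cC_R$ sandwiched between two slices (this is where the width-$1$ condition enters), one argues by contradiction: if $H^i(E_\lambda)=0$ then the long exact homology sequence shows $\pi$ acts surjectively on $H^i(E_R)$, and a generalized Nakayama lemma for noetherian $R$-linear abelian categories (\Cref{L:nakayama}) forces $H^i(E_R)=0$. Exactness of restriction to $K$ then gives the bound on $E_K$. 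Incidentally, your concern about producing a DVR whose fraction field is literally a residue field of $k$ is a red herring: any DVR essentially of finite type over $k$ either lies over a single point of $\Spec(k)$, or its image factors through a DVR of the form in axiom (a), and \Cref{P:base_change_sluicing} handles both cases. The real obstacle is the non-flat base change, not the reduction.
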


\begin{proof}
Because the stack $\cM$ is locally of finite presentation over $k$ and $k$ is Nagata, one can find a discrete valuation ring $R$ essentially of finite type over $k$, with residue field $\kappa$ and fraction field $K$, and an $\cE \in \cC_R$ such that $\cE_\kappa$ represents $x_0$ and $\cE_K$ represents $x$.\endnote{By definition of specialization, one can find an affine $k$-scheme $T$ and points $t,t_0$ realizing $x$ and $x_0$. For instance, you can take the local ring of a smooth cover of $\cM$ at a point representing $x_0$. One can write $T = \Spec(\colim_\alpha R_\alpha)$ for a filtered colimit of finite type $k$-algebras $R_\alpha$. Because $M$ is locally of finite presentation, the given $T$-point is the restriction of an $R_\alpha$-point for some $\alpha$. You can then consider the local ring at the point $t_0$ representing $x_0$, and dominate this by a local map from an essentially finite type discrete valuation ring because $k$ is Nagata.} Either $\Spec(R) \to \Spec(k)$ maps to a single point, or it factors through a discrete valuation ring whose fraction field is a residue field of $k$. In both cases, \Cref{D:locally_constant_stability_condition}(a) and \Cref{P:base_change_sluicing} give a sluicing on $\cC_R$ of width $1$ that induces the given sluicings on $\cC_K$ and $\cC_\kappa$. To show that $\phi^-(\cE_\kappa) \leq \phi^-(\cE_K)$, we will suppose that $b := \phi^-(\cE_\kappa) > a > \phi^-(\cE_K)$ with $b-a<1$ and derive a contradiction.\endnote{Actually, this argument works for a sluicing of any width $0<w\leq 1$. We instead would choose $a$ and $b$ with $b-a<w$.}

We can choose a noetherian $t$-structure on $\cC_R$ such that $\cP((b,\infty)) \subseteq (\cC_R)_{> 0} \subseteq \cP((a,\infty))$ by condition (2) of \Cref{D:sluicing}. The restriction functor $\cC_R \to \cC_K$ is $t$-exact, so if we can show that $\cE_{\kappa} \in (\cC_{\kappa})_{> 0}$ implies $\cE \in (\cC_R)_{> 0}$, this would contradict the assumption that $\cE_K$ has a HN factor of phase $<a$. Likewise, to show that $\phi^+(\cE_\kappa) \geq \phi^+(\cE_K)$, it suffices to show that for a noetherian $t$-structure on $\cC_R$, $\cE_\kappa \in (\cC_{\kappa})_{\leq 0}$ implies $\cE \in (\cC_R)_{\leq 0}$.

In fact, something stronger is true: for any $i$ such that $H^i(\cE_\kappa)=0$, $H^i(\cE)=0$ as well. Because the pushforward functor $\iota : \cC_\kappa \to \cC_R$ is $t$-exact and conservative, this is equivalent to showing that
\[
H^i(\iota(\cE_\kappa)) \cong H^i(\cofib(\cE \xrightarrow{\pi \cdot \id_{\cE}} \cE)) = 0
\]
implies $H^i(\cE) = 0$ in $\cC_R$. Letting $\cF := \cofib(\cE \xrightarrow{\pi \cdot \id_{\cE}} \cE)$, we have a long exact sequence in $\cC_R^\heart$,
\[
\cdots \to H^{i-1}(\cF) \to H^i(\cE) \xrightarrow{\pi} H^i(\cE) \to H^i(\cF) \to \cdots.
\]
Therefore $H^i(\cF)=0$ implies that $\pi \cdot \id$ is surjective on $H^i(\cE)$, which by \Cref{L:nakayama} below implies that $H^i(\cE) = 0$.
\end{proof}

\begin{lem}[generalized Nakayama]\label{L:nakayama}
    Let $R$ be a discrete valuation ring with maximal ideal $(\pi)$, and let $\cA$ be an $R$-linear abelian category. Let $E \in \cA$ be a noetherian object such that $\End(E)$ is finitely generated as an $R$-module. If $\pi \cdot \id_E : E \to E$ is surjective, then $E=0$.
\end{lem}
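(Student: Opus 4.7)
The plan is to separate the argument into two independent ingredients. First, I would upgrade the surjectivity hypothesis to the statement that $\pi \cdot \id_E$ is an isomorphism, using only that $E$ is noetherian. Second, I would apply the classical Nakayama lemma to the finitely generated $R$-module $\End(E)$.

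For the first step, set $K_n := \ker(\pi^n \cdot \id_E) \subseteq E$, giving an ascending chain $0 = K_0 \subseteq K_1 \subseteq \cdots$ that stabilizes at some $K_N$ by noetherianity. Pulling back the subobject $K_n \hookrightarrow E$ along $\pi \cdot \id_E : E \to E$ identifies $K_{n+1}$ with this pullback, and since epimorphisms are preserved under pullback in any abelian category, the induced morphism $\pi \cdot \id : K_{n+1} \to K_n$ is epi for every $n$. Setting $n = N$ and using $K_N = K_{N+1}$, the morphism $\pi \cdot \id_{K_N} : K_N \to K_N$ is epi, so by iteration $\pi^N \cdot \id_{K_N}$ is epi as well; but by definition of $K_N$ this morphism vanishes, forcing $K_N = 0$. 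In particular $\ker(\pi \cdot \id_E) = K_1 = 0$, and $\pi \cdot \id_E$ is a monomorphism as well as an epimorphism, hence an isomorphism.

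For the second step, let $g := (\pi \cdot \id_E)^{-1} \in \End(E)$. Then $\id_E = \pi \cdot g \in \pi \cdot \End(E)$, and for any $f \in \End(E)$, $f = f \circ (\pi \cdot g) = \pi \cdot (f \circ g) \in \pi \cdot \End(E)$. Hence $\pi \cdot \End(E) = \End(E)$; since $\End(E)$ is a finitely generated module over the local ring $R$ with maximal ideal $(\pi)$, the classical Nakayama lemma forces $\End(E) = 0$, so $\id_E = 0$ and $E = 0$.

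The main subtlety lies in the first step, since working in an abstract $R$-linear abelian category requires an element-free argument. The crucial facts I would have to invoke are the identification of $K_{n+1}$ with the pullback of $K_n \hookrightarrow E$ along $\pi \cdot \id_E$, and the stability of epimorphisms under pullback in abelian categories; both are standard. After that, the proof is formal manipulation inside $\End(E)$ followed by a direct application of Nakayama.
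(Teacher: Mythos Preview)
Your proof is correct and follows essentially the same approach as the paper: first use the ascending chain $K_n = \ker(\pi^n \cdot \id_E)$ together with noetherianity to upgrade surjectivity of $\pi \cdot \id_E$ to invertibility, then apply the classical Nakayama lemma to the finitely generated $R$-module $\End(E)$. The only cosmetic difference is in how the pullback is used in the first step---the paper pulls back $K_1 \hookrightarrow E$ along $\pi^n$ to obtain $0 \to K_n \to K_{n+1} \to K_1 \to 0$ and argues the chain cannot stabilize unless $K_1 = 0$, whereas you pull back $K_n \hookrightarrow E$ along $\pi$ to get $\pi : K_{n+1} \twoheadrightarrow K_n$ and argue that after stabilization $\pi^N$ is both epi and zero on $K_N$; these are two sides of the same argument.
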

    
\begin{proof}
    Let $K_n := \ker(\pi^n \cdot \id_E : E \to E)$. Because $\pi$ is surjective, $\pi^n$ is as well, and we have a short exact sequence for any $n$, $0 \to K_n \to E \to E \to 0$. Pulling back along the inclusion $K_1 \subset E$ into the quotient gives a short exact sequence $0 \to K_n \to K_{n+1} \to K_1 \to 0$. It follows that $K_1 = 0$, or else the ascending chain $K_1 \subset K_2 \subset \cdots$ in $E$ would not stabilize. Therefore, $\pi \cdot \id_E : E \to E$ is an isomorphism. This implies that on the finitely generated $R$-module $\End(E)$, multiplication by $\pi$, which is equivalent to composition with $\pi \cdot \id_E$, is an isomorphism. Then $\End(E)=0$ by the usual Nakayama lemma, and hence $E=0$.
\end{proof}

A stack is $\cX$ is \emph{$\Theta$-complete} (also known as $\Theta$-reductive) if for any discrete valuation ring $R$, any morphism $\Theta_R \setminus 0 \to \cX$ extends uniquely to $\Theta_R$, where $\Theta_R := \Spec(R[t])/\bG_m$ and $\bG_m$ acts with weight $-1$ on $t$.\endnote{Here $0$ denotes the unique closed point, which is $\{\pi=t=0\}$ for a uniformizer $\pi$ of $R$.} Similarly, $\cX$ is $S$-complete if the analogous codimension-$2$ filling condition holds with $\Spec(R[s,t]/(st-\pi)) /\bG_m$ in place of $\Theta_R$, where $s$ has weight $1$, $t$ has weight $-1$, and $\pi$ is a uniformizer of $R$.\endnote{One can consider these conditions for any stack, not necessarily algebraic. For the moment, we are not assuming that our moduli stack $\cM^{(\varphi,\varphi+1]}$ of objects is algebraic --- we prove this under additional hypotheses in the next section.}

\begin{lem}\label{L:Theta_reductivity_S_completeness}
For any $\varphi \in \bR$, the substack $\cM^{(\varphi,\varphi+1]} \subset \cM$ is $\Theta$-complete and $S$-complete with respect to discrete valuation rings essentially of finite type over $k$, as is the substack $\cM_v^{\rm{ss}} \subset \cM^{(\varphi,\varphi+1]}$ of semistable points of class $v \in \Lambda$.
\end{lem}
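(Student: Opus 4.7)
The plan is to interpret $\Theta_R$- and $\cS_R$-points of $\cM^{(\varphi,\varphi+1]}$, where $\cS_R := \Spec(R[s,t]/(st-\pi))/\bG_m$, as bounded filtered and ``Rees-paired'' objects in $\cC_R$ whose underlying object and associated graded have all fibers in $\cP_\bullet((\varphi,\varphi+1])$, and then to extend such data from the punctured versions $\Theta_R\setminus 0$ and $\cS_R\setminus 0$ via saturation inside noetherian hearts on $\cC_R$. The hearts come from the width-$1$ sluicing on $\cC_R$ supplied by axiom (a) of \Cref{D:locally_constant_stability_condition} together with \Cref{P:base_change_sluicing}. When $R$ is essentially of finite type over $k$, one is in one of two cases: either $\Spec R$ maps to a single residue field of $k$ (in which case \Cref{ex:sluicing_finite_type} and \Cref{P:base_change_sluicing} equip $\cC_R$ with a sluicing induced from that residue field), or $\mathrm{Frac}(R)$ is itself a residue field of $k$ (in which case axiom (a) applies directly).

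For $\Theta$-completeness of $\cM^{(\varphi,\varphi+1]}$: a $\Theta_R\setminus 0$-point consists of an object $E \in \cC_R$ (from the open embedding $\Spec R \hookrightarrow \Theta_R\setminus 0$) with all fibers in $\cP_\bullet((\varphi,\varphi+1])$, together with a filtration $F_\bullet$ of $E_K$ in $\cP_K((\varphi,\varphi+1])$ (from $\Theta_K \hookrightarrow \Theta_R\setminus 0$), where $K = \mathrm{Frac}(R)$. I refine $F_\bullet$ so each refined step has phase range of width strictly less than $1$ -- possible because each $F_i/F_{i+1} \in \cP_K((\varphi,\varphi+1])$ admits the refinement by its own HN filtration -- and then extend the refined filtration step-by-step inside the heart of a bounded noetherian $t$-structure on $\cC_R$ supplied by \Cref{D:sluicing}(2). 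Within each such heart, I define $\tilde F_i \subset E$ as the maximum subobject restricting to $F_i$ on the generic fiber; this maximum exists by noetherianity, and is uniquely characterized by the property that $E/\tilde F_i$ is $\pi$-torsion-free in the heart, thereby furnishing both existence and uniqueness of the $\Theta_R$-extension. $S$-completeness is proved analogously by applying the same saturation procedure to both filtrations comprising an $\cS_R\setminus 0$-point simultaneously, with the Rees compatibility on the associated gradeds forced by the uniqueness of the saturation.

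For the semistable substack $\cM_v^{\rm{ss}} \subset \cM^{(\varphi,\varphi+1]}$, which is open by the local constancy of $\ch$ combined with \Cref{L:phase_semicontinuity}, it suffices by a standard argument for open substacks to verify that the extensions above land in $\cM_v^{\rm{ss}}$. The $\Spec R$-stratum of the extended $\Theta_R$- or $\cS_R$-point already lies in $\cM_v^{\rm{ss}}$ by hypothesis; what remains is to check that the $B\bG_m \times \Spec R$-stratum, namely the associated graded $\bigoplus_i \gr_i \tilde F$, lands in $\cM_v^{\rm{ss}}$ on the special fiber. The cleanest way to guarantee this is to carry out the saturation intrinsically inside the relative abelian subcategory $\cP_R(\phi) \subset \cC_R$ consisting of objects whose every fiber is semistable of phase $\phi := \arg Z(v)/\pi$, rather than inside the full heart: the saturated subobjects then automatically have fibers in $\cP_\kappa(\phi)$ at every residue field $\kappa$, so each graded piece is semistable of phase $\phi$ on every fiber. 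The principal obstacle is setting up and verifying noetherianity and base-change compatibility for $\cP_R(\phi)$, which is not itself the heart of any $t$-structure but should inherit the required properties from the fiberwise finite-length structure of $\cP_\kappa(\phi)$ under the support property in \Cref{D:locally_constant_stability_condition}(b).
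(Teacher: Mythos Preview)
Your treatment of $\Theta$-completeness for $\cM^{(\varphi,\varphi+1]}$ is essentially the paper's argument: use the width-$1$ sluicing on $\cC_R$ from axiom~(a) and \Cref{P:base_change_sluicing}, then saturate the generic-fiber filtration inside a noetherian heart. The refinement step you insert is unnecessary, however. By axiom~(1) of \Cref{D:sluicing}, the object $E$ itself already lies in $\cP_R((a,\varphi+1])$ for some $a>\varphi$, hence in a single noetherian heart, and you can take all intersections $E\cap F_i$ there at once; the paper does exactly this and cites \cite{ModuliSpaces}*{Lem.~7.17} for the verification that the result defines a $\Theta_R$-point.

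Your description of $S$-completeness is too imprecise to count as a proof. An $\overline{ST}_R\setminus 0$-point is not ``two filtrations'' but a pair $E_1,E_2\in\cP_R((\varphi,\varphi+1])$ together with an isomorphism $E_1\otimes K\cong E_2\otimes K$; the extension is the graded $R[s,t]/(st-\pi)$-module $\bigoplus_n(E_1\cap\pi^{-n}E_2)t^n$, with the intersections taken in $\Ind(\cC_R)^\heart$. You have the right intuition that saturation is the mechanism, but you need to name this construction and verify (as the paper does, citing \cite{ModuliSpaces}*{Lem.~7.16}) that it produces an $\overline{ST}_R$-point.

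For $\cM_v^{\rm{ss}}$ your approach diverges from the paper's and carries a genuine gap, which you yourself flag. The relative category $\cP_R(\phi)$ of objects with fiberwise semistable of phase $\phi$ is not known to be abelian or noetherian, nor closed under subobjects in a containing heart, so saturating inside it is not available. The paper avoids this entirely: it identifies $\cM_v^{\rm{ss}}$ as the $\Theta$-semistable locus for the linear function $\ell(E_\bullet)=\sum_w w\,\Im\bigl(\overline{Z(v)}\,Z(\gr_w E_\bullet)\bigr)$ on filtrations, and then invokes the general fact \cite{halpernleistner2022structure}*{Thm.~5.5.8} that the semistable locus for such a function on a $\Theta$- and $S$-complete stack is again $\Theta$- and $S$-complete. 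This reduces the semistable case to the ambient case with no further work.
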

\begin{proof}
    \smallskip
    \noindent\textit{$\cM^{(\varphi,\varphi+1]}$ is $\Theta$-complete:}
    \smallskip
    
    Let $R$ be a discrete valuation ring essentially of finite type over $k$ with residue field $\kappa = R/\mathfrak{m}_R$, and let $\cP$ denote the sluicing of width $1$ on $\cC_R$ given by \Cref{D:locally_constant_stability_condition}(a).
    
    Morphisms $\Theta_R \to \cM$ correspond to objects in $\cC_{\Theta_R} \subset \Ind(\cC)_{\Theta_R}$, which are diagrams $\cdots E_{n+1} \to E_n \to \cdots$ in $\Ind(\cC_R)$ satisfying certain conditions:
    \begin{enumerate}[label=(\roman*)]
        \item Each $E_n \in \cC_R$, because the functor taking the diagram $E_\bullet \mapsto E_n$ admits a co\-continuous right adjoint, taking $F \in \cC_R$ to $\cdots\to 0 \to F \to 0 \to \cdots$ with the only non-zero term in degree $n$;
        \item The associated graded object $\bigoplus_n \cofib(E_{n+1}\to E_n)$ is in $\cC_R$, which implies that $E_{n+1} \to E_n$ is an isomorphism for all but finitely many $n$; and
        \item The fact that $\Ind(\cC)_{\Theta_R}$ is classically generated by diagrams of the form $ \cdots \to 0 \to E_n \to E_{n} \to E_n \to \cdots$ implies that for any object in $\cC_{\Theta_R}$, $E_i \cong 0$ for $i \gg 0$.
    \end{enumerate}
    By \Cref{L:phase_semicontinuity}, the condition that $E_\bullet$ defines a morphism $\Theta_R \to \cM^{(\varphi,\varphi+1]}$ amounts to the condition that $\bigoplus_n \cofib(E_{n+1}\otimes_R \kappa \to E_n \otimes_R \kappa)$ lies in the heart $\cP((\varphi,\varphi+1]) \subset \cC_R$. Therefore, $\Theta_R$-points of $\cM^{(\varphi,\varphi+1]}$ correspond to $\bZ$-weighted filtered objects in $\cP((\varphi,\varphi+1])$ that remain filtrations after tensoring with $\kappa$, and the analogous description applies to maps $\Theta_K \to \cA$, where $K$ is the fraction field of $R$.
    
    One can now check $\Theta$-reductivity directly. Consider a morphism $\Theta_R \setminus 0 \to \cM^{(\varphi,\varphi+1]}$, corresp\-onding to an object $E\in \cC_R^\heart$ and a finite weighted filtration $\cdots \hookrightarrow E_{n+1} \hookrightarrow E_n \hookrightarrow \cdots$ of $E\otimes K \in \cC_K$. One obtains a filtration $\cdots\hookrightarrow E \cap E_{n+1} \hookrightarrow E \cap E_{n} \hookrightarrow \cdots$ of $E$, where the intersection is taken in the heart of the $t$-structure on $\Ind(\cC_R)$ corresponding to $\cP((\varphi,\varphi+1])$. It follows from this definition that $E \cap E_i \cong E$ for $i\ll0$ and $E \cap E_i \cong 0$ for $i\gg 0$, and $E \cap E_i \in \cC_R$ because the sluicing has width $1$, which implies both objects lie in the heart of some ind-noetherian $t$-structure on $\Ind(\cC_R)$.\endnote{\Cref{D:sluicing}(1) implies that $E \in \cP((a,\varphi+1])$ and $E_i \in \cP((b,\varphi+1])$. The interval $I=(\min(a,b),\varphi_1]$ has width $<1$, so the fact that the sluicing has width $1$ means $\cP(I)$ is contained in the heart of a noetherian $t$-structure.} The argument of \cite{ModuliSpaces}*{Lem. 7.17} then shows that this filtration of $E$ defines a morphism $\Theta_R \to \cM^{(\varphi,\varphi+1]}$ extending the given one on $\Theta_R \setminus 0$.
    
    \smallskip
    \noindent\textit{$\cM^{(\varphi,\varphi+1]}$ is $S$-complete:}
    \smallskip
    
    We regard $R[s,t]/(st-\pi)$ as a commutative algebra object in the category of graded objects $\Ind(\cC_R)^{\bZ}$. We let $\overline{ST}_R$ be the relative spectrum of this algebra over $(B\bG_m)_R$. As in the case of $\Theta$-reductivity, maps $\overline{ST}_R \to \cM$ correspond to $R[s,t]/(st-\pi)$-module objects $\bigoplus_n E_n \in \Ind(\cC_R)^{\bZ}$ such that $\forall n\in \bZ, E_n \in \cC_R$, $F_n := \cofib(s : E_{n-1} \to E_n) \cong 0$ for $n\gg 0$, and $G_n := \cofib(t : E_{n+1} \to E_{n}) \cong 0$ for $n \ll 0$.
    
    The only subset of $\overline{ST}_R$ that is stable under generalization and contains the closed point $\{s=t=0\}$ is $|\overline{ST}_R|$ itself, so because $|\cM^{(\varphi,\varphi+1]}|$ is stable under generalization (\Cref{L:phase_semicontinuity}) and the substack $\cM^{(\varphi,\varphi+1]}$ is defined by a pointwise condition, a morphism $\overline{ST}_R \to \cM$ factors through $\cM^{(\varphi,\varphi+1]}$ if and only if
    \[
        \cofib(G_{n-1} \xrightarrow{s}  G_n) \in \cP((\varphi,\varphi+1]) \subset \cC_R
    \]
    for all $n \in \bZ$. In this case all $E_n$, $F_n$ and $G_n$ also lie in $\cP((\varphi,\varphi+1])$, and the maps $s : G_{n-1} \to G_n$ and $t : F_{n+1} \to F_n$ are injective. Therefore, a morphism $\overline{ST}_R \to \cA$ can be identified with an $R[s,t]/(st-\pi)$-module object in the abelian category of graded objects in $\Ind(\cC_R)^{(\varphi,\varphi+1]}$ satisfying the conditions above.
    
    As in the proof of \cite{ModuliSpaces}*{Lem. 7.16}, we can now verify $S$-completeness directly. Consider a morphism $\overline{ST}_R \setminus 0 \to \cM^{(\varphi,\varphi+1]}$, corresponding to a pair of objects $E_1,E_2 \in \cP((\varphi,\varphi+1]) \subset \cC_R$ and an isomorphism $F \cong E_1 \otimes K \cong E_2 \otimes K$, where $K$ is the fraction field of $R$. We have a graded $R[s,t]/(st-\pi)$ module object $\bigoplus_n (E_1 \cap (\pi^{-n} \cdot E_2)) t^n \subset \bigoplus_n F t^n$, where the intersection is taken in $\Ind(\cC_R)_{(\varphi,\varphi+1]}$. Here we are identifying $E_1,E_2$ as subobjects of $F$, and we are using the fact that multiplication by $\pi$ is an automorphism of $F$. The hypothesis that the sluicing on $\cC_R$ has width $1$ implies that $E_1$ and $(E_1 \cap (\pi^{-n} \cdot E_2)$ both lie in the heart of some ind-noetherian $t$-structure on $\Ind(\cC_R)$, and therefore $(E_1 \cap (\pi^{-n} \cdot E_2) \subseteq E_1$ lies in $\cC_R$ for all $n$. One can check that this object defines a morphism $\overline{ST}_R \to \cM^{(\varphi,\varphi+1]}$ extending the original morphism.
    
    \smallskip
    \noindent\textit{$\cM_v^{\rm{ss}}$ is $\Theta$-complete and $S$-complete:}
    \smallskip
    
    As discussed in \cite{halpernleistner2022structure}*{\S6.4}, the open substack $\cM_v^{\rm{ ss}} \subset \cM^{(\varphi,\varphi+1]}$ consists of points $E \in \cC_K^{(\varphi,\varphi+1]}$ of class $v \in \Lambda$ such that for any $\bZ$-weighted filtration $\cdots \to E_{w+1} \to E_w \to \cdots$ of $E$ one has
    \begin{equation}\label{E:linear_function}
    \ell(E_\bullet) := \sum_{w \in \bZ} w \Im \left(\overline{Z(v)} Z(\gr_w(E_\bullet)) \right) \leq 0.
    \end{equation}
    In other words, $\cM_{v}^{\rm{ss}}$ is the $\Theta$-semistable locus with respect to the linear function on the component fan of $\cM^{(\varphi,\varphi+1]}_v$ defined by $\ell$. Whenever one has a stability condition of this kind on a $\Theta$-complete and $S$-complete stack, the semistable locus is also $\Theta$-complete and $S$-complete \cite{halpernleistner2022structure}*{Thm. 5.5.8}. (See also \cite{ModuliSpaces}*{\S7.3}.)
\end{proof}

\begin{cor}\label{C:mass_semicontinuity}
    Given a locally constant stability condition on $\cC$, suppose $x_0$ is a special\-ization of a point $x \in |\cM|$. If $x \in {(\varphi,\varphi+1]}$, then $m(x) \leq m(x_0)$.
\end{cor}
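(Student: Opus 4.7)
The plan is to reduce to a DVR setup as in the proof of \Cref{L:phase_semicontinuity}, by choosing a DVR $R$ essentially of finite type over $k$ with fraction field $K$, residue field $\kappa$, and $\cE \in \cC_R$ such that $\cE_K, \cE_\kappa$ represent $x, x_0$. The semistable base case is immediate: if $\cE_K$ is semistable then $m(\cE_K) = |Z(\ch(\cE_K))|$, and since $\ch$ is locally constant on $|\cM|$ and $\Spec R$ is connected, $|Z(\ch(\cE_\kappa))| = |Z(\ch(\cE_K))| = m(\cE_K)$; the triangle inequality applied to the HN decomposition of $\cE_\kappa$ (available by \Cref{L:phase_mass_functions}) yields $m(\cE_\kappa) \geq |Z(\ch(\cE_\kappa))| = m(\cE_K)$.

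For general $\cE_K$ I would induct on the number $r$ of HN factors, with $r=1$ being the semistable base case. By \Cref{D:locally_constant_stability_condition}(a), or alternatively \Cref{P:base_change_sluicing} together with \Cref{ex:sluicing_finite_type}, $\cC_R$ carries a width-$1$ sluicing whose truncation functors preserve compacts. Letting $\phi_1 > \cdots > \phi_r$ be the HN phases of $\cE_K$ and $\phi_{r+1} := \varphi$, define $\tilde F_i := \tau_{(\phi_{i+1},\phi_i]}(\cE) \in \cP_R((\phi_{i+1},\phi_i])$. Flatness of $R \to K$ combined with the fact that $\cE_K \in \cP_K((\varphi,\varphi+1])$ ensures that $\tilde F_{i,K} = F_i$ is semistable of phase $\phi_i$, so by the base case $m(\tilde F_{i,\kappa}) \geq |Z(F_i)|$, and summing gives $\sum_i m(\tilde F_{i,\kappa}) \geq m(\cE_K)$.

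The remaining step, which is the main obstacle, is to bound $m(\cE_\kappa)$ below by $\sum_i m(\tilde F_{i,\kappa})$; this runs opposite to the naive triangle inequality. The plan is to apply \Cref{T:filtration_inequality} to the induced filtration of $\tau_{(\varphi,\varphi+1]}(\cE)_\kappa$ by the $\tilde F_{i,\kappa}$, with cutoffs $a_i \in (\phi_{i+1},\phi_i)$, obtaining
\[
\sum_i m(\tilde F_{i,\kappa}) \leq m(\tau_{(\varphi,\varphi+1]}(\cE)_\kappa) + C\sum_i\bigl(m(\tilde F_{i+1,\kappa}^{\leq a_i}) + m(\tilde F_{i,\kappa}^{>a_i-\epsilon})\bigr).
\]
The error terms measure how much mass of each $\tilde F_{i,\kappa}$ escapes its intended phase window $(\phi_{i+1},\phi_i]$; since $\tilde F_i$ lies in the heart $\cP_R((\phi_{i+1},\phi_i])$ by construction, any such escape is attributable solely to the non-$t$-exactness of the base change $R \to \kappa$. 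The delicate bookkeeping is to show, using the compatibility in \Cref{D:locally_constant_stability_condition}(a) together with the strict slack $m(\tilde F_{i,\kappa}) > |Z(F_i)|$ in the semistable inequality (equality holding precisely when no phase escape occurs), that these errors are absorbed, and then to control the difference $m(\cE_\kappa) - m(\tau_{(\varphi,\varphi+1]}(\cE)_\kappa)$ using that the complementary pieces $\tau_{>\varphi+1}(\cE)$ and $\tau_{\leq\varphi}(\cE)$ have class zero in $\Lambda$ and are accessible via the same base change compatibility.
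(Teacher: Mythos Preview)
Your semistable base case and DVR reduction are fine, but the inductive step has a genuine gap that I do not see how to repair. The error terms in \Cref{T:filtration_inequality} measure the mass of $\tilde F_{i,\kappa}$ outside its intended phase window, and you have essentially no control over these: base change along $R \to \kappa$ is not left $t$-exact, so $\tilde F_{i,\kappa}$ can have HN factors of arbitrary phases subject only to the constraint $\ch(\tilde F_{i,\kappa}) = \ch(F_i)$. Your proposed absorption mechanism, trading phase escape against the slack $m(\tilde F_{i,\kappa}) - |Z(F_i)|$, does not work quantitatively: the constant $C_{n,\epsilon,t}$ in \Cref{T:filtration_inequality} is much larger than $1$, so even if the slack were exactly the mass of the escaped portion (which is already unclear), it would not compensate. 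Similarly, the claim that $\tau_{>\varphi+1}(\cE)$ and $\tau_{\leq\varphi}(\cE)$ have class zero in $\Lambda$ is false in general; they vanish over $K$ but their special fibers can be nonzero with nonzero class.

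The paper takes a completely different route that sidesteps these issues. Rather than truncating $\cE$ over $R$, it uses $\Theta$-completeness of $\cM^{(\varphi,\varphi+1]}$ (\Cref{L:Theta_reductivity_S_completeness}) to extend the HN filtration of $\cE_K$ to a filtration of $\cE$ \emph{as an $R$-point of} $\cM^{(\varphi,\varphi+1]}$. This guarantees that the induced filtration of $\cE_\kappa$ lies entirely in the quasi-abelian category $\cP_\kappa((\varphi,\varphi+1])$, with associated graded pieces having the same classes in $\Lambda$ (hence the same central charges) as the HN factors of $\cE_K$. Because those central charges have strictly decreasing argument, the filtration is convex, and then Bayer's HN polygon argument \cite{Bayer_short}*{Prop.~3.3} gives $m(\cE_\kappa) \geq \sum_j |Z(\gr_j)| = m(\cE_K)$ directly: the HN polygon of $\cE_\kappa$ lies above the piecewise-linear curve determined by any convex filtration, and the arc length of the upper curve dominates. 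No reverse triangle inequality or error-term bookkeeping is needed.
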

\begin{proof}
    As in the proof of \Cref{L:phase_semicontinuity}, it suffices to prove this when there is a morphism $\Spec(R) \to \cM$ for a discrete valuation ring essentially of finite type over $k$ whose generic point $\Spec(K)$ maps to $x$ and special point $\Spec(\kappa)$ maps to $x_0$. Because $\cM^{(\varphi,\varphi+1]}$ is $\Theta$-complete, the Harder--Narasimhan filtration of $E_K$ extends to a filtration of $E$ as an $R$-point of $\cM^{(\varphi,\varphi+1]}$. In particular, it gives a filtration $E_n \subseteq \cdots \subseteq E_1 = E_\kappa$ in $\cP_\kappa((\varphi,\varphi+1])$ that is convex in the sense that $\Im(Z(\gr_j(E_\bullet)) \overline{Z(\gr_i(E_\bullet))}) > 0$ for all $i<j$. For any such convex filtration, \cite{Bayer_short}*{Prop. 3.3} implies that $m(E_\kappa) = m(E_1) \geq \sum_j |Z(\gr_j(E_\bullet))| = m(E_K)$ --- using the fact that for two real concave functions $f,g$ on $[a,b]$ with $f(a)=g(a)$, $f(b)=g(b)$, and $f(x)\leq g(x)$ for all $x \in [a,b]$, the arc length of $f$ is $\leq$ that of $g$ over $[a,b]$.
\end{proof}

\subsection{Existence of moduli spaces}

Our main result of this section is the following:

\begin{thm} \label{T:moduli_spaces}
    Let $(\cP,v,Z)$ be a locally constant stability condition on a smooth, proper, and idempotent complete stable dg-category $\cC$ over an excellent ring $k$ of characteristic $0$. Let $G \in \cC$ be a classical generator such that\endnote{Note that by \Cref{L:phase_mass_functions}, if this condition holds for every finite type residue field, then it holds for any field of finite type over $k$. Here are some examples to clarify the residue field of finite type condition: if $k$ is a discrete valuation ring with maximal ideal $\mathfrak{m}$, then both $k/\mathfrak{m}$ and the field of fractions $\mathrm{Frac}(k)$ are residue fields of finite type over $k$. However if $k = \bC[\![x,y]\!]$ and $\mathfrak{m} = (x,y)$, then $\mathrm{Frac}(k)$ is a residue field that is not finite type, because it is not finitely generated as a $k$-algebra. On the other $\mathrm{Frac}(k / (f))$ for any irreducible element $f \in k$ is a residue field that is finitely generated as a $k$ algebra because $\mathrm{Frac}(k/(f))$ can be obtained from $k/(f)$ by inverting a single element --- any element that set-theoretically cuts out the point $\mathfrak{m}$.} 
    \begin{equation} \label{E:bound_on_generator}
        \sup_{\substack{\kappa \text{ a residue field of}\\ \text{finite type over } k}} \max \left( m(G_\kappa), \phi^+(G_\kappa) - \phi^-(G_\kappa) \right) < \infty.
    \end{equation}
    Then the following are equivalent:
\begin{enumerate}
    \item There is a function $f : \bR_{>0} \to \bZ_{\geq 0}$ such that for any field $K$ of finite type over $k$ and any nonzero $E \in \cC_K$, $\dim \Hom(G_K,E) \leq f(m(E))$. 
    
    \item For any $C>0$, the set of finite type points $E \in |\cM|$ such that $m(E)<C$ and $|\phi^\pm(E)|<C$ is bounded. 
    
    \item For any $v \in \Lambda$, the subset of $|\cM|$ parameterizing $\sigma$-semistable objects $E \in \cC_K^\heart$ of numerical class $v$ is open. The corresponding open substack $\cM^{\rm {ss}}_{v} \subset \cM$ has affine diagonal (and in particular is a derived $1$-stack), and the underlying classical stack of $\cM^{\rm{ss}}_{v}$ admits a proper good moduli space.
\end{enumerate}
\end{thm}


\begin{rem}
    The condition \eqref{E:bound_on_generator} is automatic if $G$ is a direct sum of objects $G_i$ for which there is an interval $I$ of width $<1$ such that $(G_i)_\kappa \in \cP_\kappa(I)$ for every residue field of finite type over $k$, because then one has $m(E) \leq r |Z(E)|$ for some constant $r$ depending on the width of $I$. This holds if the sluicings $\cP_\kappa$ are induced by a single sluicing on $\cC$, because one can replace a classical generator $G$ with the direct sum of its projections $\tau_I(G)$ onto various small intervals. For instance \eqref{E:bound_on_generator} is automatic if $k$ is a field or discrete valuation ring.
\end{rem}

\begin{rem}
    As the proof will show, in conditions $(1)$ and $(2)$ it is equivalent to quantify over all points in $|\cM|$, instead of just the finite type points. Also, we can take the function in $(1)$ to be monotone increasing and superadditive, in the sense that $f(x+y) \geq f(x)+f(y)$.
\end{rem}

There has been recent progress in the construction of moduli spaces for Bridgeland semi\-stable objects \cite{ModuliSpaces}*{\S7}. However, the obstruction to proving the existence of moduli spaces in general has been verifying the ``generic flatness'' property for the heart of a stability condition (see \cites{ AbramovichPolishchuk,ArtinZhang}).\endnote{Generic flatness means that for any object $E \in \cC_R^\heart$, if $E_\kappa \in \cC_R^\heart$ for some residue field $\kappa$ of $R$, then there is an $f \in R$ such that $f \notin \ker(R \to \kappa)$ and for any other prime $\mathfrak{p} \subset R$ with $f \notin \mathfrak{p}$, $E_{R/ \mathfrak{p}} \in \cC_{R/\mathfrak{p}}^\heart$.} Our main contribution here is that the boundedness hypotheses of \Cref{T:moduli_spaces} imply this generic flatness property. The following is a very strong form of the generic flatness property:

\begin{prop}\label{P:generic_flatness}
In the context of \Cref{T:moduli_spaces}, if condition (2)  holds, then for any $C \in \bR$, the subsets $\{E \in |\cM| \text{ s.t. } \phi^+_\sigma(E) \leq C\}$ and $\{E \in |\cM| \text{ s.t. } \phi^-_\sigma(E) \geq C\}$ are open.
\end{prop}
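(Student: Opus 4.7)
The plan is to combine the semicontinuity of \Cref{L:phase_semicontinuity} with a spreading-out argument based on the sluicing on $\cC_R$ from \Cref{P:base_change_sluicing}, and conclude by Noetherian induction. By the definition of the topology on $|\cM|$, it suffices to fix a morphism $\Spec R \to \cM$ from an affine $k$-scheme of finite type (corresponding to $E \in \cC_R$) and show that the preimages of $\{\phi^+ \leq C\}$ and $\{\phi^- \geq C\}$ are open in $\Spec R$. By \Cref{L:phase_semicontinuity} both preimages are stable under generalization, so it is enough to prove constructibility. After reducing to $R$ integral (and, via stratification, essentially of finite type over a single residue field of $k$ so that \Cref{P:base_change_sluicing} and \Cref{ex:sluicing_finite_type} endow $\cC_R$ with a sluicing), the core step is to produce, whenever the relevant inequality holds at the generic point $\eta$, a dense open of $\Spec R$ on which it holds everywhere; Noetherian induction on the complement then yields openness.

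The $\phi^- \geq C$ case is direct. Assume $\phi^-(E_\eta) \geq C$, and for each $\epsilon > 0$ set $G_\epsilon := \tau^R_{\leq C-\epsilon}(E_R) \in \cC_R$. Since $R \to K(\eta)$ is flat, base change at $\eta$ is $t$-exact and $G_\epsilon \otimes_R K(\eta) \cong \tau^{K(\eta)}_{\leq C-\epsilon}(E_\eta) = 0$. Local finite presentation of $\cM$ then forces $G_\epsilon$ to vanish on a dense open $V_\epsilon \subset \Spec R$, so $E_R|_{V_\epsilon} \cong \tau^R_{>C-\epsilon}(E_R)|_{V_\epsilon} \in (\cC_{R_{V_\epsilon}})^{>C-\epsilon}$. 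By the observation noted after \Cref{P:base_change_sluicing} that pullback always preserves $\cC^{>\varphi}$, this yields $E_t \in \cC_{\kappa(t)}^{>C-\epsilon}$ and hence $\phi^-(E_t) > C-\epsilon$ for every $t \in V_\epsilon$. Taking a sequence $\epsilon_n \to 0$, the resulting descending chain $\{V_{\epsilon_n}\}$ stabilizes by Noetherianity to a single dense open on which $\phi^-(E_t) \geq C$.

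The $\phi^+ \leq C$ case is more delicate, because pullback preserves $\cC^{\leq \varphi}$ only for flat base change. The naive analog with $F := \tau^R_{>C}(E_R)$ gives, by flat base change at $\eta$ together with local finite presentation, a dense open $U$ on which $F = 0$ and hence $E_R|_U \in (\cC_{R_U})^{\leq C}$; but at a non-flat specialization $t \in U$, $\Tor$-contributions can pull $E_t$ into $\cC_{\kappa(t)}^{>C}$. To remedy this, my plan is to invoke generic flatness: by \Cref{L:sluicing_base_change_boundedness} $E_R \in \cP_R(I)$ for some finite interval $I$, and subdividing $I$ into sub-intervals of width less than the sluicing width $w$ presents $E_R$ as an iterated extension whose graded pieces lie in hearts of bounded noetherian $t$-structures (\Cref{D:sluicing}(2)). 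Classical Grothendieck generic flatness, applied in these noetherian hearts, yields a dense open $U'$ on which base change commutes with the sluicing truncations, so that $E_R|_{U \cap U'} \in \cC^{\leq C}$ implies $E_t \in \cC_{\kappa(t)}^{\leq C}$, i.e.\ $\phi^+(E_t) \leq C$, for every $t \in U \cap U'$.

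Finally, Noetherian induction wraps up: given that the preimage $S$ contains a dense open $U$, the inductive hypothesis applied to $\Spec R \setminus U$ makes $S \cap (\Spec R \setminus U)$ open in $\Spec R \setminus U$, hence of the form $W \cap (\Spec R \setminus U)$ for some open $W \subseteq \Spec R$, so $S = U \cup W$ is open. The main obstacle I anticipate is formulating the generic flatness statement precisely for objects of $\cC_R$ with respect to the sluicing hearts rather than ordinary coherent sheaves; I expect this to follow from classical generic flatness applied to the locally noetherian structure on the hearts provided by \Cref{P:base_change_sluicing} and \Cref{D:sluicing}(2), combined with the fact that $E_R$ is perfect over $R$ since $\cC$ is smooth and proper.
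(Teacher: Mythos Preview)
Your plan has two genuine gaps. First, the claim that the descending chain $\{V_{\epsilon_n}\}$ stabilizes is false: as $\epsilon \to 0$ the object $G_\epsilon = \tau^R_{\leq C-\epsilon}(E_R)$ grows, so its vanishing locus $V_\epsilon$ \emph{shrinks}, and Noetherian spaces have no DCC on opens. (This could be repaired via \Cref{L:boundedness}, which shows only finitely many phases occur among HN factors over all finite-type points, so a single small $\epsilon$ already gives $\phi^->C-\epsilon\Leftrightarrow\phi^-\geq C$---but you do not invoke it.) Second and more fundamentally, your $\phi^+\leq C$ argument rests on an unformulated ``generic flatness in sluicing hearts,'' which you yourself flag as the main obstacle, and your argument nowhere uses hypothesis~(2), the boundedness of $\{m<C,\,|\phi^\pm|<C\}$. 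If your route worked as written, the proposition would hold without that hypothesis.

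The paper's proof is organized entirely around hypothesis~(2) and proceeds quite differently. Rather than spreading out truncations on $\cC_R$ (which need not even carry a sluicing in this generality---only $\cC_\kappa$ for residue fields $\kappa$ does; see the remark after \Cref{D:locally_constant_stability_condition}), it parametrizes HN filtrations extrinsically via the flag stack $\Flag(\cE)=T\times_\cM\Filt^{\rm un}(\cM)$. Hypothesis~(2) together with \Cref{L:boundedness} forces the locus of finite-type HN filtrations to be bounded, hence covered by a finite-type smooth chart $U\to\Flag(\cE)$; taking the reduced closure $Y\subset U$ of those generic points whose HN filtration has a factor of the maximal phase $\phi_{\max}$, one shows using \Cref{L:HN_openness} and \Cref{L:phase_semicontinuity} that the image $F(Y)\subset T$ coincides with $S_+=\{t:\phi^+(\cE_t)=\phi_{\max}\}$. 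Chevalley's theorem makes $F(Y)$ constructible, hence $S_+$ is closed, and iterating over the finitely many occurring phases finishes. Boundedness is precisely what makes the Chevalley step available; a viable generic-flatness substitute would need to replace this control, and you have not shown how.
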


We will need some lemmas to prove this.

\begin{lem} \label{L:HN_openness}
Let $\cE_n \to \cE_{n-1} \to \cdots \to \cE_0$ be a diagram in $\cC_T$ for an affine $k$-scheme $T$. If $t \in T$ specializes to $t_0$ and $(\cE_\bullet)_{t_0}$ is an HN filtration in $\cC_{t_0}$, then $(\cE_\bullet)_{t}$ is an HN filtration in $\cC_{t}$ with the same phases.
\end{lem}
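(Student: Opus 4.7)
My plan is to reduce the statement to the semicontinuity of $\phi^\pm$ that was already established in \Cref{L:phase_semicontinuity}. The key point is that being an HN filtration is characterized by the associated graded pieces being semistable with strictly decreasing phases, and each of these conditions transfers well along generization because of the one-sided inequalities $\phi^+(x)\leq \phi^+(x_0)$ and $\phi^-(x)\geq \phi^-(x_0)$ for $x_0$ a specialization of $x$.

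Concretely, I would proceed as follows. Form the cofibers $\gr_i(\cE_\bullet) := \cofib(\cE_{i+1}\to \cE_i) \in \cC_T$. For any point $s\in T$, the pullback $\cC_T \to \cC_{\kappa(s)}$ is an exact functor of stable $dg$-categories, so it preserves cofibers; consequently $(\gr_i(\cE_\bullet))_s \cong \gr_i((\cE_\bullet)_s)$, and similarly the maps between successive terms in $(\cE_\bullet)_s$ are the restrictions of the original maps. Thus it suffices to show that each $(\gr_i(\cE_\bullet))_t$ is semistable in $\cC_{\kappa(t)}$ of the same phase $\phi_i$ as $(\gr_i(\cE_\bullet))_{t_0}$, since the strict inequalities $\phi_1 > \cdots > \phi_n$ then automatically transfer to $t$.

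The object $\gr_i(\cE_\bullet)\in \cC_T$ defines a morphism $T\to \cM$, and under this morphism the specialization $t \leadsto t_0$ in $T$ produces a specialization $[(\gr_i)_t] \leadsto [(\gr_i)_{t_0}]$ in $|\cM|$. By hypothesis $(\gr_i)_{t_0}$ is semistable of phase $\phi_i$, i.e.\ $\phi^+((\gr_i)_{t_0}) = \phi^-((\gr_i)_{t_0}) = \phi_i$. Applying \Cref{L:phase_semicontinuity} to this specialization gives
\[
\phi^+((\gr_i)_t) \;\leq\; \phi^+((\gr_i)_{t_0}) \;=\; \phi_i \;=\; \phi^-((\gr_i)_{t_0}) \;\leq\; \phi^-((\gr_i)_t).
\]
Since always $\phi^-(E)\leq \phi^+(E)$ for any nonzero $E$, both inequalities are forced to be equalities, so $(\gr_i)_t$ is semistable of phase exactly $\phi_i$. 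The main obstacle in the argument is really just ensuring the formalism of base change commutes with the formation of cofibers and the computation of points of $|\cM|$, but this follows immediately from the exactness of pullback $\cC_T \to \cC_{\kappa(s)}$ and the definition of $|\cM|$ as a set of equivalence classes of field-valued points. Once this bookkeeping is in place, the genuine content is entirely concentrated in the already-proven semicontinuity statement.
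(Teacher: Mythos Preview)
Your proof is correct and follows essentially the same approach as the paper's: reduce to the associated graded pieces using that base change commutes with cofibers, and then deduce semistability of each $(\gr_i)_t$ with the same phase from the two inequalities in \Cref{L:phase_semicontinuity}. The paper's proof is simply a terser version of what you wrote.
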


\begin{proof}
Because the formation of associated graded objects commutes with base change, it suffices to show that if $\cE_{t_0}$ is semistable of phase $\phi$, then so is $\cE_t$. This is an immediate consequence of \Cref{L:phase_semicontinuity}.
\end{proof}

\begin{lem} \label{L:boundedness}
    Under the hypotheses of \Cref{T:moduli_spaces}, if $T$ is an affine $k$-scheme of finite type and $\cE \in \cC_T$, then the values of $|\phi^+(\cE_t)|$, $|\phi^-(\cE_t)|$, $m(\cE_t)$, and the length of the HN filtration of $\cE_t$ are uniformly bounded above over all finite type points $t \in |T|^{\rm{ft}}$. In addition, only finitely many $\phi \in \bR$ arise as the phase of an HN factor of $\cE_t$ for some $t \in |T|^{\rm{ft}}$.
\end{lem}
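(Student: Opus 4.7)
The plan is to bound all relevant invariants of $\cE_t$ uniformly by exploiting the perfect complex $\RHom(G_T,\cE)$ and transferring the resulting bounds back to $\cE_t$ via the thick-subcategory generation of $\id_\cC$ by $\RHom(G,-)\otimes_k G$ established in the proof of \Cref{P:mass_hom_implies_bounded_functors}.

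First I would observe that $\RHom(G_T,\cE) \in \Perf(T)$ because $\cC$ is smooth and proper. Representing it as a bounded complex of finitely generated projective $\cO_T$-modules in degrees $[a_0,b_0]$, for every $t \in T$ the complex $\RHom(G_t,\cE_t) \simeq \RHom(G_T,\cE) \otimes^L_{\cO_T} \kappa(t)$ has cohomology contained in $[a_0,b_0]$. Since the function $t \mapsto \dim_{\kappa(t)} H^i(\RHom(G_t,\cE_t))$ is upper semicontinuous for each $i$, the total fiber dimension $d_t := \sum_i \dim_{\kappa(t)} H^i(\RHom(G_t,\cE_t))$ is bounded above by some constant $D$ on the noetherian scheme $T$.

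Next, I would invoke the argument from the proof of \Cref{P:mass_hom_implies_bounded_functors}: smooth-properness together with $G$ being a classical generator imply that $\id_\cC$ lies in the smallest thick stable subcategory of $\Fun^{\rm{ex}}(\cC,\cC)$ containing $\RHom(G,-)\otimes_k G$. A fixed finite expression of $\id_\cC$ as an iterated cone/retract of shifts of this functor involves some level $n$ and shifts in a bounded range $[-L,L]$, both depending only on $\cC$ and $G$; in particular, they are independent of the base field $K$ and of $\cE$, since the decomposition commutes with base change along $k \to K$ and with evaluation. Applied to $\cE_K$ for any field $K$ of finite type over $k$, and noting that over a field $\RHom(G_K,\cE_K) \otimes_\kappa G_K \cong \bigoplus_{i \in [a_0,b_0]} G_K^{\oplus d_i}[-i]$, combining with \eqref{E:bound_on_generator} (which yields $m(G_K), |\phi^\pm(G_K)| \leq N$ uniformly) and the triangle inequalities for $\phi^\pm$ and mass yields uniform bounds
\[
|\phi^\pm(\cE_t)| \leq N + L + \max(|a_0|,|b_0|), \qquad m(\cE_t) \leq C_0 \cdot D \cdot N,
\]
where $C_0$ depends only on the finite cone decomposition of $\id_\cC$.

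The bound on HN length then follows from the support property: each nonzero HN class $v_j \in \Lambda$ satisfies $|Z(v_j)| > C\|v_j\|$ for a fixed $C>0$, bounded below by a positive constant since $\|v_j\|$ is at least the minimum nonzero lattice norm, and $\sum_j |Z(v_j)| = m(\cE_t)$ is bounded. Finitely many HN phases arise in total because the possible classes $v_j$ all lie in the finite set $\{v \in \Lambda : \|v\| \leq C_0 D N /C\}$, each such $v$ pins down the value of $\arg Z(v)/\pi$ modulo $2\bZ$, and the real-valued phases themselves lie in the bounded interval furnished by the $|\phi^\pm|$-bounds above. The main technical point that needs care is the uniformity of the cone decomposition across varying base fields $K$; this will be automatic because the thick generation takes place once and for all in the $k$-linear category $\Fun^{\rm{ex}}(\cC,\cC)$, and base change preserves cones, shifts, and retracts, so a single finite expression of $\id_\cC$ produces a corresponding decomposition of $\id_{\cC_K}$ with the same level $n$ and shift range $[-L,L]$ over every $K$.
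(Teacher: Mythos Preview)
Your argument is correct, but the paper takes a more direct route. Rather than passing through the perfect complex $\RHom(G_T,\cE)$ and the functor-category generation of $\id_\cC$ by $\RHom(G,-)\otimes_k G$, the paper simply uses that $G_T$ classically generates $\cC_T$: the object $\cE$ itself is a retract of some $\cE'$ built by a fixed finite sequence of extensions from shifts of $G_T$, and this \emph{single} expression in $\cC_T$ restricts fiberwise to an expression for $\cE_t$ in terms of the same number $M$ of shifted copies of $G_t$. The phase and mass bounds then follow immediately from \eqref{E:bound_on_generator} and the triangle inequality, with no need for semicontinuity of fiber dimensions or the smoothness-based thick generation of $\id_\cC$. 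Your approach buys a bound explicitly tied to the fiber dimension $D$ of $\RHom(G_T,\cE)$, which is a pleasant byproduct, and the base-change uniformity issue you flag is indeed the crux (and you resolve it correctly via the bimodule picture $\Fun^{\rm ex}_k(\cC,\cC)\simeq \cC^{\rm op}\otimes_k\cC$). The paper's approach is more elementary because the uniformity is automatic once one has a single finite expression in $\cC_T$; it also avoids invoking smoothness a second time. The final steps---using the support property to bound HN length and to confine the possible HN classes to a finite subset of $\Lambda$, then combining with the phase bound to get finitely many phases---are identical in both arguments.
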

\begin{proof}
    The stable dg-category $\cC_T$ is classically generated by $G_T$. Therefore the object $\cE \in \cC_T$ can be realized as a retract of an object $\cE'$ constructed from an explicit sequence of extensions starting with $G_T$ and its shifts. Every fiber $\cE_t$ is therefore also a retract of $\cE'_t$, which is constructed via the same sequence of extensions. The bounds on $\phi^-(G_t)$ and $\phi^+(G_t)$ in \eqref{E:bound_on_generator} imply that there is a uniform bound $-N < \phi^-(\cE_t) < \phi^+(\cE_t) < N$ for all $t \in |T|^{\rm{ft}}$, where the latter denotes the set of finite type points of $T$.

    Using the same generation argument, the triangle inequality for masses implies that $m(\cE_t) \leq m(\cE'_t) \leq M \cdot m(G_t) \leq M K$ for every $t \in |T|^{\rm{ft}}$, 
    where $M$ is the number of copies of $G$ that appear in the construction of $\cE'$ and $K$ is the supremum appearing in \eqref{E:bound_on_generator}.
    
    The support property \Cref{D:locally_constant_stability_condition}(b) then implies that there are only finitely many classes $v \in \Lambda$ that can appear as HN factors of $\cE_t$ for $t \in |T|^{\rm{ft}}$. Combined with the uniform phase bound, this implies that there are only finitely many $\phi \in \bR$ that arise as the phases of HN factors of some $\cE_t$, and hence the length of the HN filtration of $\cE_t$ is uniformly bounded above for all $t\in |T|^{\rm{ft}}$.
\end{proof}

\begin{proof}[Proof of \Cref{P:generic_flatness}]
We consider a finite type affine $k$-scheme $T$ and a map $T \to \cM$, corresp\-onding to an object $\cE \in \cC_T$. Our goal is to show that the set of $t \in T$ such that $\phi^+(\cE_t)\leq C$ is open, and likewise for the set of $t$ with $\phi^-(\cE_t)\geq C$.

Let $\phi_{\max} := \max \{\phi^+(\cE_t) : t \in T \text{ closed point}\}$ and analogously $\phi_{\min} := \min\{ \phi^-(\cE_t) : t \in T \text{ closed point} \}$ --- they exist by \Cref{L:boundedness}. It follows from \Cref{L:phase_semicontinuity} that $\phi_{\max}$ is also the maximum of $\phi^+(\cE_t)$ over all points in $T$, not just closed points, and likewise for $\phi^-_{\min}$. Let 
\[
S_+ := \{t \in T : \phi^+(\cE_t) = \phi_{\max}\} \subset |T|,
\]
and define $S_-$ analogously using $\phi^-$ and $\phi_{\min}$. Let $S_\pm^{\rm{ft}} \subset S_\pm$ be the subset of finite type points. By \Cref{L:boundedness}, only finitely many phases appear among HN factors of $\cE_t$ for $t \in |T|^{\rm{ft}}$, and $\phi_{\max}$ and $\phi_{\min}$ must be among them. Therefore, it suffices by induction to show that $S_+,S_- \subset |T|$ are closed. We will give the argument for $S_+$, as the argument for $S_-$ is completely analogous.

Our argument is a modification of the construction of a $\Theta$-stratification in \cite{halpernleistner2022structure}*{Thm. 2.2.2}.

Consider the stack $\Filt^{\rm{un}}(\cM)$ of unweighted filtered points in $\cM$. This is a union of stacks $\Filt^{\rm{un}}(\cM)_n$ parameterizing filtrations of length $n$. $\Filt^{\rm{un}}(\cM)_n$ is the moduli stack of objects in the smooth and proper $k$-linear dg-category of diagrams in $\cC$ of the form $E_n \to \cdots \to E_1$, which we regard as a filtration of $E_1$. Therefore $\Filt^{\rm{un}}(\cM)$ is an algebraic higher derived stack, locally of finite presentation over $k$.

The map taking such a diagram to $E_1$ defines a forgetful morphism $\Filt^{\rm{un}}(\cM) \to \cM$. We define the flag stack $\Flag(\cE) := T \times_{\cM} \Filt^{\rm{un}}(\cM)$; points in $|\Flag(\cE)|$ classify a point in $T$ and a filtration of $\cE_t$. Let $|\Flag(\cE)|^{\rm{HN}} \subset |\Flag(\cE)|$ denote the subset of points whose underlying filtration is an HN filtration, and let $|\Flag(\cE)|^{\rm{ftHN}} \subset |\Flag(\cE)|^{\rm{HN}}$ denote the subset of finite type points.

The associated graded morphism $\gr : \Filt^{\rm{un}}(\cM)_n \to \cM^n$ taking 
\[
    (E_1\to \ldots \to E_n) \mapsto \left( E_1, \cofib(E_1 \to E_2),\ldots, \cofib(E_{n-1} \to E_n) \right)
\]
is $\infty$-quasi-compact, and a point $E_\bullet \in \Filt^{\rm{un}}(\cM)_n$ is a HN filtration if and only if $\gr(E_\bullet)$ is a tuple of semistable objects with decreasing phases. It follows from condition (2) of \Cref{T:moduli_spaces} for finite type points, combined with the observations above on the bounded\-ness of $m_\sigma$ and the length of HN filtrations of $\cE_t$ for $t \in |T|^{\rm{ft}}$, that the image of $|\Flag(\cE)|^{\rm{ftHN}}$ under $\gr$ is bounded, and hence $|\Flag(\cE)|^{\rm{ftHN}}$ is bounded.

We can therefore choose a smooth morphism $U \to \Flag(\cE)$ from a finite type affine scheme $U$ whose image contains $|\Flag(\cE)|^{\rm{ftHN}}$. Let $F$ denote the composition $F : U \to \Flag(\cE) \to T$. Let $Y \subset U$ denote the reduced closure of the set of generic points of $U$ whose image in $\Filt^{\rm{un}}(\cM)$ is an HN filtration containing a factor of phase $\phi_{\max}$.

Any point $t \in F(Y)$ is a specialization of $F(\eta)$ for some generic point $\eta$ of $U$ with $\phi^+(\cE_{F(\eta)}) = \phi_{\max}$, so \Cref{L:phase_semicontinuity} implies that $F(Y) \subset S_+$.

On the other hand, for any $t \in |T|^{\rm{ft}}$ with $\phi^+(\cE_t) = \phi_{\max}$, there is a point in $u \in U$ with $t = F(u)$ and whose image in $\Filt^{\rm{un}}(\cM)$ is an HN filtration of $\cE_t$. By \Cref{L:HN_openness}, the generic point of any component of $U$ containing $u$ also corresponds to an HN filtration with maximal phase $\phi_{\rm{max}}$, and thus $u \in Y$. We therefore have $S_+^{\rm{ft}} \subset F(Y)$.

Now consider $t \in T \setminus F(Y)$. The set $T \setminus F(Y)$ is constructible by Chevalley's theorem, because $F$ is of finite presentation. Therefore $t$ specializes to a finite type point $t_0 \in T \setminus F(Y)$. Because $S_+^{\rm{ft}} \subset F(Y)$, \Cref{L:phase_semicontinuity} implies that $\phi^+(\cE_t) \leq \phi^+(\cE_{t_0}) < \phi_{\max}$, so $t \notin S_+$. Thus, ultimately, $S_+=F(Y)$ is a constructible set, and \Cref{L:phase_semicontinuity} then implies that $S_+$ is closed by \cite{stacks-project}*{\href{https://stacks.math.columbia.edu/tag/0060}{Tag 0060}}.
\end{proof}

\begin{prop}\label{P:existence}
    In the context of \Cref{T:moduli_spaces}, if condition (2) holds, then:
    \begin{enumerate}
    \item $\cM^{(\varphi,\varphi+1]} \subset \cM$ is an open substack with affine diagonal, algebraic and locally of finite presentation over $k$;
    \item $\cM^{(\varphi,\varphi+1]}$ admits a well-ordered $\Theta$-stratification by Harder--Narasimhan filtrations, and the subset $\{x | m(x) \leq C\}$ is an open and quasi-compact union of strata for any $C>0$.
    \item For any $v \in \Lambda$, $\cM_v^{\rm{ss}} = \{x | \ch(x)=v \text{ and } m(x) \leq |Z(v)|\}  \subset \cM^{(\varphi,\varphi+1]}$ is open and admits a proper good moduli space;
    \end{enumerate}
\end{prop}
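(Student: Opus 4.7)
The plan is to build on the generic flatness result of Proposition \ref{P:generic_flatness}, the completeness results of Lemma \ref{L:Theta_reductivity_S_completeness}, and the boundedness provided by Lemma \ref{L:boundedness}, then invoke the existing machinery for existence of $\Theta$-stratifications and of good moduli spaces.

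For (1), openness of $\cM^{(\varphi,\varphi+1]}$ is immediate from Proposition \ref{P:generic_flatness}: it is the intersection of $\{\phi^+\leq \varphi+1\}$ and $\bigcup_{n\geq 1}\{\phi^-\geq \varphi+1/n\}$, both of which are open. For the affine diagonal, I would use that for $E,F\in \cP_K((\varphi,\varphi+1])$ the $t$-structure axioms give $\Ext^{<0}(E,F)=0$; a standard argument (analogous to the reasoning showing that hearts of bounded $t$-structures give rise to $1$-stacks with affine diagonal) then shows that the relative isomorphism scheme of any pair of objects in $\cP$ is affine, and hence that $\cM^{(\varphi,\varphi+1]}$ has affine diagonal. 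Being open inside the algebraic higher derived stack $\cM$, it is algebraic and locally of finite presentation over $k$.

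For (2), the HN filtration corresponds to the maximizer of the piecewise linear function $\ell$ from \eqref{E:linear_function} on the fan of $\bZ$-weighted filtrations of each point, realized inside the component fan of $\cM^{(\varphi,\varphi+1]}$. By Lemma \ref{L:Theta_reductivity_S_completeness}, $\cM^{(\varphi,\varphi+1]}$ is $\Theta$-complete, while hypothesis (2) together with Lemma \ref{L:boundedness} supply the boundedness of the finite-mass loci. Applying the existence theorem \cite{halpernleistner2022structure}*{Thm.~2.2.2} for $\Theta$-stratifications then yields the stratification by HN type. For $E$ with HN graded classes $v_1,\ldots,v_n$, the mass satisfies $m(E)=\sum_i|Z(v_i)|$, and the support property from \Cref{D:locally_constant_stability_condition}(b) implies that only finitely many such tuples can appear with $\sum_i|Z(v_i)|\leq C$. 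Corollary \ref{C:mass_semicontinuity} then shows that $\{m\leq C\}$ is open, and by the previous sentence it is a quasi-compact union of strata.

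For (3), $\cM_v^{\rm{ss}}$ is the open stratum of the stratification corresponding to the trivial HN type, together with the class condition $\ch=v$; the characterization $\{m(x)\leq |Z(v)|\}$ follows from the HN triangle inequality, which gives $m(E)\geq |Z(E)|$ with equality iff $E$ is semistable. By (1) and (2) the stack $\cM_v^{\rm{ss}}$ has affine diagonal, is locally of finite presentation, and is bounded; by Lemma \ref{L:Theta_reductivity_S_completeness} it is both $\Theta$-complete and $S$-complete. These are the hypotheses of the Alper--Halpern-Leistner--Heinloth existence theorem for good moduli spaces, producing a separated good moduli space $M_v^{\rm{ss}}$. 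For properness I would apply the valuative criterion: given a DVR $R$ essentially of finite type over $k$ with fraction field $K$ and a morphism $\Spec(K)\to \cM_v^{\rm{ss}}$, local finite presentation of $\cM$ extends this to an $R$-point $\cE\in \cC_R$; Corollary \ref{C:mass_semicontinuity} and Lemma \ref{L:phase_semicontinuity} control the special fiber, $\Theta$-completeness applied to the (possibly non-trivial) HN filtration of the special fiber lets us degenerate $\cE$ to a new $R$-point whose special fiber is semistable of class $v$, and $S$-completeness then provides a unique polystable limit. The image of this special fiber in $M_v^{\rm{ss}}$ is the required closed point. The main obstacle will be the properness step: one must verify carefully that the combination of $\Theta$-completeness, $S$-completeness, and the support property actually produces a limit inside $\cM_v^{\rm{ss}}$ (still of class $v$) rather than in some larger ambient stack, tracking the numerical class through the mutations that replace the HN filtration of the special fiber by its polystable representative.
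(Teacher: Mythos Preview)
Your outline for parts (1) and (2) is broadly correct and follows the paper's approach, though for (2) you should be aware that the paper explicitly notes it is \emph{not} aware of a real-valued numerical invariant on the full stack $\cM^{(\varphi,\varphi+1]}$ that recovers HN filtrations (the function $\ell$ of \eqref{E:linear_function} only handles the torsion-free part $\cM^{(\varphi,\varphi+1)}$); instead it verifies the criteria of \cite{halpernleistner2022structure}*{Thm.~2.2.2} directly (HN property, HN specialization, open strata, HN consistency, strong HN boundedness), using the mass function as an invariant on irreducible components of $\Filt(\cM^{(\varphi,\varphi+1]})$.

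The genuine gap is in your properness argument for (3). First, local finite presentation of $\cM$ does \emph{not} let you extend a $K$-point to an $R$-point; it only says points over filtered colimits descend to a stage. You can extend $E \in \cC_K$ to some $F \in \cC_R$ (this is \Cref{L:localization}), but an arbitrary such $F$ will have special fiber $F_\kappa$ whose phases may lie far outside $(\varphi,\varphi+1]$, so \Cref{C:mass_semicontinuity} and \Cref{L:phase_semicontinuity} give no control and the special fiber is not even a point of $\cM^{(\varphi,\varphi+1]}$; in particular there is no HN filtration to feed into any form of semistable reduction. Second, $\Theta$-completeness does not degenerate a point to a semistable one: it fills in $\Theta_R\setminus 0 \to \cM$ to $\Theta_R$, which is a different statement.

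The paper's route is substantially more work: using the sluicing on $\cC_R$ guaranteed by \Cref{D:locally_constant_stability_condition}(a), it first treats intervals of width $<w$, where it shows the inclusion of the torsion subcategory $\cT = \ker(\cP_R(I) \to \cP_K(I))$ admits a right adjoint, and replaces a naive extension $G$ by its torsion-free quotient $\tilde{E}$; this forces $\tilde{E}_\kappa \in \cP_\kappa((a,a+1])$. Only then, with an honest $R$-point of $\cM^{(a,a+1]}$ in hand, does the semistable reduction theorem \cite{ModuliSpaces}*{Thm.~6.3} (applied via the $\Theta$-stratification of part (2)) modify it so that the HN filtration extends across the special fiber, yielding $F_\kappa \in \cP_\kappa((a,b])$. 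Arbitrary intervals are handled by induction, splicing together extensions on subintervals.
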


\begin{proof}
\Cref{P:generic_flatness} implies that the set of semistable points in $|\cM|$ is open, because these are precisely the points where $\phi^+(E) \leq C$ and $\phi^-(E) \geq C$ for some $C$, and $\cM^{(\varphi,\varphi+1]}$ is open because it is the union of the substacks of points satisfying $\varphi+\epsilon\leq \phi^-(E) \leq \phi^+(E) \leq \varphi+1$ for $0<\epsilon \ll 1$, by \Cref{D:sluicing}(1). In addition, the condition $v(E) = v$ is open, because the function $E \mapsto v(E) \in \Lambda$ is locally constant on $|\cM|$. Therefore, $\cM^{\rm{ss}}_v$ and $\cM^{(\varphi,\varphi+1]}$ are open substacks of $\cM$. $\cM^{(\varphi,\varphi+1]}$ and $\cM_v^{\rm{ss}}$ are derived $1$-stacks, because an $E \in \cC_T$ corresponding to a $T$-point of $\cM^{(\varphi,\varphi+1]}$ has vanishing negative self-Ext groups after arbitrary base change.

By hypothesis, the set of finite type points in $\cM_v^{\rm {ss}}$ is bounded. This implies that $\cM_v^{\rm {ss}}$ is quasi-compact, because given a smooth morphism $f : U \to \cM_v^{\rm {ss}}$ whose image contains all of the finite type points, the complement $|\cM_v^{\rm {ss}}| \setminus f(|U|)$ is a locally constructible subset that does not contain any finite type points, and is thus empty.

Finally, we check the conditions on the moduli functor $\cM_v^{\rm{ss}}$ that imply the existence of a proper good moduli space:

\smallskip
\noindent\textit{$\cM^{(\varphi,\varphi+1]}$ has affine diagonal:}
\smallskip

Although it is formulated in the context of a moduli functor for objects in an abelian category, the proof of \cite{ModuliSpaces}*{Thm. 7.20} carries over to our context verbatim.

\smallskip
\noindent\textit{$\cM^{(\varphi,\varphi+1]}$ admits a $\Theta$-stratification by Harder--Narasimhan filtrations:} 
\smallskip

One can use the function $\ell$ in \eqref{E:linear_function} and a norm on graded points to define a numerical invariant in the sense of \cite{halpernleistner2022structure}*{Def. 4.1.1} that determines a $\Theta$-stratification of the open substack of torsion-free objects $\cM^{(\phi,\phi+1)}$ \cite{halpernleistner2022structure}*{Thm. 6.5.3}. However, we are not aware of a real-valued numerical invariant that does the same for the full stack $\cM^{(\varphi,\varphi+1]}$.

Instead, we choose an arbitrary set of weights $w_0<\cdots<w_p$ for every possible choice of $z_0,\ldots,z_p \in \bH \cup \bR_{<0}$ with increasing phases, and we let $\overline{\cS} \subset \Filt(\cM)$ be the open and closed substack of $\bZ$-weighted filtrations with $Z(\gr_{w_i}(E_\bullet)) = z_i$ for all $i$ and $\gr_w(E_\bullet)=0$ for all other $w$. We equip $\overline{\cS}$ with the locally constant function $\mu : \overline{\cS} \to \bR$ taking $\mu(s) = \sum_w |Z(\gr_w(E_\bullet))|-|Z(v)|$. The set of Harder--Narasimhan filtrations $\cS \subset \overline{\cS}$ is characterized as the points that maximize $\mu$ in their fiber for the morphism $\rm{ev}_1 : \overline{\cS} \to \cM$ that forgets the filtration. With this setup \cite{halpernleistner2022structure}*{Thm. 2.2.2} gives criteria for $\cS$ to be the HN filtrations of a $\Theta$-stratification. (See \cite{halpernleistner2022structure}*{Rem. 2.1.3}.)
\begin{itemize}
    \item The \textit{HN property} amounts to the observation that $\rm{ev}_1 : \cS \to \cM^{(\varphi,\varphi+1]}$ is universally bijective, because of the existence and uniqueness of Harder--Narasimhan filtrations.\endnote{There is a slight difference in terminology here, because $\cS$ also contains one-step Harder--Narasimhan filtrations, which corresponds to the stratum of semistable points in $\cM^{(\varphi,\varphi+1]}$. In \cite{halpernleistner2022structure}, the semistable locus was not regarded as a stratum in the stratification, but there is no harm in doing so.}

    \item The \emph{HN specialization property} is automatic because $\cM^{(\varphi,\varphi+1]}$ is $\Theta$-complete \cite{halpernleistner2022structure}*{Rem 2.2.5}.
    
    \item The \emph{open strata} property is the fact that $\cS$ is closed under generization. In fact, $\cS \subset \Filt(\cM^{(\varphi,\varphi+1]})$ is open, because the condition that $\gr_i(E_\bullet)$ is semistable of phase $\phi_i$ is an open condition on $\Filt(\cM^{(\varphi,\varphi+1]})$.
    
    \item The \emph{HN consistency} condition amounts to the fact that given an HN filtration $E_n \subset \cdots \subset E_0$, one has $m(E_0) = m(\bigoplus_j \gr_j(E_\bullet))$.
 
    \item The \emph{strong HN boundedness} property states that the preimage under $\rm{ev}_1 : \cS \to \cM^{(\varphi,\varphi+1]}$ of any bounded set of finite type points is also bounded. \Cref{L:boundedness} implies an upper bound on the mass and length of HN filtration on any bounded subset $X$ of finite type points of $\cM^{(\varphi,\varphi+1]}$. The support property \Cref{D:locally_constant_stability_condition}(b) then implies that only finitely many classes $v \in \Lambda$ appear as HN factors of points in $X$.\endnote{We have already seen that there is a constant $N>0$ such that $m(x) \leq N$ for any $x\in X$. This implies an upper bound $|Z(E)| \leq m(x) \leq N$ for any $E$ appearing as the associated graded piece of the HN filtration of some $x \in X$. On the other hand, \Cref{D:locally_constant_stability_condition}(b) implies that $\lVert \ch(E) \rVert < |Z(E)|/C$ for any such semistable $E$.} By the HN consistency condition, the preimage $\rm{ev}_1^{-1}(X) \subset \cS$ is contained in the preimage of a bounded substack of $\Grad(\cM^{(\varphi,\varphi+1]})$ under the quasi-compact morphism $\gr : \Filt(\cM^{(\varphi,\varphi+1]}) \to \Grad(\cM^{(\varphi,\varphi+1]})$.
    
\end{itemize}

Note that for any HN filtration of length $>1$, the function $\ell$ of \eqref{E:linear_function} satisfies $\ell(E_\bullet)>0$, so $\ell$ is compatible with the $\Theta$-stratification of $\cM^{(\varphi,\varphi+1]}$.

\smallskip
\noindent\textit{$\cM^{(a,b]}$ satisfies the valuative criterion for universal closedness for any $a<b$:}\endnote{This is the existence part of the valuative criterion for properness.}
\smallskip

\Cref{D:locally_constant_stability_condition}(a) and \Cref{P:base_change_sluicing} provides a sluicing $\cP_R$ of width $1$ on $\cC_R$.\endnote{Either $k \to R$ factors through some discrete valuation ring of the kind in \Cref{D:locally_constant_stability_condition}(a), or it factors through a residue field of $k$. Either way \Cref{P:base_change_sluicing} induces a sluicing on $\cC_R$.} We choose a constant $0<w<1$ and first prove the claim for $I = (a,b]$ with $b-a\leq w$.

Let $\cT \subset \cP_R(I)$ be the kernel of the restriction functor $\cP_R(I) \to \cP_K(I)$. We claim that the inclusion $\cT \hookrightarrow \cP_R(I)$ admits a right adjoint. The quasi-abelian category $\cP_R(I)$ is contained in the heart of a noetherian $t$-structure, so for any $G \in \cP_R(I)$, any chain of strict monomorphisms $F_1 \hookrightarrow F_2 \hookrightarrow \cdots \hookrightarrow G$ must stabilize.\endnote{Strict monomorphisms are also monomorphisms in the containing abelian category.} Zorn's lemma then implies that there exists a maximal strict monomorphism $T \to G$ with $T \in \cT$, meaning for any strict monomorphism $T'' \to G$ with $T'' \in \cT$, if $T \to G$ factors through a morphism $T \to T''$, then this must be an isomorphism $T \cong T''$.

We claim that for any $T' \in \cT$, any morphism $T' \to G$ factors uniquely through $T \to G$. In other words, $T \to G$ is unique, and the inclusion functor $\cT \hookrightarrow \cP_R(I)$ has a right adjoint taking $G \mapsto T$. To show this, consider $F := \cofib(T \to G) \in \cP_R(I)$. We must show that the composition $f : T' \to F$ is zero, and because $f$ factors through $\ker(\coker(f))$, it suffices to show that $\ker(\coker(f)) \cong 0$. $\ker(\coker(f)) \to F$ is a strict monomorphism, and pulling back gives a commutative diagram with exact rows 
\[
\xymatrix{
    T \ar[r] \ar[d] & T'' \ar[r] \ar[d] & \ker(\coker(f)) \ar[d] \\
    T \ar[r] & G \ar[r] & F 
}.\]
Furthermore, the fact that restriction $\cP_R(I) \to \cP_K(I)$ is exact implies that $\ker(\coker(f)) \in \cT$ and hence $T'' \in \cT$. Now $T'' \to G$ is a strict monomorphism, so maximality of $T$ implies that $T \to T''$ is an isomorphism, and therefore $\ker(\coker(f)) \cong 0$, proving the claim.

Now consider $E \in \cP_K(I)$ for $I$ as above. \Cref{D:sluicing}(2) provides a noetherian $t$-structure on $\cC_R$, and $E \mapsto E \otimes_R K$ is automatically exact,\endnote{This is because $E \otimes_R K \in \Ind(\cC_R)$ is the colimit of the diagram $E \to E \to \cdots$, where all of the maps are multiplication by a uniformizer in $R$.} so \Cref{L:localization} implies one can extend $E$ to an $F \in \cC_R$. Letting $G := \tau_I(F) \in \cP_R(I)$, one still has $G \otimes K \cong E$.\endnote{The definition of a locally constant stability condition provides that $(-)\otimes K : \cC_R \to \cC_K$ is exact with respect to the sluicings.} Let $T \to G$ be the maximal strict monomorphism in $\cP_R(I)$ with $T \in \cT$, and let $\tilde{E} = \cofib(T \to G) \in \cP_R(I)$. Then $\tilde{E}$ admits no non-zero morphisms $T \to \tilde{E}$ with $T \in \cT$. In particular, if $\pi \in R$ is a uniformizer, then we must have vanishing
\[
    0 \cong \ker(\pi: \tilde{E} \to \tilde{E}) \cong \tau_{>a}(\fib(\pi : \tilde{E} \to \tilde{E})).
\]
It follows that if $\kappa = R/(\pi)$, then $\tilde{E} \otimes_R \kappa \cong \cofib(\pi : \tilde{E} \to \tilde{E}) \in \cP_R((a,a+1])$. So $\tilde{E}$ determines a morphism $\Spec(R) \to \cM^{(a,a+1]}$.

We now use the fact, established above, that $\cM^{(a,a+1]}$ has a $\Theta$-stratification by Harder--Narasim\-han filtrations. The semistable reduction theorem \cite{ModuliSpaces}*{Thm. 6.3} implies that, after possibly replacing $R$ with a totally ramified extension, there is another morphism $\Spec(R) \to \cM^{(a,a+1]}$, corresponding to $F \in \cC_R$, such that $F \otimes K \cong \tilde{E} \otimes K \cong E$ in $\cC_K$ and the HN filtration of $E$ extends to a filtration of $F$ over $R$ that also restricts to an HN filtration of $F_\kappa$. In particular, $\phi^+(F_{\kappa}) \leq b$, so $F$ defines an $R$-point of $\cM^{(a,b]}$ extending $E$.

Finally, suppose $E \in \cP_K((a,b])$ for an arbitrary $a<b$, and let $I_1 = (a,b-w]$ and $I_2=(b-w,b]$. Suppose that for $i=1,2$ we have $E_i \in \cP_R(I_i)$ such that $K \otimes E_i \cong \tau_{I_i}(E)$ and $E_i|_{\kappa} \in \cP_\kappa(I_i)$. Then $E$ is the fiber of a morphism $E_1\otimes K \to E_2[1] \otimes K$ in $\cC_K$, which corresponds to a morphism in $\Ind(\cC_R)$
\[
E_1 \to E_2[1]\otimes K \cong \colim(E_2[1] \xrightarrow{\pi} E_2[1] \xrightarrow{\pi} \cdots).
\]
Because $E_1$ is compact, this factors through a morphism $E_1 \to E_2[1]$, and $\tilde{E}:= \fib(E_1 \to E_2[1])$ has the property that $E \cong K \otimes_R \tilde{E}$ and $\kappa \otimes \tilde{E} \in \cP_\kappa(I)$. By induction, it follows that for any interval $I$ and $E \in \cP_K(I)$, there is a $\tilde{E} \in \cC_R$ such that $E \cong \tilde{E} \otimes_R K$ and $\tilde{E} \otimes_R \kappa \in \cP_\kappa(I)$.

\smallskip
\noindent\textit{$\cM_v^{\rm {ss}}$ admits a proper good moduli space:}
\smallskip

This follows from \cite{ModuliSpaces}*{Thm. C}, whose hypotheses we have verified above. $\Theta$-reductivity and $S$-completeness is shown in \Cref{L:Theta_reductivity_S_completeness}. The function $\ell$ of \eqref{E:linear_function} does not explicitly come from a class in $H^2(\cM^{(\varphi,\varphi+1]};\bR)$, but the proof of \cite{ModuliSpaces}*{Thm. C} applies more generally to any linear function on the component fan.

\end{proof}

\begin{proof}[Proof of \Cref{T:moduli_spaces}]

\noindent{\textit{Proof that $(1) \Rightarrow (2)$:}}
\smallskip

Consider the map of higher derived stacks $\pi : \cM \to \Perf$ that acts on $R$-points by
\[
(E\in \cC_R) \mapsto \RHom_R (G_R, E) \in \Perf(R).
\]
In the course of proving the algebraicity of $\cM$, \cite{ToenVaquie}*{Prop 3.13} shows that $\pi$ is of finite presentation, and in particular $\infty$-quasi-compact. Therefore, the preimage of any bounded subset of $|\Perf|$ is bounded in $|\cM|$.

Now consider a finite type point in $\cM$, corresponding to a finite type field $K$ over $k$ and an object $E \in \cC_K$. The bounds $\phi^+_\sigma(E) < C$ and $\phi^-_\sigma(E)>-C$ imply the existence of an $N$, independent of $E$, such that $H^i(\RHom_K(G_K,E))$ vanishes for $i \notin [-N,N]$. It follows from the Hom-bound in $(1)$ and the bound $m_\sigma(E) \leq C$ that $\pi(E)$ lies in the substack of $\Perf$ of points whose homology is bounded in the range $[-N,N]$ and whose homology dimension is bounded above by $f(C)$. This is an $\infty$-quasi-compact substack of $\Perf$ \cite{ToenVaquie}*{Prop. 3.20}, which verifies the condition $(2)$.

\smallskip
\noindent\textit{Proof that $(2) \Rightarrow (3)$:}
\smallskip

This is part of \Cref{P:existence}.

\medskip
\noindent{\textit{Proof that $(3) \Rightarrow (2)$, at all points instead of just finite type points:}}
\medskip

For any $v \in \Lambda$, $\cM^{\rm {ss}}_v$ must contain a finite type point if it is nonempty. The support property \Cref{D:locally_constant_stability_condition}(b) then guarantees that for any $C$ there are finitely many classes $v \in \Lambda$ such that $|Z(v)|<C$ and $\cM_v^{\rm{ss}} \neq \emptyset$. It follows that the set of semistable points of $|\cM|$ with mass $<C$ and phase in any finite interval $(a,b]$ is bounded. Also, because $|Z(v)|$ has a positive minimum on this set, it follows that there is an $N$ such that the HN filtration of any object $E$ with $m_\sigma(E)<C$ has length $<N$. The subset of $|\Filt^{\rm{un}}(\cM)|$ parameterizing HN filtrations of mass $<C$ with length $n$ and phases in $(a,b]$ is contained in the preimage under $\gr : \Filt(\cM)_n \to \cM^n$ of the subset of $|\cM^n|$ parameterizing $n$-tuples of semistable objects with increasing phase in $(a,b]$ and mass $<C$. It follows that this subset of $|\Filt^{\rm{un}}(\cM)|$ is bounded, and therefore its image under the forgetful map $\Filt^{\rm{un}}(\cM) \to \cM$ is bounded.

\medskip
\noindent{\textit{Proof that $(2) \Rightarrow (1)$, with both conditions at all points, not just finite type points:}}
\medskip

Consider the function
\[
f(x) := \sup \left\{\dim \Hom_K(G_K,E) \left| \begin{array}{c} K \text{ a field over }k,\\ E \in \cM(K) \text{ s.t. } m_\sigma(E) \leq x \end{array} \right. \right\}.
\]
We claim that there is an $a<b$ such that $f(x)$ is unchanged if we only take the supremum over $E \in \cP_K((a,b])$. Let $S : \cC \to \cC$ be the Serre functor, i.e., $\RHom_k(E,F) \cong \RHom_k(F,S(E))^\ast$ for all $E,F$. Because the formation of $\RHom_k(E,F) \in \Perf(k)$ commutes with base change, we have $S(E)_K \cong S_{K}(E_K)$ for any field $K$ over $k$ and any $E \in \cC$, where $S_K$ denotes the Serre functor on $\cC_{K}$. Because $G$ generates $\cC$, the supremum \eqref{E:bound_on_generator} is also finite if $G$ is replaced with any $E \in \cC$. In particular, there is an $a<b$ such that $G_K \in (\cC_K)_{> a}$ and $S_K(G_K) \in (\cC_K)_{\leq b}$ for all fields $K$ over $k$.\endnote{The supremum is only stated for residue fields of $k$, but any homomorphism $k \to K$ to a field $K$ factors uniquely through a residue field $k \to \kappa$, and the pullback functor $\cC_{\kappa} \to \cC_K$ is exact, so the supremum is unchanged if one considers all fields over $k$.} The first bound implies that $\Hom_K(G_K,E) = \Hom_K(G_K,\tau_{>a}(E))$ for any $K$ and $E \in \cC_K$, and the second bound implies that 
\[
    \Hom_K(G_K,E) \cong \Hom_K(E,S(G_K))^\ast \cong \Hom_K(\tau_{\leq b}(E),S(G_K))^\ast \cong \Hom_K(G_K,\tau_{\leq b}(E)).
\]
By condition $(2)$, the set of $E$ appearing in the supremum is now bounded, so there is a finite type affine $k$-scheme $T$ and an $\cE \in \cC_T$ such that
\[
f(x) = \sup\{\dim \Hom_{\kappa(t)}(G_{\kappa(t)}, \cE_t) | t \in T \text{ finite type and }m_\sigma(\cE_t)\leq x\},
\]
where $\kappa(t)$ is the residue field of $t \in T$. If $\cH := \RHom_T(G_T,\cE) \in \Perf(T)$, then 
\[
    \Hom_{\kappa(t)} (G_{\kappa(t)}, \cE_t) \cong H^0(\cH_t)
\]
for all $t$. The semicontinuity theorem then implies that the supremum in the second description of $f(x)$ is achieved at some point of $T$, and in particular $f(x)$ is finite. This $f(x)$ satisfies the condition $(1)$ by construction.
\end{proof}

\section{Main definitions}
\label{S:maindefinitions}

In this section we introduce two structures on a stable dg-category: multi-scale decomp\-ositions and augmented stability conditions.

\subsection{Multi-scale lines}

Recall that a rooted tree has a partial order ``$\subseteq$'' on vertices in which $u \subseteq v$ if $v$ lies on the unique simple path connecting $u$ to the root.\endnote{This is non-standard notation for this partial order, but it is meant to distinguish it from two additional order relations on vertices to be defined below. In \Cref{D:multi-scale_decomposition}, we will associate a category $\cC_{\leq v}$ to every vertex of a rooted graph, in such a way that $\cC_{\leq u} \subseteq \cC_{\leq v}$ when $u\subseteq v$ in this sense, justifying the notation.} The least upper bound $u \vee v$ is the first vertex where the simple paths from $u$ and $v$ to the root vertex meet.

\begin{defn}
\label{D:multi-scaleline}
A \emph{multi-scale line} is a tuple $(\Sigma,p_\infty,\preceq,\omega_\bullet)$, where
\begin{enumerate}
    \item $\Sigma$ is a connected nodal complex genus $0$ curve\endnote{That is, every component of $\Sigma$ is isomorphic to $\bP^1$, and the dual graph of $\Sigma$ is a tree.} with marked smooth point $p_\infty$. We let $\Gamma(\Sigma) = (V(\Sigma),E(\Sigma))$ denote the dual graph of $\Sigma$, and let $v_0 \in V(\Sigma)$ denote the vertex corresponding to the component containing $p_\infty$, which we regard as the root.\medskip
    
    \item $\preceq$ is a total preorder on the set $V(\Sigma)$ of irreducible components of $\Sigma$ such that:\medskip
    \begin{enumerate}
        \item $\forall v \in V(\Sigma) \setminus \{v_0\}$, $\exists ! w\in V(\Sigma)$ that is adjacent to $v$ with $v\prec w$. The edge from $v$ to $w$ is called the edge \emph{ascending from} $v$; and \medskip
        \item $v \sim w$, meaning $v \preceq w$ and $w \preceq v$, for any two vertices $v,w \in V(\Sigma)$ that are minimal with respect to $\subseteq$. We call such vertices \emph{terminal}. \medskip
    \end{enumerate}
    
    \item The root vertex of $\Gamma(\Sigma)$ has valence zero or at least two, and all other non-terminal vertices have valence at least three.\medskip
    
    \item For every $v \in V(\Sigma)$, $\omega_v$ is a meromorphic differential on $\Sigma_v$ with a pole of order $2$ at the node corresponding to the ascending edge from $v$ when $v \neq v_0$ or the marked point $p_\infty$ when $v=v_0$, and no other zeros or poles.
\end{enumerate}

A \emph{complex projective (resp. real oriented) isomorphism} of multi-scale lines is an iso\-morphism of nodal curves $f : \Sigma \to \Sigma'$ that preserves the respective preorderings and marked points, and such that $f^\ast(\omega'_v) = c_v \omega_v$ for some constant $c_v \in \bC^\ast$ (resp. $\bR_{>0}$), where $c_v = c_w$ whenever $v \sim w$ and $c_v=1$ if $v$ is minimal.\endnote{It follows from condition (3) of \Cref{D:multi-scaleline} that the $\omega_v$ are of the form $\lambda \cdot dz$ for any chosen affine coordinate $z:\Sigma_v\setminus \{n_v\}\to \bC$, where $n_v$ is the ascending node of $v$ and $\lambda \in\bC^*$. A complex projective isomorphism of multi-scale lines is an isomorphism of nodal curves that does not preserve the data of any $\omega_v$ for $v$ non-minimal, but informally preserves $\omega_v/\omega_{v'}$ for any $v\sim v'$.}
\end{defn}

Condition (2) implies that $u \subseteq v \Rightarrow u \preceq v$, that \emph{every} edge of $\Gamma$ is the ascending edge for one of its vertices, and that $v_0$ is the unique maximum with respect to the preorder $\preceq$.\endnote{This claim is verified as follows. As $V(\Gamma)$ equipped with $\preceq$ is a poset, one can define $\rm{height}(v)$ to be the maximum length of chains $v\supset v_1\supset \cdots$. We prove $u\subseteq v\Rightarrow u \preceq v$ by inducting on the height of $v$. For height $0$, $v$ is minimal with respect to $\subseteq$ and the claim follows from (2)(b). Suppose the claim has been proven for height $<k$. Let $w$ of height $k$ be given. Let $v\subset w$ be given such that $v$ and $w$ are connected by an edge. $v$ is of height $\le k-1$ so $u\subseteq v\Rightarrow u\preceq v$. Since $\Gamma$ is a tree, $w$ is attached to $v$ via the edge ascending from $v$, i.e. $v\prec w$. Consequently, $u\preceq w$ for all $u\subseteq w$.

Since $u\subseteq v_0$ for all $u \in V(\Gamma)$, it now follows that $v_0$ is a maximum element with respect to $\preceq$. Lastly, let $e\in E(\Gamma)$ be given connecting vertices $v$ and $w$. Then $v\subseteq w$ or $w\subseteq v$. If $v\subseteq w$, $v\preceq w$ and so $e$ is the ascending edge from $v$ to $w$. 
} Also, minimal vertices form an equivalence class under $\sim$.

\begin{defn}[Normalized periods]
\label{D:normalizedperiods}
Consider a multi-scale line $(\Sigma,p_\infty,\preceq,\omega_\bullet)$ with dual graph $\Gamma$. Given $v,v' \in V(\Gamma)$ with neither $v\subseteq v'$ nor $v\supseteq v'$, we define the \emph{normalized period} $\mathfrak{p}(v,v')$ as follows: If $n, n' \in \Sigma_{v\vee v'}$ denote the nodes from which $v$ and $v'$ descend, respectively, then
\[
\mathfrak{p}(v,v') := \frac{\int_{n}^{n'} \omega_{v\vee v'}}{\lvert \int_{n}^{n'} \omega_{v\vee v'} \rvert},
\]
where the integral is along any path $\gamma : [0,1] \to \Sigma_{v\vee v'}$ avoiding the ascending node (or $p_\infty$ when $v\vee v'$ is the root) such that $\gamma(0) = n$ and $\gamma(1) = n'$.
\end{defn}

Note that $\mathfrak{p}(v,v')$ only depends on the real oriented isomorphism class of $\Sigma$, and that $\mathfrak{p}(u,v)$ is always defined for distinct $u\sim v$. In addition to the partial order ``$\subseteq$'' and the total preorder ``$\preceq$,'' the normalized period function allows one to define a parameterized family of partial orders on the terminal components of $\Sigma$.

\begin{defn}[Directional ordering]
Let $\Sigma$ be a multi-scale line, $\zeta \in S^1$, $t \in [0,\infty]$, and $v,w \in V(\Sigma)$. If $t<\infty$, we say $v \leq_{\zeta, t} w$ when either 1) $v=w$, or 2) $v \nsubseteq w$ and $w \nsubseteq v$ and 
\[
    \Re \left((1 + \frac{it}{\pi}) \frac{1}{\zeta} \mathfrak{p}(v,w)\right) > 0 \quad \text{and} \quad \Re \left((1 - \frac{it}{\pi}) \frac{1}{\zeta} \mathfrak{p}(v,w)\right) > 0.
\]
For $t=\infty$, condition (2) is replaced with $\frac{1}{\zeta} \mathfrak{p}(v,w)=1$.
\end{defn}

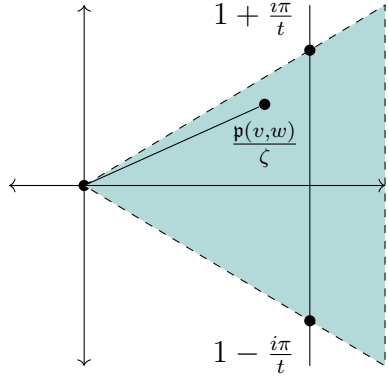
\begin{figure}[ht]
    \centering

\begin{tikzpicture}[xscale=.5,yscale=.3]
		\node [] (0) at (0, 8) {};
		\node [] (1) at (0, -8) {};
		\node [] (2) at (-2, 0) {};
		\node [] (3) at (8, 0) {};
		\node [] (4) at (6, 8) {};
		\node [] (5) at (6, -8) {};
          \node [circle,fill,inner sep=1.5pt] (7) at (0, 0) {};
		\node [] (8) at (8, 8) {};
		\node [] (10) at (8, -8) {};
		\draw [style=dashed, fill=teal!30] (7.center) -- (8.center) -- (10.center) -- cycle;		
         
         \node [circle,fill,inner sep=1.5pt, label=north west:$1+\frac{i\pi}{t}$] (6) at (6, 6) {};
          \node [circle,fill,inner sep=1.5pt, label=south west:$1-\frac{i\pi}{t}$] (9) at (6, -6) {};
		\node [circle,fill,inner sep=1.5pt, label=south:$\frac{\mathfrak{p}(v,w)}{\zeta}$] (11) at (4.8,3.6) {};
            
            \draw [<->] (0.center) to (1.center);
		\draw [<->](2.center) to (3.center);
		\draw [] (4.center) to (5.center);
		
		\draw (7.center) to (11);
\end{tikzpicture}
    
    \caption{This illustrates the partial order on terminal vertices of a multi-scale line. We say $v \leq_{\zeta,t} w$ if the point $\frac{\mathfrak{p}(v,w)}{\zeta}$ on the unit circle lies in the interior of the shaded cone shown.}
    \label{fig:partial_order_condition}
\end{figure}

\begin{lem}
\label{L:partialorder}
For any $\zeta,t$, $\leq_{\zeta,t}$ is a partial order on $V(\Sigma)$. Furthermore, $v \leq_{\zeta,t} w$ implies $v \leq_{\zeta,s} w$ for any $s < t$.
\end{lem}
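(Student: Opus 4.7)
The plan is to translate the conditions defining $\leq_{\zeta,t}$ into a statement about a single family of open convex cones, and then handle transitivity by a case analysis on the dual graph. First I would introduce, for $t\in [0,\infty)$, the cone
\[
C_t := \{z\in\bC : \Re((1+it/\pi)z)>0 \ \text{and}\ \Re((1-it/\pi)z)>0\} = \{z : \Re(z)>(t/\pi)|\Im(z)|\},
\]
and set $C_\infty := \bR_{>0}$. The key properties I would record are: $C_t$ is an open convex cone contained in the open right half-plane (hence contains no antipodal pair), and $C_t\subseteq C_s$ whenever $s\leq t$. With this, condition (2) of the definition becomes $\mathfrak{p}(v,w)/\zeta\in C_t$, and reversing the path of integration in \Cref{D:normalizedperiods} yields $\mathfrak{p}(w,v)=-\mathfrak{p}(v,w)$.

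Reflexivity is immediate from the definition, and antisymmetry follows because $v\leq_{\zeta,t}w$ and $w\leq_{\zeta,t}v$ with $v\neq w$ would place both $\mathfrak{p}(v,w)/\zeta$ and its negative in $C_t$. The monotonicity assertion $v\leq_{\zeta,t}w\Rightarrow v\leq_{\zeta,s}w$ for $s<t$ then follows from the nesting of cones, with the edge case $t=\infty$ amounting to the trivial check $1\in C_s$ for all $s<\infty$.

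The main content is transitivity, and I would attack it by fixing hypotheses $u\leq_{\zeta,t}v\leq_{\zeta,t}w$, setting $m:=u\vee v\vee w$, and recording the attaching nodes $\tilde n_u,\tilde n_v,\tilde n_w$ on $\Sigma_m$ (the nodes where the first strict descendants of $m$ on the paths to $u,v,w$ meet $\Sigma_m$). A short combinatorial argument excludes $\tilde n_u=\tilde n_v=\tilde n_w$, leaving three configurations. When the three nodes are distinct, $u\vee v=v\vee w=u\vee w=m$; picking an affine coordinate $z$ on $\Sigma_m$ with $\omega_m=dz$, the hypotheses become $(z_v-z_u)/\zeta,(z_w-z_v)/\zeta\in C_t$ after rescaling by positive reals, and convexity of $C_t$ immediately yields $(z_w-z_u)/\zeta\in C_t$. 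When exactly two nodes coincide with $\tilde n_u=\tilde n_v$ or $\tilde n_v=\tilde n_w$, the period $\mathfrak{p}(u,w)$ on $\Sigma_m$ equals one of the two hypothesis periods, and the corresponding hypothesis directly gives the conclusion (with distinctness of $\tilde n_u$ and $\tilde n_w$ ensuring $u\vee w=m$, so $u$ and $w$ are $\subseteq$-incomparable as required). The final case, $\tilde n_u=\tilde n_w$, is the one to handle with care: a direct computation on $\Sigma_m$ forces $\mathfrak{p}(v,w)=-\mathfrak{p}(u,v)$, so the two hypotheses are contradictory and the case is vacuous.

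The main obstacle I anticipate is the bookkeeping in this last case: I need to show that even when $u$ and $w$ happen to be $\subseteq$-comparable (and therefore $u\leq_{\zeta,t}w$ would fail by definition), the hypotheses $u\leq_{\zeta,t}v$ and $v\leq_{\zeta,t}w$ are already inconsistent, so transitivity is never violated. The cleanest way to arrange this is via the equivalence ``$\tilde n_x=\tilde n_y$ on $\Sigma_m$ $\iff$ $x$ and $y$ share a strict descendant of $m$,'' which subsumes the containments $x\subseteq y$ and $y\subseteq x$. Once this equivalence is in hand, the three cases above become clean combinatorial reflections of the antipodal-freeness and convexity of $C_t$.
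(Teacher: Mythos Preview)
Your proposal is correct and takes essentially the same approach as the paper. Both arguments translate the condition into membership in an open convex cone, obtain monotonicity in $t$ from the nesting of these cones, and prove transitivity by a four-case analysis on how $u,v,w$ sit in the dual tree, using period equality in two cases, the antipodal contradiction in one case, and convexity in the case where all three meet at a common join. Your parameterization via the attaching nodes $\tilde n_u,\tilde n_v,\tilde n_w$ on $\Sigma_m$ with $m=u\vee v\vee w$ is equivalent to the paper's parameterization via where the path to $v$ branches off the subtree spanned by $u,w$ and the root; your cases $\tilde n_u=\tilde n_v$, $\tilde n_v=\tilde n_w$, $\tilde n_u=\tilde n_w$, and all distinct correspond exactly to the paper's cases $v_1,v_3,v_4,v_2$. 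Your treatment is slightly more explicit than the paper's in that you handle the possibility that $u$ and $w$ are $\subseteq$-comparable (the paper simply assumes this away), but you correctly observe that this falls under the $\tilde n_u=\tilde n_w$ antipodal contradiction, so no separate argument is needed.
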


\begin{proof}
The condition $v \leq_{\zeta,t} w$ is described graphically in \Cref{fig:partial_order_condition}. The graph shows that $v \leq_{\zeta,t} w$ implies $v \leq_{\zeta,s} w$ for $s<t$, and also that $v \leq_{\zeta,\infty} w$ is equivalent to $v \leq_{\zeta,t} w$ holding simultaneously for all finite $t$. To prove the lemma, then, we can assume $t$ is finite. We will show that if $v$ is a vertex such that no two of $\{u,v,w\}$ are related by $\subseteq$, then $u <_{\zeta,t} v$ and $v<_{\zeta,t}w$ implies $u \leq_{\zeta,t} w$, leaving the verification of reflexivity and anti-symmetry to the reader.\endnote{Reflexivity is immediate from the definition. Antisymmetry is verified as follows: If $v\le_{\zeta,t}w$ and $w\le_{\zeta,t} v$ then either $v = w$ or $v\not\subseteq w$ and $w\not\subseteq v$. Consider the nodes $n_v$ and $n_w$ corresponding to $v$ and $w$ on $\Sigma_{v\vee w}$. By \Cref{fig:partial_order_tree}, one can see that for $v\le_{\zeta,t}w$ and $w\le_{\zeta,t} v$ to hold, one must have $n_v = n_w$. However, this implies $v = v\vee w = w$.}

If one considers the subtree of $\Gamma(\Sigma)$ spanned by $u$, $w$, and the root node $r$, then the four possible places that the path from $r$ to $v$ splits from this subtree are represented by the vertices $v_1,\ldots,v_4$ in \Cref{fig:partial_order_tree}, so we can consider each case in turn. For $v_1$, $u \vee w = v_1 \vee w$ and $\mathfrak{p}(u,w) = \mathfrak{p}(v_1,w)$, which implies the claim. Likewise, the equality $\mathfrak{p}(u,w) = \mathfrak{p}(u,v_3)$ implies the claim for $v_3$. For $v_4$, one has $\mathfrak{p}(u,v_4) = - \mathfrak{p}(v_4,w)$, so this case is ruled out by the hypotheses that $u <_{\zeta,t} v_4$ and $v_4<_{\zeta,t}w$.

Finally, in the case $v_2$, the paths from $r$ to each of $u$,$v_2$, and $w$ all meet at the same vertex $u \vee w$. The three nodes at which the component $\Sigma_{u\vee w}$ attaches to the components leading to $u,v_2,$ and $w$ correspond to three complex numbers $x,y,z \in \bC$ respectively, defined up to simultaneous translation. The condition $u <_{\zeta,t} v_2$ is equivalent to the condition that the vector $\frac{1}{\zeta} \overrightarrow{xy}$ lies in the shaded cone of \Cref{fig:partial_order_condition}, and likewise for $v_2 <_{\zeta,t} w$ and $u \leq_{\zeta,t} w$. The claim follows from the convexity of this cone, and the fact that $\overrightarrow{xz}=\overrightarrow{xy}+\overrightarrow{yz}$.
\end{proof}

\begin{figure}[h]
    \centering
    \begin{tikzpicture}
		\node [circle,fill,inner sep=1.5pt,label=below:$u$] (0) at (-12, -1) {};
		\node [circle,fill,inner sep=1.5pt,label=left:$u \vee w$] (1) at (-9, 2) {};
		\node [circle,fill,inner sep=1.5pt,label=below:$w$] (2) at (-6, -1) {};
		\node [circle,fill,inner sep=1.5pt,] (3) at (-8, 3) {};
            \node [circle,fill,inner sep=1.5pt,label=above:$r$] (r) at (-8, 4) {};
		\node [circle,fill,inner sep=1.5pt,] (4) at (-11, 0) {};
		\node [circle,fill,inner sep=1.5pt,label=below:$v_1$] (5) at (-10, -1) {};
		\node [circle,fill,inner sep=1.5pt,label=below:$v_3$] (6) at (-8, -1) {};
		\node [circle,fill,inner sep=1.5pt,label=below:$v_4$] (7) at (-4, -1) {};
		\node [circle,fill,inner sep=1.5pt,] (8) at (-7, 0) {};
		\node [circle,fill,inner sep=1.5pt,label=below:$v_2$] (9) at (-9, -1) {};
		\draw (0.center) to (4.center);
            \draw (r.center) to (3.center);
		\draw (4.center) to (5.center);
		\draw (4.center) to (1.center);
		\draw (1.center) to (3.center);
		\draw (3.center) to (7.center);
		\draw (1.center) to (2.center);
		\draw (6.center) to (8.center);
		\draw (1.center) to (9.center);
    \end{tikzpicture}

    \caption{Given distinct terminal components $u$ and $w$ of a multi-scale line, this diagram shows all of the possible ways that the path from the root $r$ to a third terminal component $v$ can meet the sub-tree spanned by $u$, $w$, and $r$.}
    \label{fig:partial_order_tree}
\end{figure}
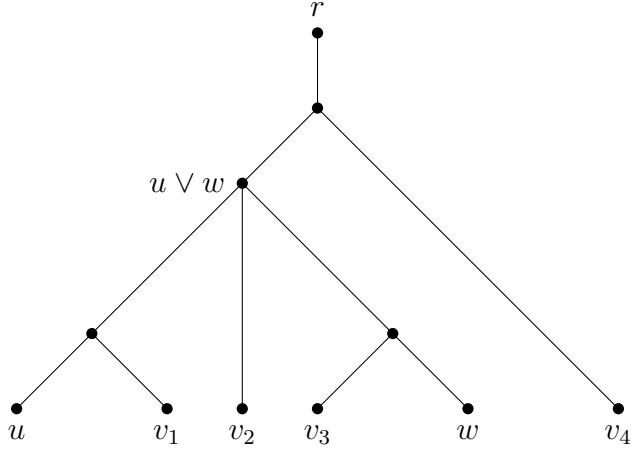

\begin{rem}[$\GL_2(\bR)$-action]
\label{R:GL2R}
Specifying a multi-scale line $\Sigma$ is equivalent to specifying the pre-ordered rooted tree $\Gamma(\Sigma)$ along with $(x_1,\ldots,x_{n_v}) \in \rm{Conf}_{n_v}(\bC)$ for each non-terminal $v \in V(\Sigma)$, where $n_v$ is the number of descending edges from $v$. Two collections of configurations are related by a real-oriented isomorphism if $\tilde{x}_i = a_v x_i + b_v$ for constants $b_v \in \bC$  and $a_v \in \bR_{>0}$, with $a_v$ only depending on the class of $v$ in $V(\Sigma)/{\sim}$. Given $g \in \GL_2(\bR)$, we let $g \cdot \Sigma$ be the multi-scale line with the same dual graph $\Gamma(\Sigma)$, and specified by the configurations $(gx_1,\ldots,gx_{n_v}) \in \rm{Conf}_{n_v}(\bC) = \rm{Conf}_{n_v}(\bR^2)$. The real-oriented isomorphism class of $g\cdot \Sigma$ only depends on the real-oriented isomorphism class of $\Sigma$.
\end{rem}

\subsection{Multi-scale decompositions}

\begin{defn}\label{D:multi-scale_decomposition}
A \emph{multi-scale decomposition} of a stable dg-category $\cC$ consists of a multi-scale line $(\Sigma,p_\infty,\preceq,\omega_\bullet)$ and a nonzero thick stable subcategory $\cC_{\leq v}$ for every terminal component $v$ of $\Sigma$ such that: 
\begin{enumerate}
    \item $\cC_{\leq v} \cap \cC_{\leq w}$ is generated as a triangulated category by the sub\-categories $\cC_{\leq u}$ for all $u$ such that $u \leq_{1,\infty} v$ and $u \leq_{1,\infty} w$.
    
    \item If $w \leq_{i,0} v$, i.e. $\mathfrak{p}(w,v)\in \mathbb{H}$, then $\Hom(\cC_{\leq v}, \cC_{\leq w}) = 0$ in the quotient category $\cC / (\cC_{\leq v} \cap \cC_{\leq w})$.
    \item For any $v \in V(\Sigma)$, the triangulated categories $\cC_{\leq v} := \Span\{\cC_{\leq u} | u\in V(\Sigma)_{\rm{term}}, u \subseteq v\}$ and $\cC_{<v}:=\Span\{\cC_{\leq u} |u\in V(\Sigma)_{\rm{term}}, u <_{1,\infty} v\}$ are thick.
    \item $\gr_v(\cC_\bullet) := \cC_{\leq v} / \cC_{<v} \neq 0$ for all $v \in V(\Sigma)$, and $\cC_{\leq v}=\cC$ for the root vertex $v \in V(\Sigma)$.  
\end{enumerate}
We will typically denote such a multi-scale decomposition by $\cC = \langle \cC_\bullet \rangle_\Sigma$, leaving the additional data $p_\infty, \preceq, \omega_\bullet$ implicit unless they are needed.
\end{defn}

Condition (1) implies that $\cC_{\leq v} \subset \cC_{\leq w}$ whenever $v \leq_{1,\infty} w$. On the other hand, if $\Im(\mathfrak{p}(v,w)) \neq 0$ and $v \vee w$ is the least upper bound in the dual graph of $\Sigma$, then the terminal vertices $u$ appearing in (1) are precisely the terminal vertices that do \emph{not} lie below $v \vee w$ and such that $u \leq_{1,\infty} v \vee w$.\endnote{First note that $u$ can not lie below $v \vee w$. If it did, then in the configurations of points in $\bC$ corresponding to the descending edges from $v \vee w$, $u$ would correspond to a point in $\bC$ that differs by a real number from both of the points corresponding to $v$ and $w$, but this is impossible if $\Im(\mathfrak{p}(v,w)) \neq 0$. 

Next, the definition of $\leq_{1,\infty}$ implies that if $u$ does not lie below $v \vee w$, then $u \leq_{1,\infty} v$ if and only if $u \leq_{1,\infty} v \vee w$ if and only if $u \leq_{1,\infty} w$.}

The key property of the categories $\cC_{\leq v}$ for non-terminal vertices is that if $u \subseteq v$ in $\Gamma(\Sigma)$, then one has
\begin{equation} \label{E:vertex_category_containment}
    \cC_{< v} \subset \cC_{< u} \subset \cC_{\leq u} \subset \cC_{\leq v}.
\end{equation}
This follows immediately from the definition, because for $w\in V(\Sigma)_{\rm{term}}$ that does not lie below $v$, one has $w \leq_{1,\infty} u$ if and only if $w \leq_{1,\infty} v$.

\begin{ex}
If $\Im(\mathfrak{p}(u,v)) \neq 0$ for all terminal components $u$ and $v$, then $\leq_{i,0}$ is a total ordering on terminal components. Let us index the terminal components $v_1,\ldots,v_n$ such that $v_i <_{i,0} v_j$ for all $i<j$. Then the subcategories $\cC_{\leq v_i}$ have only the zero object in common, and \Cref{D:multi-scale_decomposition} is equivalent to giving a semiorthogonal decomposition $\cC = \langle \cC_{\leq v_1},\ldots,\cC_{\leq v_n} \rangle,$ along with the data of the multi-scale line $\Sigma$.
\end{ex}

\begin{ex}
If $\Im(\mathfrak{p}(u,v))=0$ for all terminal components $u$ and $v$, then $\leq_{1,0}$ is a total ordering on terminal components. In this case \Cref{D:multi-scale_decomposition}(2) is vacuous, and the multi-scale decomposition is simply a filtration by thick stable subcategories $0\subsetneq \cC_{\leq v_1} \subsetneq \cC_{\leq v_2} \subsetneq \cdots \subsetneq \cC_{\leq v_n} = \cC$, along with the data of the multi-scale line $\Sigma$.
\end{ex}

The notion of multi-scale decomposition is recursive via the following:

\begin{const}[Descendent decomposition]\label{const:descendent}
For any component $\Sigma_v \subset \Sigma$, we can obtain a new multi-scale line $\Sigma_{\subseteq v} = \bigcup_{u \subseteq v} \Sigma_u \subset \Sigma$, where the new marked point $p'_\infty \in \Sigma_{\subseteq v}$ is the point that previously connected this sub-curve to the rest of $\Sigma$. The dual graph of $\Sigma_{\subseteq v}$ is the subgraph lying below $v \in V(\Sigma)$, and $v$ is the root vertex. One then has a canonical multi-scale decomposition of $\gr_v(\cC_\bullet)$ indexed by the components of $\Sigma_{\subseteq v}$, where for every terminal component $u$ of $\Sigma_{\subseteq v}$,
\[
	\gr_v(\cC_\bullet)_{\leq u} := \cC_{\leq u} / \cC_{<v}
\]
is the essential image in $\gr_v(\cC_\bullet)$ of $\cC_{\leq u}$, which is well-defined by \eqref{E:vertex_category_containment}. For notational simplicity, we write $\cC_{\le u}^v = \gr_v(\cC_\bullet)_{\le u}$. One can show that $\gr_v(\cC_\bullet) = \langle \cC_\bullet^v \rangle_{\Sigma_{\subseteq v}}$ is a multi-scale decomposition of $\gr_v(\cC_\bullet)$.\endnote{By \eqref{E:vertex_category_containment}, if $w\subseteq v$ then $\cC_{<v}\subset \cC_{\le w}$ so it makes sense to consider the quotient $\cC_{\le w}/\cC_{<v}$. By \cite{Verdierquotient}*{Ch. II, Prop. 2.3.1}, if $w\subseteq v$, then $\cC_{\le w}^v$ is a thick subcategory of $\gr_v(\cC_\bullet)$ if and only if $\cC_{\le w}$ is thick in $\cC$, which implies the result for $w$ terminal and also gives condition (3). 

Condition (1) follows from the fact that the map taking a subcategory of $\cC_{\leq v} / \cC_{<v}$ to its preimage in $\cC_{\leq v}$ establishes a bijection between full triangulated subcategories of $\cC_{\leq v} / \cC_{<v}$ and full triangulated subcategories of $\cC_{\leq v}$ that contain $\cC_{<v}$, and this bijection is compatible with intersection of subcategories. The first claim can be seem from the universal property of a quotient dg-category, and the second is immediate --- see \cite{Verdierquotient}*{Ch. II, Prop. 2.3.1}.

Next, we prove (2). There is a canonical equivalence $\cC/\cC_{\le u}\cap \cC_{\le w} \to \gr_v(\cC_\bullet)/\cC_{\le u}^v\cap \cC_{\le w}^v$ by \cite{Verdierquotient}*{Ch. II, Prop. 2.3.1}. Under this equivalence, the essential images of $\cC_{\le u}$ and $\cC_{\le w}$ in $\cC/\cC_{\le u} \cap \cC_{\le w}$ are taken to $\cC_{\le u}^v$ and $\cC_{\le w}^v$, respectively. In particular, (2) for $\langle \cC_\bullet\rangle_{\Sigma}$ implies (2) for the descendent decomposition.

Condition (4) follows from the fact that for any $w\subseteq v$ the natural functor $\cC_{\le w}/\cC_{<w} \to \cC_{\le w}^v/\cC_{\le w}^v$ is an equivalence.} 
\end{const}

\begin{const}[Large scale semiorthogonal decomposition]
\label{const:SODfrommulti-scale}
Consider a multi-scale decomp\-osition $\cC = \langle \cC_\bullet\rangle_\Sigma$ and let $u_1,\ldots, u_n \in V(\Sigma)$ denote the components lying immediately below the root vertex. We order these components uniquely so that for all $i<j$ one has $\mathfrak{p}(u_i,u_j) \in \bH\cup \bR_{>0}$. There is then a unique set of indices $i_0=0<i_1<i_2<\cdots<i_k = n$ such that $\mathfrak{p}(u_i,u_j)=1$ for all $i_{\ast-1} < i < j \leq i_{\ast}$, and $\Im(\mathfrak{p}(u_i,u_j))>0$ for any other $i<j$. 

It follows directly from \Cref{D:multi-scale_decomposition} that $\cC_{\leq u_i} \subset \cC_{\leq u_j}$ whenever $i_{\ast-1} < i < j \leq i_{\ast}$, that $\Hom(\cC_{\leq u_j}, \cC_{\leq u_i}) = 0$ for any other $i<j$, and that the subcategories $\cC_{\leq u_{i_a}}$ for $a=1,\ldots,k$ generate $\cC$ as a triangulated category. Thus we have a semiorthogonal decomposition $\cC = \langle \cC_{\leq u_{i_1}},\ldots,\cC_{\leq u_{i_k}}\rangle$, which we refer to as the \emph{large scale semiorthogonal decomposition}.
\end{const}

\begin{ex}
\label{ex:twolevelmulti-scaledecomp}
If $\Sigma$ has exactly two levels, a multi-scale decomposition $\cC = \langle \cC_\bullet\rangle_\Sigma$ is a semi\-orthogonal decomposition $\cC = \langle \cC_{\le u_{i_1}},\ldots, \cC_{\le u_{i_k}}\rangle$ as in \Cref{const:SODfrommulti-scale} such that each $\cC_{\le u_{i_p}}$ is filtered by the categories $\cC_{\le u_a}$ for $i_{p-1}<a\le i_p$. Given such an $a$, $\gr_{u_a}(\cC_\bullet) = \cC_{\le u_a}/\cC_{<u_a}$. So, a multi-scale decomposition of this type is a semiorthogonal decomposition of $\cC$ along with a filtration of each semiorthogonal factor. See Figure \ref{F:2level} for a visualization:
\end{ex}

\begin{figure}[h]
\begin{center}
	\begin{tikzpicture}
	\draw[black] (1,0) arc (0:-180:1cm and 0.5cm);
	\draw[black,dashed] (1,0) arc (0:180:1cm and 0.5cm);
	\draw (0,0) circle (1cm);
	\filldraw[black] (0,1) circle  (0.05); 
	\shade[ball color=gray!10] (0,0) circle (1cm);

    \draw[black] (.5,-1.5) arc (0:-180:.5cm and 0.25cm);
	\draw[black,dashed] (.5,-1.5) arc (0:180:.5cm and 0.25cm);
	\draw (0,-1.5) circle (.5cm);
	\shade[ball color=gray!10] (0,-1.5) circle (.5cm);

    \draw (1.05,-1.05) circle (.5cm);
    
    \draw[black] (1.55,-1.05) arc (0:-180:.5cm and .25 cm); 
    \draw[black,dashed] (1.55,-1.05) arc (0:180:.5cm and .25 cm);
    \shade[ball color=gray!10] (1.05,-1.05) circle (.5cm);

    \draw (-1.05,-1.05) circle (.5cm);
    
    \draw[black] (-.55,-1.05) arc (0:-180:.5cm and 0.25cm);
    \draw[black,dashed] (-.55,-1.05) arc (0:180:.5cm and 0.25cm);
    \shade[ball color=gray!10] (-1.05,-1.05) circle (.5cm);


    \draw node at (1,1) {$\Sigma_{\rm{root}}$};
    \draw node at (.65, -1.65) {\tiny $\iddots$};
    \draw node at (0,1.3) {$p_\infty$};
    \draw node at (-1.5,-1.6) {$\Sigma_{v_1}$};
    \draw node at (-.5,-2.1) {$\Sigma_{v_2}$};
    \draw node at (1.8,-1.6) {$\Sigma_{v_k}$};
    \draw node at (-.45,-.7) {\tiny{\textcolor{red}{$n_1$}}};
    \draw node at (0,-.8) {\tiny{\textcolor{red}{$n_2$}}};
    \draw node at (.45,-.7) {\tiny{\textcolor{red}{$n_k$}}};

    \draw node at (5,0) {$\Gamma(\Sigma) \:=$ }; 
    \filldraw[black] (6.75,.5) circle (0.05);
    \filldraw[black] (6,-.5) circle (0.05);
    \filldraw[black] (6.5,-.5) circle (0.05);
    \filldraw[black] (7.5,-.5) circle (0.05);
    \draw (6.75,.5) -- (6,-.5);
    \draw (6.75,.5) -- (6.5,-.5);
    \draw (6.75,.5) -- (7.5,-.5);
    \draw node at (7,-.5) {$\cdots$}; 
    \draw node at (6.75,.7) {\tiny{root}};
    \draw node at (6,-.7) {\footnotesize{$v_1$}};
    \draw node at (6.5,-.7) {\footnotesize{$v_2$}};
    \draw node at (7.5,-.7) {\footnotesize{$v_k$}};

    \filldraw[red] (-.70,-.70) circle (0.05);
    \filldraw[red] (.70,-.70) circle (0.05);
    \filldraw[red] (0,-1) circle  (0.05);

    \draw[fill=gray!42] (-2,-7) rectangle (2,-3);
    \draw node at (1.4,-3.3) {$\Sigma_{\text{root}}$};

    \filldraw[red] (-1,-6) circle (0.05);
    \filldraw[red] (-1.5,-5) circle (0.05);
    \filldraw[red] (-.5,-5) circle (0.05);
    \filldraw[red] (.5,-5) circle (0.05);
    \filldraw[red] (-1,-4) circle (0.05);
    \filldraw[red] (0,-4) circle (0.05);

    \draw node at (-.8,-5.8) {\tiny{\textcolor{red}{$n_1$}}}; 
    \draw node at (-1.3,-4.8) {\tiny{\textcolor{red}{$n_2$}}}; 
    \draw node at (-.3,-4.8) {\tiny{\textcolor{red}{$n_3$}}};  
    \draw node at (.7,-4.8) {\tiny{\textcolor{red}{$n_4$}}}; 
    \draw node at (-.8,-3.8) {\tiny{\textcolor{red}{$n_5$}}};   
    \draw node at (.2,-3.8) {\tiny{\textcolor{red}{$n_6$}}};  

    \draw node at (6.75,-3.8) {$\cC_{\le {v_5}} \subsetneq \cC_{\le v_6}$};
    \draw node at (7,-4.8) {$\cC_{\le {v_2}} \subsetneq \cC_{\le v_3} \subsetneq \cC_{\le v_4}$};
    \draw node at (6.75,-5.8) {$\cC_{\le {v_1}}$};
    \draw node at (6.75,-6.8) {$\cC = \langle \cC_{\le {v_1}},\cC_{\le v_4},\cC_{\le v_6}\rangle$.};
	\end{tikzpicture}
\end{center}
\caption{A picture of a two-level multi-scale line, its dual tree, the configuration of node points on $\Sigma_{\rm{root}}\setminus \{p_\infty\}$, and the resulting filtered semiorthogonal decomposition. For simplicity, we have taken $k=6$.}
\label{F:2level}
\end{figure}
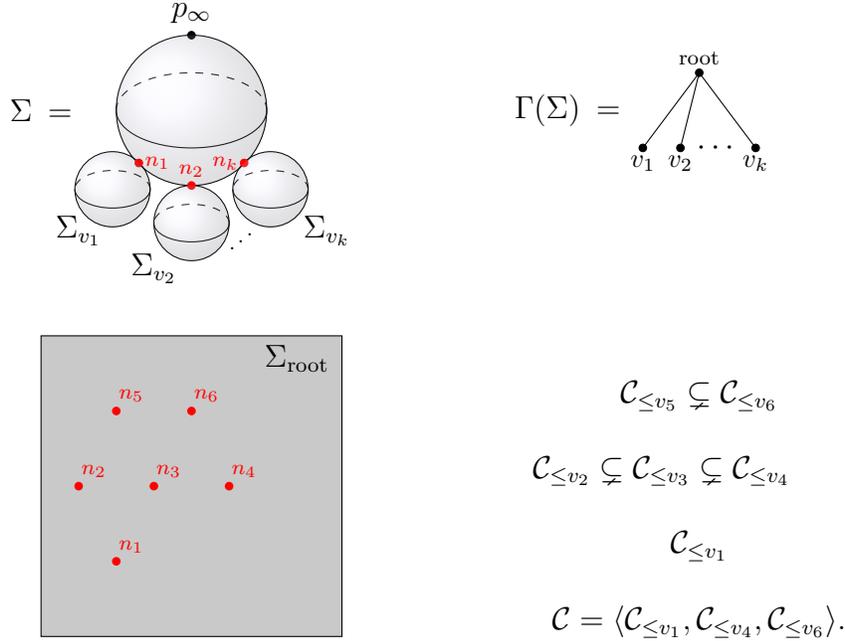

\begin{defn} 
\label{D:lexicographicordering}
    For any multi-scale line $\Sigma$ and all $0<\epsilon\ll 1$, the partial order $\leq_{e^{i\epsilon},0}$ is a total ordering on the terminal components of $\Sigma$ and is independent of $\epsilon$. We will refer to this as the \emph{lexicographic ordering}, and denote it $\leq_{\rm{lex}}$, because $u \leq_{\rm{lex}} v$ if either 1) $u \leq_{1,0} v$ or 2) $u$ and $v$ are incomparable with respect to $\leq_{1,0}$ and satisfy $u \leq_{i,\infty} v$. From this description, it is evident that $\le_{\rm{lex}}$ refines the partial ordering $\le_{1,t}$ for all $t\in [0,\infty]$.
\end{defn}

\begin{prop} 
\label{L:lex_filtration}
A multi-scale decomposition $\cC = \langle \cC_\bullet \rangle_\Sigma$ is uniquely determined by the multi-scale line $\Sigma$ and the following subcategories indexed by $v \in V(\Sigma)_{\rm{term}}$,
\[
    F_v(\cC_\bullet) := \Span \{ \cC_{\leq u} \:|\: u \in V(\Sigma)_{\rm{term}}, u \leq_{\rm{lex}} v \}.
\]
\end{prop}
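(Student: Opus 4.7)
The plan is to prove, by strong induction on the lex rank $i$, that $\cC_{\leq u_i}$ is recovered from $\Sigma$ and $\{F_{u_j}\}_{j\leq i}$; since every non-terminal $\cC_{\leq v}$ is by definition the span of the terminal $\cC_{\leq u}$'s with $u\subseteq v$, this suffices. The base case $i=1$ is immediate: because $\leq_{\rm{lex}}$ refines $\leq_{1,\infty}$, no terminal vertex lies strictly below $u_1$ in $\leq_{1,\infty}$, so $\cC_{<u_1}=0$ and $F_{u_1}(\cC_\bullet)=\cC_{\leq u_1}$ by definition.

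For the inductive step, assume $\cC_{\leq u_j}$ is known for all $j<i$. Then $\cC_{<u_i}$ is determined, since by lex refinement every terminal $u_a<_{1,\infty}u_i$ satisfies $a<i$. A short argument using axiom (1) of \Cref{D:multiscale_decomposition} and the same refinement shows that $\cC_{\leq u_i}\cap \cC_{\leq u_j}\subseteq \cC_{<u_i}$ for every $j<i$: any terminal $u_a$ contributing to the axiom-(1) span for this intersection must satisfy $u_a\leq_{1,\infty}u_i$, and the case $u_a=u_i$ is ruled out because $u_i\leq_{1,\infty}u_j$ together with $j<i$ would contradict the refinement. Consequently $\cC_{\leq u_i}$ is recovered from its essential image $\overline{\cC}_{\leq u_i}$ in the Verdier quotient $\bar F := F_{u_i}/\cC_{<u_i}$, since $\cC_{\leq u_i}$ is the preimage of $\overline{\cC}_{\leq u_i}$ under the quotient functor.

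Partition $\{j<i\}$ into $J^+, J^-, J^0$ according to whether $\Im\mathfrak{p}(u_i,u_j)$ is positive, negative, or zero. The case $\Im\mathfrak{p}(u_i,u_j)=0$ forces $\mathfrak{p}(u_i,u_j)=-1$ (as $\mathfrak{p}(u_i,u_j)=1$ would yield $u_i<_{\rm{lex}}u_j$), so $u_j<_{1,\infty}u_i$ and $\cC_{\leq u_j}\subseteq \cC_{<u_i}$ for $j\in J^0$. The subcategories $\overline{\cC}^\pm:=\Span\{\overline{\cC}_{\leq u_j}:j\in J^\pm\}\subset \bar F$ are therefore known by induction, and together with $\overline{\cC}_{\leq u_i}$ they classically generate $\bar F$. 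Axiom (2) applied to the pairs $(u_j,u_i)$ for $j\in J^+$ and $(u_i,u_j)$ for $j\in J^-$, combined with the intersection containment above, yields
\[
\Hom_{\bar F}(\overline{\cC}^+,\overline{\cC}_{\leq u_i})=0,\qquad \Hom_{\bar F}(\overline{\cC}_{\leq u_i},\overline{\cC}^-)=0.
\]
A convexity case analysis on triples $(u_i,u_{j_1},u_{j_2})$ with $j_1\in J^+$ and $j_2\in J^-$, analogous to the proof of transitivity in \Cref{L:partialorder}, establishes $\Im\mathfrak{p}(u_{j_1},u_{j_2})<0$ and hence $\Hom_{\bar F}(\overline{\cC}^+,\overline{\cC}^-)=0$ by a further application of axiom (2). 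Assembled together, these vanishings provide a three-term semiorthogonal decomposition
\[
\bar F = \langle\, \overline{\cC}^-,\, \overline{\cC}_{\leq u_i},\, \overline{\cC}^+ \,\rangle,
\]
from which $\overline{\cC}_{\leq u_i}={}^\perp\overline{\cC}^-\cap (\overline{\cC}^+)^\perp$ is uniquely pinned down.

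The main obstacle is the geometric case analysis underlying the cross-term vanishing $\Hom_{\bar F}(\overline{\cC}^+,\overline{\cC}^-)=0$: for each triple $(u_i,u_{j_1},u_{j_2})$ one must consider the four possible ways (as in \Cref{fig:partial_order_tree}) that the path from the root to $u_i$ meets the subtree spanned by $u_{j_1}$, $u_{j_2}$, and the root, and in each configuration deduce the sign of $\Im\mathfrak{p}(u_{j_1},u_{j_2})$ from those of $\Im\mathfrak{p}(u_i,u_{j_1})$ and $\Im\mathfrak{p}(u_i,u_{j_2})$ by exploiting the convexity of the upper half-plane. Only after this bookkeeping does axiom (2) supply the cross-term vanishing needed to verify the three-term semiorthogonal decomposition.
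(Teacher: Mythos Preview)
Your approach is correct in spirit but takes a genuinely different route from the paper, and there is one step you have glossed over that requires additional work.

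\textbf{Comparison with the paper.} The paper inducts on the number of levels $|V(\Sigma)/{\sim}|$: at each stage it first recovers the large-scale semiorthogonal decomposition $\cC=\langle\cC_{\leq u_{i_1}},\ldots,\cC_{\leq u_{i_k}}\rangle$ from the lex filtration (by an inductive argument on the filtration index, using that each step changes exactly one semiorthogonal factor), then passes to the descendent decomposition of each $\gr_{u_j}(\cC_\bullet)$ and invokes the inductive hypothesis. Your approach instead inducts directly on the lex rank and recovers each $\cC_{\leq u_i}$ via a three-term semiorthogonal decomposition of $F_{u_i}/\cC_{<u_i}$. Your route is more direct but purchases this at the cost of the tree-geometric bookkeeping you identify; the paper's route avoids that bookkeeping entirely by exploiting the recursive structure of \Cref{const:descendent}. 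Incidentally, your ``main obstacle'' $\Im\mathfrak{p}(u_{j_1},u_{j_2})<0$ is nothing more than transitivity of $\leq_{i,0}$: you have $u_{j_2}<_{i,0}u_i<_{i,0}u_{j_1}$, so \Cref{L:partialorder} gives $u_{j_2}<_{i,0}u_{j_1}$ immediately.

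\textbf{The gap.} After establishing $\Im\mathfrak{p}(u_{j_1},u_{j_2})<0$, axiom (2) only gives $\Hom(\cC_{\leq u_{j_1}},\cC_{\leq u_{j_2}})=0$ in $\cC/(\cC_{\leq u_{j_1}}\cap\cC_{\leq u_{j_2}})$, not in $\bar F=F_{u_i}/\cC_{<u_i}$. For the two vanishings involving $\overline{\cC}_{\leq u_i}$ you correctly invoke the containment $\cC_{\leq u_i}\cap\cC_{\leq u_j}\subseteq\cC_{<u_i}$, but for the cross-term you need the analogous $\cC_{\leq u_{j_1}}\cap\cC_{\leq u_{j_2}}\subseteq\cC_{<u_i}$, which you never address. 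This is fillable: since $\Im\mathfrak{p}(u_i,u_{j_1})$ and $\Im\mathfrak{p}(u_i,u_{j_2})$ have opposite signs, one checks (as in the $v_4$ case of \Cref{L:partialorder}) that $u_i\subseteq u_{j_1}\vee u_{j_2}$. By the remark after \Cref{D:multiscale_decomposition}, any terminal $u_a$ contributing to $\cC_{\leq u_{j_1}}\cap\cC_{\leq u_{j_2}}$ satisfies $u_a\nsubseteq u_{j_1}\vee u_{j_2}$ and $u_a\leq_{1,\infty}u_{j_1}\vee u_{j_2}$; combined with $u_i\subseteq u_{j_1}\vee u_{j_2}$ this forces $u_a\vee u_i\supsetneq u_{j_1}\vee u_{j_2}$ with the node towards $u_i$ coinciding with that towards $u_{j_1}$, whence $\mathfrak{p}(u_a,u_i)=\mathfrak{p}(u_a,u_{j_1})=1$. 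So the gap closes, but it is a genuine second piece of tree geometry beyond the one you flagged.
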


\begin{proof}
We prove this by induction on the cardinality of $V(\Sigma)/{\sim}$. If there is only one level, then there is nothing to prove. Otherwise, let $u_1,\ldots,u_n \in V(\Sigma)$ be the nodes lying immediately below the root node, index the terminal nodes of $\Sigma$ so that $v_1 \leq_{\rm{lex}} v_2 \leq_{\rm{lex}} \cdots \leq_{\rm{lex}} v_N$, and let $\cF_j := F_{v_j}(\cC_\bullet)$.

First, we show that the semiorthogonal decomposition of \Cref{const:SODfrommulti-scale} is determined by the filtration $\cF_1 \subset \cdots \subset \cF_N = \cC$ and $\Sigma$. Indeed, one can show by induction that for every $s$ one can obtain the semiorthogonal decomposition $\cF_{s+1} = \langle \cF_{s+1} \cap \cC_{\leq u_{i_1}},\ldots, \cF_{s+1} \cap \cC_{\leq u_{i_k}} \rangle$ from the corresponding semi\-orthogonal decomposition of $\cF_s$, because all but one of the factors remains unchang\-ed, and each term of a semiorthogonal decomposition is uniquely determined by the remaining terms and the ordering of terms.

Next, for any $j = 1,\ldots,n$, let $S_j \subset \{1,\ldots, N\}$ be the set of $r$ for which $v_r \subset u_j$, and let $p$ be the unique index such that $i_{p-1}<j\leq i_p$. Because $\leq_{\rm{lex}}$ refines $\leq_{1,\infty}$, for each $r \in S_j$ the projection of $\cF_r$ onto the semiorthogonal factor $\cC_{\leq u_{i_p}}$ above contains $\cC_{<u_j}$ and is contained in $\cC_{\leq u_j}$. This projection therefore descends to a subcategory $\cF^{u_j}_r \subseteq \gr_{u_j}(\cC_\bullet).$ In fact, these categories $\cF^{u_j}_r$ are precisely the subcategories $F_v(\cC_\bullet^{u_j})$ of the descendent decomposition $\gr_{u_j}(\cC_\bullet) = \langle \cC_{\bullet}^{u_j} \rangle_{\Sigma_{\subseteq u_j}}$, because the $\leq_{\rm{lex}}$ ordering on $\Sigma_{\subseteq u_j}$ agrees with that on $\Sigma$. In particular, $\cC_{\leq u_j} = \cF_s$ where $s = \max(S_j)$.

We have shown that the subcategories $\cC_{\leq u_j}$ can be reconstructed uniquely for all $j$, and that the analogous filtration $\cF^{u_j}_r$ can be obtained on $\gr_{u_j}(\cC_\bullet)$. The inductive hypothesis implies that the induced multi-scale decomposition $\gr_{u_j}(\cC_\bullet) = \langle \cC_{\bullet}^{u_j} \rangle_{\Sigma_{\subseteq u_j}}$ is uniquely determined. From this one can uniquely recover the subcategories $\cC_{\leq v}$ for any terminal $v \in V(\Sigma)$ as the preimage of $\cC^{u_j}_{\leq v}$ under the quotient map $\cC_{\leq u_j} \to \gr_{u_j}(\cC_\bullet)$, where $j$ is the unique index such that $v \subseteq u_j$.
\end{proof}


In general, unlike a semiorthogonal decomp\-osition, a multi-scale decomposition does not give functorial filtrations of objects, but we do have the following:

\begin{lem}[Scale filtration] \label{L:multi-scale_decomp_filtration}
Let $E \in \cC$ be non-zero and consider a multi-scale decomp\-osition $\langle \cC_\bullet \rangle_\Sigma$. Then there is a unique set of terminal components $s_1,\ldots,s_m$ of $\Sigma$ such that:
\begin{enumerate}
    \item There exists a descending filtration $E_\bullet$ of $E$ such that $\gr_i(E_\bullet) \in \cC_{\leq s_i}$ and $\Pi_i(\gr_i(E_\bullet)) \in \gr_{s_i}(\cC_\bullet)$ is non-zero; and
    \item $s_a \leq_{i,0} s_b$ for all $a<b$.
\end{enumerate}
We refer to $E_\bullet$ as a \emph{scale filtration} of $E$. Given another scale filtration $F_\bullet$ of $E$, there is a third scale filtration $G_\bullet$ of $E$ and morphisms $E_\bullet \leftarrow G_\bullet \to F_\bullet$ that induce isomorphisms $\Pi_i(\gr_i(E_\bullet)) \leftarrow \Pi_i(\gr_i(G_\bullet)) \to \Pi_i(\gr_i(F_\bullet))$ for all $i$.
\end{lem}

\begin{proof}
By \Cref{const:descendent}, for any component $v$ of $\Sigma$, we have a descendent decomposition $\gr_v(\cC_\bullet) = \langle \cC_\bullet^v \rangle_{\Sigma_{\subseteq v}}$. We will prove the claim for this induced multi-scale decomposition by induction on the level of $v$. The base case is when $v$ is terminal, in which case the claims of the lemma are tautological.

For the inductive step, it suffices to consider the case where $v$ is the root component, and assume the claim is known for the components $u_1,\ldots,u_n$ lying immediately below $v$. We will use the notation and conventions of \Cref{const:SODfrommulti-scale} and consider the corresponding semi\-orthogonal decomposition $\cC = \langle \cC_{\leq u_{i_1}},\ldots, \cC_{\leq u_{i_k}} \rangle.$

There is a unique descending filtration of $E$ such that $E_a := \gr_a (E) \in \cC_{\leq u_{i_a}}$ for $a=1,\ldots,k$. For each $a$ such that $E_a \neq 0$, let $j_a$ be the minimal $i_{a-1}<j \leq i_a$ such that $E_a \in \cC_{\leq u_j}$. Letting $u_a := u_{j_a}$, then removing the steps in the filtration where $E_a=0$ and reindexing so that $E_a \neq 0$ for all $a$, we have identified a unique (up to isomorphism) descending filtration of $E$ with $E_a := \gr_a(E) \in \cC_{\leq u_a}$ for $a=1,\ldots,k$, $\Im(\mathfrak{p}(u_a,u_b))>0$ for $a<b$, and such that the image $E'_a \in \gr_{u_a}(\cC_\bullet)$ of each $E_a$ is non-zero.

Now by the inductive hypothesis, each $E'_a$ admits a filtration in $\gr_{u_a}(\cC_\bullet)$ satisfying the conditions of the lemma. Each of these filtrations can be lifted to a filtration of $E_a$ satisfying the conditions of the lemma.\endnote{Assertions about lifting filtrations can be verified using the following: \vspace{2mm}

\emph{Claim:} Let $\pi:\cC\to \cD$ be a Verdier quotient of triangulated categories by some thick subcategory $\cT$ of $\cC$. Given a morphism $f \in \Hom_{\cD}(E,F)$, there exists an object $E'$ of $\cC$ and a morphism $\beta:E'\to E$ in $\cC$ which is an isomorphism in $\cD$ such that there is a morphism $h\in \Hom_{\cC}(E',F)$ with $\beta \circ f = h$ in $\cD$. \vspace{4mm}

The claim implies that one can lift filtrations from $\cD$, where by this we mean: given a descending filtration $F_\bullet$ of $F \in \cD$, there exists a filtration $F_\bullet'$ of $F\in \cC$ such that $\pi(F_\bullet') = F_\bullet$, regarded as diagrams in $\cC$ and $\cD$, respectively. Since $\pi$ exact, this also implies that $\pi(\gr_i(F_\bullet')) \cong \gr_i(F_\bullet)$ for all indices $i$.

\begin{proof}[Proof of Claim]
    $f\in \Hom_{\cD}(E,F)$ corresponds to a roof diagram $E \xleftarrow{\beta} E' \xrightarrow{h} F$ of morphisms in $\cC$, where $\Cone(\beta) \in \cT$. In particular, $\beta \in \Hom_{\cC}(E,E')$ is sent to an isomorphism under $\pi$ and one can verify using the composition law for roof diagrams that in $\cD$ one has $\beta \circ f = h$. 
\end{proof}
} Note that any automorphism of $E_a$ descends canonically to an automorphism of $E'_a$, so $\gr_i(E_a) \in \gr_{s_i}(\cC_\bullet)$ is still determined up to canonical isomorphism. Finally, the filtration of each $E_a$ can be combined with the filtration of $E$ with $\gr_a(E) = E_a$ constructed in the previous paragraph, and the resulting filtration satisfies the conditions of the lemma.

The last assertion about the morphism of filtrations $E_\bullet \leftarrow G_\bullet \to F_\bullet$ inducing isomorphisms on graded pieces can be proven by induction, noting that the projection of a scale filtration $E_\bullet$ of $E$ to $\gr_{u_a}(\cC_\bullet)$ (in the above notation) gives a scale filtration of the image of $E$.\endnote{Consider the large scale semiorthogonal decomposition of $\cC$ as in \Cref{const:SODfrommulti-scale} and as in the body of the proof let $E_a \in \cC_{\le u_a}$ be one of the associated graded factors. Then, applying the projection functor $\cC_{\le u_a} \to \gr_{u_a}(\cC_\bullet)$ gives a pair of scale filtrations of $E_a$ in $\gr_{u_a}(\cC_\bullet)$, denoted $E_\bullet'$ and $F_\bullet'$. We may assume inductively that these are refined by a scale filtration $E_\bullet' \leftarrow G_\bullet'\to F_\bullet'$ as in the statement. Up to replacing $G_\bullet'$ by an equivalent filtration $G_\bullet''$ in $\gr_{u_a}(\cC_\bullet)$, we can lift this to a diagram of filtrations $E_\bullet \leftarrow G_\bullet \to F_\bullet$ in $\cC_{\le u_a} \subseteq \cC$. We can concatenate the lifted filtrations for each $a$ to complete the proof of the claim}
\end{proof}

\begin{warning}
In general, the scale filtration is not compatible with direct sums. For instance, consider a terminal component $v$, $E \in \cC_{\leq v}$ with $\Pi_v(E)\neq 0$, and any object $F$ for which the $s_i$ appearing in \Cref{L:multi-scale_decomp_filtration} satisfy $s_i <_{1,\infty} v$. Then $E \oplus F \in \cC_{\leq v}$ and $\Pi_v(E\oplus F) \cong \Pi_v(E) \neq 0$, so the filtration of $E \oplus F$ is trivial. 
\end{warning}

\begin{defn}
\label{D:twellplaced}
    Let $t\in [0,\infty]$ be given. If the terminal components $s_1,\ldots, s_m$ of $\Sigma$ associated to the scale filtration of $E$ have a maximal element $s_i$ with respect to the partial order $\le_{1,t}$, then we say $E$ is $t$-\emph{well-placed} with \emph{dominant} vertex $s_i$, written $\dom(E) = s_i$. The \emph{dominant projection} of $E$ is the image of $\gr_i(E_\bullet)$ in $\gr_{s_i}(\cC_\bullet)$ and is written $\Pi_{\rm{dom}}(E)$ or simply $\Pi(E)$ for brevity.
\end{defn}

\begin{rem}
\label{R:inftywellplaced}
    Note that an object $E$ is $\infty$-well-placed with respect to a multi-scale decomp\-osition $\langle \cC_\bullet\rangle_\Sigma$ if and only if $E\in \cC_{\le v}$ for some $v\in V(\Sigma)_{\rm{term}}$ and has nonzero image in $\gr_v(\cC_\bullet)$. This $v$ is the dominant vertex in the scale filtration of $E$.\endnote{Suppose $E\in \cC_{\le v}$ and has nonzero projection to $\gr_v(\cC_\bullet)$. Then, we have a scale filtration given by $E_1 = E$ with only one factor $E \in \cC_{\le v}$. Conversely, if $E$ is $\infty$-well-placed, the components $s_1,\ldots, s_m$ appearing in its scale filtration have a maximum $s_i$ with respect to $\le_{1,\infty}$. However, $s_j\le_{i,0} s_i$ (or $s_i\le_{i,0}s_j$) and $s_j\le_{1,\infty}s_i$ implies that $s_i = s_j$. Consequently, there is only one terminal component $s = s_i$ appearing in the scale filtration, and it follows that $E\in \cC_{\le s}$.} 
\end{rem}

\subsection{Coarsening multi-scale decompositions}

\begin{defn}\label{D:specialization_totally_preordered_rooted_tree}
A \emph{contraction} of totally preordered rooted trees $f : (\Gamma,\preceq,v_0) \twoheadrightarrow (\Gamma',\preceq,v_0')$ is a surjection $f:V(\Gamma) \to V(\Gamma')$ such that\endnote{This is a slight abuse of notation. We use the same symbol $f$ to denote both the contraction, and the underlying surjection on vertex sets.}
\begin{enumerate} 
    \item $v\preceq w$ implies $f(v)\preceq f(w)$;
    \item for adjacent vertices $v,w \in V(\Gamma)$, $f(v)$ and $f(w)$ are either equal or adjacent.
    \item for any $v' \in V(\Gamma')$, $f^{-1}(v')$ spans a connected subgraph of $\Gamma$.
\end{enumerate}
Condition (1) implies that $f(v_0) = v_0'$, since $u'\preceq f(v_0)$ for all $u'$ and this condition uniquely characterizes the root. 
\end{defn}

\begin{lem}\label{L:specializations}
For any totally preordered rooted tree $(\Gamma,\preceq,v_0)$ and any order preserving surjection of totally ordered sets
\[
    \beta: V(\Gamma) /{\sim} \twoheadrightarrow [n] := \{0<\cdots<n\},
\]
there is a unique $(\Gamma',\preceq,v_0')$ which is a contraction $f:(\Gamma,\preceq,v_0)\twoheadrightarrow (\Gamma',\preceq,v_0')$ such that $V(\Gamma')/{\sim}$ is isomorphic to $[n]$ under $V(\Gamma)/{\sim}$.\endnote{By this, we mean that there is an order-preserving bijection $V(\Gamma')/{\sim} \to [n]$ whose composition with the canonical map $V(\Gamma)/{\sim} \to V(\Gamma')/{\sim}$ induced by $f$ agrees with the given map $V(\Gamma)/{\sim} \to V(\Gamma')/{\sim}$. Such a map is unique if it exists.} The contraction $f$ is uniquely determined by $\beta$ and the induced map $V(\Gamma)_{\rm{term}}\to V(\Gamma')_{\rm{term}}$ and any two such contractions differ by a unique automorphism of $(\Gamma',\preceq',v_0')$.
\end{lem}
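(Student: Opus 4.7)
The plan is to construct $(\Gamma', \preceq, v_0')$ and $f$ explicitly as a quotient of $\Gamma$, then verify the uniqueness assertions. Define an equivalence relation $\sim_f$ on $V(\Gamma)$ by declaring $u \sim_f v$ iff there is a path $u = w_0, w_1, \ldots, w_k = v$ in $\Gamma$ with $\pi([w_i]) = \pi([u])$ for all $i$. Each $\sim_f$-class is then a connected subtree of $\Gamma$ lying at a single $\pi$-level. Take $V(\Gamma') := V(\Gamma)/\sim_f$, declare $[u]$ and $[v]$ adjacent iff $[u] \neq [v]$ and some representatives are adjacent in $\Gamma$, set $v_0' := [v_0]$, and equip $V(\Gamma')$ with the preorder $[u] \preceq [v] \iff \pi([u]) \leq \pi([v])$. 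Contracting disjoint connected subtrees of a tree yields a tree, and the three axioms of \Cref{D:specialization_totally_preordered_rooted_tree} together with the identification $V(\Gamma')/{\sim} \cong [n]$ are then immediate.

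The main verification is condition (2)(a) of \Cref{D:multiscaleline} for $\Gamma'$. Fix a non-root $[v'] \in V(\Gamma')$ and let $F := f^{-1}([v'])$, which is a connected subtree of $\Gamma$ not containing $v_0$. Since $F$ is a tree it has $|F| - 1$ internal edges, and in any rooted tree each edge is the ascending edge of its lower endpoint, so the internal edges of $F$ account for $|F| - 1$ of the $|F|$ ascending edges emanating from vertices of $F$. Thus exactly one vertex $v^* \in F$ has its ascending edge leaving $F$, to some $w^* \notin F$ with $v^* \prec w^*$. Since $v^*$ and $w^*$ are adjacent but lie in distinct $\sim_f$-classes, $\pi([v^*]) \neq \pi([w^*])$, forcing $[v'] \prec f(w^*)$. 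Any other external edge of $F$ cannot be the ascending edge of its $F$-endpoint (since $v^*$ is unique), so it must be the ascending edge of its outside endpoint, and thus lies over a vertex strictly $\prec [v']$. This verifies (2)(a). For (2)(b), I would argue that any terminal $[v']$ of $\Gamma'$ satisfies $\pi([v']) = 0$: terminality of $[v']$ implies every descending edge from an $F$-vertex remains inside $F$ (else it would produce an adjacent vertex $\prec [v']$ in $\Gamma'$), so iterated descent in the finite tree $\Gamma$ must reach a terminal vertex of $\Gamma$ inside $F$, forcing $\pi([v']) = 0$.

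For uniqueness, the key observation is that in any totally preordered rooted tree two adjacent vertices are never $\sim$-equivalent, because the ascending-edge axiom forces strict inequality between a vertex and its parent. Hence any contraction $f' : \Gamma \to \Gamma''$ satisfying the hypotheses must identify every pair of adjacent same-$\pi$-value vertices, placing every $\sim_f$-class inside an $f'$-fiber; conversely every $f'$-fiber is connected and $\pi$-constant (the latter because the hypothesis $V(\Gamma'')/{\sim} \cong [n]$ is required to be compatible with $\pi$), hence contained in a single $\sim_f$-class. The $f'$-fibers thus coincide with the $\sim_f$-classes, producing a canonical isomorphism $\Gamma'' \cong \Gamma'$ of totally preordered rooted trees. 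Any two contractions $f_1, f_2 : \Gamma \to \Gamma'$ therefore share the same partition of $V(\Gamma)$, so they differ by a unique bijection $\sigma$ of $V(\Gamma')$, which is automatically an automorphism of $(\Gamma', \preceq, v_0')$. If $f_1$ and $f_2$ agree on $V(\Gamma)_{\rm term}$, then $\sigma$ fixes every terminal vertex of $\Gamma'$; but in any rooted tree every vertex lies on the unique path from some terminal vertex to the root, and such a path is preserved setwise by $\sigma$ with both endpoints fixed and depth preserved, forcing $\sigma = \id$ and hence $f_1 = f_2$.

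The hardest step will be the counting argument of the second paragraph: carefully tracking how the ascending edges in $\Gamma$ of vertices in $F$ distribute into internal versus external edges, and verifying that the single external ascending edge produces exactly the unique $\prec$-greater adjacency of $[v']$ in $\Gamma'$ while all other external edges of $F$ yield only $\prec$-lesser adjacencies.
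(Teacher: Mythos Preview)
Your proof is correct and follows the same construction as the paper: both build $\Gamma'$ as the quotient of $\Gamma$ by the equivalence relation whose classes are the connected components of the $\pi$-level sets, and both establish uniqueness by showing that the fibers of any admissible contraction must coincide with these classes. The only minor difference is that for the claim that $f$ is determined by its values on terminal vertices, the paper invokes preservation of joins (writing each vertex as $u\vee w$ for terminal $u,w$, so $f(v)=f(u)\vee f(w)$), whereas you argue via path-preservation and depth-preservation; both arguments are valid and straightforward.
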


\begin{proof}
    Write $g$ for the composite $V(\Gamma)\to V(\Gamma)/{\sim} \twoheadrightarrow [n]$. We construct $\Gamma'$ as follows: for all $k\in [n]$, $g^{-1}(k)$ spans a disjoint union of trees $\{T_k^i\}$. For each $0\le k \le n$, the level $k$ vertex set of $\Gamma'$ is $\{T_k^i\}$. Two elements of $V(\Gamma')$ are connected by an edge if they are connected by an edge in $\Gamma$. The tree spanned by $g^{-1}(n)$ contains $v_0$ and defines the root vertex $v_0'$ of $\Gamma'$. The reader may verify that this defines a totally preordered rooted tree $(\Gamma',\preceq',v_0')$, which is unique up to isomorphism with the claimed properties.\endnote{$\Gamma'$ is a tree: any pair of elements of $V(\Gamma')$ can be joined by a unique simple path. This follows from the corresponding fact for $\Gamma$. The total preorder on $V(\Gamma')$ is specified by saying $v\preceq w$ if $v$ is on level $k$ and $w$ is on level $l$ with $k\le l$. Condition (2)(a) of \Cref{D:multi-scaleline} follows from the fact that each of the $T_k^i$ is a totally preordered rooted tree itself. The root vertex is the unique maximum for $\preceq$ in $\Gamma$. This root vertex is connected to a unique vertex via an ascending edge, and this implies the corresponding property for the tree $T_k^i$. \Cref{D:multi-scaleline}(2)(b) follows from the fact that all terminal vertices of $\Gamma$ are contained in $g^{-1}(0)$. This reasoning also gives condition (3).
    
    The most important part here is the uniqueness: the condition that there is a contraction $f:(\Gamma,\preceq,v_0)\twoheadrightarrow(\Gamma',\preceq',v_0')$ such that $V(\Gamma')/{\sim'}$ is isomorphic to $[n]$ under $V(\Gamma)/{\sim}$ forces each $T_k^i$ to be contracted to a vertex. If $T_k^i$ and $T_k^j$ are sent to the same vertex $v'$ under a putative $f$ then the preimage $f^{-1}(v')$ would not be connected (as required by \Cref{D:specialization_totally_preordered_rooted_tree}(3)) unless $T_k^i = T_k^j$. Consequently, the level $k$ vertices of $\Gamma'$ must be in bijective correspondence with $\{T_k^i\}$. This also uniquely determines the total preorder and the root. The definition of the edges is forced by \Cref{D:specialization_totally_preordered_rooted_tree}(2).}

    There is a canonical map $f:(\Gamma,\preceq,v_0)\twoheadrightarrow (\Gamma',\preceq',v_0')$ sending each $v\in V(\Gamma)$ to the sub-tree of $\Gamma$ to which it belongs. One can verify that this $f$ is a contraction and such that $V(\Gamma')/{\sim'}$ is isomorphic to $[n]$ under $V(\Gamma)/{\sim}$. Finally, it follows from \Cref{D:specialization_totally_preordered_rooted_tree} that contractions of totally preordered rooted trees preserve joins. Now, since any $v\in V(\Gamma)$ is the join of some elements $u,w\in V(\Gamma)_{\rm{term}}$ it follows that $f(v) = f(u\vee w) = f(u)\vee f(w)$ is uniquely determined by the map $V(\Gamma)_{\rm{term}}\to V(\Gamma')_{\rm{term}}$.\endnote{The join $u\vee w$ of two vertices in a totally preordered rooted tree can be characterized as the first common vertex in the unique simple paths connecting $u$ and $w$ to $v_0$. A path here can be regarded as a sequence of vertices $u=u_0,u_1,\ldots, u_k = v_0$ and $w = w_0,w_1,\ldots, w_l = v_0$. Thus, $u\vee w$ is the first vertex which is common to both sequences; say $u\vee w = u_s$ and $u\vee w = w_t$. Since $\Gamma'$ is also a tree, these simple paths are sent to simple paths. If $x=f(u_{s'}) = f(w_{t'})$ for $s'<s$ and $t'<t$ is the meet $f(u)\vee f(w)$, then the preimage of $x$ contains both $u_{s'}$ and $w_{t'}$. For this preimage to be connected, as required by \Cref{D:specialization_totally_preordered_rooted_tree}(3), it must be that $u_{s'}\vee w_{t'} = u\vee w$ is in the preimage of $x$. Consequently, $x = f(u\vee w)$. }

    Finally, consider another contraction $g:(\Gamma,\preceq,v_0)\twoheadrightarrow (\Gamma',\preceq',v_0')$ such that $V(\Gamma')/{\sim}$ is isomorphic to $[n]$ under $V(\Gamma)/{\sim}$ via $g$. $g$ must map each of the trees $T_k^i$ to a single level $k$ vertex of $\Gamma'$ and in particular induces a bijection from $\{T_k^i\}$ to itself. Therefore, there is a unique $\alpha \in \Aut(V(\Gamma'))$ such that $\alpha \circ f = g$, where $f$ denotes the canonical map from above. One can verify that $\alpha$ is an isomorphism as claimed.\endnote{By its definition, $\alpha$ preserves the $\preceq$ and the root vertex since it preserves the level values in $0,\ldots, n$. 
    
    Consider sub-trees $T$ and $T'$ of $\Gamma$ corresponding to elements of $V(\Gamma').$ If $T$ and $T'$ are connected in $\Gamma'$ then they correspond to subtrees sent to different values under $h$. So, $g(T) \ne g(T')$ but yet $T$ and $T'$ contain adjacent vertices. \Cref{D:specialization_totally_preordered_rooted_tree}(2) now implies that $g(T)$ and $g(T')$ are adjacent. Conversely, suppose that $g(T)$ and $g(T')$ are adjacent in $\Gamma'$. Suppose also that $h(T) = l<n$ and $h(T') \ge l+1$. If $T$ and $T'$ are not adjacent, let $T''$ be a subtree of $\Gamma$ with $h(T'') \ge l+1$ such that $T''$ and $T$ are adjacent. Then in $\Gamma'$, $T$ is connected to $T''$ and $T'$ which, by uniqueness of ascending edges, implies that $T'' = T'$. 

    Finally, $\alpha$ is a graph isomorphism because $g(T)$ and $g(T')$ are adjacent if and only if $T$ and $T'$ are by the previous paragraph.}
\end{proof}

Identifying $V(\Gamma)/{\sim}$ with $[\ell]$ for some $\ell\geq 0$, \Cref{L:specializations} says that totally preordered rooted trees $\Gamma'$ which are contractions of $\Gamma$ correspond bijectively to order-preserving surjections $[\ell] \twoheadrightarrow [n]$ for $n \in [0,\ell]$. The additional information of a map $V(\Gamma)_{\rm{term}} \to V(\Gamma')_{\rm{term}}$ then determines the contraction.

\begin{ex}
\label{ex:contractinglevels}
    Consider the unique isomorphism $V(\Gamma)/{\sim} \cong [\ell]$ of totally ordered sets taking $v_0$ to $\ell$. Given $k\in \{1,\ldots,\ell\}$, the \emph{contraction of the $k^{\text{th}}$ level} is the one induced by the unique order preserving surjection $[\ell]\twoheadrightarrow[\ell-1]$ such that $k,k-1\mapsto k-1$, i.e. the degeneracy map $\sigma_{k-1}^\ell:[\ell]\to [\ell-1]$ defined by
    \[
        \sigma_{k-1}^\ell(p) = 
        \begin{cases}
            p&p\le k-1\\
            p-1 & p \ge k.
        \end{cases}
    \]
    As explained in the proof of \Cref{L:specializations}, the graph $\Gamma'$ obtained by contracting the $k^{\rm{th}}$ level of $\Gamma$ can be obtained from $\Gamma$ be declaring a vertex $v_k$ on level $k$ equivalent to a vertex $v_{k-1}$ if they are joined by an edge, and then removing the edge. The level structure of the resulting graph is encoded by the surjection $[\ell]\twoheadrightarrow [\ell-1]$. In particular, since $V(\Gamma')$ is the quotient of $V(\Gamma)$ by a relation, there is a canonical map $V(\Gamma)_{\rm{term}}\to V(\Gamma')_{\rm{term}}$ and thus a canonical contraction induced by the data of $[\ell]\twoheadrightarrow [\ell-1]$ and $\Gamma$.

    Since every order preserving surjection $[\ell]\twoheadrightarrow[\ell-k]$ for $0\le k \le \ell$ factorizes uniquely as $\sigma_{j_1}^{\ell-k+1}\circ \cdots \circ \sigma_{j_k}^\ell$ for $0\le j_1<\cdots<j_k\le \ell-1$ by \cite{Maclanecategories}*{Lem. p. 173}, every contraction of $V(\Gamma)$ is determined by a choice of levels $1\le i_1<\cdots<i_k\le \ell$ to be contracted. The corresponding contraction is given by $\sigma_{i_1-1}^{\ell-k+1}\circ \cdots \circ \sigma_{i_k - 1}^\ell$ and the canonical contraction constructed here is called \emph{the} \emph{contraction of levels} $i_1<\cdots<i_k$.
\end{ex}

The definition of a contraction $f : \Gamma(\Sigma) \twoheadrightarrow \Gamma(\Sigma')$ implies that for any $v \in V(\Sigma')$ the subset $f^{-1}(v) \subset V(\Sigma)$ has a unique maximal element with respect to $\subseteq$. It is convenient to introduce the function
\[
f^\dagger : V(\Sigma') \to V(\Sigma) \text{ given by } v \mapsto \max (f^{-1}(v)).
\]

\begin{defn}[Coarsening]
\label{D:multi-scaleSODcoarsening}
    A \emph{coarsening} of $\cC = \langle \cC_\bullet \rangle_{\Sigma_1}$, denoted $\langle \cC_\bullet \rangle_{\Sigma_1} \rightsquigarrow \langle \cB_\bullet \rangle_{\Sigma_2}$, is a multi-scale decomposition $\cC = \langle \cB_\bullet \rangle_{\Sigma_2}$ and a contraction $f : \Gamma(\Sigma_1) \twoheadrightarrow \Gamma(\Sigma_2)$ satisfying:
    \begin{enumerate}[label=(\alph*)]
        \item For terminal vertices, $u \leq_{1,0} v \Rightarrow f(u) \leq_{1,0} f(v)$ and $u \leq_{i,0} v \Rightarrow f(u) \leq_{i,0} f(v)$, and 
        \item $\cB_{\leq v} \subseteq \cC_{\leq f^\dagger(v)}$ for all $v\in V(\Sigma_2)$, and the composite functor $\cB_{\leq v} \to \gr_{f^\dagger(v)}(\cC_\bullet)$ factors through an equivalence of quotient categories $\gr_v(\cB_\bullet) \cong \gr_{f^\dagger(v)}(\cC_\bullet)$. 
    \end{enumerate}
\end{defn}

\begin{lem}
\label{L:coarseningconsequences}
    Given a coarsening $\langle \cC_\bullet\rangle_{\Sigma_1}\rightsquigarrow \langle \cB_\bullet\rangle_{\Sigma_2}$, one has $\Span\{\cC_{<f^\dagger(v)},\cB_{\le v}\} = \cC_{\le f^\dagger(v)}$ for any $v\in V(\Sigma_2)_{\rm{term}}$.
\end{lem}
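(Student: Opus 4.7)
The inclusion $\Span\{\cC_{<f^\dagger(v)},\cB_{\leq v}\} \subseteq \cC_{\leq f^\dagger(v)}$ is essentially immediate: $\cC_{<f^\dagger(v)} \subseteq \cC_{\leq f^\dagger(v)}$ by \eqref{E:vertex_category_containment} applied to $\langle \cC_\bullet \rangle_{\Sigma_1}$, and $\cB_{\leq v} \subseteq \cC_{\leq f^\dagger(v)}$ is a direct consequence of condition (b) of \Cref{D:multiscaleSODcoarsening}. Since $\cC_{\leq f^\dagger(v)}$ is a thick triangulated subcategory of $\cC$ by \Cref{D:multiscale_decomposition}(3), it contains the triangulated span of these two subcategories.

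For the reverse inclusion, my plan is to exploit the equivalence $\gr_v(\cB_\bullet) \cong \gr_{f^\dagger(v)}(\cC_\bullet)$ supplied by \Cref{D:multiscaleSODcoarsening}(b). Let $E \in \cC_{\leq f^\dagger(v)}$, and denote by $\bar E \in \gr_{f^\dagger(v)}(\cC_\bullet)$ its image in the Verdier quotient. Via the equivalence, $\bar E$ corresponds to a class in $\gr_v(\cB_\bullet) = \cB_{\leq v}/\cB_{<v}$, so we may lift it to an object $E' \in \cB_{\leq v}$ whose image in $\gr_{f^\dagger(v)}(\cC_\bullet)$ is identified with $\bar E$ under the equivalence.

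The key step is then to use the standard calculus of fractions for Verdier quotients: since $\bar E$ and the image of $E'$ are isomorphic in $\cC_{\leq f^\dagger(v)}/\cC_{<f^\dagger(v)}$, there exists a roof $E \xleftarrow{s} G \xrightarrow{t} E'$ in $\cC_{\leq f^\dagger(v)}$ with $\cofib(s), \cofib(t) \in \cC_{<f^\dagger(v)}$. The exact triangle $G \to E' \to \cofib(t)$ shows that $G \in \Span\{\cB_{\leq v}, \cC_{<f^\dagger(v)}\}$, and then the exact triangle $G \to E \to \cofib(s)$ shows that $E$ itself lies in $\Span\{\cB_{\leq v}, \cC_{<f^\dagger(v)}\}$, completing the proof.

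The main technical point to be checked carefully is the lifting of an isomorphism in the Verdier quotient to a roof at the level of the ambient category $\cC_{\leq f^\dagger(v)}$; this is standard for stable $\infty$-categories / triangulated Verdier quotients, but worth citing precisely. No ordering or partial-order conditions are needed beyond condition (b) for this particular statement, so the argument is quite clean; the role of the coarsening conditions and the notation $f^\dagger$ only enters through the fact that condition (b) gives both the containment $\cB_{\leq v} \subseteq \cC_{\leq f^\dagger(v)}$ and the identification of quotients, which is precisely what drives the roof argument.
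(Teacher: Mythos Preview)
Your argument is correct and follows essentially the same strategy as the paper. The paper's proof is more terse: it observes that $\Span\{\cC_{<f^\dagger(v)},\cB_{\leq v}\}$ contains $\cC_{<f^\dagger(v)}$ and surjects onto $\gr_{f^\dagger(v)}(\cC_\bullet)$, and then invokes the bijection between strict triangulated subcategories of $\cC_{\leq f^\dagger(v)}$ containing $\cC_{<f^\dagger(v)}$ and strict triangulated subcategories of the Verdier quotient (citing \cite{Verdierquotient}*{Ch.~II, Prop.~2.3.1}); your roof argument is exactly what underlies that correspondence, so you are simply unpacking the same step by hand.
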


\begin{proof}
    By \Cref{D:multi-scaleSODcoarsening}(b), since $\cB_{\le v} \to \gr_{f^\dagger(v)}(\cC_\bullet)$ is essentially surjective, the essential image of $\Span\{\cC_{<f^\dagger(v)},\cB_{\le v}\}$ under $\cC_{\le f^\dagger(v)}\to \gr_{f^\dagger(v)}(\cC_\bullet)$ is $\gr_{f^\dagger(v)}(\cC_\bullet)$. The result now follows from the correspondence between strict triangulated subcategories of $\cC_{\le f^\dagger(v)}$ containing $\cC_{<f^\dagger(v)}$ and strict triangulated subcategories of $\gr_{f^\dagger(v)}(\cC_\bullet)$ \cite{Verdierquotient}*{Ch. II, Prop. 2.3.1}. 
\end{proof}

\begin{lem}
If $f : \langle\cD_\bullet\rangle_{\Sigma_1} \rightsquigarrow \langle \cC_\bullet \rangle_{\Sigma_2}$ and $g : \langle\cC_\bullet\rangle_{\Sigma_2} \rightsquigarrow \langle \cB_\bullet \rangle_{\Sigma_3}$ are coarsenings, then the composition of contractions $\Gamma(\Sigma_1) \twoheadrightarrow \Gamma(\Sigma_2) \twoheadrightarrow \Gamma(\Sigma_3)$ defines a coarsening $\langle\cD_\bullet\rangle_{\Sigma_1} \rightsquigarrow \langle \cB_\bullet \rangle_{\Sigma_3}$.
\end{lem}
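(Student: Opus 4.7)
The plan is to verify the three requirements of \Cref{D:multiscaleSODcoarsening} in turn for the composition $h := g \circ f : \Gamma(\Sigma_1) \twoheadrightarrow \Gamma(\Sigma_3)$. That $h$ is a contraction of totally preordered rooted trees in the sense of \Cref{D:specialization_totally_preordered_rooted_tree} is formal: preservation of $\preceq$ composes, equality-or-adjacency of images under $f$ followed by $g$ yields the same for $h$, and connectedness of $h^{-1}(v) = f^{-1}(g^{-1}(v))$ follows because $g^{-1}(v)$ is a connected subtree of $\Gamma(\Sigma_2)$, each $f^{-1}(w)$ is a connected subtree of $\Gamma(\Sigma_1)$, and adjacent fibres in $\Gamma(\Sigma_2)$ come from adjacent vertices in $\Gamma(\Sigma_1)$. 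Condition (a) of \Cref{D:multiscaleSODcoarsening} for $h$ is immediate from its validity for $f$ and $g$.

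The substantive step is the identity
\[
    h^\dagger(v) \;=\; f^\dagger\bigl(g^\dagger(v)\bigr) \quad \text{for all } v \in V(\Sigma_3).
\]
The key observation I would establish first is that any contraction preserves the relation $\subseteq$: if $u\subseteq w$ in $\Gamma(\Sigma_1)$, then the image under $f$ of the simple path from $u$ to the root passes through $f(w)$, so $f(u)\subseteq f(w)$. Using this, one sees that the unique $\subseteq$-maximum of any connected subgraph of a rooted tree dominates the whole subgraph, so for any $x\in h^{-1}(v)$ the simple path in $\Gamma(\Sigma_1)$ from $x$ to the root maps to the simple path from $f(x)$ to the root in $\Gamma(\Sigma_2)$, which must pass through $g^\dagger(v)$ because $f(x)\in g^{-1}(v)$ forces $f(x) \subseteq g^\dagger(v)$. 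Lifting this passage to the path in $\Gamma(\Sigma_1)$ produces some $y\in f^{-1}(g^\dagger(v))$ with $x\subseteq y$, hence $x \subseteq f^\dagger(g^\dagger(v))$. Thus $f^\dagger(g^\dagger(v))$ dominates $h^{-1}(v)$, and since it lies in $h^{-1}(v)$ itself it coincides with $h^\dagger(v)$.

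With this identity in hand, condition (b) of \Cref{D:multiscaleSODcoarsening} follows by chaining the inclusions and the equivalences. The inclusions $\cB_{\leq v}\subseteq \cC_{\leq g^\dagger(v)}\subseteq \cD_{\leq f^\dagger(g^\dagger(v))}=\cD_{\leq h^\dagger(v)}$ compose. For the equivalence of quotient categories, I would exploit the fact that the kernel of $\cC_{\leq g^\dagger(v)} \to \gr_{h^\dagger(v)}(\cD_\bullet)$ equals $\cC_{<g^\dagger(v)}$, since the coarsening condition for $f$ at $g^\dagger(v)$ identifies this kernel with $\cC_{\leq g^\dagger(v)}\cap \cD_{<h^\dagger(v)}$ and with the kernel of the quotient $\cC_{\leq g^\dagger(v)} \to \gr_{g^\dagger(v)}(\cC_\bullet)$. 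Intersecting with $\cB_{\leq v}$ and applying the coarsening condition for $g$ gives $\cB_{\leq v}\cap \cD_{<h^\dagger(v)} = \cB_{\leq v}\cap \cC_{<g^\dagger(v)} = \cB_{<v}$, so $\cB_{\leq v}\to \gr_{h^\dagger(v)}(\cD_\bullet)$ factors through $\gr_v(\cB_\bullet)$ and the resulting functor is the composition of the two given equivalences, hence itself an equivalence.

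The main obstacle I anticipate is a clean bookkeeping verification of $h^\dagger = f^\dagger \circ g^\dagger$; the remaining content is essentially formal, reducing to the observation that nested Verdier quotients compose cleanly once the relevant kernel identifications are pinned down.
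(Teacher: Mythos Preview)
Your proposal is correct and follows essentially the same route as the paper. The paper states the identity $h^\dagger = f^\dagger \circ g^\dagger$ without proof and then records the chain of inclusions and equivalences as a single commutative square whose outer rectangle yields condition~(b); your argument unpacks the same content, supplying the combinatorial verification of $h^\dagger = f^\dagger \circ g^\dagger$ and rephrasing the diagram chase as a kernel computation, but there is no substantive difference in strategy.
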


\begin{proof}
\Cref{D:multi-scaleSODcoarsening}(a) for $g \circ f$ follows directly from the corresponding conditions for $f$ and $g$. Consider $v\in V(\Sigma_3)$ and note that $f^\dagger(g^\dagger(v)) = (g\circ f)^\dagger(v)$. Applying \Cref{D:multi-scaleSODcoarsening}(b) to $f$ and $g$ gives a diagram
\[
    \begin{tikzcd}
    \cB_{\le v}\arrow[r,hook]\arrow[d,two heads]&\cC_{\le f^\dagger(v)}\arrow[r,hook]\arrow[d,two heads]& \cD_{\le f^\dagger g^\dagger(v)}\arrow[d,two heads]\\
    \gr_v(\cB_\bullet)\arrow[r,"\sim"]& \gr_{f^\dagger(v)}(\cC_\bullet)\arrow[r,"\sim"] & \gr_{f^\dagger g^\dagger(v)}(\cD_\bullet)
    \end{tikzcd}
\]
whose outer square implies \Cref{D:multi-scaleSODcoarsening}(b).
\end{proof}

\begin{ex}
Let us study coarsenings in the case where $\Sigma_1$ has exactly two terminal nodes $u_1,u_2$, and let $\alpha_1 := \mathfrak{p}(u_1,u_2)$. The trivial multi-scale decomp\-osition $\cC = \langle \cC \rangle_{\bP^1}$ is automatically a coarsening, so suppose that $\Sigma_2$ also has two terminal nodes, in which case $f : \Gamma(\Sigma_1) \to \Gamma(\Sigma_2)$ is an isomorphism of preordered rooted trees. We can therefore regard $\Sigma_2$ as the same curve as $\Sigma_1$, but with a different choice of differential, encoded by the normalized period $\alpha_2 = \mathfrak{p}(u_1,u_2)$ computed in $\Sigma_2$. We consider the following cases:

\begin{itemize}

\item $\Im(\alpha_1) > 0$: $\cC = \langle \cC_\bullet \rangle_{\Sigma_1}$ is equivalent to specifying a semiorthogonal decomposition $\cC = \langle \cC_{\leq u_1}, \cC_{\leq u_2} \rangle$. \Cref{D:multi-scaleSODcoarsening}(a) implies that $\Im(\alpha_2)>0$ as well, so we have a second semiorthogonal decomposition $\cC = \langle \cB_{\leq u_1}, \cB_{\leq u_2} \rangle$ such that $\cB_{\leq u_1} \subseteq \cC_{\leq u_1}$ and $\cB_{\leq u_2} \subseteq \cC_{\leq u_2}$. It follows that $\cB_{\leq u_\bullet} = \cC_{\leq u_\bullet}$.

\item $\alpha_1 = 1$: $\cC = \langle \cC_\bullet\rangle_{\Sigma_1}$ is equivalent to specifying a filtration $0 \subsetneq \cC_{\leq u_1} \subsetneq \cC_{\leq u_2} = \cC$. \Cref{D:multi-scaleSODcoarsening}(a) implies that $\Re(\alpha_2)>0$. \Cref{L:coarseningconsequences} implies that $\cB_{\leq u_1}=\cC_{\leq u_1}$, because $\cC_{<u_1}=0$. In addition, one of the following holds:

\begin{itemize}
 \item $\alpha_2=1$: We have $\cB_{\leq u_2}=\cC$, so the decompositions are identical.

\item $\Im(\alpha_2)>0$: The existence of a semiorthogonal decomposition $\cC = \langle \cB_{\leq u_1}, \cB_{\leq u_2} \rangle$ implies that $\cB_{\leq u_2}$ is the right orthogonal complement of $\cC_{\leq u_1}$.

\item $\Im(\alpha_2)<0$: We instead have $\cC = \langle \cB_{\leq u_2}, \cB_{\leq u_1} \rangle$, so $\cB_{\leq u_2}$ is the \emph{left} orthogonal complement of $\cC_{\leq u_1}$. In this and the last case $\cB_{<u_2} = 0$, so \Cref{D:multi-scaleSODcoarsening}(b) implies that the canonical morphism $\cB_{\leq u_2} \to \gr_{u_2}(\cC_\bullet)$ is an equivalence. Com\-posing one equivalence with the inverse of the other gives the usual mutation equivalence between the left and right orthogonal complement of the admissible subcategory $\cC_{\leq u_1} \subset \cC$.
\end{itemize}

\end{itemize}

In all cases, the categories $\cB_{\leq u_\bullet}$ are uniquely determined by the value of $\alpha_2$. When $\Im(\alpha_1)>0$, any value of $\alpha_2$ with $\Im(\alpha_2)>0$ underlies a coarsening. On the other hand, if $\alpha_1=1$, the existence of a coarsening for different values of $\alpha_2$ encounters a categorical obstruction, depending on whether the inclusion $\cC_{\leq u_1} \subset \cC$ admits a left or right adjoint.
\end{ex}

\begin{defn}[Signature]
    \label{D:signature} Given a multi-scale line $\Sigma$, the collection of values of the functions $\sign \Im(\mathfrak{p}(u,v)),\sign\Re(\mathfrak{p}(u,v)) \in \{0,\pm 1\}$ where $u,v$ range over all distinct pairs in $V(\Sigma)_{\rm{term}}$ is called the \emph{signature} of $\Sigma$. 
\end{defn}

The signature is an invariant of $\Sigma$ up to real-oriented isomorphism.

\begin{lem}
\label{L:coarseninguniquelydetermined}
The categories $\cB_{\le v}$ of a coarsening $\langle \cC_\bullet \rangle_{\Sigma_1} \rightsquigarrow \langle \cB_\bullet \rangle_{\Sigma_2}$ are determined by $\langle \cC_\bullet \rangle_{\Sigma_1}$, the contraction $f : \Gamma(\Sigma_1) \twoheadrightarrow \Gamma(\Sigma_2)$, and 
the signature of $\Sigma_2$. Conversely, there is at most one contraction $\Gamma(\Sigma_1) \twoheadrightarrow \Gamma(\Sigma_2)$ realizing $\langle \cB_\bullet \rangle_{\Sigma_2}$ as a coarsening of $\langle \cC_\bullet \rangle_{\Sigma_1}$. 
\end{lem}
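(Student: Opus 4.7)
The plan is to use \Cref{L:lex_filtration}, which says a multiscale decomposition is determined by its multiscale line together with the lexicographic filtration $F_v$. Since $\Sigma_2$ is determined up to real-oriented isomorphism by $\Gamma(\Sigma_2)$ (given by $f$) and its signature, the first claim reduces to recovering $F_v(\cB_\bullet)$ for each terminal $v\in V(\Sigma_2)$ intrinsically from the stated data.

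The key identity I aim to establish is
\[
F_v(\cB_\bullet) = \Span\{\cC_{\le u'} : u' \in V(\Sigma_1)_{\rm term},\ f(u') \le_{\rm lex}^{\Sigma_2} v\}.
\]
The inclusion $\subseteq$ is immediate from $\cB_{\le w}\subset \cC_{\le f^\dagger(w)} = \Span\{\cC_{\le u'} : f(u')=w\}$ for each $w\le_{\rm lex} v$, since the terminal descendants of $f^\dagger(w)$ are precisely the terminals in $f^{-1}(w)$. For $\supseteq$ I would induct on $v$ in lex order, using \Cref{L:coarseningconsequences} to write $\cC_{\le u'} \subset \cC_{\le f^\dagger(w)} = \Span\{\cC_{<f^\dagger(w)}, \cB_{\le w}\}$ for $w = f(u')$. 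The term $\cB_{\le w}$ lies in $F_v(\cB_\bullet)$ by definition. For $\cC_{<f^\dagger(w)}$, the crucial observation is that any terminal $u''$ with $u'' <_{1,\infty}^{\Sigma_1} f^\dagger(w)$ lies outside the subtree of $f^\dagger(w)$, so $u''\vee u' = u''\vee f^\dagger(w)$ and the branching nodes agree, giving $\mathfrak{p}^{\Sigma_1}(u'', u') = \mathfrak{p}^{\Sigma_1}(u'', f^\dagger(w)) = 1$. Thus $u'' <_{1, 0}^{\Sigma_1} u'$, coarsening condition (a) forces $f(u'') \le_{1,0}^{\Sigma_2} w$, and the fact that $f(u'')\ne w$ (since $u''$ lies outside the subtree of $f^\dagger(w)$) yields $f(u'') <_{\rm lex}^{\Sigma_2} w \le_{\rm lex} v$. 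The inductive hypothesis then places $\cC_{\le u''}\subset F_{f(u'')}(\cB_\bullet)\subset F_v(\cB_\bullet)$. The right-hand side of the key identity depends only on $\langle \cC_\bullet\rangle_{\Sigma_1}$, $f$, and $\le_{\rm lex}^{\Sigma_2}$ (a function of the signature), so combined with \Cref{L:lex_filtration} this proves the first claim.

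For the second claim, by \Cref{L:specializations} it suffices to show $f^\dagger : V(\Sigma_2)\to V(\Sigma_1)$ is determined. For terminal $v\in V(\Sigma_2)$, I claim $f^\dagger(v)$ is uniquely characterized as the $w\in V(\Sigma_1)$ satisfying $\cB_{\le v}\subset \cC_{\le w}$ with the induced map $\gr_v(\cB_\bullet)\to \gr_w(\cC_\bullet)$ an equivalence. If $w_1\ne w_2$ were two such candidates, they would lie at the same level of $\Sigma_1$ (determined by the shared induced map $V(\Sigma_1)/{\sim}\to V(\Sigma_2)/{\sim}$) and hence be incomparable under $\subseteq$. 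A direct case analysis using \Cref{D:multiscale_decomposition}(1) handles both subcases: when $\mathfrak{p}(w_1,w_2)=\pm 1$ one gets $\cC_{\le w_i}\subset \cC_{<w_{3-i}}$ for the appropriate $i$, while otherwise the intersection $\cC_{\le w_1}\cap \cC_{\le w_2}$ is contained in $\cC_{<w_i}$ for both $i$; either way $\cB_{\le v}\subset \cC_{<w_i}$ for some $i$, forcing the composition $\cB_{\le v}\to \gr_{w_i}(\cC_\bullet)$ to vanish and contradicting $\gr_{w_i}(\cC_\bullet)\ne 0$ from \Cref{D:multiscale_decomposition}(4). Non-terminal values of $f^\dagger$ are then determined by joins in $\Gamma(\Sigma_1)$ of the terminal values, and $f$ itself is recovered as $f(u) = \min_{\subseteq}\{v : u\subseteq f^\dagger(v)\}$. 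The main technical obstacle in Part 1 is verifying the invariance of normalized periods under descending to terminal descendants, which underpins the crucial translation from coarsening condition (a) to the strict lex-comparison $f(u'') <_{\rm lex}^{\Sigma_2} w$.
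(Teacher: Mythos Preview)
Your first part coincides with the paper's argument. The paper defines $\cG_j=\Span\{\cC_{\le f^\dagger(v_i)}:i\le j\}$, which equals your right-hand side, and proves $\cF_j=\cG_j$ by exactly the same inductive use of \Cref{L:coarseningconsequences} together with the deduction $f(u'')<_{\rm lex} w$ from coarsening condition (a); it then invokes \Cref{L:lex_filtration} just as you do.

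Your second part has a genuine gap. The case analysis you describe applies \Cref{D:multiscale_decomposition}(1) to the pair $w_1,w_2$, but that axiom is only stated for \emph{terminal} vertices of $\Sigma_1$, whereas $w_i=f_i^\dagger(v)$ is typically non-terminal. It is not automatic that $\cC_{\le w_1}\cap\cC_{\le w_2}$ admits the same description when the $w_i$ are arbitrary vertices. The paper avoids this issue by a different route: it uses \Cref{L:specializations} to write the two contractions as $g=\alpha\circ f$ for an automorphism $\alpha$ of $\Gamma(\Sigma_2)$, takes $u=\alpha(v)\ne v$, and then passes to the descendent decomposition at $x=f^\dagger(u)\vee f^\dagger(v)$. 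There the image of $\cB_{\le v}$ in $\gr_x(\cC_\bullet)$ is nonzero and lands simultaneously in the two subcategories $\cC_{\le f^\dagger(u)}^x$ and $\cC_{\le f^\dagger(v)}^x$, which are semiorthogonal pieces of the large-scale decomposition of $\gr_x(\cC_\bullet)$ in the $\le_{i,0}$-incomparable case --- giving the contradiction without ever invoking axiom (1) for non-terminal vertices. Your ``same level'' claim also deserves justification: it follows because the valence condition \Cref{D:multiscaleline}(3) forces the level sizes of $\Gamma(\Sigma_1)$ to be strictly decreasing, so the induced surjection $V(\Sigma_1)/{\sim}\to V(\Sigma_2)/{\sim}$ is determined by the target tree alone; the paper's proof leaves this implicit as well.
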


\begin{proof}
    Index the terminal components of $\Sigma_2$ so that $v_1 \le_{\rm{lex}} v_2 \le_{\rm{lex}} \cdots \le_{\rm{lex}} v_n$ as in \Cref{D:lexicographicordering}. We consider two sequences of subcategories of $\cC$ 
    \begin{gather*}
    \cF_j := F_{v_j}(\cB_\bullet) = \Span \{ \cB_{\leq v_i} | i \leq j \}, \\
    \cG_j := \Span \{ \cC_{\leq f^\dagger(v_i)} | i \leq j\}.
\end{gather*}
\Cref{D:multi-scaleSODcoarsening}(b) implies that we have a diagram of inclusions:
\[
\begin{tikzcd}[column sep=.7em, row sep=.7em]
\cF_1 \arrow[r,symbol=\subseteq] \arrow[d,symbol=\subseteq]  & \cF_2 \arrow[r,symbol=\subseteq] \arrow[d,symbol=\subseteq] & \cdots \arrow[r,symbol=\subseteq] & \cF_n \arrow[d,symbol={=}] \arrow[r,symbol={=}] & \cC \\
\cG_1 \arrow[r,symbol=\subseteq] & \cG_2 
 \arrow[r,symbol=\subseteq] & \cdots \arrow[r,symbol=\subseteq] & \cG_n &
 \end{tikzcd}
\]
For any $u \in V(\Sigma_1)_{\rm{term}}$ such that $u \nsubseteq f^\dagger(v_j)$  (in particular $f(u) \neq v_j$) and $u \leq_{1,\infty} f^\dagger(v_j)$, it follows from \Cref{D:multi-scaleSODcoarsening}(a) that $f(u) <_{1,0} v_j$, which implies that $f(u) <_{\rm{lex}} v_j$. So, for some $i<j$, $f(u) = v_i$ and hence $\cC_{\leq u} \subset \cG_i$.\endnote{By definition, $\cC_{\le f^\dagger(v_i)}$ is generated by the $\cC_{\le u}$ where $u\in V(\Sigma_1)$ is a terminal component such that $u\le_{1,\infty} f^\dagger(v_i)$ or $u\subseteq f^\dagger(v_i)$. In particular, since $f(u) = v_i$ it follows that $u\subset f^\dagger(v_i)$, and so $\cC_{\le u}\subset \cC_{\le f^\dagger(v_i)}\subset \cG_i$.} We conclude that $\cC_{<f^\dagger(v_j)} \subseteq \cG_{j-1}$ for all $j$, using the convention that $\cG_0 = 0$. \Cref{L:coarseningconsequences} now implies that each functor $\cF_j \to \cG_j/\cG_{j-1}$ is essentially surjective. An inductive argument then implies that $\cF_j = \cG_j$ for all $j=1,\ldots,n$. We have thus recovered the lexicographic filtration of $\cC$ associated to $\langle \cB_\bullet \rangle_{\Sigma_2}$ from the multi-scale decomposition $\cC = \langle \cC_\bullet \rangle_{\Sigma_1}$, the total order $\leq_{\rm{lex}}$ on the terminal components of $\Sigma_2$, and $f$. Thus, \Cref{L:lex_filtration} implies that the categories $\cB_{\leq v}$ are uniquely determined.\medskip

We now prove the converse. Consider a pair of contractions $f,g:\Gamma(\Sigma_1)\twoheadrightarrow \Gamma(\Sigma_2)$ realizing $\langle \cB_\bullet\rangle_{\Sigma_2}$ as a coarsening of $\langle \cC_\bullet\rangle_{\Sigma_1}$. By \Cref{L:specializations}, there exists $\alpha \in \Aut(\Gamma(\Sigma_2))$ such that $\alpha \circ f= g$, which is determined by its action on $V(\Sigma_2)_{\rm{term}}$.\endnote{An automorphism of a totally preordered rooted tree is in particular a contraction. Consequently, by \Cref{L:specializations} it follows that it is uniquely determined by what it does to the terminal vertices.} So, consider $v\in \Gamma(\Sigma_2)_{\rm{term}}$ and suppose $v\ne \alpha(v)=:u$. Since $u\ne v$ and both are terminal, it follows that $f^\dagger(v) \not\subseteq f^\dagger(u)$ and $f^\dagger(u)\not\subseteq f^\dagger(v)$.\endnote{If $f^\dagger(v) \subseteq f^\dagger(u)$ then it follows that $v\subseteq u$ but since $u$ is terminal it follows that $v=u$.} By \Cref{D:multi-scaleSODcoarsening}(b), $\Span\{\cB_{\le v},\cB_{\le u}\}\subset \cC_{\le f^\dagger(v)}$ and $\cC_{\le f^\dagger(u)}$.\endnote{The identity $\alpha \circ f = g$ implies that $g^\dagger(v) = f^\dagger(u)$ and $g^\dagger(u) = f^\dagger(v)$. So, applying \Cref{D:multi-scaleSODcoarsening}(b) to $f$ and $g$ gives the claim.} Suppose $f^\dagger(v)$ and $f^\dagger(u)$ are not compar\-able for $\le_{1,\infty}$ and without loss of generality say $f^\dagger(v)\le_{i,0}f^\dagger(u)$. Write $w = u \vee v$ and note that $f^\dagger(w)  \supseteq f^\dagger(u),f^\dagger(v)$. So, for $\star = u,v$, we have a commutative diagram of functors:
\begin{equation*}
    \begin{tikzcd}
        \cB_{\le v}\arrow[r,hook]\arrow[d,hook]&\cC_{\le f^\dagger(\star)}\arrow[d,hook]\arrow[dr]\\
        \cB_{\le w}\arrow[r,hook]&\cC_{\le f^\dagger(w)}\arrow[r, two heads]&\gr_{f^\dagger(w)}(\cC_\bullet).
    \end{tikzcd}
\end{equation*}
Now, $\cB_{\le v}$ has nonzero image in $\gr_w(\cB_\bullet) \xrightarrow{\sim} \gr_{f^\dagger(w)}(\cC_\bullet)$ by \Cref{D:multi-scale_decomposition}(4) and $v\subseteq w$.\endnote{We have inclusions $\cB_{<w}\subset \cB_{<v}\subset \cB_{\le v}\subset \cB_{\le w}$ and since $\gr_v(\cB_\bullet) \ne 0$, it follows that the image of $\cB_{\le v}$ in $\gr_w(\cB_\bullet)$, i.e. $\cB_{\le v}/\cB_{<w}$ is nonzero.} So, the image of $\cB_{\le v}$ in $\gr_{f^\dagger(w)}(\cC_\bullet)$ is in the image of both $\cC_{\le f^\dagger(v)}$ and $\cC_{\le f^\dagger(u)}$. Thus, for $x = f^\dagger(u)\vee f^\dagger(v)\subseteq f^\dagger(w)$, we see that $\cC_{\le f^\dagger(v)}^x \cap \cC_{\le f^\dagger(u)}^x \subset \gr_{x}(\cC_\bullet)$, is non-zero, containing the image of $\cB_{\le v}$. This contradicts semiorthogonality of $\cC_{\le f^\dagger(v)}^x$ and $\cC_{\le f^\dagger(u)}^x$ in the descendent decomposition of $\gr_{x}(\cC_\bullet)$.\endnote{The only thing to verify is that the image of $\cB_{\le v}$ in $\gr_x(\cC_\bullet)$ is nonzero. We have inclusions $\cB_{\le v}\subset \cC_{\le f^\dagger(v)} \subset \cC_{\le x}$ since $f^\dagger(v)\subset x$. Also, we have $\cC_{<x}\subset \cC_{<f^\dagger(v)}\subset \cC_{\le f^\dagger(v)}\subset \cC_{\le x}$. Since $\cB_{\le v}\to \gr_{f^\dagger(v)}(\cC_\bullet)$ is essentially surjective, we cannot have $\cB_{\le v}\subset \cC_{<x}$. Thus, the image of $\cB_{\le v}$ in $\gr_x(\cC_\bullet)$ is nonzero.} 

So, we may suppose that $f^\dagger(v)\le_{1,\infty} f^\dagger(u)$. Then, we have inclusions $\cB_{\le u} \hookrightarrow \cC_{\le f^\dagger(v)}\hookrightarrow \cC_{<f^\dagger(u)} \hookrightarrow \cC_{\le f^\dagger(u)} \twoheadrightarrow \gr_{f^\dagger(u)}(\cC_\bullet)$. However, the composite $\cB_{\le u} \to \gr_{f^\dagger(u)}(\cC_\bullet)$ is essentially surjective by \Cref{D:multi-scaleSODcoarsening}(b), but $\cB_{\le u}$ is contained in $\cC_{<f^\dagger(u)}$. This implies $\gr_{f^\dagger(u)}(\cC_\bullet) = 0$, contradicting \Cref{D:multi-scale_decomposition}(4). In particular, $\alpha = \id$ and $f = g$.
\end{proof}

\begin{lem}
\label{L:0wpcoarsening}
If $E \in \cC$ is $0$-well-placed with respect to a multi-scale decomposition $\langle \cC_\bullet \rangle_\Sigma$, then it is $0$-well-placed with respect to any coars\-ening $f:\langle \cC_\bullet \rangle_\Sigma\rightsquigarrow \langle \cB_\bullet\rangle_{\Sigma_2}$ as well. Furthermore, $\dom_{\langle \cB_\bullet\rangle}(E) = f(\dom_{\langle \cC_\bullet\rangle}(E))$
\end{lem}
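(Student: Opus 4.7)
The plan is to analyze the scale filtration of $E$ with respect to $\langle \cB_\bullet\rangle_{\Sigma_2}$ by first computing the $\Sigma_2$-scale filtration of each individual graded piece of the $\Sigma$-scale filtration, and then using coarsening condition (a) of \Cref{D:multiscaleSODcoarsening} to identify the dominant vertex.

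Write $E = E_m \to \cdots \to E_0 = 0$ for the scale filtration of $E$ with respect to $\langle \cC_\bullet\rangle_\Sigma$, with graded pieces $G_i := \fib(E_i \to E_{i-1}) \in \cC_{\le s_i}$ having nonzero image in $\gr_{s_i}(\cC_\bullet)$, $s_a \le_{i,0} s_b$ for $a < b$, and $s_{k^*} := \dom_{\langle \cC_\bullet\rangle}(E)$ the $\le_{1,0}$-maximum of $\{s_1, \ldots, s_m\}$. First I would apply \Cref{L:multiscale_decomp_filtration} to each $G_i$ with respect to $\langle \cB_\bullet\rangle_{\Sigma_2}$, obtaining a scale filtration of $G_i$ with terminal components $T_i \subseteq V(\Sigma_2)_{\rm term}$. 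Splicing these filtrations (refining each step $E_i \to E_{i-1}$ by the filtration of $G_i$) gives a filtration of $E$ whose terminal components lie in $\bigcup_i T_i$, so by the uniqueness assertion of \Cref{L:multiscale_decomp_filtration} one has $\{s'_k\} \subseteq \bigcup_i T_i$ for the $\Sigma_2$-scale filtration of $E$.

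The crucial intermediate claim is that for each $i$, $f(s_i) \in T_i$ and is the $\le_{1,0}$-maximum of $T_i$. For membership, the nonzero image of $G_i$ in $\gr_{s_i}(\cC_\bullet)$ projects, via the descendent decomposition of \Cref{const:descendent} and the equivalence $\gr_{f(s_i)}(\cB_\bullet) \cong \gr_{f^\dagger(f(s_i))}(\cC_\bullet)$ from coarsening condition (b), to a nonzero image in $\gr_{f(s_i)}(\cB_\bullet)$, forcing $f(s_i)$ to appear in the $\Sigma_2$-scale filtration of $G_i$. For maximality, additional terminal components $t \in T_i$ arise from a finer multiscale refinement of $\cC_{\le f^\dagger(f(s_i))}$ inside $\Sigma_2$, and one must argue that $t \le_{1,0} f(s_i)$ or is incomparable. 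Given this, for any $s'_k \in T_{i_0}$ we get $s'_k \not>_{1,0} f(s_{i_0})$, and coarsening condition (a) applied to $s_{i_0} \le_{1,0} s_{k^*}$ gives $f(s_{i_0}) \le_{1,0} f(s_{k^*})$; transitivity then yields $s'_k \not>_{1,0} f(s_{k^*})$, which exhibits $f(s_{k^*})$ as the $\le_{1,0}$-maximum of $\{s'_k\}$ and proves the lemma.

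The main obstacle is the maximality assertion in the intermediate claim: showing that the additional terminal components introduced by refinement inside each $G_i$ are $\le_{1,0}$-bounded by $f(s_i)$, without circularly invoking the present lemma. This amounts to a special case of the lemma itself applied to $G_i \in \cC_{\le s_i}$ (which is trivially $0$-well-placed in $\Sigma_1$ with one-step filtration), and requires a direct argument: one must verify that for any additional terminal component $t \in T_i$, the normalized period $\mathfrak{p}_{\Sigma_2}(t, f(s_i))$ has non-negative real part, using only the structural compatibility imposed by coarsening condition (a) on terminal vertices of $\Sigma_1$ and the local structure of the multiscale lines near $f^\dagger(f(s_i))$. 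Carrying out this case analysis rigorously, and reconciling the splicing of individual filtrations with the $\le_{i,0}$-chain condition required by \Cref{L:multiscale_decomp_filtration}, is the technical crux of the proof.
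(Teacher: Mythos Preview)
Your approach reverses the direction of the paper's argument, and this reversal is the source of the obstacle you correctly identify as unresolved. The paper starts from the $\Sigma_2$-scale filtration of $E$, with vertices $s_1, \ldots, s_n \in V(\Sigma_2)_{\rm term}$, and shows how to \emph{refine} it into a $\Sigma_1$-scale filtration in two steps: delete any step $a$ for which some $b \neq a$ has $f^\dagger(s_a) \leq_{1,\infty} f^\dagger(s_b)$, then concatenate with the descendant scale filtrations of the surviving graded pieces inside each $\gr_{f^\dagger(s_a)}(\cC_\bullet)$. The $0$-well-placed hypothesis for $\langle \cC_\bullet\rangle_\Sigma$ then gives a $\leq_{1,0}$-maximum among the refined vertices, which forces a $\leq_{1,0}$-maximum $f^\dagger(s_*)$ among $\{f^\dagger(s_a)\}$, and condition (a) of \Cref{D:multiscaleSODcoarsening} pushes this forward to $s_* = f(f^\dagger(s_*))$ being the maximum of $\{s_a\}$.

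Going fine-to-coarse as you propose creates two difficulties. First, the spliced filtration need not be a $\Sigma_2$-scale filtration: the $\leq_{i,0}$-chain condition of \Cref{L:multiscale_decomp_filtration}(2) can fail across the splice points, so the uniqueness clause does not give $\{s'_k\} \subseteq \bigcup_i T_i$; uniqueness only compares genuine scale filtrations. Second, your ``main obstacle''---that every $t \in T_i$ satisfies $t \leq_{1,0} f(s_i)$---is precisely the lemma applied to the $\infty$-well-placed object $G_i$, and there is no evident direct period argument: the extra vertices in $T_i$ depend on how $G_i$ decomposes relative to the $\cB_{\leq v}$'s, while condition (b) only gives inclusions $\cB_{\leq v} \subseteq \cC_{\leq f^\dagger(v)}$, not the reverse. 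Resolving this seems to require exactly the coarse-to-fine refinement the paper uses, so the cleanest fix is to reverse your direction from the outset.
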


\begin{proof}
    Consider a nonzero object $E$ of $\cC$, and let $E_\bullet = [E_1 \to \cdots \to E_n]$ be a scale filtration with respect to $\langle \cB_\bullet \rangle_{\Sigma_2}$, with associated terminal vertices $s_1,\ldots,s_n \in V(\Sigma_2)_{\rm{term}}$. One can then obtain a scale filtration of $E$ with respect to $\langle \cC_\bullet \rangle_{\Sigma_1}$ in two steps as follows:
\begin{itemize}
    \item[(1)] For any $a =1,\ldots,n$, delete the $a^{\rm{th}}$ step in the filtration if there is a $b \neq a$ such that $f^\dagger(s_a) \leq_{1,\infty} f^\dagger(s_b)$. After renumbering, this gives a filtration $E'_\bullet$ with $\gr_i(E_\bullet') \in \cB_{\leq s_i}$ and $f^\dagger(s_a) <_{i,0} f^\dagger(s_b)$ for all $a<b$.
    \item[(2)] Concatenate $E'_\bullet$ with the preimage of the scale filtration of each $\Pi_i(\gr_i(E'_\bullet)) \in \gr_{f^\dagger(s_i)}(\cC_\bullet)\simeq \gr_{s_i}(\cB_\bullet)$ with respect to the descendent multi-scale decomposition of $\gr_{f^\dagger(s_i)}(\cC_\bullet)$ --- see \Cref{const:descendent}.
\end{itemize}
If $E$ is $0$-well-placed with respect to $\langle \cC_\bullet\rangle_\Sigma$, then the terminal vertices associated to this scale filtration have a maximal element with respect to $\le_{1,0}$. This implies that $\{f^\dagger(s_a):a=1,\ldots,n\}$ has a maximal element $f^\dagger(s_*)$ with respect to $\le_{1,0}$ and \Cref{D:multi-scaleSODcoarsening}(a) now implies that $s_1,\ldots, s_n$ has maximum $f(f^\dagger(s_*)) = s_*$ with respect to $\le_{1,0}$.\endnote{Note that strictly speaking, we have shown that the subset of $s_1,\ldots, s_n$ that were not deleted in Step (1) above has a maximal element with respect to $\le_{1,0}$, call it $s_{\rm{max}}$. However, if $s_a$ was deleted because there exists $b>a$ such that $f^\dagger(s_a)\le_{1,\infty} f^\dagger(s_b)$, then it follows that $s_a\le_{1,0} s_b\le_{1,0}s_{\rm{max}}$.}
\end{proof}

\begin{lem}
\label{L:recovercategories}
    Given a coarsening $\langle \cC_\bullet\rangle_{\Sigma_1}\rightsquigarrow \langle \cB_\bullet\rangle_{\Sigma_2}$ with underlying contraction $f:\Gamma(\Sigma_1)\twoheadrightarrow\Gamma(\Sigma_2)$, we can recover the categories $\cC_{\le v}$ from $\langle \cB_\bullet\rangle_{\Sigma_2}$, $\Sigma_1$, and the descendent decompositions $\gr_{f^\dagger(v)}(\cC_\bullet) = \langle \cC_{\bullet}^{f^\dagger(v)}\rangle_{\Sigma_{\subseteq f^\dagger(v)}}$ for all $v\in V(\Sigma_2)_{\rm{term}}$. 
\end{lem}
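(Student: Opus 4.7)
My plan is to recover the subcategories $\cC_{\leq v}$ indexed by terminal $v \in V(\Sigma_1)$ by induction on the position of $f(v)$ in the lexicographic ordering of $V(\Sigma_2)_{\rm{term}}$. For non-terminal $w \in V(\Sigma_1)$, $\cC_{\leq w}$ is by definition $\Span\{\cC_{\leq u} \mid u \in V(\Sigma_1)_{\rm{term}}, u \subseteq w\}$, so it will suffice to handle the terminal cases. I will order $V(\Sigma_2)_{\rm{term}}$ as $v_1' \le_{\rm{lex}} \cdots \le_{\rm{lex}} v_n'$, and assume inductively that $\cC_{\leq u}$ has been recovered for every terminal $u \in V(\Sigma_1)$ with $f(u) = v_i'$ for some $i < j$.

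The crucial combinatorial input will be that for any terminal $x \in V(\Sigma_1)$ with $x <_{1,\infty} f^\dagger(v_j')$, one has $f(x) <_{\rm{lex}} v_j'$. To prove this I will first note that such an $x$ is incomparable with $f^\dagger(v_j')$ under $\subseteq$ and $\mathfrak{p}_{\Sigma_1}(x,f^\dagger(v_j')) = 1$. For any terminal $y \subseteq f^\dagger(v_j')$ in $\Sigma_1$, the nodes in $\Sigma_{x\vee f^\dagger(v_j')}$ corresponding to $y$ and to $f^\dagger(v_j')$ coincide, because the subtree containing $y$ attaches to that component via the node for $f^\dagger(v_j')$; hence $\mathfrak{p}_{\Sigma_1}(x,y) = \mathfrak{p}_{\Sigma_1}(x,f^\dagger(v_j')) = 1$, and in particular $\Re(\mathfrak{p}_{\Sigma_1}(x,y)) > 0$. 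By \Cref{D:multiscaleSODcoarsening}(a), using $f(y) = v_j'$, this yields $\Re(\mathfrak{p}_{\Sigma_2}(f(x),v_j'))>0$, so $f(x) <_{1,0} v_j'$ in $\Sigma_2$. Since $\le_{\rm{lex}}$ refines $\le_{1,0}$, I conclude $f(x) <_{\rm{lex}} v_j'$, so $\cC_{\leq x}$ has been recovered by the inductive hypothesis.

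It will then follow that every generator $\cC_{\leq x}$ of $\cC_{<f^\dagger(v_j')}$ has been recovered, and \Cref{L:coarseningconsequences} gives $\cC_{\leq f^\dagger(v_j')} = \Span\{\cC_{<f^\dagger(v_j')}, \cB_{\leq v_j'}\}$, recovering this category as well. The Verdier quotient $\cC_{\leq f^\dagger(v_j')} / \cC_{<f^\dagger(v_j')}$ equals $\gr_{f^\dagger(v_j')}(\cC_\bullet)$, which \Cref{D:multiscaleSODcoarsening}(b) identifies canonically with $\gr_{v_j'}(\cB_\bullet)$; through this identification the abstract category provided by the descendent decomposition matches our construction. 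For each terminal $v \in V(\Sigma_1)$ with $v \subseteq f^\dagger(v_j')$ (equivalently $f(v) = v_j'$), the descendent decomposition exhibits a distinguished subcategory $\cC_{\leq v}^{f^\dagger(v_j')} \subseteq \gr_{f^\dagger(v_j')}(\cC_\bullet)$. By \eqref{E:vertex_category_containment} I have $\cC_{<f^\dagger(v_j')} \subseteq \cC_{\leq v} \subseteq \cC_{\leq f^\dagger(v_j')}$, so the universal property of Verdier quotients identifies $\cC_{\leq v}$ uniquely as the preimage of $\cC_{\leq v}^{f^\dagger(v_j')}$ under the quotient functor, completing the inductive step.

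The hard part will be the combinatorial step above: ensuring that every generator of $\cC_{<f^\dagger(v_j')}$ has been recovered by step $j$. This rests on the signature-preservation clause \Cref{D:multiscaleSODcoarsening}(a) of a coarsening, together with the observation that $\mathfrak{p}(x,\cdot)$ is unchanged upon descending from $f^\dagger(v_j')$ to any of its terminal descendants. The base case $j=1$ is automatic: were there an $x <_{1,\infty} f^\dagger(v_1')$, the argument above would give $f(x) <_{\rm{lex}} v_1'$, which is impossible, so $\cC_{<f^\dagger(v_1')}=0$. Once the combinatorial lemma is in place, the remainder of the proof is a direct application of \Cref{L:coarseningconsequences} and the universal property of Verdier quotients.
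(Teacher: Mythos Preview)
Your proof is correct and follows essentially the same strategy as the paper's: induct over the terminal vertices of $\Sigma_2$, at each step recover $\cC_{<f^\dagger(v_j')}$ from the earlier steps, then $\cC_{\leq f^\dagger(v_j')}$, and finally pull back the descendent decomposition to recover each $\cC_{\leq v}$ with $v\subseteq f^\dagger(v_j')$. The only cosmetic differences are that you induct along the total order $\le_{\rm lex}$ on $V(\Sigma_2)_{\rm term}$ and invoke \Cref{L:coarseningconsequences} to obtain $\cC_{\leq f^\dagger(v_j')}=\Span\{\cC_{<f^\dagger(v_j')},\cB_{\leq v_j'}\}$, whereas the paper inducts along maximal $\le_{1,\infty}$-chains among the $f^\dagger(v)$ and recovers $\cC_{\leq f^\dagger(v_j')}$ as a preimage in $\cC/\cC_{<f^\dagger(v_j')}$; your combinatorial step ($x<_{1,\infty}f^\dagger(v_j')\Rightarrow f(x)<_{\rm lex}v_j'$) is exactly the argument appearing in the paper's proof of \Cref{L:coarseninguniquelydetermined}.
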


\begin{proof}
    First, we record the following identities: 
    \begin{equation}
    \label{E:categories}
        \begin{split}
            \cC_{<f^\dagger(v)} &= \Span \{\cC_{\leq f^\dagger(u)} : u \text{ terminal}, f^\dagger(u) <_{1,\infty} f^\dagger(v) \}\\
            \cC_{\leq f^\dagger(v)} &= \Span \{\cC_{\leq    f^\dagger(u)} : \text{terminal }u \subseteq v \}.
        \end{split}
    \end{equation}
    The first equality comes from the observation that if $w \in V(\Sigma_1)_{\rm{term}}$ and $w \nsubseteq f^\dagger(v)$, then $w \subseteq f^\dagger(u)$ for a unique terminal $u \nsubseteq v$, and $w <_{1,\infty} f^\dagger(v)$ if and only if $f^\dagger(u) <_{1,\infty} f^\dagger(v)$.\endnote{$w$ maps to a terminal vertex $u$ in $V(\Sigma_2)$ under the contraction. In particular, $w \subset f^\dagger(u)$ and this determines $u$. If $u\subseteq v$ then $w\subseteq f^\dagger(u)\subseteq f^\dagger(v)$, so $u\not\subseteq v$. The fact that $w<_{1,\infty} f^\dagger(v)$ if and only if $f^\dagger(u)<_{1,\infty} f^\dagger(v)$ is simply because $w\subseteq f^\dagger(u)$.} The second equality follows from the fact that any $w \in V(\Sigma_1)_{\rm{term}}$ lies below $f^\dagger(x)$ for a unique terminal component $x \subseteq v$, and $\cC_{\leq f^\dagger(x)}$ already contains $\cC_{<f^\dagger(v)}$.

    Next, consider $\{f^\dagger(v):v\in V(\Sigma_2)_{\rm{term}}\}$. By \eqref{E:categories}, for $v\in \Gamma(\Sigma_2)_{\rm{term}}$ we have $\cC_{<f^\dagger(v)} = \Span\{\cC_{\le f^\dagger(u)}:u\in \Gamma(\Sigma_2)_{\rm{term}}, f^\dagger(u)<_{1,\infty} f^\dagger(v)\}$. By \Cref{L:partialorder}, $\le_{1,\infty}$ is a partial order on $\{f^\dagger(v):v\in \Gamma(\Sigma_2)_{\rm{term}}\}$. Consider a maximal chain $f^\dagger(v_1)\le_{1,\infty} \cdots \le_{1,\infty} f^\dagger(v_k)$ for $\le_{1,\infty}$ with all $v_i \in V(\Sigma_2)_{\rm{term}}$. Now, $\cC_{<f^\dagger(v_1)} = 0$ so that $\cC_{\le f^\dagger(v_1)} = \gr_{f^\dagger(v_1)}(\cC_\bullet)$. For each terminal $w\subseteq f^\dagger(v_1)$ we recover $\cC_{\le w}$ from $\gr_{f^\dagger(v_1)}(\cC_\bullet) = \langle \cC_{\bullet}^{f^\dagger(v_1)}\rangle_{\Sigma_{\subseteq f^\dagger(v_1)}}$. Suppose that $\cC_{\le f^\dagger(v_{i-1})}$ and all $\cC_{\le w}$ for terminal $w\subseteq f^\dagger(v_{i-1})$ have been determined. Then $\cC_{<f^\dagger(v_i)} = \cC_{\le f^\dagger(v_{i-1})}$ and we recover $\cC_{\le f^\dagger(v_i)}$ as the preimage of $\gr_{f^\dagger(v_i)}(\cC_\bullet)$ under $\cC\twoheadrightarrow \cC/\cC_{\le f^\dagger(v_{i-1})}$. For any terminal $w\subset f^\dagger(v_i)$, we recover $\cC_{\le w}$ as the preimage of $\cC_{\le w}/\cC_{<f^\dagger(v_i)}$, which we know from the descendent decomposition at $f^\dagger(v_i)$. 
\end{proof}

\subsection{Admissible multi-scale decompositions}
Given a triangulated category $\cC$, a full triangulated subcategory $\cA$ is called \emph{admissible} if the inclusion functor $\cA\hookrightarrow \cC$ admits both left and right adjoints. This is equivalent to $\cA$ fitting into two semiorthogonal decompositions $\cC = \langle \cA,{}^\perp\cA\rangle$ and $\cC = \langle \cA^{\perp}, \cA\rangle$ --- see \cite{B-KSerre}.

\begin{defn}[Admissible multi-scale decomposition] \label{D:admissible_decomposition}
A multi-scale decomposition $\cC = \langle \cC_\bullet \rangle_\Sigma$ is \emph{admissible} if for any pair of vertices $u_1,u_2 \in V(\Sigma)$ that lie immediately below the same vertex $v$, i.e. are \emph{subjacent}, such that $u_1 \leq_{1,\infty} u_2$, the inclusion functor $\cC_{\leq u_1} \hookrightarrow \cC_{\leq u_2}$ is admissible.
\end{defn}

We will frequently use the following basic fact.

\begin{lem}
\label{L:admissiblecorrespondence}
    Given a triangulated category $\cC$ and an admissible subcategory $\cA$, there is a bijection between strict admissible filtrations $\cA\subset \cB \subset \cC$ and strict admissible subcategories of $\cC/\cA$ given by $[\cA \subset \cB\subset \cC]\mapsto \cB/\cA$ and $[\cB'\subset \cC/\cA]\mapsto [\cA\subset \cB\subset \cC]$ where $\cB$ is the essential preimage of $\cB'$ under $\cC\to \cC/\cA$.
\end{lem}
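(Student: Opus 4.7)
The plan is to reduce the lemma to the canonical identification $\cC/\cA \simeq \cA^\perp$ afforded by admissibility of $\cA$, where $\cA^\perp := \{X \in \cC : \Hom^\ast(\cA, X) = 0\}$. Denote by $r_\cA, l_\cA : \cC \to \cA$ the right and left adjoints to inclusion, and similarly $r_\cB, l_\cB$ when $\cB \subseteq \cC$ is admissible. By admissibility, $\cC = \langle \cA^\perp, \cA\rangle$, so the quotient functor $q : \cC \to \cC/\cA$ is identified with the projection $L : \cC \to \cA^\perp$ that is left adjoint to the inclusion, with $L(E) \cong \cofib(i r_\cA(E) \to E)$. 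Under this identification, strict admissible subcategories of $\cC/\cA$ correspond bijectively to strict admissible subcategories of $\cA^\perp$, and for any strict $\cB^\ast \subseteq \cA^\perp$ with corresponding $\cB' \subseteq \cC/\cA$, the essential preimage $q^{-1}(\cB') \subseteq \cC$ coincides with $\{E \in \cC : L(E) \in \cB^\ast\}$ and with the extension closure of $\cA \cup \cB^\ast$ inside $\cC$, as one verifies using the canonical triangle $i r_\cA(E) \to E \to L(E)$.

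For the forward direction, given a strict admissible filtration $\cA \subset \cB \subset \cC$, first observe that $\cA$ is admissible in $\cB$ by restriction of adjoints, so under the identification above $\cB/\cA$ corresponds to $\cB \cap \cA^\perp \subseteq \cA^\perp$. To show $\cB \cap \cA^\perp$ is admissible in $\cA^\perp$, I would construct a right adjoint by restricting $r_\cB$ to $\cA^\perp$: one checks $r_\cB(\cA^\perp) \subseteq \cB \cap \cA^\perp$ since for $F \in \cA$ and $E \in \cA^\perp$, $\Hom(F, r_\cB E) \cong \Hom(F, E) = 0$, so $r_\cB E$ remains in $\cA^\perp$. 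A left adjoint to $\cB \cap \cA^\perp \hookrightarrow \cA^\perp$ is obtained as $L \circ l_\cB|_{\cA^\perp}$, whose adjunction is verified similarly.

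For the backward direction, given admissible $\cB^\ast \subseteq \cA^\perp$, I set $\cB := L^{-1}(\cB^\ast) \subseteq \cC$ and define $r_\cB(E) := E \times_{L(E)} r_{\cB^\ast}(L(E))$ as a fiber product in the stable category $\cC$. The object $r_\cB(E)$ fits into a triangle $i r_\cA(E) \to r_\cB(E) \to r_{\cB^\ast}(L(E))$ mapping to the SOD triangle for $E$; in particular $L(r_\cB(E)) \cong r_{\cB^\ast}(L(E)) \in \cB^\ast$, so $r_\cB(E) \in \cB$. To verify the adjunction $\Hom(F, r_\cB E) \cong \Hom(F, E)$ for $F \in \cB$, I would apply $\Hom(F,-)$ to the two triangles and use the five lemma: the leftmost terms agree, and the rightmost map $\Hom(F, r_{\cB^\ast}(L E)) \to \Hom(F, L E)$ is an isomorphism, as both sides reduce to $\Hom_{\cA^\perp}(L F, L E)$ via the adjunctions $L \dashv i_{\cA^\perp}$ and $r_{\cB^\ast} \dashv i_{\cB^\ast}$, using $L F \in \cB^\ast$ since $F \in \cB$. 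The left adjoint $l_\cB$ is constructed symmetrically via the dual identification $\cC/\cA \simeq {}^\perp\cA$.

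The two constructions are mutually inverse by direct check: starting from $\cA \subset \cB \subset \cC$, the preimage $L^{-1}(\cB \cap \cA^\perp)$ equals $\cB$, using that both $i r_\cA(E)$ and $L(E)$ lie in $\cB$ when $E$ does (as $\cA \subseteq \cB$); conversely, for $\cB^\ast \subseteq \cA^\perp$, one has $L^{-1}(\cB^\ast) \cap \cA^\perp = \cB^\ast$ since $L$ restricts to the identity on $\cA^\perp$. The main technical point is the construction of $r_\cB$ and $l_\cB$ in the backward direction, where one must glue the adjoints associated to $\cA \hookrightarrow \cC$ with those of $\cB^\ast \hookrightarrow \cA^\perp$ into a single adjoint on $\cC$; once the fiber-product formula for $r_\cB$ is in place, the verification reduces to a formal diagram chase.
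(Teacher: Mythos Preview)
Your proposal is correct and follows essentially the same strategy as the paper: identify $\cC/\cA$ with an orthogonal complement of $\cA$ (the paper uses ${}^\perp\cA$, you use $\cA^\perp$, which is immaterial) and then transport admissibility back and forth through this identification. The paper's argument, which appears only as an endnote sketch, is slightly more conceptual---it directly observes that a filtration $\cA\subset\cB\subset\cC$ yields a three-term semiorthogonal decomposition $\cC=\langle \cA,{}^\perp\cA\cap\cB,{}^\perp\cB\rangle$, from which admissibility of ${}^\perp\cA\cap\cB$ in ${}^\perp\cA$ is immediate---whereas you construct the adjoints $r_\cB$ and $l_\cB$ explicitly via fiber products; but these are two packagings of the same mechanism.
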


\begin{proof}
    Omitted.\endnote{In what follows, subcategories are assumed to be full and strict. The correspondence between triangulated sub\-categories of $\cC$ containing $\cA$ and subcategories of $\cC/\cA$ follows from \cite{Verdierquotient}*{V, Ch. II, Prop. 2.3.1}. We just need to show that $\cB\supset \cA$ is admissible if and only if its essential image in $\cC/\cA$ is.
    
    First, consider an admissible subcategory $\cB \hookrightarrow \cC$. Then, $\cA\hookrightarrow \cB$ is an admissible inclusion. Consequently, we can form a semiorthogonal decomposition $\cC = \langle \cA,{}^\perp\cA \cap \cB,{}^\perp\cB\rangle$, and $\cB/\cA \hookrightarrow \cC/\cA$ is identified with the admissible inclusion ${}^\perp \cA \cap \cB \hookrightarrow \langle {}^\perp \cA \cap \cB,{}^\perp \cB\rangle$. Thus, $\cB/\cA$ is an admissible subcategory of $\cC/\cA$. Similar reasoning shows that if $\cB/\cA$ is an admissible subcategory of $\cC/\cA$, identifying ${}^\perp \cA$ with $\cC/\cA$, we have an admissible inclusion $\cB/\cA \hookrightarrow {}^\perp \cA$ which now implies that $\cB/\cA$ lifts to an admissible subcategory in $\cC$.}
\end{proof}

\begin{lem}
\label{L:admissiblevertexinclusion}
    If $\cC = \langle \cC_\bullet\rangle_\Sigma$ is an admissible multi-scale decomposition, then for any $v\in V(\Sigma)$
    \begin{enumerate}  
        \item the inclusion $\cC_{<v}\hookrightarrow\cC_{\le v}$ is admissible; and 
        \item for any $w\subseteq v$ the inclusion $\cC_{\le w}\hookrightarrow \cC_{\le v}$ is admissible.
    \end{enumerate}
\end{lem}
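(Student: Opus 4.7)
The plan is to proceed by strong induction on $|V(\Sigma)|$. The base case $|V(\Sigma)| = 1$ is immediate. In the inductive step, I would first verify that the descendent decomposition $\gr_v(\cC_\bullet) = \langle \cC_\bullet^v\rangle_{\Sigma_{\subseteq v}}$ from \Cref{const:descendent}, at any non-root $v$, is itself an admissible multiscale decomposition. This requires checking that for $u_1 \leq_{1,\infty} u_2$ lying immediately below the same vertex of $\Sigma_{\subseteq v}$, the left and right adjoints of $\cC_{\leq u_1} \hookrightarrow \cC_{\leq u_2}$ descend to the Verdier quotient by $\cC_{<v}$---which follows because $\cC_{<v} \subseteq \cC_{\leq u_1}$ and both adjoints fix $\cC_{<v}$. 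Since $|V(\Sigma_{\subseteq v})| < |V(\Sigma)|$, the inductive hypothesis applies to each such $\gr_v(\cC_\bullet)$.

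Next I would induct on the height of $v$. At the root $r$, (1) is trivial since $\cC_{<r} = 0$. For (2) I use the large-scale semi-orthogonal decomposition $\cC = \langle \cC_{\leq u_{i_1}},\ldots, \cC_{\leq u_{i_k}}\rangle$ of \Cref{const:SODfrommultiscale}. Each factor $\cC_{\leq u_{i_a}}$ is admissible in $\cC$ under the SOD conventions for stable $dg$-categories, and iterating the within-block chain of admissible inclusions $\cC_{\leq u_{i_{p-1}+1}} \subseteq \cdots \subseteq \cC_{\leq u_{i_p}}$ supplied by the admissibility hypothesis yields admissibility of $\cC_{\leq u_j} \hookrightarrow \cC_{\leq u_{i_p}}$ for $j$ in block $p$. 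Composition gives admissibility of $\cC_{\leq u_j} \hookrightarrow \cC$ for any vertex immediately below $r$. For deeper $w \subsetneq u_j$, the inductive hypothesis on $\gr_{u_j}(\cC_\bullet)$ combined with (1) at $u_j$ (established below) and \Cref{L:admissiblecorrespondence} yields $\cC_{\leq w} \hookrightarrow \cC_{\leq u_j}$ admissible, handling (2).

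For non-root $v$ at height $1$, I identify $\cC_{<v}$ with $\cC_{\leq u_{j-1}}$ (or $0$): since $r$ has no ancestors, the only terminal $u<_{1,\infty} v = u_j$ are descendants of the $u_{j'}$ with $j' < j$ in the same block, and the within-block chain collapses their span to $\cC_{\leq u_{j-1}}$. The inclusion $\cC_{\leq u_{j-1}} \hookrightarrow \cC_{\leq u_j}$ is then admissible by hypothesis, giving (1); statement (2) then follows from the inductive hypothesis on $\gr_v(\cC_\bullet)$ lifted via (1) at $v$ using \Cref{L:admissiblecorrespondence}. For $v$ at height $h \geq 2$ with parent $v'$ at height $h-1$, the inductive hypothesis applied to $\gr_{v'}(\cC_\bullet)$ (which has strictly fewer vertices) gives (1) and (2) at $v$ within $\gr_{v'}$; lifting via (1) at $v'$ and \Cref{L:admissiblecorrespondence} produces admissibility of $\cC_{<v}$ and $\cC_{\leq w}$ inside the common ambient $\cC_{\leq v'}$, and since both then sit as admissible subcategories of $\cC_{\leq v'}$, the induced inclusions into $\cC_{\leq v}$ are admissible as well.

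The main subtle point is the admissibility of the SOD factors $\cC_{\leq u_{i_a}} \hookrightarrow \cC$, as semi-orthogonality alone only yields one-sided admissibility---the first factor is left-admissible and the last right-admissible, but middle factors require bidirectional Hom vanishing which the hypothesis does not directly supply between different blocks. Once this is invoked from the stable $dg$-category conventions, the remaining steps are bookkeeping: repeatedly combining the within-block admissible chains, the large-scale SOD, and the admissible filtration correspondence \Cref{L:admissiblecorrespondence} to propagate admissibility down and across the tree.
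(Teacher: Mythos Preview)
Your strategy parallels the paper's: both use the large-scale semiorthogonal decomposition of \Cref{const:SODfrommultiscale}, the within-block admissible chains from \Cref{D:admissible_decomposition}, and lifting via \Cref{L:admissiblecorrespondence}. The paper runs a single induction on the distance from $v$ to the root, proving (1) first and then (2), while your nested induction on $|V(\Sigma)|$ and on height is more elaborate but not incorrect. Your direct argument that admissibility passes to descendent decompositions (the adjoints of $\cC_{\le u_1}\hookrightarrow\cC_{\le u_2}$ descend modulo $\cC_{<v}$ because $\cC_{<v}\subseteq\cC_{\le u_1}$) is valid and amounts to what the paper proves afterward as \Cref{C:admissibledesc}.

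You have, however, correctly isolated a genuine gap. The step asserting that a factor $\cC_{\le u_{i_a}}$ of the large-scale semiorthogonal decomposition is \emph{two-sided} admissible in the ambient category is not justified: a semiorthogonal decomposition in the paper's sense supplies only one adjoint for the extreme factors and none for interior factors, and there is no ``stable $dg$-category convention'' that produces the missing adjoints. The paper's proof invokes exactly the same unjustified claim (in the endnote for part (1) and in the body of part (2), where it asserts that a semiorthogonal factor $\cC_{\le u}^v\subset\gr_v(\cC_\bullet)$ is admissible), so this is a shared issue rather than one peculiar to your argument. For (1) the claim can be bypassed: whenever $\cA\subseteq\cB\subseteq\cD$ with $\cA$ admissible in $\cD$, restricting the adjoints shows $\cA$ is admissible in $\cB$, and this handles the second case of (1) directly from the inductive hypothesis $\cC_{<x}\hookrightarrow\cC_{\le x}$. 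Part (2) at the root, however, genuinely reduces to admissibility of the semiorthogonal factors, and neither your argument nor the paper's derives this from \Cref{D:admissible_decomposition} alone.
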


\begin{proof}
    We begin with (1). There are two cases: suppose first that $v$ is subjacent to $x\in V(\Sigma)$ and there exists $u$ subjacent to $x$ such that $u \le_{1,\infty} v$. Choosing the maximal such $u$ with respect to $\le_{1,\infty}$, it follows that $\cC_{<w} = \cC_{\le u}$ and the claim follows from \Cref{D:admissible_decomposition}. 
    
    If there is no such $u$, then $\cC_{<v} = \cC_{<x}$. Induction on the maximal length of a chain from $v$ to the root allows us to assume that $\cC_{<x}\hookrightarrow \cC_{\le x}$ is admissible. Next, using \Cref{D:admissible_decomposition} one can show that $\cC_{\le v}^x$ is part of an admissible filtration of $\gr_x(\cC_\bullet)$ and so lifting gives an admissible inclusion $\cC_{<v} = \cC_{<x} \hookrightarrow \cC_{\le v}$.\endnote{Consider the large scale semiorthogonal decomposition of $\gr_x(\cC_\bullet)$. There is a $w$ subjacent to $x$ such that $v\le_{1,\infty} w$ and $\cC_{\le w}^x \hookrightarrow \gr_x(\cC_\bullet)$ is an admissible inclusion. So, $\cC_{\le w} \hookrightarrow \cC_{\le x}$ is admissible. Then, by \Cref{D:admissible_decomposition}, $\cC_{\le v} \hookrightarrow \cC_{\le w}$ is admissible. Consequently, the inclusion $\cC_{\le v}\hookrightarrow \cC_{\le x}$ is admissible and it follows that $\cC_{<x} \hookrightarrow \cC_{\le v}\hookrightarrow \cC_{\le x}$ is an admissible filtration, as needed.}

    For (2), it suffices to prove the result when $w$ is subjacent to $v$. The large scale semi\-orthogonal decomposition of $\gr_v(\cC_\bullet)$ implies that there exists a $u$ subjacent to $v$ such that $w\le_{1,\infty} u$ (possibly equal) and $\cC_{\le u}^v \subset \gr_v(\cC_\bullet)$ is admissible. Since $\cC_{<v} \subset \cC_{\le v}$ is admissible, it follows from \Cref{L:admissiblevertexinclusion} that $\cC_{\le u}\subset \cC_{\le v}$ is admissible and by \Cref{D:admissible_decomposition} it follows that $\cC_{\le w}\subset \cC_{\le u}$ is. So, $\cC_{\le w}\subseteq \cC_{\le v}$ is an admissible inclusion.
\end{proof}

\begin{cor}
\label{C:admissibledesc}
    If $\cC = \langle \cC_\bullet\rangle_\Sigma$ is an admissible multi-scale decomposition, then for any $v\in V(\Sigma)$ the descendent decomposition $\gr_v(\cC_\bullet) = \langle \cC_{\bullet}^v\rangle_{\Sigma_{\subseteq v}}$ is also admissible.
\end{cor}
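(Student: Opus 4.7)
The plan is to reduce the desired admissibility to admissibility of filtrations in $\cC_{\leq u_2}$ and then pass to the Verdier quotient $\cC_{\leq u_2}^v = \cC_{\leq u_2}/\cC_{<v}$ via \Cref{L:admissiblecorrespondence}. Fix $v \in V(\Sigma)$ and consider a pair $u_1, u_2 \in V(\Sigma_{\subseteq v})$ lying immediately below a common vertex $w \in V(\Sigma_{\subseteq v})$ with $u_1 \leq_{1,\infty} u_2$. Since $u_1, u_2, w$ are vertices of $V(\Sigma)$ with the same adjacency, the admissibility hypothesis on $\langle \cC_\bullet \rangle_\Sigma$ directly yields that $\cC_{\leq u_1} \hookrightarrow \cC_{\leq u_2}$ is admissible in $\cC$.

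Next I would establish the containment $\cC_{<v} \subseteq \cC_{\leq u_i}$ for $i = 1, 2$. Because $u_i \subsetneq v$, the simple path from $u_i$ to the root passes through $v$, so for any terminal $x$ with $x <_{1,\infty} v$ the joins $x \vee u_i$ and $x \vee v$ coincide, and the two nodes on the common component $\Sigma_{x \vee v}$ that enter the normalized period computation are unchanged. Thus $\mathfrak{p}(x, u_i) = \mathfrak{p}(x, v) = 1$, i.e.\ $x <_{1,\infty} u_i$, giving $\cC_{\leq x} \subseteq \cC_{\leq u_i}$ and hence $\cC_{<v} \subseteq \cC_{\leq u_i}$. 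By \Cref{L:admissiblevertexinclusion}(1) and (2), both $\cC_{<v}$ and $\cC_{\leq u_i}$ are admissible subcategories of $\cC_{\leq v}$. I would then invoke the general fact that if $\cA \subseteq \cB \subseteq \cD$ with both $\cA$ and $\cB$ admissible in $\cD$, then $\cA \hookrightarrow \cB$ is admissible---this follows by restricting the adjoints of $\cA \hookrightarrow \cD$ along $\cB \hookrightarrow \cD$ and observing they land in $\cA$. Applied with $\cD = \cC_{\leq v}$, this shows $\cC_{<v} \hookrightarrow \cC_{\leq u_i}$ is admissible for $i=1,2$.

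Combining these facts produces an admissible filtration $\cC_{<v} \subseteq \cC_{\leq u_1} \subseteq \cC_{\leq u_2}$ inside $\cC_{\leq u_2}$. Applying \Cref{L:admissiblecorrespondence} with ambient category $\cC_{\leq u_2}$ and admissible subcategory $\cC_{<v}$, the subcategory $\cC_{\leq u_1}/\cC_{<v} = \cC^v_{\leq u_1}$ is admissible in $\cC_{\leq u_2}/\cC_{<v} = \cC^v_{\leq u_2}$, which is the desired admissibility for the descendent decomposition. The only subtlety, which is handled by \Cref{L:admissiblevertexinclusion} and the recursive definition of $\cC^v_{\leq u}$, is that $u_1$ and $u_2$ may be non-terminal vertices of $\Sigma_{\subseteq v}$; the identification $\cC^v_{\leq u} = \cC_{\leq u}/\cC_{<v}$ for general $u \subseteq v$ follows since terminal vertices of $\Sigma_{\subseteq v}$ below $u$ coincide with terminal vertices of $\Sigma$ below $u$. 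No real obstacle is expected; the argument is essentially a clean application of the correspondence between admissible subcategories of a quotient and admissible filtrations upstairs.
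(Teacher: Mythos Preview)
Your proposal is correct and follows essentially the same approach as the paper: build an admissible filtration $\cC_{<v} \subseteq \cC_{\leq u_1} \subseteq \cC_{\leq u_2}$ upstairs and then apply \Cref{L:admissiblecorrespondence} to pass to the quotient by $\cC_{<v}$. The only cosmetic differences are that the paper cites \eqref{E:vertex_category_containment} for the containment $\cC_{<v} \subseteq \cC_{\leq u_1}$ rather than rederiving it via normalized periods, and the paper works in the slightly longer chain $\cC_{<v}\subseteq \cC_{\leq u_1}\subseteq \cC_{\leq u_2}\subseteq \cC_{\leq x}\subseteq \cC_{\leq v}$ inside $\cC_{\leq v}$ rather than taking $\cC_{\leq u_2}$ as ambient; your ``general fact'' about nested admissible subcategories is precisely what the paper uses implicitly when asserting this chain is an admissible filtration.
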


\begin{proof}
    This follows from \Cref{L:admissiblecorrespondence} and \Cref{L:admissiblevertexinclusion}.\endnote{
    Suppose that $x\subseteq v$ is given and consider vertices $u_1$ and $u_2$ subjacent to $x$ with $u_1\le_{1,\infty} u_2$. We have a chain of admissible inclusions: $\cC_{\le u_1}\subseteq \cC_{\le u_2}\subseteq \cC_{\le x} \subseteq \cC_{\le v}$ using \Cref{D:admissible_decomposition} and \Cref{L:admissiblevertexinclusion}(2). Next, $\cC_{<v}$ is an admissible subcategory of $\cC_{\le v}$ contained in $\cC_{\le u_1}$ by \eqref{E:vertex_category_containment}. Consequently, $\cC_{<v}\subseteq \cC_{\le u_1}\subseteq \cC_{\le u_2}\subseteq \cC_{\le x} \subseteq \cC_{\le v}$ is an admissible filtration and by \Cref{L:admissiblecorrespondence} it follows that $\cC_{\le u_1}^v\hookrightarrow\cC_{\le u_2}^v$ is an admissible inclusion in $\gr_v(\cC_\bullet)$.}
\end{proof}

\begin{cor}
\label{C:admissibleMSD}
    Consider a multi-scale line $\Sigma$ and a collection of categories $\cC_\bullet = \{\cC_{\le v}\subseteq \cC: v\in V(\Sigma)_{\rm{term}}\}$ as in \Cref{D:multi-scale_decomposition}. If the pair $(\Sigma,\cC_\bullet)$ satisfies \Cref{D:multi-scale_decomposition}(1),(2),(4) and the condition of \Cref{D:admissible_decomposition}, then \Cref{D:multi-scale_decomposition}(3) follows. That is, the pair $(\Sigma,\cC_\bullet)$ defines an admissible multi-scale decomposition of $\cC$. 
\end{cor}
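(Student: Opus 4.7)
The strategy is downward induction on the rooted tree $\Gamma(\Sigma)$, starting from the root and descending to terminal vertices, simultaneously establishing that $\cC_{\leq v}$ and $\cC_{<v}$ are thick for each $v \in V(\Sigma)$. The base case is trivial: $\cC_{\leq r} = \cC$ and $\cC_{<r} = 0$, both thick. At each stage we fix a vertex $v$ with $\cC_{\leq v}$ and $\cC_{<v}$ already known to be thick, and prove thickness for each immediate subordinate $u_i$. The key input is that admissible subcategories of a stable $dg$-category are automatically thick, together with the standard Verdier-quotient correspondence.

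The crucial observation is that $\cC_{<v} \subseteq \cC_{\leq u_i}$ for every subordinate $u_i$: the remark following \Cref{D:multiscale_decomposition}, a consequence of condition (1), gives $\cC_{\leq x} \subseteq \cC_{\leq y}$ whenever $x \leq_{1,\infty} y$, and every terminal generator $w$ of $\cC_{<v}$ satisfies $w \leq_{1,\infty} u_i$, because $\mathfrak{p}(w, u_i) = \mathfrak{p}(w, v)$ at their common ancestor (which lies strictly above $v$, so the node leading to $u_i$ coincides with that leading to $v$). Thus the images $\cC_{\leq u_i}^v := \cC_{\leq u_i}/\cC_{<v}$ are well-defined subcategories of the quotient $\gr_v(\cC_\bullet) = \cC_{\leq v}/\cC_{<v}$.

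Reindexing the subordinates as in \Cref{const:SODfrommultiscale} so that $\mathfrak{p}(u_i, u_j) \in \bH \cup \bR_{>0}$ for $i<j$, and identifying block indices $i_{a-1} < i \leq i_a$ in which $\mathfrak{p}(u_i, u_j) = 1$, one verifies combinatorially that $\cC_{\leq u_{i_a}} \cap \cC_{\leq u_{i_b}} = \cC_{<v}$ for $a \neq b$: by \Cref{D:multiscale_decomposition}(1) this intersection is spanned by $\cC_{\leq w}$ for terminal $w$ with $w \leq_{1,\infty}$ both $u_{i_a}$ and $u_{i_b}$, and such $w$ can only lie outside $\Sigma_{\subseteq v}$ and contribute to $\cC_{<v}$, since no subordinate of $v$ can lie on two distinct positive real rays in the configuration at $v$. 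Condition \Cref{D:multiscale_decomposition}(2) then yields honest semiorthogonality $\Hom(\cC_{\leq u_{i_b}}^v, \cC_{\leq u_{i_a}}^v) = 0$ in $\gr_v(\cC_\bullet)$ for $a<b$. Within each block, \Cref{D:admissible_decomposition} produces a chain of admissible inclusions $\cC_{\leq u_{i_{a-1}+1}}^v \hookrightarrow \cdots \hookrightarrow \cC_{\leq u_{i_a}}^v$, exhibiting each $\cC_{\leq u_j}^v$ as admissible (hence thick) in the maximum $\cC_{\leq u_{i_a}}^v$; combined with generation, we obtain a semiorthogonal decomposition $\gr_v(\cC_\bullet) = \langle \cC_{\leq u_{i_1}}^v, \ldots, \cC_{\leq u_{i_k}}^v\rangle$ of thick subcategories, so each $\cC_{\leq u_j}^v$ is thick in $\gr_v(\cC_\bullet)$. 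Via the bijection between thick subcategories of $\cC_{\leq v}/\cC_{<v}$ and thick subcategories of $\cC_{\leq v}$ containing $\cC_{<v}$, each $\cC_{\leq u_j}$ is thick in $\cC_{\leq v}$, and hence in $\cC$.

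For $\cC_{<u_i}$ a parallel analysis identifies the terminal generators as lying either in $\cC_{<v}$ or below some sibling $u_j$ of $u_i$ with $u_j <_{1,\infty} u_i$ (equivalently, in the same block as $u_i$ but earlier). When such siblings exist, taking the maximum $u_{\max}$ with respect to $\leq_{1,\infty}$ yields $\cC_{<u_i} = \cC_{\leq u_{\max}}$ (absorbing $\cC_{<v}$ since $\cC_{<v} \subseteq \cC_{\leq u_{\max}}$), which is thick by the previous paragraph; otherwise $\cC_{<u_i} = \cC_{<v}$, thick by the inductive hypothesis. The principal technical subtlety throughout is the combinatorial identification $\cC_{\leq u_{i_a}} \cap \cC_{\leq u_{i_b}} = \cC_{<v}$, which is what converts the quotient-level semiorthogonality of \Cref{D:multiscale_decomposition}(2) into a genuine SOD of $\gr_v(\cC_\bullet)$ and enables the Verdier lifting.
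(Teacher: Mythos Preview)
Your proof is correct and follows essentially the same approach as the paper: downward induction from the root, establishing at each vertex $v$ the large-scale semiorthogonal decomposition of $\gr_v(\cC_\bullet)$ from conditions (1), (2), (4) together with the admissibility hypothesis, then lifting thickness via the Verdier correspondence. Your treatment is in fact slightly more careful than the paper's in two respects: you explicitly verify the intersection identity $\cC_{\leq u_{i_a}} \cap \cC_{\leq u_{i_b}} = \cC_{<v}$ needed to pass to a genuine SOD in the quotient (the paper simply invokes \Cref{const:SODfrommultiscale}, which is formally stated for multiscale decompositions already satisfying (3)), and you separately handle thickness of $\cC_{<u_i}$, which the paper leaves implicit.
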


\begin{proof}
    If $w$ is subjacent to the root, consider the large scale semiorthogonal decomposition $\cC = \langle \cC_{\le u_{i_1}},\ldots, \cC_{\le u_{i_k}}\rangle$ as in \Cref{const:SODfrommulti-scale}. If $w = u_{i_j}$ for some $j$ then $\cC_{\le w} = \cC_{\le u_{i_j}}$ is thick, being a semiorthogonal factor. If not, then by \Cref{D:admissible_decomposition}, $\cC_{\le w}$ is a thick subcategory of some $\cC_{\le u_{i_j}}$ and thus thick in $\cC$. 

    Suppose we know the result for $v$ and let $w$ be a vertex subjacent to it. Consider the category $\gr_v(\cC_\bullet)$ and the pair $(\Sigma_{\subseteq v},\cC_\bullet^v=\{\cC_{\le u}^v\})$ where $u$ ranges over terminal vertices of $\Sigma_{\subseteq v}$. One can verify directly that $(\Sigma_{\subseteq v}, \cC_\bullet^v)$ satisfies \Cref{D:multi-scale_decomposition}(1),(2),(4); $(\Sigma_{\subseteq v}, \cC_\bullet^v)$ satisfies the property of \Cref{D:admissible_decomposition} by the same argument as the one used to prove \Cref{C:admissibleMSD}. The base case implies that $\cC_{\le w}^v$ is a thick subcategory of $\gr_v(\cC_\bullet)$ and by \cite{Verdierquotient}*{Ch. II, Prop. 2.3.1} it follows that $\cC_{\le w}$ is thick in $\cC$.
\end{proof}

\subsubsection{Constructing coarsenings} Next, we discuss 
constructing a coarsening $\langle \cC_\bullet\rangle_{\Sigma_1} \rightsquigarrow \langle \cB_\bullet\rangle_{\Sigma_2}$ from a multi-scale decomposition $\cC = \langle \cC_\bullet\rangle_{\Sigma_1}$ and a contraction $\Gamma(\Sigma_1)\twoheadrightarrow \Gamma(\Sigma_2)$.

\begin{lem}
\label{L:admissiblecontractions}
    If $\langle \cC_\bullet \rangle_{\Sigma_1}$ is admissible, $\Sigma_2$ is a multi-scale line, and $f : \Gamma(\Sigma_1) \twoheadrightarrow \Gamma(\Sigma_2)$ is a contraction satisfying \Cref{D:multi-scaleSODcoarsening}(a), then $f$ underlies a coarsening $\langle \cC_\bullet \rangle_{\Sigma_1} \rightsquigarrow \langle \cB_\bullet \rangle_{\Sigma_2}$ such that $\langle \cB_\bullet\rangle_{\Sigma_2}$ is also admissible. 
\end{lem}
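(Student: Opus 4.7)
The plan is to reduce to elementary coarsenings of two types: (E1) signature degenerations, in which $f$ is an isomorphism of preordered rooted trees and only the signature of $\Sigma_2$ differs minimally from $\Sigma_1$, and (E2) single-level contractions $\sigma_{k-1}^\ell$ which merge two adjacent levels. By \Cref{L:specializations} and \Cref{ex:contractinglevels}, any contraction factors into single-level contractions, and signature degenerations can be interpolated to adjust signatures. By choosing intermediate multiscale lines so that each step satisfies \Cref{D:multiscaleSODcoarsening}(a), the lemma for general $f$ will follow from the elementary cases by composition, using that admissibility of multiscale decompositions is preserved along compositions of coarsenings (each recursively constructed decomposition serves as the input to the next step).

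For an elementary signature degeneration (E1), I would define the categories $\cB_{\leq v}$ by mutating the $\cC_{\leq v}$'s: wherever the signature changes (for instance, where $u \leq_{1,\infty} v$ in $\Sigma_1$ becomes $u <_{i,0} v$ in $\Sigma_2$), replace the filtration $\cC_{\leq u} \subseteq \cC_{\leq v}$ by the semiorthogonal decomposition $\cC_{\leq v} = \langle \cC_{\leq u}, \cB_{\leq v}\rangle$ where $\cB_{\leq v} := \cC_{\leq u}^\perp \cap \cC_{\leq v}$, and symmetrically for the opposite signature change. The required adjoints exist precisely because $\langle \cC_\bullet \rangle_{\Sigma_1}$ is admissible. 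Direct verification shows the resulting decomposition satisfies \Cref{D:multiscale_decomposition}(1), (2), (4) and \Cref{D:admissible_decomposition}, after which \Cref{C:admissibleMSD} supplies axiom (3).

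For an elementary single-level contraction (E2), I would define $\cB_{\leq v}$ for each terminal vertex $v \in V(\Sigma_2)$ as a lift of an equivalence $\gr_v(\cB_\bullet) \simeq \gr_{f^\dagger(v)}(\cC_\bullet)$ to a subcategory of $\cC_{\leq f^\dagger(v)}$, using the admissibility of the inclusion $\cC_{<f^\dagger(v)} \hookrightarrow \cC_{\leq f^\dagger(v)}$ supplied by \Cref{L:admissiblevertexinclusion}. The choice of left versus right orthogonal complement is dictated by where $v$ sits in the lex ordering of $\Sigma_2$ relative to the other terminals sharing an ancestor. Semiorthogonality and generation then follow from \Cref{D:multiscaleSODcoarsening}(a) applied to $f$, combined with the corresponding properties of $\langle \cC_\bullet \rangle_{\Sigma_1}$ and the correspondence in \Cref{L:admissiblecorrespondence} between admissible filtrations and admissible subcategories of Verdier quotients.

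The main obstacle will be case (E2) with $k=1$, since it alters the combinatorics of terminal vertices. Concretely, verifying that each inclusion $\cB_{\leq u_1} \hookrightarrow \cB_{\leq u_2}$ (for $u_1 \leq_{1,\infty} u_2$ subjacent to a common vertex of $\Sigma_2$) is admissible requires tracing how the adjoints of the corresponding inclusions in $\Sigma_1$ transfer through the mutations used to construct the $\cB_{\leq v}$'s. This is technical but systematic: each such adjoint can be expressed as the composition of an adjoint coming from $\langle \cC_\bullet \rangle_{\Sigma_1}$ with a mutation functor whose existence is guaranteed by the original admissibility hypothesis. Once this is verified, \Cref{C:admissibleMSD} again suffices to close out axiom (3) of \Cref{D:multiscale_decomposition}.
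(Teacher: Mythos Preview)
Your approach differs from the paper's in an interesting way. You propose factoring the contraction into a sequence of elementary moves of two types and handling each by mutation. The paper instead proceeds in two large steps: first, it treats the special case where $f$ is a \emph{bijection} on vertex sets and $\Sigma_2$ is \emph{fully generic} (i.e., $\leq_{i,0}$ is a total order on terminal vertices); second, for general $f$, it perturbs $\Sigma_1$ to a generic $\Sigma_1'$ via such a bijective contraction $g$, obtaining a full semiorthogonal decomposition $\cC = \langle \cD_{\leq v_1}, \ldots, \cD_{\leq v_n}\rangle$, and then \emph{reassembles} the target by setting $\cB_{\leq u} := \Span\{\cD_{\leq v} \mid f(v) \leq_{1,\infty} u\}$. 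The first step is carried out by an induction on levels, inductively defining each $\cB_{\leq u_j}$ as the appropriate left/right orthogonal inside $\cC_{\leq u_j}$ according to the target ordering.

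The paper's route is slicker because once everything is semiorthogonal, the reassembly is essentially formal: spans of semiorthogonal factors automatically satisfy all the axioms of \Cref{D:multiscale_decomposition} and \Cref{D:admissible_decomposition}. Your elementary-move strategy is conceptually sound, but there is a gap you should be aware of: you assert that the intermediate multiscale lines can be chosen so that condition \Cref{D:multiscaleSODcoarsening}(a) holds at every step, but this requires specifying actual multiscale line structures (not just level trees) at each stage, and you have not said how. In particular, your (E2) moves with $k>1$ preserve the terminal vertex set but may alter joins, hence alter normalized periods and the partial orders $\leq_{1,0}$, $\leq_{i,0}$; these are not pure ``tree moves'' and may force signature changes that interact with your (E1) moves in ways you have not tracked. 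The paper sidesteps this entirely by going to the generic extreme in one shot.
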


\begin{proof}
    First, we prove the lemma in the case where $f$ is bijective and $\leq_{i,0}$ is a total order on $V(\Sigma_2)_{\rm{term}}$. Let $u_1,\ldots, u_n \in V(\Sigma_1)$ denote the vertices subjacent to the root, ordered as in \Cref{const:SODfrommulti-scale}. To construct such a coarsening $f:\langle \cC_\bullet\rangle_{\Sigma_1} \rightsquigarrow \langle \cB_\bullet\rangle_{\Sigma_2}$, it suffices to give \vspace{-2mm}
    \begin{enumerate} 
        \item categories $\cB_{\le u_i}$ for $i=1,\ldots,n$ forming a semiorthogonal decomposition of $\cC$, ordered using $\le_{i,0}$ on $\Sigma_2$, such that $\cB_{\le u_i} \hookrightarrow \cC_{\le u_i}$ fits into a semiorthogonal decomposition $\langle \cC_{<u_i},\cB_{\le u_i}\rangle = \cC_{\le u_i}$; and \vspace{-2mm}
        \item for each $i=1,\ldots, n$, a coarsening of $\gr_{u_i}(\cC_\bullet) = \langle \cC_\bullet^{u_i}\rangle_{(\Sigma_1)_{\subseteq u_i}}$ compatible with the restriction of $f$ to a bijective contraction $\Gamma((\Sigma_1)_{\subseteq u_i}) \twoheadrightarrow \Gamma((\Sigma_2)_{\subseteq f(u_i)})$.\endnote{We will identify $u\in V(\Sigma_1)$ with its image under $f$. For each $u_i$, we have a multi-scale decomposition $\langle \cB_\bullet^{u_i}\rangle_{(\Sigma_2)_{{\subseteq u_i}}}$ which is a coarsening of the descendent decomposition of $\gr_{u_i}(\cC_\bullet)$. First, for each $v\in V(\Sigma_2)_{\rm{term}}$ with $v\subset u_i$ define $\cB_{\le v}$ as $\cB_{\le v}^{u_i}\subset \cB_{\le u_i}$. It is a quick exercise to verify that defined this way, $\langle \cB_\bullet\rangle_{\Sigma_2}$ defines a (generic) admissible multi-scale decomposition of $\cC$. We check that $f:\langle \cC_\bullet\rangle_{\Sigma_1}\rightsquigarrow\langle \cB_\bullet\rangle_{\Sigma_2}$ is a coarsening as claimed. 
        
        First, note that since $\langle \cC_{<u_i},\cB_{\le u_i}\rangle = \cC_{\le u_i}$, the composite $\cB_{\le u_i}\hookrightarrow \cC_{\le u_i}\twoheadrightarrow \gr_{u_i}(\cC_\bullet)$ is an equivalence for $i=1,\ldots, n$. For any $v\subseteq u_i$, we have a commutative square 
        \[
        \begin{tikzcd}[ampersand replacement = \&]
            \cB_{\le v}^u\arrow[r,"\sim"] \& \cC_{\le v}^u/\cC_{<v}^u\\
            \cB_{\le v}\arrow[r] \arrow[u,"\sim",sloped]\& \cC_{\le v}/\cC_{<v}\arrow[u,"\sim",swap,sloped]
        \end{tikzcd}
        \]
        which implies that $\cB_{\le v}\to \cC_{\le v}/\cC_{<v}$ is an equivalence.}
    \end{enumerate}
    
    By an inductive argument, it suffices to construct the categories $\cB_{\leq u_j}$ in (1).\endnote{The base case is where $\Sigma_1 \cong \bP^1$, in which case there is nothing to prove. The claim (2) is the inductive hypothesis.} Just as in \Cref{const:SODfrommulti-scale}, we have indices $0=i_0<i_1<\cdots<i_k=n$ and a semiorthogonal decomposition $\cC = \langle \cC_{\leq u_{i_1}}, \ldots \cC_{\leq u_{i_k}} \rangle$, along with admissible inclusions $\cC_{\leq u_i} \subseteq \cC_{\leq u_j}$ whenever $i_{\ast-1}<i<j\leq i_\ast$.

    For any $i_{\ast-1}<j\leq i_\ast$, we inductively define a full subcategory $\cB_{\leq u_j} \subset \cC_{\leq u_j}$ consisting of objects that are right orthogonal to $\cB_{\leq u_a}$ for all $i_{\ast-1}<a<j$ with $f(u_j) \leq_{i,0} f(u_a)$ and left orthogonal to $\cB_{\leq u_a}$ for all $i_{\ast-1}<a<j$ with $f(u_a) \leq_{i,0} f(u_j)$. This gives a semiorthogonal decomposition of $\cC_{\leq u_{i_\ast}}$ whose terms are $\cB_{\leq u_j}$ for $i_{\ast-1}<j\leq i_\ast$, where the categories are ordered according to the total order $\leq_{i,0}$ on $\Sigma_2$. Finally, note that each inclusion $\cB_{\le u_j}\hookrightarrow\cC_{\le u_j}$ fits into a semiorthogonal decomposition $\cC_{\le u_j} = \langle \cC_{<u_j},\cB_{\le u_j}\rangle$ for all $j$.

    Now we consider a general contraction $f:\Gamma(\Sigma_1) \to \Gamma(\Sigma_2)$ satisfying \Cref{D:multi-scaleSODcoarsening}(a). By perturbing the configuration of nodes on each level slightly, we can construct a bijective contraction $g : \Gamma(\Sigma_1) \to \Gamma(\Sigma_1')$ with the property that $\leq_{i,0}$ is a total ordering on $\Sigma_1'$, and such that if either $f(u) \leq_{i,0} f(v)$ or $f(u) \leq_{1,0} f(v)$, then the same relation holds between $g(u)$ and $g(v)$. We are now in the situation above, so we have a coarsening $\langle \cC_\bullet \rangle_{\Sigma_1} \rightsquigarrow \langle \cD_\bullet \rangle_{\Sigma_1'}$. For all $u \in V(\Sigma_2)_{\rm{term}}$, we let
    \[
        \cB_{\leq u} := \Span \{\cD_{\leq v} : v \in V(\Sigma_1')_{\rm{term}} \text{ and } f(v) \leq_{1,\infty} u\}.
    \]
    The reader can verify, using the coarsening $\langle \cC_\bullet\rangle_{\Sigma_1}\rightsquigarrow\langle \cD_\bullet\rangle_{\Sigma_1'}$, that $\langle \cB_\bullet\rangle_{\Sigma_2}$ is an admissible multi-scale decomposition such that $f:\langle \cC_\bullet\rangle_{\Sigma_1}\rightsquigarrow \langle \cB_\bullet\rangle_{\Sigma_2}$ is a coarsening.\endnote{First, we check that $\langle \cB_\bullet\rangle_{\Sigma_2}$ is an admissible multi-scale decomposition. First of all, the subcategories $\cB_{\le u}$ are thick and triangulated since they are generated by factors of a semiorthogonal decomposition of $\cC$. \Cref{D:multi-scale_decomposition}(1) is by 
    \begin{align*}
        \cB_{\le u} \cap \cB_{\le w} &= \Span\{\cD_{\le v}|f(v)\le_{1,\infty} u\} \cap \Span\{\cD_{\le v}|f(v)\le_{1,\infty} w\}\\
        &= \Span\{\cD_{\le v}|f(v)\le_{1,\infty} u \text{ and } w\}\\
        & = \Span \{\cD_{\le v}|f(v)\le_{1,\infty} x \text{ and } x\le_{1,\infty} u,w\}\\
        & = \Span\{\cB_{\le x}| x\le_{1,\infty} u,w\}.
    \end{align*}
    where $v \in V(\Sigma_1')_{\rm{term}}$ above. For \Cref{D:multi-scale_decomposition}(2), we have just seen that
    \[ 
        \cB_{\le u}\cap \cB_{\le w} = \Span\{\cD_{\le v}|f(v)\le_{1,\infty} u \text{ and } w\}.
    \]
    Now, consider $u,w\in \Gamma(\Sigma_2)_{\rm{term}}$ such that $u<_{i,0}w$ and $v,v'\in \Gamma(\Sigma_1')_{\rm{term}}$ such that $f(v)\le_{1,\infty} u$ and $f(v')\le_{1,\infty} w$. It is not possible for $f(v)\vee f(v') \subsetneq u\vee w$ given these conditions. Indeed, if this were the case, then we would have $f(v) \le_{1,\infty} u$ and $w$, which is impossible.
    
    If $f(v)\vee f(v') = u\vee w$, then $f(v)<_{i,0} f(v')$ and thus $v<_{i,0} v'$ in $\Sigma_1'$ (here, we are suppressing $g$ from the notation). It follows that $\Hom_{\cC}(\cD_{\le v'},\cD_{\le v}) = 0$. On the other hand, if $u\vee w \subsetneq (f(v) \vee f(v'))\vee (u\vee w)$ then $f(v)$ or $f(v')<_{1,\infty} w$ and $u$ so that $\cD_{\le v}$ or $\cD_{\le v'}$ is in $\cB_{\le u}\cap \cB_{\le w}$, and so $\Hom(\cB_{\le u},\cB_{\le w}) = 0$ in $\cB/(\cB_{\le u}\cap \cB_{\le w})$.

    For \Cref{D:multi-scale_decomposition}(3), for any $w\in V(\Sigma_2)$, $\cB_{\le w}$ and $\cB_{<w}$ are of the form $\Span\{\cD_{\le v}|v\in A\}$ for $A \subseteq \Gamma(\Sigma_1')_{\rm{term}}$. Since the $\cD_{\le v}$ are factors in a semiorthogonal decomposition, this implies thickness of $\cB_{\le w}$ and $\cB_{<w}$. For \Cref{D:multi-scale_decomposition}(4), the first part is because $\gr_w(\cB_\bullet) = \cB_{\le w}/\cB_{<w} = \Span \{\cD_{\le v}|f(v)\subseteq w\}\ne 0$. The second part is immediate from the fact that the $\cD_{\le v}$ for $v\in V(\Sigma_1')_{\rm{term}}$ form a semiorthogonal decomposition of $\cC$.
    
    For \Cref{D:admissible_decomposition}, simply note that given $x\in V(\Sigma_2)$ and $u,w$ subjacent to it with $u\le_{1,\infty} w$, we have $\cB_{\le u} = \Span\{\cD_{\le v}|v\in A\}$ and $\cB_{\le w} = \Span\{\cD_{\le v}|v\in B\}$ with $A\subset B \subset \Gamma(\Sigma_1')_{\rm{term}}$. Hence, $\cB_{\le u}\hookrightarrow \cB_{\le w}$ is admissible. 

    Next, we check that $f:\Gamma(\Sigma_1)\twoheadrightarrow \Gamma(\Sigma_2)$ defines a coarsening $\langle \cC_\bullet\rangle_{\Sigma_1}\rightsquigarrow \langle \cB_\bullet\rangle_{\Sigma_2}$. \Cref{D:multi-scaleSODcoarsening}(a) is by hypothesis, so it remains to check (b). First, note that for any $u\in V(\Sigma_2)$, 
    \begin{align*}
        \cB_{\le u} & = \Span\{\cD_{\le v}|f(v)<_{1,\infty} u\text{ and } f(v)\subseteq u\}\\
        \cB_{<u} & = \Span\{\cD_{\le v}|f(v)<_{1,\infty} u\}
    \end{align*}
    where here and afterwards $v\in V(\Sigma_1')_{\rm{term}}$. Suppose $f(v)<_{1,\infty} u$. Then $f^\dagger f(v)<_{1,\infty} f^\dagger(u)$. Since $v\subset f^\dagger f(v)$, we have $\cD_{\le v}\subset \cD_{\le f^\dagger f(v)} \subset \cC_{\le f^\dagger f(v)}\subset \cC_{\le f^\dagger(u)}$. This implies that $\cB_{<u}\subset \cC_{<f^\dagger(u)}$. If $f(v)\subseteq u$, then $f^\dagger f(v)\subseteq f^\dagger(u)$ implies that $\cD_{\le v}\subset \cD_{\le f^\dagger f(v)} \subset \cC_{\le f^\dagger f(v)}\subset \cC_{\le f^\dagger(u)}$. So, we have inclusions $\cB_{\le u} \hookrightarrow \cC_{\le f^\dagger(u)}$ and $\cB_{<u}\hookrightarrow \cC_{<f^\dagger(u)}$ which give an induced map $\gr_u(\cB_\bullet) \to \gr_{f^\dagger(u)}(\cC_\bullet)$. It remains to check that this induced map is an equivalence. 
    
    First, note that by the description of the categories $\cB_{\le u}$ and $\cB_{<u}$, we have that the composite 
    \[ 
        \Span\{\cD_{\le v}|f(v)\subseteq u\}\hookrightarrow \cB_{\le u} \twoheadrightarrow \gr_u(\cB_\bullet)
    \] 
    is an equivalence. Since $g:\langle \cC_\bullet\rangle_{\Sigma_1}\rightsquigarrow \langle \cD_\bullet\rangle_{\Sigma_1'}$ is a coarsening, the composite 
    \[ 
        \cD_{\le f^\dagger(u)}\hookrightarrow \cC_{\le f^\dagger(u)} \twoheadrightarrow \gr_{f^\dagger(u)}(\cC_\bullet)
    \]
    induces an equivalence $\gr_{f^\dagger(u)}(\cD_\bullet)\to \gr_{f^\dagger(u)}(\cC_\bullet)$. However, the identities
    \begin{align*}
        \cD_{\le f^\dagger(u)} & = \Span \{\cD_{\le v}|v<_{1,\infty} f^\dagger(u) \text{ or } v\subseteq f^\dagger(u)\}\\
        \cD_{<f^\dagger(u)} & = \Span \{\cD_{\le v}|v<_{1,\infty}f^\dagger(u)\}
    \end{align*}
    imply that the composite $\Span\{\cD_{\le v}|f(v) = u\} \hookrightarrow \cC_{\le f^\dagger(u)} \twoheadrightarrow \gr_{f^\dagger(u)}(\cC_\bullet)$ is an equivalence. Thus, the induced map $\gr_u(\cB_\bullet)\to \gr_{f^\dagger(u)}(\cC_\bullet)$ is an equivalence.
    }
\end{proof}

\begin{lem}
    If $\cC$ is smooth and proper, and there is a coarsening $\langle \cC_\bullet \rangle_{\Sigma_1} \rightsquigarrow \langle \cB_\bullet \rangle_{\Sigma_2}$ such that $f : \Gamma(\Sigma_1) \to \Gamma(\Sigma_2)$ is bijective and $\leq_{i,0}$ is a total order on $V(\Sigma_2)_{\rm{term}}$, then $\langle \cC_\bullet \rangle_{\Sigma_1}$ is admissible.    
\end{lem}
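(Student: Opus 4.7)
The plan is to verify the admissibility condition of \Cref{D:admissible_decomposition}: for any pair $u_1,u_2 \in V(\Sigma_1)$ subjacent to a common vertex with $u_1 \leq_{1,\infty} u_2$, show that the inclusion $\cC_{\leq u_1} \hookrightarrow \cC_{\leq u_2}$ is admissible.

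First, since $\leq_{i,0}$ is total on $V(\Sigma_2)_{\rm{term}}$, the multiscale decomposition $\langle \cB_\bullet\rangle_{\Sigma_2}$ is an honest semiorthogonal decomposition
\[
\cC = \langle \cB_{\leq w_1}, \ldots, \cB_{\leq w_n}\rangle,
\]
with $w_1 <_{i,0}^{\Sigma_2} \cdots <_{i,0}^{\Sigma_2} w_n$ enumerating the terminal vertices; each $\cB_{\leq w_i}$ is admissible in $\cC$. Identifying $V(\Sigma_1) = V(\Sigma_2)$ via $f$, the coarsening condition \Cref{D:multiscaleSODcoarsening}(b) supplies, for every $v \in V(\Sigma_1)$, an inclusion $\cB_{\leq v} \hookrightarrow \cC_{\leq v}$ descending to an equivalence $\gr_v(\cB_\bullet) \cong \gr_v(\cC_\bullet)$. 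Full faithfulness of this equivalence forces the extra Hom-vanishings $\Hom^\ast_\cC(\cC_{<v}, \cB_{\leq v}) = 0$ for every $v$, strengthening what the $\Sigma_2$-SOD automatically provides.

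Next, I would prove by induction on $v \in V(\Sigma_1)$ (processed so that $\leq_{1,\infty}^{\Sigma_1}$-predecessors and $\subseteq$-descendants come first) that $\cC_{\leq v}$ is admissible in $\cC$ and equals $\Span\{\cB_{\leq w} : w \in T_v\}$, where $T_v \subseteq V(\Sigma_1)_{\rm{term}}$ is the set of terminals $w$ with $w \leq_{1,\infty}^{\Sigma_1} u$ for some terminal $u \subseteq v$. The description of $T_v$ follows by combining the defining formula for $\cC_{\leq v}$ with the containments $\cC_{\leq w} \subseteq \cC_{\leq u}$ when $w \leq_{1,\infty} u$, and then peeling off $\cB$-lifts via the coarsening equivalences. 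Within $\cC$, the $\cB_{\leq w}$ for $w \in T_v$ are admissible and pairwise semiorthogonal in the $\Sigma_2$-SOD order, so by restriction they remain admissible in $\cC_{\leq v}$ and assemble into an SOD of it. The smooth-and-proper hypothesis on $\cC$ (via the Serre functor) then upgrades any one-sided adjoint of $\cC_{\leq v} \hookrightarrow \cC$ to a two-sided adjoint, yielding admissibility. With the key claim in hand, for any $u_1 \leq_{1,\infty} u_2$ subjacent to a common vertex, both $\cC_{\leq u_1}$ and $\cC_{\leq u_2}$ are admissible in $\cC$, so the adjoints of $\cC_{\leq u_1} \hookrightarrow \cC$ restrict to left and right adjoints for $\cC_{\leq u_1} \hookrightarrow \cC_{\leq u_2}$.

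The hardest step will be producing the one-sided adjoint that establishes admissibility of $\Span\{\cB_{\leq w} : w \in T_v\}$ in $\cC$ when $T_v$ is not a consecutive interval of the $\leq_{i,0}^{\Sigma_2}$-order. In those configurations there exist ``gap'' terminals $w' \notin T_v$ sandwiched between elements of $T_v$, and the $\Sigma_2$-SOD supplies Hom-vanishing with $T_v$ in only one direction. The reverse-direction vanishing must be extracted by chaining coarsening equivalences at ancestors $v' \in V(\Sigma_1)$ satisfying $\cB_{\leq w'} \subseteq \cC_{<v'}$ and $\cB_{\leq w} \subseteq \cB_{\leq v'}$ for $w \in T_v$: each such $v'$ contributes $\Hom^\ast(\cB_{\leq w'}, \cB_{\leq w}) = 0$ via the vanishing $\Hom^\ast(\cC_{<v'}, \cB_{\leq v'}) = 0$. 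Identifying a sufficient set of ancestors $v'$ and showing that the accumulated orthogonalities split $\cC$ compatibly with $T_v$ is the combinatorial heart of the argument.
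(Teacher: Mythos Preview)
Your overall strategy---express each $\cC_{\le v}$ as a span of the $\cB_{\le w}$ and then deduce admissibility---is viable and close to the paper's, but there is a genuine error. The claim that ``full faithfulness of this equivalence forces the extra Hom-vanishings $\Hom^\ast_\cC(\cC_{<v}, \cB_{\leq v}) = 0$'' is false. For any semiorthogonal decomposition $\cD = \langle \cA, \cB\rangle$, the composite $\cB \hookrightarrow \cD \to \cD/\cA$ is an equivalence, yet $\Hom(\cA, \cB)$ is unconstrained. Full faithfulness of the section only controls morphisms \emph{within} $\cB_{\le v}$, not morphisms \emph{into} $\cB_{\le v}$ from the kernel $\cC_{<v}$. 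Your proposed mechanism for the ``hardest step''---chaining coarsening equivalences at ancestors $v'$ to extract $\Hom(\cB_{\le w'}, \cB_{\le w}) = 0$---therefore collapses.

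The step is in fact not hard once the smooth-and-proper hypothesis is used properly. Since $\cC$ has a Serre functor, every factor $\cB_{\le w}$ of the $\Sigma_2$-SOD is admissible in $\cC$, and if $\cA$ is admissible in $\cC$ then it is admissible in any intermediate full triangulated subcategory (restrict the adjoints). A short induction on $|S|$ then shows that $\Span\{\cB_{\le w} : w \in S\}$ is admissible in $\cC$ for \emph{every} subset $S$, consecutive or not. Your formula $\cC_{\le v} = \Span\{\cB_{\le w} : w \in T_v\}$ (proved inductively from \Cref{L:coarseningconsequences}) and your final restriction-of-adjoints step then give the result.

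The paper organizes the argument differently: it works level by level down the tree. At the root it shows $\cC_{\le u_p} = \Span\{\cB_{\le u_1}, \ldots, \cB_{\le u_p}\}$ for vertices $u_p$ subjacent to the root, deduces $\cC_{<v}\hookrightarrow \cC_{\le v}$ is admissible there, and then recurses: the coarsening restricts to a coarsening of descendent decompositions $\langle \cC_\bullet^{v}\rangle_{\Sigma_{1,\subseteq v}} \rightsquigarrow \langle \cB_\bullet^{v}\rangle_{\Sigma_{2,\subseteq v}}$ on $\gr_v(\cC_\bullet)$ (which is again smooth and proper), and admissibility in the quotient is lifted via \Cref{L:admissiblecorrespondence}. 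This recursion localizes the problem so that at each stage the relevant $\cB$'s form the \emph{entire} SOD of the ambient category, sidestepping the non-consecutive-interval issue that you flagged.
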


\begin{proof}
    Since $f$ is a bijection, we suppress it from the notation, writing $v$ in lieu of $f(v)$. Consider the large scale semiorthogonal decomposition (\Cref{const:SODfrommulti-scale}) of $\langle \cC_\bullet\rangle_{\Sigma_1}$. It follows that $\cC_{<u_1} = 0$ so by \Cref{L:coarseningconsequences}, $\cC_{\le u_1} = \Span\{\cC_{<u_1},\cB_{\le u_1}\} = \cB_{\le u_1}$. Next, for all $j\le i_1$ one has $\cC_{< u_j} = \cC_{\le u_{j-1}}$ and by induction $\cC_{\le u_{i_p}} = \Span\{\cB_{\le u_1},\ldots, \cB_{\le u_{i_p}}\}$ for all $1\le p \le i_1$. 
    
    Since $\le_{i,0}$ is a total order on $V(\Sigma_2)_{\rm{term}}$, up to possible reindexing, $\Hom(\cB_{\le u_j},\cB_{\le u_i}) = 0$ for all $1\le i<j \le i_1$ and so $\cC_{\le u_p} = \langle \cB_{\le u_{i_1}},\ldots, \cB_{\le u_{i_p}}\rangle$ for each $1\le p \le i_1$. Similarly, for all $i_{*-1}<p\le i_*$, we have $\cC_{\le u_p} = \langle \cB_{\le u_{j}}:i_{*-1}<j\le p\rangle$, which implies the claimed admissibility. Let $v,w \in V(\Sigma_1)$ be given which are subjacent to the root. We have proven:
    \begin{enumerate}
        \item $v\le_{1,\infty} w$ implies $\cC_{\le v}\hookrightarrow \cC_{\le w}$ is admissible; and
        \item $\cC_{<v}\hookrightarrow \cC_{\le v}$ is admissible.
    \end{enumerate}
    Next, consider a vertex $v$ such that $\cC_{<v}\hookrightarrow \cC_{\le v}$ is admissible. Using the fact that $\langle \cC_\bullet^{u}\rangle_{\Sigma_{1,\subseteq u}}\rightsquigarrow \langle \cB_\bullet^u\rangle_{\Sigma_{2,\subseteq u}}$ is a coarsening of multi-scale decompositions of $\gr_u(\cC_\bullet)$,\endnote{\Cref{D:multi-scaleSODcoarsening}(a) is immediate from the corresponding condition of $\langle \cC_\bullet\rangle_{\Sigma_1}\rightsquigarrow \langle \cB_\bullet\rangle_{\Sigma_2}$. For \Cref{D:multi-scaleSODcoarsening}(b), consider $w\in V(\Sigma_2)$ such that $w\subseteq v$. Since $\cB_{\le w}\subset \cC_{\le w}$ it follows that $\cB_{\le w}^v\subset \cC_{\le w}^v$. Finally, the equivalence $\cB_{\le w}^v/\cB_{<w}^v \simeq \gr_w(\cB_\bullet)$ and the fact that $\cB_{\le w} \to \gr_{f^\dagger(w)}(\cC_\bullet)$ induces an equivalence $\gr_w(\cB_\bullet)\simeq \gr_{f^\dagger(w)}(\cC_\bullet)$ is enough to imply \Cref{D:multi-scaleSODcoarsening}(b).} the above reasoning implies that for all $x,y$ subjacent to $v$ with $x\le_{1,\infty} y$, $\cC_{\le x}^v\hookrightarrow \cC_{\le y}^v$ is admissible. Using the fact that $\cC_{<v}\hookrightarrow \cC_{\le v}$ is admissible, \Cref{L:admissiblecorrespondence} implies that $\cC_{\le x}\hookrightarrow \cC_{\le y}$ is admissible. This implies that $\cC_{<x}\hookrightarrow \cC_{\le x}$ is admissible for all $x$ subjacent to $v$ and so proceeding by induction, the result follows.
\end{proof}

\subsection{Augmented stability conditions}
\label{S:augmentedstabilityconditions}

Let us fix a finitely generated free abelian group $\Lambda$ and a homomorphism $\ch :  \rm{K}_0(\cC) \to \Lambda$ that is surjective after tensoring with $\bQ$. Given a multi-scale decomposition $\cC = \langle \cC_\bullet \rangle_\Sigma$, for any $v \in V(\Sigma)$ we define
\[
\Lambda_{\leq v} := \ch(\cC_{\leq v})^{\rm{sat}}, \quad \Lambda_{<v} := \left(\sum_{u <_{1,\infty} v} \Lambda_{\leq u} \right)^{\rm{sat}}, \text{ and} \quad \gr_v(\Lambda) := \Lambda_{\leq v} / \Lambda_{<v},
\]
where $G^{\rm{sat}} := \{x \in \Lambda | m x \in G \text{ for some }m\in \bZ\}$ denotes the saturation of a subgroup $G$ of $\Lambda$. By the exact sequence $ \rm{K}_0(\cC_{<v}) \to  \rm{K}_0(\cC_{\leq v}) \to  \rm{K}_0(\gr_v(\cC_\bullet)) \to 0$ given by \cite{Kellerdgcat}*{Thm. 5.2}, $\ch$ descends uniquely to a homomorphism $\ch :  \rm{K}_0(\gr_v(\cC_\bullet)) \to \gr_v(\Lambda)$ that is surjective after tensoring with $\bQ$.\endnote{Consider the diagram 
\[
    \begin{tikzcd}[ampersand replacement =\&]
         \rm{K}_0(\cC_{<v})\arrow[r]\arrow[d] \&  \rm{K}_0(\cC_{\le v})\arrow[r]\arrow[d] \&  \rm{K}_0(\gr_v(\cC_\bullet))\arrow[r]\arrow[d]\& 0 \\
        \Lambda_{<v}\arrow[r] \& \Lambda_{\le v}\arrow[r]\& \gr_v(\Lambda)\arrow[r]\&0.
    \end{tikzcd}
\]
After tensoring with $\bQ$, the composite $ \rm{K}_0(\cC_{\le v})_{\bQ} \to (\Lambda_{\le v})_{\bQ} \to \gr_v(\Lambda)_{\bQ}$ is surjective. In particular, $ \rm{K}_0(\gr_v(\cC_\bullet))_{\bQ}\to \gr_v(\Lambda)_{\bQ}$ is also surjective.}

\begin{defn}[Augmented stability conditions] \label{D:generalized_stability_condition}
An \emph{augmented stability condition} on $\cC$, denoted $\sigma = \langle \cC_\bullet | \ell_\bullet \rangle_\Sigma$, consists of a multi-scale decomposition $\cC = \langle \cC_\bullet \rangle_\Sigma$ and for each terminal component $v$ of $\Sigma$ a function $\ell_v : \gr_v(\cC_\bullet) \setminus 0 \to \Sigma_v\setminus \{p_{\rm{ascending}}\}$. We require that after choosing an isomorphism $(\Sigma_v\setminus \{p_{\rm{ascending}}\}, \omega_v) \cong (\bC,dz)$ each $\ell_v$ determines an element $\sigma_v \in \Stab_{\gr_v(\Lambda)}(\gr_v(\cC_\bullet))$ by applying \Cref{P:logZ_functions}, and that there exists a direct sum decomposition $\Lambda \otimes \bQ = \bigoplus_{v \in V(\Sigma)_{\rm{term}}} M_{v}$ such that $\Lambda_{\leq v} \otimes \bQ = \sum_{u \leq_{1,\infty} v} M_u$ for all terminal $v$.

We let $\Astab(\cC)$ denote the set of equivalence classes of augmented stability conditions on $\cC$, where two augmented stability conditions are equivalent if there is a real oriented isomorphism of underlying multi-scale lines that is compatible with the subcategories $\cC_{\leq v}$ and the marking functions $\ell_v$.
\end{defn}

Note that any two isomorphisms $(\Sigma_v\setminus \{p_{\rm{ascending}}\},\omega_v) \xrightarrow{\sim}(\bC,dz)$ differ by a translation. So, for each $v\in V(\Sigma)_{\rm{term}}$, $\ell_v$ canonically determines an element of $\Stab_{\gr_v(\Lambda)}(\gr_v(\cC_\bullet))/\bC$ by \Cref{P:logZ_functions}. For any $0$-well-placed objects $E,F \in \cC$ with $\dom(E)=\dom(F)=v$, we define functions
\begin{equation}
\label{E:functionsastab}
    \begin{split}
        m_{\sigma_\bullet}^t(F/E) &:= m^t_{\sigma_v}(\Pidom(F)/\Pidom(E)) \\
        \ell_{\sigma_\bullet}^t(F/E) &:= \ell_{\sigma_v}^t(\Pidom(F)/\Pidom(E)) \\
        \phi_{\sigma_\bullet}^t(F/E) &:= \phi_{\sigma_v}^t(\Pidom(F)/\Pidom(E))
    \end{split}
\end{equation}
As in the case of log central charge, we can regard $E \mapsto \ell^t_{\sigma_v}(\Pidom(E))$ as a function 
\[
    \ell_{\sigma_\bullet}^t : \{0\text{-well-placed } E \in \cC \} \to \Sigma
\]
whose image lies in the smooth loci of the terminal components.

\begin{ex}
    An element of $\Astab(\cC)$ with underlying curve $\bP^1$ is specified by a function $\ell : \cC\setminus 0 \to \bC$ up to translation, i.e. an element of $\Stab(\cC)/\bC$. Consequently, the set of such points is in canonical bijection with $\Stab(\cC)/\bC$.
\end{ex}

\begin{ex}
    Suppose next that $\Sigma$ is a multi-scale line with exactly two levels. By \Cref{ex:twolevelmulti-scaledecomp}, an augmented stability condition $\langle \cC_\bullet | \ell_\bullet\rangle_\Sigma$ consists of a semiorthogonal de\-composition of $\cC$ labeled by a strictly increasing sequence of imaginary numbers, a filtration of each semiorthogonal factor labeled by a strictly increasing sequence of real numbers, and a stability condition on each associated graded category for these filtrations. Except for the labels, these are the structures obtained as limits of quasi-convergent paths in \cite{quasiconvergence}*{\S 2}. One obtains equivalent augmented stability conditions by simultaneously scaling all of the labels, simultaneously translating the real or imaginary labels, or translating one of the stability conditions by an element of $\bC$.
\end{ex} 

Finally, let us introduce some useful terminology regarding augmented stability conditions:
\begin{defn}\label{D:terminology} Let $\cC$ be a stable dg-category.
    \begin{itemize}
        \item Given $\sigma = \langle \cC_\bullet | \sigma_\bullet \rangle_\Sigma \in \Astab(\cC)$, we say that an object $E \in \cC$ is $\sigma$-\emph{stable} (respectively \emph{semistable}) if it is $\infty$-well-placed, and its image in $\gr_v(\cC_\bullet)$ is $\sigma_v$-stable (respectively $\sigma_v$-semistable), where $v = \dom(E)$. $\cC_{\le v}^{\rm{ss}} \subseteq \cC_{\le v}$ denotes the full subcategory of $\sigma$-semistable objects with dominant vertex $v$. \smallskip

        \item Given $\sigma,\tau \in \Astab(\cC)$, we say that $\tau$ is a \emph{coarsening} of $\sigma$, and write $\sigma \rightsquigarrow \tau$, if the same holds for their underlying multi-scale decompositions.\smallskip

        \item We say that $\langle \cC_\bullet | \sigma_\bullet \rangle_\Sigma$ is \emph{generic} if $\Sigma$ has the property that $\Im(\mathfrak{p}(u,v)) \neq 0$ for any pair of distinct $u,v \in \Gamma(\Sigma)_{\rm{term}}$, i.e., $\leq_{i,0}$ is a total order on $\Gamma(\Sigma)_{\rm{term}}$. \smallskip

        \item Let $\sigma = \langle \cC_\bullet|\ell_\bullet\rangle_{\Sigma}$ denote an augmented stability condition. Given $v\in V(\Sigma)$, we can form the descendent decomp\-osition $\gr_v(\cC_\bullet) = \langle \cC_\bullet^v\rangle_{\Sigma_{\subseteq v}}$ (\Cref{const:descendent}). We construct an augmented stability condition $\gr_v(\sigma) = \langle \cC_\bullet^v|\ell_\bullet^v\rangle_{\Sigma_{\subseteq v}}$ with \vspace{-2mm}
        \begin{enumerate}[label=(\roman*)] 
            \item multi-scale decomposition $\gr_v(\cC_\bullet) = \langle \cC_\bullet^v\rangle_{\Sigma_{\subseteq v}}$;\vspace{-2mm}
            \item homomorphism $\gr_v(\cC_\bullet)\to \gr_v(\Lambda)$; and \vspace{-2mm}
            \item for each $w \in V(\Sigma_{\subseteq v})_{\rm{term}}$, $\ell_w^v:\cC_{\le w}^v/\cC_{<w}^v\to \Sigma_w\setminus \{p_{\rm{ascending}}\}$ is defined using the equivalence $\cC_{\le w}^v/\cC_{<w}^v \simeq \gr_w(\cC_\bullet)$ and $\ell_w$.\endnote{First of all, it follows from \Cref{D:generalized_stability_condition} that for $\{M_w:w\in V(\Sigma)_{\rm{term}}\}$ as in \emph{loc. cit.} we have 
                \[
                    \Lambda_{\le v} = \sum_{w\le_{1,\infty} v \text{ or } w\subseteq v} M_w
                \]
            for \emph{any} $v\in V(\Sigma)$. Indeed, by \Cref{D:multi-scale_decomposition}, it follows that $\cC_{\le v} = \Span \{\cC_{\le w}:w\subseteq v\}$ and thus that $ \rm{K}_0(\cC_{\le v}) = \sum_{w\subseteq v}  \rm{K}_0(\cC_{\le w}).$ Therefore, $v(\cC_{\le v})_{\bQ} = \sum_{w\le_{1,\infty} v\text{ or }w\subseteq v} M_w$. Now, we only need to check the condition about the direct sum decomposition $\gr_v(\Lambda) \otimes \bQ = \bigoplus_{w\in V(\Sigma_{\subseteq v})_{\rm{term}}} M_w$. However, this decomposition follows from $\Lambda_{\le v} \otimes \bQ = \sum_{u\le_{1,\infty} v} M_u$ for all $v\in V(\Sigma)$. Indeed, since $\cC_{<v} =\cC_{\le w}$ for some $w \in V(\Sigma)$, one has that $\gr_v(\Lambda)\otimes \bQ = (\Lambda_{\le v}\otimes \bQ)/(\Lambda_{<v}\otimes \bQ) = \sum_{u \in V(\Sigma_{\subseteq v})_{\rm{term}}} M_u$. Similarly, for $w\in V(\Sigma_{\subseteq v})_{\rm{term}},$ writing $\Lambda_{\le w}^v$ for the image of $\Lambda_{\le w}$ in $\gr_v(\Lambda)$, one has $\Lambda_{\le w}^v \otimes \bQ = \sum_{u \in V(\Sigma_{\subseteq v})_{\rm{term}}, u\le_{1,\infty} w} M_u$.}
        \end{enumerate}
    \end{itemize}
\end{defn}

\section{Spaces of marked multi-scale lines}

In this section, we construct a moduli space $\mscbar_n$ of $n$-marked multi-scale lines, which is a smooth compactific\-ation of $\bC^n/\bC$. Later, we consider its real oriented blowup along the simple normal crossings boundary divisor $\partial:= \mscbar_n \setminus (\bC^n/\bC)$ to obtain a manifold with corners, denoted $\rmscbar_n$. The definition of the topology of $\Astab(\cC)$ makes crucial use of $\cA_n^{\bR}$.

\begin{defn}
\label{D:stablenmarkedmulti-scaleline}
An $n$-marked multi-scale line is the data of $(\Sigma,p_\infty,\preceq,\omega_\bullet,p_1,\ldots,p_n)$, where $(\Sigma,p_\infty,\preceq, \omega_\bullet)$ is a multi-scale line and the $p_i$ are smooth marked points lying on terminal components of $\Sigma$. An $n$-marked multi-scale line is called \emph{stable} if each terminal component contains at least one of the $p_i$.\endnote{Note that a stable multi-scale line has trivial complex projective automorphism group.}
\end{defn}

Associated to an $n$-marked multi-scale line is an $n$-marked totally preordered rooted tree $(\Gamma,\preceq,v_0,h)$, where $(\Gamma,\preceq,v_0)$ is the totally preordered rooted tree of $(\Sigma,p_\infty,\preceq,\omega_\bullet)$ and the marking function $h:\{1,\ldots,n\}\to V(\Sigma)_{\rm{term}}$ is defined by $h(i) = v$ if $p_i \in \Sigma_v$. 

Associated to a totally preordered rooted tree $(\Gamma,\preceq,v_0)$ is a unique isomorphism of totally ordered sets $\lambda:V(\Gamma)/{\sim} \to [\ell]$ which is called the \emph{level function} of $(\Gamma,\preceq,v_0)$. Note that $\lambda^{-1}(\ell) = \{v_0\}$ and that $\lambda^{-1}(0) = V(\Gamma)_{\rm{term}}$.\endnote{This follows from the fact that $u\subseteq v\Rightarrow u\preceq v$. If $u\subset v$ then $u\prec v$ and thus the root is sent to the maximal element of $[\ell]$ with respect to the usual ordering, i.e. $\ell$. Similarly, all of the terminal vertices are mapped to $0$; note that this reflects the assumption that all terminal vertices are in the same equivalence class with respect to $\sim$.} 

As a matter of terminology, a \emph{dual tree} is one arising from a stable $n$-marked multi-scale line: it is a totally preordered rooted tree $(\Gamma,\preceq,v_0)$ with the additional data of a totally ordered set of half edges such that each vertex has the necessary number edges or half edges attached. To simplify notation, we will often refer to a dual tree as simply $\Gamma$.

\subsection{The moduli space of marked multi-scale lines}

The set $\Mmscbar_n$ consists of complex projective isomorphism classes of $n$-marked stable multi-scale lines $(\Sigma,p_\infty,\preceq,\omega_\bullet,p_\bullet)$. We often denote a point of $\Mmscbar_n$ by $\Sigma$, unless the other data are explicitly used.

\begin{defn}[Period functions]
For any $i,j \in \{1,\ldots,n\}$, we define functions $\Pi_{ij} : \Mmscbar_n \to \bP^1$ as follows: If $\Sigma$ is a stable $n$-marked multi-scale line, then $\Pi_{ij}(\Sigma) = \int_{p_i}^{p_j} \omega_v$ if $p_i$ and $p_j$ are both contained in the same terminal component $\Sigma_v$, and $\Pi_{ij}=\infty$ otherwise. The integral is taken over any path from $p_i$ to $p_j$ in the smooth locus of $\Sigma_v$.
\end{defn}

$\Mmscbar_n^\circ \subset \Mmscbar_n$ denotes the subset of isomorphism classes of multi-scale lines with irreducible underlying curve. $\Mmscbar_n^{\circ}$ can be identified with the configuration space of $n$ points in $\bC$ up to simultaneous translation, $\bC^n/\bC$. Indeed, $(\Sigma,p_\infty,\omega,p_\bullet) \in \Mmscbar_n^\circ$ is isomorphic to $(\bP^1,\infty,\omega,p_\bullet)$, where $p_i\ne \infty$ for all $1\le i \le n$ and $\omega$ has an order $2$ pole at $\infty$ and no other zeros or poles. $\omega$ determines a coordinate $z:\bP^1\setminus \{\infty\}\to \bC$ up to translation such that $dz = \omega$. Consequently, $(\Sigma,p_\infty,\omega,p_\bullet)$ is equivalent to $(\bP^1,\omega,dz)$ together with $(z(p_1),\ldots, z(p_n))\in \bC^n/\bC$. Subject to this identification, $\Pi_{ij}$ restricts to the coordinate on $\bC^n/\bC$, given by $\Pi_{ij}(z) = z_j - z_i$. 

Given a stable $n$-marked multi-scale line $\Sigma$, for each irreducible component $\Sigma_v$ write $N_v$ for the set of descending nodes on $\Sigma_v$ when $\Sigma_v$ is nonterminal or the set of marked points in $\Sigma_v$ when it is terminal. Choose $n_v \in N_v$, for each $v$.

\begin{lem}
\label{L:determinedbyintegrals}
    Suppose $\Sigma$ is a stable $n$-marked multi-scale line with dual level tree $\Gamma(\Sigma)$. $\Sigma$ is determined up to isomorphism by $\Gamma(\Sigma)$ and 
    \begin{equation}
    \label{E:integrals}
        \left\{\int_{n_v}^{n} \omega_v\:\bigg|\: n\in N_v \setminus \{n_v\}\right\}_{v\in \Gamma(\Sigma)},
    \end{equation}
    where $\int_{n_v}^n\omega_v$ is the integral along any path in the smooth locus of $\Sigma_v$ connecting $n_v$ and $n$. It is uniquely determined up to complex projective isomorphism by $\Gamma(\Sigma)$ and
    \begin{equation}
    \label{E:integrals2}
    \left\{ \left[ 
        \int_{n_v}^{n} \omega_v\:\bigg|\: 
        \begin{array}{c} 
            v\in \lambda^{-1}(m)  \\ 
            n \in N_v\setminus \{n_v\}
        \end{array} 
    \right]_{1\le m \le \ell},\left(\int_{n_v}^n\omega_v\:\bigg|\: 
    \begin{array}{c}
    v\in \lambda^{-1}(0) \\ 
    n\in N_v\setminus \{n_v\}
    \end{array}
    \right)
    \right\}
    \end{equation}
    where \eqref{E:integrals2} is in $\prod_{m=1}^{\ell}\bP^{\nu_m} \times \bA^{\nu_0}$, for $\nu_m = \sum_{v\in \lambda^{-1}(m)} |N_v\setminus \{n_v\}|$.
\end{lem}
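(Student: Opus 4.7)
My plan is to reconstruct $\Sigma$ from $\Gamma(\Sigma)$ and the integral data component by component, exploiting the rigidity provided by the pole structure of $\omega_v$ on each $\Sigma_v \cong \bP^1$. The key observation is that, since $\omega_v$ has a single pole of order $2$ at $p_{\rm{ascending}}$ and no other zeros or poles, in any affine coordinate $z$ on $\Sigma_v \setminus \{p_{\rm{ascending}}\}$ with $p_{\rm{ascending}}$ placed at infinity we have $\omega_v = \lambda_v \, dz$ for some $\lambda_v \in \bC^*$, and the affine coordinate satisfying $\omega_v = dz$ is unique up to translation. This reduces the problem on each component to identifying the positions of a finite set of points in $\bC$, determined up to simultaneous translation.

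For the first claim, I would fix the basepoint $n_v \in N_v$ and normalize the affine coordinate $z_v$ on $\Sigma_v \setminus \{p_{\rm{ascending}}\}$ so that $z_v(n_v) = 0$ and $\omega_v = dz_v$. The coordinates of the remaining points of $N_v$ in $z_v$ are then exactly the integrals $\int_{n_v}^n \omega_v$, which are well defined and path-independent because $\omega_v$ is exact on the simply connected domain $\Sigma_v \setminus \{p_{\rm{ascending}}\} \cong \bC$. The combinatorial data encoded in $\Gamma(\Sigma)$ specifies the gluing of components along nodes and the assignment of marked points to terminal components. Assembling the components according to this gluing reconstructs $\Sigma$; any two multiscale lines obtained in this way from the same input are related by a nodal-curve isomorphism that carries each $\omega_v$ to $\omega_v$ on the nose.

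For the second claim, the additional freedom in a complex projective isomorphism is the rescaling $\omega_v \mapsto c_v \omega_v$ by scalars $c_v \in \bC^*$ that are constant on $\sim$-equivalence classes and equal to $1$ on minimal (terminal) vertices. Since the level function $\lambda$ identifies $V(\Sigma)/{\sim}$ with $[\ell]$, the scalar $c_v$ depends only on $\lambda(v)$; call it $c_m$. For $m \geq 1$, this rescaling multiplies every integral arising from a vertex at level $m$ by the common factor $c_m$, so the full tuple at level $m$ is well defined only as an element of $\bP^{\nu_m}$, whereas at level $m = 0$ one has $c_0 = 1$, so the integrals survive as genuine affine invariants in $\bA^{\nu_0}$. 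Conversely, any choice of representatives of the projective classes produces integrals as in \eqref{E:integrals}, hence via the first part a multiscale line $\Sigma$, and two such choices differ by a rescaling of precisely the form above, identifying the two lines by a complex projective isomorphism. Thus the class of $\Sigma$ in $\Mmscbar_n$ is uniquely determined by \eqref{E:integrals2}.

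The main subtlety I foresee is the combinatorial bookkeeping: one must confirm that $\Gamma(\Sigma)$, as a totally preordered rooted tree together with its ordered set of marked half-edges, really distinguishes the ascending node from the descending nodes on each non-root component and matches half-edge labels with the indices of the marked points, so that no hidden ambiguity enters the gluing step. Once this combinatorial data is carefully unpacked, the reconstruction is forced by the pole structure of $\omega_v$, and I do not expect any deeper geometric obstacle.
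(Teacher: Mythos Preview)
Your proposal is correct and follows essentially the same approach as the paper. The paper phrases the argument slightly differently---it first handles the irreducible case by observing that the integrals $\{\Pi_{1j}\}$ are global coordinates on $\bC^n/\bC$, and then notes that an isomorphism of stable $n$-marked multiscale lines with fixed dual tree decomposes into a collection of isomorphisms $\alpha_v$ of the individual irreducible components $(\Sigma_v,n_+,\omega_v,N_v)$---but this is the same content as your reconstruction-and-gluing argument, and the treatment of the complex projective case via level-constant rescalings is identical.
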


\begin{proof}
    If $\Sigma$ is irreducible, then $(\Sigma,\omega,p_\bullet) \in \Mmscbar_n^\circ$ is uniquely determined by $\{\int_{p_1}^{p_j}\omega\}_{j=2}^n$ since this is equivalent to the statement that $\{\Pi_{1j}\}_{j=2}^n$ give global coordinates on $\bC^n/\bC$. For a general $\Sigma$, an isomorphism $\alpha:(\Sigma,p_\infty,\preceq,\omega_\bullet,p_\bullet)\to (\Sigma',p_\infty',\preceq',\omega_\bullet',p_\bullet')$ of stable $n$-marked multi-scale lines with dual tree $\Gamma$ is equivalent to the data of $\{\alpha_v:v\in V(\Gamma)\}$, where writing $n_+$ for the ascending node of $\Sigma_v$ and $n_+'$ for that of $\Sigma_v'$ (resp. $p_\infty$ and $p_\infty'$ for $v = v_0$), $\alpha_v$ is an isomorphism of irreducible multi-scale lines $(\Sigma_v,n_+,\omega_v,N_v) \to (\Sigma_v',n_+',\omega_v',N_v')$. Thus, the isomorphism class of $(\Sigma,p_\infty,\preceq,\omega_\bullet,p_\infty)$ is determined by \eqref{E:integrals}.

    When $\alpha$ is a complex projective isomorphism, we have again $\{\alpha_v:v\in V(\Gamma)\}$ except that $\alpha_v^*(\omega_v') = c_v\omega_v$, where $c_v\in \bC^*$ is a constant depending only on the level of $v$, and equal to $1$ on level $0$. The second claim follows.
\end{proof}

For any stable $n$-marked multi-scale line $\Sigma$, let 
\begin{equation}
\label{E:Iijdefn}
    I_{ij}:= \int_{n_i}^{n_j}\omega_{v}
\end{equation}
where $v:= h(i)\vee h(j) \in V(\Sigma)$ and $n_i,n_j\in \Sigma_v$ denote either 
\begin{enumerate}
    \item[(1)] the marked points $p_i$ and $p_j$ if $h(i)=h(j)$; or
    \item[(2)] the nodes in $\Sigma_{v}$ that connect to $\Sigma_{h(i)}$ and $\Sigma_{h(j)}$, respectively. 
\end{enumerate} 
The integral is taken along any path connecting $n_i$ and $n_j$ in the smooth locus of $\Sigma_v$. For any given dual level tree $\Gamma$, we introduce the following subset of $\Mmscbar_n$,
\[
    U_\Gamma := \left\{ \Sigma \in \mscbar_n \left| \begin{array}{c} \exists \text{ contraction } \Gamma \twoheadrightarrow \Gamma(\Sigma), \text{ and} \\\Pi_{ij}(\Sigma) \neq 0 \text{ if } h(i)\neq h(j) \text{ in } \Gamma \end{array} \right. \right\}.
\]
Now consider a stable $n$-marked multi-scale line $\Sigma$ whose isomorphism class lies in $U_\Gamma$, and let $f : \Gamma \twoheadrightarrow \Gamma(\Sigma)$ be the associated contraction. Suppose $\Gamma/{\sim} = [\ell]$ and for each $1\le m \le \ell$ choose a vertex $v_m$ on level $m$. Also, choose $i_m,j_m \in \{1,\ldots, n\}$ such that $h(i_m)\vee h(j_m) = v_m$. Let $s_0 = 1$ and for each $1\le m \le \ell$, let $s_m = I_{i_mj_m}$. Next, for $1\le m \le \ell$, let 
\[
        t_m := \left\{ \begin{array}{ll} s_{m-1}/s_{m}, & \text{if } f(v_m) \sim f(v_{m-1}) \\
        0& \text{otherwise}.\end{array} \right.
\] 
Finally, for any $i,j\in \{1,\ldots, n\}$ such that $h(i)\vee h(j)$ is on level $m$, let 
\[
    z_{ij} := \frac{I_{ij}}{s_m} = \frac{1}{s_m} \int_{n_i}^{n_j}\omega_{f(v_m)}
\] 
for all $i,j \in \{1,\ldots, n\}$. Let $A$ denote the set of pairs $i<j$ such that $h(i)\ne h(j)$ in $V(\Gamma)$ and $B$ the set of pairs with $h(i) = h(j)$. 

\begin{lem}
\label{L:equivalentindices}
    There is a well-defined map
    \begin{equation}
    \label{E:Mn_coordinates}
        (z_{ij},t_m)_{\substack{1\leq i<j \leq n\\ 1 \leq m \leq \ell}} : U_\Gamma \to (\bC^\ast)^A \times \bC^B \times \bC^\ell.
    \end{equation}
    If different vertices $v_m$ and indices $i_m,j_m$ are chosen for each $1\le m \le \ell$, the resulting function differs from the original by composition with a monomial automorphism of $(\bC^\ast)^A \times \bC^B \times \bC^\ell$.
\end{lem}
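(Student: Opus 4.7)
The plan has two parts: verify well-definedness of the map on complex projective isomorphism classes, and then verify that the auxiliary choices only affect it by a monomial automorphism.

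For well-definedness, I would use the fact that a complex projective isomorphism acts by $\omega_v \mapsto c_v \omega_v$, where $c_v$ depends only on the level of $v$ in $\Gamma(\Sigma)$ and $c_v = 1$ on the terminal level. Since a contraction $f : \Gamma \twoheadrightarrow \Gamma(\Sigma)$ preserves $\preceq$, it preserves the level-equivalence $\sim$, so all vertices of $\Gamma$ on a common level $m$ map to vertices of $\Gamma(\Sigma)$ on a single common level. Consequently, $s_m = I_{i_m j_m}$ rescales by one constant depending only on $m$, and for every $(i,j)$ with $h(i) \vee h(j)$ on level $m$ of $\Gamma$, the integral $I_{ij}$ rescales by the same constant. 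Thus $z_{ij} = I_{ij}/s_m$ is invariant, and $t_m$ is either invariant (when $f(v_m) \sim f(v_{m-1})$) or identically $0$. The target verification is then immediate: for $(i,j) \in A$ with $h_\Sigma(i) \neq h_\Sigma(j)$, $I_{ij}$ is the period between two distinct descending nodes on a $\bP^1$-component whose differential has its sole pole at the ascending node, hence nonzero; for $(i,j) \in A$ with $h_\Sigma(i) = h_\Sigma(j)$, the defining condition of $U_\Gamma$ forces $\Pi_{ij}(\Sigma) = I_{ij} \neq 0$. Each $s_m$ for $m \geq 1$ is itself a type-$A$ integral and therefore nonzero, so $t_m \in \bC$ is well-defined.

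For the monomial compatibility, let $(v_m, i_m, j_m)$ (choice A) and $(v'_m, i'_m, j'_m)$ (choice B) produce data $s^\bullet_m, z^\bullet_{ij}, t^\bullet_m$. The central observation is that $h_\Gamma(i'_m) \vee h_\Gamma(j'_m) = v'_m$ lies on level $m$ of $\Gamma$, so
\[
    z^A_{i'_m j'_m} = \frac{I_{i'_m j'_m}}{s^A_m} = \frac{s^B_m}{s^A_m} \in \bC^*.
\]
Direct computation then gives
\[
    z^B_{ij} = \frac{z^A_{ij}}{z^A_{i'_m j'_m}} \quad \text{whenever } h(i)\vee h(j) \text{ lies on level } m \text{ of } \Gamma,
\]
and, using that $f$ preserves $\sim$ so that the vanishing loci of $t^A_m$ and $t^B_m$ coincide,
\[
    t^B_m = t^A_m \cdot \frac{z^A_{i'_{m-1} j'_{m-1}}}{z^A_{i'_m j'_m}} \quad \text{when nonzero},
\]
with the convention that the scale at level $0$ is trivial. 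These expressions are Laurent monomials in the $A$-coordinates involving only the $(\bC^*)^A$-variables; swapping the roles of $A$ and $B$ produces the inverse, so the change of coordinates is a monomial automorphism of $(\bC^*)^A \times \bC^B \times \bC^\ell$.

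The argument is essentially a bookkeeping exercise, and the only delicate point is to confirm that each denominator $z^A_{i'_m j'_m}$ really lies in the $(\bC^*)^A$-block rather than in the $\bC^B$-block, so that the monomial transformation is genuinely invertible. This is automatic from the requirement that $v'_m$ is non-terminal (on level $m \geq 1$ of $\Gamma$), which forces $h_\Gamma(i'_m) \neq h_\Gamma(j'_m)$ and hence $(i'_m, j'_m) \in A$. I do not anticipate any further substantive obstruction.
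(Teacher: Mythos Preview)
Your proof is correct and follows essentially the same approach as the paper: both verify that the new coordinates are Laurent monomials in the old ones by direct computation from the definitions. The paper changes a single pair $(i_m,j_m)$ at a time and composes, whereas you handle all levels simultaneously via the ratios $z^A_{i'_m j'_m}=s^B_m/s^A_m$; you also make the well-definedness on complex projective isomorphism classes explicit, which the paper records separately in the following corollary.
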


\begin{proof}
    It suffices to prove the result changing one pair of indices $(i_m,j_m) = (i,j)$ to $(i_m',j_m') = (k,\ell)$ on some level $1\le m \le \ell$. Write $z_{ij}'$ for the coordinates defined with respect to the indices $(i_m',j_m')$. By inspection, $z_{\alpha\beta} = z_{\alpha\beta}'$ for all $\alpha,\beta$ with $h(\alpha)\vee h(\beta)$ not on level $m$. Otherwise, we have $z_{k\ell}' = z_{ij}$ and $z_{\alpha \beta}' = z_{\alpha\beta}/z_{k\ell}$. For $k\not\in \{m,m+1\}$, one has $t_k' = t_k$. Finally, $t_m' = t_m\cdot z_{k\ell}$ and $t_{m+1}' = t_{m+1}/z_{k\ell}$. It is an exercise to verify that this map is invertible.\endnote{Since $z_{\alpha\beta} = I_{\alpha\beta}/I_{ij}$ and $z_{\alpha\beta}' = I_{\alpha\beta}/I_{k\ell}$, the inverse map is given by $z_{\alpha\beta} = z_{\alpha\beta}'\cdot (z_{ij}')^{-1}$. Furthermore, one can verify that $t_k = t_k'$ for $k\ne m,m+1$, $t_{m+1} = t_{m+1}'/z_{ij}'$, and $t_m = t_m'\cdot z_{ij}'.$}
\end{proof}

The functions $(z_{ij},t_m)$ depend on a choice of level tree $\Gamma$ which defines the open set $U_\Gamma$ and indices $\{(i_m,j_m)\}_{m=1}^{\ell}$. We will always suppress the choice of indices from the notation, however to emphasize $\Gamma$ we may write $(z_{ij}^\Gamma,t_m^\Gamma)$. By a \emph{choice of indices for} $\Gamma$, we mean a choice of $\{(i_m,j_m)\}_{m=1}^{\ell}$ as above.\endnote{By \Cref{L:equivalentindices}, the choice of indices only alters the map $U_\Gamma \to (\bC^*)^A\times \bC^B \times \bC^\ell$ by a simple type of automorphism. However, these indices are analogous to choosing a basis of a vector space and different choices are technically useful in different scenarios.} The default notation will be to write an alternative choice of indices as $\{(i_m',j_m')\}_{m=1}^{\ell}$ and the resulting coordinates as $(z_{ij}',t_m')$.

\begin{rem}
\label{R:periodsintermsofzij}
    We let $*$ denote the tree with one vertex and note that $U_* = \Mmscbar_n^\circ$. There are no $t$-coordinates for $\Gamma = *$ and under the identification $U_* = \Mmscbar_n^\circ = \bC^n/\bC$, $z_{ij}^*$ corresponds to $\Pi_{ij}$.
\end{rem}

\begin{cor}
\label{C:determinedbyz}
    Let $\Sigma$ be given whose equivalence class lies in $U_\Gamma$. $\{z_{ij}(\Sigma),t_m(\Sigma)\}$ depend only on the complex projective isomorphism class of $\Sigma$. Furthermore, if $\Gamma(\Sigma) = \Gamma$, then $\Sigma$ is uniquely determined up to complex projective isomorphism by $\{z_{ij}(\Sigma)\}_{1\le i<j\le n}$.
\end{cor}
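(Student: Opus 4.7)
The plan is to handle the two claims in turn. For the invariance claim, the key fact is that under a complex projective isomorphism $\Sigma \cong \Sigma'$, the differential $\omega_v$ on each irreducible component rescales by a factor $c_v \in \bC^*$ that depends only on the $\sim$-equivalence class of $v$ in $V(\Gamma(\Sigma))$, with $c_v = 1$ on terminal components. Since the path from $p_i$ to $p_j$ used in \eqref{E:Iijdefn} is preserved under the isomorphism, each integral $I_{ij}$ rescales by exactly $c_{f(h(i)\vee h(j))}$, where $f : \Gamma \twoheadrightarrow \Gamma(\Sigma)$ is the contraction implicit in $\Sigma \in U_\Gamma$. The pivotal observation is that contractions of totally preordered rooted trees preserve $\preceq$ by \Cref{D:specialization_totally_preordered_rooted_tree}(1), hence also preserve $\sim$. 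So whenever $h(i)\vee h(j)$ and $v_m$ both lie on level $m$ in $\Gamma$, their images $f(h(i)\vee h(j))$ and $f(v_m)$ lie in the same $\sim$-class of $\Gamma(\Sigma)$, giving $c_{f(h(i)\vee h(j))} = c_{f(v_m)}$. This makes $z_{ij} = I_{ij}/s_m$ invariant. For $t_m$: in the nonzero case, $f(v_m)\sim f(v_{m-1})$ gives $c_{f(v_m)} = c_{f(v_{m-1})}$, so the ratio $s_{m-1}/s_m$ is invariant, while the zero case is invariant trivially.

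For the determination claim, assume $\Gamma(\Sigma) = \Gamma$. In this case $f$ is the identity, and since $v_m$ and $v_{m-1}$ lie on different levels of $\Gamma$, we have $f(v_m) \not\sim f(v_{m-1})$, so all $t_m$ vanish. It therefore suffices to show that the $z_{ij}$ alone recover $\Sigma$ up to complex projective isomorphism. I would invoke \Cref{L:determinedbyintegrals}, which reduces the problem to recovering the tuple of integrals in \eqref{E:integrals2} from the $z_{ij}$'s. For any $v$ on level $m$ in $\Gamma$ and any node $n \in N_v \setminus \{n_v\}$, I would pick indices $i,j$ so that $p_i$ descends through $n_v$ and $p_j$ through $n$, giving $h(i) \vee h(j) = v$ and $\int_{n_v}^n \omega_v = I_{ij} = s_m \cdot z_{ij}$. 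On the terminal level $m=0$ one has $s_0 = 1$, so $z_{ij}$ directly gives the value of the integral in $\bA^{\nu_0}$. On a non-terminal level $m \geq 1$, all such integrals share the common normalizing factor $s_m$, so the collection of $z_{ij}$ with $h(i)\vee h(j)$ on level $m$, supplemented by $z_{i_m j_m} = 1$, supplies homogeneous coordinates for the required projective point in $\bP^{\nu_m}$. Collecting across levels yields all of \eqref{E:integrals2}, and \Cref{L:determinedbyintegrals} then identifies $\Sigma$ up to complex projective isomorphism.

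The main point requiring care is the first claim when $\Gamma \neq \Gamma(\Sigma)$: one must track how the rescaling factors $c_v$ for $v \in V(\Gamma(\Sigma))$ interact with the level stratification of the refined tree $\Gamma$. Once the observation that contractions preserve the $\sim$-relation is in place, the cancellation in $I_{ij}/s_m$ is automatic, and both claims reduce to careful but routine bookkeeping using \Cref{L:equivalentindices} and \Cref{L:determinedbyintegrals}.
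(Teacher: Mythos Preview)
Your proposal is correct and follows essentially the same approach as the paper: both arguments reduce the invariance of $z_{ij}$ to the fact that $I_{ij}$ and $s_m = I_{i_m j_m}$ lie on the same $\sim$-class of $\Gamma(\Sigma)$ and hence rescale by the same factor under a complex projective isomorphism, and both invoke \Cref{L:determinedbyintegrals} by exhibiting the entries of \eqref{E:integrals2} as (ratios of) suitable $z_{ij}$'s. Your treatment is more explicit than the paper's in two respects---you track the contraction $f:\Gamma\twoheadrightarrow\Gamma(\Sigma)$ carefully to handle the case $\Gamma\neq\Gamma(\Sigma)$, and you separately verify the invariance of $t_m$, which the paper's proof leaves to the reader---but these are elaborations of the same underlying argument rather than a different method.
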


\begin{proof}
    Since each $z_{ij}$ is defined as a ratio $I_{ij}/I_{i_mj_m}$ where $h(i) \vee h(j)$ is on level $m$, it depends only on the complex projective isomorphism class of $\Sigma$. For each $1\le m \le \ell$, choose $n_v\in N_v$ for each $v\in \lambda^{-1}(m)$ such that $n_v$ is the node connecting $\Sigma_v$ to $\Sigma_{h(i_m)}$ when $v = v_m$. $s_m = \int_{n_v}^{n}\omega_{v_m}$ for some $n\in N_{v_m}$, by definition. Choose $1\le m \le \ell$, $v\in \lambda^{-1}(m)$, and $n'\in N_v\setminus \{n_v\}$. Then $z_{ij}(\Sigma) = \int_{n_v}^{n'} \omega_v/\int_{n_{v_m}}^{n}\omega_{v_m}$, where $h(i)\vee h(j) = v$, $n_v$ connects $\Sigma_v$ to $\Sigma_{h(i)}$, and $n'$ connects $\Sigma_v$ to $\Sigma_{h(j)}$. So, all ratios of homogeneous coordinates in \eqref{E:integrals2} can be recovered from $\{z_{ij}(\Sigma)\}$. Similar reasoning shows that each $\int_{n_v}^n \omega_v$ for $v\in \lambda^{-1}(0)$ equals some $z_{ij}(\Sigma)$. The result now follows from \Cref{L:determinedbyintegrals}.
\end{proof}

\begin{lem}
\label{L:intersection}
    Suppose $f:\Gamma\twoheadrightarrow \Gamma'$ is the contraction of levels $1\le i_1<\cdots< i_k\le \ell$. Then $U_\Gamma \cap U_{\Gamma'} = D(t_{i_1}^\Gamma\cdots t_{i_k}^\Gamma)$.
\end{lem}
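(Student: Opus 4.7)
The plan is to prove the two set-theoretic inclusions separately, using the classification of contractions by level-set collapses (\Cref{ex:contractinglevels}) together with the uniqueness statement of \Cref{L:specializations}. A preliminary observation I would first record is that for every $\Sigma \in U_\Gamma$ with canonical contraction $g : \Gamma \twoheadrightarrow \Gamma(\Sigma)$, each $s_m = I_{i_m j_m}$ is nonzero: it is the integral of the nowhere-vanishing holomorphic differential $\omega_{g(v_m)}$ between two distinct nodes/points on $\Sigma_{g(v_m)}$, and $\omega_{g(v_m)} = c\,dz$ in any affine coordinate on the smooth locus. Consequently, the defining formula for $t_m^\Gamma$ gives the equivalence $t_m^\Gamma(\Sigma) \ne 0 \iff g(v_m) \sim g(v_{m-1})$, i.e., the canonical contraction $g$ identifies levels $m-1$ and $m$ of $\Gamma$ in $\Gamma(\Sigma)$.

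For the forward inclusion $U_\Gamma \cap U_{\Gamma'} \subseteq D(t_{i_1}^\Gamma \cdots t_{i_k}^\Gamma)$, I would suppose $\Sigma$ lies in both open sets and pick contractions $g : \Gamma \twoheadrightarrow \Gamma(\Sigma)$ and $g' : \Gamma' \twoheadrightarrow \Gamma(\Sigma)$. Then $g$ and $g' \circ f$ are two contractions $\Gamma \twoheadrightarrow \Gamma(\Sigma)$, and \Cref{L:specializations} guarantees they induce the same order-preserving surjection $V(\Gamma)/{\sim} \twoheadrightarrow V(\Gamma(\Sigma))/{\sim}$. Since $g' \circ f$ factors through $V(\Gamma')/{\sim} \cong [\ell-k]$ by construction, this common surjection identifies all the levels that $f$ does, giving $g(v_{i_j}) \sim g(v_{i_j - 1})$ for each $j = 1,\ldots,k$, hence $t_{i_j}^\Gamma(\Sigma) \ne 0$.

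For the reverse inclusion, I would take $\Sigma \in U_\Gamma$ with $t_{i_j}^\Gamma(\Sigma) \ne 0$ for all $j$. By the preliminary observation, the level-set surjection $V(\Gamma)/{\sim} \twoheadrightarrow V(\Gamma(\Sigma))/{\sim}$ induced by $g$ identifies $i_j - 1$ and $i_j$ for every $j$, so by \Cref{ex:contractinglevels} it factors through the surjection $[\ell] \twoheadrightarrow [\ell-k]$ underlying $f$. Lifting this factorization back to trees via \Cref{L:specializations} yields a contraction $g' : \Gamma' \twoheadrightarrow \Gamma(\Sigma)$ (with $g' \circ f = g$ up to an automorphism of $\Gamma(\Sigma)$). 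To conclude $\Sigma \in U_{\Gamma'}$ I still need $\Pi_{ij}(\Sigma) \ne 0$ whenever $h(i) \ne h(j)$ in $\Gamma'$; this is automatic because the induced map $V(\Gamma)_{\rm{term}} \to V(\Gamma')_{\rm{term}}$ can only merge terminal vertices, so $h(i) \ne h(j)$ in $\Gamma'$ implies $h(i) \ne h(j)$ in $\Gamma$, and the required non-vanishing is already guaranteed by $\Sigma \in U_\Gamma$.

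The main obstacle I anticipate is not conceptual but notational: there are two distinct contractions in play (the fixed $f$ of the hypothesis and the canonical $g$ associated to each point of $U_\Gamma$), and the uniqueness statements of \Cref{L:specializations} must be invoked carefully on both the level of trees and the level of order-preserving surjections of finite totally ordered sets. Once these are kept straight, the argument amounts to a dictionary between these two equivalent descriptions of contractions.
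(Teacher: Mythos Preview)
Your proposal is correct and follows essentially the same approach as the paper. The paper's proof is terser: it first notes (as you do) that for $\Sigma \in U_\Gamma$ the extra $\Pi_{ij}$-condition for $U_{\Gamma'}$ is automatic, then observes that the existence of a contraction $\Gamma' \twoheadrightarrow \Gamma(\Sigma)$ is equivalent to the canonical contraction $g:\Gamma \twoheadrightarrow \Gamma(\Sigma)$ factoring through $f$, which in turn is equivalent to $g$ contracting every level that $f$ does, i.e., to $t_{i_1}^\Gamma \cdots t_{i_k}^\Gamma \neq 0$. Your two-inclusion argument unpacks exactly these equivalences; the only minor point is that your appeal to \Cref{L:specializations} for the equality of level surjections induced by $g$ and $g'\circ f$ is really relying on the uniqueness of a contraction between \emph{marked} dual trees (determined on terminal vertices by the markings, then everywhere by \Cref{L:specializations}), which the paper also uses implicitly.
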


\begin{proof}
    One verifies that $U_\Gamma \cap U_{\Gamma'} = \{\Sigma \in U_\Gamma: \exists \text { contraction } \Gamma'\twoheadrightarrow\Gamma(\Sigma)\}$.\endnote{By definition, $U_\Gamma \cap U_{\Gamma'}$ consists of those $\Sigma$ such that there exist contractions $\Gamma \twoheadrightarrow \Gamma(\Sigma)$ and $\Gamma'\twoheadrightarrow \Gamma(\Sigma)$ and $\Pi_{ij}(\Sigma)\ne 0$ if $h(i) \ne h(j)$ or $h'(i) \ne h'(j)$. However, $h'(i)\ne h'(j)$ implies $h(i) \ne h(j)$ since $\Gamma'$ is a contraction of $\Gamma$. Therefore, if $\Sigma \in U_\Gamma$, the condition of being in $U_{\Gamma'}$ is just that there exists a contraction $\Gamma'\twoheadrightarrow\Gamma(\Sigma)$.} This is equivalent to saying that $g:\Gamma\twoheadrightarrow\Gamma(\Sigma)$ factors through $f$. This, in turn, is equivalent to the statement: if $f$ contracts a level $m$ then $g$ contracts the level $m$. This happens exactly on the locus where $t_{i_1}^\Gamma\cdots t_{i_k}^\Gamma$ does not vanish.
\end{proof}

We use the term \emph{interval} to refer to a set of consecutive integers when it is clear from the context. Such intervals are all of the form $I = \{a,\ldots, a+k\}$ for $a\in \bZ$ and $k\in \bN$. Given such an $I$, let $I^+ := \{a+1,\ldots, a+k\}$.

\begin{lem}
\label{L:goodindicesCOV}
Suppose $\Sigma \in U_\Gamma \cap U_{\Gamma'}$ where $\Gamma \twoheadrightarrow \Gamma'$ is the contraction corresponding to $\alpha:[\ell]\twoheadrightarrow [\ell-k]$, deleting levels $1\le i_1< \cdots < i_k\le \ell$. Given a choice of indices for $\Gamma$, there is a choice of indices for $\Gamma'$ such that 
    \begin{enumerate}
        \item if $h(i)\vee h(j)$ is on level $m$ of $\Gamma$ and 
        \begin{enumerate} 
            \item $k$ is the largest integer such that $\{m-k,\ldots, m\}$ are contracted by $\alpha$ then $z_{ij}' = z_{ij}/t_{m-k}\cdots t_m$; and 
            \item if $m$ is not contracted then $z_{ij}' = z_{ij}$; and 
        \end{enumerate}
        \item for all nonzero $j\in [\ell-k],$ one has $t_j' = \prod_{i\in (\alpha^{-1}\{j-1,j\})_+} t_{i}$.
    \end{enumerate}
\end{lem}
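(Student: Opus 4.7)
The plan is to reduce the general statement to the case of a single-level contraction via the factorization in \Cref{ex:contractinglevels}, and then to verify the formulas by direct computation.

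\smallskip

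\noindent\textit{Reduction to single-level contractions.} By \Cref{ex:contractinglevels}, the map $\alpha : [\ell] \twoheadrightarrow [\ell-k]$ factors uniquely as the composition $\sigma_{i_1-1}^{\ell-k+1}\circ \cdots \circ \sigma_{i_k-1}^\ell$ of single-level degeneracies. Correspondingly, $f : \Gamma \twoheadrightarrow \Gamma'$ factors as $\Gamma = \Gamma_0 \twoheadrightarrow \Gamma_1 \twoheadrightarrow \cdots \twoheadrightarrow \Gamma_k = \Gamma'$, where each intermediate contraction $\Gamma_{p-1} \twoheadrightarrow \Gamma_p$ deletes a single level and the intersection $U_{\Gamma_{p-1}} \cap U_{\Gamma_p}$ contains $U_\Gamma \cap U_{\Gamma'}$. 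The formulas in (1) and (2) are multiplicative in $\alpha$: if we assume they hold at each single-level step, then the composed formula is obtained by a telescoping manipulation of products of $t_i$'s, using only the identity $t_p = s_{p-1}/s_p$ on the good locus. Carrying out this bookkeeping reduces the lemma to the case where $\alpha$ deletes a single level $a \in \{1,\ldots,\ell\}$.

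\smallskip

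\noindent\textit{The single-level case.} For $\alpha = \sigma_{a-1}^\ell$ one has $\alpha^{-1}(j)=\{j\}$ for $j<a-1$, $\alpha^{-1}(a-1)=\{a-1,a\}$, and $\alpha^{-1}(j)=\{j+1\}$ for $j \geq a$. On $U_\Gamma \cap U_{\Gamma'}$, \Cref{L:intersection} gives $t_a \neq 0$, so $f(v_a) \sim f(v_{a-1})$ in $\Gamma(\Sigma)$. Starting from the given indices $\{(i_m,j_m)\}_{m=1}^\ell$ for $\Gamma$, I would define indices for $\Gamma'$ by taking $v_j' := f(v_{m(j)})$ and $(i_j',j_j') := (i_{m(j)},j_{m(j)})$ outside the merged level, and at the merged level $j=a-1$ make the unique choice forced by the formulas in (1) and (2); the fact that contractions preserve joins (see the proof of \Cref{L:specializations}) ensures that $h'(i_j')\vee_{\Gamma'} h'(j_j') = v_j'$, so this is a valid choice of indices for $\Gamma'$.

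\smallskip

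\noindent\textit{Verification.} With a choice of indices in hand, $s_j'$ becomes an integral on a specific component $\Sigma_{g'(v_j')}$ of $\Sigma$, and since $g' \circ f = g$, this component coincides with $\Sigma_{g(v_m)}$ for the corresponding $m \in \alpha^{-1}(j)$. Consequently each $s_j'$ is literally equal to an $s_m$ (and is, in particular, well-defined up to the same level-wise scalar as in \Cref{D:multiscaleline}). Substituting these identifications into the definitions $z_{ij}' = I_{ij}/s_{\alpha(m)}'$ and $t_j' = s_{j-1}'/s_j'$, collecting terms, and expressing each ratio as a telescoping product of $t_p = s_{p-1}/s_p$ yields precisely the formulas in (1) and (2). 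Finally, the single-step formulas compose according to the bookkeeping described above to produce the general formulas, completing the proof.

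\smallskip

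\noindent\textit{Main obstacle.} The bookkeeping at the merged level $j=a-1$ is the delicate step: the two elements of $\alpha^{-1}(a-1)$ correspond, under $f$, to the same vertex of $\Gamma'$ but to \emph{a priori} different index pairs in $\Gamma$, and one must verify that the choice made there is consistent both with (1)(a), (1)(b) applied to $m=a-1$ and $m=a$, and with (2) for $j=a-1$ and $j=a$. Once this is done, the composition argument over multiple single-level contractions is straightforward, relying only on the multiplicative structure of both sides of the formulas.
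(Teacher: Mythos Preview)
Your proposal is correct and follows essentially the same approach as the paper: reduce to single-level contractions via the factorization of \Cref{ex:contractinglevels}, verify the formulas directly in that case, and then compose inductively. The paper is slightly more explicit in one respect: it fixes the indices for $\Gamma'$ uniformly by the rule $(i_m',j_m') := (i_{\min \alpha^{-1}(m)},\, j_{\min \alpha^{-1}(m)})$, so that $s_m' = s_{\min \alpha^{-1}(m)}$, rather than leaving the choice at the merged level to be back-solved from the desired formulas as you do.
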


\begin{proof}
    For $m\in [\ell-k]$, let $(i_m',j_m') = (i_{\min \alpha^{-1}(m)},j_{\min \alpha^{-1}(m)})$. This is sensible since if $h(i)\vee h(j)$ lies on a level in $\alpha^{-1}(m)$, then $h'(i)\vee h'(j)$ lies on level $m$. As a consequence, we have $s_m' := s_{\min \alpha^{-1}(m)}$.

    Suppose a single level $m$ is contracted. In this case, a direct calculation shows that $z_{ij}' = z_{ij}/t_m$ if $h(i)\vee h(j)$ is on level $m$ and that $z_{ij}' = z_{ij}$ otherwise. Also, one can verify that $t_p' = t_p$ for all $p<m$, $t_m' = t_m\cdot t_{m+1}$, and $t_p' = t_{p+1}$ for all $p>m$.\endnote{Suppose first that $h(i)\vee h(j)$ is on level $m$. In that case, $h'(i)\vee h'(j)$ is on level $m-1$ and one has 
    \[
        z_{ij}' = \frac{I_{ij}}{s_m'} = \frac{I_{ij}}{s_{m-1}} = \frac{I_{ij}}{s_m}\cdot \frac{s_m}{s_{m-1}} = \frac{z_{ij}}{t_m}
    \]
    as claimed. If $h(i)\vee h(j)$ is on level $p<m$ then so is $h'(i)\vee h'(j)$ and so $z_{ij}' = I_{ij}/s_p' = I_{ij}/s_p = z_{ij}$. If $h(i)\vee h(j)$ is on level $p>m$, then $h'(i)\vee h'(j)$ is on level $p-1$ and one has $z_{ij}' = I_{ij}/s_{p-1}' = I_{ij}/s_{p} = z_{ij}.$

    Next, for $p\le m-1$, $t_p' = s_{p-1}'/s_p' = s_{p-1}/s_p = t_p$. For $p = m$, $t_m' = s_{m-1}'/s_m' = s_{m-1}/s_{m+1} = t_m\cdot t_{m+1}$. Finally, when $p>m$, $t_p' = s_{p-1}'/s_p' = s_p/s_{p+1} = t_{p+1}$.}

    Consider the contraction of levels $i_1<\cdots<i_k$, $\Gamma\twoheadrightarrow \Gamma''$, recalling from \Cref{ex:contractinglevels} that all contractions are of this form. It factorizes as $\Gamma\twoheadrightarrow\Gamma'\twoheadrightarrow\Gamma''$ where $\Gamma\twoheadrightarrow\Gamma'$ is the contraction of levels $i_2<\cdots<i_{k}$ and corresponds to $\beta:[\ell]\twoheadrightarrow [\ell-k+1]$ and $\Gamma'\twoheadrightarrow\Gamma''$ contracts $\beta(i_1) = i_1$ and corresponds to $\gamma:[\ell-k+1]\twoheadrightarrow[\ell-k]$.\endnote{In the notation of \Cref{ex:contractinglevels}, $\beta$ can be written as $\sigma_{i_2-1}\circ \cdots \circ \sigma_{i_k-1}$ (with some indices suppressed) and since $i_1\le i_2-1<\cdots<i_k-1$ we have that $\beta(i_1) = i_1$.} 
    The general result follows from induction on the number of levels contracted, using the factorization of $\alpha$ as $\gamma \circ \beta$ like above.\endnote{If $p<i_1$ is given then $\gamma^{-1}(\{p-1,p\}) = \{p-1,p\}$ and so $t_p'' = t_p'$. Similar reasoning shows that $t_p' = t_p$. Next, $\gamma^{-1}(\{i_1-1,i_1\}) = \{i_1-1,i_1,i_1+1\}$ and $t_{i_1}'' = t_{i_1}'t_{i_1+1}'$. Next, $\beta^{-1}(\{i_1-1,i_1\}) = \{i_1-1,i_1\}$ so $t_{i_1}' = t_{i_1}$. By induction, $t_{i_1+1}' = \prod_{p\in \beta^{-1}(\{i_1,i_1+1\})_+} t_p$. In particular, $t_{i_1}'' = \left(\prod_{p\in \beta^{-1}(\{i_1,i_1+1\})_+} t_p\right) \cdot t_{i_1}$. Since $\alpha^{-1}(\{i_1-1,i_1\}) = \beta^{-1}(\{i_1-1,i_1,i_1+1\}) = \{i_1-1\}\sqcup \beta^{-1}(\{i_1,i_1+1\})$ the claimed formula follows for $t_{i_1}''$. If $p>i_1$, then $t_p'' = t_{p+1}' = \prod_{k\in \beta^{-1}(\{p+1,p+2\})_+} t_k$ by induction.
    
    Finally, if $h(i)\vee h(j)$ is on level $m<i_1$ then $z_{ij}'' = z_{ij}' = z_{ij}$. If $h(i)\vee h(j)$ is on level $m = i_1$, then $z_{ij}'' = z_{ij}'/t_{i_1}' = z_{ij}/t_{i_1}$. Finally, if $h(i)\vee h(j)$ is on level $m > i_1$ there are two cases. If $m-k = i_1$, then $z_{ij}'' = z_{ij}'/t_{i_1}' = z_{ij}/t_{i_1}'t_{m-k+1}\cdots t_m = z_{ij}/t_{m-k}\cdots t_m$. If $m-k>i_1$, then $z_{ij}'' = z_{ij}' = z_{ij}/t_{m-k}\cdots t_m$.}
\end{proof}

\begin{cor}
\label{C:goodCOVgeneric}
    For any $\Gamma$, on $U_* \cap U_\Gamma$ one has $\Pi_{ij} = z_{ij}/t_{1}\cdots t_m$ where $h(i)\vee h(j)$ is on level $m\ge 1$ of $\Gamma$. If $h(i)\vee h(j)$ is on level $0$ then $\Pi_{ij} = z_{ij}$.
\end{cor}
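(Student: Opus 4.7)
The plan is to deduce this corollary directly from \Cref{L:goodindicesCOV} applied to the unique contraction $f : \Gamma \twoheadrightarrow *$ that contracts every level of $\Gamma$, combined with \Cref{R:periodsintermsofzij} which identifies $z_{ij}^* = \Pi_{ij}$. Since $*$ has only the level $0$, no choice of indices $(i_m', j_m')$ for $*$ is needed, so the formula produced by \Cref{L:goodindicesCOV} involves only the coordinates $(z_{ij}^\Gamma, t_m^\Gamma)$ associated to the given indices for $\Gamma$.

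The key steps, in order, are: (i) Write $\Gamma/{\sim} \cong [\ell]$ and observe that the map $\Gamma \twoheadrightarrow *$ corresponds to the surjection $\alpha : [\ell] \twoheadrightarrow [0]$ which deletes every level $1, 2, \ldots, \ell$. (ii) Apply \Cref{L:goodindicesCOV} with this $\Gamma' = *$: on $U_\Gamma \cap U_*$, for any pair $i,j$ with $h(i) \vee h(j)$ on level $0$ of $\Gamma$, statement (1)(b) gives $z_{ij}^* = z_{ij}^\Gamma$; for any pair with $h(i) \vee h(j)$ on level $m \geq 1$, statement (1)(a) gives $z_{ij}^* = z_{ij}^\Gamma/(t_{m-k}^\Gamma \cdots t_m^\Gamma)$, where $k$ is the largest integer so that $\{m-k, \ldots, m\}$ lies entirely in the deleted set. (iii) Compute $k$: since every positive level is deleted but level $0$ is not, the largest such block ending at $m \geq 1$ is $\{1, 2, \ldots, m\}$, so $k = m - 1$ and the denominator is $t_1^\Gamma t_2^\Gamma \cdots t_m^\Gamma$. (iv) Finally, by \Cref{R:periodsintermsofzij} the coordinate $z_{ij}^*$ on $U_* = \Mmscbar_n^\circ$ agrees with the period $\Pi_{ij}$, yielding both asserted formulas.

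There is essentially no obstacle here; the only thing to verify carefully is that the prescription $(i_m', j_m') = (i_{\min \alpha^{-1}(m)}, j_{\min \alpha^{-1}(m)})$ in the proof of \Cref{L:goodindicesCOV} is vacuous for $* $ (since $\ell - k = 0$), so that applying the lemma does not require any auxiliary choice and the resulting identity is precisely the one obtained from $z_{ij}^* = \Pi_{ij}$.
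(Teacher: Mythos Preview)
Your proposal is correct and takes essentially the same approach as the paper, which simply states that the corollary is an immediate consequence of \Cref{L:goodindicesCOV}. You have just carefully spelled out the details: applying the lemma to the contraction $\Gamma \twoheadrightarrow *$ deleting all positive levels, computing $k = m-1$ so that the denominator is $t_1\cdots t_m$, and invoking \Cref{R:periodsintermsofzij} for the identification $z_{ij}^* = \Pi_{ij}$.
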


\begin{proof}
    This is an immediate consequence of \Cref{L:goodindicesCOV}.
\end{proof}

\begin{prop}
\label{P:identification}
    For any dual level tree $\Gamma$ and choice of indices, \eqref{E:Mn_coordinates} is injective and identifies $U_\Gamma$ with a smooth algebraic variety of dimension $n-1$. 
\end{prop}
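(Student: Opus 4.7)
The strategy is to describe the image of the map \eqref{E:Mn_coordinates} as an explicit smooth affine subvariety $V_\Gamma \subset (\bC^\ast)^A \times \bC^B \times \bC^\ell$, and to construct an inverse $V_\Gamma \to U_\Gamma$. I begin by identifying the defining relations of $V_\Gamma$: (a) normalization $z_{i_m j_m} = 1$ for each $m$; (b) additivity $z_{ij} + z_{jk} = z_{ik}$ whenever the three pairwise joins $h(i) \vee h(j)$, $h(j) \vee h(k)$, $h(i) \vee h(k)$ coincide; and (c) rigidity $z_{ij} = z_{i'j'}$ whenever $h(i) \vee h(j) = h(i') \vee h(j')$ is a non-terminal vertex $v$ and the pair of children of $v$ containing $\{h(i), h(j)\}$ agrees with that containing $\{h(i'), h(j')\}$. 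All three relations follow from the definition of $z_{ij}$ as a normalized integral of $\omega_v$ along a path between prescribed nodes on $\Sigma_v$. Since these relations are linear (or affine), they cut out a smooth linear subspace of $\bC^A \times \bC^B \times \bC^\ell$, and intersecting with the open condition $(\bC^\ast)^A$ gives a smooth affine variety $V_\Gamma$.

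Next, I construct the inverse. Given $(z_{ij}, t_m) \in V_\Gamma$, the dual tree of the reconstructed multiscale line is the contraction of $\Gamma$ along the set of levels $\{m : t_m = 0\}$, as in \Cref{L:intersection}. Setting $s_0 = 1$ and $s_m = s_{m-1}/t_m$ recursively on surviving levels, one identifies each surviving component $\Sigma_v$ at level $m$ with $(\bC, dz)$ via an isomorphism sending $\omega_v \mapsto s_m \, dz$. The rigidity and additivity relations then specify the positions of descending nodes and marked points on $\Sigma_v$ as appropriate multiples of the $z_{ij}$, while the normalization relations $z_{i_m j_m} = 1$ ensure consistency of the recursion across levels. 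One checks directly that the map so obtained, composed with \eqref{E:Mn_coordinates}, gives the identity in both directions; in particular, \eqref{E:Mn_coordinates} is injective, and its image coincides with $V_\Gamma$.

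To compute the dimension, observe that complex projective equivalence acts on $\omega_v$ for non-terminal $v$ by a scalar $c_v$ depending only on the level $\lambda(v)$. Hence at each level $m \geq 1$, the free moduli consist of the positions of descending nodes on the non-terminal components at level $m$, modulo simultaneous translation on each component and a single global $\bC^\ast$-scaling, contributing $\sum_{v \in \lambda^{-1}(m)}(d_v - 1) - 1$ dimensions, where $d_v$ is the descending valence of $v$. Level $0$ contributes $n - |V_t|$ dimensions from the marked point positions on terminal components. Summing these contributions and using $\sum_{v \text{ non-term}} d_v = |V| - 1$ together with $|V| = |V_t| + |V_{\rm nt}|$ yields $n - 1 - \ell$ from the $z$-coordinates, to which one adds $\ell$ from the free parameters $t_m$ for a total of $n - 1$.

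The main technical subtlety will be verifying that the inverse construction is genuinely independent of the choice of indices $(i_m, j_m)$ and the local coordinates used on each component; this reduces to a bookkeeping argument using relations (a)--(c), together with the compatibility observed in \Cref{L:equivalentindices} between different choices of indices (which establishes that the ambiguity is only a monomial change of coordinates on the target, under which $V_\Gamma$ is preserved).
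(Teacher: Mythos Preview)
Your overall strategy---identify the image as a linear subspace $V_\Gamma$ cut out by normalization, additivity, and rigidity relations, then build an explicit inverse---is a legitimate alternative to the paper's approach. The paper instead selects a subset $A'\cup B'\subset A\cup B$ of ``free'' coordinates of size $n-1-\ell$, shows that the projection $U_\Gamma\to\bC^{A'}\times\bC^{B'}\times\bC^\ell$ is a bijection onto a hyperplane-arrangement complement, and then observes that the remaining $z_{ij}$ are determined by a smooth map $g$ in the free ones, so that the full image is the graph of $g$. This sidesteps the need to verify completeness of the relations or to construct an inverse from the full target.

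However, your inverse construction contains a genuine error. You write that the dual tree of the reconstructed multiscale line is the contraction of $\Gamma$ along the levels $\{m:t_m=0\}$; this is backwards. From the definition, $t_m=0$ precisely when $f(v_m)\not\sim f(v_{m-1})$, i.e.\ when level $m$ \emph{survives} in $\Gamma(\Sigma)$; and \Cref{L:intersection} confirms that $\Gamma(\Sigma)$ is the contraction of $\Gamma$ along $\{m:t_m\neq 0\}$. (At the deepest stratum $\Gamma(\Sigma)=\Gamma$ one has all $t_m=0$, whereas on the interior $U_*$ one has all $t_m\neq 0$.) This is not merely a typo: your subsequent recursion $s_m=s_{m-1}/t_m$ ``on surviving levels'' would then require dividing by zero, and in any case the $s_m$ are not invariants of the complex projective class when level $m$ survives---they are quotiented out---so there is nothing to recover. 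The correct reconstruction instead uses the nonzero $t_m$ on \emph{contracted} levels to rescale the various $z_{ij}$ (with joins at different levels of $\Gamma$) onto a common component of $\Sigma$, exactly as in the change-of-coordinates formula of \Cref{L:goodindicesCOV}. Once this is fixed, your dimension count and the completeness of relations (a)--(c) would still need to be tied together: as written, your moduli-theoretic count establishes $\dim U_\Gamma=n-1$, not $\dim V_\Gamma$, and these only agree once the inverse is shown to be a genuine bijection.
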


\begin{proof}
    For each $v\in V(\Gamma)$, consider $N_v$ as in the discussion preceding \Cref{L:determinedbyintegrals}. When $v = v_m$ for $1\le m\le \ell$, we choose $n_v$ to be the node connecting $\Sigma_{v_m}$ to $\Sigma_{h(i_m)}$. For each non-terminal $v$ and each $n \in N_v$, choose $\iota_n \in \{1,\ldots, n\}$ such that $n$ connects $\Sigma_v$ to $\Sigma_{h(\iota_n)}$. We write $\iota_v = \iota_{n_v}$. When $v = v_m$, choose $\{\iota_n:n \in N_{v_m}\}$ such that it contains $i_m,j_m$. When $v$ is terminal, $N_v = \{p_i:h(i) = v\}$ and we define $\iota_{p_i} = i$. Consider 
    \[
    A' := \left(\bigcup_{v \ne \lambda^{-1}(0)}\{(\iota_v,\iota_n)|n\in N_v\setminus n_v\}\right) \setminus \{(i_m,j_m)\}_{m=1}^{\ell}, \text{ and}\]
    \[
        B' := \bigcup_{v\in \lambda^{-1}(0)}\{(\iota_v,\iota_i)|p_i\in N_v\setminus n_v\}. 
    \]
    One has $A'\subset A$ and $B'\subset B$ and $|A'\cup B'| = n - 1 - \ell$. Consider $\bC^A := \Spec \bC[Z_{ij}:(i,j)\in A]$, $\bC^B := \Spec \bC[Z_{ij}:(i,j) \in B]$ and $\bC^\ell := \Spec \bC[T_1,\ldots, T_\ell]$. If we define $\bC^{A'}$ and $\bC^{B'}$ in the analogous fashion, then there is a natural inclusion map $\bC^{A'}\times \bC^{B'}\times \bC^\ell \hookrightarrow \bC^A\times \bC^B\times \bC^\ell$. Define $\varphi: U_\Gamma \to \bC^{A'}\times \bC^{B'}\times \bC^\ell$ by $\varphi^*(Z_{ij}) = z_{ij}$ and $\varphi^*(T_m) = t_m$ for all relevant indices. One can verify that $V := \im(\varphi)$ is the complement of a hyperplane arrangement defined by the conditions $Z_{\iota_v\iota_n} \ne 0$ and $Z_{\iota_v\iota_n} \ne Z_{\iota_v\iota_{n'}}$ for all $v$ and $n,n'\in N_v\setminus \{n_v\}$.\endnote{Elements of $\im(\varphi)$ satisfy these nonvanishing conditions, by the description of the elements of $U_\Gamma$. Indeed, suppose $\Gamma\twoheadrightarrow\Gamma'$ is a contraction and $\Sigma$ is given with $\Gamma(\Sigma) = \Gamma'$. If $z_{\iota_v\iota_n}(\Sigma) = 0$, it must be the case that $h'(\iota_v)= h'(\iota_n)$. Now, if $h(\iota_v) = h(\iota_n)$, then this does not correspond to one of the hyperplanes which has been removed. If $h(\iota_v) \ne h(\iota_n)$, the condition $z_{\iota_v\iota_n}(\Sigma) = 0$ is equivalent to $\Pi_{\iota_v\iota_n}(\Sigma) = 0$ which is prohibited by definition of $U_\Gamma$. Analogous reasoning applies for the conditions $z_{\iota_v\iota_n} = z_{\iota_v\iota_{n'}}$. On the other hand, given a set of values $\{z_{ij}(\Sigma),t_m(\Sigma)\} \in V$ where $(i,j)\in A'\cup B'$ one can freely construct $\Sigma \in U_\Gamma$ with these coordinates.} In particular, $V$ is a quasi-affine variety.

    Finally, we define a morphism $g:\bC^{A'}\times \bC^{B'}\times \bC^\ell \to \bC^{A\setminus A'}\times \bC^{B\setminus B'}$: For all $(i,j)\in (A\setminus A')\cup (B\setminus B')$ such that $h(i)\vee h(j) = v$, let $g^*(Z_{ij}) = Z_{\iota_v\iota_{n'}} - Z_{\iota_v\iota_n}$ where $\Sigma_v$ is connected to $\Sigma_{h(j)}$ by $n'$ and to $\Sigma_{h(i)}$ by $n$. In particular, $g$ is a smooth morphism which restricts to a map $g:V\to (\bC^*)^{A\setminus A'}\times \bC^{B\setminus B'}$. It follows that $(\id,g)\circ \varphi:U_\Gamma \to (\bC^*)^A\times \bC^B\times \bC^\ell$ equals \eqref{E:Mn_coordinates} and in particular this identifies the image of $U_\Gamma$ with the graph of a smooth morphism of quasi-affine varieties. Projection to $V$ implies that the image is dimension $n-1$.\endnote{Consider a morphism $f:V\to W$ of quasi-affine varieties. $\Gamma_f\subset V\times W$ is defined by $\{(x,f(x)):x\in V\}$. $\Gamma_f$ is a quasi-affine variety itself; writing $V\subset \bA^n$ and $W\subset \bA^m$ and $f$ as $f(x) = (f_1(x),\ldots, f_m(x))$ one has that $\Gamma_f$ is defined by the vanishing of the equations $\{y_i-f_i(x)\}_{i=1}^m$.
    
    Next, there is a canonical map $(\id,f):V\to \Gamma_f$ by $x\mapsto (x,f(x))$ which is surjective. In our situation, we need to verify that $(\id,g)\circ \varphi$ is the same as \eqref{E:Mn_coordinates}. If $(i,j) \in A'\cup B'$, one has $(\id,g)^*(Z_{ij}) = Z_{ij}$, $\varphi^*(Z_{ij}) = z_{ij}$. If $(i,j)\in (A\setminus A')\cup (B\setminus B')$, then $(\id,g)^*(Z_{ij}) = Z_{\iota_v\iota_{n'}} - Z_{\iota_v\iota_n}$, where $n'$ and $n$ are as in the body of the proof, and so $\varphi^*((\id,g)^*(Z_{ij})) = z_{\iota_vj} - z_{\iota_vi} = z_{ij}$ and we are done.}
\end{proof}

\begin{rem}
\label{R:projectioncoords}
    In the proof of \Cref{P:identification}, we have shown more\-over that each $U_\Gamma$ is algebraically isomorphic to the complement of a hyper\-plane arrangement in $\bC^{n-1}$: indeed, projection from the image of $U_\Gamma$ to $V$ induces the isomorphism and $\{z_{ij},t_m:(i,j)\in A'\cup B',1\le m\le \ell\}$ form a system of algebraic coordinates on $U_\Gamma$. 
\end{rem}

\begin{thm}
\label{T:spaceconstruction}
The set $\mscbar_n$ admits a unique structure of a complex algebraic variety such that every $U_\Gamma$ is a Zariski open subset, and the functions \eqref{E:Mn_coordinates} are closed immersions. With respect to this structure:
\begin{enumerate}
    \item the locus consisting of irreducible curves is open dense;
    \item the functions $\Pi_{ij}$ on $\bC^n/\bC$ extend to morphisms $\Mmscbar_n \to \bP^1$; and
    \item $\Mmscbar_n$ is smooth and proper of dimension $n-1$.
\end{enumerate}
\end{thm}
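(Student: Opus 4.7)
The plan is to construct $\mscbar_n$ as an algebraic variety by gluing the charts $U_\Gamma$ using the coordinate realizations from \Cref{P:identification}. I first check that $\{U_\Gamma\}_\Gamma$ covers $\mscbar_n$: any stable $n$-marked multiscale line $\Sigma$ lies in $U_{\Gamma(\Sigma)}$ via the identity contraction, since $\Pi_{ij}(\Sigma) = \infty \neq 0$ whenever $h(i) \neq h(j)$ in $\Gamma(\Sigma)$. By \Cref{P:identification}, after a choice of indices each $U_\Gamma$ is identified with a closed subvariety of $(\bC^\ast)^A \times \bC^B \times \bC^\ell$, smooth of dimension $n-1$; by \Cref{L:equivalentindices} this identification is canonical up to monomial automorphism of the ambient space, so each $U_\Gamma$ carries an intrinsic algebraic structure.

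To glue, consider first the case where $\Gamma \twoheadrightarrow \Gamma'$ is a contraction of levels. Then \Cref{L:intersection} realizes $U_\Gamma \cap U_{\Gamma'}$ as the principal open $D(t_{i_1}^\Gamma \cdots t_{i_k}^\Gamma) \subset U_\Gamma$, and \Cref{L:goodindicesCOV} gives explicit monomial formulas for the transition to $\Gamma'$-coordinates, realizing the overlap as an algebraic isomorphism of open subvarieties. For a general overlap $U_\Gamma \cap U_{\Gamma'}$, any point in the intersection forces both trees to contract to a common $\Gamma(\Sigma)$; choosing a common refinement $\tilde\Gamma$ admitting contractions to both $\Gamma$ and $\Gamma'$ (obtainable by interleaving the respective level decompositions over $\Gamma(\Sigma)$), one factors the transition through $U_{\tilde\Gamma}$ using two applications of \Cref{L:goodindicesCOV}. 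This equips $\mscbar_n$ with a unique algebraic variety structure in which every $U_\Gamma$ is open and \eqref{E:Mn_coordinates} is a closed immersion.

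With the variety structure in place, the remaining assertions follow directly. For (1), the locus of irreducible curves equals $U_\ast = \bC^n/\bC$, open of dimension $n-1$ and hence dense. For (2), iterating \Cref{L:goodindicesCOV} (generalizing \Cref{C:goodCOVgeneric}) expresses $\Pi_{ij}$ on each chart $U_\Gamma$ as a monomial in the chart coordinates valued in $\bP^1$, and these local descriptions agree on overlaps and assemble into a morphism $\mscbar_n \to \bP^1$. Smoothness and the dimension assertion in (3) hold chart by chart by \Cref{P:identification}, hence globally.

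The main obstacle is properness, which I would verify via the valuative criterion. For a DVR $R$ with fraction field $K$ and uniformizer $\pi$, any morphism $\Spec(K) \to \mscbar_n$ lies, after possibly extending $R$, in some chart $U_\Gamma$ and is specified by coordinates $z_{ij}^\Gamma, t_m^\Gamma \in K$. The $\pi$-adic valuations of these coordinates encode the combinatorics of the limit: the valuations of the $t_m$ together with the relative valuations of the $z_{ij}$ within each level dictate how $\Gamma$ refines to the dual tree $\Gamma_0$ of the limiting multiscale line, and after rescaling differentials level by level the leading $\pi$-adic coefficients determine the $\Gamma_0$-coordinates of the special fiber. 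Producing the required $\Gamma_0$-indices and verifying the $R$-integrality of the coordinates is a bookkeeping exercise using the transition formulas of \Cref{L:goodindicesCOV}. Separatedness, and hence uniqueness of the extension, follows from \Cref{P:identification}: each $U_\Gamma$ embeds as a closed subvariety of an affine space, so any two $R$-point extensions of the same $K$-point must agree on a common chart containing the limit.
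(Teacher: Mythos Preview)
Your gluing argument is close to the paper's, but note that the paper factors the general transition map through common \emph{coarsenings} $\Gamma''$ (i.e., trees with $\Gamma \twoheadrightarrow \Gamma''$ and $\Gamma' \twoheadrightarrow \Gamma''$), not common refinements. This is cleaner: every $\Sigma \in U_\Gamma \cap U_{\Gamma'}$ automatically lies in $U_{\Gamma(\Sigma)}$, and $\Gamma(\Sigma)$ is a common coarsening, so the sets $U_\Gamma \cap U_{\Gamma'} \cap U_{\Gamma''}$ visibly cover the overlap. With your common-refinement $\tilde\Gamma$, membership $\Sigma \in U_{\tilde\Gamma}$ imposes the extra conditions $\Pi_{ij}(\Sigma) \neq 0$ for all $(i,j)$ separated in $\tilde\Gamma$, which you would need to arrange.

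For properness the paper takes a different route: it works in the analytic topology, proving sequential compactness directly (extract a subsequence along which all $\Pi_{ij}^\alpha$ and all ratios $\Pi_{ij}^\alpha/\Pi_{kl}^\alpha$ converge in $\bP^1$, build the limiting level tree $\Gamma$ from these ratios, and show the sequence eventually lands and converges in $U_\Gamma$). Your valuative-criterion sketch is a legitimate alternative, and its core step---reading the limiting tree off the $\pi$-adic valuations of the periods and their ratios---is the algebraic analogue of the same construction.

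The genuine gap is your separatedness argument. That each $U_\Gamma$ embeds as a closed subvariety of an affine space only tells you each chart is separated; it does not prevent the glued space from being non-separated (the affine line with doubled origin is glued from two copies of $\bA^1$). What you need is that two $R$-point extensions of the same $K$-point have their special fibers in a \emph{common} chart, and this is precisely the non-obvious content. The paper handles this by showing that the tree $\Gamma$ built from the limiting $\Pi$-ratios is forced to equal $\Gamma(\Sigma)$ for \emph{any} limit $\Sigma$; hence the limit chart is uniquely determined by the net, and then \Cref{C:determinedbyz} pins down the limit inside that chart. Your existence argument already contains the valuation-theoretic version of this tree construction---you should invoke it again for uniqueness, arguing that the $\pi$-adic valuations of the $\Pi_{ij}$ and their ratios (which depend only on the $K$-point) already determine $\Gamma_0$, so both putative extensions land in $U_{\Gamma_0}$ and then agree there by affineness.
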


\begin{proof}
    To emphasize dependence on $\Gamma$, we denote \eqref{E:Mn_coordinates} by $\psi_\Gamma$. When $\Gamma'$ is a coarsening of $\Gamma$, it follows from \Cref{L:goodindicesCOV} that $\psi_{\Gamma'}\circ \psi_{\Gamma}^{-1}$ is algebraic. For a general $\Gamma'$, $U_\Gamma \cap U_{\Gamma'}$ is covered by $U_\Gamma \cap U_{\Gamma'} \cap U_{\Gamma''}$ where $\Gamma''$ ranges over level trees that are a coarsening of both $\Gamma'$ and $\Gamma$. However, on $U_{\Gamma}\cap U_{\Gamma'}\cap U_{\Gamma''}$ one has $\psi_{\Gamma'}\circ \psi_{\Gamma}^{-1} = (\psi_{\Gamma'}\circ \psi_{\Gamma''}^{-1}) \circ (\psi_{\Gamma''}\circ \psi_{\Gamma}^{-1})$ and as the composition of algebraic maps is algebraic, the result follows. The locus of irreducible curves is $U_*$ and therefore open. For any $\Gamma$, $U_*\cap U_\Gamma = D(t_1\cdots t_\ell)$ by \Cref{L:intersection} and density of $U_*$ follows. 
    
    For the remaining claims, we consider $\Mmscbar_n$ in the analytic topology. Consider a convergent net $(\Sigma_\alpha)\to \Sigma$ with $\Sigma$ reducible. On $U_{\Gamma(\Sigma)}\cap U_*$, we have $\Pi_{ij} = z_{ij}/t_{1}\cdots t_m$ as in \Cref{C:goodCOVgeneric}, where $h(i)\vee h(j)$ is on level $m$. For all $1\le k \le \ell$, one has $\lim_\alpha t_k(\Sigma_\alpha) =0$ and similarly for all $i,j$ where $h(i)\ne h(j)$, one has $\lim_\alpha z_{ij}(\Sigma_\alpha) = z_{ij}(\Sigma) \ne 0$. Thus, $\lim_\alpha \Pi_{ij}(\Sigma_\alpha) = \infty$ if and only if $h(i)\ne h(j)$ and if $h(i) = h(j)$, then $\lim_\alpha \Pi_{ij}(\Sigma_\alpha) = \Pi_{ij}(\Sigma)$. Therefore, $\Pi_{ij}:\Mmscbar_n\to \bP^1$ is continuous.

    A topological space is Hausdorff if and only if there is a dense subspace $Y\subset X$ such that any net $(y_\alpha)$ in $Y$ has at most one limit in $X$.\endnote{Suppose we are given a dense subspace $Y\subset X$ such that the claimed property holds. Suppose $(x_\alpha)_{\alpha \in A}$ is a net in $X$ with limit-points $x$ and $x'$. Choose a net $(y_{\alpha}^\beta)_{\beta \in B_\alpha}$ such that $\lim_\beta y_\alpha^\beta = x_\alpha$ for each $\alpha \in A$. By the theorem on iterated limits of nets \cite{KelleyTopology}*{p.69} it follows that the net $(y_\alpha^\beta)$ indexed by the product directed set $A\times \prod_{\alpha\in A} B_\alpha$ converges to $x$ and $x'$. Therefore, $x=x'$. It follows that $X$ is Hausdorff. Conversely, take $Y=X$.} In our case, let $Y\subset U_*$ denote the open subspace where $\Pi_{ij} \ne 0$ for all $i<j$.

    First, consider a net $(\Sigma_\alpha)$ in $Y$ such that for all pairs $i<j$ and $k<l$ the nets $\Pi_{ij}^\alpha := \Pi_{ij}(\Sigma_\alpha)$ and $\Pi_{kl}^\alpha/\Pi_{ij}^\alpha$ converge in $\bP^1$. Write $\lim_\alpha \Pi_{ij}^\alpha = \Pi_{ij}$. We define an $n$-marked rooted level tree from these data as follows: define the level 0 vertex set by $\{1,\ldots,n\}/{\sim}$, where $i\sim j$ if $\lim_\alpha\Pi_{ij}^\alpha \in \bC$. Define the markings by $h(i) = [i]$ for all $1\le i \le n$. We define a total preorder on $\{(i,j):h(i)\ne h(j)\}$ by $(i,j)\le (k,l)$ if $\lim_\alpha \Pi_{ij}^\alpha/\Pi_{kl}^\alpha \in \bC$.\endnote{Totality follows from the assumption that $\lim_\alpha \Pi_{ij}^\alpha/\Pi_{kl}^\alpha$ converges in $\bP^1$ for all $i<j$ and $k<l$. Reflexivity is by $\lim_\alpha \Pi_{ij}^\alpha/\Pi_{ij}^\alpha = 1$ and transitivity follows from the identity $\frac{\Pi_{ij}^\alpha}{\Pi_{kl}^\alpha}\cdot \frac{\Pi_{kl}^\alpha}{\Pi_{mn}^\alpha } = \frac{\Pi_{ij}^\alpha}{\Pi_{mn}^\alpha}$.
    } Note that this preorder depends only on the classes of $i,j,k,l$ under $\sim$.\endnote{Suppose $i\sim i'$. Then $\Pi_{i'j}^\alpha/\Pi_{kl}^\alpha = (\Pi^\alpha_{i'i} + \Pi^\alpha_{ij})/\Pi_{kl}^\alpha$. Now, as $h(k)\ne h(l)$, $\Pi_{kl}^\alpha \to \infty$ and $\Pi_{i'i}^\alpha$ converges in $\bC$. Therefore, $\Pi_{i'i}^\alpha/\Pi_{kl}^\alpha \to 0$ and we have $\lim_\alpha \Pi_{i'j}^\alpha/\Pi_{kl}^\alpha = \lim_\alpha\Pi_{ij}^\alpha/\Pi_{kl}^\alpha$. A similar argument applies for the indices in the denominator.}

    For $k \in \bZ_{\ge 0}$, suppose a level forest $\Gamma_{\le k}$ has been constructed with levels $0,\ldots, k$. Consider $\cF_{k} = \{([i],[j]):\nexists \: [i]\vee [j] \in \Gamma_{\le k}\}$. For each pair $([i],[j]) \in \cF_k$ minimal with respect to the restriction of $\le$ to $\cF_k$, define a level $k+1$ vertex denoted $[i]\vee[j]$. Define edges between $[i]\vee[j]$ and the vertices of maximal level in $\Gamma_{\le k}$ to which $[i]$ and $[j]$ are connected, respectively. It follows that the output $\Gamma_{\le k+1}$ is a level forest with levels $0,\ldots, k+1$.\endnote{$\Gamma_{\le k}$ is a level forest and we have connected each new vertex to distinct components of the forest. Therefore, each pair of vertices is connected by a unique path in $\Gamma_{\le k+1}$ as needed.}
    
    Since a preorder on a finite set always has minimal elements, $|\cF_{k+1}|<|\cF_{k}|$ and this process eventually terminates. At the last stage, on the maximal level $\ell$, one vertex is attached which we define to be the root. $\Gamma_{\le \ell} =: \Gamma$ is an $n$-marked rooted level tree: by induction $\Gamma$ is an $n$-marked level forest and by construction for all terminal vertices $[i]$ and $[j]$, their join, $[i]\vee [j]$, exists. 

    Now, suppose $(\Sigma_\alpha)\to \Sigma$. Up to passing to a cofinal subnet, we may assume that all $\Pi_{ij}^\alpha$ and $\Pi_{kl}^\alpha/\Pi_{ij}^\alpha$ converge, by compactness of $\bP^1$. We claim that $\Gamma = \Gamma(\Sigma)$, where $\Gamma$ is the level tree constructed in the previous paragraphs. $p_i$ and $p_j$ are on the same terminal component of $\Sigma$ iff $\Pi_{ij}(\Sigma) \ne \infty$, so level $0$ of the trees agrees. Suppose the trees agree up to level $k\ge 0$. $i$ and $j$ meet on level $k$ of $\Gamma(\Sigma)$ if and only if $(i,j)$ is minimal among pairs that do not meet on level $\le k$ with respect to $\le$ as above. Thus, $\Gamma$ and $\Gamma(\Sigma)$ agree up to level $k+1$.\endnote{By \Cref{C:goodCOVgeneric}, if $h(i)\vee h(j)$ is on level $k$ of $\Gamma(\Sigma)$ then in $U_\Gamma$ coordinates we have $\Pi_{ij}^\alpha = z_{ij}^\alpha/t_{1}^\alpha \cdots t_k^\alpha$. So, if $h(k) \vee h(l)$ is on level $p$ one has 
    \[
    \Pi_{ij}^\alpha/\Pi_{kl}^\alpha = \frac{z_{ij}^\alpha}{z_{kl}^\alpha}\cdot r_\alpha, \text{ where } r_\alpha = 
    \begin{cases}
    t_{m+1}^\alpha\cdots t_p^\alpha & p>m\\
    1& p = m\\
    (t_{p+1}^\alpha \cdots t_m^\alpha)^{-1}& p<m.
    \end{cases}
    \]
    In particular, $(i,j)$ and $(k,l)$ are equivalent with respect to the relation induced by $\le$ if and only if $p=m$, while $(i,j)<(k,l)$ if and only if $p>m$.} By induction, $\Gamma = \Gamma(\Sigma)$. 

    Now, $\Sigma_\alpha \in U_\Gamma \cap U_*$ if and only if $\Pi_{ij}(\Sigma_\alpha) \ne 0$ for all pairs $i,j$ such that $h(i)\ne h(j)$. However, $h(i) = h(j)$ if and only if $\lim_\alpha \Pi_{ij}^\alpha \in \bC$. So, if $h(i)\ne h(j)$ then $\lim_{\alpha} \Pi_{ij}^\alpha =\infty$ and so there exists $\alpha_0$ such that $\alpha \ge\alpha_0$ implies $\Sigma_\alpha \in U_\Gamma \cap U_*$. 

    Henceforth suppose $\alpha \ge \alpha_0$. Since $\Sigma_\alpha$ is convergent in $U_\Gamma$, the associated coordinate nets $\{z_{ij}^\alpha,t_m^\alpha\}$ converge and determine $\Sigma$ uniquely by \Cref{C:determinedbyz}. This implies that $\Mmscbar_n$ is Hausdorff with its analytic topology. $\Mmscbar_n$ is second countable because it has a finite cover by Euclidean spaces, namely the $U_\Gamma$. Thus, $\Mmscbar_n$ has the structure of a complex manifold. It follows that compactness is equivalent to every sequence in $Y\subset \Mmscbar_n$ admitting a convergent subsequence in $\Mmscbar_n$.\endnote{Suppose $Y\subset X$ is given, where $X$ is a manifold of positive dimension and $Y$ is dense. Suppose every sequence in $Y$ admits a subsequence convergent in $X$. $X$ is metrizable so choose some metric $d$ and consider a sequence $(x_n)$ in $X$. For each $n\in \bN$ choose $y_n \in Y$ such that $d(y_n,x_n) < 1/n$. $(y_n)$ admits a convergent subsequence with limit $y$, which we index again as $y_n$ without loss of generality. Now, $d(x_n,y) \le d(x_n,y_n) + d(y_n,y)$ and in particular $(x_n)\to y$.}

    Consider a sequence $(\Sigma_\alpha)_{\alpha\in \bN}$ in $Y$ and up to passing to a subsequence suppose that $\Pi_{ij}^\alpha$ and $\Pi_{kl}^\alpha/\Pi_{ij}^\alpha$ converge in $\bP^1$ for all $i<j$ and $k<l$. By the previous discussion, we may associate to $\{\Pi_{ij}^\alpha,\Pi_{kl}^\alpha/\Pi_{ij}^\alpha\}$ a level tree $\Gamma$. By construction, for all $\alpha$ sufficiently large, $\Sigma_\alpha \in U_\Gamma$. By \Cref{C:goodCOVgeneric}, if $h(i) \vee h(j)$ is on level $1\le p \le \ell$, one has $\Pi_{ij}= z_{ij}/t_{1} \cdots t_p$ and in particular $\Pi_{i_mj_m}^\alpha = 1/t_{1}^\alpha\cdots t_m^\alpha$. By construction of $\Gamma$, $\lim_\alpha \Pi^\alpha_{i_{m-1}j_{m-1}}/\Pi^\alpha_{i_{m}j_{m}} = \lim_\alpha t_m^\alpha = 0$ for each $1\le m \le \ell$. Also, for any $i,j$ with $h(i)\vee h(j)$ on level $m$, $\lim_\alpha \Pi_{ij}^\alpha/\Pi_{i_mj_m}^\alpha = \lim_\alpha z_{ij}^\alpha \in \bC^*$ for $m>0$ and is in $\bC$ for $m = 0$. So, $(\Sigma_\alpha)\to \Sigma \in U_\Gamma$ with $\Gamma(\Sigma) = \Gamma$. This gives compactness.

    Finally, a coordinate calculation shows that $\Pi_{ij}:\Mmscbar_n \to \bP^1$ is holomorphic\endnote{Consider $\Sigma$ where $\Pi_{ij}(\Sigma) = \infty$. Suppose $h(i)\vee h(j)$ is on level $p>0$. Then on $\Gamma = \Gamma(\Sigma)$ one has $h(i) \ne h(j)$ and so on $U_\Gamma$ we have $\Pi_{ij} \ne 0$. In particular, $\Pi_{ij}^{-1} = t_{1}\cdots t_{p}/z_{ij}$ where $z_{ij}$ is nonvanishing on $U_\Gamma$. Therefore, $\Pi_{ij}^{-1}$ is holomorphic at $\Sigma$ as needed.} and by proper GAGA \cite{SGA1}*{Expos\'{e} XII, Cor. 4.5}, it follows that $\Pi_{ij}:\Mmscbar_n \to \bP^1$ is algebraic.
\end{proof}

For use in what follows, we record some properties of $\Mmscbar_n$. Let $S_\Gamma^\circ \subset \Mmscbar_n$ denote the set of (classes of) curves with dual level tree $\Gamma$. Note that here we abuse notation by suppressing the level structure and marking from the notation, but they are implicit; indeed, different choices of level structure or marking determine distinct sets $S_\Gamma^\circ$. Let $S_\Gamma$ denote the closure of $S_\Gamma^\circ$ in $\Mmscbar_n$. Let $\partial$ denote the union over all $S_\Gamma$ for $\Gamma$ of length $1$.

\begin{prop}
\label{P:propertiesofSGamma}
    The $S_\Gamma$ satisfy the following properties:
    \begin{enumerate}
        \item Each $S_\Gamma$ is a subvariety of $\Mmscbar_n$ of  codimension equal to the length of $\Gamma$ and $S_\Gamma^\circ$ is an open subset of $S_\Gamma$.
        \item $S_\Gamma \setminus S_\Gamma^\circ$ is the union over all $S_{\Gamma'}$ such that there exists a contrac\-tion $\Gamma'\twoheadrightarrow \Gamma$ and in particular $S_\Gamma^\circ$ form the open strata of a smooth quasi-affine stratification of $\Mmscbar_n$.
        \item $\partial$ is a simple normal crossings divisor and $\Mmscbar_n = \Mmscbar_n^\circ \sqcup \: \partial$.
    \end{enumerate}
\end{prop}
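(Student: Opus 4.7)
The plan is to reduce each of the three assertions to local computations in the charts $U_{\Gamma'}$ using the algebraic coordinates $(z_{ij}^{\Gamma'}, t_m^{\Gamma'})$ from \Cref{L:equivalentindices} and \Cref{R:projectioncoords}. The crucial preliminary observation, immediate from the piecewise definition of $t_m^{\Gamma'}$, is that for any $\Sigma \in U_{\Gamma'}$ with underlying contraction $f : \Gamma' \twoheadrightarrow \Gamma(\Sigma)$ one has $t_m^{\Gamma'}(\Sigma) = 0$ if and only if $f$ does \emph{not} contract level $m$; equivalently, the set of levels contracted by $f$ is $\{m : t_m^{\Gamma'}(\Sigma) \ne 0\}$. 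Consequently, for any contraction $\Gamma' \twoheadrightarrow \Gamma$ between trees of lengths $\ell'$ and $\ell$ respectively, whose set of contracted levels is $I \subseteq \{1,\ldots,\ell'\}$ (so $|I| = \ell' - \ell$), one has
\begin{equation*}
    S_{\Gamma}^\circ \cap U_{\Gamma'} \;=\; V\bigl(t_m^{\Gamma'} : m \notin I\bigr) \;\cap\; \bigcap_{m \in I} D(t_m^{\Gamma'}) \;\cap\; U_{\Gamma'},
\end{equation*}
while $S_\Gamma^\circ \cap U_{\Gamma'} = \emptyset$ if no contraction $\Gamma' \twoheadrightarrow \Gamma$ exists. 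This is the only real technical point; everything else follows by reading off a hyperplane arrangement.

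For (1), take any $\Sigma_0 \in S_\Gamma^\circ$ and work in the chart $U_\Gamma$ (the identity contraction, $I = \varnothing$). Locally $S_\Gamma^\circ \cap U_\Gamma = V(t_1^\Gamma, \ldots, t_\ell^\Gamma) \cap U_\Gamma$ is already Zariski closed in $U_\Gamma$, of codimension $\ell$; it is smooth because \Cref{R:projectioncoords} exhibits $U_\Gamma$ as the complement of a hyperplane arrangement in $\bC^{n-1}$ with the $t_m^\Gamma$ among its algebraic coordinates. The ambient description $S_\Gamma \cap U_{\Gamma'} = V(t_m^{\Gamma'} : m \notin I) \cap U_{\Gamma'}$ is independent of the chart by the change-of-coordinate formulas in \Cref{L:goodindicesCOV}, and these local pieces glue to realize $S_\Gamma$ as a codimension-$\ell$ subvariety of $\Mmscbar_n$ in which $S_\Gamma^\circ$ is open.

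For (2), inside $U_{\Gamma'}$ the closure $S_\Gamma \cap U_{\Gamma'} = V(t_m^{\Gamma'} : m \notin I)$ stratifies by specifying a further subset $J \subseteq I$ of $t_m^{\Gamma'}$'s that also vanish. A point with this additional vanishing pattern has dual tree $\tilde\Gamma$ obtained from $\Gamma'$ by contracting precisely the levels in $I \setminus J$; the composition $\Gamma' \twoheadrightarrow \tilde\Gamma \twoheadrightarrow \Gamma$ then exhibits the required contraction $\tilde\Gamma \twoheadrightarrow \Gamma$, and conversely every $\tilde\Gamma$ admitting such a contraction arises this way locally. This yields the disjoint decomposition $S_\Gamma = \bigsqcup_{\tilde\Gamma \twoheadrightarrow \Gamma} S_{\tilde\Gamma}^\circ$; each stratum is smooth and quasi-affine by \Cref{R:projectioncoords}, and the terms with $\tilde\Gamma \ne \Gamma$ exhaust $S_\Gamma \setminus S_\Gamma^\circ$.

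For (3), each length-one $\Gamma$ gives a codimension-one $S_\Gamma$ by (1), and in a chart $U_{\Gamma'}$ of length $\ell'$ the length-one contractions of $\Gamma'$ are in bijection with the choice of a single preserved level $j \in \{1, \ldots, \ell'\}$, giving $S_\Gamma \cap U_{\Gamma'} = V(t_j^{\Gamma'})$. Thus $\partial \cap U_{\Gamma'} = \bigcup_{j=1}^{\ell'} V(t_j^{\Gamma'})$ is a union of coordinate hyperplanes in the smooth variety $U_{\Gamma'}$, visibly simple normal crossings. The decomposition $\Mmscbar_n = \Mmscbar_n^\circ \sqcup \partial$ is the special case of (2) applied to $\Gamma = \ast$, since every dual tree admits a unique contraction to the one-vertex tree.
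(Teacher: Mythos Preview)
Your argument is correct and is precisely the ``argue using local coordinates'' that the paper indicates but does not spell out (the paper only cites \cite{SMLWC} for details). The key identification $S_\Gamma^\circ \cap U_{\Gamma'} = V(t_m^{\Gamma'} : m \notin I) \cap \bigcap_{m \in I} D(t_m^{\Gamma'})$ via the definition of $t_m$ is exactly the right computation, and everything else follows as you say.
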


\begin{proof}
    Argue using local coordinates or see \cite{SMLWC}*{Lem. 3.17} for (1) and (2) and \cite{SMLWC}*{Lem. 3.18(4)} for (3).
\end{proof}

\begin{rem}
    $\Mmscbar_n$ is a proper algebraic variety by \Cref{T:spaceconstruction}, however it is \emph{a priori} unclear whether or not it is projective. In \cite{SMLWC}, $\Mmscbar_n$ is realized as a blowup of a subspace arrangement of $\bP^{n-1}$, giving a proof of projectivity. $\Mmscbar_n$ also admits a $\bC^n/\Delta$-action, where $\Delta \subset \bC^n$ is the small diagonal subgroup, extending the natural one on $\Mmscbar_n^\circ$. 
\end{rem}

\begin{rem}
    As mentioned in the introduction, multi-scale lines and the notion of complex projective isomorphism are inspired by \cite{BCGGM}. In fact, there are only a few differences between the $n$-marked multi-scale lines considered here and the genus $0$ multi-scale diff\-erentials of type $(0^n,-2)$ studied in \cite{BCGGM}. The most obvious difference is that multi-scale lines are allowed to have colliding marked points whereas multi-scale differentials are not. In \cite{msdandwonderfulmodels}, it is proven that the moduli space of genus $0$ multi-scale differentials of type $(0^n,-2)$ constructed in \cite{BCGGM} embeds in $\cA_n$ as the boundary divisor corresponding to the level tree with two levels and $n$ vertices on the bottom level.
\end{rem}

\subsection{The moduli space of real multi-scale lines}

In this section, we consider the set of real oriented isomorphism classes of $n$-marked multi-scale lines (see \Cref{D:multi-scaleline}), denoted $\rmscbar_n$. There is a canonical map $p:\rmscbar_n\to \Mmscbar_n$ which sends the real oriented isomorphism class of a multi-scale line to its complex projective isomor\-phism class. In what follows, we will equip $\rmscbar_n$ with the structure of a manifold with corners such that $p:\rmscbar_n\to \Mmscbar_n$ is a $C^\infty$ map of manifolds restricting to a diffeomorphism over $\Mmscbar_n^\circ$.

We define some functions on the set of $n$-marked multi-scale lines which depend only on the real oriented isomorphism class. Given an $n$-marked multi-scale line $\Sigma$, consider $I_{ij}$ as in \eqref{E:Iijdefn}. The ratio $I_{ij}/\lvert I_{ij}\rvert$ is defined on real oriented isomorphism classes as long as $I_{ij} \ne 0$. So, for $i<j$ we let

\[
    \mathfrak{p}_{ij} = 
        \left\{ \begin{array}{ll} I_{ij}/\lvert I_{ij}\rvert, &\text{ if } h(i)\ne h(j), \text{ and}\\
        \frac{\Pi_{ij}}{1+|\Pi_{ij}|} &\text{otherwise.}\end{array} \right. 
\]
$\mathfrak{p}_{ij}$ defines a function $\rmscbar_n\to \bD$, where $\bD$ denotes the closed unit disk in $\bC$. Note that $\mathfrak{p}_{ij}$ maps $\Sigma$ to the interior of $\bD$ if and only if $h(i) = h(j)$. Also, we identify $\Pi_{ij}:\Mmscbar_n\to \bP^1$ with its pullback $p^*(\Pi_{ij}):\cA_n^{\bR} \to \bP^1$.

\begin{lem}
\label{L:determinedbypij}
    Let an $n$-marked multi-scale line $\Sigma$ be given with dual level tree $\Gamma$. For any choice of indices $\{(i_m,j_m)\}_{m=1}^\ell$ for $\Gamma$, the real oriented isomorphism class of $\Sigma$ is determined by $\{\mathfrak{p}_{i_mj_m}(\Sigma)\}_{m=1}^\ell$ and its complex projective isomorphism class.
\end{lem}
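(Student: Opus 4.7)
The plan is to exploit the fact that the fiber of the forgetful map $p : \rmscbar_n \to \Mmscbar_n$ over a given class $[\Sigma]_\bC$ is parameterized precisely by the choice of a unit-modulus scalar on each non-terminal level, and to identify the phases $\mathfrak{p}_{i_m j_m}$ as coordinates on this fiber.

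First, I would unpack the definitions of complex projective versus real oriented isomorphism from \Cref{D:multiscaleline}. Given two representatives $\Sigma, \Sigma'$ of the same class $[\Sigma]_\bC$, there is a complex projective isomorphism $\alpha : \Sigma \to \Sigma'$, which consists of isomorphisms $\alpha_v : \Sigma_v \to \Sigma'_v$ satisfying $\alpha_v^\ast \omega'_v = c_v \omega_v$ for constants $c_v \in \bC^\ast$ that depend only on the $\sim$-class of $v$ (equivalently, only on the level), with $c_v = 1$ on terminal components. Thus there are at most $\ell$ non-trivial scalars, one for each level $1 \leq m \leq \ell$; call them $c_1,\ldots,c_\ell$. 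The isomorphism $\alpha$ is a real oriented isomorphism precisely when each $c_m \in \bR_{>0}$.

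Next, I would compute how $I_{i_m j_m}$ and hence $\mathfrak{p}_{i_m j_m}$ transform under $\alpha$. By the change-of-variables formula applied to a path connecting the nodes $n_{i_m}, n_{j_m} \in \Sigma_{v_m}$ and its image in $\Sigma'_{v_m'}$ under $\alpha_{v_m}$, one obtains $I_{i_m j_m}(\Sigma') = c_m \, I_{i_m j_m}(\Sigma)$. Because $h(i_m) \vee h(j_m) = v_m$ lies on level $m \geq 1$, the two nodes $n_{i_m}, n_{j_m}$ are distinct smooth points of $\Sigma_{v_m}$, and $\omega_{v_m}$ is nonvanishing on this smooth locus, so $I_{i_m j_m}(\Sigma) \neq 0$ and $\mathfrak{p}_{i_m j_m}(\Sigma) \in S^1 \subset \bD$. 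Taking the normalized phase gives
\[
    \mathfrak{p}_{i_m j_m}(\Sigma') = \frac{c_m}{|c_m|}\, \mathfrak{p}_{i_m j_m}(\Sigma), \qquad m = 1, \ldots, \ell.
\]

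Finally, assume $[\Sigma]_\bC = [\Sigma']_\bC$ and $\mathfrak{p}_{i_m j_m}(\Sigma) = \mathfrak{p}_{i_m j_m}(\Sigma')$ for all $m$. The transformation law above forces $c_m / |c_m| = 1$, i.e.\ $c_m \in \bR_{>0}$, for every $m$. Since $c_v = 1$ on terminal levels by definition, every $c_v$ is a positive real. Hence the complex projective isomorphism $\alpha$ is in fact a real oriented isomorphism, so $[\Sigma]_\bR = [\Sigma']_\bR$, as required. There is no real obstacle here; the only subtlety is verifying that the indices $(i_m, j_m)$ with $h(i_m) \vee h(j_m)$ on level $m$ indeed yield non-vanishing phases, which follows from the structure of $\omega_{v_m}$ as a scalar multiple of $dz$ in an affine coordinate on $\Sigma_{v_m}$.
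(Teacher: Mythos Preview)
Your proof is correct and uses the same core idea as the paper: the transformation law $I_{i_mj_m}(\Sigma') = c_m\,I_{i_mj_m}(\Sigma)$ under a complex projective isomorphism, which forces $c_m/|c_m|=1$ once the phases agree. Your route is slightly more direct than the paper's. The paper first proves the weaker statement that agreement of \emph{all} $\mathfrak{p}_{ij}$ with $h(i)\neq h(j)$ forces $c_v\in\bR_{>0}$, and then separately shows that every such $\mathfrak{p}_{ij}$ can be recovered from the $\ell$ chosen ones together with $[\Sigma]_{\bC}$ via the identity $\mathfrak{p}_{ij}/\mathfrak{p}_{i_mj_m} = (I_{ij}/I_{i_mj_m})\cdot|I_{i_mj_m}/I_{ij}|$. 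You observe directly that one phase per level already pins down each $c_m$ modulo $\bR_{>0}$, which is all that is needed. The paper's intermediate formula is reused later in the proof of \Cref{T:bijection}, which is why it is recorded there; for the lemma itself your shortcut suffices. One cosmetic point: the sentence ``$\omega_{v_m}$ is nonvanishing on this smooth locus, so $I_{i_mj_m}(\Sigma)\neq 0$'' is not quite the right justification (a nonvanishing $1$-form can integrate to zero over a path); the correct reason, which you give at the end, is that in an affine coordinate $z$ with $\omega_{v_m}=\lambda\,dz$ the integral is $\lambda(z(n_{j_m})-z(n_{i_m}))$, nonzero because the nodes are distinct.
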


\begin{proof}
    Consider $n$-marked multi-scale lines $\Sigma$ and $\Sigma'$ which are complex projectively iso\-morphic and such that $\mathfrak{p}_{ij}(\Sigma) = \mathfrak{p}_{ij}(\Sigma')$ for all $i<j$ such that $h(i)\ne h(j)$. It follows that the constants $c_v\in \bC^*$ of \Cref{D:multi-scaleline} actually lie in $\bR_{>0}$ and in particular that $\Sigma$ and $\Sigma'$ are real orientedly isomorphic. It remains to prove that $\{\mathfrak{p}_{ij}(\Sigma)|h(i)\ne h(j)\}$ can be recovered from $\{\mathfrak{p}_{i_mj_m}(\Sigma)\}_{m=1}^\ell$ and the complex projective isomorphism class of $\Sigma$. For this, suppose $h(i)\vee h(j)$ is on level $m\ge 1$. Then one computes $(\Pi_{ij}/\Pi_{i_mj_m})(\Sigma) = (I_{ij}/I_{i_mj_m})(\Sigma)$ by definition. Thus, 
    \[
    \frac{\mathfrak{p}_{ij}(\Sigma)}{\mathfrak{p}_{i_mj_m}(\Sigma)} = \frac{I_{ij}(\Sigma)}{I_{i_mj_m}(\Sigma)}\cdot \frac{\lvert I_{i_mj_m}(\Sigma)\rvert}{\lvert I_{ij}(\Sigma)\rvert}
    \]
    and $\mathfrak{p}_{ij}(\Sigma)$ is determined by $\mathfrak{p}_{i_mj_m}(\Sigma)$ and the complex projective isomorphism class of $\Sigma$. 
\end{proof}

By \Cref{P:propertiesofSGamma}, $\Mmscbar_n$ has a simple normal crossings boundary divisor $\partial$. We perform the real oriented blowup of $\Mmscbar_n$ along $\partial$ to obtain a $C^\infty$-manifold with corners, $\Bl_\partial^{\bR}(\Mmscbar_n)$, equipped with a smooth map $\pi:\Bl_\partial^{\bR}(\Mmscbar_n)\to \Mmscbar_n$ restricting to a diffeomorphism over $\pi^{-1}(\Mmscbar_n^\circ)$  --- see \cite{HandbookArticle}*{\S 8} for background.\endnote{Let $X$ be a complex manifold and $D$ a Cartier divisor. Defining equations of $D$ yield a section $f$ of the line bundle $\Gamma(X,\cal{O}_X(D))$. If $i:D\hookrightarrow X$ is a smooth divisor, $i^*\cal{O}_X(D) = \cal{N}_{D|X}$. We regard $\pi:\cal{O}_X(D)\to X$ as a geometric line bundle. Define $B_{\cal{O}_X(D),f} = \{(p,v): \lVert v\rVert f(p) = \lVert f(p)\rVert v\}$ where we have chosen some Hermitian metric on $\cal{O}_X(D)$. If $f(p) \ne 0$, the condition gives those $v\in \cal{O}_X(D)|_p$ such that $v/\lVert v\rVert = f(p)/\lVert f(p)\rVert$. For $f(p) = 0$, the condition yields all $v\in \cal{O}_X(D)|_p$. We see that the zero section $X_0\subset B_{\cal{O}_X(D),f}$. We define $B_{\cal{O}_X(D),f}^* = B_{\cal{O}_X(D),f}\setminus X_0$. Analyzing the definition shows that $(B_{\cal{O}_X(D),f}^*)_p$ is a $\bb{R}_{>0}$-torsor for $p \not \in Z(f) = D$ and $(B_{\cal{O}_X(D),f}^*)_p$ is a $\bb{C}^*$-torsor for $f(p) = 0$. We form the quotient space $\rm{Bl}^{\rm{o}}_D(X) = B_{\cal{O}_X(D),f}^*/\bb{R}_{>0}$. Note that this is a closed subspace of $S^1(\cal{O}_X(D)) = (\cal{O}_X(D)\setminus X_0)/\bb{R}_{>0}$. Hence, it is a compact manifold with boundary which we term the \emph{real oriented blowup of} $X$ \emph{along} $D$. There is a canonical map $p:\rm{Bl}^{\rm{o}}_D(X) \to X$ which is a diffeomorphism over $X\setminus D$. Furthermore, there is a Cartesian square 
$$
\begin{tikzcd}[ampersand replacement=\&]
S^1(\cal{N}_{D|X})\arrow[d] \arrow[r] \& \rm{Bl}^{\rm{o}}_D(X)\arrow[d,"p"]\\
D\arrow[r,"i"]\&X
\end{tikzcd}
$$
where we note furthermore that $i^*\rm{Bl}_D^{\rm{o}}(X) = S^1(\cal{N}_{D|X})$ on the nose. We define the real oriented blowup for a more general Cartier divisor. Suppose $D = D_1\cup \cdots \cup D_n$ is an snc divisor. The corresponding line bundle is $\cal{O}_X(D_1+\cdots + D_n) = \bigotimes_{i=1}^n\cal{O}_X(D_i)$. Choose local equations $f_1,\ldots, f_n$ for $D$ and define 
$$\rm{Bl}^{\rm{o}}_D(X) = \rm{Bl}^{\rm{o}}_{D_1}(X)\times_X \cdots \times_X \rm{Bl}^{\rm{o}}_{D_n}(X)\to X.$$
For $p\in D_{i_1}\cap \cdots \cap D_{i_k}$, $\rm{Bl}^{\rm{o}}_D(X)|_p = S^1(\cal{N}_{D_{i_1}|X}|_p)\times \cdots \times S^1(\cal{N}_{D_{i_k}|X}|_p).$ The space $\rm{Bl}_D^{\rm{o}}(X)$ is a manifold with corners, referred to as the \emph{real oriented blowup} of $D$.}

Let $X$ be a complex manifold with $D$ an simple normal crossings divisor. Locally, there exist coordinates $(U,t_1,\ldots, t_\ell,z_1,\ldots, z_{n-\ell})$ such that $D = Z(t_1\cdots t_\ell)$. Locally, the real oriented blowup is given by
\begin{equation}
\label{E:Rblowuplocally}
    \Bl_D^{\bR}(U) = \{((t,z),T) \in U\times (S^1)^\ell: \lvert t_i\rvert T_i = t_i\}
\end{equation}
and the projection $\Bl_D^{\bR}(U)\to U$ is given by restricting the projection $U\times (S^1)^\ell \to U$. One can see that $T_i$ defines a smooth extension of $t_i/\lvert t_i\rvert$ over the locus where $t_i = 0$ \cite{HandbookArticle}*{p. 37}. Consequently, we regard $t_i/\lvert t_i\rvert$ as a smooth function extending to the boundary of $\Bl_D^{\bR}(U)$ for all $1\le i \le \ell$.

For any $i<j$, we identify $\Pi_{ij}:\Mmscbar_n\to \bP^1$ with its pullback $\pi^*(\Pi_{ij}):\Bl_{\partial}^{\bR}(\cA_n) \to \bP^1$. Let $U_{ij}$ denote the set of points in $\Mmscbar_n$ such that $\Pi_{ij} \ne 0$ and let $U_{ij}^{\bR} = \pi^{-1}(U_{ij})$.   

\begin{lem}
\label{L:continuousextensiontoboundary}
    $\Pi_{ij}/(1+\lvert \Pi_{ij}\rvert)$ extends continuously to the boundary of $\Bl_{\partial}^{\bR}(\cA_n)$ and $\Pi_{ij}/\lvert \Pi_{ij}\rvert$ extends continuously to $U_{ij}^{\bR}$. Furthermore, given $\Gamma$ such that $h(i)\ne h(j)$ one has that $\Pi_{ij}/\lvert \Pi_{ij}\rvert$ and $\Pi_{ij}/(1+\lvert \Pi_{ij}\rvert)$ coincide over $\pi^{-1}(S_\Gamma^\circ)$.
\end{lem}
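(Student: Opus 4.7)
The plan is to work in the coordinate charts $U_\Gamma$ of \Cref{P:identification} and use the standard local model \eqref{E:Rblowuplocally} for real oriented blowups. On the open dense subset $U_\ast \subset \Mmscbar_n$ of irreducible curves, the function $\Pi_{ij}$ is $\bC$-valued, so $\Pi_{ij}/(1+|\Pi_{ij}|)$ is manifestly continuous, and $\Pi_{ij}/|\Pi_{ij}|$ is continuous away from $\{\Pi_{ij}=0\}$. All of the content of the lemma lies in extending these functions across the preimage of the boundary divisor.

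Fix a dual tree $\Gamma$ of length $\ell\ge 1$ and a choice of indices, giving local algebraic coordinates $(z_{ij},t_m)$ on $U_\Gamma$ as in \Cref{R:projectioncoords}. By \eqref{E:Rblowuplocally}, the $S^1$-valued functions $T_k := t_k/|t_k|$ extend smoothly to $\pi^{-1}(U_\Gamma)$. Suppose first that $h(i) \ne h(j)$ in $\Gamma$ and let $m\ge 1$ be the level of $h(i)\vee h(j)$ in $\Gamma$. By \Cref{C:goodCOVgeneric}, on $U_\Gamma\cap U_\ast$ one has $\Pi_{ij}=z_{ij}/(t_1\cdots t_m)$. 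Moreover, the description of $U_\Gamma$ as the complement of a hyperplane arrangement in the proof of \Cref{P:identification} shows that $z_{ij}$ is nowhere vanishing on $U_\Gamma$ (the relevant hyperplanes are exactly those carved out by the condition $\Pi_{ij}\ne 0$ in the definition of $U_\Gamma$). A direct computation using $t_1\cdots t_m=|t_1\cdots t_m|\prod_{k=1}^m T_k$ gives, on the dense open $U_\Gamma\cap U_\ast$, the identities
\[
\frac{\Pi_{ij}}{1+|\Pi_{ij}|}=\frac{z_{ij}}{|z_{ij}|+|t_1\cdots t_m|}\prod_{k=1}^m \overline{T_k}, \qquad \frac{\Pi_{ij}}{|\Pi_{ij}|}=\frac{z_{ij}}{|z_{ij}|}\prod_{k=1}^m \overline{T_k}.
\]
The right-hand sides are continuous functions on all of $\pi^{-1}(U_\Gamma)$ and provide the required extensions in this chart. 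When $h(i)=h(j)$ in $\Gamma$, $\Pi_{ij}$ is already holomorphic and bounded on $U_\Gamma$, so both claims are immediate there.

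Varying $\Gamma$ covers $\rmscbar_n$, and compatibility of the extensions on overlaps is automatic because they agree with the intrinsically defined functions on the dense open $\pi^{-1}(U_\ast)$. For the coincidence assertion, over $\pi^{-1}(S_\Gamma^\circ)$ with $h(i)\ne h(j)$ in $\Gamma$ all of $t_1,\ldots,t_\ell$ vanish, so $|t_1\cdots t_m|=0$ in the first identity, and the two extensions both reduce to $\tfrac{z_{ij}}{|z_{ij}|}\prod_{k=1}^m\overline{T_k}$. I expect no substantial obstacle; the only point requiring care is the non-vanishing of $z_{ij}$ on $U_\Gamma$ when $h(i)\ne h(j)$ in $\Gamma$, which is built into the explicit chart description of \Cref{P:identification}.
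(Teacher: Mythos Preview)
Your proof is correct and follows the same approach as the paper: work in the chart $U_\Gamma$, use \Cref{C:goodCOVgeneric} to write $\Pi_{ij}=z_{ij}/(t_1\cdots t_m)$, and observe that $z_{ij}$ is nonvanishing while each $t_k/|t_k|$ extends smoothly by \eqref{E:Rblowuplocally}. Your explicit formula $\Pi_{ij}/(1+|\Pi_{ij}|)=\frac{z_{ij}}{|z_{ij}|+|t_1\cdots t_m|}\prod_k\overline{T_k}$ is in fact more carefully stated than the paper's displayed chain \eqref{E:periodcomp}, which is literally correct only on the boundary stratum; your version makes the continuous extension transparent everywhere on $\pi^{-1}(U_\Gamma)$.
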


\begin{proof}
    Consider a dual level tree $\Gamma$ and the associated open set $U_\Gamma$. We can choose coordinates as in \Cref{C:goodCOVgeneric} such that if $h(i)\vee h(j)$ is on level $m\ge 1$ one has $\Pi_{ij} = z_{ij}/t_1\cdots t_m$. Then, expanding in these coordinates gives 
    \begin{equation}
    \label{E:periodcomp}
    \frac{\Pi_{ij}}{1+\lvert \Pi_{ij}\rvert} = \frac{z_{ij}/t_1\cdots t_m}{1+\lvert z_{ij}/t_1\cdots t_m\rvert} = \frac{z_{ij}}{\lvert z_{ij}\rvert} \cdot \frac{\lvert t_1\cdots t_m\rvert}{t_1\cdots t_m} = \frac{\Pi_{ij}}{\lvert \Pi_{ij}\rvert}
    \end{equation}
    and the result follows.
\end{proof}

\begin{lem}
\label{L:bijectionoffiber}
    Consider $x\in \partial$ with dual tree $\Gamma$ of length $\ell$. Given a choice of indices $\{(i_m,j_m)\}_{m=1}^\ell$, the functions $\Pi_{i_mj_m}/(1+\Pi_{i_mj_m})$ define a homeomorphism $\pi^{-1}(x) \to (S^1)^\ell$.
\end{lem}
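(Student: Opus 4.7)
The plan is to work in local coordinates on $U_\Gamma \subset \Mmscbar_n$ adapted to the given choice of indices, and then compute the fiber of the real oriented blowup explicitly.

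First, I would choose the coordinate system $(z_{ij},t_m)$ on $U_\Gamma$ from \eqref{E:Mn_coordinates}, constructed using the prescribed indices $\{(i_m,j_m)\}_{m=1}^\ell$. Since $x \in \partial$ has dual level tree exactly $\Gamma$ of length $\ell$, the point $x$ lies in $S_\Gamma^\circ$, which by \Cref{L:intersection} is precisely the locus $t_1 = \cdots = t_\ell = 0$ inside $U_\Gamma$. By the definition of the coordinates, $z_{i_mj_m} \equiv 1$ on $U_\Gamma$, and hence by \Cref{C:goodCOVgeneric} we have the identity
\[
\Pi_{i_mj_m} \;=\; \frac{1}{t_1\cdots t_m}
\]
on the open dense subset $U_\Gamma \cap U_\ast$.

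Next, I would describe $\pi^{-1}(x)$ using the local model \eqref{E:Rblowuplocally} for real oriented blowup along a simple normal crossings divisor. Since $\partial \cap U_\Gamma$ is cut out by $t_1\cdots t_\ell$ and $x$ lies in the intersection of all $\ell$ boundary components, the fiber $\pi^{-1}(x)$ is naturally identified with $(S^1)^\ell$ via the coordinates $T_m$ extending $t_m/|t_m|$. Under this identification, each function $t_m/|t_m|$ restricts to the $m$-th projection $(T_1,\ldots,T_\ell)\mapsto T_m$ on $\pi^{-1}(x)$.

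Now I would combine these two computations. By \Cref{L:continuousextensiontoboundary}, the function $\Pi_{i_mj_m}/(1+|\Pi_{i_mj_m}|)$ (which agrees with $\Pi_{i_mj_m}/|\Pi_{i_mj_m}|$ on $U_\Gamma \cap U_\ast$ because $h(i_m)\neq h(j_m)$) extends continuously to $\pi^{-1}(x)$. Taking $|\cdot|$ and dividing in the formula $\Pi_{i_mj_m} = 1/(t_1\cdots t_m)$, we obtain the identity
\[
\frac{\Pi_{i_mj_m}}{|\Pi_{i_mj_m}|} \;=\; \frac{|t_1\cdots t_m|}{t_1\cdots t_m} \;=\; \overline{T_1\cdots T_m}
\]
on $\pi^{-1}(x)$. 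Hence the map in the statement is $(T_1,\ldots,T_\ell)\mapsto (\overline{T_1},\,\overline{T_1T_2},\,\ldots,\overline{T_1\cdots T_\ell})$, which is the composition of coordinate-wise complex conjugation with the Lie group automorphism of $(S^1)^\ell$ whose matrix in the standard basis is lower triangular with ones on the diagonal. Both maps are bijections of $(S^1)^\ell$, so the composition is a bijection $\pi^{-1}(x) \to (S^1)^\ell$ as required. The only delicate point, and the one I would verify carefully, is the identification of $\pi^{-1}(x)$ with $(S^1)^\ell$ via the coordinates $T_m$; this is not a conceptual obstacle but requires checking that the coordinates $(z_{ij},t_m)$ realize $\partial \cap U_\Gamma$ as a normal crossings divisor with the $t_m$ serving as the defining equations, which follows from \Cref{P:identification} and \Cref{P:propertiesofSGamma}.
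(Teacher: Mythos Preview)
Your proof is correct and follows essentially the same approach as the paper: work in the $(z_{ij},t_m)$ coordinates, use $\Pi_{i_mj_m}=1/(t_1\cdots t_m)$, and identify the fiber with $(S^1)^\ell$ via the angular coordinates $T_m=t_m/|t_m|$ from the local model \eqref{E:Rblowuplocally}. The paper's endnote factors the resulting map as the product map $(z_1,\ldots,z_\ell)\mapsto(z_1,z_1z_2,\ldots,z_1\cdots z_\ell)$ followed by coordinatewise inversion rather than conjugation, but on $S^1$ these coincide, so your decomposition is equivalent.
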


\begin{proof}
    Consider the functions $z_{ij}$ and $t_k$ associated to $\Gamma$ and the choice of indices. The induced map is a bijection because $\Pi_{i_mj_m} = 1/t_1\cdots t_m$ for each $1\le m \le \ell$ and $t_i/\lvert t_i\rvert$ restrict to fiber coordinates in the local model \eqref{E:Rblowuplocally}.\endnote{More precisely, since $\Pi_{i_mj_m}/(1+\lvert \Pi_{i_mj_m}\rvert) = \Pi_{i_mj_m}/\lvert \Pi_{i_mj_m}\rvert = \lvert t_1\cdots t_m\rvert/t_1\cdots t_m$ the map defined by sending $y\in \pi^{-1}(x) \mapsto ((\Pi_{i_mj_m}/1+\lvert \Pi_{i_mj_m}\rvert)(y))_{m=1}^\ell$ is the composition of the following bijections:
    \begin{enumerate}
        \item $\pi^{-1}(x)\to (S^1)^\ell$ given by $y\mapsto ((t_i/\lvert t_i\rvert)(y))_{i=1}^m$
        \item $(S^1)^\ell \to (S^1)^\ell$ given by $(z_1,\ldots, z_\ell)\mapsto (z_1,z_1z_2,\ldots, z_1\cdots z_\ell)$; and 
        \item $(S^1)^\ell \to (S^1)^\ell$ given by $(z_1,\ldots, z_\ell)\mapsto (z_1^{-1},\ldots, z_\ell^{-1})$.
    \end{enumerate}} The induced map is automatically a homeomorphism since it is a bijection between compact Hausdorff spaces.
\end{proof}

\begin{thm}
\label{T:bijection}
    There exists a unique bijection $f:\rmscbar_n \to \Bl_\partial^{\bR}(\Mmscbar_n)$ such that $\pi \circ f = p$ and $f^*(\Pi_{ij}/1+\lvert \Pi_{ij}\rvert) = \mathfrak{p}_{ij}$. 
\end{thm}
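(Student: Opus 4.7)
The plan is to construct $f$ pointwise by matching two parallel $(S^1)^\ell$-parameterizations of fibers: the one on $\pi^{-1}(x)$ provided by \Cref{L:bijectionoffiber}, and the one on $p^{-1}(x)$ provided by the $(\bC^\ast/\bR_{>0})^\ell$-action on representative multiscale lines by rescaling the $\omega_v$ on non-terminal levels. The period-type coordinates $\Pi_{ij}/(1+|\Pi_{ij}|)$ and $\mathfrak{p}_{ij}$ serve as the matching data on the two sides.

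For \emph{uniqueness}, over the open locus $\Mmscbar_n^\circ$ both $p$ and $\pi$ are bijections onto $\Mmscbar_n^\circ$ (for $\pi$ by definition of the real-oriented blowup; for $p$ because an irreducible multiscale line has a single, terminal root vertex, so by the normalization $c_v=1$ at minimal vertices in \Cref{D:multiscaleline} the complex-projective and real-oriented equivalence classes coincide). Hence $\pi\circ f=p$ forces $f$ over $p^{-1}(\Mmscbar_n^\circ)$. Over a boundary stratum $S_\Gamma^\circ\subset\partial$ of length $\ell$, \Cref{L:bijectionoffiber} identifies $\pi^{-1}(x)\cong(S^1)^\ell$ via $\bigl(\Pi_{i_mj_m}/(1+|\Pi_{i_mj_m}|)\bigr)_{m=1}^\ell$ for any choice of indices, and the prescribed equality $f^\ast(\Pi_{ij}/(1+|\Pi_{ij}|))=\mathfrak{p}_{ij}$ then pins down $f([\Sigma])$ from the values $\mathfrak{p}_{i_mj_m}([\Sigma])$.

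For \emph{existence}, I would \emph{define} $f([\Sigma])$ by that prescription: choose indices adapted to the dual tree $\Gamma$ of $[\Sigma]_\bC$, and let $f([\Sigma])$ be the unique element of $\pi^{-1}(p([\Sigma]))$ whose $\Pi_{i_mj_m}/(1+|\Pi_{i_mj_m}|)$-coordinates equal $\mathfrak{p}_{i_mj_m}([\Sigma])$. To upgrade this to the full equality $f^\ast(\Pi_{ij}/(1+|\Pi_{ij}|))=\mathfrak{p}_{ij}$ for every pair $(i,j)$, I would invoke the identity $\Pi_{ij}/(1+|\Pi_{ij}|)=\Pi_{ij}/|\Pi_{ij}|$ on boundary fibers from \eqref{E:periodcomp}: on $\pi^{-1}(x)$ the coordinate $\Pi_{ij}/|\Pi_{ij}|$ is expressible as a monomial in the chosen $\Pi_{i_mj_m}/|\Pi_{i_mj_m}|$ with a coefficient $I_{ij}/I_{i_mj_m}$ depending only on $[\Sigma]_\bC$, and the same ratio governs $\mathfrak{p}_{ij}$ as in the proof of \Cref{L:determinedbypij}, so matching on the chosen $\ell$ indices forces matching on all pairs. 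Independence from the choice of indices is then automatic.

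For \emph{bijectivity}: injectivity is immediate from \Cref{L:determinedbypij}, since the data $\bigl(p([\Sigma]),\{\mathfrak{p}_{ij}([\Sigma])\}\bigr)$ determines $[\Sigma]\in\rmscbar_n$. For surjectivity, given $y\in\Bl_\partial^\bR(\Mmscbar_n)$ lying over $x\in S_\Gamma^\circ$, pick any representative $\Sigma_0$ of $x$ and rescale its $\omega_v$ on non-terminal levels by the prescribed phases in $S^1$ (precisely the $(S^1)^\ell$-action that parameterizes the fiber $p^{-1}(x)$) so that the resulting $\mathfrak{p}_{i_mj_m}$-values agree with the fiber coordinates of $y$; this real-oriented class maps to $y$ by construction. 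The main obstacle I anticipate is the careful bookkeeping required to reconcile the two $(S^1)^\ell$-structures, verifying that the local-coordinate identity of \Cref{L:continuousextensiontoboundary} really does intertwine the blowup parameterization with the $(\bC^\ast/\bR_{>0})^\ell$-parameterization uniformly across all index pairs $(i,j)$, and not only for the chosen $\{(i_m,j_m)\}$.
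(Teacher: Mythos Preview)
Your proposal is correct and follows essentially the same route as the paper: define $f$ over $\Mmscbar_n^\circ$ by the forced condition $\pi\circ f=p$, define it on boundary fibers by matching the $(S^1)^\ell$-parameterizations from \Cref{L:bijectionoffiber} and \Cref{L:determinedbypij} via the chosen indices $\{(i_m,j_m)\}$, and then verify the identity for all remaining pairs $(i,j)$ with $h(i)\neq h(j)$ using the ratio $\Pi_{ij}/\Pi_{i_mj_m}=I_{ij}/I_{i_mj_m}$, which depends only on the complex-projective class. The only small omission is the trivial case $h(i)=h(j)$, where $\Pi_{ij}$ is already regular and $\mathfrak{p}_{ij}=\Pi_{ij}/(1+|\Pi_{ij}|)$ by definition, so $f^\ast(\Pi_{ij}/(1+|\Pi_{ij}|))=\mathfrak{p}_{ij}$ follows directly from $\pi\circ f=p$.
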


\begin{proof}
    Since $\pi$ and $p$ restrict to bijections over $\Mmscbar_n^\circ$, $f$ is uniquely characterized over $\Mmscbar_n^\circ$ by $\pi \circ f = p$. Consider $x\in S_\Gamma^\circ$ for $\Gamma$ a nontrivial level tree. Choose a set of indices $\{(i_m,j_m)\}_{m=1}^\ell$ for $\Gamma$. $\Sigma \in p^{-1}(x)$ is determined by $\{\mathfrak{p}_{i_mj_m}(\Sigma)\}_{m=1}^\ell$, by \Cref{L:determinedbypij}. On the other hand, there is a unique $t \in \pi^{-1}(x)$ such that $(\Pi_{i_mj_m}/(1+\lvert \Pi_{i_mj_m}\rvert))(t) = \mathfrak{p}_{i_mj_m}(\Sigma)$ for all $1\le m \le \ell$ by \Cref{L:bijectionoffiber}. Set $f(\Sigma) = t$.

    This defines the extension of $f$ to $p^{-1}(S_\Gamma^\circ)$, which maps bijectively onto $\pi^{-1}(S_\Gamma^\circ)$. We show that over $\pi^{-1}(S_\Gamma^\circ)$ we have $f^*(\Pi_{ij}/1+\lvert \Pi_{ij}\rvert) = \mathfrak{p}_{ij}$ for all $1\le i < j \le n$. Consider $y\in p^{-1}(x)$ and suppose $h(i)\vee h(j)$ is on level $m$ for $1\le m \le \ell$. $\Pi_{ij}/(1+\lvert \Pi_{ij}\rvert) = \Pi_{ij}/\lvert \Pi_{ij}\rvert$ over $\pi^{-1}(S_\Gamma^\circ)$ by \Cref{L:continuousextensiontoboundary}. Then, 
    \begin{align*}
        f^*\left(\frac{\Pi_{ij}}{\lvert \Pi_{ij}\rvert}\right)(y) = \frac{\Pi_{ij}}{\lvert \Pi_{ij}\rvert}(f(y)) & =  \frac{\Pi_{ij}}{\Pi_{i_mj_m}}(x) \cdot \frac{ \Pi_{i_mj_m}}{\lvert\Pi_{i_mj_m}\rvert}(f(y)) \cdot \frac{\lvert \Pi_{i_mj_m}\rvert}{\lvert \Pi_{ij}\rvert}(x)\\
        & = \frac{I_{ij}}{I_{i_mj_m}}(x) \cdot \mathfrak{p}_{i_mj_m}(y) \cdot \frac{\lvert I_{i_mj_m}\rvert}{\lvert I_{ij}\rvert}(x)\\
        & = \mathfrak{p}_{ij}(y)
    \end{align*}
    Thus, $f^*(\Pi_{ij}/(1+\lvert \Pi_{ij}\rvert)) = \mathfrak{p}_{ij}$ when $h(i)\ne h(j)$. When $h(i) = h(j)$, $\Pi_{ij}$ is regular at $x$ and so by definition $f^*(\Pi_{ij}/(1+\lvert \Pi_{ij}\rvert)) = \mathfrak{p}_{ij}$. Uniqueness follows from the proof above, since there was no choice allowed in how $f$ was defined.
\end{proof}

\begin{defn}
    Equip $\rmscbar_n$ with the unique structure of a $C^\infty$ manifold with corners such that $f$ of \Cref{T:bijection} is a diffeomorphism. 
\end{defn}

With respect to this manifold structure, the functions $\mathfrak{p}_{ij}:\rmscbar_n\to \bD$ are smooth and $\pi \circ f = p$. Given a net in $\Mmscbar_n^\circ$ converging in $\Mmscbar_n$, its lift to a net in $\rmscbar_n$ may not converge.\endnote{Indeed, consider the real oriented blowup of $\bb{C}$ at the origin. A sequence $(z_m)$ in $\bb{C}^*$ converging to $0$ will now converge in the real oriented blowup if and only if $z_m/\lvert z_m\rvert$ converges. E.g. the sequence $z_m = \frac{(-1)^m}{m}$ converges in $\bb{C}$ but not in the real oriented blowup.}

\begin{prop}
\label{P:angularconvergence}
Consider a net $(x_\alpha)_{\alpha \in A}$ in $\Mmscbar_n^\circ$ which converges to $x\in \partial$. $(x_\alpha)_{\alpha \in A}$ converges in $\rmscbar_n$ if and only if 
\begin{equation} 
\label{E:modifiedperiod}
    \lim_{\alpha} \frac{\Pi_{ij}^\alpha}{1+\lvert \Pi_{ij}^\alpha\rvert}
\end{equation}
exists for all $i,j$.
\end{prop}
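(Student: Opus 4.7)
The plan is to reduce everything to the identification $f : \rmscbar_n \xrightarrow{\sim} \Bl_\partial^{\bR}(\Mmscbar_n)$ from \Cref{T:bijection}. Under this diffeomorphism the functions $\mathfrak{p}_{ij} : \rmscbar_n \to \bD$ are continuous and restrict on the open interior $\Mmscbar_n^\circ$ to $\Pi_{ij}/(1+|\Pi_{ij}|)$. I would first record that $\rmscbar_n$ is compact Hausdorff: as a manifold with corners it is Hausdorff, and it is the real oriented blowup of the proper variety $\Mmscbar_n$ along the SNC divisor $\partial$ (\Cref{P:propertiesofSGamma}(3)), which preserves compactness. I would also use that the projection $p : \rmscbar_n \to \Mmscbar_n$ is continuous.

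The forward direction is then essentially immediate: if $x_\alpha \to y$ in $\rmscbar_n$, then continuity of each $\mathfrak{p}_{ij}$ combined with $\mathfrak{p}_{ij}(x_\alpha) = \Pi^\alpha_{ij}/(1+|\Pi^\alpha_{ij}|)$ (since $x_\alpha \in \Mmscbar_n^\circ$) shows that the limit \eqref{E:modifiedperiod} exists and equals $\mathfrak{p}_{ij}(y)$.

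For the reverse direction, assume the limits in \eqref{E:modifiedperiod} exist for all pairs $i<j$, and fix the dual tree $\Gamma = \Gamma(x)$ of $x$, together with a choice of indices $\{(i_m,j_m)\}_{m=1}^\ell$ for $\Gamma$. By compactness of $\rmscbar_n$, any subnet of $(x_\alpha)$ has a further convergent subnet, with some limit $y \in \rmscbar_n$. Continuity of $p$ together with $p(x_\alpha) = x_\alpha \to x$ in $\Mmscbar_n$ forces $p(y) = x$, so $y$ lies in the fiber over $x$ identified with $\pi^{-1}(x)$ under $f$. Continuity of $\mathfrak{p}_{ij}$ applied to the convergent subnet then yields $\mathfrak{p}_{ij}(y) = \lim_\alpha \Pi_{ij}^\alpha/(1+|\Pi_{ij}^\alpha|)$, a quantity that by hypothesis depends only on the original net, not on the choice of subnet.

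Now \Cref{L:bijectionoffiber} (together with the identification $f^*(\Pi_{ij}/(1+|\Pi_{ij}|)) = \mathfrak{p}_{ij}$) shows that the map $\pi^{-1}(x) \to (S^1)^\ell$ defined by $y \mapsto (\mathfrak{p}_{i_mj_m}(y))_{m=1}^\ell$ is a bijection. Thus $y$ is uniquely determined in $\pi^{-1}(x)$, and in particular every convergent subnet of $(x_\alpha)$ has the same limit $y$. A standard argument in compact Hausdorff spaces then gives $x_\alpha \to y$ in $\rmscbar_n$. There is no substantive obstacle here: all the real content lies in the already-established identification \Cref{T:bijection} and the parameterization of boundary fibers by \Cref{L:bijectionoffiber}; the rest is a routine compactness-plus-subnet argument.
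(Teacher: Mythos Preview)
Your proof is correct and takes a genuinely different route from the paper's. The paper works directly in the local model \eqref{E:Rblowuplocally} of the real oriented blowup over the chart $U_\Gamma$: it uses the formula $\Pi_{i_mj_m} = 1/(t_1\cdots t_m)$ to show that the existence of all the limits \eqref{E:modifiedperiod} is equivalent to the existence of $\lim_\alpha t_m^\alpha/|t_m^\alpha|$ for each $m$, which is exactly the convergence criterion in the blowup coordinates. This is a short inductive computation with the identities from \Cref{L:continuousextensiontoboundary}.

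Your argument instead packages the same content as a compactness-plus-uniqueness-of-limits argument: compactness of $\rmscbar_n$ produces subnet limits, continuity of $p$ pins them to the fiber over $x$, and \Cref{L:bijectionoffiber} identifies points of that fiber via the values $(\mathfrak{p}_{i_mj_m})$. The trade-off is that the paper's approach is self-contained at the level of coordinates and does not need to invoke compactness of $\rmscbar_n$ (which, while true, is not stated explicitly before this point), whereas your approach is softer and avoids any explicit manipulation of the $t_m$ coordinates. Both rely on the same underlying facts (\Cref{T:bijection}, \Cref{L:bijectionoffiber}, \Cref{L:continuousextensiontoboundary}); you simply organize them around a topological principle rather than a direct computation.
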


\begin{proof}
    This follows from a coordinate calculation using \Cref{C:goodCOVgeneric}.\endnote{Consider $\Gamma = \Gamma(x)$ and a choice of indices $\{(i_m,j_m)\}_{m=1}^\ell$ as in \Cref{C:goodCOVgeneric}. Since $(x_\alpha)\to x$, we know that $\lim_\alpha t_m^\alpha = t_m(x) = 0$ for all $1\le m \le \ell$ and $\lim_{\alpha} z_{ij}^\alpha = z_{ij}(x)$ for all $i<j$. By \eqref{E:Rblowuplocally}, $(x_\alpha)\to \widetilde{x} \in \rmscbar_n$ if and only if $\lim_\alpha t_m^\alpha/\lvert t_m^\alpha \rvert$ exists for each $1\le m \le \ell$. 

    Suppose first that for all $i<j$ \eqref{E:modifiedperiod} converges. Then, taking $i = i_1$ and $j = j_1$, we see by \eqref{E:periodcomp} that $\lim_\alpha t_1^\alpha/\lvert t_1^\alpha\rvert$ converges. Taking $i = i_m$ and $j = j_m$ by \eqref{E:periodcomp} we see that convergence of \eqref{E:modifiedperiod} is equivalent to existence of $\lim_\alpha t_1^\alpha\cdots t_m^\alpha/\lvert t_1^\alpha \cdots t_m^\alpha \rvert$. By induction, it follows that each $\lim_\alpha t_m^\alpha/\lvert t_m^\alpha\rvert$ exists. Conversely, suppose that each $\lim_\alpha t_m^\alpha/\lvert t_m^\alpha\rvert$ exists for $1\le m \le \ell$. \eqref{E:periodcomp} implies that each of the limits of \eqref{E:modifiedperiod} exists.}
\end{proof}

The following proposition will be used multiple times in the sequel. As a matter of notation, given an element $x$ of $\Mmscbar_n$ we write $\lvert x\rvert$ to denote the underlying (complex projective isomorphism class of) multi-scale line with the markings forgotten. Let $x^1,\ldots, x^n$ denote the marked points of $x$.

\begin{prop}
\label{P:addingpointsconvergence}
    Let nets $(x_\alpha) \in \mscbar_n$ (resp. $\rmscbar_n)$ and $(y_\alpha) \in \mscbar_{n+k}$ (resp. $\rmscbar_{n+k}$) be given such that 
    \begin{enumerate}
        \item for all $\alpha,$ there is an isomorphism $\lvert x_\alpha\rvert \cong \lvert y_\alpha\rvert$ taking $x_\alpha^i$ to $y_\alpha^i$ for all $1\le i \le n$; and
        \item for all $m > n$, $\exists j \in [1,n]$ such that $\lim_\alpha y_\alpha^{m} - x_\alpha^{j}$ exists.
    \end{enumerate}
    Then $(x_\alpha)\to x\in \mscbar_n$ (resp. $\rmscbar_n$) if and only if $(y_\alpha)\to y \in \mscbar_{n+k}$ (resp. $\rmscbar_{n+k}$). In this case, $\lvert x\rvert \cong \lvert y\rvert$ via a complex projective (resp. real oriented) isomorphism  that identifies $x^i$ with $y^i$ for all $1\le i \le n$, and $y^{m}$ lies on the same component as $x^j$ for $m$ and $j$ as in (2).
\end{prop}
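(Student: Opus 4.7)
The plan is to explicitly construct a candidate limit $y \in \mscbar_{n+k}$ using $x$ together with the limits $c_m := \lim_\alpha(y_\alpha^m - x_\alpha^{j_m}) \in \bC$ provided by condition (2), and then verify equivalence of convergence by identifying the relevant local coordinates on $\mscbar_{n+k}$ and $\mscbar_n$ via \Cref{P:identification}. Since each $c_m$ is a finite complex number, the difference $y_\alpha^m - x_\alpha^{j_m}$ is eventually well-defined, so $y_\alpha^m$ and $x_\alpha^{j_m}$ eventually lie on the same terminal component of $\lvert y_\alpha\rvert = \lvert x_\alpha\rvert$. I define $y$ by setting $\lvert y\rvert := \lvert x\rvert$, $y^i := x^i$ for $i \le n$, and placing $y^m$ for $m > n$ at the point ``$x^{j_m} + c_m$'' on the terminal component of $\lvert x\rvert$ containing $x^{j_m}$, where addition is computed in the affine coordinate dual to the differential on that component. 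The identity on $\lvert x\rvert$ then provides the claimed isomorphism $\lvert x\rvert \cong \lvert y\rvert$ matching the first $n$ markings, with $y^m$ on the same component as $x^{j_m}$.

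Let $\Gamma := \Gamma(x)$ and $\Gamma' := \Gamma(y)$. Because $\lvert x\rvert = \lvert y\rvert$ and every extra marking of $y$ lies on a terminal vertex already carrying some $x^{j_m}$, the graphs $\Gamma$ and $\Gamma'$ share the same underlying rooted level tree, with $\Gamma'$ differing only by the attachment of markings $n+1, \ldots, n+k$ to terminal vertices. I fix a choice of reference indices $(i_p, j_p)_{p=1}^\ell$ for the levels of $\Gamma$ drawn from $\{1, \ldots, n\}$; this is simultaneously a valid choice for $\Gamma'$. With respect to these choices, the functions from \Cref{P:identification} satisfy $z_{ij}^{\Gamma'} = z_{ij}^{\Gamma}$ for $1 \le i < j \le n$ and $t_p^{\Gamma'} = t_p^{\Gamma}$ for all $p$. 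The remaining new coordinates $z_{i,m}^{\Gamma'}$ with $m > n$ are computed by observing that $h(m) = h(j_m)$ in $\Gamma'$ and hence $h(i) \vee h(m) = h(i) \vee h(j_m)$: when $h(i)$ lies on the same terminal component as $h(j_m)$ one has $I_{i,m}(y_\alpha) = I_{i,j_m}(x_\alpha) + (y_\alpha^m - x_\alpha^{j_m})$, while otherwise $I_{i,m}(y_\alpha) = I_{i,j_m}(x_\alpha)$. Dividing by the common $s_p$ expresses each $z_{i,m}^{\Gamma'}(y_\alpha)$ as a continuous function of the $\Gamma$ coordinates of $x_\alpha$ and the $c_m$; an analogous identity handles $z_{m,m'}^{\Gamma'}$ for $m, m' > n$.

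Convergence $(x_\alpha) \to x$ in $\mscbar_n$ is equivalent, via the chart $U_\Gamma$ of \Cref{P:identification}, to eventual membership of $x_\alpha$ in $U_\Gamma$ together with convergence of the coordinates $(z_{ij}^{\Gamma}(x_\alpha), t_p^{\Gamma}(x_\alpha))$ to those of $x$; the analogous statement holds for $(y_\alpha) \to y$ in $U_{\Gamma'}$. The coordinate identities of the previous paragraph show that each $\Gamma'$ coordinate of $y_\alpha$ is a continuous function of the $\Gamma$ coordinates of $x_\alpha$ and the $c_m$, while conversely the $\Gamma$ coordinates of $x_\alpha$ form a subset of the $\Gamma'$ coordinates of $y_\alpha$ and the $c_m$ can be read off as limits of $z_{i,m}^{\Gamma'}$-type coordinates at level $0$. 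This yields the equivalence of convergence in $\mscbar_n$ and $\mscbar_{n+k}$.

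For the $\rmscbar$ version, \Cref{T:bijection} identifies $\rmscbar_N$ with the real oriented blowup of $\mscbar_N$ along its boundary divisor, with the angular functions $\mathfrak{p}_{ij}$ playing the role of the fiber coordinates; by the local description in \eqref{E:Rblowuplocally} and \Cref{L:bijectionoffiber}, convergence in $\rmscbar_N$ amounts to convergence in $\mscbar_N$ plus convergence of the angular functions $\mathfrak{p}_{i_pj_p}/\lvert \mathfrak{p}_{i_pj_p}\rvert$ at each level. Because the limits $c_m$ lie in $\bC$ rather than merely in $\bP^1$, they supply the needed angular data to recover $\mathfrak{p}_{i,m}(y_\alpha)$ from $\mathfrak{p}_{i,j_m}(x_\alpha)$ via the same identities, and the argument carries through. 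The main technical obstacle I anticipate lies in the careful bookkeeping of the coordinate identifications when the preferred reference indices for $\Gamma'$ do not align with those for $\Gamma$; since the necessary adjustments provided by \Cref{L:equivalentindices} and \Cref{L:goodindicesCOV} are monomial, they do not affect the equivalence of convergence.
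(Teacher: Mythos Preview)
Your proposal is correct and follows essentially the same approach as the paper: both arguments fix compatible reference indices for $\Gamma$ and $\Gamma'$, identify the shared coordinates $z_{ij}^{\Gamma'}=z_{ij}^{\Gamma}$ and $t_p^{\Gamma'}=t_p^{\Gamma}$, and express the new coordinates $z_{i,m}^{\Gamma'}$ via the additive relation $I_{i,m}=I_{i,j_m}+(y_\alpha^m-x_\alpha^{j_m})$ on the relevant terminal component. The only cosmetic differences are that the paper reduces to $k=1$ by induction whereas you handle all $k$ at once, and for the real oriented case the paper packages the argument as a commutative square involving the forgetful map $\tilde{q}:U_{\Gamma'}^{\bR}\to U_\Gamma^{\bR}$ in the local model \eqref{E:Rblowuplocally}, which makes it transparent that the angular fiber coordinates $T_p$ are literally shared rather than merely recoverable.
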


\begin{proof}
We omit the proof, noting that it follows from the definition of the coordinate systems on $\cA_n$ as in \eqref{E:Mn_coordinates} and the local description of the real oriented blowup as in \eqref{E:Rblowuplocally}.\endnote{For both $\mscbar_n$ and $\rmscbar_n$ it suffices to prove the $k=1$ case. We begin with $\mscbar_n$. Suppose $x$ has dual tree $\Gamma$. Consider $\Gamma'$ defined to be the dual tree of $x$ with an extra marked point $x^{n+1}$ attached to the component of some $x^k$. Associated to $\Gamma$ and a choice of indices $\{(i_m,j_m)\}_{m=1}^\ell$ is an open $U_\Gamma$ and a set of functions $\{z_{ij},t_m\}$ as in \eqref{E:Mn_coordinates}.

The same choice of indices defines a system of functions $\{w_{ij},u_m\}$ on $U_{\Gamma'}\subset \Mmscbar_{n+1}$. If $1\le i,j\le n$ then $z_{ij}^\alpha = w_{ij}^\alpha$ so $\lim_\alpha w_{ij}^\alpha = z_{ij}(x)$ for all such $i,j$. Also, $u_m^\alpha = t_m^\alpha$ for all $\alpha$ and $\lim_\alpha u_m^\alpha = 0$ for all $1\le m \le \ell$. 

Now, if $j = n+1$ and $h(i)\ne h(k)$ then $z_{ik}^\alpha = z_{i,n+1}^\alpha$ for all $\alpha$ and so $\lim_\alpha z_{i,n+1}^\alpha = z_{ik}(x)$. If $h(i) = h(k)$ then $z_{i,n+1}^\alpha = z_{i,k}^\alpha + z_{k,n+1}^\alpha$ and $\lim_{\alpha} z_{i,n+1}^\alpha$ exists if and only if $\lim_\alpha z_{k,n+1}^\alpha$ exists.

Under the projection $\rmscbar_{n+1}\to \mscbar_{n+1}$, $(y_\alpha)$ defines a net $(y_\alpha')$, which converges to $y'\in \mscbar_{n+1}$ with dual graph $\Gamma'$ by the first part. There is a map $q:U_{\Gamma'}\to U_\Gamma$ given by deleting the last marked point. In terms of the functions $\{z_{ij},t_m\}$ from before this is given by $(t,z,z_*)\mapsto (t,z)$ where $z_*$ is shorthand for the functions $z_{ij}$ with $j = n+1$ and $z$ is the remaining functions. 

We define $U_\Gamma^{\bR}$ as the preimage of $U_\Gamma$ under $\rmscbar_n \to \Mmscbar_n$ and we define $U_{\Gamma'}^{\bR} \subset \Mmscbar_{n+1}$ analogously. $U_{\Gamma}^{\bR} \to U_\Gamma$ is given by projection as described in \eqref{E:Rblowuplocally}. Explicitly, $U_{\Gamma}^{\bR} = \{(t,z,T):\lvert t_i\rvert T_i = t_i\}$ and $U_{\Gamma'}^{\bR} = \{(t,z,z^*,T): \lvert t_i\rvert T_i = t_i\}$. Define $\widetilde{q}(t,z,z^*,T) = (t,z,T)$. There is a commutative diagram
\[
    \begin{tikzcd}[ampersand replacement = \&] U_{\Gamma'}^{\bR}\arrow[d]\arrow[r,"\widetilde{q}"]\&U_\Gamma^{\bR}\arrow[d]\\
    U_{\Gamma'}\arrow[r,"q"]\& U_\Gamma.
    \end{tikzcd}
\]
Write $y_\alpha = (t_\alpha,z_\alpha,z^*_\alpha,T_\alpha)$ and $y_\alpha' = (t_\alpha,z_\alpha,z^*_\alpha)$. Since $\lim_\alpha y_\alpha' = y'$, the limit of each component of $y_\alpha'$ exists. On the other hand, $x_\alpha = \widetilde{q}(y_\alpha) = (t_\alpha,z_\alpha,T_\alpha)$ by its definition. Since $\lim_\alpha x_\alpha = x$, similar reasoning yields that $\lim_\alpha y_\alpha = y$ exists. The claimed properties are a consequence of the first part and $\widetilde{q}(y) = x$.}
\end{proof}

\section{The space of augmented stability conditions}

\subsection{The weak topology}

For any multi-scale decomposition $\langle \cC_\bullet \rangle_{\Sigma}$, $U(\langle \cC_\bullet \rangle_\Sigma) \subset \Astab(\cC)$ denotes the set of points with underlying multi-scale decomposition a coarsening of $\langle \cC_\bullet \rangle_\Sigma$. In what follows, we use some functions defined in \Cref{S:background}, which the reader may wish to review: see \eqref{E:massmeasure}, \eqref{E:smoothedphase}, \eqref{E:logcentralchargefunction}, and \Cref{S:C_invariant_functions}.

\begin{defn}[Weak topology]\label{D:weak_topology}
We define the \emph{weak topology} on $\Astab(\cC)$ to be the weakest topology such that for any multi-scale decomposition $\langle \cC_\bullet \rangle_{\Sigma}$ and collection of $t$-well-placed objects $E_1,\ldots,E_N$ with every terminal vertex $v$ occurring as $\dom(E_j)$ for some $j$:
\begin{enumerate}
    \item $U(\langle \cC_\bullet \rangle_\Sigma)$ is open; and
    \item the map $\ell_{(-)}^t(E_1,\ldots,E_N) : U(\langle \cC_\bullet \rangle_\Sigma) \to \rmscbar_N$ taking $\langle \cC'_\bullet| \sigma'_\bullet \rangle_{\Sigma'}$ to the marked multi-scale line $(\Sigma',\ell^t_{\sigma'}(E_1),\ldots,\ell^t_{\sigma'}(E_N))$ is cont\-inuous.
\end{enumerate}
Note that $\ell_{(-)}^t(E_1,\ldots, E_N): U(\langle \cC_\bullet\rangle_\Sigma)\to \rmscbar_N$ is defined by virtue of the fact that the $E_i$ are $0$-well-placed (\Cref{D:twellplaced}) for all elements of $U(\langle \cC_\bullet\rangle_\Sigma)$ by \Cref{L:0wpcoarsening}. The reader may wish to review the notation of \eqref{E:functionsastab} and \Cref{D:terminology}.
\end{defn}

Recall that a \emph{net} in a set $X$ is a directed set $(I,\le)$ along with a function $I \to X$. 

\begin{defn}[Weak convergence criteria] \label{D:convergence_conditions}
Given an augmented stab\-ility condition $\sigma = \langle \cC_\bullet | \sigma_\bullet\rangle_{\Sigma}$, a labeling $v_1,\ldots,v_n$ of the terminal components of $\Sigma$, and a choice of $P_i \in \cC^{\rm{ss}}_{\le v_i}$ for all terminal $i$, we will consider the following conditions on a net $\{\sigma_\alpha\}_{\alpha \in I}$ in $\Stab(\cC)/\bC$:
\begin{enumerate}
 
    \item The multi-scale lines $(\bP^1, \ell_\alpha(P_1),\ldots,\ell_\alpha(P_n))$ converge to $(\Sigma, \ell_\sigma(P_1),\ldots,\ell_\sigma(P_n))$ in $\rmscbar_n$. \vspace{-4mm}\label{I:marked_line_convergence_1}
    
    \item $\forall i$, $\forall E \in \cC_{\le v_i}^{\rm{ss}}$, and $\forall t>0$,
    
    \begin{enumerate}
        \item $\lim_\alpha c^t_\alpha(E) = 1$, and \label{I:cosh_bound} 
        \item $\lim_\alpha  \ell_\alpha(E/P_i) = \ell_\sigma(E/P_i)$. \label{I:log_central_charge}
    \end{enumerate} 
\end{enumerate}
\end{defn}

\begin{rem}
    The conditions of \Cref{D:convergence_conditions} are independent of $\{P_i\}_{i=1}^n$. Indeed, given another collection of $\{P_i' \in \cC_{\le v_i}^{\rm{ss}} : i=1,\ldots,n\}$, \eqref{I:log_central_charge} implies that $\lim_\alpha \ell_\alpha(P_i'/P_i) = \ell_\sigma(P_i'/P_i)$. Thus, \Cref{P:addingpointsconvergence} implies $(\bP^1,\ell_\alpha(\{P_i'\}))$ converges in $\rmscbar_n$ to $(\Sigma',\ell_\sigma(\{P_i'\})),$ which is real-orientedly isomorphic to $(\Sigma,\ell_\infty(\{P_i\}))$. One concludes by deducing \eqref{I:log_central_charge} with respect to $\{P_i'\}$ using the identity $\ell(E/P_i') = \ell(E/P_i) + \ell(P_i/P_i')$. 
\end{rem}

\begin{rem} \label{R:almost_semistable_convergence}
    The identity \eqref{E:cosh_expression} combined with Jensen's inequality $m^s_\alpha(E/E) \geq 1$ implies that \eqref{I:cosh_bound} is equivalent to the condition that $\lim_\alpha m^s_\alpha(E/E) = 1$ for $s= t$ and $s=-t$. The inequalities $1 \leq c^s_\alpha(E) \leq c^t_\alpha(E)$ for $\lvert s \rvert <t$ imply that if \eqref{I:cosh_bound} holds for $t$, then it holds for any $|s|<t$.
\end{rem}

\begin{defn}
\label{D:almostsemistable}
    Given a net $\{\sigma_\alpha\}_{\alpha \in I}$ satisfying the conditions of \Cref{D:convergence_conditions}, we call an object $E \in \cC$ \emph{almost semistable} with respect to the net if $\lim_\alpha c^t_\alpha(E)=1$ for all $t>0$.
\end{defn}

\begin{thm} \label{T:unique_limit}
A net $\{\sigma_\alpha\}_{\alpha \in I}$ in $\Stab(\cC)/\bC$ converges to $\sigma = \langle \cC_\bullet| \sigma_\bullet \rangle_\Sigma \in \Astab(\cC)$ in the weak topology if and only if the conditions of \Cref{D:convergence_conditions} hold, and in this case $\sigma$ is uniquely determined by the net as follows: Let $\{Q_a\}$ be a maximal collection of almost semistable objects such that $\lim_\alpha |\ell_\alpha(Q_i/Q_j)| = \infty$ for all $i \neq j$. Then:
\begin{enumerate}
    \item $\{Q_a\}$ is finite and $(\bP^1, \ell_\alpha(Q_1),\ldots,\ell_\alpha(Q_n))$ converges in $\rmscbar_n$ to a point whose under\-lying multi-scale line is $\Sigma$, and such that each terminal comp\-onent of $\Sigma$ contains exactly one marked point $\ell_\sigma(Q_i)$. Let $v_i$ denote the terminal component containing $\ell_\sigma(Q_i)$. 
    
    \item For any almost semistable $E \in \cC$, there is a unique index $i$ such that $\lim_\alpha \ell_\alpha(E/Q_i)$ exists. Furthermore, $\dom(E) = v_i$. 
    
    \item For any terminal component $v_i$, $\cC_{\leq v_i}$ is the smallest triangulated subcategory of $\cC$ that contains every almost semistable object $E$ with $\dom(E) \leq_{1,\infty} v_i$. 

    \item The $\sigma_{v_i}$-semistable objects in $\gr_{v_i}(\cC_\bullet)$ are the objects of the form $\Pi_{v_i}(E)$ where $E \in \cC$ is almost semistable with $\dom(E)=v_i$. The central charge of $\sigma_{v_i}$ on such an $E \in \cC_{\leq v_i}$ is given, up to the action of $\bC$, by
    \[
    Z_{v_i}(\Pi_{v_i}(E)) = e^{\lim_\alpha \ell_\alpha(E/Q_i)}.
    \]
\end{enumerate}
\end{thm}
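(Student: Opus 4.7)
The overall plan is to prove the equivalence of weak convergence with the conditions of \Cref{D:convergence_conditions} by direct checking, then extract (1)--(4) by observing that the net alone determines which objects are ``almost semistable'' and how they distribute among the terminal vertices of $\Sigma$. Since $\Stab(\cC)/\bC = U(\langle \cC \rangle_{\bP^1})$ is contained in every $U(\langle \cC_\bullet \rangle_\Sigma)$ (the trivial decomposition is a coarsening of all others), the openness clause of \Cref{D:weak_topology} is automatic for nets in $\Stab(\cC)/\bC$, and the entire content of weak convergence reduces to the continuity of the maps $\ell^t_{(-)}(E_1,\ldots,E_N)$ into $\rmscbar_N$.

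For the forward direction, I would use \Cref{P:addingpointsconvergence} to reduce continuity of $\ell^t_{(-)}(E_1,\ldots,E_N)$ to the case where $\{E_i\} = \{P_1,\ldots,P_n,E\}$ for a single $t$-well-placed $E$ with $\dom(E) = v_i$. Condition \eqref{I:marked_line_convergence_1} controls the first $n$ marks. For the last one, I split
\[
\ell^t_\alpha(E/P_i) = \ell_\alpha(E/P_i) + i\pi(\phi^t_\alpha(E) - \phi_\alpha(E));
\]
the first summand converges by \eqref{I:log_central_charge}, and the second tends to $0$ via the identity $m^t(E/E) = e^{t(\phi^t(E)-\phi(E))}$, the Jensen inequalities $m^{\pm t}(E/E) \geq 1$, and condition \eqref{I:cosh_bound} (see \Cref{R:almost_semistable_convergence}). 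Angular convergence of ratios involving $P_j$ for $j \neq i$ follows because $|\ell^t_\alpha(E/P_j)| \to \infty$ with the same angular limit as $\ell_\alpha(P_i/P_j)/|\ell_\alpha(P_i/P_j)|$, which is supplied by \eqref{I:marked_line_convergence_1}.

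The converse is extracted in the same spirit: weak convergence evaluated on $\ell^0_{(-)}(P_1,\ldots,P_n)$ yields \eqref{I:marked_line_convergence_1}, and on $\ell^0_{(-)}(P_1,\ldots,P_n,E)$ yields \eqref{I:log_central_charge}. To obtain \eqref{I:cosh_bound}, I would evaluate $\ell^t_{(-)}(P_1,\ldots,P_n,E)$ for arbitrary $t$: since $E \in \cC_{\leq v_i}^{\rm{ss}}$ is $\sigma$-semistable, $\ell^t_\sigma(E/P_i) = \ell_\sigma(E/P_i)$ independent of $t$, so weak convergence forces $\phi^t_\alpha(E) - \phi_\alpha(P_i) \to \phi_\sigma(E) - \phi_\sigma(P_i)$ for every $t$; subtracting the $t=0$ case yields $\phi^t_\alpha(E) - \phi_\alpha(E) \to 0$, equivalently $m^t_\alpha(E/E) \to 1$, hence \eqref{I:cosh_bound}. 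For the uniqueness assertions, I observe that $c^t_\sigma(E) = 1$ for all $t$ characterizes $\sigma$-semistability, so every almost semistable $E$ is $\sigma$-semistable and has a well-defined dominant vertex among $v_1,\ldots,v_n$ (\Cref{R:inftywellplaced}); two almost semistable objects with the same dominant vertex must have $\ell_\alpha(\cdot/\cdot) \to \ell_{\sigma_v}(\Pi_v(\cdot)/\Pi_v(\cdot)) \in \bC$, forcing $|\{Q_a\}| = n$ by maximality and yielding (1). Part (2) follows from the same dichotomy: if no $\ell_\alpha(E/Q_i)$ converged, $\{Q_a\} \cup \{E\}$ would contradict maximality, while two such $i$ would force $\ell_\alpha(Q_i/Q_j)$ to converge. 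Parts (3) and (4) are obtained by combining the scale filtration of \Cref{L:multiscale_decomp_filtration} with the observation that $\sigma$-stable objects classically generate each $\cC_{\leq v_i}$ via HN and Jordan--Hölder filtrations in $\gr_{v_i}(\cC_\bullet)$.

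The main obstacle will be the step in the converse direction deducing the full concentration condition \eqref{I:cosh_bound} from weak convergence: the maps $\ell^t_{(-)}$ are available only for a single fixed $t$ at a time, so the argument must cleanly exploit the semistability of both $E$ and $P_i$ at the limit to trivialize the $t$-dependence of $\ell^t_\sigma(E/P_i)$, and the convex-analytic identities of \Cref{S:background} must be used to convert the resulting pointwise-in-$t$ convergence of $\phi^t_\alpha(E) - \phi_\alpha(E)$ into the multiplicative statement $c^t_\alpha(E) \to 1$. A secondary delicate point is showing in (3)--(4) that the triangulated subcategory generated by almost semistable objects with $\dom \leq_{1,\infty} v_i$ is exactly $\cC_{\leq v_i}$, which requires extracting the appropriate categorical data from scale filtrations and verifying that the reconstructed central charge is compatible with the $\bC$-ambiguity via the chosen reference objects $Q_i$.
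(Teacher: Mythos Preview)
Your converse direction (weak convergence $\Rightarrow$ \Cref{D:convergence_conditions}) and the broad outline for (1)--(4) are in line with the paper. The real problem is the forward direction.

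The weak topology (\Cref{D:weak_topology}) demands continuity of $\ell^t_{(-)}(E_1,\ldots,E_N)$ for \emph{arbitrary} $t$-well-placed objects $E_i$, whereas the hypotheses \eqref{I:log_central_charge} and \eqref{I:cosh_bound} of \Cref{D:convergence_conditions} are stated only for $E \in \cC_{\leq v_i}^{\rm{ss}}$, i.e.\ objects whose dominant projection is $\sigma_{v_i}$-semistable. Your computation for ``a single $t$-well-placed $E$'' invokes both conditions directly on $E$, which is not licensed: a $t$-well-placed object need not be $\sigma$-semistable. The paper closes this gap with substantial preparatory work. It introduces the \emph{combined HN filtration} of an arbitrary object (refining the scale filtration by the HN filtrations on each $\gr_{s_i}(\cC_\bullet)$), and proves in \Cref{P:HN_mass_comparison}, via the appendix inequality \Cref{T:filtration_inequality}, that $m^t_\alpha$ is asymptotically additive on it. From this, \Cref{C:well_placed_convergence} deduces $\lim_\alpha \ell^s_\alpha(E/P_v) = \ell^s_\sigma(\Pidom(E)/\Pidom(P_v))$ for any $t$-well-placed $E$ and any $|s|\le t$, by reducing to the semistable pieces where \eqref{I:log_central_charge} and \eqref{I:cosh_bound} do apply. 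Without this reduction machinery, your forward implication does not go through.

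There is a parallel gap in your treatment of (1)--(4). You write that ``$c^t_\sigma(E)=1$ for all $t$ characterizes $\sigma$-semistability, so every almost semistable $E$ is $\sigma$-semistable,'' but almost semistability is defined by $\lim_\alpha c^t_\alpha(E)=1$, and connecting this to any statement about $\sigma$ is precisely the non-trivial content of \Cref{P:identify_well_placed}: one must show that $\limsup_\alpha c^{t'}_\alpha(E)<\infty$ forces $E$ to be $t$-well-placed, and equality to $1$ forces $\Pidom(E)$ to be semistable. The proof again uses \Cref{P:HN_mass_comparison} on the scale filtration together with a cosh-weighted lower bound (the inequality labeled \eqref{E:key_cosh_bound} in the paper) to extract a single dominant vertex. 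This is what makes the bijection $\dom:\{Q_a\}\to V(\Sigma)_{\rm{term}}$ in (1) and the uniqueness in (2) work; it is not an observation.
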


We will prove this theorem at the end of the section, after establishing several preliminary results. For the rest of the section we will fix a choice of net $\{\sigma_\alpha\}_{\alpha \in I}$ in $\Stab(\cC)/\bC$ that satisfies the conditions of \Cref{D:convergence_conditions}.

For nets of positive real numbers $\{x_\alpha\}$ and $\{y_\alpha\}$, indexed by the same directed set, we say $x_\alpha \sim y_\alpha$ if $\lim_\alpha x_\alpha / y_\alpha = 1$. Note that this an equivalence relation, and $x_\alpha \sim y_\alpha$ implies both $x_\alpha z_\alpha \sim y_\alpha z_\alpha$ and $x_\alpha + z_\alpha \sim y_\alpha + z_\alpha$ for any other net of positive numbers $\{z_\alpha\}$.\endnote{The fact that $x_\alpha z_\alpha \sim y_\alpha z_\alpha$ is immediate from the definition, but the additive version requires some justification. Observe that \[\frac{x_\alpha+z_\alpha}{y_\alpha+z_\alpha} - 1 = \frac{y_\alpha}{y_\alpha+z_\alpha} \left(\frac{x_\alpha}{y_\alpha} - 1\right).\] The hypothesis that $y_\alpha$ and $z_\alpha$ are positive implies that $\frac{y_\alpha}{y_\alpha+z_\alpha} < 1$, and this implies that the limit of the right hand side is $0$.} We use the notation $x_\alpha \lesssim y_\alpha$ to mean that $\liminf_\alpha y_\alpha/x_\alpha \geq 1$.

For any $E \in \cC$, the scale filtration (\Cref{L:multi-scale_decomp_filtration}) can be refined by further equipping each $\gr_i(E)$ with a filtration that lifts the $\sigma_{s_i}$-HN-filtration of $\Pi_{s_i}(\gr_i(E))$ under the projection $\cC_{\leq s_i} \to \gr_{s_i}(\cC_\bullet)$. We refer to this finer filtration as a \emph{combined HN filtration of $E$} with respect to $\sigma$. Although the filtration is non-unique, the dominant projections of $\gr_i(E)$ are unique up to canonical isomorphism.

\begin{prop} \label{P:HN_mass_comparison}
If $E \in \cC$ and $F_1,\ldots,F_N$ are the associated graded objects of a combined HN filtration of $E$ with respect to $\sigma$, then
\begin{equation} \label{E:mass_additive_filtration}
    m_\alpha^t(E) \sim m_\alpha^t(F_1)+\cdots+m_\alpha^t(F_N).
\end{equation}
\end{prop}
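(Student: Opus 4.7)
The plan is to apply \Cref{T:filtration_inequality} in two nested stages, using in a crucial way the almost semistability of each combined HN factor. Recall that every $F_i$ lifts a $\sigma_{s_{a(i)}}$-semistable object of $\gr_{s_{a(i)}}(\cC_\bullet)$, so $F_i\in\cC_{\leq s_{a(i)}}^{\rm ss}$, and condition \eqref{I:cosh_bound} gives $\lim_\alpha c^t_\alpha(F_i)=1$ for every $t$; by \Cref{R:almost_semistable_convergence} this is equivalent to $m^r_\alpha(F_i)\sim m_\alpha(F_i)e^{r\phi_\alpha(F_i)}$ for every $r\in\bR$.

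The core quantitative step is the following concentration estimate: if $F$ is almost semistable and a threshold $(a^\alpha)$ satisfies $a^\alpha-\phi_\alpha(F)\geq\delta$ eventually, then $m^t_\alpha(F^{>a^\alpha})=o(m^t_\alpha(F))$; dually $m^t_\alpha(F^{\leq a^\alpha})=o(m^t_\alpha(F))$ when $\phi_\alpha(F)-a^\alpha\geq\delta$. Indeed for any $s>0$ the pointwise inequality $e^{t\theta}\leq e^{-sa^\alpha}e^{(t+s)\theta}$ for $\theta>a^\alpha$ gives
\[
m^t_\alpha(F^{>a^\alpha})\leq e^{-sa^\alpha}\,m^{t+s}_\alpha(F),
\]
and dividing by $m^t_\alpha(F)\sim m_\alpha(F)e^{t\phi_\alpha(F)}$ and using $m^{t+s}_\alpha(F)/m^t_\alpha(F)\sim e^{s\phi_\alpha(F)}$ forces $\limsup_\alpha m^t_\alpha(F^{>a^\alpha})/m^t_\alpha(F)\leq e^{-s\delta}$, which tends to $0$ as $s\to\infty$. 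A symmetric argument with negative $s$ handles the dual bound.

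Stage I (within each scale). The combined HN factors $G_{a,1},\ldots,G_{a,k_a}$ of a scale piece $F^{\rm scale}_a$ are almost semistable, and their phases $\phi_\alpha(G_{a,j})$ have gaps converging to the strictly positive HN phase differences of $\sigma_{s_a}$ by \eqref{I:log_central_charge}. Applying \Cref{T:filtration_inequality} to the standard HN filtration of $F^{\rm scale}_a$ with thresholds at the midpoints between consecutive $\phi_\alpha(G_{a,j})$ and $\epsilon$ smaller than half the minimum limiting gap, the concentration estimate forces the error term to be $o(\sum_jm^t_\alpha(G_{a,j}))$, yielding $m^t_\alpha(F^{\rm scale}_a)\sim\sum_j m^t_\alpha(G_{a,j})$; the same argument produces this equivalence for every exponent $r\in\bR$ in place of $t$.

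Stage II (across scales). By \Cref{L:multiscale_decomp_filtration} the scale filtration has factors $F^{\rm scale}_a$; \Cref{D:convergence_conditions}\eqref{I:marked_line_convergence_1} combined with $\Im\mathfrak{p}(s_a,s_b)>0$ for $s_a<_{i,0}s_b$ forces $\phi_\alpha(Q_b)-\phi_\alpha(Q_a)\to+\infty$, while \eqref{I:log_central_charge} shows $\phi_\alpha(G_{a,j})-\phi_\alpha(Q_a)$ has a finite limit, so all $\phi_\alpha(G_{a,j})$ remain within bounded distance of $\phi_\alpha(Q_a)$ and the cross-scale gaps blow up. Although $F^{\rm scale}_a$ is not itself almost semistable, the same exponential-tilting bound combined with Stage I applied at exponent $t+s$ gives
\[
\frac{m^t_\alpha(F^{\rm scale}_a{}^{>A})}{m^t_\alpha(F^{\rm scale}_a)}\lesssim\frac{\sum_j m_\alpha(G_{a,j})e^{(t+s)\phi_\alpha(G_{a,j})}e^{-sA}}{\sum_j m_\alpha(G_{a,j})e^{t\phi_\alpha(G_{a,j})}}\leq\max_j e^{-s(A-\phi_\alpha(G_{a,j}))},
\]
which tends to $0$ as $s\to\infty$ whenever $A$ exceeds every $\phi_\alpha(G_{a,j})$ by a positive constant, i.e.\ whenever $A\geq\phi_\alpha(Q_a)+C$ for a suitable finite $C$; the dual bound holds below. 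Placing thresholds for the scale filtration deep inside the cross-scale gaps and applying \Cref{T:filtration_inequality} then yields $m^t_\alpha(E)\sim\sum_am^t_\alpha(F^{\rm scale}_a)$, and combining with Stage I gives $\sum_i m^t_\alpha(F_i)=\sum_{a,j}m^t_\alpha(G_{a,j})\sim m^t_\alpha(E)$. The main obstacle is Stage II: the concentration of the non-almost-semistable scale piece $F^{\rm scale}_a$ is not available from \eqref{I:cosh_bound} directly and must be extracted from Stage I applied at the shifted exponent $t+s$, so the two stages must be coordinated rather than proved independently.
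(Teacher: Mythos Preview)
Your argument is correct, but it is more elaborate than needed. The paper applies \Cref{T:filtration_inequality} \emph{once}, directly to the full combined HN filtration $F_1,\ldots,F_N$, rather than in two nested stages. The point you miss is that every $F_j$ is already almost semistable (each lies in some $\cC_{\leq v}^{\rm ss}$, so \Cref{D:convergence_conditions}\eqref{I:cosh_bound} applies), and consecutive unnormalized phases satisfy $\liminf_\alpha\big(\phi_\alpha(F_{j+1})-\phi_\alpha(F_j)\big)>0$: within a single scale piece this follows from \eqref{I:log_central_charge} and the strict HN phase gaps, while across consecutive scale pieces $s_a<_{i,0}s_{a+1}$ the difference diverges to $+\infty$ since $\phi_\alpha(P_{s_{a+1}})-\phi_\alpha(P_{s_a})\to+\infty$ and each $\phi_\alpha(F_j)-\phi_\alpha(P_{\dom(F_j)})$ has a finite limit. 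With $\alpha$-dependent thresholds at the midpoints and the concentration from \Cref{L:cosh_bound_mass} (your exponential-tilting estimate is an equivalent substitute), the error term in \Cref{T:filtration_inequality} is $o\big(\sum_j m^t_\alpha(F_j)\big)$ in one shot.

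So the ``main obstacle'' you identify---that the scale pieces $F_a^{\rm scale}$ are not almost semistable---never arises in the paper's argument, because the scale pieces are never treated as units. Your workaround via Stage~I at the shifted exponent $t+s$ is sound, but it is extra machinery that the direct approach avoids.
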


\begin{proof}
Let $P_j = P_{\dom(F_j)}$ be as in \Cref{D:convergence_conditions} for the dominant vertex of each $F_j$. The key observation is that \Cref{D:convergence_conditions}\eqref{I:log_central_charge} implies that for all $j=1,\ldots,N-1$,
\[
    \liminf_\alpha \phi_\alpha(F_{j+1}/P_{j+1}) - \phi_\alpha(F_j/P_j) > 0.
\]
So we may choose a small positive $\delta$ and a cofinal subnet such that $\phi_\alpha(F_{j+1}/P_{j+1}) - \phi_\alpha(F_j/P_j) > 6 \delta$ for all $\alpha$ in the subnet and all $j$. Then \Cref{D:convergence_conditions}\eqref{I:cosh_bound} implies that for any fixed $t'$ with $t' > \lvert t\rvert$ and any $\epsilon>0$, we can find a further cofinal subnet such that 
\[
    c^\alpha_{t'}(F_j) < \left( \frac{1}{2} + \sqrt{\frac{1}{4} + \epsilon \frac{\cosh(t' \delta)}{\cosh(t\delta)}-\epsilon} \right)
\]
for all $\alpha$ in the subnet and all $j=1,\ldots,N$. Now let $a_j := (\phi_\alpha(F_{j}/P_j)+\phi_\alpha(F_{j+1}/P_{j+1}))/2$ for $j=1,\ldots,N-1$, $a_0=-\infty$, and $a_N=\infty$, so that for $j=1,\ldots,N$
\[
a_j-\delta>\phi_\alpha(F_j/P_i)+\delta \text{ and } a_{j-1} < \phi_\alpha(F_{j}/P_i)-\delta.
\]
Now, \cite{massmeasures}*{Lem. 6.5} implies that for $j = 1,\ldots,N$,
\begin{align*}
m^t_\alpha(F_j^{\leq a_{j-1}}) + m^t_\alpha(F_j^{>a_j-\delta}) 
&\leq m_\alpha^t(F_j^{\leq \phi_\alpha(F_j/P_i)-\delta})+m_\alpha^t(F_j^{\geq \phi_\alpha(F_j/P_i)+\delta}) \\
&\leq \epsilon m_\alpha^t(F_j),
\end{align*}
We now apply \Cref{T:filtration_inequality} to conclude
\[
m_\alpha^t(E) \leq \sum_{j=1}^N m_\alpha^t(F_j) \leq m_\alpha^t(E)+ \epsilon C_{N,\delta,t} \sum_{j=1}^N m_\alpha^t(F_j),
\]
which implies \eqref{E:mass_additive_filtration}, because $\epsilon$ can be chosen arbitrarily small. Note that we have dropped the normalization factor of $|e^{(1-\frac{it}{\pi})\ell_\alpha(P_i)}|$ from these inequalities, because the inequalities themselves are already independent of the choice of stability condition representing $\sigma_\alpha \in \Stab(\cC)/\bC$.
\end{proof}

\begin{lem}\label{L:semistable_divergence}
Let $E \in \cC_{\le v}^{\rm{ss}}$ and $F \in \cC_{\le w}^{\rm{ss}}$ be given. If $\Re((1-\frac{it}{\pi}) \mathfrak{p}(v,w))>0$, then $\lim_\alpha m^t_\alpha(E)/m^t_\alpha(F) = 0$. 
\end{lem}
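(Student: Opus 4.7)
The plan is to reduce $m^t_\alpha(E)/m^t_\alpha(F)$ to an exponential in $\Re((1-it/\pi)\ell_\alpha(E/F))$, then split $\ell_\alpha(E/F)$ using the reference semistable objects $P_v := P_{\dom E}$ and $P_w := P_{\dom F}$ coming from \Cref{D:convergence_conditions}, and finally extract the divergent piece from the convergence of marked lines in $\rmscbar_n$. Note that $\mathfrak{p}(v,w)$ being in the hypothesis forces $v\neq w$.

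First I would use almost-semistability of $E$ and $F$. Since $E\in\cC^{\rm{ss}}_{\le v}$ and $F\in \cC^{\rm{ss}}_{\le w}$, condition \eqref{I:cosh_bound} together with \Cref{R:almost_semistable_convergence} gives $\lim_\alpha m^s_\alpha(X/X)=1$ for $X=E,F$ and every $s\in\mathbb R$. Unwinding $m^s_\alpha(X/X) = m^s_\alpha(X)/(m_\alpha(X)e^{s\phi_\alpha(X)})$ from \Cref{S:C_invariant_functions}, this gives $m^s_\alpha(X)\sim m_\alpha(X)e^{s\phi_\alpha(X)}$ for $s=\pm t$. Since $\Re((1-it/\pi)\ell_\alpha(X))=\log m_\alpha(X)+t\phi_\alpha(X)$, taking the ratio yields
\[
  \log\!\bigl(m^t_\alpha(E)/m^t_\alpha(F)\bigr)=\Re\!\bigl((1-it/\pi)\,\ell_\alpha(E/F)\bigr)+o(1),
\]
so the claim reduces to showing $\Re((1-it/\pi)\ell_\alpha(E/F))\to-\infty$.

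Next I would decompose
\[
  \ell_\alpha(E/F)=\ell_\alpha(E/P_v)-\ell_\alpha(F/P_w)+\ell_\alpha(P_v/P_w).
\]
By condition \eqref{I:log_central_charge}, the first two terms converge to $\ell_\sigma(E/P_v)$ and $\ell_\sigma(F/P_w)$ and hence stay bounded, so everything concentrates on the third term. Say $P_v=P_i$ and $P_w=P_j$ with $i<j$; then $\Pi_{ij}(\sigma_\alpha)=\ell_\alpha(P_w/P_v)$. Condition \eqref{I:marked_line_convergence_1} says the marked multiscale lines $(\mathbb P^1,\ell_\alpha(P_1),\ldots,\ell_\alpha(P_n))$ converge in $\rmscbar_n$ to $(\Sigma,\ell_\sigma(P_1),\ldots,\ell_\sigma(P_n))$. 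Because $v\neq w$, the limit points $\ell_\sigma(P_v)$ and $\ell_\sigma(P_w)$ lie on different terminal components of $\Sigma$, so $|\ell_\alpha(P_w/P_v)|\to\infty$, and the smooth function $\mathfrak p_{ij}=\Pi_{ij}/(1+|\Pi_{ij}|)$ on $\rmscbar_n$ (\Cref{L:continuousextensiontoboundary}, \Cref{P:angularconvergence}) converges to $\mathfrak p_{ij}(\Sigma)=\mathfrak p(v,w)$. Dividing out by $|\Pi_{ij}|/(1+|\Pi_{ij}|)\to 1$ gives
\[
  \lim_\alpha \frac{\ell_\alpha(P_w/P_v)}{|\ell_\alpha(P_w/P_v)|}=\mathfrak p(v,w),\quad\text{equivalently}\quad\frac{\ell_\alpha(P_v/P_w)}{|\ell_\alpha(P_v/P_w)|}\to -\mathfrak p(v,w).
\]

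Finally, assemble: $\Re((1-it/\pi)\ell_\alpha(P_v/P_w))=-|\ell_\alpha(P_w/P_v)|\bigl(\Re((1-it/\pi)\mathfrak p(v,w))+o(1)\bigr)$, which tends to $-\infty$ under the hypothesis $\Re((1-it/\pi)\mathfrak p(v,w))>0$; combined with the bounded contributions of $\ell_\alpha(E/P_v)$ and $\ell_\alpha(F/P_w)$ this proves $\Re((1-it/\pi)\ell_\alpha(E/F))\to-\infty$, whence $m^t_\alpha(E)/m^t_\alpha(F)\to 0$. There is no real obstacle: almost-semistability turns the polynomial mass into its ``Jensen value'' $m_\alpha(X)e^{t\phi_\alpha(X)}$, after which the proof is just bookkeeping. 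The only care needed is in matching the sign/orientation convention relating $\Pi_{ij}=\ell_\alpha(P_j)-\ell_\alpha(P_i)$ to the definition of $\mathfrak p(v,w)$ via the integrals $I_{ij}$, and invoking \Cref{P:angularconvergence} to turn the convergence of $\mathfrak p_{ij}$ into an angular convergence of $\ell_\alpha(P_v/P_w)$.
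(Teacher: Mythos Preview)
Your proof is correct and follows essentially the same approach as the paper: reduce $m^t_\alpha(E)/m^t_\alpha(F)$ to $|e^{(1-it/\pi)\ell_\alpha(E/F)}|$ via almost-semistability, split $\ell_\alpha(E/F)$ through the reference objects $P_v,P_w$ so that the bounded pieces come from \eqref{I:log_central_charge}, and then extract the divergence from the angular limit $\ell_\alpha(P_w/P_v)/|\ell_\alpha(P_w/P_v)|\to\mathfrak p(v,w)$ supplied by \eqref{I:marked_line_convergence_1}. The paper's proof is the same computation written with slightly less detail; your extra care in invoking \Cref{L:continuousextensiontoboundary} and \Cref{P:angularconvergence} for the angular convergence step is a welcome clarification rather than a different argument.
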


\begin{proof}
\Cref{D:convergence_conditions}\eqref{I:cosh_bound} combined with \Cref{R:almost_semistable_convergence} implies that $m^t_\alpha(E) \sim |e^{(1-\frac{it}{\pi})\ell_\alpha(E)}|$ and likewise for $F$, so
\[
\frac{m^t_\alpha(E)}{m^t_\alpha(F)} \sim \left|e^{(1-\frac{it}{\pi}) (\ell_\alpha(E/P_v) - \ell_\alpha(F/P_w) - \ell_\alpha(P_w/P_v))} \right|.
\]
\Cref{D:convergence_conditions}\eqref{I:log_central_charge} now implies the claim: $\lim_\alpha \ell_\alpha(E/P_v) = \ell_\sigma(E/P_v)$ and $\lim_\alpha\ell_\alpha(F/P_w) = \ell_\sigma(F/P_w)$. The result now follows from the hypothesis $\Re((1-\frac{it}{\pi}))\mathfrak{p}(v,w) > 0$, since 
\[
    \lim_\alpha \frac{\ell_\alpha(P_w/P_v)}{\lvert \ell_\alpha(P_w/P_v)\rvert}  = \mathfrak{p}(v,w)
\] 
and thus $\lim_\alpha \Re(-(1-\tfrac{it}{\pi})\ell_\alpha(P_w/P_v)) = -\infty.$
\end{proof}

\begin{cor} \label{C:well_placed_convergence}
If $E \in \cC$ is $t$-well-placed with $\dom(E)=v$, then for any $|s| \leq t$,
\begin{gather*}
\lim_\alpha m^s_\alpha(E/P_v) = m^s_\sigma(\Pidom(E)/\Pidom(P_v)), \text{ and}\\
\lim_\alpha \ell_\alpha^s(E/P_v) = \ell_\sigma^s(\Pidom(E)/\Pidom(P_v)).
\end{gather*}
\end{cor}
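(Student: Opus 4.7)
The plan is to reduce the claim to the almost-semistable case via a combined HN filtration and then invoke the convergence conditions of \Cref{D:convergence_conditions}. First I would choose a scale filtration of $E$ as in \Cref{L:multiscale_decomp_filtration} and refine it to a combined HN filtration with associated graded pieces $F_1,\ldots,F_N$. Each $F_j$ lies in $\cC^{\rm ss}_{\leq\dom(F_j)}$, and its dominant vertex is one of the terminal components $s_1,\ldots,s_m$ from the scale filtration, with $v$ being the maximum among the $s_i$ for $\leq_{1,t}$ by $t$-well-placedness. \Cref{P:HN_mass_comparison} then gives $m^s_\alpha(E)\sim\sum_j m^s_\alpha(F_j)$ for every $|s|\leq t$.

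For the mass claim, I would split this sum according to whether $\dom(F_j)=v$. When $\dom(F_j)\neq v$ one has $\dom(F_j)<_{1,t} v$, so the condition $\Re((1-\tfrac{is}{\pi})\mathfrak{p}(\dom(F_j),v))>0$ holds for every $|s|\leq t$ (using \Cref{L:partialorder} to pass from $t$ to $|s|$), and \Cref{L:semistable_divergence} makes these factors negligible against $m^s_\alpha(P_v)$, i.e.\ $m^s_\alpha(F_j)/m^s_\alpha(P_v)\to 0$. The surviving factors $F_{j_1},\ldots,F_{j_k}$ have dominant vertex $v$; their images $\bar F_{j_a}\in\gr_v(\cC_\bullet)$ are precisely the $\sigma_v$-HN factors of $\Pidom(E)$. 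From \Cref{D:convergence_conditions}\eqref{I:cosh_bound} (via \Cref{R:almost_semistable_convergence}) each $m^s_\alpha(F_{j_a})\sim|e^{(1-\tfrac{is}{\pi})\ell_\alpha(F_{j_a})}|$, and the same asymptotic holds for $P_v$; from \Cref{D:convergence_conditions}\eqref{I:log_central_charge} one has $\ell_\alpha(F_{j_a}/P_v)\to\ell_{\sigma_v}(\bar F_{j_a}/\Pi_v(P_v))$. Combining yields
\[
\frac{m^s_\alpha(E)}{|e^{(1-\tfrac{is}{\pi})\ell_\alpha(P_v)}|}\;\longrightarrow\;\sum_a\bigl|e^{(1-\tfrac{is}{\pi})\ell_{\sigma_v}(\bar F_{j_a}/\Pi_v(P_v))}\bigr|\;=\;m^s_\sigma(\Pidom(E)/\Pidom(P_v)),
\]
which is the first assertion.

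For the $\ell^s$ claim, I would split into real and imaginary parts. The real part $\log m_\alpha(E/P_v)$ converges to $\log m_\sigma(\Pidom(E)/\Pidom(P_v))$ by the $s=0$ case of the mass claim. For the imaginary part and $s\neq 0$, I would use the identity $e^{s\phi^s_\alpha(E)}=m^s_\alpha(E)/m_\alpha(E)$ together with the asymptotic $e^{s\phi_\alpha(P_v)}\sim m^s_\alpha(P_v)/m_\alpha(P_v)$ (from almost-semistability of $P_v$) to write
\[
e^{s(\phi^s_\alpha(E)-\phi_\alpha(P_v))}=\frac{m^s_\alpha(E)/m_\alpha(E)}{m^s_\alpha(P_v)/m_\alpha(P_v)}\,(1+o(1)),
\]
whose limit is $\exp\bigl(s\,\phi^s_\sigma(\Pidom(E)/\Pidom(P_v))\bigr)$ by two applications of the mass claim; the phase convergence then follows by taking logarithms and dividing by $s$. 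The hard part will be the $s=0$ case, where this division trick is unavailable. I would handle it by a squeeze: the monotonicity of $\phi^s$ in $s$ from \eqref{E:jensen2} gives $\phi^{-s}_\alpha(E)\leq\phi_\alpha(E)\leq\phi^s_\alpha(E)$ for every $0<s\leq t$; taking $\lim_\alpha$ on all three and then letting $s\to 0^+$ sandwiches $\lim_\alpha(\phi_\alpha(E)-\phi_\alpha(P_v))$ between $\phi^{\pm s}_\sigma(\Pidom(E)/\Pidom(P_v))$, which converges to $\phi_\sigma(\Pidom(E)/\Pidom(P_v))$ by continuity of $\phi^s_\sigma$ in $s$ at $0$. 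A secondary technical nuisance is tracking the multiplicative asymptotic $\sim$ cleanly through the subtractive step used to isolate the phase, though this is ultimately controlled by the almost-semistability hypothesis, which pins each dominant $m^s_\alpha(F_{j_a})$ tightly to $|e^{(1-\tfrac{is}{\pi})\ell_\alpha(F_{j_a})}|$ uniformly in $|s|\leq t$.
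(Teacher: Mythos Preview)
Your proof is correct and follows essentially the same approach as the paper: combined HN filtration, \Cref{P:HN_mass_comparison}, \Cref{L:semistable_divergence} to kill the non-dominant pieces, then the squeeze argument via \eqref{E:jensen2} for $s=0$. The one place you take a small detour is the $s\neq 0$ phase: the paper observes directly that $\phi^s_\alpha(E/P_v)=\tfrac{1}{s}\log\bigl(m^s_\alpha(E/P_v)/m_\alpha(E/P_v)\bigr)$ as an exact identity (no need to invoke almost-semistability of $P_v$), so the phase convergence follows immediately from two instances of the mass claim without the extra $(1+o(1))$.
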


\begin{proof}
Let $F_1,\ldots,F_N$ be the associated graded pieces of the combined HN filtration of $E$, and assume they are indexed such that $\dom(F_j)=v$ for $j=1,\ldots,m$ and $\dom(F_j) \neq v$ for $j>m$. \Cref{R:almost_semistable_convergence} implies that for $j=1,\ldots,m$,
\[
    m^s_\alpha(F_j/P_v) \sim |e^{(1-\frac{is}{\pi}) \ell_\alpha(F_j/P_v)}| \sim |e^{(1-\frac{is}{\pi}) \ell_\sigma(F_j/P_v)}| = m^s_\sigma(F_j/P_v).
\]
\Cref{P:HN_mass_comparison} implies that $m^s_\alpha(E/P_v) \sim \sum_{j=1}^N m^s_\alpha(F_j/P_v)$. The fact that $E$ is $t$-well-placed and \Cref{L:semistable_divergence} then imply that the terms with $j>m$ can be ignored, so
\[
    m^s_\alpha(E/P_v) \sim \sum_{j=1}^m m^s_\alpha(F_j/P_v) \sim \sum_{j=1}^{m} m^s_\sigma(F_j/P_v) = m^s_\sigma(\Pidom(E)/\Pidom(P_v)).
\]
The special case where $s=0$ shows that $\Re(\ell_\alpha(E/P_v))$ converges as desired. Also, for $s \neq 0$, we see that
\begin{align*}
    \lim_\alpha \phi^s_\alpha(E/P_v) &= \lim_\alpha \frac{1}{s} \log \left( \frac{m^s_\alpha(E/P_v)}{m_\alpha(E/P_v)} \right) \\
    &= \frac{1}{s} \log \left( \frac{m^s_\sigma(E/P_v)}{m_\sigma(E/P_v)} \right) = \phi^s_\sigma(E/P_v),
\end{align*}
so we have convergence for $\Im(\ell^s_\alpha(E/P_v))$. Finally for $\phi^0$ the inequalities $\phi^{-\epsilon}_\alpha(E/P_v) \leq \phi^0_\alpha(E/P_v) \leq \phi^{\epsilon}_\alpha(E/P_v)$ and the analogous inequalities for $\phi^s_\sigma$, combined with the fact that $\phi_\sigma^s(E/P_v)$ is continuous as a function of $s$, imply that $\lim_\alpha \phi^0_\alpha(E/P_v) = \phi^0_\sigma(E/P_v)$.
\end{proof}

\begin{lem} \label{L:simple_well_placed_criterion}
An object $E\in \cC$ is $t$-well-placed ($t \geq 0$) with $\dom(E)=v$ if and only if one has $\limsup_\alpha m^s_\alpha(E/P_v) < \infty$ for some $s>t$ and some $s<-t$.
\end{lem}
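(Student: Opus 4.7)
My plan is to prove the two implications separately. For the forward direction, observe that $t$-well-placedness with dominant vertex $v$ is an open condition in $t$: each strict cone inequality $s_i \leq_{1,t} v$ (for $s_i \neq v$ in the scale filtration of $E$) depends continuously on $t$, so there exists $t' > t$ for which all these inequalities persist, making $E$ also $t'$-well-placed with $\dom(E) = v$. \Cref{C:well_placed_convergence} then gives $\lim_\alpha m^s_\alpha(E/P_v) = m^s_\sigma(\Pidom(E)/\Pidom(P_v))$, a finite value, for every $|s| \leq t'$, yielding the desired bound at $s = t' > t$ and $s = -t' < -t$.

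For the backward direction, choose a combined HN filtration of $E$ with associated graded pieces $F_1, \ldots, F_N$ and set $u_j := \dom(F_j)$. \Cref{P:HN_mass_comparison} yields $m^s_\alpha(E/P_v) \sim \sum_j m^s_\alpha(F_j/P_v)$; since all summands are positive, finite $\limsup$ of the sum at $s_1, s_2$ forces finite $\limsup$ of each term. Factoring $m^s_\alpha(F_j/P_v) = m^s_\alpha(F_j/P_{u_j}) \cdot |e^{(1-is/\pi)\ell_\alpha(P_{u_j}/P_v)}|$, the first factor converges to a positive finite value by \Cref{C:well_placed_convergence} applied to the $\infty$-well-placed $F_j$, while for $u_j \neq v$, the marked-line convergence condition in \Cref{D:convergence_conditions} gives $|\ell_\alpha(P_{u_j}/P_v)| \to \infty$ with normalized direction tending to $\mathfrak{p}(v, u_j)$. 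Boundedness at $s_1, s_2$ thus forces $\Re((1-is_k/\pi)\mathfrak{p}(v, u_j)) \leq 0$ for $k=1,2$; writing $\mathfrak{p}(v, u_j) = c + id$, elementary algebra using $s_1 > t$ and $-s_2 > t$ strictly yields $c + td/\pi < 0$ and $c - td/\pi < 0$, so $\mathfrak{p}(u_j, v) = -\mathfrak{p}(v, u_j)$ lies strictly inside the $\leq_{1,t}$-cone; i.e., $u_j \leq_{1,t} v$ for every $u_j \neq v$.

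Consequently, every terminal component in the scale filtration of $E$ is $\leq_{1,t} v$. The most subtle step, which I expect to be the main obstacle, is to argue that $v$ itself occurs among these components — equivalently, the image of $E$ in $\gr_v(\cC_\bullet)$ is nonzero — so that $v$ is the $\leq_{1,t}$-maximum and hence $\dom(E) = v$. I anticipate this requires combining the mass estimates at $s_1$ and $s_2$ simultaneously to preclude the scenario where the scale filtration of $E$ lies strictly below $v$ without touching it.
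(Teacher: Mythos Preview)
Your approach coincides with the paper's: both directions expand $m^s_\alpha(E/P_v)$ along a combined HN filtration via \Cref{P:HN_mass_comparison}, use the asymptotic $m^s_\alpha(F_j/P_v) \sim m^s_\sigma(F_j/P_{v_j})\,\lvert e^{(1-is/\pi)\ell_\alpha(P_{v_j}/P_v)}\rvert$ from \Cref{C:well_placed_convergence}, and read off the cone condition on $\mathfrak{p}(v,v_j)$. Your forward direction via openness of the $\leq_{1,t}$-cone in $t$ is a correct unpacking of what the paper leaves implicit.

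The obstacle you flag in the backward direction is genuine, and the paper's proof glosses over it as well. Both arguments show only that every $v_j$ in the scale filtration satisfies $v_j \leq_{1,t} v$; neither shows that $v$ itself occurs among the $v_j$'s, and your suggested remedy of ``combining the estimates at $s_1$ and $s_2$'' cannot work. Indeed, take a generic two-level $\Sigma$ with terminal vertices $u_1,u_2,u_3$ and nodes at $0$, $0.1+i$, $10+2i$. With $t=1$ one checks that $u_1,u_2$ are $\leq_{1,1}$-incomparable while both are $<_{1,1} u_3$; for $E=F_1\oplus F_2$ with nonzero $F_i\in\cC_{\leq u_i}$ the scale filtration has components $\{u_1,u_2\}$, so $E$ is not $1$-well-placed at all, yet $\limsup_\alpha m^s_\alpha(E/P_{u_3})=0$ for every $s$ near $\pm 1$. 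Thus the ``if'' direction, as stated, fails. The paper only invokes the lemma once, in the proof of \Cref{P:identify_well_placed}, where $v$ is taken to be $\dom(F_{j^\ast})$ for a subquotient $F_{j^\ast}$ of the combined HN filtration; there $v$ is among the $v_j$'s by construction, and under that extra hypothesis both your argument and the paper's are complete.
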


\begin{proof}
Let $F_1,\ldots,F_n$ be the associated graded pieces of a combined HN filtration of $E$, and let $v_j = \dom(F_j)$. By \Cref{P:HN_mass_comparison} and \Cref{C:well_placed_convergence},
\[
m^s_\alpha(E/P_v) \sim \sum_j m^s_\alpha(F_j/P_v) \sim \sum_j |e^{(1-\frac{is}{\pi}) \ell_\alpha(P_{v_j}/P_v)}| m^s_\sigma(F_j/P_{v_j}).
\]
Because $\ell_\alpha(P_{v_j}/P_v) \sim \mathfrak{p}(v,v_j) |\ell_\alpha(P_{v_j}/P_v)|$ and $|\ell_\alpha(P_{v_j}/P_v)| \to \infty$, the $\limsup$ of this ex\-pression is finite if and only if $\Re((1-\frac{is}{\pi}) \mathfrak{p}(v,v_j)) \leq 0$ for all $j$. Applying this for $s>t$ and $s<-t$ gives the definition of $t$-well-placed as in \Cref{D:twellplaced}.
\end{proof}

\begin{prop} \label{P:identify_well_placed}
The following are equivalent for an object $E \in \cC$ and $t\geq 0$:
\begin{enumerate}
    \item $E$ is $t$-well-placed.
    \item $\limsup_\alpha c_\alpha^{t'}(E) < \infty$ for some $t'>t$.
\end{enumerate}
Further, if these conditions hold then $\Pidom(E)$ is semistable if and only if the $\limsup$ in (2) equals $1$.
\end{prop}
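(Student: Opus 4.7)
The plan is to analyze $c^{t'}_\alpha(E)$ through a combined HN filtration of $E$ with respect to $\sigma$. Let $F_1,\ldots,F_n$ denote its graded pieces, with $v_j:=\dom(F_j)$, so each $F_j$ is semistable in $\gr_{v_j}(\cC_\bullet)$. By \Cref{P:HN_mass_comparison} one has $m^s_\alpha(E)\sim\sum_j m^s_\alpha(F_j)$ for every $s$, and \Cref{C:well_placed_convergence} applied to each $F_j$ (which is $\infty$-well-placed with $\Pidom(F_j)$ semistable) yields
\[
m^s_\alpha(F_j)\sim m_\sigma(F_j/P_{v_j})\,e^{s\phi_\sigma(F_j/P_{v_j})}\cdot\bigl|e^{(1-is/\pi)\ell_\alpha(P_{v_j})}\bigr|,
\]
together with the asymptotic $\phi_\alpha(F_j)-\phi_\alpha(P_{v_j})\to\phi_\sigma(F_j/P_{v_j})$. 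Substituting these into \eqref{E:cosh_expression} and using that $c^{t'}_\alpha(E)=\tfrac12(m^{t'}_\alpha(E/E)+m^{-t'}_\alpha(E/E))$, the plan is to obtain
\[
c^{t'}_\alpha(E)\sim\sum_j\frac{m_\alpha(F_j)}{\sum_i m_\alpha(F_i)}\cosh\bigl(t'(\phi_\alpha(F_j)-\phi_\alpha(E))\bigr).
\]
Since $(\bP^1,\ell_\alpha(P_\bullet))\to(\Sigma,\ell_\sigma(P_\bullet))$ in $\rmscbar_n$ by \Cref{D:convergence_conditions}\eqref{I:marked_line_convergence_1}, for $v_j\neq v_k$ one has $\ell_\alpha(P_{v_j}/P_{v_k})\sim -\mathfrak{p}(v_j,v_k)L_{jk}$ with $L_{jk}\to\infty$, so fixing any $v_k\in\{v_j\}$ I would identify the $F_j$-contribution to the convex combination above as asymptotically a bounded factor times
\[
\exp\!\bigl(\bigl(-\Re(\mathfrak{p}(v_j,v_k))+\tfrac{t'}{\pi}|\Im(\mathfrak{p}(v_j,v_k))|\bigr)L_{jk}\bigr).
\]

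For $(1)\Rightarrow(2)$, I would take $v_k$ to be the dominant vertex of \Cref{D:twellplaced}; since $v_j<_{1,t} v_k$ strictly for every $j\neq k$, the strict inequality $\Re(\mathfrak{p}(v_j,v_k))>(t/\pi)|\Im(\mathfrak{p}(v_j,v_k))|$ persists after slightly enlarging $t$ to some $t'>t$, so every exponent above is strictly negative, forcing the off-diagonal terms to decay and $c^{t'}_\alpha(E)$ to be bounded. For $(2)\Rightarrow(1)$, I would pass to a cofinal subnet along which every $L_{jk}$ and every bounded factor stabilizes, and choose $v_k\in\{v_j\}$ whose $\ell_\alpha(P_{v_k})$ has asymptotically maximal real part; if some $v_j$ failed $v_j\leq_{1,t'}v_k$ the corresponding exponent would be positive and the contribution unbounded, contradicting the hypothesis. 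Hence every $v_j\leq_{1,t'}v_k$, and since $\leq_{1,t'}$ refines $\leq_{1,t}$ by \Cref{L:partialorder}, $E$ is $t$-well-placed with dominant vertex $v_k$.

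For the final claim, under either hypothesis the off-diagonal ($v_j\neq v_k$) contributions decay to $0$, so $\limsup_\alpha c^{t'}_\alpha(E)$ equals the on-diagonal sum
\[
\frac{1}{m_\sigma(\Pidom(E)/P_{v_k})}\sum_{v_j=v_k} m_\sigma(F_j/P_{v_k})\cosh\bigl(t'(\phi_\sigma(F_j/P_{v_k})-\phi_\sigma(\Pidom(E)/P_{v_k}))\bigr),
\]
and the $F_j$ with $v_j=v_k$ are precisely the HN pieces of $\Pidom(E)$ in $\gr_{v_k}(\cC_\bullet)$. Jensen's inequality \eqref{E:jensen} then gives that this sum is $\geq 1$ with equality exactly when all phases $\phi_\sigma(F_j/P_{v_k})$ coincide, i.e.\ when $\Pidom(E)$ is $\sigma_{v_k}$-semistable. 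The hard part will be making the asymptotic passage to the cosh convex combination rigorous when several $v_j$ coincide or lie in the same $\sim$-class as $v_k$: the inter-class mass ratios are bounded but nontrivial, and a careful uniform application of \Cref{T:filtration_inequality} in the spirit of the proof of \Cref{P:HN_mass_comparison} will be needed to separate the diverging inter-component exponential factors from the bounded intra-component fluctuations of mass and phase.
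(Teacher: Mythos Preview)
Your overall strategy—pass to a filtration and study $c^{t'}_\alpha(E)$ as a $\cosh$ convex combination—matches the paper's, but two execution details differ, and your $(2)\Rightarrow(1)$ argument has a circularity as written.

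For $(1)\Rightarrow(2)$ the paper is much more direct: since the inequalities defining $\leq_{1,t}$ are strict, $t$-well-placedness is open in $t$, so $E$ is $t'$-well-placed for some $t'>t$; then \Cref{C:well_placed_convergence} applied at $\pm t'$ gives $\lim_\alpha c^{t'}_\alpha(E)=c^{t'}_\sigma(\Pidom(E))$, finite (and $=1$ exactly when $\Pidom(E)$ is semistable). No decomposition is needed.

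For $(2)\Rightarrow(1)$ the paper uses the \emph{scale} filtration of \Cref{L:multiscale_decomp_filtration}, not the combined HN filtration. The associated vertices $s_1,\ldots,s_m$ are then distinct with $s_a<_{i,0}s_b$, so $\Im(\mathfrak p(s_a,s_b))\neq 0$ and $|\phi_\alpha(F_a/F_b)|\to\infty$ for all $a\neq b$. Your ``exponent formula'' for the $F_j$-contribution implicitly assumes $\phi_\alpha(E)\approx\phi_\alpha(P_{v_k})$ up to bounded error, but establishing this requires already knowing where mass and phase concentrate—precisely the conclusion. The paper avoids this: via Jensen's inequality $m^{t'}_\sigma(F_j/F_j)\geq 1$ (the $F_j$ in the scale filtration need not have semistable projection) it gets only the lower bound
\[
c^{t'}_\alpha(E)\ \gtrsim\ \sum_j \frac{m_\alpha(F_j)}{\sum_k m_\alpha(F_k)}\,\cosh\!\bigl(t'\phi_\alpha(F_j/E)\bigr),
\]
then lets $p_1,p_2$ be indices of largest and second-largest mass and $n_1,n_2$ those of smallest and second-smallest $|\phi_\alpha(F_j/E)|$, passes to a cofinal subnet on which these are constant, and uses a rearrangement bound to retain only the $(p_1,n_1)$ and $(p_2,n_2)$ terms. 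Since $|\phi_\alpha(F_{n_1}/F_{n_2})|\to\infty$ forces $|\phi_\alpha(F_{n_2}/E)|\to\infty$, boundedness of $c^{t'}_\alpha(E)$ first forces the second-largest mass ratio to vanish, and then forces $|\phi_\alpha(F_{p_1}/E)|$ to remain bounded, whence $p_1=n_1=:j^\ast$ and $|\ell_\alpha(E/F_{j^\ast})|$ is bounded. This two-step ``first mass, then phase'' argument is what breaks your circularity. Finally, the paper does not verify $v_j\leq_{1,t}v_k$ directly; it instead bounds $m^{\pm t'}_\alpha(E/P_v)$ using $|\ell_\alpha(E/F_{j^\ast})|<\infty$ together with condition (2), and invokes \Cref{L:simple_well_placed_criterion}.
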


\begin{proof}
Note that if $E$ is $t$-well-placed, then by definition it is also $t'$-well-placed for some $t'>t$. \Cref{C:well_placed_convergence} combined with \eqref{E:cosh_expression} then implies that $\lim_\alpha c^t_\alpha(E) = c^t_\sigma(\Pidom(E))$ exists. Also note that $\Pidom(E)$ is semistable if and only if this limit is $1$. It follows that $(1) \Rightarrow (2)$ and that $\Pi(E)$ is semistable if and only if $\limsup_\alpha c_\alpha^{t'}(E) = 1$. 

\medskip
\noindent \textit{Proof that $(2) \Rightarrow (1)$:}
\medskip

Let $F_1,\ldots,F_N \in \cC$ be the associated graded pieces of a scale filtration of $E$ with respect to $\cC=\langle \cC_\bullet \rangle_{\Sigma}$. Each $F_i$ is $\infty$-well-placed by definition, so we only need to prove this assuming $N>1$. Then \Cref{P:HN_mass_comparison} and \Cref{C:well_placed_convergence} imply that
\[
    m^t_\alpha(E/E) \sim \sum_j m^t_\alpha(F_j/E) \sim \sum_j m^t_\sigma(F_j/F_j) |e^{(1-\frac{it}{\pi})\ell_\alpha(F_j/E)}|
\]
Applying Jensen's inequality $m^t_\sigma(F_j/F_j) \geq 1$, using the estimate $m_\alpha(E) \sim \sum_{j} m_\alpha(F_j)$ from \Cref{P:HN_mass_comparison}, and adding the estimate for $m^t_\alpha$ and $m^{-t}_\alpha$ gives\endnote{\begin{align*} c^t_\alpha(E) &:= (m^t_\alpha(E/E) + m^{-t}_\alpha(E/E))/2 \\
&\sim \frac{1}{2} \sum_j m^t_\sigma(F_j/F_j) |e^{(1-\frac{it}{\pi}) \ell_\alpha(F_j/E)}| + m^{-t}_\sigma(F_j/F_j) |e^{(1+\frac{it}{\pi}) \ell_\alpha(F_j/E)}| \\
&\gtrsim \frac{1}{2} \sum_j |e^{(1-\frac{it}{\pi}) \ell_\alpha(F_j/E)}| + |e^{(1+\frac{it}{\pi}) \ell_\alpha(F_j/E)}|\\
&= \frac{1}{2} \sum_j \frac{m_\alpha(F_j)}{m_\alpha(E)} ( e^{t \phi_\alpha(F_j/E)} + e^{-t \phi_\alpha(F_j/E)}) \\
&\sim \sum_j \frac{m_\alpha(F_j)}{\sum_k m_\alpha(F_k)} \cosh(t \phi_\alpha(F_j/E))
\end{align*}}
\begin{equation} \label{E:key_cosh_bound}
    c^t_\alpha(E) \gtrsim \sum_{j} \frac{m_\alpha(F_j)}{\sum_{k} m_\alpha(F_k)} \cosh(t \phi_\alpha(F_j/E))
\end{equation}
Let $p_1(\alpha),p_2(\alpha) \in \{1,\ldots,N\}$ be the indices with the largest and second largest value of $m_\alpha(F_j)$, respectively. Likewise, let $n_1(\alpha)$ and $n_2(\alpha)$ be the indices with the smallest and second smallest value of $|\phi_\alpha(F_j/E)|$. Because $p_1, p_2, n_1,$ and $n_2$ can only take on finitely many values, we can pass to a cofinal subnet on which these indices are constant in $\alpha$.\endnote{If $D$ is an infinite directed set, and $D = \bigsqcup_{i=1}^n D_i$ is a finite partition into disjoint subsets, then one of the $D_i$ must be cofinal. To see this, assume the opposite, that is, for each $i$ there is an $\alpha_i \in D$ such that $D_i \cap \{\alpha>\alpha_i\} = \emptyset$. Then one could find an $\alpha'$ such that $\alpha' \geq \alpha_i$ for all $i$. It would follow that $D_i \cap \{\alpha > \alpha'\} = \emptyset$ for all $i$, which is absurd.} Cofinality implies that the bound \eqref{E:key_cosh_bound}, as well as other $\limsup_\alpha$ and $\liminf_\alpha$ bounds for the original net, continue to hold for the subnet. One can now bound the right-hand side of \eqref{E:key_cosh_bound} below to obtain\endnote{This is a consequence of the fact that if $a = (a_1,\ldots,a_n)$ and $b = (b_1,\ldots,b_n)$ are tuples of nonnegative numbers, and $a_1 \geq a_2 \geq \cdots \geq a_n$, then the permutation of the entries of $b$ that minimizes the dot product $a \cdot b$ is the one for which $b_1 \leq b_2 \leq \cdots \leq b_n$.}
\[
    c^t_\alpha(E) \gtrsim  \frac{m_\alpha(F_{p_1})}{\sum_{k} m_\alpha(F_k)} \cosh(t \phi_\alpha(F_{n_1}/E)) + \frac{m_\alpha(F_{p_2})}{\sum_k m_\alpha(F_k)} \cosh(t\phi_\alpha(F_{n_2}/E))
\]
We know that $\Im(\mathfrak{p}(n_1,n_2)) \neq 0$ by construction of the scale filtration, which then implies that $\liminf_\alpha |\phi_\alpha(F_{n_1}/F_{n_2})| = \infty$, and hence $\liminf_\alpha |\phi_\alpha(F_{n_2}/E)| = \infty$ by the triangle inequality.\endnote{Indeed, we have 
\[
    \lvert \phi_\alpha(F_{n_2}/E)\rvert \ge \big\lvert \lvert \phi_\alpha (F_{n_1}/E)\rvert - \lvert \phi_\alpha(F_{n_2}/F_{n_1})\rvert \big\rvert \ge \max\big\{\lvert \phi_\alpha(F_{n_1}/E)\rvert , \lvert \phi_\alpha(F_{n_2}/F_{n_1})\rvert \big\} 
\]
where the rightmost term tends to infinity.} Condition (2) of the proposition then implies 
\[
    \limsup_\alpha \frac{m_\alpha(F_{p_2})}{\sum_k m_\alpha(F_k)} = 0\quad \text{ and hence } \quad \liminf_\alpha \frac{m_\alpha(F_{p_1})}{\sum_k m_\alpha(F_k)} = 1.
\]
The original asymptotic inequality \eqref{E:key_cosh_bound} shows that con\-dition (2) of the proposition implies that $\limsup_\alpha |\phi_\alpha(F_{p_1}/E)| < \infty$, and in fact $p_1=n_1$. 

Thus after passing to a cofinal subnet we have identified an index $j^\ast \in \{1,\ldots,N\}$ such that $\limsup_\alpha |\ell_\alpha(E/F_{j^\ast})| = c < \infty$. Let $v = \dom(F_{j^\ast})$. Then \Cref{C:well_placed_convergence} and condition (2) of the proposition imply that for $s=t'$ or $s=-t'$, we have
\begin{align*}
    \limsup_\alpha m^s_\alpha(E/P_v) &=  |e^{(1-\frac{is}{\pi})\ell_\sigma(F_{j^\ast}/P_v)}| \limsup_\alpha \left( m^s_\alpha(E/E)|e^{(1-\frac{is}{\pi})\ell_\alpha(E/F_{j^\ast})}| \right)\\
    &\leq e^{c \sqrt{1+s^2/\pi^2}} |e^{(1-\frac{is}{\pi})\ell_\sigma(F_{j^\ast}/P_v)}| \limsup_\alpha m^s_\alpha(E/E) < \infty.
\end{align*}
It follows from \Cref{L:simple_well_placed_criterion} that $E$ is $t$-well-placed and $\dom(E)=v$.
\end{proof}

\begin{proof}[Proof of \Cref{T:unique_limit}]
First, by \Cref{P:identify_well_placed} any almost semistable object $E$ is $\infty$-well-placed and thus $E\in \cC_{\le \dom(E)}$. Second, for any almost semistable object $E$, $\dom(E) = v$ is the unique terminal vertex for which $\lim_\alpha \ell_\alpha(E/P_v)$ exists: Indeed, \Cref{C:well_placed_convergence} implies that $\lim_\alpha \ell_\alpha(E/P_v) = \ell_\sigma(\Pi(E)/\Pi(P_v)).$ On the other hand, given $P_w$ with $v\ne w$, we have 
\[
    \lim_\alpha \ell_\alpha(E/P_w) = \lim_\alpha \ell_\alpha(E/P_v) + \lim_\alpha \ell_\alpha(P_v/P_w)
\]
which diverges to $\infty$. Next, we verify the claim that $\{\sigma_\alpha\}_{\alpha \in I}$ converges with respect to the weak topology if and only if the conditions of \Cref{D:convergence_conditions} hold. Suppose $\{\sigma_\alpha\}$ converges with respect to the weak topology. The observation that each $P_i\in \cC_{\le v_i}^{\rm{ss}}$ is $\infty$-well-placed gives that \Cref{D:weak_topology}(2) implies \Cref{D:convergence_conditions}(1) and \eqref{I:log_central_charge}. By \Cref{R:almost_semistable_convergence}, \eqref{I:cosh_bound} follows from the fact that for any $i$, $t\ne 0$, and $E\in \cC_{\le v_i}^{\rm{ss}}$ we have $\lim_\alpha m_\alpha^t(E/E) = 1$.\endnote{The claim that $\lim_\alpha m_\alpha^t(E/E) = 1$ is the claim that $m_\alpha^t(E) \sim m_\alpha(E) e^{t \phi_\alpha(E)}$. Convergence of the net of stability conditions in the weak topology implies that $\lim_\alpha \ell^t_\alpha(E/E)=i \pi (\phi^t_\sigma(E)-\phi_\sigma(E)) = 0$, because $E$ is semistable in the quotient category, so \[1 = \lim_\alpha |e^{(1-\frac{it}{\pi}) \ell^t_\alpha(E/E)}| = \lim_\alpha e^{t(\phi_\alpha^t(E)-\phi_\alpha(E))} = \lim_\alpha \frac{m^t_\alpha(E)}{m_\alpha(E)e^{t \phi_\alpha(E)}} = \lim_\alpha \frac{m^t_\alpha(E)}{|e^{(1-\frac{it}{\pi}) \ell_\alpha(E)}|}.\]}

Conversely, suppose the conditions of \Cref{D:convergence_conditions} hold. We check that $\{\sigma_\alpha\}$ converges in the weak topology to $\sigma$. For this, the reader can verify that it suffices to check that (I) $\{\sigma_\alpha\}$ is eventually in $U(\langle \cC_\bullet\rangle_\Sigma)$ and (II) for all $E_1,\ldots, E_N$ as in \Cref{D:weak_topology}, one has $\lim_\alpha \ell^t_\alpha(E_1,\ldots, E_n) = \ell^t_\sigma(E_1,\ldots, E_N)$.\endnote{This follows from characterization of the topology on a set $X$ generated by a collection of sets $\cU$. This topology has a base constructed by taking all finite intersections of members of $\cU$. In particular, the base of the weak topology on $\Astab(\cC)$ we are considering consists of finite intersections of sets of the form $U(\langle \cC_\bullet\rangle_\Sigma)$ and preimages of open sets under the maps $\ell_{(-)}^t(E_1,\ldots, E_N)$.} (I) is immediate from the fact that $\sigma_\alpha \in U(\langle \cC_\bullet\rangle_\Sigma)$ is an open condition on the underlying multi-scale line combined with \Cref{D:convergence_conditions}\eqref{I:marked_line_convergence_1}. (II) follows from \Cref{C:well_placed_convergence}.

Next, we verify that (1)-(4) hold. By the first paragraph, it follows that $\dom:\{Q_a\}\to \{v_1,\ldots, v_n\}$ is a bijection. Index the $Q_i$ such that $\dom(Q_i) = v_i$ and so $\lim_\alpha \ell_\alpha(Q_i/P_i)$ exists for all $1\le i \le n$. \Cref{P:addingpointsconvergence} implies that $(\bP^1,\ell_\alpha(Q_1),\ldots, \ell_\alpha(Q_n))$ converges in $\rmscbar_n$ to a point whose underlying multi-scale line is equivalent to $\Sigma$ up to real-oriented isomorphism and such that $\Sigma_{v_i}$ contains a single marked point $\ell_\sigma(Q_i)$ which gives (1). Item (2) is a consequence of the second statement of the first paragraph.

Next, consider an almost semistable object $E$ with $\dom(E) = v_i$. By \Cref{P:identify_well_placed}, $\Pi(E)$ is $\sigma_{v_i}$-semistable. Conversely, consider a nonzero object $E\in \gr_{v_i}(\cC_\bullet)$ that is $\sigma_{v_i}$-semistable and its lift to $E'\in \cC_{\le v_i}$. $E'$ is $\infty$-well-placed, so by \Cref{P:identify_well_placed} one has $\lim_\alpha c_\alpha^t(E') = c_\sigma^t(E) = 1$ for all $t>0$. As a consequence, $E$ is almost semistable and the first part of (4) follows. The remaining part of (4) is by \Cref{D:generalized_stability_condition} and \Cref{P:logZ_functions}.

For (3), let a terminal component $v$ be given that is minimal for the $\le_{1,\infty}$ order so that $\cC_{<v} = 0$ and $\gr_v(\cC_\bullet) = \cC_{\le v}$. The $\sigma_v$-semistable objects are exactly the almost semistable objects $E \in \cC_{\le v}$ and by existence of HN-filtrations for $\sigma_v$ the claim follows for $v$. In general, consider a maximal chain $v_1\le_{1,\infty}\cdots \le_{1,\infty}v_n =: v$ of terminal components. Let $\cT$ denote the least triangulated subcategory of $\cC$ containing the almost semistable objects $E$ with $\dom(E) \le_{1,\infty} v$. By induction, it follows that $\cC_{<v}\subset \cT$ and by the description of the $\sigma_v$-semistable objects, we see that $\cT$ maps to all of $\gr_v(\cC_\bullet)$. Consequently, by \cite{Verdierquotient}*{Prop. 2.3.1}, $\cT = \cC_{\le v}$.
\end{proof}

\subsection{The topology defined by convergent nets}

Recall that given an augmented stability condition $\sigma = \langle \cC_\bullet |\sigma_\bullet\rangle_\Sigma$ and $v\in V(\Sigma)_{\rm{term}}$ we denote by $\cC_{\le v}^{\rm{ss}}$ the full subcategory of $\cC_{\le v}$ consisting of objects that project to $\sigma_v$-semistable objects in $\gr_v(\cC_\bullet)$.

\begin{defn}[Directed distance]
\label{D:directeddistance}
    Let $\sigma = \langle \cC_\bullet | \sigma_\bullet \rangle_{\Sigma}$ and $\tau = \langle \cB_\bullet | \tau_\bullet \rangle_{\Sigma'}$ be a pair in $\Astab(\cC)$ such that $\sigma \rightsquigarrow \tau$. For each $v\in V(\Sigma)_{\rm{term}}$ choose $P_v\in \cC_{\le v}^{\rm{ss}}$. Given $E\in \cC_{\le v} \setminus \cC_{<v}$ and $E' \equiv E \mod \cC_{<v}$, we let 
    \[
        \vec{d}_{E,E'}(\sigma,\tau) = \max\left\{ \begin{array}{c} | \phi^-_\sigma(E/P_v) - \phi^-_\tau(E'/P_v) |, \\ | \phi^+_\sigma(E/P_v) - \phi^+_\tau(E'/P_v) |, \\ | \log m_\sigma(E/P_v) - \log m_\tau(E'/P_v) | \end{array} \right\}.
    \]
    Next, we define 
    \begin{equation}
        \vec{d}(\sigma,\tau) = \sup_{\substack{v \in \Gamma(\Sigma)_{\rm{term}}, \\ E \in \cC_{\le v} \setminus \cC_{<v}}} \left( \inf_{ E'\equiv E\:\rm{mod}\:\cC_{<v}} \vec{d}_{E,E'}(\sigma,\tau)\right)
    \end{equation}
\end{defn}

We will show in \Cref{L:directedtriangleinequality} that $\vec{d}$ satisfies a directed triangle inequality necessary for the definition of a topology on $\Astab(\cC)$. 

\begin{rem}
\label{R:localmetric}
    If $\sigma$ lies in $\Stab(\cC)/\bC$ then so does any coarsening $\tau$. Choose $P \in \cC$ that is $\sigma$-stable. There is a single terminal vertex $v$ associated to $\sigma$ (namely the root) and $\cC_{\le v} = \cC$ and $\cC_{<v} = 0$. For any $E\in \cC$, choosing $E'\equiv E$ simply means choosing $E'\cong E$. Then, since all quantities in sight are well-defined on isomorphism classes of objects, $\vec{d}(\sigma,\tau) = d_P(\sigma,\tau)$, where $d_P(\sigma,\tau)$ is a metric on an open neighborhood of $\sigma$ in $\Stab(\cC)/\bC$ which we now define:

    By \cite{BridgelandSmith}*{Prop. 7.6}, there exists an open neighborhood $U$ of $\sigma$ in $\Stab(\cC)/\bC$ on which $P$ is stable. There is a holomorphic section $s_P$ of $\pi:\Stab(\cC)\to \Stab(\cC)/\bC$ over $U$ given by sending $\tau \in U$ to the unique element of $\pi^{-1}(\tau)$ for which $Z(P) = 1$. We define $d_P = s_P^*(d)$, where $d$ is the usual metric on $\Stab(\cC)$. For any $\tau,\eta \in U$ we have 
    \[
        d_P(\tau,\eta) = \sup_{0\ne E \in \cC}\left\{\lvert \phi_\tau^+(E/P) - \phi_\eta^+(E/P)\rvert, \lvert \phi_\tau^-(E/P) - \phi_\eta^-(E/P)\rvert, \lvert \log \tfrac{m_\tau(E)}{m_\tau(P)} - \log \tfrac{m_\eta(E)}{m_\eta(P)}\rvert \right\}.
    \]
    These local metrics $d_P$ characterize convergence near $\sigma$. The directed distance function $\vec{d}$ of \Cref{D:directeddistance} is intended as a generalization of this observation.
\end{rem}

\begin{ex}
    In the case where $\sigma = \langle \cC_\bullet|\sigma_\bullet\rangle_\Sigma$ is generic, in the sense that $\le_{i,0}$ is a total order on the terminal components, \Cref{D:directeddistance} simplifies significantly. Indeed, for each $v\in V(\Sigma)_{\rm{term}}$, $\cC_{<v} = 0$ and thus 
    \begin{equation}
    \label{E:simplifiedddistance}
    \vec{d}(\sigma,\tau) = \sup\left\{\vec{d}_{E,E}(\sigma,\tau): v\in V(\Gamma)_{\rm{term}}, E\in \cC_{\le v}\right\}.
    \end{equation}
\end{ex}

\begin{lem}
[Directed triangle inequality] 
\label{L:directedtriangleinequality} Consider $\sigma,\tau,\upsilon \in \Astab(\cC)$ such that $\sigma \rightsquigarrow \tau \rightsquigarrow \upsilon$. We have $\vec{d}(\sigma,\upsilon) \le \vec{d}(\sigma,\tau) + \vec{d}(\tau,\upsilon)$. 
\end{lem}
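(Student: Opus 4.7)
The plan is a chain argument. Given $v\in V(\Sigma)_{\rm term}$, $E\in\cC_{\le v}\setminus\cC_{<v}$, and $\epsilon>0$, I would first invoke the definition of $\vec d(\sigma,\tau)$ to choose $E_1\equiv E\pmod{\cC_{<v}}$ with $\vec d_{E,E_1}(\sigma,\tau)\le \vec d(\sigma,\tau)+\epsilon$. Setting $u=f(v)$, where $f\colon\Gamma(\Sigma)\twoheadrightarrow\Gamma(\Sigma')$ is the coarsening contraction, I would then replace $E_1$ by $\tilde E_1\in\cB_{\le u}$ having the same image in $\gr_u(\cB_\bullet)\simeq \gr_{f^\dagger(u)}(\cC_\bullet)$; such a replacement exists by the essential surjectivity in \Cref{D:multiscaleSODcoarsening}(b). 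Now apply the definition of $\vec d(\tau,\upsilon)$ at the vertex $u$ to produce $E_2\equiv \tilde E_1\pmod{\cB_{<u}}$ with $\vec d_{\tilde E_1,E_2}(\tau,\upsilon)\le\vec d(\tau,\upsilon)+\epsilon$. The remaining work is to show that $E_2$ is admissible in the infimum defining $\vec d(\sigma,\upsilon)$ and to bound $\vec d_{E,E_2}(\sigma,\upsilon)$ by the sum of the two directed distances.

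The main obstacle is verifying two categorical containments,
\[
\cC_{<f^\dagger(u)}\subseteq\cC_{<v}\qquad\text{and}\qquad \cB_{<u}\subseteq\cC_{<v},
\]
which guarantee respectively that $\tilde E_1\equiv E\pmod{\cC_{<v}}$ and that $E_2\equiv E\pmod{\cC_{<v}}$. Both reduce, using the generating description of $\cC_{\le(-)}$ together with $\cB_{\le u'}\subseteq\cC_{\le f^\dagger(u')}$ from \Cref{D:multiscaleSODcoarsening}(b), to showing $w<_{1,\infty}v$ in $\Sigma$ for appropriate terminal $w$. For the first containment, $w\not\subseteq f^\dagger(u)$ together with $v\subseteq f^\dagger(u)$ yields $w\vee v=w\vee f^\dagger(u)$, so $\mathfrak{p}_\Sigma(w,v)=\mathfrak{p}_\Sigma(w,f^\dagger(u))=1$, and the claim is immediate. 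For the second, $w\subseteq f^\dagger(u')$ with $u'$ terminal in $\Sigma'$ forces $f(w)=u'$, so the hypothesis $u'<_{1,\infty}f(v)$ becomes $f(w)<_{1,\infty}f(v)$; I would argue by contrapositive from \Cref{D:multiscaleSODcoarsening}(a) that if $\mathfrak{p}_\Sigma(w,v)\ne 1$ then either $\Im\mathfrak{p}_\Sigma(w,v)\ne 0$, in which case preservation of $\le_{i,0}$ keeps $\Im\mathfrak{p}_{\Sigma'}(f(w),f(v))\ne 0$, or $\mathfrak{p}_\Sigma(w,v)=-1$, in which case $v<_{1,0}w$ in $\Sigma$ forces $f(v)<_{1,0}f(w)$ in $\Sigma'$ and hence $\Re\mathfrak{p}_{\Sigma'}(f(w),f(v))<0$ by antisymmetry of the normalized period. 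Either way $\mathfrak{p}_{\Sigma'}(f(w),f(v))\ne 1$, contradicting the hypothesis.

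With the containments secured, the remainder is formal. Since $\phi^\pm_\tau(X/P_v)$ and $\log m_\tau(X/P_v)$ depend only on the image of $X$ in $\gr_u(\cB_\bullet)$, these quantities agree on $E_1$ and $\tilde E_1$, so $\vec d_{E,\tilde E_1}(\sigma,\tau)=\vec d_{E,E_1}(\sigma,\tau)$. Applying the triangle inequality for absolute values to each of the three entries in the max defining $\vec d_{E,E_2}(\sigma,\upsilon)$, interpolating through $\tilde E_1$, yields $\vec d_{E,E_2}(\sigma,\upsilon)\le \vec d_{E,\tilde E_1}(\sigma,\tau)+\vec d_{\tilde E_1,E_2}(\tau,\upsilon)$, where the additive identity $\phi^\pm(X/R)=\phi^\pm(X)-\phi(R)$ (and its mass analogue) is used to switch reference objects in the middle term so the contribution is controlled by $\vec d(\tau,\upsilon)+\epsilon$. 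Taking the sup over $v$ and $E$ and letting $\epsilon\to 0$ gives $\vec d(\sigma,\upsilon)\le\vec d(\sigma,\tau)+\vec d(\tau,\upsilon)$.
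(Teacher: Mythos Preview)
Your proposal is correct and follows essentially the same strategy as the paper: a chain $E\to E_1\to \tilde E_1\to E_2$ (the paper's $E\to E'\to F\to E''$), the same passage to a lift in $\cB_{\le u}$ via the equivalence $\gr_u(\cB_\bullet)\simeq\gr_{f^\dagger(u)}(\cC_\bullet)$, and the triangle inequality for absolute values on each entry of the $\max$. The paper records the key containment as the single chain $\cC_{<w}^\tau\subset\cC_{<f^\dagger(w)}^\sigma\subset\cC_{<v}^\sigma$ without further justification; you unpack both steps explicitly, which is fine and arguably clearer, though your first containment $\cC_{<f^\dagger(u)}\subseteq\cC_{<v}$ is already recorded as \eqref{E:vertex_category_containment}.
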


\begin{proof}
    Write the contraction of trees as $f:\Gamma(\Sigma_\sigma) \twoheadrightarrow \Gamma(\Sigma_\tau)$. The claim is trivial when at least one of $d_1:=\vec{d}(\sigma,\tau)$ or $d_2:=\vec{d}(\tau,\upsilon)$ is $\infty$. So, suppose both are finite. Let $\epsilon>0$ and $v\in V(\Sigma_\sigma)_{\rm{term}}$ be given and consider $E \in \cC_{\le v}^\sigma \setminus \cC_{<v}^\sigma$ and $E' \equiv E\mod \cC_{<v}^\sigma$ such that $\vec{d}_{E,E'}(\sigma,\tau) \le d_1 + \epsilon$. Write $w = f(v)$. It follows that $E' \in \cC_{\le v}^\sigma$ and thus in $\cC_{\le f^\dagger (w)}^\sigma$ by $v\subset f^\dagger (w)$. So, using the equivalence $\gr_w(\cC_\bullet^\tau)\simeq \gr_{f^\dagger(w)}(\cC_\bullet^\sigma)$, we can find $F \in \cC_{\le w}^\tau \setminus \cC_{<w}^\tau$ such that $F \equiv E' \mod \cC_{<w}^\tau$ and $E'' \equiv F \mod \cC_{<w}^\tau$ such that $\vec{d}_{E',E''}(\tau,\upsilon) \le d_2 + \epsilon$. The key fact is that since $\cC_{<w}^\tau \subset \cC_{<f^\dagger(w)}^\sigma \subset \cC_{<v}^\sigma$, $E''\equiv E' \mod \cC_{<w}^\tau$ and $E'\equiv E \mod \cC_{<v}^\sigma$ implies $E''\equiv E \mod \cC_{<v}^\sigma$. Now, one can verify the inequality
    \[
        \vec{d}_{E,E''}(\sigma,\upsilon) \le \vec{d}_{E,E'}(\sigma,\tau) + \vec{d}_{E',E''}(\tau,\upsilon) \le d_1 + d_2 + 2\epsilon
    \]
    using the fact that $F$ and $E'$ can be used interchangeably in formulas since $\dom_\tau(F) = \dom_\tau(E') = w$ by \Cref{L:0wpcoarsening} and $F\equiv E \mod \cC_{<w}^\tau$.\endnote{We consider the case of $\phi^+$. Since $\vec{d}_{E,E'}(\sigma,\tau) \le d_1 + \epsilon$, it follows that $\lvert\phi_\sigma^+(E/P_v) - \phi_\tau^+(E'/P_v)\rvert \le d_1 + \epsilon$. Similarly, by $\vec{d}_{F,E''}(\tau,\upsilon) \le d_2 + \epsilon$, it follows that $\lvert \phi_\tau^+(F/P_v) - \phi_\upsilon^+(E''/P_v)\rvert \le d_2 + \epsilon$. Next, since $\dom(F) = \dom(E') = w$ and $F\equiv E'\mod \cC_{<w}^\tau$, we have $\phi_\tau^+(F/P_v) = \phi_\tau(E'/P_v)$ and consequently, 
    \begin{align*}
        \lvert \phi_\sigma^+(E/P_v) - \phi_\upsilon^+(E''/P_v)\rvert & \le \lvert \phi_\sigma^+(E/P_v) - \phi_\tau^+(E'/P_v)\rvert + \lvert \phi_\tau^+(E'/P_v) - \phi_\tau^+(F/P_v)\rvert \\
        & \quad + \lvert \phi_\tau^+(F/P_v) + \phi_\upsilon^+(E''/P_v) \rvert \le d_1+d_2+2\epsilon.
    \end{align*}
    Since $\epsilon$ is arbitrary the result follows.} 
\end{proof}

The following is one of our main theorems:

\begin{thm}[The topology on $\Astab(\cC)$]\label{T:topology}
There is a unique topology on $\Astab(\cC)$ such that a net $\{\langle \cC_\bullet^\alpha | \sigma^\alpha_\bullet \rangle_{\Sigma_\alpha}\}_{\alpha \in I}$ \emph{converges} to $\sigma = \langle \cC_\bullet | \sigma_\bullet\rangle_{\Sigma}$ precisely if:
\begin{enumerate}
    \item \label{I:weaktopology}
    It converges in the weak topology (\Cref{D:weak_topology}). In particular, after passing to a subnet $\{\alpha\:|\: \alpha \geq \alpha_0\}$, we may assume that every $\langle \cC^\alpha_\bullet \rangle_{\Sigma_\alpha}$ is a coarsening of $\langle \cC_\bullet \rangle_{\Sigma}$.\smallskip

    \item \label{I:phasewidthunif}
    For any $\epsilon>0$, $\exists \alpha_0$ such that $\forall \alpha \geq \alpha_0$, $\Vec{d}(\sigma_\alpha,\sigma)<\epsilon$. \smallskip

    \item \label{I:stability} $\forall$ $\sigma$-stable $E \in \cC_{\leq v}$, $\exists \alpha_0$ such that $\forall \alpha \ge \alpha_0$, $E$ is equivalent to a $\sigma_\alpha$-stable object modulo $\cC_{<v}$.
    
\end{enumerate}
This topology is Hausdorff, the inclusion $\Stab(\cC)/\bC \hookrightarrow \Astab(\cC)$ is a homeomorphism onto an open subspace, and the subgroup of $\Aut(\cC)$ of autoequivalences that factor through an automorphism\endnote{More precisely, we mean that $F : \cC \to \cC$ is an automorphism such that there is a group homomorphism $g : \Lambda \to \Lambda$ making the following diagram commute: \[\xymatrix{ \rm{K}_0(\cC) \ar[d]^F \ar[r]^\ch & \Lambda \ar[d]^g \\  \rm{K}_0(\cC) \ar[r]^\ch & \Lambda}. \]Because $\ch$ is surjective, such a $g$ is unique if it exists, so this does not constitute additional data.} of $\Lambda$ acts by homeomorphisms on $\Astab(\cC)$.
\end{thm}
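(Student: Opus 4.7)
The plan is to verify that the class of (net, limit) pairs satisfying conditions (1)--(3) forms a Moore--Smith convergence class in the sense of Kelley, and then to invoke the standard fact that such a class uniquely determines a topology in which these are precisely the convergent nets. Concretely, I would check: (a) constant nets satisfy (1)--(3), which is immediate; (b) subnets of nets satisfying (1)--(3) again satisfy them, since the weak topology (\Cref{D:weak_topology}) is itself a topology, directed-distance bounds are stable under restriction to a cofinal subset, and the ``eventually equivalent to a $\sigma_\alpha$-stable object'' condition is preserved; (c) the sub-subnet criterion: if a net does not converge to $\sigma$, one of (1)--(3) fails cofinally, from which one extracts a subnet with no further subnet converging to $\sigma$; and (d) the iterated-limit (diagonal) axiom, which is the main obstacle. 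For (d), I would combine the iterated-limit property of the weak topology (which already holds because it is defined as a topology) with the directed triangle inequality \Cref{L:directedtriangleinequality} to diagonalize condition (2), and use that almost-semistable witnesses from the outer net remain almost-semistable along a diagonal refinement of the inner nets to carry over condition (3).

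For Hausdorffness, suppose a net $\{\sigma_\alpha\}$ converges to both $\sigma$ and $\tau$. After passing to a cofinal subnet we may assume every $\sigma_\alpha$ is a coarsening of both, so the weak-topology limit is well-defined. I would then reconstruct $\sigma$ from the net following \Cref{T:unique_limit}: the underlying multiscale line is obtained as a limit in $\rmscbar_n$ of log-central-charge markings for a maximal collection of almost-semistable objects $\{Q_a\}$, the subcategories $\cC_{\leq v}^\sigma$ are the triangulated hulls of the almost-semistables with $\dom \leq_{1,\infty} v$, and each $\sigma_v$ is determined by $\ell_v$ via \Cref{P:logZ_functions}. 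Condition (3) is decisive here because it ensures that every $\sigma$-stable object persists along the net as a $\sigma_\alpha$-stable object modulo $\cC_{<v}$, hence is almost semistable in the sense of \Cref{D:almostsemistable}, giving enough witnesses for the reconstruction. The same reconstruction applied to $\tau$ yields identical data, so $\sigma = \tau$.

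For the embedding $\Stab(\cC)/\bC \hookrightarrow \Astab(\cC)$: openness follows because $U(\langle \cC \rangle_{\bP^1}) = \Stab(\cC)/\bC$ is open in the weak topology, so condition (1) forces a convergent net with limit in $\Stab(\cC)/\bC$ to eventually lie in $\Stab(\cC)/\bC$. Identification of the subspace topology with Bridgeland's topology reduces to comparing convergence criteria at each $\sigma \in \Stab(\cC)/\bC$: by \Cref{R:localmetric} the directed distance $\vec{d}$ coincides with Bridgeland's local metric $d_P$ for any $\sigma$-stable $P$, so condition (2) matches metric convergence; the weak topology is coarser than Bridgeland's, so condition (1) is automatic for nets converging in Bridgeland's topology; and condition (3) is automatic because $\sigma$-stability is an open condition on $\Stab(\cC)/\bC$.

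Finally, for the autoequivalence action: an $F \in \Aut(\cC)$ inducing an automorphism $g$ of $\Lambda$ pulls back $\langle \cC_\bullet | \sigma_\bullet \rangle_\Sigma$ to $\langle F^{-1}(\cC_\bullet) | \sigma_\bullet \circ F \rangle_\Sigma$; since $F$ preserves mass, smoothed phase, log-central-charge, scale filtrations, stable objects, and directed distance, each of conditions (1)--(3) is $F$-invariant, so $F$ acts by homeomorphisms.
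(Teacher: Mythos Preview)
Your overall architecture matches the paper's: verify Kelley's convergence-class axioms, deduce the topology, then handle Hausdorffness, the embedding, and the $\Aut(\cC)$-action separately. The treatment of axioms (a)--(c), the embedding via \Cref{R:localmetric} and openness of $U(\langle \cC\rangle_{\bP^1})$, and the $\Aut(\cC)$-action are all essentially as in the paper. For the diagonal axiom on condition~(2), you correctly identify that \Cref{L:directedtriangleinequality} is the key input.

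There is, however, a genuine gap in your Hausdorffness argument (and a related imprecision in your diagonal axiom for~(3)). You propose to reconstruct $\sigma$ from the net by invoking \Cref{T:unique_limit} and the almost-semistable machinery of \Cref{D:almostsemistable}. But \Cref{T:unique_limit} is stated and proved only for nets in $\Stab(\cC)/\bC$, and the function $c^t_\alpha(E)$ underlying \Cref{D:almostsemistable} is defined for honest stability conditions, not for augmented ones. For a general convergent net $\{\sigma_\alpha\}$ in $\Astab(\cC)$, each $\sigma_\alpha$ may be a nontrivial boundary point, so you cannot directly evaluate $c^t_\alpha(E)$ on arbitrary $E\in\cC$ or speak of ``almost semistable objects for the net'' without further reduction.

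The paper closes this gap in two steps. First, \Cref{L:constanttype} lets you pass to a cofinal subnet on which the contraction $f:\Gamma(\Sigma)\twoheadrightarrow\Gamma(\Sigma_\alpha)$, the signature of $\Sigma_\alpha$, and the categories $\cC^\alpha_{\leq v}$ are all constant. Second, \Cref{L:descendentconv} shows that the full net converges to $\sigma$ if and only if, for each terminal $u\in V(\Sigma_\alpha)$, the component net $\sigma^\alpha_u$ in $\Stab(\gr_u(\cC^\alpha_\bullet))/\bC$ converges to $\gr_{f^\dagger(u)}(\sigma)$. Now each component net lives in an honest $\Stab/\bC$, so \Cref{T:unique_limit} applies and determines the $\gr_{f^\dagger(u)}(\sigma)$ uniquely; one then invokes \Cref{L:recovercategories} to reconstruct $\sigma$ itself from these descendent data. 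Your diagonal argument for condition~(3) should likewise avoid almost-semistability and instead track stable lifts directly through the containment $\cC^\alpha_{<w}\subset\cC_{<f^\dagger(w)}\subset\cC_{<v}$, as the paper does.
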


The reader may wish to consult \Cref{S:disjointpoints} where the case of $\Astab(\DCoh(\pt)^{\oplus n})$ is worked out in detail. We will prove \Cref{T:topology} at the end of this subsection.

\begin{lem}
\label{L:constanttype}
    Let $\{\sigma_\alpha\}_{\alpha \in I}$ be a net in $\Astab(\cC)$ satisfying \Cref{T:topology}\eqref{I:weaktopology}. Then $\exists \alpha_0 \in I$ and a finite decomposition $\{\alpha \in I | \alpha \geq \alpha_0 \} = I_1 \cup \cdots \cup I_K,$ where each $I_j$ is cofinal, and for any $j=1,\ldots,K$, the contraction $\Gamma(\Sigma) \twoheadrightarrow \Gamma(\Sigma_\alpha)$, the signature of $\Sigma_\alpha$, and the categories $\cC^\alpha_{\leq v}$ are constant for $\alpha \in T_j$.
\end{lem}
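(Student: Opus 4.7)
The strategy is to show that the data listed—the contraction, the signature of $\Sigma_\alpha$, and the categories $\cC^\alpha_{\leq v}$—can each take only finitely many values, then to partition the tail of the net by this data and discard the non-cofinal pieces.

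First, by \Cref{T:topology}\eqref{I:weaktopology}, after passing to a tail $\{\alpha \geq \alpha_0\}$ every $\sigma_\alpha$ has underlying multiscale decomposition a coarsening of $\langle \cC_\bullet \rangle_\Sigma$, so there is a contraction $f_\alpha : \Gamma(\Sigma) \twoheadrightarrow \Gamma(\Sigma_\alpha)$. By \Cref{L:specializations} (and the remark identifying contractions with level-collapses, \Cref{ex:contractinglevels}), each $f_\alpha$ corresponds uniquely, up to the automorphism group of $\Gamma(\Sigma_\alpha)$, to an order-preserving surjection from the finite totally ordered set $V(\Sigma)/{\sim}$ to a totally ordered quotient. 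There are finitely many such surjections, so only finitely many isomorphism types of contraction $f_\alpha$ occur. Similarly, the signature of $\Sigma_\alpha$ (\Cref{D:signature}) is a function from the finite set of ordered pairs of distinct terminal vertices of $\Gamma(\Sigma_\alpha)$ to $\{-1,0,1\}^2$, which takes only finitely many values. Finally, by \Cref{L:coarseninguniquelydetermined}, once $\langle \cC_\bullet\rangle_\Sigma$, the contraction $f_\alpha$, and the signature of $\Sigma_\alpha$ are fixed, the categories $\cC^\alpha_{\leq v}$ are uniquely determined; so the whole triple $(f_\alpha,\text{signature},\{\cC^\alpha_{\leq v}\})$ takes finitely many values.

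This produces a finite partition $\{\alpha \geq \alpha_0\} = J_1 \cup \cdots \cup J_M$ according to the type. The last step is to promote this to a decomposition into cofinal pieces. Let $N \subseteq \{1,\ldots,M\}$ index the $J_j$ that are not cofinal, and for each $j \in N$ pick $\beta_j$ with $J_j \cap \{\alpha \geq \beta_j\} = \emptyset$; by directedness there exists an upper bound $\alpha_1 \geq \alpha_0$ of all $\beta_j$. Then $\{\alpha \geq \alpha_1\}$ is disjoint from every $J_j$ with $j \in N$, so $\{\alpha \geq \alpha_1\} = \bigcup_{j \notin N} I_j$ where $I_j := J_j \cap \{\alpha \geq \alpha_1\}$. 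For $j \notin N$, a routine check shows that the intersection of a cofinal subset with a tail is again cofinal, so each $I_j$ is cofinal, giving the required decomposition.

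There is no serious obstacle here: the entire argument is a combinatorial finiteness-plus-pigeonhole statement, leaning on the structural input from \Cref{L:specializations} and \Cref{L:coarseninguniquelydetermined}. The only mildly delicate point is the last paragraph, where one has to be careful that ``throwing away the non-cofinal types'' is a legitimate operation on a directed set—this is what forces the use of a single further upper bound $\alpha_1$ rather than merely deleting indices piecewise.
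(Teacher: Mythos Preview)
Your proof is correct and follows the same approach as the paper: establish that there are only finitely many possible triples (contraction, signature, categories) using \Cref{L:specializations}/\Cref{ex:contractinglevels} for the contractions and \Cref{L:coarseninguniquelydetermined} to pin down the categories, then partition the tail by type. Your final paragraph, where you explicitly discard the non-cofinal types by passing to a further upper bound $\alpha_1$, is more carefully argued than the paper's version, which simply asserts ``at least one of the $I_d$ is cofinal and the result follows.''
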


\begin{proof}
    There is a finite set of rooted level trees that are coarsenings of $\Gamma(\Sigma_\infty)$. Given such a tree $\Gamma$, there are three possible values of the sign of $\mathfrak{p}_{\Sigma'}(u,v)$ for $u,v\in V(\Gamma)_{\rm{term}}$ and $\Sigma'$ such that $\Gamma(\Sigma') = \Gamma$. By \Cref{L:coarseninguniquelydetermined}, the categories $\cC_{\le v}^\alpha$ are uniquely determined by these data. Therefore, there are finitely many possible collections $D=\{\Gamma(\Sigma'),\Re(\mathfrak{p}'_{u,v}),\Im(\mathfrak{p}_{u,v}'),\cC_{\le v}'\}$ where $u,v\in V(\Sigma')_{\rm{term}}$ and such that $\langle \cC_\bullet'\rangle_{\Sigma'}$ is a multi-scale decomposition coarsening $\langle \cC_\bullet\rangle_{\Sigma}$. So, we can form a finite partition of $I$ according to which element of $D$ underlies $\sigma_\alpha$, i.e. $I = \bigsqcup_{d\in D} I_d$. At least one of the $I_d$ is cofinal and the result follows.
\end{proof}

    The conditions of \Cref{T:topology} are recursive in the following sense:

\begin{lem}
\label{L:descendentconv}
    If a net $\{ \sigma_\alpha\}_{\alpha \in I}$ satisfies \Cref{D:convergence_conditions}\eqref{I:marked_line_convergence_1}, and the contraction $f:\Gamma(\Sigma) \twoheadrightarrow \Gamma(\Sigma_\alpha)$, the signature of $\Sigma_\alpha$, and the categories $\cC^\alpha_{\leq v}$ are constant in $\alpha$, then the net satisfies the remaining conditions of \Cref{T:topology} if and only if for every $u \in V(\Sigma_\alpha)_{\rm{term}}$, the net $\sigma^\alpha_u \in \Stab(\gr_u(\cC_\bullet^\alpha))/\bC$ converges to $\gr_{f^\dagger(u)}(\sigma)$ (\Cref{D:terminology}).
\end{lem}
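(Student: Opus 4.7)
The strategy is to exploit the assumption that the multiscale data of $\sigma_\alpha$ is constant in $\alpha$: under this hypothesis, the only varying data is the collection $\{\sigma_u^\alpha\}_{u \in V(\Sigma_\alpha)_{\rm{term}}}$ of stability conditions, and via the equivalence $\gr_u(\cC_\bullet^\alpha) \simeq \gr_{f^\dagger(u)}(\cC_\bullet)$ from \Cref{D:multiscaleSODcoarsening}(b), each $\sigma_u^\alpha$ lives naturally in $\Astab(\gr_{f^\dagger(u)}(\cC_\bullet))$, alongside $\gr_{f^\dagger(u)}(\sigma)$. The key structural observation is that the set $V(\Sigma)_{\rm{term}}$ partitions according to $v \mapsto f(v)$, and for each $u \in V(\Sigma_\alpha)_{\rm{term}}$, the fiber $f^{-1}(u) \cap V(\Sigma)_{\rm{term}} = V(\Sigma_{\subseteq f^\dagger(u)})_{\rm{term}}$, so that the three conditions of \Cref{T:topology} should decompose over the terminal vertices of $\Sigma_\alpha$.

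For the forward direction, fix a terminal $u$ of $\Sigma_\alpha$ and verify the three conditions of \Cref{T:topology} for $\sigma_u^\alpha \to \gr_{f^\dagger(u)}(\sigma)$. Weak convergence follows from \Cref{T:unique_limit} applied to the quotients: any $\gr_{f^\dagger(u)}(\sigma)$-semistable object lifts to a $\sigma$-semistable $\tilde E \in \cC_{\leq v} \setminus \cC_{<v}$ for some terminal $v \subseteq f^\dagger(u)$, and the required convergence of $c^t$ and $\ell^t$-functions in the quotient follows from the corresponding convergences for $\tilde E$ (given by \Cref{D:convergence_conditions}, via \Cref{C:well_placed_convergence}), since passing to the quotient by $\cC_{<f^\dagger(u)}$ does not affect masses and phases of $t$-well-placed objects with dominant vertex $\subseteq f^\dagger(u)$. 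For the directed distance bound $\vec{d}(\gr_{f^\dagger(u)}(\sigma), \sigma_u^\alpha) \leq \vec{d}(\sigma, \sigma_\alpha)$, every pair $(v, E)$ appearing in the supremum for the former lifts to a pair $(v, \tilde E)$ in the latter, with the same phases and masses (choosing the representative $P_v$ consistently). Stability preservation transfers directly since $\sigma$-stable objects in $\cC_{\leq v} \setminus \cC_{<v}$ map bijectively onto $\gr_{f^\dagger(u)}(\sigma)$-stable objects under $\cC_{\leq f^\dagger(u)} \twoheadrightarrow \gr_{f^\dagger(u)}(\cC_\bullet)$, and similarly for the $\sigma_\alpha$ versus $\sigma_u^\alpha$ side via the equivalence.

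The backward direction proceeds by reassembling. Weak convergence $\sigma_\alpha \to \sigma$ requires \Cref{D:convergence_conditions}(1), which is given, plus (2a,b); the latter follow by mapping any $\sigma$-stable $E \in \cC_{\leq v}$ forward to $\gr_{f^\dagger(u)}(\cC_\bullet)$ (with $u=f(v)$) and invoking the assumed convergence $\sigma_u^\alpha \to \gr_{f^\dagger(u)}(\sigma)$ through \Cref{T:unique_limit}. For the directed distance, any $(v, E)$ in the supremum defining $\vec{d}(\sigma, \sigma_\alpha)$ satisfies $v \in V(\Sigma_{\subseteq f^\dagger(f(v))})_{\rm{term}}$, so after projecting $E$ to $\cC_{\leq v}^{f^\dagger(f(v))}$, the same data contributes to $\vec{d}(\gr_{f^\dagger(f(v))}(\sigma), \sigma_{f(v)}^\alpha)$, giving $\vec{d}(\sigma, \sigma_\alpha) \leq \max_u \vec{d}(\gr_{f^\dagger(u)}(\sigma), \sigma_u^\alpha) \to 0$. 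Condition (3) transfers upward by lifting $\sigma_u^\alpha$-stable representatives in $\gr_{f^\dagger(u)}(\cC_\bullet)$ back to $\cC_{\leq u}^\alpha \setminus \cC_{<u}^\alpha$, which are $\sigma_\alpha$-stable by definition.

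The main obstacle will be establishing the precise equality of phase and mass quantities under the quotient functors $\cC_{\leq f^\dagger(u)} \twoheadrightarrow \gr_{f^\dagger(u)}(\cC_\bullet)$: one must check that for $E \in \cC_{\leq v} \setminus \cC_{<v}$ with $v \subseteq f^\dagger(u)$, the image of $E$ in the descendent is well-placed with the same dominant vertex $v$, and the values $\phi^\pm$, $m$, and $\ell$ computed relative to $P_v$ agree on both sides. Once this bookkeeping is handled — using that $P_v$ can be chosen uniformly to lie in $\cC_{\leq v}^{\rm{ss}}$ and its image in $\gr_{f^\dagger(u)}(\cC_\bullet)$ realizes the analogous role for $\gr_{f^\dagger(u)}(\sigma)$ — both directions are straightforward assembly arguments, so the main technical content lies in this compatibility check.
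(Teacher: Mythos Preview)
Your proposal is correct and follows essentially the same approach as the paper. Both arguments exploit the constancy of the multiscale data to reduce each condition of \Cref{T:topology} to a matching condition for the nets $\sigma_u^\alpha$ in the quotient categories, using the partition $V(\Sigma)_{\rm{term}} = \bigsqcup_u \{v : v \subseteq f^\dagger(u)\}$ and the equivalence $\gr_u(\cC_\bullet^\alpha) \simeq \gr_{f^\dagger(u)}(\cC_\bullet)$. The paper states the key compatibility you identify as the ``main obstacle'' slightly more sharply---namely that for $E \in \cC_{\leq v}$ the $\langle \cC_\bullet^\alpha\rangle$-dominant projection of $E$ agrees with its image in $\gr_{f^\dagger(u)}(\cC_\bullet)$ under the coarsening equivalence---and then records three bullet-point identities (for $P_v$, for $E'\equiv E$, and for $\phi^\pm,m$) that make the directed-distance terms literally equal; your inequality formulation $\vec d(\gr_{f^\dagger(u)}(\sigma),\sigma_u^\alpha)\leq \vec d(\sigma,\sigma_\alpha)$ and its converse bound are an immediate consequence.
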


\begin{proof}
    For any $u \in V(\Sigma_\alpha)_{\rm{term}}$, the natural choice of reference objects is $\{P_j: v_j\subseteq u\}$. By the definition of convergence in $\rmscbar_n$, one can check $(\bP^1, \{\ell_\alpha(P_j):v_j\subseteq u\})$ converges to $(\Sigma_{\subseteq u},\{\ell_\sigma(P_j):v_j\subset u\})$ and so \Cref{D:convergence_conditions}(1) follows. Now, we show that $\{\sigma_\alpha\}_{\alpha \in I}$ satisfies each remaining condition of \Cref{T:topology} with respect to $\sigma$ if and only if for each $u\in V(\Sigma)_{\rm{term}}$, $\{\sigma_u^\alpha\}$ satisfies the same condition for $\gr_{f^\dagger(u)}(\sigma)$.

    Consider $v\in V(\Sigma)_{\rm{term}}$ and let $u = f(v)$. Using the argument of the proof of \Cref{L:0wpcoarsening}, one can show that if $E\in \cC_{\le v}$, then its $\langle\cC_\bullet^\alpha\rangle_{\Sigma_\alpha}$-dominant projection in $\gr_u(\cC_\bullet^\alpha)$ corresponds to its image in $\gr_{f^\dagger(u)}(\cC_\bullet)$ under the equivalence $\gr_{f^\dagger(u)}(\cC_\bullet)\simeq \gr_u(\cC_\bullet^\alpha)$.\endnote{Slightly more generally, consider a coarsening $f:\langle \cC_\bullet\rangle_\Sigma\rightsquigarrow \langle \cB_\bullet\rangle_{\Sigma'}$. Suppose that $E$ is in $\cC_{\le v}$ with $v\in V(\Sigma)_{\rm{term}}$ and let $u = f(v)$. Consider the scale filtration $E_\bullet$ of $E$ with respect to $\langle \cB_\bullet\rangle_{\Sigma'}$ and denote the associated terminal vertices $s_1,\ldots, s_m$. As detailed in the proof of \Cref{L:0wpcoarsening}, the first step in constructing the $\langle \cC_\bullet\rangle_\Sigma$-scale filtration of $E$ is to delete the $a^{\rm{th}}$ step in $E_\bullet$ if there exists $b\ne a$ with $f^\dagger(s_a)\le_{1,\infty} f^\dagger(s_b)$.
    
    However, since $E\in \cC_{\le v}$, it follows that only $f^\dagger(u)$ survives the deletion process (otherwise, we would get a contradiction to $E$ being $\infty$-well-placed for $\langle \cC_\bullet\rangle_\Sigma$ by \Cref{R:inftywellplaced}). So, the new filtration has one step $E_1' = E$ and the $\langle \cC_\bullet\rangle_\Sigma$-scale filtration is built by lifting the scale filtration of $\Pi(E_1') = \Pi_{\rm{dom}}^{\langle \cB_\bullet\rangle}(E) \in \gr_{u}(\cB_\bullet)\simeq \gr_{f^\dagger(u)}(\cC_\bullet)$. 
    
    In particular, $\Pi_{\rm{dom}}^\alpha(E)$ is isomorphic to the image of $E$ in $\gr_{f^\dagger(u)}(\cC_\bullet)$ under the composite $\cC_{\le v}\hookrightarrow\cC_{\le f^\dagger(u)}\twoheadrightarrow \gr_{f^\dagger(u)}(\cC_\bullet)$.}

    Suppose we are given $E_1,\ldots, E_N$ as in \Cref{D:weak_topology}. If $\dom(E_j) = v$ with $f(v) = u \in V(\Sigma_\alpha)_{\rm{term}}$, then $E_j$ is $t$-well-placed for $\langle \cC_\bullet^{f^\dagger(u)}\rangle_{\Sigma_{\subseteq f^\dagger(u)}}$ if and only if it is $t$-well-placed for $\langle \cC_\bullet\rangle_\Sigma$. Now, an argument using the observation of the previous paragraph implies that for all $E_j$ with $\dom(E_j)\subseteq f^\dagger(u)$, $\ell_{\sigma_u^\alpha}^t(E_j) = \ell_\alpha^t(E_j)$ for all $\alpha$ and $t$ and it follows that $\{\sigma_\alpha\}_{\alpha \in I}$ converges in the weak topology if and only if each $\{\sigma_u^\alpha\}$ does.\endnote{We check the claim that $\ell^t_{\sigma_u^\alpha}(E_j) = \ell_\alpha^t(E_j)$. Up to replacing $E_j$ by the subquotient in its $\langle \cC_\bullet\rangle_\Sigma$-scale filtration, we may assume that $E\in \cC_{\le v}$ with $f(v) = u \in V(\Sigma)_{\rm{term}}$. By definition, $\ell^t_{\sigma_u^\alpha}(E_j)$ comes from evaluation on $E_j$ regarded as an element of $\gr_{f^\dagger(u)}(\cC_\bullet)\simeq \gr_u(\cC_\bullet^\alpha)$. On the other hand, $\ell_\alpha^t(E)$ comes from taking $\ell_{\sigma_u^\alpha}^t(\Pi_{\rm{dom}}^\alpha)$, but this is simply the image of $E$ in $\gr_{f^\dagger(u)}(\cC_\bullet)\simeq \gr_u(\cC_\bullet^\alpha)$ as observed in the body of the proof.} \Cref{T:topology}\eqref{I:phasewidthunif} goes along the same lines using the following observations:\vspace{-2mm}
    \begin{itemize}
        \item $P_v\in \cC_{\le v}^{\rm{ss}}$ if and only if $P_v\in \cC_{\le v}^{f^\dagger(u),\text{ ss}}$,   \vspace{-2mm}
        \item $E\in \cC_{\le v}\setminus \cC_{<v}$ and $E'\equiv E \mod \cC_{<v}$ if and only if $E\in \cC_{\le v}^{f^\dagger(u)}\setminus \cC_{<v}^{f^\dagger(u)}$ and $E'\equiv E \mod \cC_{<v}^{f^\dagger(u)}$, and\vspace{-2mm}
        \item in this notation, $\phi_\sigma^{\pm}(E/P_v) = \phi^{\pm}_{\gr_{f^\dagger(u)}(\sigma)}(E/P_v)$, $\phi_\alpha^{\pm}(E/P_v) = \phi_{\sigma_\alpha^u}^{\pm}(E'/P_v)$ for all $\alpha$, and analog\-ously for the mass functions.\endnote{The first item follows from \eqref{E:vertex_category_containment}, which gives $\cC_{<f^\dagger(u)}\subseteq \cC_{<v}\subseteq \cC_{\le v}\subseteq \cC_{\le f^\dagger(u)}$. In particular, $\cC_{\le v}^{f^\dagger(u)}/\cC_{<v}^{f^\dagger(u)}\simeq \gr_v(\cC_\bullet)$. Furthermore, the stability condition at vertex $v\in V(\Sigma_{\subseteq u})_{\rm{term}}$ of the descendent augmented stability condition (\Cref{D:terminology}) corresponds to that of $v\in V(\Sigma)_{\rm{term}}$ under this equivalence.
        
        The second item is along similar lines. The third item follows from the fact that given $E\in \cC_{\le v}$ with $f(v) = u$, we have $\Pi_{\rm{dom}}^\alpha(E)$ is the image of $E$ in $\gr_{f^\dagger(u)}(\cC_\bullet)\simeq \gr_u(\cC_\bullet^\alpha)$.} \vspace{-2mm}
    \end{itemize}

    Finally, we consider \eqref{I:stability}. First, note that $E\in \cC_{\le v}$ is $\sigma$-stable if and only if it is $\gr_{f^\dagger(u)}(\sigma)$-stable, when regarded as an element of $\cC_{\le v}^{f^\dagger(u)}$. The result now follows from the observation that $E'\equiv E \mod \cC_{<v}$ if and only if $E'\equiv E \mod \cC_{<v}^{f^\dagger(u)}$, regarded as objects of $\cC_{\le v}^{f^\dagger(u)}$.
\end{proof}

\begin{proof}[Proof of \Cref{T:topology}]
By a slight modification of \cite{KelleyTopology}*{Thm. 9, p.27},\endnote{
For the reader's convenience, we recall some of the relevant definitions and theorems involved in the definition of the topology. \begin{defn}
[\cite{KelleyTopology} Ch. 2] A \emph{net} in a set $X$ consists of a directed set $(D,\le)$ equipped with a function $S:D\to X$. We often refer to the net by the function $S$ only, the rest of the data being implicit. A \emph{subnet} of a net $S$ in $X$ is a net $T:D'\to X$ equipped with a function $N:D'\to D$ so that 
\begin{enumerate}
    \item $T = S\circ N$; and 
    \item for each $\alpha \in D$ there is a $\beta \in D'$ such that $\beta'\ge \beta$ implies $N(\beta')\ge \alpha$. 
\end{enumerate}
\end{defn}

Note that the second condition in Kelley's definition of a subnet is sometimes called cofinality. We have indicated this in parentheses in the body of the paper. 

\begin{defn}
[\cite{KelleyTopology} p. 74] Let $X$ denote a set. Let $\mathscr{C}$ denote a class consisting of pairs of $(S,s)$, where $S$ is a net in $X$ and $s$ is a point. We say that $S$ converges $(\mathscr{C})$ to $s$ if $(S,s)\in \mathscr{C}$. This is also denoted $\lim_\alpha S(\alpha) = s\: (\mathscr{C})$. $\mathscr{C}$ is called a \emph{convergence class} for $X$ if :
\begin{enumerate}
    \item If $S$ is a net such that $S(\alpha) = s$ for each $n$, then $S$ converges $(\mathscr{C})$ to $s$.
    \item If $S$ converges $(\mathscr{C})$ to $s$, then so does each subnet of $S$.
    \item If $S$ does not converge $(\mathscr{C})$ to $s$, then there is a subnet of $S$, no subnet of which converges $(\mathscr{C})$ to $s$. 
    \item Let $D$ be a directed set, let $E_\alpha$ be a directed set for each $\alpha\in D$, let $F = D\times \prod_{\alpha\in D} E_\alpha$, and for $(\alpha,f)\in F$, let $R(\alpha,f) = (\alpha,f(\alpha))$. If $\lim_\alpha \lim_\beta S(\alpha,\beta) = s \:(\mathscr{C})$, then $S\circ R$ converges $(\mathscr{C})$ to $s$. 
\end{enumerate}
\end{defn}

We state a slight modification of Theorem 9 of \cite{KelleyTopology} (p.74). 

\begin{thm}
\label{T:topologyvianets}
    Let $\mathscr{C}$ denote a convergence class on a set $X$. There is a unique topology $\mathscr{T}$ on $X$ with respect to which $\mathscr{C}$ is the class of convergent nets.
    \begin{proof}
    By Theorem 9 of loc. cit., the map $(-)^c:2^X\to 2^X$ given by sending $A\mapsto A^c$, defined to be the set of points $s\in X$ such that some net $S$ in $A$ converges to $s$ with respect to $\mathscr{C}$, is a Kuratowski closure operator. By Theorem 8 of p. 43, associated to such an operator, there is a unique topology with respect to which the complements of sets of the form $A^c$ are the open sets. In particular, given $(S,s)\in \mathscr{C}$, $s$ is the limit of $S$ with respect to this topology and conversely.
    \end{proof}
\end{thm}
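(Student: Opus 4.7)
The plan is to mimic Kelley's classical construction of a topology from a convergence class via a closure operator. For $A\subseteq X$, define
\[
A^c := \{s\in X \mid \exists \text{ a net } S \text{ in } A \text{ with } \lim S = s\ (\mathscr{C})\}.
\]
I will verify that $A\mapsto A^c$ is a Kuratowski closure operator, then declare $\mathscr{T}$ to be the topology in which the closed sets are the fixed points of $(-)^c$, and finally show that a net converges in $\mathscr{T}$ if and only if it converges with respect to $\mathscr{C}$.

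First I would check the four Kuratowski axioms. That $\emptyset^c = \emptyset$ is trivial (no nets land in $\emptyset$), and $A\subseteq A^c$ follows from convergence-class axiom (1) applied to constant nets. For $(A\cup B)^c = A^c\cup B^c$, the inclusion $\supseteq$ is immediate; for $\subseteq$, given a net $S$ in $A\cup B$ converging to $s$, one of the preimages $S^{-1}(A)$ or $S^{-1}(B)$ must be cofinal, giving a subnet in $A$ or $B$, and by axiom (2) this subnet still converges to $s$. Idempotence $(A^c)^c\subseteq A^c$ is the content of axiom (4): given $s\in (A^c)^c$, pick a net $(t_\alpha)_{\alpha\in D}$ in $A^c$ converging to $s$, and for each $\alpha$ pick a net $(x^\alpha_\beta)_{\beta\in E_\alpha}$ in $A$ converging to $t_\alpha$. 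The diagonal-type construction $R(\alpha,f) = (\alpha,f(\alpha))$ on $F = D\times\prod_\alpha E_\alpha$ yields a net in $A$ converging $(\mathscr{C})$ to $s$ by axiom (4), so $s\in A^c$.

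Next I would let $\mathscr{T}$ be the unique topology whose closure agrees with $(-)^c$ (standard consequence of the Kuratowski axioms). It remains to identify $\mathscr{T}$-convergent nets with $\mathscr{C}$-convergent ones. The easy direction: if $S\to s\ (\mathscr{C})$, then for any $\mathscr{T}$-open neighborhood $U$ of $s$, we must have $S$ eventually in $U$, for otherwise $\{\alpha : S(\alpha)\notin U\}$ would be cofinal and by axiom (2) produce a subnet in $X\setminus U$ converging to $s$, forcing $s\in (X\setminus U)^c = X\setminus U$, contradiction. Hence $S\to s$ in $\mathscr{T}$.

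The harder direction, which I expect to be the main obstacle, is the converse: $S\to s$ in $\mathscr{T}$ implies $S\to s\ (\mathscr{C})$. This is exactly where axiom (3) is designed to enter. Suppose not; then by (3) there is a subnet $S'$ of $S$, no subnet of which converges $(\mathscr{C})$ to $s$. I would then show that $s\notin\{S'(\alpha)\}^c_{\alpha}$ after restricting to suitable tails, contradicting that $S'$ still $\mathscr{T}$-converges to $s$ (as a subnet of a $\mathscr{T}$-convergent net). Concretely, if $s$ were in the closure of the image of every tail of $S'$, a standard diagonal argument (again using axiom (4) together with (1) for the tail-indexing nets) would produce a subnet of $S'$ converging $(\mathscr{C})$ to $s$, giving the desired contradiction. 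Uniqueness of $\mathscr{T}$ is then automatic, since any topology on $X$ is determined by its convergent nets.
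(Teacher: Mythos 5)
Your proof follows the same strategy as the paper's, which simply cites Kelley's Theorem 9 (and Theorem 8) for both the fact that $A\mapsto A^c$ is a Kuratowski closure operator and for the coincidence of $\mathscr{C}$-convergence with topological convergence. You have expanded those citations into a self-contained argument, verifying the Kuratowski axioms directly (constant nets for $A\subseteq A^c$; cofinality of a part of a finite partition for $(A\cup B)^c\subseteq A^c\cup B^c$; the diagonal axiom (4) for idempotence) and then checking the two directions of the equivalence of convergences. The details are correct, including the role of axiom (3) in the harder direction — the only place you leave things slightly informal is the diagonal construction showing that $\mathscr{T}$-convergence of a subnet $S'$ would produce a $\mathscr{C}$-convergent subnet of $S'$; to make that airtight one should note that $\mathscr{T}$-convergence of $S'$ to $s$ forces $s\in A_\alpha^c$ for every tail image $A_\alpha$ of $S'$ (else $X\setminus A_\alpha^c$ is an open set around $s$ missing a tail), then take for each $\alpha$ a net in $A_\alpha$ converging $(\mathscr{C})$ to $s$, apply axioms (1) and (4) to the constant outer net at $s$, and observe that the resulting diagonal net is in fact a subnet of $S'$ because its values lie in later and later tails. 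Since the paper simply invokes Kelley for all of this, your reconstruction is essentially an unpacking of that citation rather than a genuinely different route.
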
} it suffices to verify the following enumerated conditions. 
\begin{enumerate}[label=(\roman*)]
    \item All constant nets satisfy the conditions of \Cref{T:topology};
    \item If $x_\alpha$ is convergent in the sense of \Cref{T:topology}, then so are all of its cofinal subnets;
    \item If $x_\alpha$ does not converge to some point $x\in \Astab(\cC)$, then there is a cofinal subnet no subnet of which converges to $x$;
    \item Let $D$ be a directed set, let $\{x_\alpha\}_{\alpha \in D} \to x$ be a convergent net in $\Astab(\cC)$, and let $\{y_\beta\}_{\beta \in F_\alpha} \to x_\alpha$ be a convergent net for each $\alpha$, indexed by different directed sets $F_\alpha$. Then the net $\{ y_{f(\alpha)} \}_{(\alpha,f) \in G}$ indexed by the product directed set $G := D \times \prod_{\alpha \in D} F_\alpha$\endnote{Given a pair of directed sets $(D,\le)$ and $(E,\le)$, the product set $D\times E$ is directed by $(d,e)\le (d',e')$ iff $d\le d'$ and $e\le e'$. In this specific situation, $\prod_{\alpha \in D} E_\alpha$ is equipped with the pointwise preorder in the following sense: an element of $\prod_{\alpha \in D} E_\alpha$ is a function $f: D\to \coprod_{\alpha \in D}E_\alpha$ such that $f(\alpha) \in E_\alpha$ for all $\alpha$. We say $f\le g$ if $f(\alpha)\le g(\alpha)$ for all $\alpha \in D$. So, $F = D\times \prod_{\alpha \in D}E_\alpha$ has the preorder defined by $(d,f)\le (e,g)$ iff $d\le e$ and $f\le g$ in the sense just defined.} converges to $x$.
\end{enumerate}

    For the rest of the proof, $\sigma_\alpha = \langle \cC_\bullet^\alpha | \sigma^\alpha_\bullet \rangle_{\Sigma_\alpha}$ denotes a net in $\Astab(\cC)$. Condition (1) of \Cref{T:topology} defines the class of convergent nets for the weak topology, so it satisfies conditions (i)-(iv) (see the discussion in \cite{KelleyTopology}*{Ch. 2}). 
    
    For \eqref{I:phasewidthunif}, conditions (i)-(iii) are easily verified so we focus on (iv). Let $\epsilon>0$ be given and suppose $\{\sigma_\alpha\}$ satisfies \eqref{I:phasewidthunif} with respect to $\sigma$. Thus, there is an $\alpha_0$ such that $\alpha \ge \alpha_0$ implies $\vec{d}(\sigma,\sigma_\alpha)<\epsilon$. For each $\alpha$, consider a net $\{\tau_\beta\}_{\beta \in F_\alpha}$ satisfying \eqref{I:phasewidthunif} with respect to $\sigma_\alpha$. For each $\alpha$, choose $f_0(\alpha)\in F_\alpha$ such that $\beta \ge f_0(\alpha)$ implies $\vec{d}(\sigma_\alpha,\tau_\beta) < \epsilon$. Now, suppose $(\alpha,f)\ge (\alpha_0,f_0)$ in $G$. It follows that $\vec{d}(\sigma,\tau_{f(\alpha)}) \le \vec{d}(\sigma,\sigma_\alpha) + \vec{d}(\sigma_\alpha,\tau_{f(\alpha)}) < 2\epsilon$ by \Cref{L:directedtriangleinequality}. 

    Finally, we consider \eqref{I:stability}. We leave (i)-(iii) as exercises for the reader\endnote{(i) If $\{\sigma_\alpha\}$ is constant, then we can take any $\sigma_v^\infty$-stable $E\in \gr_v^\infty(\cC)$ and choose any lift $E'\in \cC_{\le v}$. (ii) Consider a cofinal subnet $\{\sigma_\beta\}_{\beta \in B}$ of $\{\sigma_\alpha\}_{\alpha \in A}$ where $\{\sigma_\alpha\}$ has property \eqref{I:stability}. Let $\alpha_0 \in A$ be as in \eqref{I:stability}. Because $B$ is cofinal, choose $\beta_0\ge \alpha_0$ with $\beta_0\in B$. It now follows that for any $\beta \ge \beta_0$ and any $E\in \gr_v^\infty(\cC)$ which is stable we can find a suitable lift $E'$. (iii) Suppose $\{\sigma_\alpha\}$ does not satisfy \eqref{I:stability} with respect to $\sigma$. Then for all $\alpha_0 \in A$ there exists $\alpha_0'\ge \alpha_0$ and a stable $E\in \gr_v^\infty(\cC)$ without a suitable lift. Collecting all of these $\alpha_0'$ defines a cofinal subnet $\{\sigma_\alpha\}_{\alpha \in A'}$ for which \eqref{I:stability} does not hold.} and focus on (iv). Using the previous notation, let $f_\alpha: \Gamma(\Sigma)\twoheadrightarrow \Gamma(\Sigma_\alpha)$ denote the contraction associated to $\sigma \rightsquigarrow \sigma_\alpha$ and $g_\beta:\Gamma(\Sigma_\alpha)\twoheadrightarrow \Gamma(\Sigma_\beta)$ that associated to $\sigma_\alpha \rightsquigarrow \tau_\beta$. Consider $E\in \gr_v(\cC_\bullet)$ which is $\sigma_v$-stable. Suppose there exists $\alpha_0$ such that $\alpha \ge \alpha_0$ implies that $E$ is equivalent to a $\sigma_\alpha$-stable object $E_\alpha$ modulo $\cC_{<v}$. 
    
    Next, suppose for each $\alpha$ there exists $f_0(\alpha)\in F_\alpha$ such that $\beta \ge f_0(\alpha)$ implies that there exists a $\tau_\beta$-stable object $E_\beta$ which equivalent to $E_\alpha$ modulo $w = f_\alpha(v)$. Since $\cC_{<w}^\alpha \subset \cC_{<f^\dagger(w)} \subset \cC_{<v}$, $E_\beta \equiv E_\alpha \mod \cC_{<w}^\alpha$ implies that $E_\beta \equiv E_\alpha \mod \cC_{<v}$ and hence $E_\beta\equiv E \mod \cC_{<v}$. Therefore, $E_\beta \in \cC_{\le v}$ and we are done.

    So, there is a unique topology on $\Astab(\cC)$ with convergent nets those defined in \Cref{T:topology}. Next, we prove that $\Astab(\cC)$ is Hausdorff. Given a convergent net $\{\sigma_\alpha\}\to \sigma$ any of its cofinal subnets also converges to $\sigma$ and so by \Cref{L:constanttype} we may suppose that $\Gamma(\Sigma)\twoheadrightarrow\Gamma(\Sigma_\alpha)$, the signature of $\Sigma_\alpha$, and $\cC_{\le v}^\alpha$ are constant in $\alpha$. By \Cref{L:descendentconv}, for each $u\in V(\Sigma_\alpha)_{\rm{term}}$ the net $\{\sigma_u^\alpha\}$ in $\Stab(\gr_u(\cC_\bullet))/\bC$ converges to $\gr_{f^\dagger(u)}(\sigma)$, which is thus uniquely determined by \Cref{T:unique_limit}. Now, by \Cref{L:recovercategories} we can recover $\sigma$ uniquely from these data.
    
    We next verify that $\Stab(\cC)/\bC \hookrightarrow \Astab(\cC)$ is a homeo\-morphism onto its image. Suppose $\{\sigma_\alpha\} \to \sigma \in \Stab(\cC)/\bC$ with respect to \Cref{D:convergence_conditions}. For any $\sigma$-stable object $P$, there exists an open neighborhood $U$ of $\sigma$ in $\Stab(\cC)/\bC$ on which $P$ is stable. We may choose $\alpha_0$ such that $\alpha \ge \alpha_0\Rightarrow$ $\sigma_\alpha \in U$. By \Cref{R:localmetric}, $\{\sigma_\alpha\}\to \sigma$ if and only if $d_P(\sigma_\alpha,\sigma)\to 0$, however this is exactly \eqref{I:phasewidthunif}. To see that the image of $\Stab(\cC)/\bC$ is open, simply note that its elements are uniquely characterized by the property that their underlying curve $\Sigma$ is irreducible. Consequently, if $\{\sigma_\alpha \}\to\sigma \in \Stab(\cC)/\bC$ with respect to the topology of \Cref{D:convergence_conditions}, by \eqref{I:weaktopology} it follows that $\{\sigma_\alpha\}$ is eventually in $U(\langle \cC\rangle_{\bP^1})\subset \Stab(\cC)/\bC$. Therefore, $\Stab(\cC)/\bC$ is open in this topology.

    Finally, the action $\Aut(\cC)\times \Astab(\cC)\to \Astab(\cC)$ is given by $(\psi,\langle \cC_\bullet|\sigma_\bullet\rangle_\Sigma)\mapsto \langle \cC_\bullet^\psi|\sigma_\bullet^\psi\rangle_\Sigma$ where 
    \begin{enumerate}
        \item the multi-scale decomposition $\langle \cC_\bullet^\psi\rangle_\Sigma$ is specified by setting $\cC_{\le v}^\psi := \psi(\cC_{\le v})$ for each $v\in V(\Sigma)_{\rm{term}}$; and 
        \item for each $v \in V(\Sigma)_{\rm{term}}$, $\sigma_v^\psi \in \Stab(\gr_v(\cC^{\psi}_\bullet))/\bC$ is the stability condition associated to $\sigma_v \in \Stab(\gr_v(\cC_\bullet))/\bC$ under the equivalence $\gr_v(\cC_\bullet) \to \gr_v(\cC_\bullet^\psi)$ induced by $\psi$. 
    \end{enumerate} 
    We leave to the reader to verify that this action is continuous, using the definition of convergent nets for the topology on $\Astab(\cC)$ in \Cref{T:topology}.\endnote{We omit the verification that $\Aut(\cC)\times \Astab(\cC)\to \Astab(\cC)$ is an action in the category of sets, besides showing that $\langle \cC_\bullet^\psi|\sigma_\bullet^\psi\rangle_\Sigma$ is an augmented stability condition. We verify the conditions of \Cref{D:multi-scale_decomposition}. 
    
    \Cref{D:multi-scale_decomposition}(1) follows from the fact that given $\cT = \Span\{\cA_1,\ldots, \cA_n\}$, all of which are subcategories of $\cC$, one has $\cT^\psi = \Span\{\cA_1^\psi,\ldots, \cA_n^\psi\}$. For \Cref{D:multi-scale_decomposition}(2), given any $w\le_{i,0} v$ for $w$ and $v$ terminal vertices there is an induced equivalence $\psi:\cC/\cC_{\le v} \cap \cC_{\le w}\to \cC/\cC_{\le v}^\psi \cap \cC_{\le w}^\psi$ under which $\cC_{\le v}/\cC_{\le v}\cap \cC_{\le w}$ is sent to $\cC_{\le v}^\psi/\cC_{\le v}^\psi \cap \cC_{\le w}^\psi$ and likewise for $\cC_{\le w}/\cC_{\le v}\cap \cC_{\le w}$. \Cref{D:multi-scale_decomposition}(3) and the second part of (4) follow from the first sentence of this paragraph. For the first part of \Cref{D:multi-scale_decomposition}(4), simply note that $\cC_{\le v}^\psi /\cC_{<v}^\psi \simeq \cC_{\le v}/\cC_{<v}$.

    Given $\psi \in \Aut(\cC)$ which factors through an automorphism of $\Lambda$, we show that the map $\psi:\Astab(\cC)\to \Astab(\cC)$ is continuous. Consider a convergent net $\sigma_\alpha =\langle \cC_\bullet^\alpha|\sigma_\bullet^\alpha\rangle_{\Sigma_\alpha}\to \langle \cC_\bullet|\sigma_\bullet\rangle_{\Sigma} = \sigma$. We claim $\sigma_\alpha^\psi = \langle \cC_\bullet^{\alpha,\psi}|\sigma_\bullet^{\alpha,\psi}\rangle_{\Sigma_\alpha}$ converges to $\langle \cC^\psi_\bullet|\sigma_\bullet^\psi\rangle_{\Sigma} = \sigma^\psi$.

    We check convergence with respect to the weak topology first (\Cref{D:weak_topology}). There exists an $\alpha_0$ such that $\alpha \ge \alpha_0$ implies $\langle \cC_\bullet^\alpha\rangle_{\Sigma_\alpha}$ is a coarsening of $\langle \cC_\bullet\rangle_{\Sigma}$. In particular, there is a contraction $\Gamma(\Sigma)\to \Gamma(\Sigma_\alpha)$ for each $\alpha$. Now, $\cC_{\le v}^\alpha \subset \cC_{\le f^\dagger(v)}$ for all $v\in V(\Sigma_\alpha)_{\rm{term}}$ and consequently $\cC_{\le v}^{\alpha,\psi} \subset \cC_{\le f^\dagger(v)}^{\psi}$. It also follows that this latter inclusion induces an equivalence $\cC_{\le v}^{\alpha,\psi}/\cC_{<v}^{\alpha,\psi} \to \gr_{f^\dagger(v)}(\cC_\bullet^\psi)$. 

    Next, given any collection of $t$-well-placed objects $E_1,\ldots, E_N$ as in \Cref{D:weak_topology}, let $E_i^\psi = \psi(E_i)$ for each $i=1,\ldots, N$. One can verify that $E_1^\psi,\ldots, E_N^\psi$ are $t$-well-placed for $\langle \cC_\bullet^\psi\rangle_\Sigma$. Now, $\ell_\alpha^{t,\psi}(E_i) = \ell_\alpha^{t}(\psi^{-1}(E_i))$ and it follows that $\ell_\alpha^{t,\psi}(E_1^\psi,\ldots, E_N^\psi) = \ell_\alpha^t(E_1,\ldots, E_N)$ converges to $(\Sigma,\ell_{\sigma^\psi}^t(E_1^\psi,\ldots, E_N^\psi)) = (\Sigma,\ell_\sigma^t(E_1,\ldots, E_N))$. Thus, $\sigma_\alpha^\psi$ converges to $\sigma^\psi$ in the weak topology. \Cref{T:topology}\eqref{I:phasewidthunif} and \eqref{I:stability} are proved along similar lines. Let us consider \eqref{I:stability} for simplicity. If $E\in \cC_{\le v}$ is $\sigma$-stable, then $\psi(E) = E^\psi$ is $\sigma^\psi$-stable. If there exists an $F\equiv E \mod\cC_{<v}$ that is $\sigma_\alpha$-stable, then $F^\psi$ is $\sigma_\alpha^\psi$-stable and $F^\psi \equiv E^\psi \mod \cC_{<v}^\psi$.} Note that this definition extends the usual left action of $\Aut(\cC)$ on $\Stab(\cC)/\bC$. 
\end{proof}

\subsection{Quasi-convergence and convergence} 
In \cite{quasiconvergence}, the notion of a \emph{quasi-con\-vergent} path in $\Stab_\Lambda(\cC)/\bC$ is introduced. These are paths that either converge for the topology on $\Stab(\cC)/\bC$, or diverge ``to infinity'' in a controlled manner \cite{quasiconvergence}*{Def. 2.8}. In this section, we connect the notion of quasi-convergence to convergence in the topology on $\Astab(\cC)$ of \Cref{T:topology}. This section uses the notation and terminology of \cite{quasiconvergence}. The reader unfamiliar with the contents of \cite{quasiconvergence} can safely skip this section on a first reading.

Quasi-convergent paths are rather general. To enforce quasi-convergent paths to converge in the topology on $\Astab(\cC)$ we have to add more hypotheses. Consider a quasi-convergent path $\sigma_s:[0,\infty)\to \Stab(\cC)/\bC$ that is numerical \cite{quasiconvergence}*{Def. 2.34} and satisfies the support property for paths of \cite{quasiconvergence}*{Def. 2.36}. 

By Cor. 2.38 of \emph{loc. cit.}, $\sigma_s$ has a finite asymptotic indexing set $P_1,\ldots, P_n$. Furthermore, Thm. 2.37 of \emph{loc. cit.} states that there are categories $\cC_{\preceq P_i}^{P_i}$ associated to $\sigma_s$ as well as certain quotient categories $\cC_{\preceq P_i}^{P_i}/\cC_{\prec P_i}^{P_i}$ with stability conditions $\sigma_i \in \Stab(\cC_{\preceq P_i}^{P_i}/\cC_{\prec P_i}^{P_i})/\bC$ for each $i$. The support property and numericity hypotheses imply that each $\sigma_i$ satisfies the support property with respect to $\Lambda_i = v(\cC_{\preceq P_i})/v(\cC_{\prec P_i})^{\rm{sat}}$ for all $i = 1,\ldots, n$.

\begin{defn}
\label{D:stronglyqconv}
    Let $\sigma_s$ be a quasi-convergent path in $\Stab(\cC)/\bC$ with asymptotic indexing set $P_1,\ldots, P_n$. $\sigma_s$ is \emph{strongly quasi-convergent} if 
    \begin{enumerate} 
        \item for all $1\le i,j,k,l\le n$, the following limit exists in $\bP^1$:
        \[ 
            \lim_{s\to \infty} \frac{\ell_s(P_j) - \ell_s(P_i)}{\ell_s(P_l)-\ell_s(P_k)}.
        \] 
        \item for all $i$ and all $\sigma_i$-stable objects $E\in \cC_{\preceq P_i}^{P_i}/\cC_{\prec P_i}^{P_i}$ there exists $s_E$ such that $s\ge s_E \Rightarrow$ there exists a lift of $E$ to $\cC_{\preceq P_i}^{P_i}$ that is $\sigma_s$-stable; and
        \item for all $\epsilon>0$ there exists a $s_0$ such that $s\ge s_0 \Rightarrow$ for any $i=1,\ldots,n$ and any nonzero $E\in \cC_{\preceq P_i}^{P_i}/\cC_{\prec P_i}^{P_i}$ there exists a lift to $E'\in \cC_{\preceq P_i}^{P_i}$ such that 
    \begin{equation}
    \label{E:extracondition}
        \max\left\{
        \begin{array}{c}
            \lvert \phi_s^+(E'/P_i) - \phi_\infty^+(E/P_i)\rvert,\\
            \lvert \phi_s^-(E'/P_i) - \phi_\infty^-(E/P_i)\rvert,\\
            \lvert\log \tfrac{m_s(E')}{m_s(P_i)} - \log \tfrac{m_\infty(E)}{m_\infty(P_i)}\rvert
        \end{array}
    \right\} < \epsilon.
    \end{equation}
    Here, $m_\infty(E)/m_\infty(P_i):=\lim_{s\to\infty} m_s(E)/m_s(P_i)$ and similarly for $\phi^{\pm}_\infty(E/P_i)$. These limits exist by definition of the categories $\cC_{\preceq P_i}^{P_i}$ and $\cC_{\prec P_i}^{P_i}$.
    \end{enumerate}
\end{defn}

\begin{lem}
\label{L:convergenceinrmscbar}
    Suppose that $\sigma_s$ as above is strongly quasi-convergent. $(\bP^1,\infty,dz,\{\ell_t(P_i)\})$ converges in $\rmscbar_n$ to $(\Sigma,\{\ell_\infty(P_i)\})$ where $\Sigma$ has exactly $n$ terminal comp\-onents.
\end{lem}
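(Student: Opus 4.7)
The plan is to reduce the statement to an application of \Cref{P:angularconvergence}. Set $\Pi_{ij}(t) := \ell_t(P_j) - \ell_t(P_i)$, so the path $(\bP^1, \ell_t(P_\bullet))$ lives in $\Mmscbar_n^\circ$. Two things must be established: (a) the path converges in $\Mmscbar_n$ to a point whose underlying multiscale line $\Sigma$ has exactly $n$ terminal components with marked points $\ell_\infty(P_i)$; and (b) $\Pi_{ij}(t)/(1+|\Pi_{ij}(t)|)$ converges in $\bD$ for every pair $(i,j)$.

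For (a), the definition of the asymptotic indexing set for a quasi-convergent path forces $|\Pi_{ij}(t)| \to \infty$ for all $i \neq j$, so each $\Pi_{ij}$ tends to $\infty$ in $\bP^1$. Combined with condition (1) of \Cref{D:stronglyqconv}, which provides convergence in $\bP^1$ of every ratio $\Pi_{ij}(t)/\Pi_{kl}(t)$, the Hausdorffness argument in the proof of \Cref{T:spaceconstruction} applies verbatim: it constructs a dual level tree $\Gamma$ from these data and shows that the path eventually enters $U_\Gamma$ and converges there in the coordinates of \eqref{E:Mn_coordinates}. Because $|\Pi_{ij}| \to \infty$ for every $i \neq j$, the markings $h(i)$ on terminal vertices of $\Gamma$ are pairwise distinct, so $\Sigma$ has exactly $n$ terminal components. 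Identification of the marked points with $\ell_\infty(P_i)$ follows by applying \Cref{P:logZ_functions} to the limit stability conditions $\sigma_i$ on the subquotient categories, using condition (2) of \Cref{D:stronglyqconv} to ensure compatibility of stable lifts.

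For (b), when $h(i) = h(j)$ the function $\Pi_{ij}$ is bounded and already convergent, so $\Pi_{ij}/(1+|\Pi_{ij}|)$ converges inside the interior of $\bD$. When $h(i) \neq h(j)$ one has $|\Pi_{ij}| \to \infty$, and the required condition becomes convergence of the angular part $\theta_{ij}(t) := \Pi_{ij}(t)/|\Pi_{ij}(t)|$ in $S^1$. In the local coordinates on $U_\Gamma$ supplied by \Cref{C:goodCOVgeneric}, the identity $\Pi_{i_m j_m}/|\Pi_{i_m j_m}| = \overline{t_1 \cdots t_m}/|t_1 \cdots t_m|$ reduces this, by induction on the level $m$ of $\Gamma$, to showing that a single representative $\theta_{i_m j_m}$ converges on each level.

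The main obstacle is establishing this per-level angular convergence, and it is where condition (1) of \Cref{D:stronglyqconv} is used in full strength rather than just in norm. Writing $\Pi_{ij}(t) = \log(m_t(P_j)/m_t(P_i)) + i\pi(\phi_t(P_j) - \phi_t(P_i))$, the numericity and support-property hypotheses on the path imply that both the real and imaginary parts have well-defined linear asymptotic slopes in the quasi-convergence parameter, whose ratio determines $\lim_t \arg \Pi_{ij}(t)$ on the top scale. Condition (1) then transfers this angular information to other index pairs on the same level via the identity $\theta_{ij}/\theta_{kl} = (\Pi_{ij}/\Pi_{kl}) \cdot (|\Pi_{kl}|/|\Pi_{ij}|)$, and a downward induction on the levels of $\Gamma$ yields convergence of every $\theta_{ij}$. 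Applying \Cref{P:angularconvergence} then produces the desired limit in $\rmscbar_n$, whose underlying multiscale line coincides with the $\Sigma$ constructed in (a).
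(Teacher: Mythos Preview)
Your part (a) is essentially the paper's argument: use \Cref{D:stronglyqconv}(1) together with the compactness argument from the proof of \Cref{T:spaceconstruction} to obtain convergence in $\Mmscbar_n$, and observe that $|\Pi_{ij}(t)|\to\infty$ for $i\neq j$ forces one marked point per terminal component. The digression invoking \Cref{P:logZ_functions} and \Cref{D:stronglyqconv}(2) is unnecessary and does not fit here; the limit marked points $\ell_\infty(P_i)$ are simply the limits in the coordinates on $U_\Gamma$, with no appeal to stability conditions required.

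Part (b) contains a genuine gap. You assert that numericity and the support property imply that $\log(m_t(P_j)/m_t(P_i))$ and $\phi_t(P_j)-\phi_t(P_i)$ have ``well-defined linear asymptotic slopes in the quasi-convergence parameter,'' and then build an induction on levels from this. No such regularity is assumed or proved anywhere: a quasi-convergent path need not be asymptotically linear in any sense, so this step is unjustified, and the subsequent ``downward induction on levels'' rests on it. The paper avoids this entirely by observing that the angular convergence
\[
\lim_{s\to\infty}\frac{\ell_s(P_j/P_i)}{1+|\ell_s(P_j/P_i)|}
\]
is already part of the \emph{definition} of a quasi-convergent path (\cite{quasiconvergence}*{Def.~2.8}); since the $P_i$ are limit semistable, this applies directly and \Cref{P:angularconvergence} finishes the proof. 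You should replace your argument for (b) with this one-line citation.
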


\begin{proof}
    Since the limits of $\ell_s(P_j/P_i)/\ell_s(P_l/P_k)$ exist in $\bP^1$ for all $1\le i,j,k,l\le n$, the proof of \Cref{T:spaceconstruction} implies that $(\bP^1,\infty,dz,\{\ell_s(P_i)\})$ converges in $\Mmscbar_n$ to $(\Sigma,\{\ell_\infty(P_i)\})$. Since we know $\lim_s \ell_s(P_j/P_i) = \infty$ for all $i\ne j$, it follows that there is exactly one marked point per terminal component of $\Sigma$. Finally, by \Cref{P:angularconvergence}, $(\bP^1,\omega,dz,\{\ell_s(P_i)\})$ con\-verges in $\rmscbar_n$ since $\lim_s \ell_s(P_j/P_i)/(1+\lvert \ell_s(P_j/P_i)\rvert)$ exists for all $1\le i,j\le n$ by \cite{quasiconvergence}*{Def. 2.8}.
\end{proof}

\Cref{L:convergenceinrmscbar} gives a bijection between $V(\Sigma)_{\rm{term}}$ and $\{P_1,\ldots P_n\}$. Denote the terminal vertex of $\Gamma(\Sigma)$ corresponding to $\ell_\infty(P_i)$ by $v_i$ for all $1\le i \le n$ and set $\cC_{\le v_i} := \cC_{\preceq P_i}^{P_i}$. Next, $\cC_{\prec P_i}^{P_i}$ is generated by $\cC_{\preceq P_j}^{P_j}$ for $P_i\sim^{\rm{i}} P_j$ and $P_j\prec P_i$. On the other hand, $P_j$ satisfies these conditions if and only if $\mathfrak{p}(v_j,v_i) = 1$. So, $\cC_{<v_i} = \cC_{\prec P_i}^{P_i}$. For each $1\le i \le n$, we associate to $v_i$ the stability condition $\sigma_i$ on $\cC_{\preceq P_i}^{P_i}/\cC_{\prec P_i}^{P_i} =: \gr_{v_i}(\cC_\bullet)$.

For each $v_i \in V(\Sigma)_{\rm{term}}$, we define $\Lambda_{\le v_i} = v(\cC_{\preceq P_i})^{\rm{sat}}$. By the previous discussion, $\Lambda_{<v_i}$, as defined in \Cref{S:augmentedstabilityconditions}, equals $v(\cC_{\prec P_i})^{\rm{sat}}$. Consequently, $\gr_{v_i}(\Lambda) = \Lambda_i$.

\begin{prop}
\label{P:quasiconvconv}
    Let $\sigma_s$ be a numerical quasi-convergent path in $\Stab(\cC)/\bC$ with the support property. The data $\Sigma$, $\{\cC_{\le v_i}\}$, $\{\Lambda_{\le v_i}\}$, and $\{\sigma_i\}$ given above define an augmented stability condition $\langle \cC_\bullet|\sigma_\bullet\rangle_\Sigma$. 
    \begin{enumerate} 
        \item If $\sigma_s$ satisfies \Cref{D:stronglyqconv}(1) then it converges to $\langle \cC_\bullet|\sigma_\bullet\rangle_\Sigma$ in the weak topology on $\Astab(\cC)$. 
        \item If, in addition, $\sigma_s$ is strongly quasi-convergent, then it converges to $\langle \cC_\bullet|\sigma_\bullet\rangle_\Sigma$ in the topology of \Cref{T:topology}.
    \end{enumerate}
\end{prop}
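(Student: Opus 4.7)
The plan is first to show the data $(\Sigma, \{\cC_{\leq v_i}\}, \{\Lambda_{\leq v_i}\}, \{\sigma_i\})$ assemble into an augmented stability condition. The multiscale line $\Sigma$ is produced by \Cref{L:convergenceinrmscbar} and comes equipped with a bijection $P_i \leftrightarrow v_i$. I would then match the partial order $\preceq^i$ on the asymptotic indexing set used in \cite{quasiconvergence} with the order $\leq_{1,\infty}$ on $V(\Sigma)_{\mathrm{term}}$ by unpacking normalized periods: $P_i \preceq^i P_j$ precisely when $\Re(\mathfrak{p}(v_i,v_j))>0$, with strict inequality governed by $\Im(\mathfrak{p}(v_i,v_j))$. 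With this dictionary, the semiorthogonality and generation statements of \cite{quasiconvergence}*{Thm.~2.37}, together with the identifications $\cC_{<v_i} = \cC_{\prec P_i}^{P_i}$ noted in the text, yield \Cref{D:multiscale_decomposition}(1)--(4). The $\Lambda$-grading condition in \Cref{D:generalized_stability_condition} comes from the numericity hypothesis, which forces the saturated images of $v:K_0(\cC)\to\Lambda$ coming from each $\cC_{\preceq P_i}^{P_i}$ to assemble into a rational direct sum decomposition of $\Lambda\otimes\bQ$; the $\sigma_i \in \Stab(\gr_{v_i}(\cC_\bullet))$ supplied by \emph{loc.\ cit.} then give the marking functions $\ell_{v_i}$, pulled across the isomorphism $(\Sigma_{v_i}\setminus\{p_{\mathrm{ascending}}\},\omega_{v_i})\cong(\bC,dz)$.

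For part (1), I verify the criteria of \Cref{T:unique_limit} via \Cref{D:convergence_conditions}. Condition \eqref{I:marked_line_convergence_1} is exactly \Cref{L:convergenceinrmscbar}, using \Cref{D:stronglyqconv}(1) to supply the ratios of periods demanded by \Cref{P:angularconvergence}. For condition \eqref{I:log_central_charge}, given $E \in \cC_{\leq v_i}^{\mathrm{ss}}$, the quasi-convergence data produce $\lim_s |Z_s(E)|/|Z_s(P_i)|$ and $\lim_s (\phi_s^{\pm}(E)-\phi_s(P_i))$ (these are the defining limits of $m_\infty(E)/m_\infty(P_i)$ and $\phi_\infty^{\pm}(E/P_i)$ used in \cite{quasiconvergence}), and since the image of $E$ in $\gr_{v_i}(\cC_\bullet)$ is $\sigma_i$-semistable, $\phi_\infty^+ = \phi_\infty^-$; this gives convergence of $\ell_s(E/P_i) \to \ell_\sigma(E/P_i)$. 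For \eqref{I:cosh_bound}, use \Cref{R:almost_semistable_convergence}: it suffices to show $m_s^t(E/E)\to 1$, and because $E$ projects to a semistable object of definite phase, its HN-factors in $\cC$ have phases converging to the single limit phase $\phi_\infty(E)$, so the polynomial-mass formula \eqref{E:smoothedphase} forces $c^t_s(E)\to 1$.

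For part (2), I need to upgrade weak convergence to convergence in the topology of \Cref{T:topology}. Condition \eqref{I:stability} is literally \Cref{D:stronglyqconv}(2), so the real work is the uniform directed-distance bound \eqref{I:phasewidthunif}. Given $\epsilon>0$, I take the $s_0$ provided by \Cref{D:stronglyqconv}(3). For $s\geq s_0$ and any nonzero $E \in \cC_{\leq v_i}\setminus \cC_{<v_i}$, projecting to $\gr_{v_i}(\cC_\bullet)$ and invoking \eqref{E:extracondition} produces a lift $E'\equiv E \pmod{\cC_{<v_i}}$ with $\vec{d}_{E,E'}(\sigma_\infty,\sigma_s) < \epsilon$; since $\sigma_s$ is an ordinary stability condition (so its multiscale line is irreducible and the only vertex constraint is that $E'$ represent the same class modulo $\cC_{<v_i}$), taking the supremum over $v_i$ and infimum over $E'$ yields $\vec{d}(\sigma_\infty,\sigma_s)\leq\epsilon$, as required.

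The main obstacle is the bookkeeping behind condition \eqref{I:cosh_bound}: one must argue that even though $E \in \cC_{\leq v_i}$ is only semistable modulo $\cC_{<v_i}$, the HN factors of $E$ in the ambient category $\cC$ for $\sigma_s$ have phases concentrating around a single value as $s\to\infty$, so that the mass measure $dm_{s,E}$ collapses to a point mass in the limit. This requires combining the control on $\ell_s$ of semistable factors living in various $\cC_{\preceq P_j}^{P_j}$ with the fact that factors in $\cC_{<v_i}$ have vanishing asymptotic mass relative to $P_i$ (a consequence of the strict inequality $P_j \prec P_i$ for such factors, paralleling \Cref{L:semistable_divergence}). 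Once this is established, the rest reduces to routine verification.
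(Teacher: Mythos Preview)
Your proposal is correct and follows the paper's proof closely. The only substantive divergence is your handling of \eqref{I:cosh_bound}, where you flag as the ``main obstacle'' the possibility that $E \in \cC_{\leq v_i}^{\mathrm{ss}}$ has $\sigma_s$-HN factors in $\cC_{<v_i}$ whose phases fail to concentrate, and you propose a mass-decay argument paralleling \Cref{L:semistable_divergence}. The paper avoids this entirely by asserting (via \cite{quasiconvergence}*{Thm.~2.30}) that $\cC_{\le v_i}^{\mathrm{ss}}$ consists precisely of the limit semistable objects $E$ with $E \sim P_i$; the defining property $\phi_s^+(E)-\phi_s^-(E)\to 0$ then gives $c_s^t(E)\to 1$ immediately via the elementary bound $\cosh(x)<(1-x^2)^{-1}$ for $|x|<1$, applied once the phase spread drops below $1/t$. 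Your route could be made to work but needs care beyond what you sketch: when $\mathfrak{p}(v_j,v_i)=1$ only the \emph{direction} of $\ell_s(P_j/P_i)$ is controlled, not the boundedness of $\Im(\ell_s(P_j/P_i))$, so the $\cosh$ factor is not automatically dominated by the mass decay $m_s(G)/m_s(P_i)\to 0$.

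One small correction to your dictionary: the matching (set up in the text just before the proposition) is $v_j \le_{1,\infty} v_i \Leftrightarrow \mathfrak{p}(v_j,v_i)=1 \Leftrightarrow (P_j \preceq P_i \text{ and } P_j\sim^{\mathrm{i}} P_i)$, and $v_j \le_{i,0} v_i \Leftrightarrow P_j \prec^{\mathrm{i}} P_i$; this is not the same as $\Re(\mathfrak{p}(v_i,v_j))>0$. Your treatment of the $\Lambda$-splitting, \eqref{I:log_central_charge}, and the strong-convergence conditions \eqref{I:phasewidthunif} and \eqref{I:stability} matches the paper.
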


\begin{proof}
    First, we check that $\{\Lambda_{\le v_i}\}$ satisfies the conditions of \Cref{D:generalized_stability_condition}. By \cite{quasiconvergence}*{Lem. 2.35}, there is a direct sum decomposition $\Lambda_{\bQ} = \bigoplus_{F\in \cP/{\sim^{\rm{i}}}} v(\cC^F)_{\bQ}$. Each $\cC^F$ is filtered by $0\subsetneq \cC_{P_{i_1}}^F\subsetneq \cdots \subsetneq \cC_{P_{i_k}}^F = \cC.$ Choosing a sequence of subspaces $M_{v_{i_1}},\ldots, M_{v_{i_k}}$ splitting the filtration 
    \[
        0\subsetneq v(\cC_{\preceq P_{i_1}}^F)_{\bQ}\subsetneq \cdots \subsetneq v(\cC_{\preceq P_{i_k}}^F)_{\bQ} = v(\cC^F)_{\bQ}
    \]
    defines $M_v$ for all $v\in V(\Sigma)_{\rm{term}}$ such that the conditions of \Cref{D:generalized_stability_condition} hold. It remains to verify that $\langle \cC_\bullet\rangle_\Sigma$ satisfies \Cref{D:multi-scale_decomposition}.

    First, $\cC_{\le v_i}\cap \cC_{\le v_j} = 0$ if $P_i\not \sim^{\rm{i}} P_j$. In this case, since $v_k \le_{1,\infty} v_i$ if and only if $P_k \preceq P_i$ and $P_k\sim^{\mathrm{i}} P_i$ and analogously for $v_j$, there are no $v_k$ such that $v_k \le_{1,\infty} v_i$ and $v_k\le_{1,\infty}v_j$. If $P_i \sim^{\mathrm{i}} P_j$, then if $P_i \prec P_j$ we have $\cC_{\le v_i}\cap \cC_{\le v_j} = \Span\{\cC_{\le v_k}\:|\: v_k\le_{1,\infty}v_i\} = \cC_{\le v_i}$ and so \Cref{D:multi-scale_decomposition}(1) holds. For \Cref{D:multi-scale_decomposition}(2), $v_i\le_{i,0}v_j$ if and only if $P_i \prec^{\mathrm{i}} P_j$ and so $\Hom(\cC_{\le v_j},\cC_{\le v_i}) = 0$ by \cite{quasiconvergence}*{Prop. 2.20}. \Cref{D:multi-scale_decomposition}(3) follows from Lem. 2.26 \emph{ibid}.\endnote{Consider an enumeration $\cP/{\sim^{\rm{i}}} = \{F_1,\ldots, F_k\}$. Any of the categories that we are considering can be written in the form $\cT = \langle \cC_{\preceq E_1}^{F_1},\ldots, \cC_{\preceq E_k}^{F_k}\rangle$, where the factors are allowed to be zero. \cite{quasiconvergence}*{Lem. 2.26} implies that each semiorthogonal factor is thick in $\cC^{F_i}$ for $i=1,\ldots, k$. We also have a semiorthogonal decomposition $\cC = \langle \cC^{F_1},\ldots, \cC^{F_k}\rangle$. Now, the condition of $X\in \cT$ is equivalent to $\Pi_i(X) \in \cC_{\preceq E_i}^{F_i}$ for $i=1,\ldots, k$. However, $\Pi_i(X\oplus Y) = \Pi_i(X)\oplus \Pi_i(Y)$ and so thickness of $\cT$ follows from thickness of $\cC_{\preceq E_i}^{F_i}$ for each $i=1,\ldots, k$. 
    
    The fact that $\Pi_i(X\oplus Y) = \Pi_i(X)\oplus \Pi_i(Y)$ as follows. Consider the canonical filtrations $X_n\to X_{n-1}\to \cdots \to X_1 = X$ and $Y_{n}\to Y_{n-1}\to\cdots Y_1 = Y$. Taking the sum of these filtrations we have $X_n\oplus Y_n \to X_{n-1}\oplus Y_{n-1}\to \cdots \to X_1\oplus Y_1 = X\oplus Y$. Since 
    \[
        \Cone(X_{i+1}\oplus Y_{i+1} \to X_i \oplus Y_i) = \Cone(X_{i+1}\to X_i) \oplus \Cone(Y_{i+1} \to Y_i)
    \] 
    by \cite{stacks-project}*{\href{https://stacks.math.columbia.edu/tag/05QS}{Tag 05QS}}, the result follows since, for instance, $\Pi_i(X) = \Cone(X_{i+1}\to X_i)$.} Finally, the first part of \Cref{D:multi-scale_decomposition}(4) follows from the fact that $\gr_{v_i}(\cC_\bullet) \ne 0$ for each $i=1,\ldots, n$, since it admits a stability condition and \eqref{E:vertex_category_containment}. The second part is a combination of Prop. 2.20 and Thm. 2.29 \emph{ibid}.

    Next, we prove (1) of the proposition. \Cref{D:convergence_conditions}\eqref{I:marked_line_convergence_1} is by \Cref{L:convergenceinrmscbar}. To prove \eqref{I:cosh_bound} and \eqref{I:log_central_charge}, recall that $\cC_{\le v_i}^{\mathrm{ss}}$ consists of the limit semistable objects $E$ such that $E\sim P_i$.\endnote{By definition \cite{quasiconvergence}*{Thm. 2.30}, the semistable objects for $\sigma_{v_i}$ are the essential image of the limit semistable objects in $\cC_{\le v_i} = \cC_{\preceq P_i}^{P_i}$. The objects sent to zero are precisely those in $\cC_{\prec P_i}^{P_i} = \cC_{<v_i}$.} Consider such an $E$. For any $\delta>0$, there exists $s_0$ such that $s\ge s_0\Rightarrow \lvert \phi^+_s(E) - \phi^-_s(E)\rvert < \delta$. For $t>0$ and $\delta < 1/t$, one has $\lvert c_s^t(E) - 1\rvert < (1-\delta^2)^{-1}$ which implies \eqref{I:cosh_bound}.\endnote{One has the Taylor series expansion $\cosh(x) = \sum_{k=0}^\infty \tfrac{x^{2k}}{2k!}$. For $\lvert x \rvert \le \delta < 1$, this gives $\cosh(x) < \sum_{k\ge 0}(x^2)^k \le (1-\delta^2)^{-1}$. Suppose $\phi_s^+(E) - \phi_s^-(E) < \delta$ where $\delta$ is such that $t\delta < 1$. Then,
    \[
        c_s^t(E) = \frac{\sum_i \cosh t(\theta_i - \phi_s(E))\cdot  m_s(\gr_i(E))}{m_s(E)} < \frac{1}{1-\delta^2}.
    \]} \eqref{I:log_central_charge} follows immediately from \cite{quasiconvergence}*{Thm. 2.30}.\endnote{The central charge of $\sigma_{i}$ on $E$ is $Z_{i}(E) = \lim_{s\to \infty} \exp(\ell_s(E/P_i))$. Furthermore, the phase of $E$ with respect to $\sigma_i$ is $\lim_{s\to\infty} \phi_s(E) - \phi_s(P_i)$.} Finally, if $\sigma_s$ is strongly quasi-convergent, \Cref{T:topology}\eqref{I:phasewidthunif} is immediate from \Cref{D:stronglyqconv}(3), while \Cref{T:topology}\eqref{I:stability} is by \Cref{D:stronglyqconv}(2).\endnote{Let $\epsilon > 0$ be given. Choose $s_0$ such that $s\ge s_0$ implies that quantity of \eqref{E:extracondition} is less than $\epsilon$. For any $E\in \cC_{\le v_i} = \cC_{\preceq P_i}^{P_i}$, taking $E = E'$ in \eqref{I:phasewidthunif} gives that the expression of \eqref{I:phasewidthunif} is less than $\epsilon$.} 
\end{proof}

\section{First properties of \texorpdfstring{$\Astab(\cC)$}{AStab(C)}}

\subsection{The manifold-with-corners conjecture}

\begin{defn}[Complete coarsening]
    We call a coarsening $\langle \cC_\bullet \rangle_{\Sigma} \rightsquigarrow \langle \cB_\bullet \rangle_{\Sigma'}$ \emph{complete} if the underlying contraction of dual trees is bijective on terminal comp\-onents, and the terminal comp\-onents of $\Sigma'$ are totally ordered with respect to $\leq_{i,0}$.
\end{defn}

A complete coarsening gives a semi\-orthogonal decomposition $\cC = \langle \gr_{v_1}(\cC_\bullet),\ldots,\gr_{v_n}(\cC_\bullet) \rangle$, where we have indexed the terminal components so that $v_1 \leq_{i,0} v_2 \leq_{i,0} \cdots \leq_{i,0} v_n$ with respect to $\Sigma'$.

In \cite{CP10}, a notion of gluing is introduced which allows for the construction of stability conditions on a category $\cC$ in the presence of a semiorthogonal decomposition $\cC = \langle \cC_1,\ldots, \cC_n\rangle$ and stability conditions $(\sigma_i)_{i=1}^n \in \prod_{i=1}^n \Stab(\cC_i)$ whose slicings satisfy certain Hom-vanishing conditions \cite{CP10}*{Thm. 3.6}. In \cite{quasiconvergence}*{Def. 3.7}, a notion of \emph{strong gluing} is introduced which requires slightly stronger Hom-vanishing conditions. If $\sigma\in \Stab(\cC)$ is constructed from strong gluing of $(\sigma_i)_{i=1}^n \in \prod_{i=1}^n \Stab(\cC_i)$ then for each $\phi \in \bR$, we have $\cP(\phi) = \bigoplus_{i=1}^n \cP_{\sigma_i}(\phi)$ \cite{quasiconvergence}*{Lem. 3.5}.

\begin{defn}[Admissible augmented stability condition]\label{D:admissible_point}
A point $\langle \cC_\bullet | \ell_\bullet \rangle_\Sigma \in \Astab(\cC)$ is \emph{admissible} if its underlying multi-scale decomposition is admissible, and the following conditions hold:
\begin{enumerate}
    \item For any complete coarsening $\langle \cC_\bullet \rangle_{\Sigma} \rightsquigarrow \langle \cB_\bullet \rangle_{\Sigma'}$, there exist lifts of the stability con\-ditions $\sigma_{v_i} \in \Stab(\gr_{v_i}(\cC_\bullet))/\bC$, denoted $\sigma_{i}$, for each $i=1,\ldots, n$ such that the collection $(\sigma_i)_{i=1}^n \in \prod_{i=1}^n \Stab(\gr_{v_i}(\cC_\bullet))$ is strongly gluable.

    \item For any two complete coarsenings,
    and any $u,v \in \Gamma(\Sigma)_{\rm{term}}$ with $u \leq_{1,\infty} v$, the functor $\gr_v(\cC_\bullet) \to \gr_u(\cC_\bullet)$ obtained by composing the inclusion $\gr_v(\cC_\bullet) \hookrightarrow \cC$ of the first semi\-orthogonal decomposition and the projection $\cC \to \gr_u(\cC_\bullet)$ from the second semiorthogonal decomposition is mass-bounded in the sense of \Cref{D:boundedness}.\endnote{Note that this condition is tautologically true if $\leq_{i,0}$ is already a total ordering for the original multi-scale decomposition. That's because any contraction that is bijective on terminal components will give the same total ordering and hence the same semiorthogonal decomposition.}
\end{enumerate}
We denote the subset of admissible augmented stability conditions by $\Astab(\cC)^{\rm{adm}}$. One has $\Stab(\cC)/\bC \hookrightarrow \Astab(\cC)^{\rm{adm}} \subset \Astab(\cC)$.
\end{defn}

\begin{prop} \label{P:admissible_points_in_closure}
    Any admissible point of $\Astab(\cC)$ lies in the closure of $\Stab(\cC)/\bC$ in $\Astab(\cC)$.
\end{prop}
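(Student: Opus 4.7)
The plan is to realize every admissible $\sigma = \langle \cC_\bullet | \ell_\bullet \rangle_\Sigma$ as the limit of a strongly quasi-convergent path in $\Stab(\cC)/\bC$ and then invoke \Cref{P:quasiconvconv}(2). Choose a complete coarsening $\langle \cC_\bullet \rangle_\Sigma \rightsquigarrow \langle \cB_\bullet \rangle_{\Sigma'}$, which exists by \Cref{L:admissiblecontractions} since $\langle \cC_\bullet \rangle_\Sigma$ is admissible; after reindexing so that $v_1 \leq_{i,0} \cdots \leq_{i,0} v_n$, this yields a semiorthogonal decomposition $\cC = \langle \gr_{v_1}(\cB_\bullet), \ldots, \gr_{v_n}(\cB_\bullet) \rangle$. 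Using the equivalences $\gr_{v_i}(\cB_\bullet) \simeq \gr_{f^\dagger(v_i)}(\cC_\bullet)$ of \Cref{D:multiscaleSODcoarsening}(b), admissibility condition \Cref{D:admissible_point}(1) produces strongly gluable lifts $\sigma_i \in \Stab(\gr_{v_i}(\cB_\bullet))$ of the graded stability conditions $\sigma_{v_i}$.

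To encode the multi-level geometry of $\Sigma$, assign to each terminal $v_i$ a height vector $h(v_i) \in \bZ^{\ell}$, where $h^k(v_i)$ records the level at which the path from $v_i$ to the root first descends below level $k$, normalized so that $h(v_1)=0$. For a parameter $s>0$, define twisted stability conditions $\sigma_{i,s} := s \cdot \alpha(v_i) \cdot \sigma_i \in \Stab(\gr_{v_i}(\cB_\bullet))$, where $\alpha(v_i) \in \bC$ is a linear combination of the $h^k(v_i)$ whose imaginary part separates different levels and whose real part is tuned so that the resulting ratios of central charges match the normalized periods $\mathfrak{p}(v_i,v_j)$ prescribed by $\Sigma$. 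For $s$ sufficiently large the semiorthogonality axiom \Cref{D:multiscale_decomposition}(2), together with the strong gluability of $(\sigma_i)$, implies the hypotheses of \cite{quasiconvergence}*{Lem.~3.5}, so one obtains a well-defined path $\sigma_s \in \Stab(\cC)/\bC$ such that $\cP_{\sigma_s}(\phi) = \bigoplus_i \cP_{\sigma_{i,s}}(\phi)$.

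It remains to verify that $s \mapsto \sigma_s$ is strongly quasi-convergent in the sense of \Cref{D:stronglyqconv} and has limit $\sigma$. The asymptotic indexing set consists of $P_i \in \gr_{v_i}(\cB_\bullet)$ taken to be $\sigma_{v_i}$-stable; the equivalence relation $\sim^{\mathrm{i}}$ on $\{P_i\}$ corresponds precisely to incomparability under $\leq_{1,\infty}$ among the $v_i$, so the categories $\cC_{\preceq P_i}^{P_i}$ and $\cC_{\prec P_i}^{P_i}$ of \cite{quasiconvergence}*{Thm.~2.37} match $\cC_{\le v_i}$ and $\cC_{<v_i}$, and the induced stability conditions on $\gr_{v_i}(\cC_\bullet)$ coincide with $\sigma_{v_i}$. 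Part (1) of \Cref{D:stronglyqconv} follows directly from the definition of the height vectors $h(v_i)$.

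The hard part is establishing parts (2) and (3) of \Cref{D:stronglyqconv}, which is where the full force of admissibility comes in. For an object $E \in \gr_{v_i}(\cC_\bullet)$, lifting $E$ to a $\sigma_s$-almost-stable representative in $\cC_{\le v_i}$ is subtle because $\sigma_s$ is built from a possibly different complete coarsening than the one witnessing $E$. Condition \Cref{D:admissible_point}(2) provides mass-bounded projection functors $\gr_v(\cC_\bullet) \to \gr_u(\cC_\bullet)$ between graded pieces associated to any two complete coarsenings, and these uniform mass bounds are exactly what is needed to propagate $\sigma_{v_i}$-stability of $E$ to $\sigma_s$-stability of some lift for all $s$ large, and to control the error in \eqref{E:extracondition} uniformly in $E$. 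Once strong quasi-convergence of $\sigma_s$ with limit $\sigma$ is established, \Cref{P:quasiconvconv}(2) shows $\sigma_s \to \sigma$ in $\Astab(\cC)$, so $\sigma$ lies in the closure of $\Stab(\cC)/\bC$.
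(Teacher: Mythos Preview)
Your overall strategy---construct a glued path $\sigma_s$ in $\Stab(\cC)/\bC$ and show it converges to $\sigma$---matches the paper's, but the specific construction of the path has a genuine gap when $\Sigma$ has more than two levels. You define $\sigma_{i,s} := (s\cdot\alpha(v_i))\cdot\sigma_i$ with $\alpha(v_i)\in\bC$ a fixed complex number, so $\ell_s(P_i)-\ell_s(P_j)=s\bigl(\alpha(v_i)-\alpha(v_j)\bigr)+O(1)$ grows linearly in $s$ for every pair with $v_i\neq v_j$. All ratios $\bigl(\ell_s(P_i)-\ell_s(P_j)\bigr)/\bigl(\ell_s(P_k)-\ell_s(P_l)\bigr)$ then converge to nonzero constants, which by the proof of \Cref{T:spaceconstruction} forces the limiting marked multiscale line to have only two levels. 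Thus for a deeper $\Sigma$ your path cannot satisfy \Cref{D:convergence_conditions}\eqref{I:marked_line_convergence_1}, let alone \Cref{D:stronglyqconv}(1). No choice of height vectors $h(v_i)$ can repair this while the dependence on $s$ remains linear; one needs hierarchically separated growth rates (e.g.\ different powers of $s$, or $s$ versus $\log s$) to distinguish levels.

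The paper avoids this by induction on the number of levels via the descendent decomposition (\Cref{const:descendent}): it treats only the two-level case directly, using the path $c_j(s)=n_j s+(j-1)\log(s)\,i$ so that imaginary parts diverge (making gluing eventually possible via \cite{quasiconvergence}*{Lem.~3.5}) while real parts dominate (recovering the prescribed $\mathfrak{p}$). Convergence is then checked against the criteria of \Cref{T:topology} directly, rather than via \Cref{P:quasiconvconv}. Your appeal to \Cref{D:admissible_point}(2) for \Cref{D:stronglyqconv}(2)--(3) is also left at the level of an assertion; the paper's inductive reduction makes these verifications routine because at each stage one is working with a strongly glued stability condition, so every object has an explicit HN filtration assembled from the factors, and one can take $E'=\Pi_j(E)$ as the required lift.
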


\begin{proof}
    We will show that every $\sigma = \langle \cC_\bullet|\sigma_\bullet\rangle_\Sigma$ such that $\Gamma(\Sigma)$ has two levels is the limit of a path $\sigma_s:[0,\infty)\to \Stab(\cC)/\bC$; the general case follows from induction. The curve $\Sigma$ has $n \ge 2$ nodes connecting the root component to the terminal components $v_1,\ldots,v_n$. Let the indices $0<i_1<i_2<\cdots<i_k = n$ be as in \Cref{const:SODfrommulti-scale}. Each semiorthogonal factor $\cC_{\le v_{i_a}}$ has an admissible filtration $\cC_{\le v_{i_{a-1} + 1}}\subsetneq \cdots \subsetneq \cC_{\le v_{i_a}}$ which we re-index as $\cC_{\le u_1}\subsetneq \cdots \subsetneq \cC_{\le u_r}$. 
    
    By definition, $\sigma$ contains the data of $\sigma_{u_j} \in \Stab(\gr_{u_j}(\cC_\bullet)/\bC$ for each $j=1,\ldots, r$. Let $\cC_{u_1} = \cC_{\le u_1}$ and $\cC_{u_j} = {}^\perp \cC_{u_{j-1}}\cap \cC_{\le u_j}$ for $j=2,\ldots, r$. This gives a semiorthogonal decomposition $\cC = \langle \cC_{u_1},\ldots, \cC_{u_r}\rangle$ such that the composite functor $\cC_{u_j}\hookrightarrow \cC_{\le u_j}\to \cC_{\le u_j}/\cC_{\le u_{j-1}}$ is an equivalence; thus, we regard $\sigma_{v_j}$ as an element of $\Stab(\cC_{v_j})/\bC$. Repeating this for each $a=1,\ldots, k$, we obtain a semiorthogonal decomposition $\cC = \langle \cC_1,\ldots, \cC_n\rangle$ with stability conditions $\sigma_i \in \Stab(\cC_i)/\bC$ for all $i=1,\ldots, n$.



    Next, choose a configuration of complex numbers $(z_1,\ldots, z_n)\in \bC^n$ which lifts the config\-uration of nodes, i.e. such that for each $i\ne j$ one has $\mathfrak{p}(v_i,v_j) = (z_j - z_i)/\lvert z_j-z_i\rvert$. For each $j$ such that  $i_{a-1} < j \le i_a$, we let 
    \[
        c_j(s) = z_j\cdot s + (j - i_{a-1} - 1)\log(s)\cdot i. 
    \]
    Regarded as a path in $\bC^n/\bC$, the limit of $(c_1(s),\ldots, c_n(s))$ is an element of $\cA_n^{\bR}$ with underlying multi-scale line $\Sigma$. For each $j=1,\ldots, n$, let $P_j$ denote a collection of $\sigma_j$-stable objects of $\cC_j$ such that $v(P_j)$ is a basis of $\gr_{v_j}(\Lambda)_{\bQ}$. For each $j=1,\ldots, n$, and $s\ge 0$, there is a unique lift of $\sigma_j$ to $\widetilde{\sigma}_j(s)\in \Stab(\cC_{v_j})$ given by putting the centroid of $\ell(P_{v_j})$ at $c_{j}(s)$. Because $i<j$ implies $\lim_{s\to\infty} \Im(c_j(s) - c_i(s)) = \infty$, it follows that there is $\sigma_s \in \Stab(\cC)$ strongly glued from $(\widetilde{\sigma}_1(s),\ldots, \widetilde{\sigma}_n(s))$ for all $s\gg0$.\endnote{See the argument of \cite{quasiconvergence}*{Lem. 3.11}.} The reader may verify that $\sigma_s$ converges to $\sigma = \langle \cC_\bullet|\sigma_\bullet\rangle_\Sigma$ with respect to the topology of \Cref{T:topology}.\endnote{We verify the conditions of \Cref{T:topology}. This involves verifying \Cref{D:convergence_conditions} first: \Cref{D:convergence_conditions}(1) is a consequence of the fact that $\lim_{s\to\infty} (c_1(s),\ldots, c_n(s))$ has underlying multi-scale line $\Sigma$ and using \Cref{P:addingpointsconvergence}. \Cref{D:convergence_conditions}(2) is the most involved criterion we have to check: we consider the case where $E\in \cC_{\le v_n}^{\rm{ss}}$. For simplicity, we assume that $i<j$ implies that $\mathfrak{p}(v_i,v_j) = 1$. In this case, $E$ has a canonical filtration with sub-quotients $E_i\in \cC_i$ corresponding to the semiorthogonal decomposition $\cC = \langle \cC_1,\ldots, \cC_n\rangle$, where $E_n$ is $\sigma_{n}(s)$-semistable for all $t\gg0$. On the other hand, for $1\le j \le n-1$, $E_j$ has a $\sigma_j(s)$-HN filtration with subquotients $\{E_{j,p}\}$ which are independent of $s$ for all $s\gg0$. It follows that 
    \begin{equation}
    \label{E:coshboundequation}
        c_s^t(E) = \frac{1}{\sum_{j,p}m_s(E_{j,p})}\left(\sum_{j,p} m_s(E_{j,p})\cosh t(\phi_s(E_{j,p})-\phi_s(E)) \right)
    \end{equation}
    First, consider the summand $\cosh t(\phi_s(E_n) - \phi_s(E))$. We claim that $\phi_s(E_n) - \phi_s(E) \to 0$ as $s\to\infty$. Note that here $\phi_s(E)$ refers to the average phase function so that
    \begin{align*}
        \lvert \phi_s(E) - \phi_s(E_n)\rvert  = \left\lvert \frac{1}{\sum m_s(E_{j,p})}\left(\sum_{j,p}\phi_s(E_{j,p})m_s(E_{j,p}) - m_s(E_{j,p})\phi_s(E_n)\right) \right\rvert.
    \end{align*}
    Let us consider the contribution of a summand corresponding to indices $(j,p)$ where $j<k$: 
    \begin{equation}
    \label{E:coshboundcomputation}
        \left\lvert \frac{m_s(E_{j,p})}{\sum_{j,p} m_s(E_{j,p})} (\phi_s(E_{j,p}) - \phi_s(E_n))\right\rvert \le \left\lvert \frac{m_s(E_{j,p})}{m_s(E_n)}(\phi_s(E_{j,p}) - \phi_s(E_n))\right\rvert.
    \end{equation}
    Because $\ell_s(E_k/E_{j,p})/(1+\lvert \ell_s(E_k/E_{j,p})\rvert) \to 1$, the right-hand term of \eqref{E:coshboundcomputation} tends to zero as $s\to \infty$. Consequently, the only remaining term is the $j=n$ term, which is $\phi_s(E_n)m_s(E_n) - m_s(E_n)\phi_s(E_n)$, and hence zero. So, we have proved that $\lim_{s\to\infty} \phi_s(E_n) - \phi_s(E) = 0$. Returning to our study of $c_s^t(E)$, we see that the $j = n$ term in \eqref{E:coshboundequation} tends to $1$ as $s\to\infty$, using the fact that $\lim_{s\to\infty} m_s(E_{j,p})/m_s(E_n) = 0$ for all $j\ne n$. It remains to see that the other terms do not contribute. Fix some $E_{j,p} =:F$ for $j<n$. The relevant term is 
    \begin{align}
    \label{E:coshboundfinal}
        \left\lvert \frac{m_s(F)}{m_s(E_n)}\cosh t(\phi_s(F) - \phi_s(E))\right\rvert &\le \left\lvert \frac{m_s(F)}{m_s(E_n)} \cosh t(\phi_s(F) - \phi_s(E_n))\right \rvert \\
        & = \left\lvert \frac{m_s(F)}{2m_s(E_n)}(e^{t(\phi_s(F) - \phi_s(E_n))} + e^{-t(\phi_s(F) - \phi_s(E_n))} \right \rvert 
    \end{align}
    By construction of the path, $\phi_s(F) - \phi_s(E_n) = C\log(s)$ for $C\in \bZ$ so that $e^{\pm t(\phi_s(F) - \phi_s(E_n))} = s^{\pm tC}$. On the other hand, $m_s(F)/m_s(E_n) = e^{\kappa s}$ for some $\kappa \ne 0$. It follows that \eqref{E:coshboundfinal} tends to zero. This implies \Cref{D:convergence_conditions}(2a). Along the way, we have proven enough to observe (2b): for all $E\in \cC_{\le v_n}^{\rm{ss}}$ we have $\lim_{s\to\infty} \ell_s(E/P_n) = \lim_{s\to\infty} \ell_s(E_k/P_n) = \ell_{\sigma_{n}}(E_n/P_{v_n})$. It remains to verify \Cref{T:topology}(2),(3). The idea here is that given any $E \in \cC_{\le v_n}$, it is equivalent to the object of $\cC_n = \cC_{\le v_n}/\cC_{<v_n}$ that is its projection $\Pi_n(E)$ to $\cC_n$ using the functors of the semiorthogonal decomposition. Then by construction of the path, for all $s\gg0$ (independent of $E$) we have that $\vec{d}_{E,\Pi_n(E)}(\sigma,\sigma_s) = 0$. In particular, $\vec{d}(\sigma,\sigma_s) = 0$ for all $s \gg 0$. This gives \Cref{T:topology}(2). Similarly, $E \in \cC_{\le v_n}$ is $\sigma_{v_n}$-semistable if and only if $\Pi_n(E)$ is $\sigma_{n}(s)$-semistable for all $s\gg0$, which gives \Cref{T:topology}(3).
    }
\end{proof}

\begin{conj}[Manifold-with-corners]
\label{conj:manifoldwithcorners}
    Let $\cC$ be a stable dg-category, let $\sigma = \langle \cC_\bullet | \ell_\bullet \rangle_\Sigma \in \Astab(\cC)$ be an admissible point, and let $P_1,\ldots,P_n \in \cC$ be $\sigma$-stable objects, where $n=\rank(\Lambda)$, such that the vectors $\ch(P_i)$ among $P_i$ with $\dom(P_i)=v$ span $\Lambda_v \otimes \bQ$. Then there is an open neighborhood $U$ of $\sigma$ such that the map $\ell_{(-)}(P_1,\ldots,P_n): U \to \rmscbar_n$ is a homeomorphism between $U$ and an open subset of $\rmscbar_n$.
\end{conj}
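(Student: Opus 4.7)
The plan is to construct a local inverse to $\ell_{(-)}(P_1,\ldots,P_n) : U \to \rmscbar_n$ near $\sigma$ by combining the gluing construction of \cite{CP10} (or its refinement, strong gluing from \cite{quasiconvergence}) with a recursive application of the generic case \Cref{T:genericmanifoldwithcorners} to each graded piece $\gr_v(\cC_\bullet)$. Continuity of $\ell_{(-)}(P_1,\ldots,P_n)$ itself is immediate from \Cref{D:weak_topology}, since $\sigma$-stable objects are $\infty$-well-placed and remain so on a neighborhood of $\sigma$, so the essential content is the construction of the inverse and verification of continuity.

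Given $x \in \rmscbar_n$ near $\ell_\sigma(P_1,\ldots,P_n)$, the underlying multiscale line $\Sigma'$ has dual graph given by a contraction $f : \Gamma(\Sigma) \twoheadrightarrow \Gamma(\Sigma')$ whose signature refines that of $\Sigma$ (possibly introducing nonzero imaginary or real parts to previously equal periods). By \Cref{L:admissiblecontractions}, $f$ together with the signature of $\Sigma'$ lifts uniquely to a coarsening $\langle \cC_\bullet\rangle_\Sigma \rightsquigarrow \langle \cB_\bullet \rangle_{\Sigma'}$ with $\langle \cB_\bullet\rangle_{\Sigma'}$ also admissible. The marked points of $x$ on each terminal $\Sigma'_v$ prescribe small perturbations of the central charges on $\gr_v(\cB_\bullet) \cong \gr_{f^\dagger(v)}(\cC_\bullet)$, and by induction on $\rank(\Lambda)$ applied to the descendent decomposition (\Cref{const:descendent}), we lift these perturbations to augmented stability conditions on each $\gr_v(\cB_\bullet)$. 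Condition (1) of \Cref{D:admissible_point} provides strongly gluable lifts to $\Stab(\gr_{v_i}(\cB_\bullet))$ for each complete coarsening of $\langle \cB_\bullet\rangle_{\Sigma'}$, and strong gluing then assembles these into an augmented stability condition $\tau = \langle \cB_\bullet | \tau_\bullet \rangle_{\Sigma'}$ with $\ell_\tau(P_i) = q_i$ (the $i$-th marked point of $x$). The latter identity uses that the $\ch(P_i)$ with $\dom(P_i)=v$ span $\Lambda_v \otimes \bQ$, which rigidifies the $\bC$-action on each $\Stab(\gr_v(\cB_\bullet))/\bC$ relative to the prescribed marked points.

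To verify $\tau \to \sigma$ as $x \to \ell_\sigma(P_1,\ldots,P_n)$ in the topology of \Cref{T:topology}: the weak topology condition follows from the continuity of Bridgeland deformations on each $\gr_v(\cC_\bullet)$; the directed distance bound $\vec{d}(\sigma,\tau) \to 0$ requires estimating $\phi_\sigma^\pm(E/P_v) - \phi_\tau^\pm(E'/P_v)$ and $\log m_\sigma(E/P_v) - \log m_\tau(E'/P_v)$ uniformly over $E \in \cC_{\leq v} \setminus \cC_{<v}$, which is where condition (2) of \Cref{D:admissible_point} enters: the mass-boundedness of the mutation functors connecting the two semiorthogonal decompositions implied by complete coarsenings controls how the mass function transforms under the passage to the coarser decomposition. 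Finally, the preservation of stability for $\sigma$-stable objects under small perturbations follows from the standard openness argument for semistability combined with strong gluing, which guarantees that projections onto the $\tau_v$-heart of $\gr_v(\cB_\bullet)$ remain semistable of the same phase.

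The main obstacle I expect is the last paragraph, and specifically the construction of strong gluing in the second paragraph when $\langle \cC_\bullet \rangle_\Sigma$ itself is non-generic, i.e.\ $\leq_{i,0}$ is only a partial order on $V(\Sigma)_{\rm{term}}$. In this case, passing to a complete coarsening involves not just ordering but genuine mutation, and the standard gluing of \cite{CP10} applies to a \emph{single} semiorthogonal decomposition, whereas here we need to glue across many semiorthogonal decompositions arising from different complete coarsenings and verify that the resulting $\tau$ is independent of the chosen coarsening. This suggests that the conjecture requires the construction of a gluing theorem that takes as input a multiscale decomposition together with stability conditions on the pieces whose phases satisfy appropriate compatibility, rather than only a semiorthogonal decomposition; such a construction -- conspicuously absent from the current literature -- is precisely what \Cref{T:genericmanifoldwithcorners} exploits in the generic case and what is missing in the non-generic admissible case. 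Establishing this refined gluing, and verifying that the gluing parameters depend continuously on $x \in \rmscbar_n$ through the coarsening structure, constitutes the essential technical content of the full conjecture.
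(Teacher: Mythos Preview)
This statement is a \emph{conjecture} in the paper, not a theorem; the paper does not prove it. The paper establishes only the special case of generic admissible points (\Cref{T:genericmanifoldwithcorners}), and explicitly remarks, immediately after stating the conjecture, that ``in a neighborhood of a non-generic boundary point, however, the conjecture is predicting many more stability conditions than arise from this construction. There should be a more general kind of gluing construction.'' So there is no proof in the paper against which to compare your proposal.

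That said, your final paragraph is exactly right and aligns with the paper's own assessment. The obstruction you identify --- that the gluing construction of \cite{CP10} takes as input a single semiorthogonal decomposition, whereas a non-generic admissible multiscale decomposition has many complete coarsenings related by mutation, and one must build a stability condition compatible with all of them simultaneously --- is precisely the missing ingredient the authors flag. Your earlier paragraphs sketch what one would do \emph{if} such a refined gluing existed, but do not supply it; in particular, the appeal to \Cref{L:admissiblecontractions} only produces the coarsened multiscale decomposition $\langle \cB_\bullet\rangle_{\Sigma'}$, and condition (1) of \Cref{D:admissible_point} only guarantees gluability along each complete coarsening separately, not that the glued stability conditions patch together into a single $\tau$ independent of the chosen coarsening. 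So your proposal is an honest outline of why the conjecture is plausible, together with a correct identification of the gap, rather than a proof.
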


We will see in \Cref{T:genericmanifoldwithcorners} below that at generic admissible points of $\Astab(\cC)$, the gluing constr\-uction of \cite{CP10} suffices to prove \Cref{conj:manifoldwithcorners}. In a neighborhood of a non-generic boundary point, however, the conjecture predicts many more stability conditions than arise from this construction. To prove this conjecture, it may be necessary to develop a more general kind of gluing construction.

\begin{prop} \label{P:manifold_with_corners_implies_boundedness}
    \Cref{conj:manifold_with_corners_simplified} implies \Cref{conj:boundedness}.
\end{prop}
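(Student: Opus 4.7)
My approach is to translate the conclusion into an assertion about bounded subsets of the moduli stack $\cM$, for which I can use the local manifold-with-corners structure furnished by \Cref{conj:manifold_with_corners_simplified}. By \Cref{L:simplify_mass_hom_bound}(1) and \Cref{P:mass_hom_implies_bounded_functors}, the mass-Hom bound is equivalent to the functor $\RHom(G, -) : \cC \to \DCoh(k)$ being mass-bounded for a classical generator $G$. The associated map $\pi : \cM \to \Perf$ is of finite presentation (as used in the proof of $(1) \Rightarrow (2)$ in \Cref{T:moduli_spaces}), so this in turn is equivalent to condition $(2)$ of \Cref{T:moduli_spaces}: for each $C > 0$, the set of finite type points $F \in |\cM|$ with $m_\sigma(F) \leq C$ and $|\phi^\pm_\sigma(F)| \leq C$ is bounded in $|\cM|$. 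Thus I reduce to deducing this boundedness from \Cref{conj:manifold_with_corners_simplified}.

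I argue by contradiction: suppose such an unbounded sequence $\{F_n\}$ exists. The support property of \Cref{D:locally_constant_stability_condition}(b) bounds the classes of HN factors, and the phase-width bound controls HN length, so $\ch(F_n) \in \Lambda$ takes only finitely many values; pass to a subsequence with $\ch(F_n) = v$ constant. From $\{F_n\}$ I would construct a sequence $\{\sigma_n\} \subset \Stab(\cC)/\bC$ (obtained by acting on $\sigma$ via carefully chosen elements of $\GL_2^+(\bR)$ and tilting) which converges in $\Astab(\cC)$ to an augmented stability condition $\tau = \langle \cC_\bullet | \tau_\bullet \rangle_\Sigma$ whose multiscale structure encodes the different asymptotic scales present in the sequence. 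The construction is modeled on the extraction of boundary points from quasi-convergent paths as in \cite{quasiconvergence} and \Cref{P:quasiconvconv}; the role of the sequence $\{F_n\}$ is to witness the relevant scales through the asymptotic shapes of their HN filtrations. Since $\cC$ is smooth and proper, the factors $\cC_{\leq u} \subseteq \cC_{\leq v}$ of the limiting multiscale decomposition should be admissible inclusions by general adjoint existence results on smooth proper categories, so $\tau$ satisfies the hypotheses of \Cref{conj:manifold_with_corners_simplified}.

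Applying \Cref{conj:manifold_with_corners_simplified} at $\tau$, I obtain a $\tau$-stable basis $\{P_1, \dots, P_n\}$ and a local homeomorphism $\ell_{(-)}(P_1, \ldots, P_n) : U \to V \subset \rmscbar_n$ from a neighborhood $U$ of $\tau$. For $n$ large the $\sigma_n$ lie in $U$ and their images in $V$ converge. Combining the continuity of mass and phase on $U$ with the $\Theta$-completeness and $S$-completeness from \Cref{L:Theta_reductivity_S_completeness} and the constant class $\ch(F_n) = v$, the $F_n$ must eventually be parameterized by a bounded substack of $\cM_v$, contradicting the unboundedness hypothesis. The principal obstacle is constructing $\tau$ from the failing sequence and verifying its admissibility in the sense of \Cref{D:admissible_decomposition}; this requires a refinement of the quasi-convergence machinery that takes the mass-phase-bounded family $\{F_n\}$ as input and produces from it the scales of a multiscale line, together with an argument that the resulting limit categories are all admissible. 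Once this construction is in hand, translating local compactness of $\rmscbar_n$ through the homeomorphism yields the desired contradiction.
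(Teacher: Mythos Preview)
Your proposal has a substantial gap and takes a fundamentally different route from the paper. The paper does not work inside $\cC$ at all: given $E \in \cC$, it forms a new category $\tilde{\cC} = \langle A, \cC \rangle$ by adjoining an exceptional object $A$ with $\RHom_{\tilde{\cC}}(A, F) = \RHom_\cC(E, F)$. It then constructs two explicit glued paths $\sigma_1(r),\sigma_2(r)$ in $\Stab(\tilde{\cC})/\bC$, one from $\langle A, \cC \rangle$ and one from the mutated decomposition $\langle \cC', A \rangle$, and observes that both converge to the \emph{same} boundary point $\sigma_\infty$ with $\cB_{\leq v_1} = \langle A \rangle$ and $\mathfrak{p}(v_1, v_2) = 1$. \Cref{conj:manifold_with_corners_simplified} at $\sigma_\infty$ then forces $\sigma_1(r)$ and $\sigma_2(r)$ to lie in one component of $\Stab(\tilde{\cC})$ for $r \ll 0$, hence $d(\sigma_1(r),\sigma_2(r))<\infty$. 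An explicit mass computation---the HN filtration of a $\sigma$-stable $F$ with respect to $\sigma_2(r)$ contains copies of $A$ counted by $\dim H^\ast\RHom(E,F)$---converts this finite-distance bound directly into the mass-Hom inequality. The conjecture is invoked only in the soft form ``two glued stability conditions near the same boundary point are at finite distance.''

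Your argument, by contrast, tries to extract a boundary point $\tau \in \Astab(\cC)$ from an unbounded family $\{F_n\}$, and there are two genuine obstacles. First, your admissibility claim is false: not every thick subcategory of a smooth proper category is admissible, nor need its inclusion have even a one-sided adjoint. The paper itself exhibits the torsion complexes $\cT \subset \DCoh(\bP^1)$ as a thick subcategory whose inclusion admits neither adjoint (\Cref{L:nonadmissibleP1}), so there is no reason the $\tau$ you hope to produce would satisfy the hypothesis of \Cref{conj:manifold_with_corners_simplified}. Second, and more basically, you give no mechanism for producing $\tau$: the $F_n$ are mass-and-phase bounded for a \emph{fixed} $\sigma$, so there is no sequence $\sigma_n$ naturally diverging to the boundary, and you explicitly concede that this construction is the principal obstacle. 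The paper's trick of enlarging the category to $\tilde{\cC}$ sidesteps all of this by making the boundary point explicit and its adjoint hypothesis automatic, since $\langle A \rangle$ is generated by an exceptional object.
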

\begin{proof}
    We use a very special case of the manifold-with-corners conjecture. Let $\cC$ be a smooth and proper dg-category over a field. For any $E \in \cC$, let $\tilde{\cC} := \langle A, \cC \rangle$ be the stable idempotent completion of the dg-category containing $\cC$ as a full subcategory and with one additional exceptional object $A$ such that $\RHom_{\tilde{\cC}}(\cC,A) = 0$ and $\RHom_{\tilde{\cC}}(A,F) = \RHom_\cC(E,F)$ for any $F \in \cC$. $\tilde{\cC}$ is smooth and proper, because it is proper and has a semiorthogonal decomposition with smooth factors.

    Given a stability condition $\sigma$ on $\cC$, we construct two paths in $\Stab(\tilde{\cC})/\bC$ with the same limit-point in $\Astab(\tilde{\cC})$. First, using gluing theorem of \cite{CP10}, there is a unique stability condition $\sigma_1(r)$ on $\tilde{\cC}$ satisfying the following:
    \begin{enumerate}
    \item the $\sigma_1(r)$-stable objects are precisely the $\sigma$-stable objects in $\cC$ and $A$;
    \item $\logZ_{\sigma_1(r)}(F) = \logZ_\sigma(F)$ for any stable $F \in \cC$; and
    \item $\logZ_{\sigma_1(r)}(A) = r + i \pi \theta$ for some fixed $\theta \ll 0$.
    \end{enumerate}
    On the other hand, we can mutate $\tilde{\cC} = \langle A,\cC\rangle$ to $\tilde{\cC} = \langle \cC', A \rangle$, where $\cC \simeq \cC'$ via the mutation equivalence $F \mapsto \cofib(A \otimes \RHom(A,F) \to F)$. Let $\sigma'$ be the stability condition on $\cC'$ corresponding to $\sigma$ under this equivalence. Applying the gluing theorem again, we get another stability condition $\sigma_2(r)$ on $\tilde{\cC}$ satisfying the same conditions with $\sigma'$ in place of $\sigma$, except that $\theta \gg 0$.

    As $r \to -\infty$, both $\sigma_1(r)$ and $\sigma_2(r)$ approach the augmented stability condition $\sigma_\infty = \langle \cB_\bullet | \ell_\bullet \rangle_\Sigma$, where $\Sigma$ has one root and two terminal components $v_1,v_2$ with $\mathfrak{p}(v_1,v_2) = 1$, $\cB_{\leq v_1} = \Span\{A\}$, $\cB_{\leq v_2} = \tilde{\cC}$, and the stability condition on $\tilde{\cC} / \cB_{\leq v_1} \simeq \cC$ is $\sigma$.\endnote{Recall that there is a unique stability condition on $\cB_{\leq v_1} \cong \DCoh(k)$ up to the action of $\bC$.} Therefore for $r \ll 0$, \Cref{conj:manifold_with_corners_simplified} applied to the point $\sigma_\infty \in \Astab(\tilde{\cC})$ implies that $\sigma_1(r)$ and $\sigma_2(r)$ lie in a connected neighborhood of $\sigma_\infty$. In particular, they lie on the same path component of $\Stab(\tilde{\cC})$, so $d(\sigma_1(r),\sigma_2(r))<\infty$\endnote{This uses the manifold-with-corners conjecture in a very soft way. All that is needed that that the two glued stability conditions lie a finite distance away from each other!} --- see \Cref{fig:MWCdeformation}.

    \begin{figure}
        \centering
   \begin{tikzpicture}

  \def\xmin{-5}
  \def\xmax{0}
  \def\ymin{-3.5}
  \def\ymax{3.5}

  \begin{scope}
    \clip (\xmin,\ymin) rectangle (\xmax,\ymax);

    \fill[gray!20]
      (\xmin,\ymin) --
      plot[domain=\xmin:\xmax, samples=200, smooth, variable=\x] 
        ({\x}, {-ln(-(\x - 1))}) --
      (\xmax,\ymin) -- cycle;

    \fill[gray!20]
      (\xmin,\ymax) --
      plot[domain=\xmin:\xmax, samples=200, smooth, variable=\x] 
        ({\x}, {ln(-(\x - 1))}) --
      (\xmax,\ymax) -- cycle;

    \fill[blue!10]
      plot[domain=\xmin:\xmax, samples=200, smooth, variable=\x] 
        ({\x}, {ln(-(\x - 1))}) --
      plot[domain=\xmax:\xmin, samples=200, smooth, variable=\x] 
        ({\x}, {-ln(-(\x - 1))}) -- cycle;
  \end{scope}

  \draw[thick] (0,\ymin) -- (0,\ymax);

  \draw[thick, domain=\xmin:\xmax, samples=200, smooth, variable=\x] 
    plot ({\x}, {ln(-(\x - 1))});
    
  \draw[thick, domain=\xmin:\xmax, samples=200, smooth, variable=\x] 
    plot ({\x}, {-ln(-(\x - 1))});

  \draw[red, thick, domain=-4.5:0, samples=200, smooth, variable=\x]
    plot ({\x}, {2*ln(-(\x - 1))}); 

  \draw[red, thick, domain=-4.5:0, samples=200, smooth, variable=\x]
    plot ({\x}, {-2*ln(-(\x - 1))}); 

  \node[red] at (-1.6, 2.4) {$\sigma_1(r)$}; 
  \node[red] at (-1.6, -2.4) {$\sigma_2(r)$};

  \filldraw[black] (0,0) circle (2pt);
  \node[right] at (0,0) {$\sigma_\infty$};

  \begin{scope}[shift={(1.5,0)}] 
    \draw[fill=blue!10, draw=black] (0,0.4) rectangle +(0.5,0.4);
    \node[right] at (0.6,0.6) {Conjectural region};

    \draw[fill=gray!20, draw=black] (0,-0.4) rectangle +(0.5,0.4);
    \node[right] at (0.6,-0.2) {Glued regions};
  \end{scope}

\end{tikzpicture}
        \caption{The connected neighborhood of $\sigma_\infty$ used in the proof of \Cref{P:manifold_with_corners_implies_boundedness}. The gray glued regions are constructed using techniques from \cite{CP10}. By contrast, the existence of the blue ``conjectural'' region which allows us to deform $\sigma_1(r)$ to $\sigma_2(r)$ uses \Cref{conj:manifold_with_corners_simplified}.}
        \label{fig:MWCdeformation}
    \end{figure}

    Now, for any $\sigma$-stable object $F$ with phase in $(0,1]$, consider the exact triangle
    \[
    \RHom(A,F) \otimes A \to F \to F' \to
    \]
    Then $F$ is $\sigma_1(r)$-stable by definition, and the HN filtration of $F$ with respect to $\sigma_2(r)$ refines this exact triangle. More precisely, the HN factors of $F$ are $\Hom(A,F[i]) \otimes A[-i]$ and $F'$.\endnote{When we said $\theta \gg 0$ in the construction of $\sigma_2(r)$, it was so that the phase of $A[-i]$ is $>1$ for all $i$ such that $\Hom(A,F[i]) \neq 0$ for some $F \in \cC_{(0,1]}$.} It follows that
    \begin{align*}
        m_{\sigma_2(r)}(F) &= m_{\sigma_2(r)}(F')+\dim H^\ast(\RHom_{\tilde{\cC}}(A,F)) m_{\sigma_2(r)}(A) \\
        &= m_{\sigma_1(r)}(F) + e^r \dim H^\ast(\RHom_\cC (E,F)).
    \end{align*}
    Since $d(\sigma_1(r),\sigma_2(r))<\infty$, there is a constant $C_{E,r}>0$ such that $m_{\sigma_2(r)}(F) / m_{\sigma_1(r)}(F)<C_{E,r}$ for any such $F$. This gives $\dim H^\ast(\RHom_\cC (E,F)) < e^{-r} C_{E,r}' m_{\sigma_1(r)}(F) = e^{-r} C_{E,r}'m_\sigma(F)$, where $C_{E,r}' = (C_{E,r}+1)/C_{E,r}$.\endnote{We have 
    \[
        -m_{\sigma_1(r)}(F) < \frac{m_{\sigma_2(r)}(F)}{C_{E,r}} 
    \]
    and so
    \begin{align*}
        \dim H^*(\RHom_{\cC}(E,F) & = \left(m_{\sigma_2(r)}(F) - m_{\sigma_1(r)}(F)\right)e^{-r} \\
        &<e^{-r}(m_{\sigma_2(r)}(F) + m_{\sigma_2(r)}(F)/C_{E,r})\\
        & = e^{-r}\left(\frac{C_{E,r}+1}{C_{E,r}}\right)m_{\sigma_2(r)}(F).
    \end{align*}} By \Cref{L:simplify_mass_hom_bound}, we conclude that the original stability condition $\sigma$ on $\cC$ has a mass-Hom bound.
\end{proof}

\begin{prop}\label{P:mass_Hom_bound_implies_admissibility}
    If \Cref{conj:boundedness} holds, then for any smooth and proper $\cC$ over a field, a point $\langle \cC_\bullet | \sigma_\bullet \rangle_\Sigma \in \Astab(\cC)$ is admissible if and only if the underlying multi-scale decomp\-osition is admissible. In particular, \Cref{conj:boundedness} and \Cref{conj:manifoldwithcorners} imply \Cref{conj:manifold_with_corners_simplified}.
\end{prop}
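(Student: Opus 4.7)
The proof splits into the equivalence claim and the ``in particular'' corollary. The direction ``admissible point implies admissible MSD'' is immediate from \Cref{D:admissible_point}, which makes admissibility of the underlying MSD a prerequisite. The content is the converse: admissible MSD together with smooth-properness and the boundedness conjecture implies admissibility of the point.

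Assuming $\cC$ smooth and proper over $k$, the boundedness conjecture, and admissibility of $\langle\cC_\bullet\rangle_\Sigma$, I would first observe that each $\gr_v(\cC_\bullet)$ is smooth and proper, by descending induction on the level using the admissibility of the descendent decomposition (\Cref{C:admissibledesc}), the large-scale semiorthogonal decomposition (\Cref{const:SODfrommultiscale}), and the standard fact that admissible factors of smooth and proper $dg$-categories are smooth and proper. The boundedness conjecture then yields a mass-Hom bound for each $\sigma_v$, and \Cref{P:mass_hom_implies_bounded_functors} upgrades this to mass-boundedness of every exact $k$-linear functor between these graded categories. Specialising to the composition $\gr_v(\cC_\bullet) \hookrightarrow \cC \twoheadrightarrow \gr_u(\cC_\bullet)$ arising from two complete coarsenings gives condition (2) of \Cref{D:admissible_point}. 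For condition (1), each complete coarsening realises $\cC$ as a semiorthogonal decomposition of smooth and proper factors with mass-Hom bounds; the Gluing example of the preceding subsection produces a strongly gluable collection via \cite{quasiconvergence}*{Lem.~3.5} after replacing each $\sigma_{v_i}$ by $e^{i\lambda_i}\cdot\sigma_{v_i}$ for $\lambda_i$ sufficiently large, a choice permitted by the $\bC$-orbit freedom built into \Cref{D:admissible_point}(1).

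For the ``in particular'' claim, the hypothesis of \Cref{conj:manifold_with_corners_simplified} is adjointness of every inclusion $\cC_{\leq u} \subseteq \cC_{\leq v}$ for terminal $u,v$. Under smooth properness, this should be equivalent to admissibility of the MSD in the sense of \Cref{D:admissible_decomposition}: the forward direction follows from \Cref{L:admissiblevertexinclusion}, while the reverse --- propagating adjointness from terminal pairs to arbitrary pairs immediately below a common vertex --- I would argue by descending induction on the tree, using \Cref{L:admissiblecorrespondence} to translate adjoint-filtration data between $\cC_{\leq v}$ and $\gr_v(\cC_\bullet)$. Once MSD admissibility is in hand, the main part of the proof promotes the point to admissible and \Cref{conj:manifoldwithcorners} applies to yield the desired local homeomorphism. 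The main obstacle is precisely this last equivalence; everything else reduces to bounded-functor arguments and the Gluing example already in the text.
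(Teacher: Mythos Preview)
Your argument for the main equivalence is essentially the paper's, just more explicit. The paper's proof is two sentences: it invokes \cite{quasiconvergence}*{Lem.~3.5} for condition~(1), and for condition~(2) it notes that semiorthogonal factors of smooth and proper categories are smooth and proper, then applies \Cref{conj:boundedness} together with \Cref{P:mass_hom_implies_bounded_functors}. Your route through descending induction on the tree to establish smooth-properness of each $\gr_v(\cC_\bullet)$ is unnecessary once you observe that a complete coarsening already realises these as semiorthogonal factors of $\cC$, which is how the paper argues; but nothing you wrote is wrong.

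On the ``in particular'' clause you are more careful than the paper, which does not address it at all. You correctly observe that the hypothesis of \Cref{conj:manifold_with_corners_simplified} concerns only inclusions $\cC_{\leq u}\subseteq\cC_{\leq v}$ for \emph{terminal} $u,v$, while admissibility of the multiscale decomposition in \Cref{D:admissible_decomposition} is phrased for vertices immediately below a common vertex, which need not be terminal. Bridging these is a genuine step. Your proposed induction is plausible; one clean way to organise it is to first use the Serre functor on $\cC$ to upgrade ``left or right adjoint'' to ``admissible'' for terminal pairs, then propagate upward: once all terminal inclusions are admissible one obtains that each $\cC_{\leq v}$ for $v$ subjacent to the root is admissible in $\cC$ (hence smooth and proper), and then recurse into the descendent decompositions via \Cref{L:admissiblecorrespondence}. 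You are right to flag this as the point requiring care; the paper simply asserts the implication.
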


\begin{proof}
    Given a semiorthogonal decomposition $\cC = \langle \cC_1,\ldots,\cC_n \rangle$, for any collection of stability conditions $(\sigma_j)_{j=1}^n \in \prod_{j=1}^n \Stab(\cC_j)$, there exist real numbers $t_1<\cdots<t_n$ such that $(e^{i\cdot t_j}\cdot \sigma_j)_{j=1}^n$ is strongly gluable \cite{quasiconvergence}*{Lem. 3.5}, so \Cref{D:admissible_point}(1) holds. Any semiorthogonal factor of a smooth and proper category is also smooth and proper, so \Cref{conj:boundedness} and \Cref{P:mass_hom_implies_bounded_functors} imply \Cref{D:admissible_point}(2).
\end{proof}

\subsection{Proof of the manifold-with-corners conjecture at generic points}

In this section, we prove \Cref{conj:manifoldwithcorners} for generic (as in \Cref{D:terminology}) admissible elements of $\Astab(\cC)$. 

\begin{notn} 
\label{N:genmanifoldwithcorners}
We fix a generic admissible point $\sigma = \langle \cC_\bullet|\ell_\bullet\rangle_\Sigma$ and enumerate $V(\Sigma)_{\rm{term}}$ such that $v_1\le_{i,0} \cdots \le_{i,0} v_k$. Let $P_1,\ldots, P_n \in \cC$ be $\sigma$-stable objects; i.e., each $P_i \in \gr_v(\cC_\bullet) = \cC_{\le v}$ for some $v\in V(\Sigma)_{\rm{term}}$ and is $\sigma_v$-stable. We also require that $P_v = \{P_i:P_i\in \cC_{\le v}\}$ is a basis for $\gr_v(\Lambda) \otimes \bQ$ and let $r_v = \rk\gr_v(\Lambda)$. To simplify notation, we write $\ell(-) = \ell_{(-)}(P_1,\ldots P_n).$
\end{notn}

\begin{thm}[Generic manifold-with-corners]
\label{T:genericmanifoldwithcorners}
     Suppose $\sigma = \langle \cC_\bullet|\sigma_\bullet\rangle_\Sigma \in \Astab^{\rm{adm}}(\cC)$ is generic and consider $\{P_1,\ldots, P_n\}$ as in \Cref{N:genmanifoldwithcorners}. There exists an open neighborhood $U$ of $\sigma$ such that $\ell:U\to \Mmscbar_n^{\bR}$ is a homeomorphism onto an open subset of $\cA_n^{\bR}$.
\end{thm}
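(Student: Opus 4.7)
The plan is to construct a local inverse to $\ell$ near $\sigma$ by assembling, for each nearby point of $\rmscbar_n$, an augmented stability condition via the gluing theorem of \cite{CP10}. Since $\sigma$ is generic, iterated application of \Cref{const:SODfrommultiscale} upgrades the multiscale decomposition to a nested system of honest semiorthogonal decompositions: for every $v \in V(\Sigma)$, the descendant decomposition $\gr_v(\cC_\bullet) = \langle \cC_\bullet^v\rangle_{\Sigma_{\subseteq v}}$ is semiorthogonal at every level. The admissibility of $\sigma$ (\Cref{D:admissible_point}) guarantees that every complete coarsening yields a strongly gluable lift of the corresponding tuple of stability conditions. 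Since the $\sigma$-stable objects $\{P_i : \dom(P_i) = v\}$ form a basis of $\gr_v(\Lambda)_\bQ$ for each terminal $v$, Bridgeland's theorem furnishes a local homeomorphism between a neighborhood of $\sigma_v$ in $\Stab(\gr_v(\cC_\bullet))/\bC$ and an open subset of the corresponding space of central charges.

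The next step is to construct the putative inverse on a small open neighborhood of $\ell(\sigma) \in \rmscbar_n$. For such a $y$, the dual tree $\Gamma(y)$ is a coarsening of $\Gamma(\Sigma)$, and \Cref{L:admissiblecontractions} canonically promotes this contraction to a coarsening $\langle \cC_\bullet\rangle_\Sigma \rightsquigarrow \langle \cB_\bullet\rangle_{\Sigma_y}$. For each terminal vertex $v'$ of $\Gamma(y)$, the set $\{v \in V(\Sigma)_{\rm{term}} : v \leq_{1,\infty} f^\dagger(v')\}$ indexes a semiorthogonal decomposition of $\gr_{v'}(\cB_\bullet) \cong \gr_{f^\dagger(v')}(\cC_\bullet)$ by the descendant large-scale SOD. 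The marked points of $y$ lying on the component $\Sigma_{y,v'}$ specify, via the Bridgeland local homeomorphism, lifts $\sigma_v(y)$ of the $\sigma_v$'s for each such $v$. The gluing theorem \cite{CP10}*{Thm. 3.6} then assembles the $\sigma_v(y)$'s into a stability condition $\tau_{v'}(y)$ on $\gr_{v'}(\cB_\bullet)$, and collecting these across terminal $v'$ together with the multiscale decomposition $\langle \cB_\bullet\rangle_{\Sigma_y}$ produces an augmented stability condition $\sigma_y$.

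The remaining step is to check that $y \mapsto \sigma_y$ is continuous and provides a two-sided inverse to $\ell$ on some open $U \ni \sigma$. The identity $\ell(\sigma_y) = y$ is essentially by construction, since the $\tau_{v'}(y)$ are defined so that the $\ell_{\sigma_y}$-images of the $P_i$ reproduce the markings of $y$ and the underlying multiscale line of $\sigma_y$ is $\Sigma_y$. Injectivity of $\ell|_U$ follows from \Cref{P:logZ_functions} applied in each quotient $\gr_{v'}(\cB_\bullet)$ together with \Cref{L:coarseninguniquelydetermined}. Continuity of $\ell$ itself is immediate from \Cref{D:weak_topology}, while continuity of $y \mapsto \sigma_y$ is verified against the three clauses of \Cref{T:topology}: weak convergence is built into the construction, Bridgeland's local metric inside each $\Stab(\gr_v(\cC_\bullet))/\bC$ yields the directed-distance condition, and openness of the $\sigma_v$-stability locus of each $P_i$ provides the required stability statement. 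Openness of the image is then automatic.

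The principal obstacle is controlling the strong gluing uniformly over all coarsenings of $\Gamma(\Sigma)$ that can arise near $\sigma$: one must show that every contraction $\Gamma(\Sigma) \twoheadrightarrow \Gamma'$ compatible with \Cref{D:multiscaleSODcoarsening}(a) yields a strongly gluable tuple of lifts, and that the gluability window is open uniformly in $y$. Genericity reduces the problem to semiorthogonal (rather than filtered) gluing at every stage, admissibility provides strong gluability at the base point, and the finiteness of the set of possible coarsenings of $\Gamma(\Sigma)$ allows one to intersect the finitely many open neighborhoods produced by \cite{quasiconvergence}*{Lem. 3.5} to obtain a single $U$. A secondary bookkeeping issue is the independence of the construction from the implicit recursive ordering of gluings within nested levels, which follows from the uniqueness statements in the gluing theorem together with the recursive compatibility of descendant decompositions under coarsening.
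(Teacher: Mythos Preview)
Your overall strategy matches the paper's: build an inverse to $\ell$ near $\sigma$ by gluing along the semiorthogonal decomposition determined by each coarsening, then verify it is a two-sided inverse and continuous. The construction of $\sigma_y$ and the identity $\ell(\sigma_y)=y$ are essentially the content of the paper's \Cref{L:gluedexist}, and your remarks about uniform gluability over finitely many coarsenings are on point.

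The gap is in your injectivity step. Invoking \Cref{P:logZ_functions} does not work: that proposition says a prestability condition is determined by its $\ell$-function on \emph{all} of $\cC$, whereas the point $\ell(\tau)\in\rmscbar_n$ only records the values $\ell_\tau(P_1),\ldots,\ell_\tau(P_n)$. Likewise \Cref{L:coarseninguniquelydetermined} pins down the categories $\cB_{\le v}$ but says nothing about the stability conditions on the quotients. So from $\ell(\eta)=\ell(\sigma_y)$ alone you cannot conclude $\eta=\sigma_y$; you must rule out the possibility that $\eta$ has the same central charge on the $P_i$ but a different slicing. The paper handles this in \Cref{L:allglued}: one uses that $\eta\in W_\epsilon$ forces $\vec d(\sigma,\eta)<\epsilon$, hence each $P_i$ is $\eta$-stable (so $Z_\eta=Z_{\sigma_y}$ on a basis), and then the directed-distance bound together with $\tau_i\in\overline\Delta_\epsilon(\sigma_{v_i})$ gives $d(\cP_\eta,\cP_{\sigma_y})<1$, so Bridgeland's rigidity lemma \cite{Br07}*{Lem.~6.4} yields $\eta=\sigma_y$. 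This use of the slicing-distance bound coming from the \emph{topology} on $\Astab(\cC)$ (not just weak convergence) is the missing ingredient in your argument.

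A smaller point: for continuity of $\ell^{-1}$ the paper does not verify the three clauses of \Cref{T:topology} directly but instead shows that any convergent path $\gamma(t)\to\ell(\sigma_\infty)$ lifts to a strongly quasi-convergent path and then invokes \Cref{P:quasiconvconv}. Your direct-verification route is plausible, but the nontrivial clause is again \Cref{T:topology}\eqref{I:phasewidthunif}, and establishing it requires the same control over slicings that you omitted above.
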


Given $x,y\in \bC^n/\bC$, we write $x\approx_\epsilon y$ if when $\widetilde{x},\widetilde{y} \in \bC^n$ are the respective lifts of $x$ and $y$ with centroids at $0$, one has $\lvert \widetilde{x}_i - \widetilde{y}_i\rvert<\epsilon$ for all $i=1,\ldots, n$. For $z\in \bC^n$, let $\overline{\Delta}_\epsilon(z) := \{w\in \bC^n/\bC: z\approx_\epsilon w\}$. Note that $\overline{\Delta}_\epsilon(z)$ depends only on the class of $z$ in $\bC^n/\bC$.

\begin{lem}
\label{L:Brdeformation}
    Consider $\sigma \in \Stab(\cC)/\bC$ and $P_1,\ldots, P_n$ that are $\sigma$-stable such that $\{v(P_i)\}_{i=1}^n$ is a basis of $\Lambda_{\bQ}$. There exists an $\epsilon>0$ such that $\ell$ maps an open neighborhood of $\sigma$ biholomorphically onto $\overline{\Delta}_\epsilon(\ell(\sigma))$.
\end{lem}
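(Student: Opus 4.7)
The plan is to reduce this to Bridgeland's theorem \cite{Br07}*{Thm. 1.2}, which asserts that the central charge map $\Stab(\cC) \to \Hom(\Lambda,\bC)$ is a local homeomorphism and in fact endows $\Stab(\cC)$ with a canonical complex manifold structure. Since $\Stab(\cC)/\bC$ is open in $\Astab(\cC)$, we may take the neighborhood $U$ inside $\Stab(\cC)/\bC$, where both source and target are complex manifolds, and ``biholomorphic'' has its usual meaning.

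First I would establish that there is an open neighborhood $U_0$ of $\sigma$ inside $\Stab(\cC)/\bC$ on which every $P_i$ remains stable; this is standard since each locus $\{\tau : P_i \text{ is } \tau\text{-stable}\}$ is open. On $U_0$ every $P_i$ is stable, so $\ell_\tau(P_i) = \log|Z_\tau(P_i)| + i\pi\phi_\tau(P_i)$, where $Z_\tau$ denotes any local lift of $\tau$ to $\Stab(\cC)$. Since $Z_\tau(P_i) \in \bR_{>0} \cdot e^{i\pi\phi_\tau(P_i)}$ throughout $U_0$, a holomorphic branch of $\log Z_\tau(P_i)$ can be chosen that agrees with $\ell_\tau(P_i)$ at $\sigma$ and hence, by continuity of phase on the stable locus, throughout a (possibly smaller) connected neighborhood.

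Next, because $\{\ch(P_i)\}_{i=1}^n$ is a $\bQ$-basis of $\Lambda_\bQ$, evaluation at these classes defines a linear isomorphism $\mathrm{ev}: \Hom(\Lambda,\bC) \xrightarrow{\sim} \bC^n$. Composing Bridgeland's local homeomorphism $\Stab(\cC) \to \Hom(\Lambda,\bC)$ with $\mathrm{ev}$ and then with the componentwise holomorphic log constructed above gives a local biholomorphism from a lift of $U_0$ in $\Stab(\cC)$ to an open subset of $\bC^n$. This composition is $\bC$-equivariant, where $\bC$ acts on $\bC^n$ by simultaneous translation in all coordinates (since $z \cdot \sigma$ multiplies $Z$ by $e^z$ and hence translates each $\log Z(P_i)$ by $z$). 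Passing to quotients by the free $\bC$-action produces a local biholomorphism $U_0 \to \bC^n/\bC$ that coincides with $\ell$, because $\ell$ was designed to be $\bC$-equivariant in precisely this way.

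Finally, since a local biholomorphism is open, its image contains a neighborhood of $\ell(\sigma)$, so we may pick $\epsilon>0$ small enough that $\overline{\Delta}_\epsilon(\ell(\sigma))$ is contained in this image and so that the preimage $U \subset U_0$ maps homeomorphically onto $\overline{\Delta}_\epsilon(\ell(\sigma))$; the inverse is then automatically holomorphic. The whole argument is essentially bookkeeping on top of Bridgeland's theorem: the only subtlety is selecting matching branches of $\log Z(P_i)$ so that $\ell$ is realized as an honest holomorphic map in the chosen chart, and ensuring $\bC$-equivariance so that the construction descends cleanly to the quotient.
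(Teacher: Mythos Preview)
Your proof is correct and follows essentially the same approach as the paper's: both use openness of the stable locus, identify $\ell$ with $\logZ$ on that locus, invoke Bridgeland's theorem together with the fact that $\{\ch(P_i)\}$ is a basis to get holomorphic coordinates, exploit $\bC$-equivariance (translation on $\bC^n$) to descend to the quotient, and then shrink to fit $\overline{\Delta}_\epsilon(\ell(\sigma))$. The paper cites \cite{BridgelandSmith}*{Prop.~7.6} for openness of stability and organizes the argument by first lifting to $\Stab(\cC)$ and then descending, whereas you phrase it slightly differently, but the content is the same.
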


\begin{proof}
    Choose a lift of $\widetilde{\sigma} \in \Stab(\cC)$ of $\sigma$. By \cite{BridgelandSmith}*{Prop. 7.6}, there is an open neighborhood $U$ of $\widetilde{\sigma}$ on which $\{P_1,\ldots, P_n\}$ are stable. For all $\tau \in \pi(U)$, $\ell(\tau) = [\logZ_\tau(P_1),\ldots, \logZ_\tau(P_n)] \in \bC^n/\bC$. Also, since $\pi:\Stab(\cC)\to \Stab(\cC)/\bC$ is open, \cite{Br07}*{Thm. 1.2} implies there exists an open $V\subset \pi(U)$ containing $\sigma$ on which $\ell = \logZ$ is a system of holomorphic coordinates.\endnote{We can choose $U$ to be small enough that $U\to \Hom(\Lambda,\bC) \to (\bC^*)^n$ given by $Z\mapsto (Z(P_1),\ldots, Z(P_n))$ is a holomorphic coordinate system. For any $\tau \in U$, $\logZ_\tau(P_i) = \ln \lvert Z_\tau(P_i)\rvert + i\pi \phi_\tau(P_i)$ and this in particular defines a logarithm of $Z_\tau(P_i)$ for all $1\le i \le n$ and all $\tau \in U$. Consider the universal covering $q:\bC^n\to (\bC^*)^n$ given by $(z_1,\ldots, z_n)\mapsto (e^{z_1},\ldots, e^{z_n})$. We obtain a commutative diagram 
    \[
    \begin{tikzcd}[ampersand replacement = \&]
        \&\bC^n\arrow[d,"q"]\\
        U\arrow[r,"Z"]\arrow[ur,dashed,"\logZ"]\&(\bC^*)^n
    \end{tikzcd}
    \] 
    In particular, $\logZ$ defines a system of holomorphic coordinates on $U$. Since the $\bC$-action respects stability, we can assume that $U$ is $\bC$-invariant up to replacing it by $\bC\cdot U$. The $\bC$-action on $U$ in these coordinates is given by $z\cdot(\logZ(P_1),\ldots, \logZ(P_n)) = (\logZ(P_1)+z,\ldots, \logZ(P_n)+z)$. Consequently, on $\pi(U) \subset \Stab(\cC)/\bC$ we get a coordinate system given by $[\logZ(P_1),\ldots, \logZ(P_n)]\in \bC^n/\bC$.} Choosing $\epsilon>0$ sufficiently small that $\overline{\Delta}_\epsilon(\ell(\sigma)) \subset \ell(V)$, the result follows. 
\end{proof}

When $\{P_1,\ldots, P_n\}$ is understood, denote the open neighborhood of $\sigma$ in \Cref{L:Brdeformation} by $\overline{\Delta}_\epsilon(\sigma)$. If $\pi:\Stab(\cC)\to \Stab(\cC)/\bC$ denotes the projection, we let $\Delta_\epsilon(\sigma):=\pi^{-1}(\overline{\Delta}_\epsilon(\sigma))$.

\begin{const}[Gluing]
\label{Const:gluing}
    Consider a generic $\sigma = \langle \cC_\bullet|\sigma_\bullet\rangle_\Sigma \in \Astab^{\rm{adm}}(\cC)$ and a coarsen\-ing of multi-scale lines $\Sigma \rightsquigarrow \Sigma'$ with $f:\Gamma(\Sigma)\twoheadrightarrow\Gamma(\Sigma')$. By \Cref{L:admissiblecontractions}, there exists a coarsening $\langle \cC_\bullet\rangle_{\Sigma}\rightsquigarrow\langle \cB_\bullet\rangle_{\Sigma'}$. In this case, for $w\in \Gamma(\Sigma')_{\rm{term}}$ we have a semiorthogonal decomposition 
    \[
        \cB_{\le w} = \langle \cC_{\le v}|v\in V(\Sigma)_{\rm{term}}\cap f^{-1}(w)\rangle.
    \]
    Consider $\vec{\tau} = (\tau_{v_i})_{i=1}^k \in \prod_i \Delta_\epsilon(\sigma_{v_i})$ such that for each $w\in V(\Sigma')_{\rm{term}}$, the collection of stability conditions $\{\tau_{v_i}|f(v_i) = w\}$ is gluable to a stability condition on $\cB_{\le v}$ in the sense of \cite{CP10}*{\S 2}. We define the \emph{gluing} of $\vec{\tau}$ along $f$ as $\langle \cB_\bullet|\gl_f(\vec{\tau})_\bullet \rangle_{\Sigma'}$ where for each $w\in \Gamma(\Sigma')_{\rm{term}}$, $\gl_f(\vec{\tau})_w = \gl(\{\sigma_{v_i}|f(v_i) = w\})$. $\langle \cB_\bullet|\gl_f(\vec{\tau})_\bullet \rangle_{\Sigma'}$ is a generic element of $\Astab(\cC)$ which coarsens $\sigma$. To emphasize the condition $\vec{\tau} \in \prod_{i} \Delta_\epsilon(\sigma_{v_i})$ we say that $\langle \cB_\bullet|\gl_f(\vec{\tau})_\bullet \rangle_{\Sigma'}$ is $\epsilon$\emph{-glued} from $\sigma$.\endnote{We verify that $\langle \cB_\bullet|\gl_f(\vec{\tau})_\bullet \rangle_{\Sigma'}$ is a generic element of $\Astab^{\rm{adm}}(\cC)$ which coarsens $\sigma$. First of all, we check that $\langle \cB_\bullet \rangle_{\Sigma'}$ is a multi-scale decomposition. For each $w\in V(\Sigma')_{\rm{term}}$, $\cB_{\le w}$ is a thick triangulated subcategory of $\cC$ since it is a semiorthogonal component of $\cC$.

    First, note that if $w\le_{i,0} w'$, then $f(w)\le_{i,0} f(w')$ by \Cref{D:multi-scaleSODcoarsening}(a). Consequently, if $\cB_{\le v} = \langle \cC_{\le w}:f(w) = v\rangle$ and $\cB_{\le v'} = \langle \cC_{\le w'}:f(w') = v'\rangle$, then $v\le_{i,0} v'$ if and only if $w\le_{i,0} w'$ for all $f(w) =v$ and $f(w') = v'$. This implies \Cref{D:multi-scale_decomposition}(1) and (2). (3) is because all of the categories $\cC_{<v}$ and $\cC_{\le v}$ for $v\in V(\Sigma')$ are admissible (simply being coarsenings of the semiorthogonal decomposition indexed by the terminal components) and hence thick. Finally, for (4) note that $\gr_v(\cB_\bullet) = \cB_{\le v}\supset \cC_{\le w}\ne 0$ for $f(w) = v$.}
\end{const}

\begin{lem}
\label{L:gluedexist}
    There exist $\epsilon >0$ and an open neighborhood $V_\epsilon \subset \rmscbar_n$ of $\ell(\sigma)$ such that for any $x\in V_\epsilon$, there is a unique $\tau \in \Astab(\cC)$ that is $\epsilon$-glued from $\sigma$ such that $\ell(\tau) = x$.
\end{lem}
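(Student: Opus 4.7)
The plan is to extract from a point $x \in \rmscbar_n$ near $\ell(\sigma)$ the data of stability conditions on each graded category $\gr_{v_i}(\cC_\bullet)$, and then apply \Cref{Const:gluing} to produce $\tau$. The argument proceeds in three movements.

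First I would set up the local neighborhoods on each factor. Since $\sigma$ is generic and admissible, applying \Cref{D:admissible_point}(1) to the identity complete coarsening of $\Sigma$ produces strongly gluable lifts $(\tilde\sigma_{v_i})_{i=1}^k \in \prod_i \Stab(\gr_{v_i}(\cC_\bullet))$. For each $i$, \Cref{L:Brdeformation} applied to $(\gr_{v_i}(\cC_\bullet), \sigma_{v_i}, \{P_j : \dom(P_j)=v_i\})$ provides an $\epsilon_i > 0$ such that $\ell_{(-)}$ biholomorphically identifies $\overline{\Delta}_{\epsilon_i}(\sigma_{v_i})$ with a polydisc in $\bC^{r_{v_i}}/\bC$. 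Set $\epsilon := \min_i \epsilon_i$.

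Next I would argue that the gluing remains valid after perturbation and after passage to a coarsening. Strong gluability is an open condition on $\prod_i \Stab(\gr_{v_i}(\cC_\bullet))$ --- it consists of strict inequalities among phases of semistable objects across the factors, and these vary continuously with the slicings --- and the analogous condition for gluing only within each fiber $f^{-1}(w)$ of a contraction $f : \Gamma(\Sigma) \twoheadrightarrow \Gamma'$ satisfying \Cref{D:multiscaleSODcoarsening}(a) is strictly weaker. Shrinking $\epsilon$ if necessary, I may assume that every $\vec\tau \in \prod_i \overline{\Delta}_\epsilon(\sigma_{v_i})$ admits lifts that remain strongly gluable along every such $f$. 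I would then define $V_\epsilon \subset \rmscbar_n$ as the set of $x$ whose underlying multiscale line $\Sigma_x$ admits a contraction $f : \Gamma(\Sigma) \twoheadrightarrow \Gamma(\Sigma_x)$, and such that for each terminal $v$ of $\Sigma$, the positions of the marked points $\{x_j : \dom(P_j)=v\}$ on $\Sigma_{x,f(v)}$ --- read in an affine coordinate calibrated to the one on $\Sigma_v$ via the real-oriented blowup structure of \Cref{T:bijection} --- lie within $\overline{\Delta}_\epsilon$ of $\ell_\sigma(P_j)$. Openness of $V_\epsilon$ near $\ell(\sigma)$ follows from the coordinate description of $\rmscbar_n$ near the boundary afforded by \Cref{P:identification} together with the local description \eqref{E:Rblowuplocally} of the real-oriented blowup.

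Finally, for $x \in V_\epsilon$ with contraction $f$, I would read off $\vec z_v := (x_j : \dom(P_j) = v) \in \bC^{r_v}/\bC$ for each terminal $v$ and apply \Cref{L:Brdeformation} to obtain the unique $\tau_v \in \overline{\Delta}_\epsilon(\sigma_v)$ with $\ell_{\tau_v}(P_j) = x_j$ for all $P_j$ with $\dom(P_j)=v$. By the previous movement, $\vec\tau = (\tau_v)$ admits a strongly gluable lift along $f$, so \Cref{Const:gluing} yields $\tau = \langle \cB_\bullet | \gl_f(\vec\tau)_\bullet \rangle_{\Sigma_x} \in \Astab(\cC)$, which is $\epsilon$-glued from $\sigma$ and satisfies $\ell(\tau) = x$ by construction. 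For uniqueness, any $\epsilon$-glued $\tau'$ with $\ell(\tau')=x$ must have underlying multiscale line $\Sigma_x$ and hence the same contraction $f$, and \Cref{L:Brdeformation} (combined with \Cref{P:logZ_functions}) forces each $\tau'_v = \tau_v$. The main obstacle is the openness of strong gluability in the second movement: although intuitive, it requires a careful check that the strict phase inequalities of~\cite{CP10, quasiconvergence} persist simultaneously for all semistable objects in all factors under a uniformly small perturbation, and continue to hold after grouping factors as prescribed by an arbitrary admissible coarsening $f$.
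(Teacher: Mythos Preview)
Your overall strategy is the same as the paper's, but the second movement has a genuine gap that is not the one you flag at the end.

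The issue is this. To obtain $\ell(\tau)=x$, you cannot use an \emph{arbitrary} gluable lift of $(\tau_v)_v$; you must use the \emph{specific} lifts dictated by $x$. Concretely, when terminal vertices $v_1,\ldots,v_p$ of $\Sigma$ all map under $f$ to the same terminal vertex $w$ of $\Sigma_x$, choose a representative of the configuration of marked points on $\Sigma_{x,w}$ in $\bC$, and let $q_{v_j}\subset\bC$ be the positions of the points $\{x_i:\dom(P_i)=v_j\}$. The lift of $\tau_{v_j}$ you must use is the unique one in $\Delta_\epsilon(\sigma_{v_j})$ with $\ell_{\tau_{v_j}}(P_{v_j})=q_{v_j}$. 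Gluability of this particular tuple is governed by the imaginary separations of the cluster centroids $c_j$, not by the smallness of $\epsilon$ alone. Your openness argument only shows that lifts \emph{near the fixed reference lifts} $(\tilde\sigma_{v_i})$ remain gluable; it says nothing about lifts at other points of the tube $\prod_i\Delta_\epsilon(\sigma_{v_i})$, which is where the lifts determined by $x$ may land. In particular, your $V_\epsilon$ as written contains points $x$ for which two clusters $q_{v_i},q_{v_j}$ with $f(v_i)=f(v_j)$ sit at nearly the same height in $\bC$, and for such $x$ the required lifts are not gluable.

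The paper handles this by building the separation directly into the neighborhood: it defines $V_{\epsilon,L}$ by the two conditions $q_v\approx_\epsilon\ell_\sigma(P_v)$ \emph{and} $\Im(c_j)-\Im(c_{j-1})>L$ for consecutive centroids within each fiber $f^{-1}(w)$. The second condition is exactly what makes the $x$-determined lifts gluable (for $L\gg0$), and the fact that $V_{\epsilon,L}$ is still an open neighborhood of $\ell(\sigma)$ follows from genericity of $\sigma$ together with the topology of $\rmscbar_n$: convergence to $\ell(\sigma)$ forces the imaginary separations to diverge. Once you add this condition, the rest of your argument goes through; without it, the claim $\ell(\tau)=x$ in your third movement is unjustified.
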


\begin{proof}
    We use the notation of \Cref{Const:gluing}. Let $x$ have underlying multi-scale line $\Sigma'$, which coarsens $\Sigma$ via $f:\Gamma(\Sigma)\twoheadrightarrow \Gamma(\Sigma')$, and has marked points $q_1,\ldots, q_n$. Let $q_v = \{q_i|P_i\in \cC_{\le v}\}$ for each $v\in V(\Sigma)_{\rm{term}}$. By the topology on $\cA_n^{\bR}$, for any $L\gg0$ and $\epsilon>0$ small, there exists an open neighborhood $V_{\epsilon,L}$ of $\ell(\sigma)$ such that for all $x\in V_{\epsilon,L}:$
    \begin{itemize}
        \item for all $w\in \Gamma(\Sigma')_{\rm{term}}$, letting $\{v_1 \le_{i,0}\cdots\le_{i,0} v_p\} = \Gamma(\Sigma)_{\rm{term}} \cap f^{-1}(w)$ we have $\Im(c_j) - \Im(c_{j-1}) > L$ for all $j$, where $c_j$ is the centroid of $q_{v_j}$; and 
        \item $q_{v}\approx_\epsilon \ell_\sigma(P_{v})$ for all $v\in \Gamma(\Sigma)_{\rm{term}}$.
    \end{itemize}
    Now, fix $w$ as above and choose a representative configuration of $(q_{v_1},\ldots,q_{v_p})$ in $\bC$ which is unique up to overall translation. For $\epsilon>0$ sufficiently small, for each $j=1,\ldots, p$, \Cref{L:Brdeformation} gives a unique $\tau_{v_j} \in \Delta_\epsilon(\sigma_{v_j})$ such that $\ell_{\tau_{v_j}}(P_{v_j}) = q_{v_j}$. Next, for all $L\gg0$ as above, $(\tau_{v_i})_{i=1}^p \in \prod_{i=1}^p \Stab(\cC_{\le v_i})$ is gluable to $\tau_w \in \Stab(\cB_{\le w})$ with $\ell_{\tau_w}(P_{v_i}) = q_{v_i}$ for all $i=1,\ldots, p$. The class of $\tau_w$ in $\Stab(\cB_{\le w})/\bC$ is independent of the choice of representative configuration in $\bC$. Repeating this for each $w\in V(\Sigma')_{\rm{term}}$ constructs a unique $\tau = \langle \cB_\bullet|\tau_\bullet\rangle_{\Sigma'}$ which is $\epsilon$-glued from $\sigma$ and such that $\ell(\tau) = x$, for any $x\in V_{\epsilon,L}$.
\end{proof}

For any $\epsilon>0$, $U_\epsilon = \{\tau \in \Astab(\cC)| \sigma \rightsquigarrow\tau \text{ and }\vec{d}(\sigma,\tau)<\epsilon\}$ is an open neighborhood of $\sigma$. By \Cref{T:topology}\eqref{I:stability}, there exists an open neighborhood $\Omega$ of $\sigma$ such that $\forall\tau \in \Omega$ each $P_v$ for $v\in V(\Sigma)_{\rm{term}}$ is equivalent modulo $\cC_{<v}$ to a $\tau$-stable object. We define an open set $W_\epsilon:= U_\epsilon \cap \Omega \cap \ell^{-1}(V_\epsilon)$, where $V_\epsilon$ is as in \Cref{L:gluedexist}.

\begin{lem}
\label{L:allglued}
    There exists an $\epsilon > 0$ such that all elements of $W_\epsilon$ are $\epsilon$-glued from $\sigma$. 
\end{lem}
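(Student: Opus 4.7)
The plan is to reverse-engineer the gluing construction of \Cref{Const:gluing}. Given $\tau = \langle \cB_\bullet | \tau_\bullet \rangle_{\Sigma'} \in W_\epsilon$ coarsening $\sigma$ via $f : \Gamma(\Sigma) \twoheadrightarrow \Gamma(\Sigma')$, I first observe that $f$ must be bijective on terminal vertices and preserve the $\le_{i,0}$-total order (since $\sigma$ is generic and any coarsening that merged two terminals would contradict $\ell(\tau) \in V_\epsilon$ for $\epsilon$ sufficiently small). By \Cref{L:admissiblecontractions} applied to the admissible $\sigma$, for each $w \in V(\Sigma')_{\rm{term}}$ we obtain a semiorthogonal decomposition $\cB_{\leq w} = \langle \cC_{\leq v_1},\ldots,\cC_{\leq v_p}\rangle$, where $\{v_1 \le_{i,0} \cdots \le_{i,0} v_p\} = f^{-1}(w) \cap V(\Sigma)_{\rm{term}}$. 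The goal is to define $\tau_{v_j} \in \Stab(\cC_{\leq v_j})/\bC$ by restricting $\tau_w$ along this SOD, verify that $\tau_{v_j} \in \Delta_\epsilon(\sigma_{v_j})$, and check that the resulting tuple glues back to $\tau$.

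First I would use the condition $\tau \in \Omega$ to pick, for each $P_i$ with $\dom(P_i) = v_j$, a $\tau$-stable lift $P'_i \equiv P_i \bmod \cC_{<v_j}$, and the bound $\vec{d}_{P_i,P'_i}(\sigma,\tau) < \epsilon$ to control the $\tau$-phase and $\tau$-mass of each $P'_i$ relative to a chosen reference object in $\cC_{\leq v_j}^{\rm{ss}}$. Combining this with the constraint $\ell(\tau) \in V_\epsilon \subset \rmscbar_n$, which pins down the marked points $\ell_\tau(P_i)$ on $\Sigma'_w$ with a prescribed cyclic order by increasing imaginary part, I obtain that the $P'_i$ with $\dom(P_i) = v_j$ all have $\tau_w$-phase in a narrow band $I_j \subset \bR$ of width $O(\epsilon)$, and that the bands $I_1,\ldots,I_p$ are disjoint and strictly increasing.

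The main obstacle is then to show that the slicing of $\tau_w$ splits across the SOD, i.e.\ $\cP_{\tau_w}(\phi) = \bigoplus_j \cP_{\tau_w}(\phi) \cap \cC^w_{\le v_j}$ for every $\phi$, where $\cC^w_{\le v_j}$ denotes the image of $\cC_{\le v_j}$ in $\gr_w(\cB_\bullet)$. Given the phase-band separation of the previous step, this follows from the admissibility hypothesis \Cref{D:admissible_point}(1): any $\tau_w$-semistable object $E$ has a canonical SOD decomposition $E = (E_1,\ldots,E_p)$ with $E_j \in \cC^w_{\leq v_j}$, and the Hom-vanishing $\Hom(\cC^w_{\leq v_i},\cC^w_{\leq v_j}) = 0$ for $i > j$ combined with the phase separation forces each $E_j$ to be itself $\tau_w$-semistable of the same phase as $E$. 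Restricting $\tau_w$ to each factor then defines a stability condition $\tau_{v_j}$ on $\gr_{v_j}(\cC_\bullet)$, and the directed distance bound $\vec{d}(\sigma,\tau) < \epsilon$ evaluated on all $E \in \cC_{\le v_j}$ gives exactly the inequality required by \Cref{L:Brdeformation} to conclude $\tau_{v_j} \in \Delta_\epsilon(\sigma_{v_j})$.

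Finally, the slicing decomposition established above is precisely the defining property of the gluing of $(\tau_{v_j})$ along $f$, so $\gl_f(\vec{\tau}) = \tau$ by uniqueness of the gluing of \cite{CP10}. Gluability of $\vec{\tau}$ is automatic from the Hom-vanishing between the $\cC_{\leq v_j}$'s (inherited from $\sigma$-admissibility) and the strict phase ordering. It only remains to choose $\epsilon>0$ uniformly: since there are finitely many combinatorial types of coarsening $f$ of $\Gamma(\Sigma)$ and finitely many terminal vertices, one takes $\epsilon$ smaller than the minimum gap needed to enforce both the band separation and the $\Delta_\epsilon$-deformation in each case. This $\epsilon$ then works for all $\tau \in W_\epsilon$.
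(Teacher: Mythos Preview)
Your approach is genuinely different from the paper's, and the step you flag as the ``main obstacle'' contains a real gap.

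The paper does \emph{not} attempt to split the slicing of $\eta_w$ directly. Instead it argues by comparison: given $\eta \in W_\epsilon$, \Cref{L:gluedexist} produces a $\tau$ that is $\epsilon$-glued from $\sigma$ with $\ell(\tau) = \ell(\eta)$; the goal becomes $\eta = \tau$. Reducing to the case where both lie in $\Stab(\cC)/\bC$, one lifts so that $\ell_\eta(P_i) = p_i = \ell_\tau(P_i)$. Since $\sigma$ is generic, $\cC_{<v} = 0$, so the condition $\eta \in \Omega$ says each $P_i$ is genuinely $\eta$-stable; hence $\ell_\eta(P_i) = \logZ_\eta(P_i)$, and likewise for $\tau$. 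Because the $\ch(P_i)$ span $\Lambda_\bQ$, this forces $Z_\eta = Z_\tau$. By \cite{Br07}*{Lem.~6.4} it then suffices to show $d(\cP_\eta,\cP_\tau) < 1$. Here one uses that $\tau$ is \emph{strongly} glued (after shrinking $V_\epsilon$), so any $\tau$-semistable $E$ lies in some $\cP_{\tau_i}(\phi) \subset \cC_{\le v_i}$; now $\tau_i \in \Delta_\epsilon(\sigma_{v_i})$ gives $|\phi_\tau(E/P_i) - \phi^\pm_{\sigma_i}(E/P_i)| < \epsilon$, and the bound $\vec d(\sigma,\eta) < \epsilon$ (applied to $E \in \cC_{\le v_i}$) gives $|\phi^\pm_{\sigma_i}(E/P_i) - \phi^\pm_\eta(E/P_i)| < \epsilon$, whence $|\phi_\tau(E) - \phi^\pm_\eta(E)| \le 2\epsilon$.

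Your argument breaks at the claim that ``phase separation forces each $E_j$ to be $\tau_w$-semistable of the same phase as $E$.'' The phase bands $I_j$ you construct contain only the finitely many $P_i$; they do \emph{not} contain all $\tau_w$-semistable objects in $\cC_{\le v_j}$. The bound $\vec d(\sigma,\tau) < \epsilon$ controls $\phi^\pm_\tau(F/P_{v_j}) - \phi^\pm_{\sigma_{v_j}}(F/P_{v_j})$ for $F \in \cC_{\le v_j}$, but $\phi^\pm_{\sigma_{v_j}}(F/P_{v_j})$ ranges over all of $\bR$, so the $\tau_w$-phases of objects in $\cC_{\le v_j}$ are not confined to any band. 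Without such confinement, the Hom-vanishing between the semiorthogonal factors does not let you conclude that the SOD components $E_j$ of a $\tau_w$-semistable $E$ are themselves semistable. The paper sidesteps this entirely: by already knowing $\tau$ is strongly glued, it only needs the $\vec d$-bound on objects $E$ that lie in a single factor $\cC_{\le v_i}$, and there the bound applies directly.
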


\begin{proof}
    Consider $\eta = \langle \cB_\bullet|\tau_\bullet\rangle_{\Sigma'} \in W_\epsilon(\sigma)$. For $\epsilon>0$ small, by \Cref{L:gluedexist} there is a $\tau$ that is $\epsilon$-glued from $\sigma$ such that $\ell(\eta) = \ell(\tau)$. We claim $\eta = \tau$. It suffices to consider the case where $\tau$ and $\eta$ are in $\Stab(\cC)/\bC$. Choose an $n$-marked multi-scale line $(\bP^1,dz,\infty,p_1,\ldots,p_n)$ representing $\ell(\tau)$. 
    
    Up to shrinking $V_\epsilon = V_{\epsilon,L}$ by taking $L$ large (see the proof of \Cref{L:gluedexist}), we may assume that $\tau$ is \emph{strongly} glued from $(\tau_{i})_{i=1}^k \in \prod_{i=1}^k \overline{\Delta}_\epsilon(\sigma_{v_i})$. The key property of strong gluing is that for all $\phi \in \bR$, we have $\cP(\phi) = \bigoplus_{i=1}^k \cP_{\tau_i}(\phi)$ \cite{quasiconvergence}*{Lem. 3.5}. There is a unique lift of $\tau_i$ to $\Stab(\cC_{\le v_i})$ with $\ell(P_{v_i}) = p_{v_i}$ for all $i=1,\ldots, k$. By abuse of notation, we denote each lift by $\tau_i$ and write $\gl(\tau_i) = \tau$. There is also a unique lift of $\eta$ to $\Stab(\cC)$ with $\ell_\eta(P_i) = p_i$ for each $i$, also denoted $\eta$.
    
    Because $\eta \in W_\epsilon(\sigma)$, each $P_i \in \cC_{\le v}$ is equivalent to an $\eta$-stable object mod $\cC_{<v} = 0$. Since $\sigma$ is generic, so is $\eta$, and so $P_i \in \cC_{\le v} \subset \cB_{\le f(v)}$ is also $\eta_{f(v)}$-stable. So, $\ell_\eta(P_i) = \logZ_{\eta}(P_i)$ for each $i$ and $\ell(\eta) = \ell(\tau)$ implies that $Z_\eta = Z_{\tau}$. By \cite{Br07}*{Lem. 6.4}, it now suffices to show $d(\cP_\eta,\cP_{\tau})<1$. Because $\tau$ is strongly glued, we may assume $E\in \cP_{\tau_i}(\phi)$ for some $i$. Since $\tau_i \in \Delta_\epsilon(\sigma_i)$, we have $\lvert \phi_{\tau}(E/P_i) - \phi^{\pm}_{\sigma_i}(E/P_i)\rvert<\epsilon$ and because $\eta \in W_\epsilon(\sigma)$, by \eqref{E:simplifiedddistance}, $\lvert \phi_{\sigma_i}^{\pm}(E/P_i) - \phi_\eta^{\pm}(E/P_i)\rvert < \epsilon$. Therefore, $\lvert \phi_{\sigma'}(E/P_i) - \phi_\tau^{\pm}(E/P_i)\rvert \le 2\epsilon$. 
\end{proof}

\begin{proof}[Proof of \Cref{T:genericmanifoldwithcorners}]
    Let $\epsilon>0$ and $W_\epsilon$ be given such that \Cref{L:gluedexist} and \Cref{L:allglued} hold. Then $\ell:W_\epsilon \to \rmscbar_n$ is bijective onto its image and continuous by \Cref{D:weak_topology}.\endnote{If $x\in \ell(W_\epsilon),$ then in particular $x\in V_\epsilon$ and by \Cref{L:gluedexist} there is a unique $\tau \in \Astab(\cC)$ that is $\epsilon$-glued from $\sigma$ such that $\ell(\tau) = x$. On the other hand, if $\tau \in W_\epsilon$ has $\ell(\tau) = x$, then by \Cref{L:allglued} it is necessarily $\epsilon$-glued from $\sigma$ and hence unique. This gives the necessary injectivity.} Next, we claim that for $\delta\le \epsilon$ sufficiently small, $W_\epsilon$ maps onto $V_\delta$. Since $\delta \le \epsilon$, for all $x\in V_\delta$ there exists a unique $\tau \in \Astab(\cC)$ that is $\epsilon$-glued from $\sigma$ with $\ell(\tau) = x$. As discussed in the proof of \Cref{L:gluedexist}, we can take $\tau$ to be strongly glued from $\sigma$. Using this, one can verify that there exists $\delta\le \epsilon$ such that $q_{v}\approx_\delta \ell(P_{v})$ for all $v\in V(\Sigma)_{\rm{term}}$ implies $\tau \in W_\epsilon$.\endnote{The condition that $\tau \in U_\epsilon$ means that $\vec{d}(\sigma,\tau)<\epsilon$. Since $\sigma$ is generic, by \eqref{E:simplifiedddistance} this is equivalent to 
    \begin{equation}
    \label{E:vectordistance}
        \max\left\{\lvert \phi_\sigma^{\pm}(E/P_{v}) - \phi_\tau^{\pm}(E/P_{v})\rvert,\lvert \log m_\sigma(E/P_{v}) -\log m_\tau(E/P_{v})\rvert\right\}<\epsilon
    \end{equation}
    for all $v \in V(\Sigma)_{\rm{term}}$ and all $E\in \cC_{\le v}$ (here we abuse notation a bit by letting $P_v$ refer to some/any choice of $P_i\in P_v$). Let $\tau = \langle \cB_\bullet|\tau_\bullet\rangle_{\Sigma'}$ and consider $w\in V(\Sigma')_{\rm{term}}$. Let 
    \[ 
    f^{-1}(w) \cap V(\Sigma)_{\rm{term}} = \{v_1\le_{i,0}\cdots\le_{i,0} v_p\}
    \]
    Then $\tau_w$ is strongly glued from stability conditions $\tau_{v_1},\ldots, \tau_{v_p}$ with $\tau_{v_i} \in \overline{\Delta}_\epsilon(\sigma_{v_i})$ for each $i=1,\ldots, p$. Any $E\in \cC_{\le v_i}$ has a $\tau_{v_i}$-HN-filtration with subquotients $H_* \in \cP_{\tau_{v_i}}(\phi_*)$. However, since $\tau_w$ is strongly glued from $\tau_{v_1},\ldots, \tau_{v_p}$, $\cP_{\tau_{v_i}}(\phi)\subset \cP_{\tau_w}(\phi)$ for each $\phi \in \bR$. Thus, the $\tau_{v_i}$-HN filtration of $E$ is also the $\tau_w$-HN filtration of $E$. Also, $m_{\tau_w}(E/P_{v_i}) = m_{\tau_{v_i}}(E/P_{v_i})$. In particular, \eqref{E:vectordistance} becomes 
    \[
    \max\left\{\lvert \phi_{\sigma_{v}}^{\pm}(E/P_{v}) - \phi_{\tau_v}^{\pm}(E/P_{v})\rvert,\lvert \log m_{\sigma_v}(E/P_{v}) -\log m_{\tau_v}(E/P_{v})\rvert\right\}<\epsilon
    \]
    for all $v \in V(\Sigma)_{\rm{term}}$ and $E\in \cC_{\le v}$. This in turn is equivalent to $d_{P_v}(\sigma_v,\tau_v)<\epsilon$ for all $v \in V(\Sigma)_{\rm{term}}$, where $d_{P_v}$ is the local metric of \Cref{R:localmetric}. By \Cref{L:Brdeformation}, we can find a $\delta$ sufficiently small that $\tau_v\in \overline{\Delta}_\delta(\sigma_v)$ implies $P_v$ is $\tau_v$-stable and $d_{P_v}(\sigma_v,\tau_v)<\epsilon$. At the level of multi-scale lines, this translates into the condition that $q_v\approx_\delta \ell(P_v)$ for all $v\in V(\Sigma)_{\rm{term}}$.
    } So, $\ell:W_\epsilon \to \cA_n^{\bR}$ maps surjectively onto $V_\delta$ and we take $U = \ell^{-1}(V_\delta)\cap W_\epsilon$, noting that $\ell(U) = V_\delta$ is open. 
    
    Since $\rmscbar_n$ is first countable and locally path connected, to show continuity of $\ell^{-1}$ it suffices to show that $\ell^{-1}$ respects limits of convergent paths $\gamma:[0,\infty)\to \ell(U)$.\endnote{Since $\rmscbar_n$ is a manifold with corners, it is first countable and in particular sequential. That is, continuity of maps out of $\rmscbar_n$ can be verified using limits of sequences. Next, suppose given $x_n \to x$ in $\rmscbar_n$. Since $\rmscbar_n$ is locally path connected, choose a path connected open neighborhood $V$ of $x$ and $n_0 \in \bN$ such that $n\ge n_0 \Rightarrow x_n \in V$. Next, define a (continuous) path $\gamma:[n_0,\infty)\to V$ such that $\gamma(n) = x_n$ for all $n\ge n_0$. $x_{(-)}:\{n\in \bN:n\ge n_0\}\to V$ is a cofinal subnet of $\gamma:[0,\infty)\to V$, so if given a map $f:\rmscbar_n \to X$, for $X$ a topological space, $\lim_{t\to\infty} (f\circ \gamma)(s) = f(x)$ implies $\lim_{n\to\infty} f(x_n) = f(x)$.} We will consider the case where $\gamma(t)$ is irreducible for all $t$, the general case being similar. Let $\sigma_t = \ell^{-1}\gamma(t)$ for all $t$ and let $\sigma_\infty := \ell^{-1}(\lim_{t\to\infty} \gamma(t))$. By \Cref{L:gluedexist}, for all $t\gg 0$, there exists $(\tau_{i,t})_{i=1}^k\in \prod_{i=1}^k \Delta_\epsilon(\sigma_{v_i})$, unique up to $\bC$ action, with $\rm{gl}(\tau_{i,t})_{i=1}^k = \sigma_t$ in $\Stab(\cC)/\bC$ and such that $\lim_{t\to\infty} \tau_{i,t} = \sigma_{v_i}\in \Stab(\cC_{\le v_i})/\bC$ for all $1\le i \le k$.\endnote{This is because $\{\logZ_t(P_v)\}\in \bC^{r_v}/\bC$ gives local coordinates on $\overline{\Delta}_\epsilon(\sigma_v)$. By definition of convergence in $\cA_n^{\bR}$, it also follows that $\lim_{t\to\infty} \{\logZ_t(P_v)\} = \{\logZ_{\sigma_v}(P_v)\}$. Consequently, in $\overline{\Delta}_\epsilon(\sigma_{v_i})$ it follows that $\sigma_{i,t}\to \sigma_{v_i}$.} 
    
    We show that $\sigma_t$ is strongly quasi-convergent (\Cref{D:stronglyqconv}). Consider $(c_{1,t},\ldots, c_{k,t}) \in \bC^k/\bC$, where $c_{i,t}$ is the centroid of $\ell(P_{v_i})$ for $i=1,\ldots, k$. Since $\gamma(t)$ is converging to a generic boundary point $\Sigma$, $\lim_{t\to\infty} \Im(c_{j,t} - c_{i,t}) = \infty$ for all $i<j$. So, for all $t\gg0$, $\sigma_t$ is strongly glued from $(\sigma_{i,t})_{i=1}^k$. Since $\lim_{t\to\infty} \tau_{i,t} =  \sigma_{v_i}$ for all $i$, the limit semistable objects for $\sigma_t$ are $\bigcup_{i=1}^k \cP_{\sigma_i}$, where $\cP_{\sigma_i} := \bigcup_{\phi \in \bR} \cP_{\sigma_i}(\phi)$ is embedded using $\cC_{\le v_i}\hookrightarrow \cC$. $\sigma_t$ is quasi-convergent \cite{quasiconvergence}*{Defn. 2.8} because:
    \begin{enumerate}[label=(\roman*)]
        \item Every $E\in \cC$ has a canonical filtration with associated graded pieces $\gr_i(E) \in \cC_{\le v_i}$ for $1\le i \le k$. Next, $\gr_i(E)$ admits an HN-filtration with respect to $\sigma_{v_i}$. Concatenating these filtrations, we obtain the limit HN-filtration for $\sigma_t$.\endnote{This uses the fact that $\Im(c_{j,t} - c_{i,t})\to \infty$ for all $i<j$, which allows us to concatenate the filtrations in a way that respects ordering of phases.}
        \item Given a pair of limit semistable objects $E,F$ in $\cC$,  
        \[
        \lim_{t\to\infty} \frac{\ell_{\sigma_t}(E/F)}{1+\lvert \ell_{\sigma_t}(E/F)\rvert} = 
        \begin{cases}
            \frac{\ell_{v_i}(E/F)}{1+\lvert \ell_{v_i}(E/F)\rvert}& E,F\in \cC_{\le v_i}\\
            \mathfrak{p}(v_j,v_i)& E\in \cC_{\le v_i},F\in \cC_{\le v_j}\text{ and }i\ne j.
        \end{cases}
        \]
    \end{enumerate}
    Next, $\{P_1,\ldots, P_n\}$ contains an asymptotic indexing set obtained by choosing one rep\-resentative from each $P_v$, denoted $P_v$ by abuse of notation.\endnote{Indeed, $P_j\sim^i P_k$ if and only if $P_j \sim P_k$ if and only if $P_j,P_k\in \cC_{\le v}$ for some $v$.} We recover $\cC = \langle \cC_{v_1},\ldots, \cC_{v_k}\rangle$ by noting that $\cC_{\le v} = \cC^{P_v}$ for each $v \in V(\Sigma)_{\rm{term}}$. The stability conditions obtained by applying \cite{quasiconvergence}*{Thm. 2.30} are $(\sigma_{v_i})_{i=1}^k$. 
    
    \Cref{D:stronglyqconv}(1) holds by the hypothesis that $\ell(\sigma_t) = \gamma(t)$ converges in $\rmscbar_n$. Next, if $E$ is $\sigma_{v_i}$-stable, then since $\tau_{i,t}\to \sigma_{v_i}$ there exists $t_E\in \bR$ such that $t\ge t_E \Rightarrow$ $E$ is $\tau_{i,t}$-stable. Because $\sigma_t$ is strongly glued from $(\tau_{i,t})_{i=1}^k$ for all $t\gg0$, we have $\cP_{\sigma_t}(\phi) = \bigoplus_{i=1}^k \cP_{\tau_{i,t}}(\phi)$ so that $E$ is $\sigma_t$-stable. (3) follows from the fact that $\tau_{i,t}\to \sigma_{v_i}$ in $\Stab(\cC_{\le v_i})/\bC$ and \Cref{R:localmetric}. Therefore, by \Cref{P:quasiconvconv}, $\lim_{t\to\infty} \sigma_t = \sigma_\infty$. 
\end{proof}


\subsection{First examples}
\label{S:firstexamples}

We work out some first examples coming from $\DCoh(X)$ for $X$ a variety. We let $\Astab(X) = \Astab(\DCoh(X))$.

\subsubsection{Disjoint union of points}
\label{S:disjointpoints}
We write $\pt = \Spec k$, for $k$ an arbitrary field. The map $\sigma \mapsto \logZ_\sigma(E)$, where $E$ is a one-dimensional $k$-vector space, gives a biholomorphism $\Stab(\DCoh(\pt)) \cong \bC$.\endnote{A bounded $t$-structure on $\DCoh(\pt)$ is determined by which shift of $E$ lies in the heart. The central charge is uniquely determined by $Z(E)$. Both of these data are contained in $\logZ_\sigma(E)$, which is holomorphic by the definition of the complex structure on $\Stab(\cC)$.} Let $X$ be a disjoint union of copies of $\pt$.

\begin{lem}
\label{L:orthogonalproduct}
    Suppose $\cC_1,\ldots, \cC_n$ are triangulated categories, each with a surjective homo\-morphism $v_i :  \rm{K}_0(\cC_i)\twoheadrightarrow \Lambda_i$ for $\Lambda_i$ a finite rank lattice. Let $\cC = \bigoplus_{i=1}^n \cC_i$ and $\Lambda = \bigoplus_{i=1}^n \Lambda_i$.\endnote{Given triangulated categories $\cC_1$ and $\cC_2$, we let $\cC_1\oplus \cC_2$ denote the triangulated category where $\Ob(\cC_1\oplus \cC_2) = \{A_1\oplus A_2:A_i\in \Ob(\cC_i)\text{ for } i=1,2\}$, and $\Hom_{\cC_1\oplus \cC_2}(A_1\oplus A_2,B_1\oplus B_2) = \Hom_{\cC_1}(A_1,B_1)\oplus \Hom_{\cC_2}(A_2,B_2)$.} There is a biholo\-morphism
    \[
        \Stab_\Lambda(\cC) \xrightarrow{\sim} \prod_{i=1}^n \Stab_{\Lambda_i}(\cC_i)
    \]
    given by $(Z,\cP) \mapsto (Z_i,\cP_i)_{i=1}^n$, where $Z_i = Z|_{\Lambda_i}$ and $\cP_i = \cP \cap \cC_i$.
\end{lem}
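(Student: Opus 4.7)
The forward direction proceeds by checking that for $\sigma = (Z,\cP) \in \Stab_\Lambda(\cC)$, the restrictions $Z_i := Z|_{\Lambda_i}$ and $\cP_i(\phi) := \cP(\phi) \cap \cC_i$ assemble into a stability condition on $\cC_i$. The crucial input is the orthogonality $\Hom_\cC(\cC_i,\cC_j)=0$ for $i\ne j$ built into the direct sum of triangulated categories, which implies that every $E \in \cC$ splits canonically as $E \cong \bigoplus_i E_i$ with $E_i \in \cC_i$, and that any $\sigma$-semistable object of phase $\phi$ is actually a direct sum of $\sigma$-semistable objects of phase $\phi$ in the various $\cC_i$. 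Applying this to the HN filtration in $\cC$ of any $E \in \cC_i$ forces all HN factors to lie in $\cC_i$ (any factor in $\cC_j$ for $j\ne i$ would admit no nonzero maps to or from $E$), yielding the required HN filtration in $\cC_i$. The support property for $\sigma_i$ and the condition $Z_i(E) \in \bR_{>0}e^{i\pi\phi}$ for $E \in \cP_i(\phi)$ then transfer directly from those of $\sigma$ after restricting the norm on $\Lambda \otimes \bR$ to $\Lambda_i \otimes \bR$.

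For the inverse, given $(Z_i, \cP_i)_{i=1}^n \in \prod_i \Stab_{\Lambda_i}(\cC_i)$, I will set $Z := \sum_i Z_i \circ \pi_i$ on $\Lambda = \bigoplus \Lambda_i$ and define $\cP(\phi)$ to be the full additive subcategory of $\cC$ consisting of objects isomorphic to $\bigoplus_i E_i$ with each $E_i \in \cP_i(\phi)$. The slicing axiom $\Hom(\cP(\phi),\cP(\phi'))=0$ for $\phi > \phi'$ is immediate from the same axiom in each $\cC_i$ combined with $\Hom_\cC(\cC_i,\cC_j)=0$. For HN filtrations, I take the HN filtration of each summand $E_i \in \cC_i$ separately and interleave them by phase: the union of phases appearing in all the individual filtrations gives a finite decreasing sequence $\phi_1 > \cdots > \phi_m$, and the $k$-th associated graded piece of $E = \bigoplus_i E_i$ is declared to be the direct sum over $i$ of the $\phi_k$-th semistable piece of $E_i$ (or $0$ if none exists). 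The support property for $\sigma$ follows because for $\sigma$-semistable $E = \bigoplus_i E_i$ of phase $\phi$ all $Z_i(E_i)$ lie on the same ray $\bR_{>0} e^{i\pi\phi}$, giving $|Z(E)| = \sum_i |Z_i(E_i)|$, which combined with the support properties of the $\sigma_i$ and the norm $\|\cdot\| := \sum_i \|\cdot\|_i$ on $\Lambda$ yields the required bound with constant $\min_i c_i$.

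To upgrade the bijection to a biholomorphism I will invoke Bridgeland's theorem: the forgetful maps $\Stab_\Lambda(\cC) \to \Hom(\Lambda,\bC)$ and $\prod_i \Stab_{\Lambda_i}(\cC_i) \to \prod_i \Hom(\Lambda_i,\bC)$ are local homeomorphisms onto open subsets, and the canonical identification $\Lambda = \bigoplus_i \Lambda_i$ induces a linear isomorphism $\Hom(\Lambda,\bC) \cong \prod_i \Hom(\Lambda_i,\bC)$. The two constructions above intertwine these forgetful maps, so the bijection is a local biholomorphism and hence a biholomorphism globally. The main obstacle is verifying that the forward and inverse constructions are mutually inverse on the nose; this reduces to the observation that once one knows $\sigma$-semistable objects of phase $\phi$ in $\cC$ are precisely the direct sums of $\sigma_i$-semistable objects of phase $\phi$ in the $\cC_i$, the slicing obtained by restricting and then re-summing is tautologically the original $\cP(\phi)$, and conversely the slicings $\cP_i$ obtained from $\cP := \bigoplus_i \cP_i$ by intersection with $\cC_i$ recover the $\cP_i$.
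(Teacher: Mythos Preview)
Your proposal is correct and follows the same approach as the paper: restrict $(Z,\cP)$ componentwise for the forward map, and glue the $(Z_i,\cP_i)$ via $\cP(\phi)=\bigoplus_i\cP_i(\phi)$ for the inverse. The paper's proof is far terser---it leaves the forward direction as an exercise and, rather than constructing the inverse by hand as you do, simply invokes the gluing map of \cite{quasiconvergence}*{\S3.2} as a biholomorphic inverse; your explicit construction of the glued slicing and your argument via Bridgeland's local homeomorphism theorem amount to unpacking exactly what that citation provides.
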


\begin{proof}
    It is an exercise to verify that $(Z_i,\cP_i)$ defines a stability condition on $\cC_i$ for each $i$. The gluing map $\prod_{i=1}^n \Stab_{\Lambda_i}(\cal{C}) \to \Stab_\Lambda(\cal{C})$ defined using \cite{quasiconvergence}*{\S3.2} is a biholo\-morphic inverse to $(Z,\cP)\mapsto(Z_i,\cP_i)_{i=1}^n$.
\end{proof}

By \Cref{L:orthogonalproduct}, $\Stab(X) \cong \bC^n$ and $\Stab(X)/\bC \cong \bC^n/\bC$. All thick triangulated sub\-categories of $\DCoh(X)$ are of the form $\bigoplus_{i\in S} \DCoh(\pt_i)$ for $S\subset \{1,\ldots, n\}$ since thick triang\-ulated subcategories of $\DCoh(X)$ correspond to Serre subcategories of $\Coh(X)$.\endnote{First of all, the stated correspondence between thick triangulated subcategories and Serre subcategories of $\Coh(X)$ can be checked using the fact that all objects of $\DCoh(X)$ are isomorphic to the sums of their cohomology objects in this case. Since Serre subcategories of $\Coh(X)$ are in particular closed under summands, they are all of the form $\Coh_S = \bigoplus_{i\in S} \Coh(\pt_i)$ for $S\subset \{1,\ldots, n\}$. The corresponding thick triangulated subcategory of $\DCoh(X)$ is then $\bigoplus_{i\in \bZ}\Coh_S[i]$.} So, all thick triangulated subcategories of $\DCoh(X)$ are admissible.

\begin{lem}
\label{L:asconpoints}
    Consider a real oriented isomorphism class of multi-scale line $\Sigma$ such that $\Gamma(\Sigma)_{\rm{term}} = \{v_i\}_{i=1}^k$, a partition $\{1,\ldots, n\} = \bigsqcup_{j=1}^k S_j$, and $c_j\in \bC^{S_j}/\bC$ for each $1\le j\le k$. There is a unique element of $\Astab(X)$ with underlying multi-scale line $\Sigma$, 
    \[
    \cC_{<v_j} = \Span\left\{\bigcup_{\mathfrak{p}(v_i,v_j) = 1}\{E_s:s\in S_i\}\right\}, \quad \gr_{v_j}(\cC_\bullet) = \Span\{E_s:s\in S_j\},
    \]
    \[
    \cC_{\le v_j} = \cC_{<v_j}\oplus \gr_{v_j}(\cC_\bullet),
    \] 
    and $\sigma_j \in \Stab(\gr_{v_j}(\cC_\bullet))/\bC$ such that $\logZ(E_s:s\in S_j) = c_j$. Conversely, every element of $\Astab(X)$ is of this type.
\end{lem}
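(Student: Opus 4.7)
The plan is to verify the construction of an augmented stability condition from the given combinatorial data, and then recover the data from any augmented stability condition using the classification of thick subcategories of $\DCoh(X)$.

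For the forward direction, one first uses the combinatorial data to define $\cC_{\leq v_j}$, $\cC_{<v_j}$, and the quotients $\gr_{v_j}(\cC_\bullet)$ as stated, and to exhibit $\Lambda \otimes \bQ = \bigoplus_j M_{v_j}$ with $M_{v_j} := \bQ\langle [E_s] : s \in S_j\rangle$; then one checks the axioms of \Cref{D:multiscale_decomposition}. Axiom (2) is trivially satisfied because all Hom spaces between distinct points of $X$ vanish, so the required vanishing holds not only modulo $\cC_{\leq v}\cap \cC_{\leq w}$ but on the nose. Axiom (3) is clear: every subcategory of $\DCoh(X)$ of the form $\bigoplus_{i\in T}\DCoh(\pt_i)$ is thick. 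Axiom (4) follows from $S_j \neq \emptyset$ and $\bigsqcup_j S_j = \{1,\ldots,n\}$. For axiom (1), writing $T_j := \bigcup_{u \leq_{1,\infty} v_j} S_u$ so that $\cC_{\leq v_j} = \DCoh_{T_j}$, the partition property yields $T_j \cap T_k = \bigcup_{u \leq_{1,\infty} v_j,\ u \leq_{1,\infty} v_k} T_u$, which is precisely the required identity. Finally, since $\gr_{v_j}(\cC_\bullet) \cong \bigoplus_{s \in S_j}\DCoh(\pt_s)$, \Cref{L:orthogonalproduct} identifies $\Stab(\gr_{v_j}(\cC_\bullet))/\bC \cong \bC^{S_j}/\bC$, and $c_j$ uniquely specifies the stability condition $\sigma_j$.

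For the converse, given $\sigma = \langle \cC_\bullet | \sigma_\bullet\rangle_\Sigma \in \Astab(X)$, the thick subcategory classification mentioned in the preamble to the lemma forces $\cC_{\leq v_j} = \DCoh_{T_j}$ for some $T_j \subseteq \{1,\ldots,n\}$, and similarly $\cC_{<v_j} = \DCoh_{\bigcup_{u<_{1,\infty}v_j} T_u}$. Setting $S_j' := T_j \setminus \bigcup_{u<_{1,\infty} v_j} T_u$, axiom (4) gives $S_j' \neq \emptyset$. The main point is to show $\{S_j'\}$ is a partition of $\{1,\ldots,n\}$. Coverage: $\cC_{\leq v_0} = \cC$ forces $\bigcup_j T_j = \{1,\ldots,n\}$, and taking any $v_j$ minimal with respect to $\leq_{1,\infty}$ in $\{v_j : s \in T_j\}$ places $s$ in $S_j'$. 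Disjointness: if $s \in S_j' \cap S_k'$ with $j\neq k$, then $s\in T_j \cap T_k$, and axiom (1) implies $s \in T_u$ for some terminal $u$ with $u \leq_{1,\infty} v_j$ and $u \leq_{1,\infty} v_k$; if $u \in \{v_j,v_k\}$ one of $v_j,v_k$ strictly $<_{1,\infty}$ the other, contradicting one of the $S'$-conditions, and otherwise $u <_{1,\infty}$ both, again contradicting the $S'$-conditions.

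Given the partition, $\gr_{v_j}(\cC_\bullet) \cong \DCoh_{S_j'}$, and \Cref{L:orthogonalproduct} together with $\Stab(\DCoh(\pt))/\bC = \pt$ identifies $\sigma_j$ with an element $c_j' \in \bC^{S_j'}/\bC$. Uniqueness of the data is then immediate from \Cref{L:recovercategories} (or directly from the preceding discussion). The main obstacle in the argument is the disjointness of the $S_j'$, which requires a careful application of the intersection axiom \Cref{D:multiscale_decomposition}(1); everything else reduces to the classification of thick subcategories and the trivial structure of $\Stab(\DCoh(\pt))$.
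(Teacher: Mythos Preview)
Your proof is correct and follows essentially the same approach as the paper: verify the multiscale decomposition axioms directly from the combinatorics of the partition, then for the converse use the classification of thick subcategories of $\DCoh(X)$ to extract the sets $S_j$. You give more explicit detail than the paper in two places---the verification of axiom~(1) via $T_j\cap T_k$, and the partition argument for the $S_j'$ in the converse---where the paper simply says ``by inspection'' and ``it follows from \Cref{D:multiscale_decomposition}.''

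One small correction: your claim that the Hom-vanishing in axiom~(2) holds ``on the nose'' is not quite right. When $v_a<_{i,0} v_b$, the categories $\cC_{\leq v_a}$ and $\cC_{\leq v_b}$ can overlap (e.g.\ if some terminal $u$ satisfies $u\leq_{1,\infty} v_a$ and $u\leq_{1,\infty} v_b$, which happens in multiscale lines with more than two levels), and then $\Hom(E_s,E_s)\neq 0$ for $s$ in the common support. The point is exactly what the paper says: any such nonzero morphism arises from an object in $\cC_{\leq v_a}\cap\cC_{\leq v_b}$, so it vanishes after passing to the quotient. This does not affect the validity of your argument, only the parenthetical remark.
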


\begin{proof}
    \Cref{L:orthogonalproduct} implies that $\sigma_j$ is determined by the condition $\logZ(E_s:s\in S_j) = c_j$. We show that $\cC_\bullet = \{\cC_{\le v_j}\}$ and $\Sigma$ satisfy \Cref{D:multi-scale_decomposition}. First, $\cC_{\le v_i}\cap \cC_{\le v_j} = \Span\{\cC_{\le v_k}| v_k\le_{1,\infty} v_i,v_j\}$ by inspection. Next, if $v_i\le_{i,0}v_j$, then $\Hom(\cC_{\le v_j},\cC_{\le v_i}) = 0$ in $\cC/\cC_{\le v_i}\cap \cC_{\le v_j}$, since nonzero morphisms between objects in $\cC_{\le v_j}$ and $\cC_{\le v_i}$ correspond to objects $E_l\in \cC_{\le v_i}\cap \cC_{\le v_j}$, but such morphisms are zero in $\cC/\cC_{\le v_i}\cap \cC_{\le v_j}$. For any $v\in V(\Sigma)$, $\cC_{< v} = \bigoplus_{i\in S'} \DCoh(\pt_i)$ for $S' = \bigcup_{v_i<_{1,\infty} v} S_i$, and $\cC_{\le v} = \bigoplus_{i\in S}\DCoh(\pt_i)$ for $S = S' \cup \bigcup_{v_i\subset v} S_i$, giving \Cref{D:multi-scale_decomposition}(3) and (4).

    Conversely, the thick triangulated subcategories of $\DCoh(X)$ are of the form $\bigoplus_{i\in S} \DCoh(\pt_i)$ for $S\subset \{1,\ldots, n\}$. For each $i$, let $S_{\le i} = \{k:E_k\in \cC_{\le v_i}\}$ and define $S_{<i}$ analogously. Let $S_i = S_{\le i}\setminus S_{<i}$ so that $\gr_{v_i}(\cC_\bullet) = \Span\{E_s:s\in S_i\}$ for each $i$. It follows from \Cref{D:multi-scale_decomposition} that $\{1,\ldots,n\} = \bigsqcup_{i=1}^k S_i$. The claimed description follows.
\end{proof}

The map $\ell:=\ell_{(-)}(E_1,\ldots, E_n):\Astab(X) \to \cA_n^{\bR}$ is globally defined by \Cref{L:asconpoints}. It follows from \Cref{P:npoints} that $\DCoh(X)$ is an  example where \Cref{conj:manifoldwithcorners} holds globally in arbitrary dimension.

\begin{prop}
\label{P:npoints}
    $\ell:\Astab(X)\to \rmscbar_n$ is a homeo\-morphism which restricts to a biholo\-morphism on $\Stab(X)/\bC$.
\end{prop}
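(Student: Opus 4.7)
The plan is to leverage the explicit classification of augmented stability conditions in Lemma \ref{L:asconpoints} together with the classification of $n$-marked stable multiscale lines to get a set-theoretic bijection, and then to verify continuity in both directions directly from the definitions. First, Lemma \ref{L:asconpoints} identifies each $\sigma \in \Astab(X)$ with a triple $(\Sigma, \{S_j\}_{j=1}^k, \{c_j\}_{j=1}^k)$ where $\Sigma$ is a real-oriented isomorphism class of multiscale line, $\{S_j\}$ is a partition of $\{1,\ldots,n\}$ indexed by $V(\Sigma)_{\rm{term}}$, and $c_j \in \bC^{S_j}/\bC$ records the central charges on $\gr_{v_j}(\cC_\bullet) = \Span\{E_s : s \in S_j\}$. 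An $n$-marked stable multiscale line in $\rmscbar_n$ is parameterized by exactly the same data: $\Sigma$, the partition according to which terminal component each marked point lies on (nonempty blocks by stability), and the positions $c_j$ of those marked points. The map $\ell = \ell_{(-)}(E_1,\ldots,E_n)$ sends $\sigma$ to the marked line with precisely these data, so it is set-theoretically bijective.

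For continuity of $\ell$: every $E_s$ is $\infty$-well-placed at every point of $\Astab(X)$ (by Remark \ref{R:inftywellplaced}, since it lies in $\cC_{\le v_s}$ with nonzero projection to $\gr_{v_s}(\cC_\bullet)$), so $\ell$ is globally defined. By Definition \ref{D:weak_topology}, $\ell$ restricted to each $U(\langle \cC_\bullet \rangle_{\Sigma'})$ is continuous in the weak topology on $\Astab(X)$; since the topology of Theorem \ref{T:topology} is finer than the weak topology, $\ell$ is continuous in the topology of $\Astab(X)$.

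The core step is continuity of $\ell^{-1}$. Given a convergent net $x_\alpha \to x$ in $\rmscbar_n$ with $\sigma_\alpha := \ell^{-1}(x_\alpha)$ and $\sigma := \ell^{-1}(x)$, for $\alpha$ large there is a contraction $f_\alpha : \Gamma(\Sigma) \twoheadrightarrow \Gamma(\Sigma_\alpha)$ and the partitions are related by $S_v^\alpha = \bigsqcup_{u \in V(\Sigma)_{\rm{term}} \cap f_\alpha^{-1}(v)} S_u$; together with Lemma \ref{L:asconpoints} this yields a coarsening of multiscale decompositions and places $\sigma_\alpha$ in $U(\langle \cC_\bullet^\sigma \rangle_\Sigma)$. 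Convergence of marked points in $\rmscbar_n$ then gives convergence of $\ell(\sigma_\alpha) \to \ell(\sigma)$ in the weak topology, verifying condition \eqref{I:weaktopology}. Condition \eqref{I:stability} is immediate: a $\sigma$-stable $E \in \cC_{\leq v}$ is (up to shift) equivalent modulo $\cC_{<v}$ to some $E_s$ with $s \in S_v$, and $E_s$ is itself $\sigma_\alpha$-stable as a simple object of $\gr_{f_\alpha(v)}(\cC_\bullet^\alpha) = \bigoplus_{t \in S_{f_\alpha(v)}^\alpha} \DCoh(\pt_t)$. For condition \eqref{I:phasewidthunif}, given $v \in V(\Sigma)_{\rm{term}}$ and $E \in \cC_{\le v} \setminus \cC_{<v}$, I replace $E$ by its projection $E'$ onto $\gr_v(\cC_\bullet^\sigma)$, which is a finite direct sum of shifts of $\{E_s\}_{s \in S_v}$; both sides of $\vec{d}_{E,E'}(\sigma, \sigma_\alpha)$ then become explicit functions of the central charges of those $E_s$. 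Convergence $x_\alpha \to x$ in $\rmscbar_n$ provides uniform control over these central charges (and hence over the local Bridgeland metrics on the factor $\Stab$ of each terminal component), so $\vec{d}(\sigma, \sigma_\alpha) \to 0$.

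Finally, under $\ell$ the irreducible locus $\Stab(X)/\bC$ maps to $\rmscbar_n^\circ \cong \bC^n/\bC$; both sides are identified with $\bC^n/\bC$ via the central-charge coordinates $(\logZ_\sigma(E_1), \ldots, \logZ_\sigma(E_n))$, and under these identifications $\ell$ is the identity, hence a biholomorphism by Bridgeland's theorem and Lemma \ref{L:orthogonalproduct}. The main obstacle is the uniform control on $\vec{d}(\sigma,\sigma_\alpha)$ in Step 3, but here the disjoint-points hypothesis makes things very rigid: every category $\gr_v(\cC_\bullet^\sigma)$ is a finite orthogonal sum of copies of $\DCoh(\pt)$, so $\vec{d}$ reduces to local-metric distances on each factor, all of which depend continuously on the marked-point positions.
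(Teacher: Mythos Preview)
Your proof is correct and follows essentially the same approach as the paper: bijectivity from Lemma~\ref{L:asconpoints}, continuity of $\ell$ from the weak topology, and continuity of $\ell^{-1}$ by directly verifying the three convergence conditions of Theorem~\ref{T:topology}. The only notable difference is that the paper gives an explicit computation (in an endnote) for the uniformity in condition~\eqref{I:phasewidthunif}, whereas you invoke the reduction to local Bridgeland metrics on the orthogonal factors; both work here because the stable objects of each $\gr_v(\cC_\bullet)$ are just the finitely many $E_s$, so the metric is governed by the finitely many differences $\logZ(E_s)-\logZ(P_v)$, which converge by hypothesis.
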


\begin{proof}
    $\ell$ is continuous by \Cref{D:weak_topology} and bijective by \Cref{L:asconpoints}. We verify that $\ell^{-1}$ is continuous. Consider a net $(\Sigma_\alpha)_{\alpha \in I}\to \Sigma$ in $\cA_n^{\bR}$; under $\ell^{-1}$ this gives rise to a net $(\sigma_\alpha)_{\alpha \in I}$, which we claim converges to $\sigma = \ell^{-1}(\Sigma)$. The conditions of \Cref{D:convergence_conditions} follow from the observation that $\cC_{\le v_i}^{\rm{ss}}$ consists of objects of the form $E = \bigoplus_{j} E_j^{\oplus s_j}[n_j]$ such that $j\in S_i$, $n_j\in \bZ$, and $s_j \ge 0$ such that $j\in S_i$ and such that $\phi(E/P) = \phi(E_j[n_j]/P)$ for all $j$; in particular, $\phi_\alpha(E/P)\to \phi(E/P)$.\endnote{We check the conditions of \Cref{D:convergence_conditions} more carefully: (1) is by our hypothesis that $(\Sigma_\alpha)\to \Sigma$ in $\cA_n^{\bR}$. For (2)\eqref{I:cosh_bound}, choose $P = E_j$ for $j\in S_i$. Then, we know that $\phi_\alpha(E/P)\to \phi_\alpha(E/P)$ which implies that $c_\alpha^t(E) \to 1$ for all $t$, since $\phi_\alpha^+(E) - \phi_\alpha^-(E) \to 0$. This also implies convergence of the imaginary part of \eqref{I:log_central_charge}. For the real part, by hypothesis, for any $j\in S_i$ one has $\lim_\alpha \log m_\alpha(E_j/P) = \log m_\sigma(E_j/P)$. Consequently, $\lim_\alpha m_\alpha(E_j/P)$ exists for each $j$ and by the classification of stability conditions on $\DCoh(\gr_{v_i}(\cC_\bullet))$ from \Cref{L:orthogonalproduct}, we see that $m_\alpha(E) = \sum_j s_j m_\alpha(E_j)$, so that $\lim_\alpha m_\alpha(E/P)$ converges.} 

    It remains to verify \Cref{T:topology}\eqref{I:phasewidthunif} and \eqref{I:stability}. Up to passing to a cofinal subnet, we may assume that the underlying curve of $\Sigma_\alpha$, the signature of $\Sigma_\alpha$, and the categories $\cC_{\le v}^\alpha$ of $\sigma_\alpha$ indexed by $V(\Sigma_\alpha)_{\rm{term}}$ are constant. By \Cref{L:asconpoints}, $\sigma_\alpha$ corresponds to a partition $\{1,\ldots, n\} = \bigsqcup S_i'$ coarsening that of $\sigma$. For any $E\in \gr_{v_i}(\cC_\bullet),$ choosing $E$ as its own lift $E'$ in $\cC_{\le v_i}$ as in \Cref{D:directeddistance} gives $\lim_\alpha \vec{d}_{E,E'}(\sigma_\alpha,\sigma) = 0$. The uniformity condition \eqref{I:phasewidthunif} follows from the fact for each $i$, all $\sigma$-stable in $\cC_{\le v_i}$ are shifts of the finite set $\{E_j:j\in S_i\}$. Finally, \eqref{I:stability} follows from the fact that the image of any stable object under $\cC_{\le v_i}\to \gr_{v_i}(\cC_\bullet)$ is again $\sigma_\alpha$-stable.\endnote{We verify \eqref{I:phasewidthunif} in more detail: the question is about uniformity of convergence of $\phi_\alpha^{\pm}(E/P)-\phi^{\pm}(E/P)$ for $E\in \cC_{\le v_i}$ with some $P\in \{E_j:j\in S_i\}$ fixed. Write $E = \bigoplus_j F_j$ where $F_j \in \langle E_j\rangle$ for each $j\in S_i$. One has $m_\alpha(E/P) = \sum_j s_j\cdot m_\alpha(E_j/P)$ where $s_j$ is the number of simple objects in $\langle E_j\rangle$ used to build $F_j$. This implies that $\log m_\alpha(E/P)\to \log m(E/P)$ uniformly in $E$, since 
    \[
    \min_j\left\{\frac{m_\alpha(E_j)}{m(E_j)}\right\} \le \frac{m_\alpha(E)}{m(E)} = \frac{\sum_j s_j m_\alpha(E_j)}{\sum_j s_j m(E_j)} \le \max_j \left\{\frac{m_\alpha(E_j)}{m(E_j)}\right\}.
    \]
    Let us consider the average phase, using the notation from above: without loss of generality we may set $\phi_\alpha(P) = 0$ for all $\alpha$ and $\phi(P) = 0$. Then, the relevant quantity is 
    \[
        \frac{1}{m_\alpha(E)}\sum_{j,k}m_{j,k}\cdot  \theta_{j,k}(\alpha)\cdot m_\alpha(E_j) - \frac{1}{m(E)} \sum_{j,k} m_{j,k}\cdot  \theta_{j,k} \cdot m(E_j)
    \]
    where $F_j$ has factors $E_j[n_k]^{\oplus m_{j,k}}$ for $n_k\in \bZ$ and $m_{j,k} \ge 0$ and $\theta_{j,k}(\alpha) = \phi_\alpha(E_j[n_k]) = \phi_\alpha(E_j)+n_k$. We use similar notation for $E$ without the subscript $\alpha$. The above quantity can be rewritten as
    \[
        \frac{m(E) - m_\alpha(E)}{m(E)m_\alpha(E)} \left(\sum_{j,k}m_{j,k}\cdot\big( \theta_{j,k}(\alpha)\cdot m_\alpha(E_j) - \theta_{j,k}\cdot m(E_j)\big)\right) 
    \]
    which has absolute value bounded above by 
    \begin{equation}
    \label{E:unifconvbound}
    \left(\frac{1}{m_\alpha(E)}+ \frac{1}{m(E)}\right)\left(\sum_{j,k} m_{j,k} \cdot\big(\theta_{j,k}(\alpha)\cdot m_\alpha(E_j) - \theta_{j,k}\cdot m(E_j)\big)\right). 
    \end{equation}
    Now, for any $\epsilon>0$, we have $m_\alpha(E) \ge \min\{m(E_j):j\in S_i\}-\epsilon$ for all $\alpha$ sufficiently large. Uniform convergence in $E$ of \eqref{E:unifconvbound} to $0$ now follows from 
    \begin{align*}
    \frac{m_{j,k}\cdot (\theta_{j,k}(\alpha)\cdot m_\alpha(E_j) - \theta_{j,k}\cdot m(E_j))}{m(E)} & \le \frac{m_{j,k}\cdot (\theta_{j,k}(\alpha)\cdot m_\alpha(E_j) - \theta_{j,k}\cdot m(E_j))}{m_{j,k}\cdot m(E_j)}\\
     & = \frac{\theta_{j,k}(\alpha)\cdot m_\alpha(E_j) - \theta_{j,k}\cdot m(E_j)}{m(E_j)}
    \end{align*}
    and the fact that $\theta_{j,k}(\alpha) - \theta_{j,k}\to 0$ uniformly in $j,k$.}
\end{proof}

\subsubsection{The projective line}
The stability manifold of $\DCoh(\bP^1)$ was first studied in \cite{Ok06}, where it was shown to be biholomorphic to $\bC^2$. Since then, more explicit identifications of $\Stab(\bP^1)$ with $\bC^2$ have been found \cites{HKK,NMMP}. In this section, we will completely describe $\Astab(\bP^1)$. Let $\Gamma$ denote the unique level tree with two terminal vertices $v_1,v_2$. Up to reindexing, we assume $\mathfrak{p}(v_1,v_2)\in \bH \cup \bR_{>0}$. 

\begin{lem}
\label{L:admissibleclassificationP^1}
    Admissible points $\sigma$ of $\Astab(\bP^1)$ are elements of $\Stab(\bP^1)/\bC$ or have level tree $\Gamma$. In the latter case, 
    \begin{enumerate} 
        \item if $\sigma$ is generic, i.e. $\mathfrak{p}(v_1,v_2)\ne 1$, then $\cC_{\le v_1} = \langle\cO(k)\rangle$ and $\cC_{\le v_2} = \langle\cO(k+1)\rangle$ for some $k\in \bZ$; and \vspace{-2mm}
        \item if $\mathfrak{p}(v_1,v_2) = 1$, then $\cC_{\le v_1} = \langle\cO(k)\rangle$ for some $k\in \bZ$ and $\cC_{\le v_2} = \DCoh(\bP^1)$.
    \end{enumerate}
\end{lem}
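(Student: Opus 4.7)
The plan is to exploit the fact that $\DCoh(\bP^1)$ has rank 2 and a very rigid classification of exceptional objects and admissible subcategories.

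First I would use the rank constraint. Since $K_0(\DCoh(\bP^1))$ has rank $2$, so does $\Lambda$ (assuming $\ch$ is surjective after tensoring with $\bQ$). By the direct sum decomposition $\Lambda \otimes \bQ = \bigoplus_{v \in V(\Sigma)_{\rm{term}}} M_v$ in \Cref{D:generalized_stability_condition}, the number of terminal components of $\Sigma$ is at most $2$. If there is exactly one terminal vertex, then the valence conditions of \Cref{D:multiscaleline}(3) force the tree to have a single vertex, so $\Sigma \cong \bP^1$ and $\sigma \in \Stab(\bP^1)/\bC$. Otherwise there are exactly two terminal vertices, and both must lie directly below the root, so $\Gamma(\Sigma) = \Gamma$.

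For the generic case $\mathfrak{p}(v_1, v_2) \ne 1$, after reindexing so $\mathfrak{p}(v_1, v_2) \in \bH$, \Cref{const:SODfrommultiscale} gives a semiorthogonal decomposition $\DCoh(\bP^1) = \langle \cC_{\le v_1}, \cC_{\le v_2} \rangle$. Each factor is nonzero by \Cref{D:multiscale_decomposition}(4) and admissible by hypothesis, and on $K_0 \otimes \bQ$ each has rank exactly one. A rank-one admissible subcategory of $\DCoh(\bP^1)$ is necessarily generated by an exceptional object, and since every exceptional object in $\DCoh(\bP^1)$ is a shift of a line bundle $\cO(k)$, we have $\cC_{\le v_1} = \langle \cO(a) \rangle$ and $\cC_{\le v_2} = \langle \cO(b) \rangle$ for some $a, b \in \bZ$. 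The semiorthogonality condition $\Hom^\bullet(\cO(b), \cO(a)) = 0$ forces $a - b = -1$, giving the claimed form with $k = a$.

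For the non-generic case $\mathfrak{p}(v_1, v_2) = 1$, \Cref{D:multiscale_decomposition}(1) gives a strict inclusion $\cC_{\le v_1} \subsetneq \cC_{\le v_2}$, and the requirement that the root vertex's category equals $\DCoh(\bP^1)$ forces $\cC_{\le v_2} = \DCoh(\bP^1)$. Moreover $\cC_{\le v_1}$ is nonzero (by \Cref{D:multiscale_decomposition}(4)) and proper (otherwise $\gr_{v_2}(\cC_\bullet) = 0$, contradicting \Cref{D:multiscale_decomposition}(4) again). It is admissible in $\DCoh(\bP^1)$ by assumption, and again by the rank and exceptional object classification it must be of the form $\langle \cO(k) \rangle$.

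The main obstacle is the classification-theoretic step: verifying that every nonzero proper admissible subcategory of $\DCoh(\bP^1)$ is generated by some $\cO(k)$, equivalently that every indecomposable exceptional object on $\bP^1$ is a shift of a line bundle. I would extract this from the standard theory of exceptional collections on $\bP^1$ (for instance via the description of $\Stab(\bP^1)$ in \cite{Ok06} or via Beilinson's resolution), which shows that all full exceptional collections are mutation-equivalent to $(\cO, \cO(1))$; combined with a rank count this rules out any other admissible subcategory.
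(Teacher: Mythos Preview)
Your proof is correct and follows the same outline as the paper's: use the rank constraint from \Cref{D:generalized_stability_condition} to bound the number of terminal vertices by two, then classify the resulting admissible subcategory or semiorthogonal decomposition of $\DCoh(\bP^1)$. The paper simply cites \cite{GKR03} for the classification of semiorthogonal decompositions rather than arguing via exceptional objects and the vanishing $\RHom(\cO(b),\cO(a))=0 \Leftrightarrow a-b=-1$, but the two arguments are interchangeable here.
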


\begin{proof}
    Suppose $\sigma = \langle \cC_\bullet|\sigma_\bullet\rangle_{\Sigma} \in \Astab(\bP^1)$. By \Cref{D:generalized_stability_condition}, $\rank  \rm{K}_0(\bP^1) = 2$ bounds the number of terminal vertices of $\Gamma(\Sigma)$. When $\sigma$ is generic, it corresponds to a semiorthogonal decomposition $\langle \cC_{\le v_1},\cC_{\le v_2}\rangle$, all of which are classified by \cite{GKR03} and are as in (1). Otherwise, if $v_1\le_{1,\infty} v_2$ then $\cC_{\le v_1}\subset \DCoh(\bP^1)$ is an admissible subcategory and hence equals $\langle \cO(k)\rangle$ for some $k\in \bZ$. 
\end{proof}

Let $X_k$ denote the locus in $\Stab(\bP^1)/\bC$ where $\cO(k)$ and $\cO(k-1)$ are simultaneously stable. Okada \cite{Ok06} proves that $\varphi_k: X_k \to \bC$ given by $\sigma \mapsto \logZ_\sigma(\cO(k)) - \logZ_\sigma(\cO(k-1))$ is a biholomorphism onto $\bH$. Moreover, $X_k$ contains a fundamental domain $U_k$ for the $\bZ$-action on $\Stab(\bP^1)/\bC$ generated by $-\otimes \cO(1)$. $U_k$ corresponds under $\varphi_k$ to the region in $\bH$ bounded below by $\{(x,y): e^{\lvert x\rvert}\cos y = 1\}$. In \cite{NMMP}, an explicit biholomorphism $\mathscr{B}:\bC \xrightarrow{\sim} \Stab(\bP^1)/\bC$ is given which identifies each horizontal strip $\bR + i\pi[k-1,k)$ with $U_k$. The restriction of $\mathscr{B}$ to $\bR+ i\pi[0,1)\to U_1$ is denoted $\mathscr{B}_1$. By \cite{NMMP}*{Prop. 25}, 
\begin{equation}
\label{E:bessel}
    (\varphi_1\circ \mathscr{B}_1)(\tau) = \log\left(\frac{ \rm{K}_0(e^\tau)+ i\pi I_0(e^\tau)}{ \rm{K}_0(e^\tau)}\right)
\end{equation}
where $I_0(u)$ and $ \rm{K}_0(u)$ are modified Bessel functions of the first and second kind, respectively. For any $k\in \bZ$, let $\Omega_k$ denote the union of $U_k$ with $\partial_k:= \{\sigma \in \Astab(\bP^1): \cC_{\le v_1} = \cO(k) \text{ and } \mathfrak{p}(v_1,v_2) = e^{i\pi\theta}, \theta \in [0,1)\}$. By \Cref{L:admissibleclassificationP^1}, $\cO(n)\cdot \Omega_0 = \Omega_n$ for all $n \in \bZ$ and therefore $\Astab^{\rm{adm}}(\bP^1) = \bigsqcup_{n\in \bZ} \Omega_n$. 

Next, let $\bC_+ := \{x+iy:x\in (-\infty,\infty],y\in \bR\}$, equipped with the product topology. $\bZ$ acts on $\bC_+$ by translation: $n\cdot (x+iy) = x+i(y+n\pi)$. A strict fundamental domain for this action is given by the strip $S_1 = (-\infty,\infty] + i\pi[0,1)$. Write $S_1^\circ = (-\infty,\infty]+i\pi(0,1)$. Define $F_1:S_1 \to \Omega_1$ such that $F_1|_{\bR + i\pi[0,1)} = \mathscr{B}_1$ and for $\theta \in [0,1)$, $F_1(\infty+i\pi\theta)$ is the element of $\partial_1$ with $\mathfrak{p}(v_1,v_2) = e^{i\pi \theta}$.

\begin{thm}
\label{T:P1}
    There is a unique map $F:\bC_+ \to \Astab^{\rm{adm}}(\bP^1)$ which is $\bZ$-equivariant and such that $F|_{S_1} = F_1$. $F$ is a homeomorphism and restricts to the biholomorphism $\mathscr{B}:\bC\to \Stab(\bP^1)/\bC$.
\end{thm}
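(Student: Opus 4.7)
Uniqueness of $F$ follows immediately from the fact that $S_1$ is a strict fundamental domain for the $\bZ$-action on $\bC_+$ by translation by $i\pi$: every $\tau \in \bC_+$ is uniquely of the form $n \cdot s$ for some $n \in \bZ$ and $s \in S_1$, so $\bZ$-equivariance forces $F(\tau) = \cO(n) \cdot F_1(s)$. I will take this as the definition of $F$. Bijectivity then follows from the decomposition $\Astab^{\rm{adm}}(\bP^1) = \bigsqcup_{n \in \bZ} \Omega_n$ (\Cref{L:admissibleclassificationP^1}), the identity $\cO(n) \cdot \Omega_1 = \Omega_{n+1}$, and the bijectivity of $F_1 : S_1 \to \Omega_1$ built into its definition: the restriction to $\bR + i\pi[0,1)$ is $\mathscr{B}_1$, a biholomorphism onto $U_1$ by Okada's result, and the restriction to $\{\infty\} + i\pi[0,1)$ sends $\theta$ bijectively to the unique point of $\partial_1$ with $\mathfrak{p}(v_1,v_2) = e^{i\pi\theta}$. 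The identity $F|_{\bC} = \mathscr{B}$ then follows because both maps are $\bZ$-equivariant and agree on the fundamental domain $\bR + i\pi[0,1)$ for the $\bZ$-action on $\bC$.

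Continuity of $F$ reduces by $\bZ$-equivariance to continuity of $F_1 : S_1 \to \Omega_1$. The interior $\bR + i\pi[0,1)$ is handled by $\mathscr{B}_1$ being a biholomorphism, so only boundary points $\infty + i\pi\theta_0 \in S_1$ require work. Given a convergent net $\tau_n \to \infty + i\pi\theta_0$ in $S_1$, I will pass to cofinal subnets of two kinds: those eventually in $\{\infty\} + i\pi[0,1)$, where the claim reduces to directly verifying \Cref{T:topology} using that only the normalized period $e^{i\pi\theta_n} \to e^{i\pi\theta_0}$ varies while the categorical data is constant; and those of the form $x_n + i\pi\theta_n$ with $x_n \in \bR$, $x_n \to \infty$, and $\theta_n \to \theta_0$. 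For the latter, the plan is to show that each ray $t \mapsto \mathscr{B}(t + i\pi\theta)$ in $\Stab(\bP^1)/\bC$ is strongly quasi-convergent in the sense of \Cref{D:stronglyqconv}, with asymptotic indexing set $\{\cO(k),\cO(k+1)\}$ and limit categorical/stability data matching $F_1(\infty + i\pi\theta)$; this is verified by asymptotic expansion of $\varphi_1 \circ \mathscr{B}_1$ in \eqref{E:bessel} as $\Re(\tau) \to \infty$. Then \Cref{P:quasiconvconv} provides convergence along each fixed ray, and continuity in $\theta$ of the Bessel asymptotics upgrades this to the full net $x_n + i\pi\theta_n$.

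For continuity of $F^{-1}$, the interior is handled by $\mathscr{B}^{-1}$. At a generic boundary point $\sigma = F(\infty + i\pi\theta_0)$ with $\theta_0 \in (0,1)$, the underlying multiscale decomposition $\DCoh(\bP^1) = \langle \langle \cO(k)\rangle, \langle \cO(k+1)\rangle\rangle$ is the Beilinson exceptional collection, which is admissible, so \Cref{T:genericmanifoldwithcorners} yields a local homeomorphism between a neighborhood of $\sigma$ in $\Astab(\bP^1)$ and an open subset of $\rmscbar_2$; composing with the explicit local model for $F$ on $\bC_+$ coming from \eqref{E:bessel} gives continuity of $F^{-1}$ at $\sigma$. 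The main obstacle will be continuity of $F^{-1}$ at the non-generic boundary point $F(\infty + 0)$ and its $\bZ$-translates, where $\mathfrak{p}(v_1,v_2) = 1$, the multiscale decomposition is the strict admissible filtration $\cC_{\le v_1} = \langle \cO(k)\rangle \subsetneq \cC_{\le v_2} = \DCoh(\bP^1)$, and \Cref{T:genericmanifoldwithcorners} does not apply. My plan is to argue directly: given a net $\sigma_\alpha \to \sigma$ in $\Astab(\bP^1)$, \Cref{L:constanttype} reduces to cofinal subnets with constant underlying multiscale decomposition. When $\sigma_\alpha \in \Stab(\bP^1)/\bC$, the convergence of $\ell_{\sigma_\alpha}(\cO(k),\cO(k+1))$ in $\rmscbar_2$ combined with inverting \eqref{E:bessel} forces $\mathscr{B}^{-1}(\sigma_\alpha) \to \infty + i\pi \cdot 0$ in $\bC_+$; when $\sigma_\alpha$ lies in a generic boundary stratum, \Cref{T:topology}\eqref{I:stability} forces the categorical index to agree with that of $\sigma$ and $\mathfrak{p}_\alpha \to 1$ gives the corresponding $\theta_\alpha \to 0$, so $F^{-1}(\sigma_\alpha) = \infty + i\pi\theta_\alpha \to \infty + i\pi \cdot 0$ as required.
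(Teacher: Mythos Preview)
Your overall structure matches the paper's, but there is a gap in your plan for continuity of $F$. You propose to show each ray $t \mapsto \mathscr{B}(t + i\pi\theta)$ is strongly quasi-convergent, invoke \Cref{P:quasiconvconv}, and then claim that ``continuity in $\theta$ of the Bessel asymptotics upgrades this to the full net $x_n + i\pi\theta_n$.'' But \Cref{P:quasiconvconv} is a statement about a single path; convergence along each fixed-$\theta$ ray does not by itself imply convergence of the diagonal net without uniformity in $\theta$. To make this work you would need to verify the conditions of \Cref{T:topology} directly for the net, with estimates uniform in $\theta$ --- which is essentially what the paper does, bypassing \Cref{P:quasiconvconv} entirely. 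The paper's route is more economical: at generic boundary points $\infty + i\pi y$ with $y\in(0,1)$ it invokes \Cref{T:genericmanifoldwithcorners} once to reduce \emph{both} continuity of $F$ and of $F^{-1}$ to the assertion that $\tau_\alpha \to \infty + i\pi y$ in $\bC_+$ iff $\varphi_1(\mathscr{B}_1(\tau_\alpha))$ converges in $\rmscbar_2$ to the point with $\mathfrak{p}(v_1,v_2) = e^{i\pi y}$, and then the Bessel asymptotic $\varphi_1 \circ \mathscr{B}_1(\tau) = i\pi/2 + 2e^\tau + o(1)$ gives this directly via the computation \eqref{E:limitangle}. At the non-generic point $\infty + 0$ the paper verifies \Cref{D:convergence_conditions} and \Cref{T:topology}\eqref{I:phasewidthunif},\eqref{I:stability} by hand for the net, using the explicit description of $\cC_{\le v_i}^{\rm ss}$.

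Two smaller issues: your reduction ``continuity of $F$ reduces to continuity of $F_1 : S_1 \to \Omega_1$'' misses nets approaching $\infty + 0$ from $S_0$, since $S_1$ is only half-open (the paper explicitly splits into $\Im(z_\alpha) \in [0,1)\pi$ and $\Im(z_\alpha) \in (-1,0]\pi$); and in your $F^{-1}$ argument at the non-generic point, only $\cO(k)$ is guaranteed $\sigma_\alpha$-stable by \eqref{I:stability}, not $\cO(k+1)$, so you cannot directly invert \eqref{E:bessel} --- the paper handles this by observing $\sigma_\alpha \in X_0 \cup X_1$ and treating each case separately.
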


\begin{proof}
    $F_1$ maps a strict fundamental domain for the $\bZ$-action on $\bC_+$ to a strict fundamental domain for the $\bZ$-action on $\Astab^{\rm{adm}}(\bP^1)$ bijectively. Therefore, $F$ is uniquely defined on $S_{n+1} = n\cdot S_1$ by $F(n\cdot z) = n\cdot F_1(z)$. and is bijective. We next show that $F$ is a homeomorphism. This is clear on $\bC \subset \bC_{+}$, since $F|_{\bC} = \mathscr{B}$. Consider $z = \infty + i\pi y$ for $y\in (0,1)$. By definition, $\varphi_1\circ\mathscr{B}_1$ is $\tau \mapsto \logZ_{\mathscr{B}(\tau)}(\cO(1)) - \logZ_{\mathscr{B}(\tau)}(\cO)$, given in coordinates by \eqref{E:bessel}. We claim that $F$ extends $\mathscr{B}$ continuously to the boundary of $(-\infty,\infty] +i\pi[0,1]$. Consider a sequence $\tau_n = x_n +iy_n\pi$ in $(-\infty,\infty] + i\pi[0,1]$ with $x_n \to \infty$. Write $u = e^\tau$ and let $u_n = e^{\tau_n}$. As $\lvert u \rvert \to \infty$, we have asymptotic estimates 
    \[ 
         \rm{K}_0(u) = \sqrt{\frac{\pi}{2u}}e^{-u}\epsilon_1(u),\quad 
        I_0(u) = \frac{e^u}{\sqrt{2\pi u}}\epsilon_2(u) + i\frac{e^{-u}}{\sqrt{2\pi u}} \epsilon_1(u)
    \]
    where $\epsilon_1(u),\epsilon_2(u) \to 1$ as $\lvert u\rvert\to \infty$ by \cite{NMMP}*{p. 26}. So, as $\lvert u \rvert\to \infty$ we have 
    \begin{align*}
        \log\left(\frac{ \rm{K}_0(u) + i\pi I_0(u)}{ \rm{K}_0(u)}\right) & = i\frac{\pi}{2} + 2u + \log\left(\frac{\epsilon_2(u)}{\epsilon_1(u)}\right). 
    \end{align*}
    In particular,\endnote{As $n\to \infty$,
    \begin{align*}
            \log\left(\frac{ \rm{K}_0(u_n) + i\pi I_0(u_n)}{ \rm{K}_0(u_n)}\right)  & \approx \log \left(\frac{\sqrt{\frac{\pi}{2u_n}} e^{-u_n} + i\pi(\frac{e^{u_n}}{\sqrt{2\pi u_n}} + i\frac{e^{-u_n}}{\sqrt{2\pi u_n}})}{\sqrt{\frac{\pi}{2u_n}}e^{-u_n}}\right)\\
            & = \log \left(1+ i\pi \left(\frac{e^{2u_n} + i}{\pi}\right)\right)\\
            & = \log(i) +2 u_n = i\frac{\pi}{2}+2u_n.
    \end{align*}
    Finally, we have 
    \[
        \lim_{n\to \infty} \frac{u_n}{\lvert u_n\rvert} = \lim_{n\to\infty} \frac{e^{x_n}e^{iy_n\pi}}{e^{x_n}} = \lim_{n\to\infty} e^{iy_n\pi} = e^{iy\pi}.
    \]}
    \begin{equation}
    \label{E:limitangle}
        \lim_{n\to\infty} \frac{(\varphi_1\circ \mathscr{B}_1)(u_n)}{1+\lvert (\varphi_1\circ \mathscr{B}_1)(u_n)\rvert } = \lim_{n\to\infty} \frac{u_n}{\lvert u_n\rvert} = \lim_{n\to\infty} e^{i\pi y_n}.
    \end{equation}
    By \Cref{T:genericmanifoldwithcorners}, to prove that $f$ is a local homeomorphism at a boundary point $\infty + i\pi y$ for $y\in (0,1)$, we may show that a net $\{\tau_\alpha\}$ converges to $\infty + i\pi y$ if and only if $\logZ_{f(\tau_\alpha)}(\cO(1)) - \logZ_{f(\tau_\alpha)}(\cO)$ converges to the point in $\rmscbar_2$ with $\mathfrak{p}(v_1,v_2) = e^{i\pi y}$. Given $\tau_\alpha \to\infty+i\pi y$, \eqref{E:limitangle} implies that $\lim_\alpha \logZ_{f(\tau_\alpha)}(\cO(1)) - \logZ_{f(\tau_\alpha)}(\cO) = \Sigma$ with $\mathfrak{p}_\Sigma(v_1,v_2) = e^{iy\pi}$. Conversely, suppose given a net $\{\sigma_\alpha\}$ in $\Astab(\bP^1)$ converging to $\sigma \in \partial_1$ with $\mathfrak{p}(v_1,v_2) = e^{i\pi y}$ and $y\in (0,1)$. Denote by $\tau_\alpha = x_\alpha + i\pi y_\alpha$ in $(-\infty,\infty]+ i\pi(0,1)$ the net $f^{-1}(\sigma_\alpha)$. The asymptotic estimates of $I_0(u)$ and $ \rm{K}_0(u)$ as $u\to 0$ \cite{NMMP}*{p. 25} imply that $x_\alpha \to \infty$.\endnote{$u$ is a parameter on $\bH$ given by $u = e^\tau$. The cited asymptotic estimates read $I_0(u) = 1 + O(\lvert u\rvert^2)$ and $ \rm{K}_0(u) = - \log(\frac{u}{2}) -C_{\rm{eu}} + O(\lvert u\rvert^2\cdot \lvert \ln(u)\rvert)$ as $u\to 0$. In particular, as $\tau$ goes to the left in the strip (i.e. $x\to -\infty$) we have 
    \[
        \log\left(\frac{ \rm{K}_0(u) + i\pi I_0(u)}{ \rm{K}_0(u)}\right) \to 0
    \]
    and the convergence depends only on $x$. Consequently, for 
    \[ 
        f(\tau_\alpha) = \log\left(\frac{ \rm{K}_0(u_\alpha) + i\pi I_0(u_\alpha)}{ \rm{K}_0(u_\alpha)}\right)
    \]
    to diverge to infinity, we must have $x_\alpha \to \infty$.}. On the other hand, \eqref{E:limitangle} implies that $y_\alpha \to y$. 

    Next, we check continuity of $f$ at $\{\infty + i\pi n : n \in \bZ\}$. Consider a net $\{z_\alpha\}\to \infty = \infty+ i\pi \cdot 0$. Up to passing to a cofinal subnet, we may separately consider the cases where $\Im(z_\alpha) \in [0,1)\pi$ and $\Im(z_\alpha) \in (-1,0]\pi$ for all $\alpha$. Let $f(z_\alpha) = \sigma_\alpha$ and consider the former case. We verify that $\{\sigma_\alpha\}$ satisfies the conditions of \Cref{T:topology}. Set $P_1= \cO$ and $P_2 = \cO(1)$. \Cref{D:convergence_conditions}\eqref{I:marked_line_convergence_1} is by \eqref{E:limitangle}. $\cC_{\le v_1}^{\rm{ss}} = \{\cO^{\oplus p}[t]:p\ge 1,t\in \bZ\}$, so \Cref{D:convergence_conditions}\eqref{I:cosh_bound} and \eqref{I:log_central_charge} are immediate for $v_1$.\endnote{Indeed, $\ell_\alpha(\cO^{\oplus p}[t])  - \ell_\alpha(\cO) = \log \frac{p\cdot m_\alpha(\cO)}{m_\alpha(\cO)} + i\pi t$, and is in particular constant. Similarly, since $\cO$ is $\sigma_\alpha$-stable for all $\sigma_\alpha$ in this subnet, it follows that $c_t^\alpha(E) = 1$ for all $t > 0$ and $\alpha$.} 
    
    Next, $\cC_{\le v_2}^{\rm{ss}}$ consists of objects $E\in \DCoh(\bP^1)$ which fit into a triangle $\cO(1)^{\oplus p}[n] \to E\to  \bigoplus_i \cO^{\oplus q_i}[s_i]$. By \eqref{E:limitangle}, $\lim_\alpha m_\alpha(\cO(1))/m_\alpha(\cO) = \infty$ and it follows that $\phi_\alpha(E/\cO(1)) \to n$.\endnote{We have 
    \[
    \phi_\alpha(E/\cO(1)) = \frac{p\cdot (\phi_\alpha(\cO(1)) + n - \phi_\alpha(\cO(1)))m_\alpha(\cO(1)) +\sum_i q_i\cdot (\phi_\alpha (\cO) -\phi_\alpha(\cO(1)) +s_i)m_\alpha(\cO)}{p\cdot m_\alpha(\cO(1)) + (\sum_i q_i)\cdot m_\alpha(\cO)}.
    \]
    and so using the fact that $\lim_\alpha m_\alpha(\cO(1))/m_\alpha(\cO) = \infty$ and that $\lvert \phi_\alpha - \phi_\alpha(\cO(1))\rvert$ is bounded in $\alpha$, we get that 
    \[
        \lim_\alpha \phi_\alpha(E/\cO(1)) = \lim_\alpha \frac{p\cdot n\cdot m_\alpha(\cO(1))}{p\cdot m_\alpha(\cO(1))}
    \]
    which gives the claimed result.}
    A calculation using the fact that $\ell_\alpha(E/P_2) = \logZ_\alpha(E) - \logZ_\alpha(\cO(1))$ implies that 
    \[  
        \lim_\alpha \log(m_\alpha(E)/m_\alpha(\cO(1))) = p
    \]
    and so \Cref{D:convergence_conditions}\eqref{I:log_central_charge} holds. Finally, a direct calculation shows that \Cref{D:convergence_conditions}\eqref{I:cosh_bound} holds for any $E\in \cC_{\le v_2}^{\rm{ss}}$.\endnote{An object $E\in \cC_{\le v_2}^{\rm{ss}}$ decomposes as $\cO(1)^{\oplus p}[r] \to E\to \bigoplus_i \cO^{\oplus q_i}[s_i]$. \Cref{D:convergence_conditions}\eqref{I:log_central_charge} follows from this description, the fact that $\lim_\alpha m_\alpha(\cO(1))/m_\alpha(\cO)=0$, and the fact that 
    \[ 
    \lim_\alpha \frac{\logZ_\alpha(\cO(1)) - \logZ_\alpha(\cO)}{1+\lvert \logZ_\alpha(\cO(1)) - \logZ_\alpha(\cO)\rvert } = 1.
    \]
    It follows, in particular, that for $E\in \cC_{\le v_2}^{\rm{ss}}$ as above, we have $\lim_\alpha \phi_\alpha(E/\cO(1)) = r$. For \Cref{D:convergence_conditions}\eqref{I:cosh_bound}, we need to show that 
    \[
        c_\alpha^t(E) = \frac{1}{m_\alpha(E)}\int_{\bR} \cosh(t(\theta-\phi(E))) \:dm_E(\theta)\to 1.
    \]
    There are two types of terms to consider: one is 
    \[
        \frac{\cosh (t(\phi_\alpha(\cO(1)) + r - \phi_\alpha(E)))\cdot m_\alpha(\cO(1))}{m_\alpha(\cO(1))} = \cosh(t(\phi_\alpha(\cO(1))  + r- \phi_\alpha(E))).
    \]
    By the previous discussion, the argument of $\cosh$ tends to zero, so this expression tends to 1. The other type of summand is 
    \[
    \frac{\cosh(t(\phi_\alpha(\cO) + s_i-\phi_\alpha(E)))\cdot m_\alpha(\cO)q_i}{m_\alpha(\cO(1))}.
    \]
    Here, we know that $m_\alpha(\cO)/m_\alpha(\cO(1))\to 0$, but $\phi_\alpha(\cO)+s_i - \phi_\alpha(E) \approx \phi_\alpha(\cO) + s_i - \phi_\alpha(\cO(1))$ could grow arbitrarily large. However, using the asymptotic estimates for $\lvert u \rvert \to \infty$, as $z_\alpha \to \infty+i\pi 0$, it follows that $\Im(z_\alpha) \to 0$ and consequently that 
    \[
    \log\left(\frac{\rm{K}_0(u_\alpha) + i\pi I_0(u_\alpha)}{\rm{K}_0(u_\alpha)}\right) = i\frac{\pi}{2} + 2u_\alpha + \log\left(\frac{\epsilon_2(u_\alpha)}{\epsilon_1(u_\alpha)}\right)
    \]
    where $\Im(u_\alpha) \to 0$. Next, $\cosh t(\phi_\alpha(\cO) + s_i - \phi_\alpha(E)) \le \cosh t(\phi_\alpha(\cO) + s_i - \phi_\alpha(\cO(1)))$ and so since $e^{\pm t (\phi_\alpha(\cO) + s_i - \phi_\alpha(\cO(1))} = e^{\pm t\cdot s_i(\pi/2)}\cdot e^{f_\alpha}$ where $f_\alpha \to 0$, it follows that we have 
    \[
    \lim_\alpha 
    \frac{\cosh(t(\phi_\alpha(\cO) + s_i-\phi_\alpha(E)))\cdot m_\alpha(\cO)q_i}{m_\alpha(\cO(1))}
    = 0
    \]
    which gives \Cref{D:convergence_conditions}\eqref{I:cosh_bound}.} We leave as an exercise the verification that \Cref{T:topology}\eqref{I:phasewidthunif} holds.\endnote{We have to show that $\vec{d}(\sigma_\alpha,\sigma)\to 0$. Consider $E\in \DCoh(\bP^1)\setminus \langle \cO\rangle$. $E$ has projection to $\langle \cO(1)\rangle$ given by $\Pi(E) \cong \bigoplus_j \cO(1)^{\oplus m_j}[n_j]$. Now, $\log m_\alpha(\Pi(E)/\cO(1)) =  \log m_\alpha(\Pi(E)) - \log m_\alpha(\cO(1)) = \sum_j m_j$. In particular, $\log m_\alpha(\Pi(E)/\cO(1)) = \log m_\sigma(\Pi(E)/\cO(1))$ for all $\alpha$. Let $n_{\rm{min}} = \min \{n_j\}$ and define $n_{\rm{max}}$ analogously. $\phi_\sigma^-(\Pi(E)/\cO(1)) = \phi_\sigma(\cO(1)[n_{\rm{min}}]/\cO(1)) = n_{\rm{min}}$ and likewise for $\phi_\sigma^+$. It follows that $\vec{d}(\sigma_\alpha,\sigma)\to 0$.}

    Finally, we check \Cref{T:topology}\eqref{I:stability}. The $\sigma$-stable objects are $\{\cO[n]:n\in \bZ\}$ and the set of objects in $\DCoh(\bP^1)$ that project to $\cO(1)$. $\cO[n]$ is $\sigma_\alpha$-stable for each $\alpha$, so there is nothing to check in this case. On the other hand, if $E \equiv \cO(1) \mod \langle \cO\rangle$ then we are done because $\cO(1)$ is $\sigma_\alpha$-stable for all $\alpha$. Consequently, $\sigma_\alpha \to \sigma$ with respect to the topology of \Cref{T:topology}. The case where $\Im(z_\alpha) \in (-1,0]\pi$ for all $\alpha$ follows from analogous reasoning. 
    
    It remains to show that if $\sigma_\alpha \to\sigma \in \partial_1$ with $\mathfrak{p}(v_1,v_2) = 1$ then $f^{-1}(\sigma_\alpha)\to f^{-1}(\sigma)$. By \Cref{T:topology}\eqref{I:stability}, there exists an $\alpha_0$ such that $\alpha \ge \alpha_0$ implies that $\cO$ is $\sigma_\alpha$-stable. In particular, $\sigma_\alpha \in X_0 \cup X_1$. We can consider separately the cases where the net lies entirely in $X_0$ or $X_1$. In the latter case, the fact that 
    \[
        \lim_\alpha \frac{\logZ_{\alpha}(\cO(1)) - \logZ_\alpha (\cO)}{1+\lvert\logZ_{\alpha}(\cO(1)) - \logZ_\alpha (\cO) \rvert} =  1
    \] 
    implies by \eqref{E:limitangle} that $f^{-1}(\sigma_\alpha)\to \infty + i\pi 0$. The other case is proven analogously.
\end{proof}

Next, we classify the non-admissible points of $\Astab(\bP^1)$. 

\begin{lem}
\label{L:nonadmissibleP1}
    The only non-admissible point $\sigma_0$ in $\Astab(\bP^1)$ has underlying curve of type $\Gamma$ with $\mathfrak{p}(v_1,v_2) = 1$ and $\cC_{\le v_1}$ the category of torsion complexes on $\bP^1$. Furthermore, $\sigma_0$ lies in the closure of each of the regions $U_k$ for $k\in \bZ$.
\end{lem}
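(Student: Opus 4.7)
The plan is to prove the classification of $\sigma_0$ first, then to exhibit convergent paths realizing the closure assertion. By \Cref{L:admissibleclassificationP^1}, a non-admissible point must have underlying multiscale line of type $\Gamma$, and since the generic semiorthogonal case is always admissible by \cite{GKR03}, we may assume $\mathfrak{p}(v_1,v_2)=1$ with $\cC_{\le v_1}\subsetneq \DCoh(\bP^1)$ proper, thick, and non-admissible.

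The key step is a lattice-theoretic constraint. The direct-sum axiom $\Lambda\otimes \bQ=M_{v_1}\oplus M_{v_2}$ of \Cref{D:generalized_stability_condition} forces $\Lambda_{\le v_1} = \bZ w$ for some primitive $w\in \bZ^2$, and the support property for the stability condition on $\gr_{v_2}(\cC_\bullet)$ forces every $F\in \DCoh(\bP^1)$ with $[F]\in \bZ w$ to lie in $\cC_{\le v_1}$ (otherwise its image in the quotient would be a nonzero object of K-theory class zero). For $w=(1,k)$ this forces $\cO(k+1)\oplus \cO(k-1)\in \cC_{\le v_1}$ (whose class is $2w$), hence both $\cO(k\pm 1)\in \cC_{\le v_1}$ by thickness — but this enlarges $\Lambda_{\le v_1}$ beyond rank one, a contradiction. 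So $w=(0,1)$, and the requirement then forces $\cC_{\le v_1}\supseteq \cT$; containing any line bundle in addition to $\cT$ would produce two line bundles via $0\to \cO(k)\to \cO(k+1)\to \cO_p\to 0$, hence all of $\DCoh(\bP^1)$, so $\cC_{\le v_1}=\cT$. The stability conditions on $\cT$ and on $\DCoh(k(\bP^1))$ are each unique up to $\bC$ since each category has a single stable object up to shift, so $\sigma_0$ is uniquely determined.

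For the closure assertion, the autoequivalence $-\otimes\cO(n)$ preserves $\cT$, $\DCoh(k(\bP^1))$, and each of their stability conditions, so $\cO(n)\cdot \sigma_0 = \sigma_0$; since the $\bZ$-action sends $U_k$ to $U_{k+n}$, it suffices to construct one path in $U_1$ converging to $\sigma_0$. I will take $\sigma_t = \mathscr{B}(\tau_t)$ with $\tau_t = x_t + i\pi/2$ and $x_t\to -\infty$. By the asymptotic expansions of $I_0$ and $K_0$ near the origin underlying \eqref{E:bessel}, along this path $\logZ_t(\cO(1))-\logZ_t(\cO)\to 0$, and normalizing so that $Z_t(\cO_p)\equiv 1$ gives $|\ell_t(\cO/\cO_p)|\to \infty$ in the direction making the marked multiscale line $(\bP^1,\ell_t(\cO_p),\ell_t(\cO))$ converge in $\rmscbar_2$ to that of $\sigma_0$ with $\mathfrak{p}(v_1,v_2)=1$. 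With reference objects $P_1=\cO_p$, $P_2=\cO$, the cosh and log-central-charge conditions of \Cref{D:convergence_conditions} are immediate, since $\cO_p$, $\cO$, and $\cO(1)$ are $\sigma_t$-stable throughout $U_1$ and project to the unique $\sigma_0$-stable objects of their respective graded pieces.

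The main obstacle will be verifying the directed-distance bound $\vec d(\sigma_0,\sigma_t)\to 0$ and the stability-lifting condition of \Cref{T:topology}\eqref{I:stability} uniformly over all nonzero $E\in \cC_{\le v}\setminus \cC_{<v}$. The key simplification is that every object of $\DCoh(\bP^1)$ decomposes as a direct sum of shifts of line bundles $\cO(k)$ and shifts of torsion sheaves $\cO_p$ by Grothendieck's splitting theorem, so every object is already in refined HN form for $\sigma_0$. For $E\in \DCoh(\bP^1)\setminus \cT$, the representative $E'\equiv E\mod \cT$ obtained by replacing each $\cO(k)$-summand of $E$ by a copy of $\cO$ of the same multiplicity and shift realizes the image of $E$ in $\gr_{v_2}=\DCoh(k(\bP^1))$; the comparisons of $\phi^\pm$ and $\log m$ between $E$ and $E'$ along $\sigma_t$ then reduce to the bound $|Z_t(\cO(k))/Z_t(\cO)-1|=O(|k|/|x_t|)\to 0$, and $E'$ itself serves as the required $\sigma_t$-stable lift since $\cO$ is $\sigma_t$-stable throughout $U_1$.
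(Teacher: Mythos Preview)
There is a genuine gap in your classification argument. Your key claim --- that the support property on $\gr_{v_2}(\cC_\bullet)$ forces every $F\in\DCoh(\bP^1)$ with $[F]\in\bZ w$ to lie in $\cC_{\le v_1}$ --- is false. A nonzero object of $K$-theory class zero in the quotient is \emph{not} a contradiction: the support property bounds $|Z|$ below only on \emph{semistable} objects, and nothing prevents $\bar F$ from having an HN filtration whose factors' classes cancel (e.g.\ $\cO(1)\oplus\cO[1]$ has class $(0,1)$ but is not torsion, and its image in $\DCoh(k(\bP^1))$ is $k(\bP^1)\oplus k(\bP^1)[1]\neq 0$). You invoke this claim twice --- to place $\cO(k\pm1)$ in $\cC_{\le v_1}$ when $w=(1,k)$, and to conclude $\cT\subseteq\cC_{\le v_1}$ when $w=(0,1)$ --- and neither deduction survives.

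The fix is to argue via thickness and the classification of indecomposables. Since every object of $\DCoh(\bP^1)$ splits as a sum of shifts of $\cO(\ell)$'s and length-$n$ skyscrapers, thickness forces $\cC_{\le v_1}$ to be generated by whichever of these indecomposables it contains, and the rank-one constraint on $\Lambda_{\le v_1}$ then pins down which ones are allowed. For $w=(1,k)$ this gives $\cC_{\le v_1}=\langle\cO(k)\rangle$ immediately (the admissible case). For $w=(0,1)$ it gives only $\cC_{\le v_1}\subseteq\cT$; to get the reverse inclusion one needs the paper's argument: if some $\cO_q\notin\cC_{\le v_1}$, the triangle $\cO(-1)\to\cO\to\cO_q$ produces a nonzero nilpotent endomorphism of the image of $\cO$ in the quotient, so $\cO$ cannot be stable there and no stability condition $\sigma_{v_2}$ exists. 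Your closure argument is essentially the paper's (same path up to reparametrization); note however that with your choice of $E'$ one has $\vec d_{E,E'}(\sigma_0,\sigma_t)=0$ exactly, so the non-uniform bound $O(|k|/|x_t|)$ is a red herring and the $v_1$ case also needs to be addressed.
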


\begin{proof}
    Suppose $\cT$ is a nonzero thick subcategory of $\DCoh(\bP^1)$ giving rise to an element of $\Astab(\bP^1)$. By \Cref{D:generalized_stability_condition}, $\rm{K}_0(\cT) \subset \rm{K}_0(\bP^1)$ is rank $1$. If $\cT$ contains no complexes with zero dimensional support then it must contain one of the sheaves $\cO(k)$. It cannot contain any other $\cO(\ell)$ or any complex with finite support since in this case $v(\cT)$ is of rank $2$. In particular, $\cT = \langle \cO(k)\rangle$ and we are in the case of \Cref{L:admissibleclassificationP^1}. 
    
    Next, suppose that $\cT$ contains a torsion complex supported at a point $p$. It then contains $\cO_p$ and thus $v(\cT)$ is spanned by $v(\cO_p)$.\endnote{A torsion sheaf supported at a point corresponds to $M = k[x]/(x^n)$ in a suitable choice of affine chart $\bA^1 = \Spec k[x]$. In particular, we can consider the map $\cdot \: x: M\to M$ which has kernel $k\cdot x^{n-1}$ and cokernel $k[x]/(x^{n-1})$. The cohomology $\Cone(M\to M)$ contains $\ker$ and $\coker$ as summands. In particular, $k\cdot x^{n-1}\cong k$ corresponds to $\cO_p$ in this chart.} So, any stability condition on $\DCoh(\bP^1)/\cT$ factoring through $\rm{K}_0(\bP^1)/v(\cT)$ must not have any torsion sheaves as stable objects. In particular, stable objects are the images under $\DCoh(\bP^1)\to \DCoh(\bP^1)/\cT$ of shifts of locally free sheaves, all of which are equivalent in the quotient to $\cO^{\oplus n}$ for some $n\ge 1$. If there is some $\cO_q\not\in \cT$, then the triangle $\cO(-1)\to \cO \to \cO_q$ gives a triangle $\cO\to \cO \to \cO_q$ in the quotient and in particular a nontrivial endomorphism of $\cO$. Consequently, $\cO$ cannot be stable with respect to any stability condition on $\DCoh(\bP^1)/\cT$. So, $\cT$ contains all zero dimensional sheaves. 
    
    Since $\cT$ is not admissible, the corresponding augmented stability condition has curve $\Sigma$ with dual tree $\Gamma$ with $\mathfrak{p}(v_1,v_2) = 1$. To see that $0\subsetneq \cT\subsetneq \DCoh(\bP^1)$ has stability conditions on the associated graded categories, it suffices to note that $\cT$ has a stability condition given by restricting slope stability and that $\DCoh(\bP^1)/\cT$ is the derived category of finite dimensional vector spaces over the function field of $\bP^1$ by \cite{Meinhardtquotient}*{Prop. 3.13}.

    Finally, consider $U_k$ for any $k\in \bZ$. Under the map $\varphi_k:U_k \to \bH$, this corresponds to the region bounded below by the curve $\{(x,y):e^{\lvert x\rvert}\cos y = 1\}$. Consider the path given by $\varphi_k(t) = i/t$ for $t\in [1,\infty)$. By similar reasoning to \cite{quasiconvergence}*{\S 4.2}, $\varphi_k(t)$ defines a quasi-convergent path which converges in $\Astab(\bP^1)$ to $\sigma_0$. 
\end{proof}

\Cref{L:admissibleclassificationP^1}, \Cref{T:P1}, and \Cref{L:nonadmissibleP1} give a global picture of $\Astab(\bP^1)$ com\-patible with \cite{NMMP}*{Fig. 1, p. 29}. The admissible points with graph $\Gamma$ give rise to a boundary diffeomorphic to $\bR$ at the right-hand side of the figure in \emph{loc. cit.} On the other hand, any path tending toward the left limits to the non-admissible point $\sigma_0$. 

By \Cref{T:topology}, there is a continuous $\Aut(\bP^1)$-action on $\Astab(\bP^1)$ which factors through the $\bZ$-action generated by $-\otimes \cO(1)$. On $\bC \cong \Stab(\bP^1)/\bC\subset \Astab(\bP^1)$, this action corresponds to the translation action $(n,z)\mapsto z + in$, thus identifying the double quotient $\Aut(\bP^1)\backslash\Stab(\bP^1)/\bC$ with an open cylinder $S^1\times \bR$. The homeomorphism $F:\bC_+ \to \Astab^{\rm{adm}}(\bP^1)$ of \Cref{T:P1} realizes the quotient of $\Astab^{\rm{adm}}(\bP^1)$ by $\Aut(\bP^1)$ as $S^1\times (-\infty,\infty]$. 

Finally, observe that $\sigma_0$ is the sole fixed point of the $\bZ$-action because $-\otimes \cO(1)$ acts as the identity on the category of torsion complexes $\cT$. Combining these observations realizes $\Astab(\bP^1)/\Aut(\bP^1)$ as a cone compactifying the open cylinder $S^1\times \bR$ with boundaries $S^1\times \{\infty\}$ corresponding to the admissible points and $*\times \{-\infty\}$ corresponding to $\sigma_0$ (see Figure \ref{F:P1compactification}). Quotienting by the admissible boundary $S^1\times \{\infty\}$ produces a sphere compactifying $\Aut(\bP^1)\backslash\Stab(\bP^1)/\bC$ with a pair of special points: one ``singular'' point $\sigma_0$ with stabilizer group $\bZ$ and one point $\sigma_\infty$ which corresponds to the admissible boundary.

\begin{figure}[h]
\begin{center}
    \begin{tikzpicture}
        \draw[thick] (0,0) -- (3,1);
        \draw[thick] (0,0) -- (3,-1);
        \filldraw[blue] (0,0) circle (0.05);
        \draw[red,thick] (3,1) arc (90:-90:0.5cm and 1cm);
        \draw[dashed, red] (3,1) arc (90:270:0.5cm and 1cm);
        \fill[gray!50, opacity = 0.20] (0,0) -- (3,1) -- (3,-1);
        \fill[gray!50, opacity = 0.20] (3,1) arc (90:-90:0.5cm and 1cm);
        \draw (1.5,.5) arc (90:-90:.25 cm and .5 cm);
        \draw[dashed] (1.5,.5) arc (90:270:.25cm and .5 cm);

        \draw (7,1) arc (90:-90:0.5cm and 1cm);
		\draw[dashed] (7,1) arc (90:270:0.5cm and 1cm);
		\draw (7,0) circle (1cm);
		\filldraw[blue] (6,0) circle  (0.05); 
		\filldraw[red] (8,0) circle (0.05);
		\shade[ball color=blue!10!white,opacity=0.20] (7,0) circle (1cm);

        \draw node at (-.25,-.25) {$\textcolor{blue}{\sigma_0}$};
        \draw node at (3.7,0) {$\textcolor{red}{\partial}$};
        \draw node at (4.75,0) {$\longrightarrow$};
        \draw node at (5.75,-.25) {$\textcolor{blue}{\sigma_0}$};
        \draw node at (8.3,0) {$\textcolor{red}{\partial}$};
    \end{tikzpicture}
    
\end{center}
\caption{The cone with boundary on the left represents $\Astab(\bP^1)/\Aut(\bP^1)$. The boundary circle on the right side of the surface labeled ``$\partial$'' depicts the admissible boundary. The horizontal arrow is the quotient map which contracts the admissible boundary to a point. The gray interior of both pictures corresponds to $\Aut(\bP^1)\backslash \Stab(\bP^1)/\bC$, which is biholomorphic to an open cylinder.}
\label{F:P1compactification}
\end{figure}

\subsubsection{Curves of higher genus}
In this section, $X$ denotes a smooth and proper curve of genus at least one. By \cite{OkawaSODs}, there are no proper admissible subcategories of $\DCoh(X)$, so $\Astab^{\rm{adm}}(X) = \Stab(X)/\bC$. We will construct non-admissible augmented stability conditions on $\DCoh(X)$. 

Consider the filtration $0\subsetneq \cT\subsetneq \DCoh(X)$, where $\cT$ is the thick triangulated subcategory of torsion complexes. As we have seen in \cite{quasiconvergence}*{Lem. 4.2} this filtration is associated to certain quasi-convergent paths in $\Stab(X)$. $\cT$ admits a stability condition, unique up to $\bC$-action, whose central charge $Z$ factors through $H^2(X;\bZ)$ and is given by $Z(T) = \deg(T)$. By \cite{Meinhardtquotient}*{Prop. 3.13}, $\DCoh(X)/\cT \simeq \DCoh(\rm{mod}\:K(X))$, which also has a unique stability condition up to $\bC$-action with central charge given by the rank function. 

Let $\Sigma$ denote a multi-scale line with one root component and two terminal components corresp\-onding to $v_1,v_2\in V(\Sigma)_{\rm{term}}$ such that $\mathfrak{p}(v_1,v_2) = 1$. Set $\cC_{\le v_1} = \cT$ and $\cC_{\le v_2} = \DCoh(X)$ and let $\sigma_i \in \Stab(\gr_{v_i}\DCoh(X))/\bC$ denote the unique elements for $i=1,2$. By \cites{Br07,Macricurves}, $\Stab(X)/\bC\cong \bH$ via $\tau \mapsto Z_\tau(\cO_p)/Z_\tau(\cO_X)$, where $p\in X$ is any closed point. In \cites{NMMP,quasiconvergence} paths in $\Stab(X)/\bC$ are studied, which in these coordinates are given by 
\[
\tau(s) = \frac{2\pi i}{e^{i\theta}s+2(g(X)-1)C_{\rm{eu}}}
\]
where $C_{\rm{eu}}$ is the Euler-Mascheroni constant and $\theta \in (-\pi/2,\pi/2)$. 

\begin{lem}
\label{L:nonadmissiblehighergenus}
    In the above notation, $\sigma = \langle \cC_\bullet|\sigma_\bullet\rangle_{\Sigma}$ defines a non-admissible augmented stability condition and $\tau(s)$ converges to $\sigma$ in the topology of \Cref{T:topology}.
\end{lem}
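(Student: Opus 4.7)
The plan is to establish the two assertions in turn, starting from the easier non-admissibility claim and then reducing the convergence statement to a prior quasi-convergence result via \Cref{P:quasiconvconv}.

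\textbf{Non-admissibility.} The underlying multiscale decomposition of $\sigma$ is the two-step filtration $0 \subsetneq \cT \subsetneq \DCoh(X)$. By \Cref{D:admissible_decomposition}, admissibility would require the inclusion $\cT = \cC_{\le v_1} \hookrightarrow \cC_{\le v_2} = \DCoh(X)$ to be an admissible subcategory. However, by \cite{OkawaSODs}, $\DCoh(X)$ admits no non-trivial admissible subcategories when $g(X) \ge 1$. Since $\cT$ is a proper non-zero thick subcategory, it cannot be admissible, and consequently $\sigma$ is not admissible as an augmented stability condition. (Along the way, one should also check that $\sigma$ is a legitimate point of $\Astab(X)$: the axioms of \Cref{D:multiscale_decomposition} are immediate from thickness of $\cT$ and the two-level structure, while the decomposition in \Cref{D:generalized_stability_condition} is given by $\Lambda \otimes \bQ = \bQ \cdot [\cO_p] \oplus \bQ \cdot [\cO_X]$.)

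\textbf{Convergence.} The strategy is to verify the hypotheses of \Cref{P:quasiconvconv}(2) for the path $\tau(s)$. Numericity and the support property hold automatically for any stability condition on a curve. The quasi-convergence of $\tau(s)$ with the correct asymptotic structure has already been established in \cite{quasiconvergence}*{\S 4.2}: the asymptotic indexing set is represented by $\{\cO_p,\cO_X\}$ for any closed point $p \in X$, with $\cC^{\cO_p}_{\preceq \cO_p} = \cT$ and $\cC^{\cO_X}_{\preceq \cO_X} = \DCoh(X)$, and the induced stability conditions on the graded pieces are precisely the unique $\sigma_1$ on $\cT$ (slope stability restricted to torsion complexes) and $\sigma_2$ on $\DCoh(X)/\cT \simeq \DCoh(\mathrm{mod}\,K(X))$ (rank stability).

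It then remains to upgrade quasi-convergence to strong quasi-convergence (\Cref{D:stronglyqconv}). Condition (1) on limits of ratios of log-central charges follows from the explicit form of $\tau(s)$, since only two asymptotic classes are present and all relevant ratios either tend to a fixed value in $\bC$ or to $\infty$. For condition (2), the $\sigma_1$-stable objects of $\cT$ are shifts of skyscrapers $\cO_q$, which remain $\tau(s)$-stable for all $s \gg 0$ by Bridgeland's openness of stability, and the $\sigma_2$-stable objects of $\DCoh(X)/\cT$ are shifts of the image of $\cO_X$, which lifts to the $\tau(s)$-stable line bundle $\cO_X$ for all $s \gg 0$. Condition (3) requires uniform control of phases and masses across all objects in each graded category, reduced via Harder--Narasimhan to the stable objects above; here one exploits that torsion sheaves decompose as direct sums of skyscrapers and that $\tau(s) \to 0$ in $\bH$ along a controlled trajectory, so the differences $|\phi_s^\pm(E/P_i) - \phi_\infty^\pm(E/P_i)|$ and $|\log(m_s(E)/m_s(P_i)) - \log(m_\infty(E)/m_\infty(P_i))|$ go to $0$ at a rate independent of $E$.

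The main obstacle is the uniformity in \Cref{D:stronglyqconv}(3): while the graded categories are essentially as simple as possible (each has a single stable object up to shift), the uniform estimate over all of $\DCoh(X)$ requires explicit bounds using the closed form of $\tau(s)$ and an analysis of how semistable vector bundles sit relative to torsion sheaves. Once these estimates are in hand, \Cref{P:quasiconvconv}(2) delivers convergence of $\tau(s)$ to $\sigma$ in the topology of \Cref{T:topology}.
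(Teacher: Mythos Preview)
Your proposal is correct and amounts to the same argument as the paper's. The paper verifies the conditions of \Cref{T:topology} directly (checking \Cref{D:convergence_conditions}, then \eqref{I:stability} and \eqref{I:phasewidthunif}), while you package the same verifications through \Cref{P:quasiconvconv} via \Cref{D:stronglyqconv}; since \Cref{D:stronglyqconv}(1)--(3) correspond precisely to \Cref{D:convergence_conditions}(1), \Cref{T:topology}\eqref{I:stability}, and \Cref{T:topology}\eqref{I:phasewidthunif}, the two routes coincide in substance. One small sharpening: the uniformity in \Cref{D:stronglyqconv}(3) is not merely ``the graded categories are simple'' but rather that every object of $\DCoh(X)$ is equivalent modulo $\cT$ to $\bigoplus_j \cO_X^{\oplus r_j}[n_j]$, so choosing this as the lift $E'$ makes the three quantities in \eqref{E:extracondition} identically zero; the paper notes this explicitly, and it is what makes the ``main obstacle'' you flag disappear.
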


\begin{proof}
    The relevant abelian group in \Cref{D:generalized_stability_condition} is $\Lambda = H^0(X;\bZ) \oplus H^2(X;\bZ)$ and we have $M_{v_1} = H^2(X;\bQ)$ and $M_{v_2} = H^0(X;\bQ)$. The conditions of \Cref{D:convergence_conditions} are brief to verify.\endnote{First, consider $\cC_{v_2}^{\rm{ss}}$, i.e. the objects that project to a semistable object in $\db(\rm{mod}\:K(X))$. This is in fact all objects. We may restrict ourselves to objects $E\in \Coh(X)$. These have Chern character valued in $H^0(X;\bZ)\oplus H^2(X;\bZ)$ given by $\ch(E) = d[\cO_p] + r[\cO_X]$ for $d,r\ge 0$. In particular, 
    \[
        \frac{Z_s(E)}{Z_s(\cO_X)} = r + \frac{Z_s(\cO_p)}{Z_s(\cO_X)} \to r
    \]
    as $s\to\infty$. Consequently, for any $E$, all of its Harder--Narasimhan factors have (normalized) phase converging to $1$. In particular, for any $t\ge 0$ $c^t_s(E) \to 1$. Also, $m_s(E)/m_s(\cO_X) \to r$. This gives \Cref{D:convergence_conditions} for $v_2$. The case of $v_1$ is immediate since all torsion sheaves are $\tau_s$-semistable for all $s$.} \Cref{T:topology}\eqref{I:stability} is immediate, since the $\sigma$-stable objects are all of the form $\cO_p$ for $p\in X$ and line bundles $L$ up to shift and $\tau(s)$ is equivalent to slope stability for all $s$. Finally, \Cref{T:topology}\eqref{I:phasewidthunif} follows from a short calculation, using the fact that the lattices for the stability conditions on $\cT$ and $\DCoh(X)/\cT$ are one-dimensional.\endnote{The essential observation is that the equivalent $\DCoh(X)/\cT\to \db(\rm{mod}\:K(X))$ is given by sending a sheaf to its stalk at the generic point of $X$. Consequently, any object of $\DCoh(X)$ is (up to shift) equivalent to $\cO^{\oplus r}$ modulo $\cT$, where $r = \rk(E)$.}
\end{proof}

\printendnotes

\bibliography{refs}{}
\bibliographystyle{plain}

\end{document}